\newtheorem{theorem}{Theorem}[section]
\newtheorem{lemma}[theorem]{Lemma}
\newtheorem{proposition}[theorem]{Proposition}
\newtheorem{corollary}[theorem]{Corollary}
\theoremstyle{definition}
\newtheorem{definition}[theorem]{Definition}
\newtheorem{example}[theorem]{Example}
\newtheorem{examples}[theorem]{Examples}
\newtheorem{remark}[theorem]{Remark}
\newtheorem{principles}[theorem]{Principles}
\newtheorem{rems}[theorem]{Remarks}
\newcommand{\Tr}{\text{Tr}}
\newcommand{\id}{\text{id}}
\newcommand{\FPdim}{\text{FPdim}}
\newcommand{\Ker}{\text{Ker\,}}
\newcommand{\Der}{\text{Der}}
\newcommand{\Fun}{\text{Fun}}
\newcommand{\End}{\text{End}}
\renewcommand{\Vec}{\text{Vec}}
\newcommand{\Hom}{\text{Hom}}
\newcommand{\uHom}{\underline{\text{Hom}}}
\newcommand{\Ad}{\text{Ad}}
\newcommand{\Aut}{\text{Aut}}
\newcommand{\Ind}{\text{Ind}}
\newcommand{\Int}{\text{Int}}
\newcommand{\Rep}{\text{Rep}}
\newcommand{\rev}{\text{rev}}
\newcommand{\Vect}{\text{Vec}}
\newcommand{\BN}{\mathbb{N}}
\newcommand{\ev}{\text{ev}}
\newcommand{\coev}{\text{coev}}
\newcommand{\op}{\text{op}}
\newcommand{\oP}{\text{op}}
\newcommand{\additive}{{$k$-linear }}
\newcommand{\additively}{{$k$-linearly }}
\newcommand{\Func}{\text{Func\,}}
\newcommand{\Repr}{\text{Repr}}
\newcommand{\VecT}{\text{Vect}}
\newcommand{\fD}{{\mathfrak D}}
\newcommand{\fP}{{\mathfrak P}}
\newcommand{\fL}{{\mathfrak L}}
\newcommand{\Ob}{\text{Ob}}
\newcommand{\B}{\mathcal{B}}
\newcommand{\C}{\mathcal{C}}
\newcommand{\K}{\mathcal{K}}
\newcommand{\D}{\mathcal{D}}
\newcommand{\cP}{\mathcal{P}}
\newcommand{\E}{\mathcal{E}}
\newcommand{\F}{\mathcal{F}}
\newcommand{\Z}{\mathcal{Z}}
\renewcommand{\L}{\mathcal{L}}
\newcommand{\M}{\mathcal{M}}
\newcommand{\A}{\mathcal{A}}
\newcommand{\T}{\mathcal{T}}
\newcommand{\PB}{\mathcal{PB}}
\newcommand{\PM}{\mathcal{PM}}
\renewcommand{\O}{\mathcal{O}}
\newcommand{\be}{\mathbf{1}}
\newcommand{\fg}{{\mathfrak g}}
\newcommand{\fa}{{\mathfrak a}}
\newcommand{\fc}{{\mathfrak c}}
\newcommand{\fh}{{\mathfrak h}}
\newcommand{\G}{\widetilde{G}}
\renewcommand{\be}{\mathbf{1}}
\newcommand{\BZ}{{\mathbb Z}}
\newcommand{\BQ}{{\mathbb Q}}
\newcommand{\BC}{{\mathbb C}}
\newcommand{\BF}{{\mathbb F}}
\newcommand{\mI}{{\mathcal I}}
\newcommand{\bt}{\boxtimes}
\newcommand{\ot}{\otimes}
\newcommand{\cyc}{cyc}
\newcommand{\kcyc}{{k_{\cyc}}}
\newcommand{\kRcyc}{{k_{\mathbb{R}\cyc}}}
\newcommand{\iso}{\buildrel{\sim}\over{\longrightarrow}}
\newcommand{\x}{x}
\newcommand{\q}{\mathbb{Q}^{\times}_{>0}/(\mathbb{Q}^{\times}_{>0})^2}
\begin{document}
\title{On braided fusion categories I}

\author{Vladimir Drinfeld}
\address{V.D.: Department of Mathematics,
University of Chicago, Chicago, IL 60637, USA}
\email{drinfeld@math.uchicago.edu}

\author{Shlomo Gelaki}
\address{S.G.: Department of Mathematics, Technion-Israel Institute of
Technology, Haifa 32000, Israel}
\email{gelaki@math.technion.ac.il}

\author{Dmitri Nikshych}
\address{D.N.: Department of Mathematics and Statistics,
University of New Hampshire,  Durham, NH 03824, USA}
\email{nikshych@math.unh.edu}

\author{Victor Ostrik}
\address{V.O.: Department of Mathematics,
University of Oregon, Eugene, OR 97403, USA}
\email{vostrik@math.uoregon.edu}

\dedicatory{To Yuri Ivanovich Manin with admiration}

\begin{abstract}
We introduce a new notion of the core of a braided fusion category.
It allows to separate the part of a braided fusion category that does not come from finite groups.
We also give a comprehensive and self-contained exposition of the known results
on braided fusion categories without assuming them pre-modular or non-degenerate.
The guiding heuristic principle of our work is an analogy between 
braided fusion categories and Casimir Lie algebras.  
\medskip

{\em Mathematics Subject Classification (2000): 18D10, 16W30.}

{\em Keywords: Tensor category, braided tensor category, equivariantization.}
\end{abstract}

\maketitle

\setcounter{tocdepth}{1}
\tableofcontents
\newpage

\section{Introduction}

This work is the first one in a series of articles on the structure of braided fusion categories.
It is concerned with the foundations  of the theory. In the future articles we are going to study
certain classes of braided fusion categories (e.g., nilpotent and group-theoretical ones)
and the structure of the core of any braided fusion category.

\subsection{Terminology and notation}  \label{terminology}

In this paper we work over an algebraically closed field $k$ of characteristic $0$.
A {\em $k$-linear category\,} is an additive category in which each $\Hom$ group is
equipped with a structure of vector space over $k$ so that the composition of morphisms
is $k$-linear. A {\em tensor category\,}  (or {\em monoidal $k$-linear category\,}) is a
$k$-linear category $\C$ with a structure of monoidal category such that the
bifunctor $\ot : \C \times \C \to \C$ is bilinear on morphisms.

By a {\em fusion category} we mean a $k$-linear 
semisimple rigid tensor category $\C$ with finitely many isomorphism classes of
simple objects, finite-dimensional spaces of morphisms, and such
that the unit object $\be$ of $\C$ is simple.  We refer the reader to
\cite{ENO} for a general theory of fusion categories. 
A fusion category is  {\em braided} if it is equipped with a natural isomorphism
$c_{X,Y}: X\ot Y \simeq Y \ot X$ satisfying the hexagon axioms, see \cite{JS, BK}.

The simplest example of a fusion category is a {\em pointed} category, i.e., 
such that all its simple objects are invertible.  A pointed fusion category
is equivalent to $\Vec_G^\omega$, i.e., the category of finite-dimensional $G$-graded vector
spaces  with the associativity constraint given by some cocycle
$\omega\in Z^3(G,\, k^\times)$ (here $G$ is a finite group).

\subsection{Braided versus pre-modular}  \label{whybraided}
In the literature most of the results on braided fusion categories are formulated under the
assumption that the braided category is {\em pre-modular,\,} i.e., equipped with
a {\em spherical structure\,} (see \S\ref{pivotal}). Moreover, many results are formulated
for {\em modular\,} categories, i.e., pre-modular categories satisfying a non-degeneracy
condition. However, the spherical structure is irrelevant for the questions considered in
this work and for many other purposes. So we found it necessary to formulate and prove 
the generalizations of some known results without assuming the existence of a spherical structure 
(although it seems to exist in all known examples of fusion categories). We also tried to
avoid non-degeneracy assumptions whenever possible.

\subsection{The core of a braided fusion category}
This is the main new notion of this work. Before discussing it in more details in \S\ref{Main: core},
let us explain informally why we think  it is important.

\subsubsection{Informal discussion} \label{core-informal}
The problem of classifying {\em all\,} braided fusion categories is probably unreasonable
because it contains the subproblem of classifying all finite groups. So given a braided fusion
category $\C$ it is natural to try to separate the ``part" of $\C$ that ``comes from finite groups"
from the remaining ``part", $\C_1$. The notion of core allows to give a precise definition of $\C_1$.
Namely, $\C_1$ is the adjoint category of the core of $\C$ (the definition of
adjoint category is given in \S\ref{adcat}). It turns out that if the global dimension
(or the Frobenius-Perron dimension) of $\C_1$ is odd then $\C_1$ is a tensor product
of braided fusion categories which are {\em simple\,} (i.e., not pointed, 
not symmetric, and without nontrivial fusion subcategories). We hope that a similar (but more complicated) 
statement can be proved without the oddness assumption.

\subsubsection{Definition and main results} \label{Main: core}

Let $\E=\Rep(G) \subset \C$ be a maximal Tannakian subcategory of a braided fusion category $\C$.
The core of $\C$ corresponding to $\E$ is the braided fusion category 
\begin{equation}
\label{intro core}
\mbox{Core}_\E(\C)  := \E'\bt_\E \Vec
\end{equation}
obtained by  the de-equivariantization of the centralizer of $\E$, see Definition~\ref{core corresponding to E}.
This is parallel to the notion of the core of a pre-metric Lie coalgebra, see 
Principle~\ref{intro principles}(iii) and \S\ref{warmup}.
The above category  $\mbox{Core}_\E(\C)$ carries an action of $G$ by braided 
autoequivalences. Let $\Gamma_\E$
denote the image of $G$ in the group of isomorphism classes of braided 
autoequivalences of  $\mbox{Core}_\E(\C)$.

In Theorem~\ref{canonical modularization} we show that {\em the equivalence class of the pair 
$(\mbox{Core}_\E(\C),\, \Gamma_\E)$ does not depend on the choice  of $\E$,} i.e., is an invariant of  
$\C$. We hope that this new invariant will be useful in the study of braided
fusion categories. 
In a joint work \cite{ENO4} with P.~Etingof  we will explain
how to reconstruct a braided fusion category from its core and some finite group data.
An example of such reconstruction is provided in Theorem~\ref{main1} which says
that non-degenerate  braided fusion categories with trivial core
are precisely the centers  of pointed categories. 

We also show that the core is {\em weakly anisotropic}, i.e., has no 
non-trivial Tannakian subcategories stable under all braided autoequivalences.
To demonstrate that the class of weakly anisotropic braided fusion categories
is reasonable, we formulate Theorems~\ref{5wapt0} 
and \ref{5ordmaintheor}, which will be proved in a subsequent article.
Theorem~\ref{5ordmaintheor} describes the structure of a weakly anisotropic braided fusion category 
$\C$ satisfying a certain condition, which holds automatically if the Frobenius-Perron dimension
of $\C$ is odd.

\subsection{Some other results}

\subsubsection{Characterization of the centers of pointed categories}
\label{Main: centers}
A fusion subcategory $\L$ of a braided fusion category $\C$ is said to be 
{\em Lagrangian} if $\L$ is Tannakian and coincides with its centralizer.

In Theorem~\ref{main1} we prove that  a non-degenerate braided fusion category $\D$
is equivalent to the center of a pointed fusion category if and only if 
it contains a Lagrangian subcategory. More precisely,
we show that there is an equivalence between the groupoid of Lagrangian subcategories of $\D$  
and the groupoid of ``realizations'' of  $\D$ as the center of a pointed fusion category, 
see Theorem~\ref{more precise}. This agrees with the heuristic Principles~\ref{intro principles}.

\subsubsection{Invariance of the central charge of a pre-modular category}
\label{Main: cecnarge}

Theorem~\ref{tau after mod} relates the Gauss sum  of a pre-modular category $\C$ 
with that of the fiber category  $\E'\bt_\E \Vec$ corresponding to a Tannakian subcategory $\E\subset \C$.  
We prove in Corollary~\ref{zariad} that the central charges of $\C$ and $\E'\bt_\E \Vec$ coincide.

\subsection{The classical-quantum analogy} \label{heuristic pr}
In this subsection (which can be skipped by the skeptical reader)
we formulate some heuristic principles.

\subsubsection{Casimir Lie algebras and pre-metric Lie coalgebras}  \label{classical}
Fix a ground field $K$ (which has nothing to do with $k$). 
We do not assume that $\mbox{char }K=0$
(e.g., $K$ can be finite), but for safety assume that $\mbox{char }K>2$.
All vector spaces and algebras will be over $K$.
\begin{definition}  \label{Casimirdef}
A {\em Casimir Lie algebra\,} is a pair $(\fg ,t)$, where
$\fg$ is a finite-dimensional Lie algebra and $t\in\fg\otimes\fg$
is symmetric and $\fg$-invariant. A {\em pre-metric Lie coalgebra\,} is
a finite-dimensional Lie coalgebra $C$ equipped with a quadratic form $q:C\to K$
invariant with respect to the coadjoint action of the Lie algebra $C^*$.
\end{definition}

If $(\fg ,t)$ is a Casimir Lie algebra then $t$ can be viewed as
an invariant quadratic form on the dual vector space $\fg^*$, so
$(\fg^*,t)$ is a pre-metric Lie coalgebra. Thus the notions of Casimir Lie
algebra and pre-metric Lie coalgebra are essentially equivalent.

A subspace $V\subset\fg^*$ is a Lie {\em subcoalgebra\,} if
$V^*$ is a quotient {\em algebra\,} of $\fg$. This happens if and only
if $V$ is stable under the coadjoint action of $\fg$. Therefore if
$V\subset\fg^*$ is a subcoalgebra so is $V^{\perp}\subset\fg^*$.

\begin{definition}   \label{metricLiedef}
A {\em metric Lie coalgebra\,} is
a pre-metric Lie coalgebra $(C,q)$ such that the quadratic form $q$ is non-degenerate.
A {\em metric Lie algebra\,} is a Casimir Lie algebra $(\fg ,t)$ such that
the tensor $t$ is non-degenerate.
\end{definition}

If $(\fg ,t)$ is a metric Lie algebra then $t$ gives rise
to an invariant non-degenerate quadratic form on $\fg$ (a ``metric").
Clearly the notions of metric Lie
algebra and metric Lie coalgebra are essentially equivalent.

\begin{definition}   \label{Maninpairdef}
A subalgebra $\fa$ of a metric Lie
algebra $\fg$ is said to be {\em Lagrangian\,} if it is isotropic and 
$\dim\fa =\frac{1}{2}\dim\fg$. A {\em Manin pair\,} is a pair consisting of
a metric Lie algebra $\fg$ and a Lagrangian subalgebra $\fa\subset\fg$.
A Manin pair is said to be {\em pointed\,} if $\fa$ is an ideal. 
\end{definition}

\begin{rems}    \label{Maninpairrem}
\begin{enumerate}
\item[(i)] If a Lagrangian subalgebra $\fa\subset\fg$
is an ideal then $\fa$ is abelian because $[\fa , \fa]$ is orthogonal to every element of $\fg$.
\item[(ii)] It is easy to show that for any finite-dimensional Lie algebra $\fc$, pointed
Manin pairs $(\fg ,\fa )$ with  a fixed isomorphism $\fg/\fa\iso\fc$
are classified by $H^3(\fc ,K)$. On the other hand, pointed fusion categories (see \S\ref{terminology}) 
are classified by pairs $(G,\omega)$, where $G$ is a finite group and $\omega\in H^3(G,k^{\times})$.
\end{enumerate}
\end{rems}

\subsubsection{Heuristic principles}   \label{heursubsubsec}
\begin{principles}    \label{intro principles}
\begin{enumerate}
\item[(i)] 
Braided fusion categories are ``quantum" analogs of
Casimir  Lie algebras (and, by duality, of pre-metric Lie coalgebras).  
Pointed braided fusion categories are analogs of  abelian Casimir Lie algebras.
Non-degenerate braided fusion categories are 
analogs of metric Lie algebras.  Symmetric fusion categories are analogs
of Casimir Lie algebras with $t=0$.
\item[(ii)]
Fusion subcategories of braided fusion categories are analogs of Lie subcoalgebras
in pre-metric Lie coalgebras. The centralizer of a fusion subcategory (see \S\ref{Sect3}) 
is an analog of  the orthogonal complement of a Lie subcoalgebra (here ``complement"
means ``complement in the Lie coalgebra").
\item[(iii)]
The de-equivariantization procedure (see \S\ref{isotsub}) is analogous  to taking the quotient 
of a pre-metric Lie coalgebra by a subcoalgebra contained in the kernel of the quadratic form. 
Passing from a braided fusion category $\C$ to its core (see \S\ref{Main: core} and \S\ref{coresec}) is 
analogous  to passing from a pre-metric Lie coalgebra $C$ to the
pre-metric Lie coalgebra $\mathfrak{a}^\perp/\mathfrak{a}$, where $\mathfrak{a}\subset C$ is a maximal isotropic subcoalgebra. 
\item[(iv)]
Fusion categories (resp. pointed fusion categories) are ``quantum" analogs of 
Manin pairs (resp. pointed Manin pairs).
Passing from a fusion category to its center is a ``quantum" analog of
passing from a Manin pair $(\fg ,\fa )$ to the metric Lie algebra $\fg$.
\end{enumerate}
\end{principles}

These principles go back to the article \cite{Dr}, in which 
Manin pairs (which are more or less the same as {\em Lie quasi-bialgebras\,}) appear as 
classical limits of quasi-Hopf algebras and pre-metric Lie algebras appear as 
classical limits of quasitriangular quasi-Hopf algebras. 

In this article Manin pairs are used only to explain why
the characterization of the centers of pointed categories given in
Theorems~\ref{main1}-\ref{more precise} is natural from the point of view
of the classical-quantum
analogy. 
In fact, there is a useful notion of {\em quantum Manin pair\,}
\cite{MNO}. This is a pair $(\C ,A)$, where
$\C$ is a non-degenerate braided fusion category  and $A$ is a commutative
algebra  in $\C$ satisfying certain conditions (namely, $A$ is connected,
etale, and $\FPdim (A)^2 = \FPdim (\C)$).
It is proved in \cite{MNO} that for fixed $\C$, the groupoid formed by
such algebras $A$ is canonically equivalent
to the groupoid of realizations of $\C$ as a center of a fusion category.
This justifies the name ``quantum Manin pair".

\begin{remark}  \label{Lie-rings}
To get a closer analogy with braided fusion categories, instead of pre-metric Lie algebras
one can consider {\em Casimir Lie rings}. By this we mean a finite Lie ring $\fg$
(for safety, of odd order) whose Pontryagin dual  $\fg^*:=\Hom (\fg ,k^{\times})$ is equipped
with a $\fg$-invariant symmetric bi-additive pairing $\fg^*\times\fg^*\to k^{\times}$.
\end{remark}

\subsection{Organization}

In \S\ref{Prelim} we review the background material from the theory of fusion
categories. For a braided fusion category $\C$ without a fixed spherical structure the notion
of $S$-matrix is not defined. Instead, we introduce a similar notion of $\tilde{S}$-matrix.
We define $\C$ to be non-degenerate if its $\tilde{S}$-matrix is. In Proposition~\ref{Muger1prime}
we formulate two other conditions equivalent to the non-degeneracy of~$\C$.
One of them (the triviality of M\"uger's center $\C'$) is well known
in the case of pre-modular categories (see \cite{Br}).

M\"uger's notion of centralizer plays a central role in this work (this is natural in view
of Principle~\ref{intro principles}(ii)).
In \S\ref{Sect3} we extend M\"uger's results on centralizers from \cite{Mu2, Mu4}
to braided fusion categories which are not necessarily non-degenerate or pre-modular.
We also introduce and study the notion of {\em projective centralizer.}

\S\ref{isotsub} deals with  two mutually inverse constructions:
{\em equivariantization} and {\em de-equivariantization}. We follow 
the ideas of Brugui\`{e}res \cite{Br} and M\"uger \cite{Mu1} who showed
that a datum of a fusion category with an action of a finite group $G$
is equivalent to a datum  of a fusion category $\C$ over the Tannakian 
category $\Rep(G)$ such that the composition $\Rep(G)\to \Z(\C)\to \C$
is fully faithful. The passage from an action of $G$ to a category over $\Rep(G)$ 
is by taking the category of $G$-equivariant objects;
the passage in the opposite direction (i.e., the de-equivariantization) 
consists of taking the fiber category, see \S\ref{alternat} for an explanation
of our terminology.  This correspondence  also specializes to group actions  
on braided and pre-modular categories, see Theorem~\ref{princprinc}.
In the case of pre-modular categories the de-equivariantization procedure
coincides with the {\em modularization} introduced in \cite{Br, Mu1}. 
The above correspondence also  allows to study the structure of braided fusion 
categories  from two  complementary  points of view: one based on analysis of 
Tannakian  subcategories and another based on group actions on braided fusion categories, 
see  \S\ref{equi}.  In \S\ref{orbi} we present a description (due to Kirillov.~Jr.\ \cite{Ki} 
and M\"uger \cite{Mu3}) of the structure of braided fusion categories containing
a subcategory $\Rep(G)$ as equivariantizations of {\em braided $G$-crossed categories}.
The latter notion was introduced by Turaev \cite{Tuor}. 
At the end of \S\ref{isotsub} we establish the characterization of the centers 
of pointed fusion categories explained in \S\ref{Main: centers} above.

\S\ref{coresec} is the main new contribution of this article. 
Here we define the notion of the {\em core} of a braided fusion category $\C$ (see
\S\ref{Main: core} above) and study its properties.
We also introduce the class of {\em weakly anisotropic} braided fusion categories
(i.e., those without non-trivial Tannakian subcategories stable under all braided autoequivalences)
and explain why it is reasonable to study the structure of such categories, see
Theorems~\ref{5wapt0}  and and \ref{5ordmaintheor}.

In \S\ref{sectiongauss} we discuss the Gauss sums of pre-modular categories,
which are generalizations of the classical Gauss sums of pre-metric groups.  
We prove the invariance property of 
the central charge of a pre-modular category (see \S\ref{Main: cecnarge} above).
If a braided fusion category $\C$ is not equipped with a spherical structure the notion
of Gauss sum is not defined; however, if $\C$ is non-degenerate and its Frobenius-Perron
dimension is integral we introduce in \S\ref{elementxc} a modification of this notion,
which we call {\em Gauss-Frobenius-Perron sum.}

Appendix~\ref{app on metric groups} contains basic results on quadratic forms 
on finite abelian groups (most of them are well-known).
Appendix~\ref{Is} is a summary of known facts about Ising categories (i.e., non-pointed 
fusion categories of Frobenius-Perron dimension $4$) and braided structures on them.

In Appendix~\ref{appensix Alt-Br} we recall a characterization of centralizers
in braided fusion categories  due to Altsch\"uler and Brugui\`eres \cite{AB}.
This is a key technical result  used in our proofs in \S\ref{Sect3}.

In Appendix~\ref{likbez} we give  an alternative proof of the theorem due to
Joyal and Street \cite{JS}, which says that every pre-metric
group comes from a pointed braided fusion category.

In Appendix~\ref{grcatsect} we introduce the notion of extension of a group by a
gr-category (a.k.a. categorical group). We show that in the case of 
pointed categories equipped with a faithful grading Turaev's notion of
braided $G$-crossed category amounts to the notion of
a {\em central\,} extension of a group by a braided
gr-category, which was introduced in \cite{CC}.

In Appendix~\ref{fusionring} we prove that a full subcategory of a fusion category 
which is closed under tensor product is automatically rigid.

\subsection{Acknowledgments} The research of V.~Drinfeld was supported by NSF grants
DMS-0401164 and DMS-0701106.
The research of D.~Nikshych was supported by 
the NSA grant H98230-07-1-0081 and the NSF grants DMS-0200202 and DMS-0800545.   
The research of V.~Ostrik was supported by NSF grant DMS-0602263. He is also
grateful to the Institute for Advanced Study where part of his work was done.
S.~Gelaki is grateful to the departments of mathematics at the University of New Hampshire 
and MIT for their warm hospitality
during his Sabbatical. The authors thank Alexander Beilinson, Pavel Etingof,
Alexander Kirillov, Jr., and Michael M\"uger for useful discussions and advice.

\section{Preliminaries}  \label{Prelim}

In this section we recall some definitions and results on fusion categories and 
braided fusion categories following mainly \cite{BK, De, ENO, GN, JS}.
Throughout the paper we work over an algebraically closed field
$k$ of characteristic $0$.

\subsection{Generalities on fusion categories}  \label{geneneralities}
The definitions of fusion category and pointed fusion category 
were given in the introduction.

By a {\em fusion subcategory} of a fusion category $\C$ we
understand a full tensor subcategory $\C'\subset\C$ such that if $X\in\C$
is isomorphic to a direct summand of an object of $\C'$ then $X\in\C'$.
As an additive category, such $\C'$ is generated by some of the simple
objects of $\C$. It is known (see Corollary \ref{fusion sub} in Appendix~\ref{fusionring}) 
that a fusion subcategory of a fusion category is rigid; therefore it is itself a fusion category.

Given a fusion category $\C$ and  a collection of objects $S$ in $\C$,
there is a smallest fusion subcategory containing $S$. It is called the
fusion subcategory {\it generated\,} by $S$. 

\begin{definition}  \label{image}
Let $\C$ and $\D$ be fusion categories and $F: \C\to \D$ a tensor
functor.  Its {\em image\,} $F(\C)$ is defined to be the fusion subcategory
of $\D$ generated by $F(X)$, $X\in\C$. The functor $F$ is called {\em surjective} if $F(\C) =\D$.
\end{definition}

Note that as an additive category, $F(\C)$ is generated
by those simple objects $Y$ in $\D$ which are contained in $F(X)$ for some simple 
$X$ in $\C$.

For a fusion category $\C$ let $\O(\C)$ denote the set of isomorphism classes
of simple objects. $\C$ is said to be {\it trivial\,} if $\O(\C)$ has only one element
(the class of the unit object); in this case $\C$ is equivalent to $\Vect$, i.e., 
the fusion category of finite dimensional vector spaces over $k$.

\subsection{The adjoint category of a fusion category $\C$}  \label{adcat}
By definition, this is the fusion subcategory $\C_{ad}\subset\C$ 
generated by $X\ot X^*$, $X\in \O(\C)$.

\begin{example}  \label{ex-ad}
Let $G$ be a finite group and let $\C=\Rep(G)$ be the category of finite-dimensional
representations of $G$.
Then $\C_{ad} = \Rep(G/Z(G))$, where $Z(G)$ is the center of $G$.
\end{example}

\subsection{The universal grading} \label{gradsubsec}
A {\it grading\,} of a fusion category
$\C$ by a group $G$ is a map $\deg :\O(\C)\to G$ with the following property: for any
simple objects $X,Y,Z\in\C$ such that $X\otimes Y$ contains $Z$ one has
$\deg Z=\deg X\cdot\deg Y$. The name ``grading" is also used for the corresponding
decomposition $\C = \oplus_{g\in G}\, \C_g$, where
$\C_g\subset\C$ is the full additive
subcategory generated by simple objects of degree $g\in G$. The subcategory
$\C_1$ corresponding to $g=1$ is a fusion subcategory; it is called the
 {\it trivial component\,} of the grading. A grading is said to be {\em trivial\,} if $\C_1=\C$.
 It is said to be {\em faithful\,} if the map $\deg :\O(\C)\to G$ is surjective.
The set of all gradings of $\C$ by $G$ depends functorially
on $G$, so there is a notion of {\it universal grading.} As explained in \S3.2 of \cite{GN},
a universal grading exists. The corresponding group $G$ will be denoted by $U_{\C}$.
The following statements are also proved in \S3.2 of \cite{GN} (see
\cite[Lemma 3.4, Theorem 3.5, and Proposition 3.9]{GN}).

\begin{proposition}\label{un-grad} 
\begin{enumerate}
\item[(i)]The universal grading $\deg :\O(\C)\to U_{\C}$ is faithful.
\item[(ii)]The trivial component of the universal grading equals $\C_{ad}$.
Every fusion subcategory  $\D\subset\C$ containing $\C_{ad}$
has the form $\D =\bigoplus\limits_{g\in G}\, \C_g$ for some subgroup $G\subset U_\C$.
\item[(iii)] The group of tensor automorphisms of the identity functor $\id_{\C}$ is
canonically isomorphic to $\Hom (U_{\C},k^{\times}$). The homomorphism
$f_{\psi}\in\Hom (U_{\C},k^{\times})$ corresponding to  $\psi\in\Aut_\otimes(\id_\C)$
is defined as follows: assigning to a simple $X\in\C$ the element  $\psi_X\in\Aut (X)=k^{\times}$
one gets a grading of $\C$ by $k^{\times}$, and $f_{\psi}:U_{\C}\to k^{\times}$ corresponds to 
this grading by the universality of $U_{\C}$.
\end{enumerate}
\end{proposition}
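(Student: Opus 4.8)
The statement to prove is Proposition~\ref{un-grad}, which has three parts, all attributed to \cite{GN}. The plan is to reconstruct the proofs from the definition of the universal grading.

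\medskip

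\textbf{Setup.} First I would make the functoriality of gradings explicit: a grading of $\C$ by $G$ is the same as a group homomorphism $\phi: U_\C \to G$, via the universal grading $\deg: \O(\C) \to U_\C$ composed with $\phi$. This is the defining property of $U_\C$, and I would take it as given from \cite{GN} that such a universal object exists. The key structural observation I would establish at the outset is that for any grading $\C = \bigoplus_{g} \C_g$, the trivial component $\C_1$ is a fusion subcategory containing $\C_{ad}$: indeed if $X$ is simple then $X \otimes X^*$ contains $\be$, so $\deg X \cdot \deg X^* = 1$, meaning every simple summand $Z$ of $X \otimes X^*$ has $\deg Z = \deg X \cdot (\deg X)^{-1} = 1$; hence $\C_{ad} \subseteq \C_1$. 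Conversely I would want to show $\C_{ad}$ itself is the trivial component of \emph{some} grading.

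\medskip

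\textbf{Parts (i) and (ii).} To prove faithfulness of the universal grading and that its trivial component is exactly $\C_{ad}$, the natural approach is to construct a grading whose trivial component is precisely $\C_{ad}$ and which is faithful onto its image group. Concretely: let $U$ be the group generated by symbols $[X]$ for $X \in \O(\C)$ subject to the relations $[X][Y] = [Z]$ whenever $Z$ is a summand of $X \otimes Y$; this is tautologically the universal grading group, and $\deg X = [X]$ is faithful by construction since $U$ is \emph{generated} by the $[X]$. For the trivial component: $[X] = 1$ in $U$ iff $X$ lies in the fusion subcategory generated by the relations forcing degree $1$, which I would identify with $\C_{ad}$ by showing both inclusions — one inclusion is the $X \otimes X^*$ computation above, and for the other I would argue that $\O(\C_{ad})$ is exactly the set of simples appearing in products $X_1 \otimes X_1^* \otimes \cdots \otimes X_n \otimes X_n^*$ and that all such have degree $1$ under \emph{any} grading, so the subgroup of $U$ they generate is trivial, forcing $\C_1 \subseteq \C_{ad}$ after checking the universal group construction does not collapse more than necessary. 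The second sentence of (ii) — that subcategories containing $\C_{ad}$ correspond to subgroups of $U_\C$ — then follows: given $\D \supseteq \C_{ad}$, the set $H = \{\deg X : X \in \O(\D)\}$ is closed under multiplication (since $\D$ is a fusion subcategory) and under inversion (since $X^* \in \D$), hence a subgroup; and $\D = \bigoplus_{g \in H} \C_g$ because any simple of degree $g \in H$ equals some summand of $X \otimes (\text{something in } \C_{ad}) \subseteq \D$ for $X \in \O(\D)$ with $\deg X = g$, using $\C_{ad} \subseteq \D$.

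\medskip

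\textbf{Part (iii).} For the isomorphism $\Aut_\otimes(\id_\C) \cong \Hom(U_\C, k^\times)$, I would exhibit the map in both directions. Given $\psi \in \Aut_\otimes(\id_\C)$, each component $\psi_X$ for simple $X$ is a scalar in $k^\times = \Aut(X)$; the tensor compatibility $\psi_{X \otimes Y} = \psi_X \otimes \psi_Y$ together with $\psi$ being a natural automorphism shows that $X \mapsto \psi_X$ is constant on isomorphism classes and multiplicative on tensor products of simples, hence defines a grading of $\C$ by $k^\times$, which by universality factors through a unique $f_\psi: U_\C \to k^\times$. Conversely, given $f: U_\C \to k^\times$, define $\psi_X = f(\deg X) \cdot \id_X$ on simples and extend to all objects by semisimplicity; naturality is automatic on simples (any morphism between non-isomorphic simples is zero) and tensor compatibility follows from $\deg(X \otimes Y) = \deg X \cdot \deg Y$ on simple summands. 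These two assignments are mutually inverse and manifestly group homomorphisms.

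\medskip

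\textbf{Main obstacle.} The delicate point is part (i)/(ii): identifying the trivial component of the universal grading with $\C_{ad}$ rather than something smaller. The inclusion $\C_{ad} \subseteq \C_1$ is easy, but the reverse requires knowing that the universal grading group $U_\C$ is ``as large as possible'' — i.e., that no relations beyond those forced by fusion hold. This is precisely where one must use the concrete generators-and-relations construction of $U_\C$ from \cite{GN} and verify it has the claimed universal property; alternatively, one builds the grading of $\C$ by $\O(\C_{ad})\backslash$-cosets directly and checks it is universal. I would lean on \cite[\S3.2]{GN} for this and focus the exposition on the consequences, since the excerpt explicitly cites those results as already proved.
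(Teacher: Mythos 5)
The paper itself gives no proof of this proposition: it is stated with an explicit citation to \cite[Lemma 3.4, Theorem 3.5, Proposition 3.9]{GN}, so there is no in-paper argument to compare against, and your decision to lean on \cite{GN} for the hard step is in effect the same treatment the authors chose. Your reconstruction of (iii), of faithfulness in (i), and of the second sentence of (ii) (the subgroup correspondence, using $Y\subseteq W\ot X$ with $W\in\C_{ad}\subseteq\D$) is correct, modulo the small point that the mutual inverseness in (iii) uses surjectivity of $\deg$ from (i).

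The one place where your sketch, as written, does not work is the inclusion $\C_1\subseteq\C_{ad}$ in (ii). The argument ``all simples of $\C_{ad}$ have degree $1$ under any grading, so the subgroup of $U$ they generate is trivial'' only re-proves the easy inclusion $\C_{ad}\subseteq\C_1$; it says nothing about a simple $X\notin\C_{ad}$ with $[X]=1$ in your generators-and-relations group, and ``checking the construction does not collapse more than necessary'' is precisely the whole content of the theorem, not a checkable afterthought. The standard way to close this (and what \cite{GN} does) is to exhibit a concrete grading whose trivial component is exactly $\C_{ad}$, namely the decomposition of $\C$ into indecomposable $\C_{ad}$-module components: the key lemma is that if $Z,Z'$ are simple summands of $X\ot Y$ then $Y\subseteq X^*\ot Z$, hence $Z'\subseteq X\ot Y\subseteq (X\ot X^*)\ot Z$ with $X\ot X^*\in\C_{ad}$, so all summands of $X\ot Y$ lie in the same component; this makes the set of components a group and the component map a faithful grading with trivial component $\C_{ad}$, and universality of $U_\C$ then forces $\C_1\subseteq\C_{ad}$. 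You correctly identify this as the main obstacle and defer it to \cite{GN}, which is acceptable given that the paper does the same, but your proposed first route to it is circular and should be replaced by the component construction just described.
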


\begin{remark} \label{reformul}
The second part of (ii) means that
if $\D\subset\C$ is a  fusion subcategory containing $\C_{ad}$
then for each $g\in U_{\C}$ either $\D \supset \C_g$ or $\D \cap \C_g=0$.
\end{remark}

Gradings $d_1: \O(\C)\to G_1$ and $d_2: \O(\C)\to G_2$ are said to be {\em equivalent\,}
if $d_2=f\circ d_1$ for some isomorphism $f:G_1\iso G_2$.

\begin{corollary} \label{trivcomp}
There is a one-to-one correspondence between fusion subcategories $\D \subset \C$ 
containing $\C_{ad}$ and subgroups $G\subset U_\C$, namely
$$\D \mapsto G_\D:= \{g\in U_\C\mid \D \cap \C_g \neq 0\},\quad 
G\mapsto \D_G:=\bigoplus\limits_{g\in G}\, \C_g.$$
\end{corollary}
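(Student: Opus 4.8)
The plan is to deduce Corollary~\ref{trivcomp} directly from Proposition~\ref{un-grad}(ii) by checking that the two assignments $\D\mapsto G_\D$ and $G\mapsto\D_G$ are mutually inverse order-preserving bijections. First I would verify that $\D_G=\bigoplus_{g\in G}\C_g$ is indeed a fusion subcategory containing $\C_{ad}$: it contains $\C_1=\C_{ad}$ by Proposition~\ref{un-grad}(ii), it is closed under direct summands since it is additively generated by a set of simple objects, and it is closed under tensor products because the $\C_g$ define a grading, so $\C_g\ot\C_h\subseteq\C_{gh}\subseteq\D_G$ whenever $g,h\in G$; rigidity is automatic (Corollary~\ref{fusion sub}). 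Conversely, for $\D\supseteq\C_{ad}$ the set $G_\D=\{g\in U_\C\mid\D\cap\C_g\neq0\}$ is a subgroup of $U_\C$: it contains $1$ since $\C_{ad}\subseteq\D$, and if $g,h\in G_\D$ then picking simple $X\in\D\cap\C_g$ and $Y\in\D\cap\C_h$ the object $X\ot Y\in\D$ is nonzero and lies in $\C_{gh}$, so $gh\in G_\D$; closure under inverses follows the same way using $X^*\in\C_{g^{-1}}$.

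Next I would check that the two maps are inverse to each other. For a subgroup $G\subseteq U_\C$ we have $G_{\D_G}=\{g\mid\D_G\cap\C_g\neq0\}$; since $\deg$ is faithful (Proposition~\ref{un-grad}(i)) each $\C_g$ is nonzero, so $\D_G\cap\C_g=\C_g\neq0$ exactly when $g\in G$, giving $G_{\D_G}=G$. For a fusion subcategory $\D\supseteq\C_{ad}$, Proposition~\ref{un-grad}(ii) says $\D=\bigoplus_{g\in H}\C_g$ for some subgroup $H\subseteq U_\C$; then by the faithfulness of the grading $H=\{g\mid\D\cap\C_g\neq0\}=G_\D$, hence $\D_{G_\D}=\bigoplus_{g\in G_\D}\C_g=\D$. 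This establishes the bijection. (The order-preserving property, $G\subseteq G'\iff\D_G\subseteq\D_{G'}$, is immediate from the explicit formulas and may be worth stating as a remark, though it is not part of the claim.)

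There is no real obstacle here: the corollary is essentially a restatement of Proposition~\ref{un-grad}(ii), and the only content beyond it is the observation — which follows from the faithfulness in part~(i) — that a fusion subcategory containing $\C_{ad}$ is determined by \emph{which} homogeneous components it contains, equivalently by the subset $G_\D\subseteq U_\C$, and that this subset is forced to be a subgroup. The mildest point requiring care is making sure that Remark~\ref{reformul} is being used correctly: it guarantees the dichotomy $\D\supseteq\C_g$ or $\D\cap\C_g=0$ for each $g$, which is precisely what lets us write $\D=\bigoplus_{g\in G_\D}\C_g$ rather than merely $\D\subseteq\bigoplus_{g\in G_\D}\C_g$. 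With that in hand the proof is a few lines.
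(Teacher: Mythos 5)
Your argument is correct and is exactly the paper's route: the paper's proof is the one-line ``Use Proposition~\ref{un-grad}(i-ii)'', and your write-up simply spells out the details of why those two parts (faithfulness of the universal grading and the description of subcategories containing $\C_{ad}$) make the two assignments well-defined and mutually inverse. No gaps; nothing further is needed.
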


\begin{proof}
Use Proposition \ref{un-grad}(i-ii).
\end{proof}

Let $\D\subset\C$ be a fusion subcategory. Among all gradings of $\C$ trivial on $\D$ there is
a universal one, namely, the composition $\O(\C) \to U_\C \to   U_\C/G$, where $G\subset U_{\C}$
is the normal subgroup generated by the image of $U_{\D}$.

\begin{corollary}\label{trivial component}
There is a one-to-one correspondence between equivalence classes of
faithful gradings of $\C$ and fusion subcategories $\D \subset \C$ 
containing $\C_{ad}$ such that the subgroup $G_{\D}\subset U_{\C}$ is normal.
Namely, one associates to $\D$ the universal grading of $\C$ trivial on $\D$; one associates to a grading its trivial component.
\end{corollary}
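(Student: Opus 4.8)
The plan is to exhibit the claimed bijection explicitly and check it is well-defined and mutually inverse, relying entirely on Corollary~\ref{trivcomp}. First I would recall the setup: by Corollary~\ref{trivcomp}, fusion subcategories $\D$ containing $\C_{ad}$ correspond bijectively to subgroups $G_\D \subset U_\C$. A faithful grading of $\C$ is, by definition, a surjection $\deg' : \O(\C) \to H$; by the universal property of $U_\C$ it factors uniquely as $\O(\C) \xrightarrow{\deg} U_\C \xrightarrow{p} H$ with $p$ a group homomorphism, and faithfulness of $\deg'$ together with faithfulness of $\deg$ (Proposition~\ref{un-grad}(i)) forces $p$ to be surjective. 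Two such gradings are equivalent precisely when the corresponding quotient maps $p_1, p_2$ have the same kernel. Hence equivalence classes of faithful gradings of $\C$ are in bijection with normal subgroups $N \subset U_\C$ (a faithful grading $\leftrightarrow$ the quotient $U_\C \to U_\C/N$).

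Next I would describe the two maps of the asserted correspondence in these terms. Given a fusion subcategory $\D \supseteq \C_{ad}$ with $G_\D$ normal, the universal grading of $\C$ trivial on $\D$ is (as noted in the paragraph preceding the corollary) the composition $\O(\C) \to U_\C \to U_\C/G$, where $G$ is the normal subgroup generated by the image of $U_\D$; I would check that $G = G_\D$. Indeed, by Corollary~\ref{trivcomp} we have $\D = \D_{G_\D} = \bigoplus_{g \in G_\D} \C_g$, so the image of $U_\D$ in $U_\C$ is exactly $G_\D$ (this is essentially the content of Proposition~\ref{un-grad}(ii) applied to $\D$, identifying the restriction of the universal grading of $\C$ to $\D$ with a grading of $\D$ whose trivial component is again $\C_{ad} = \D_{ad}$), and since $G_\D$ is assumed normal it equals the normal subgroup it generates. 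Thus $\D$ maps to the faithful grading $U_\C \to U_\C/G_\D$, i.e.\ to the normal subgroup $G_\D$ under the identification of the previous paragraph. Conversely, given a faithful grading with trivial component $\C_1$, we have $\C_1 \supseteq \C_{ad}$ by Proposition~\ref{un-grad}(ii), and $G_{\C_1}$ is the kernel of the corresponding quotient map $p : U_\C \to H$, hence normal; and the grading recovered from $\C_1$ via the first map is $U_\C \to U_\C/G_{\C_1} \cong H$, which is equivalent to the original.

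So the two assignments are: $\D \mapsto (\text{normal subgroup } G_\D)$ and $(\text{grading}) \mapsto \C_1 = \D_{\ker p}$, and under Corollary~\ref{trivcomp}'s bijection these restrict to mutually inverse bijections between $\{\D : \C_{ad} \subseteq \D,\ G_\D \trianglelefteq U_\C\}$ and $\{$normal subgroups of $U_\C\} \leftrightarrow \{$equivalence classes of faithful gradings$\}$. I would write this up as: apply Corollary~\ref{trivcomp}, observe that under its bijection the normal subgroups of $U_\C$ correspond to exactly the subcategories in the stated class, and that normal subgroups of $U_\C$ are the same data as equivalence classes of faithful gradings by the universal property of $U_\C$.

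The only genuinely non-routine point — the ``main obstacle'' — is verifying that the image of $U_\D$ in $U_\C$ really equals $G_\D$, i.e.\ that for a fusion subcategory $\D$ with $\C_{ad}\subseteq\D$, the universal grading group of $\D$ surjects onto $G_\D = \{g : \D\cap\C_g \ne 0\}$ with the expected compatibility. This is where one must be slightly careful: the restriction to $\D$ of the universal grading of $\C$ is a grading of $\D$ by $G_\D$, it is faithful by construction of $G_\D$, and its trivial component is $\D \cap \C_1 = \C_{ad} = \D_{ad}$; comparing with the universal grading of $\D$ (Proposition~\ref{un-grad}(ii) for $\D$) gives a canonical surjection $U_\D \twoheadrightarrow G_\D$, so the normal closure of the image of $U_\D$ in $U_\C$ is the normal closure of $G_\D$, which is $G_\D$ itself under the normality hypothesis. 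Everything else is bookkeeping with the universal property of $U_\C$ and the definition of equivalence of gradings.
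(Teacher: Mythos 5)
Your proof is correct and follows essentially the same route as the paper: identify equivalence classes of faithful gradings of $\C$ with normal subgroups of $U_\C$ via the universal property, then translate through Corollary~\ref{trivcomp}; the paper's proof is exactly this, leaving implicit the compatibility check (that the image of $U_\D$ in $U_\C$ equals $G_\D$, so the universal grading trivial on $\D$ is the quotient by $G_\D$), which you spell out. One small correction: the side claim $\C_{ad}=\D_{ad}$ is false in general (take $\D=\C_{ad}$ with $(\C_{ad})_{ad}\subsetneq\C_{ad}$, e.g.\ $\C=\Rep(G)$ for $G$ the quaternion group), but it is not load-bearing --- the surjection $U_\D\twoheadrightarrow G_\D$ already follows from the faithfulness of the universal grading of $\D$ (so $U_\D$ is generated by the image of $\O(\D)$) together with the fact that $G_\D$ is a subgroup, so you should simply delete that identification.
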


\begin{proof}
For every normal subgroup $G\subset U_\C$ the composition $\O(\C) \to U_\C \to   U_\C/G$
is a faithful grading. By the universality of the grading $\O(\C)\to U_\C$, one thus gets a one-to-one correspondence between normal subgroups of $U_\C$  and 
equivalence classes of faithful gradings of $\C$. Now use Corollary~\ref{trivcomp}.
\end{proof}

\begin{remark} \label{automat}
If $\C$ is braided then $U_\C$ is abelian. So in this case the normality condition in Corollary
\ref{trivial component} holds automatically.
\end{remark}

\subsection{Dimensions}
Let $\C$ be a fusion category. The tensor product in $\C$ induces a ring structure
on the Grothendieck ring $K_0(\C)$. 

There are two different but related notions of dimension in $\C$. One of them takes
values in $\mathbb{R}$, the other one takes values in $k$.

\subsubsection{Frobenius-Perron dimension} \label{fpd}
This is a homomorphism $\FPdim :K_0(\C )\to\mathbb{R}$
such that $\FPdim (X)\ge 0$ for all 
$X\in\C$. According to \S8 of \cite{ENO}, there exists one and only one such homomorphism
and in fact, $\FPdim (X)\ge 1$ for all nonzero $X\in\C$.
Moreover, the numbers $\FPdim (X)$, $X\in\C$, are algebraic integers in some
cyclotomic field (see \cite[Corollaries 8.53-8.54]{ENO})).

The  {\it Frobenius-Perron dimension of a fusion category\,} $\C$ is defined by
$\FPdim(\C ):=\sum_{X\in \O(\C)}\FPdim(X)^2$. This is a positive cyclotomic
algebraic integer in $\mathbb{R}$. Moreover, according to \cite{O3}, the ideal
in the ring of algebraic integers generated by $\FPdim(\C )$ is stable under
$Gal (\bar\BQ /\BQ )$.

\begin{example}  \label{Hopf}
Let $H$ be a finite dimensional semisimple Hopf algebra over $k$ and $\C:=\Rep(H)$, 
i.e., $\C$ is the category
of finite dimensional representations of $H$. We have the usual dimension homomorphism
$\dim_k :K_0(\C )\to\mathbb{Z}$. Since $\dim_kX>0$ for all nonzero $X\in\C$ we see that
$\FPdim(X) = \dim_k(X)$ for all $X\in\C$. Therefore $\FPdim(\Rep(H)) = \dim_k(H)$.
\end{example}

\subsubsection{Traces, squared norms, and global dimension} \label{squared}

Let $\C$ be a fusion category and
let $\psi$ be any  natural (not necessarily tensor) isomorphism between
the identity and the functor of taking the second dual. Such an isomorphism
exists in any fusion category.
It is completely determined by a collection of isomorphisms 
$\psi_X: X \xrightarrow{\sim} X^{**},
\, X\in \O(\C)$.   For any object $X$ of $\C$ let 
\[
\ev_X: X^*\ot X \to \be \quad \mbox{and}\quad \coev_X: \be \to X\ot X^*
\]
denote the evaluation and coevaluation maps of $X$.
For any morphism $f: X \to X$ we can define two traces, depending on $\psi$:
\begin{eqnarray}
\label{2 traces +}
\Tr_+(f) &:& \be \xrightarrow{\text{coev}_X} X \ot X^* \xrightarrow{\psi_X f\ot \id_{X^*}}
X^{**}\ot X^* \xrightarrow{\text{ev}_{X^*}} \be, \\
\label{2 traces -}
\Tr_-(f) &:& \be \xrightarrow{\text{coev}_{X^*}} X^*\ot X^{**} \xrightarrow{ \id_{X^*} \ot f\psi_X^{-1}}
X^*\ot X  \xrightarrow{\text{ev}_X} \be.
\end{eqnarray}

Let us denote $d_+(X) = \Tr_+(\id_X)$ and $d_-(X) = \Tr_-(\id_X)$.
Note that $d_\pm(X)\neq 0$ for all simple $X$, see e.g., \cite{ENO} or \cite{BK}.
The dimensions $d_\pm(X)$ are of course non-canonical since they depend 
on the choice of $\psi$. However,  for any $X\in \O(\C)$ we have  
a canonical quantity 
\begin{equation}
\label{dim1}
|X|^2:= d_+(X) d_-(X), 
\end{equation}
known as M\"uger's {\em squared norm} of $X$ (the definition is
due to M.~M\"uger \cite{Mu-I}, the name and notation were introduced 
in \S2 of \cite{ENO}).
The squared norm  $|X|^2$ is independent of the choice of $\psi$.

Note that for any morphism $f: X\ot Y \to X\ot Y $
we can define morphisms
\begin{equation*}
\id_X \ot \Tr_+(f) :
X \xrightarrow{\text{coev}_Y} X \ot Y \ot Y^* \xrightarrow{(\id_X \ot \psi_Y)(f\ot \id_{Y^*}) }
X \ot Y^{**} \ot Y^*  \xrightarrow{\text{ev}_{Y^*}} X, 
\end{equation*}
\begin{equation*}
\Tr_- \ot \id_Y (f) :  Y\xrightarrow{\text{coev}_{X^*} } X^*\ot X^{**} \ot Y
\xrightarrow{(id_{X^*}\ot f) (\psi_X^{-1} \ot \id_Y)} X^*\ot X \ot Y \xrightarrow{\text{ev}_X } Y.
\end{equation*} 
Note that $\Tr_+(\Tr_- \ot \id_Y )(f)= \Tr_- (\id_X \ot \Tr_+)(f)$. We will denote
the latter by $\Tr_- \ot \Tr_+(f)$.

Let $\kcyc\subset k$  be the maximal cyclotomic extension of
$\mathbb{Q}$ in $k$, i.e., the subfield generated by all roots of unity.
The automorphism of $\kcyc$ that takes each root of unity to its inverse
is called {\em complex conjugation\,} and denoted by $z\mapsto\bar z$.
Define $\kRcyc :=\{ z\in \kcyc | \bar z=z\}$; this is the maximal totally real
subfield of $\kcyc$.

It follows from  \cite[Remark 3.1]{ENO} and
\cite[Corollary 8.53]{ENO} that for every $X\in\O(\C)$  its squared norm $|X|^2$
is a totally positive algebraic integer in $\kRcyc$.
According to \cite{O3}, if $\C$ is braided (but not in general) the ideal
in the ring of algebraic integers generated by the  squared norm of any simple object of $X$
is stable under $Gal (\bar\BQ /\BQ )$.

The {\em categorical dimension\,} (also known as  {\em global dimension\,}) of a fusion category $\C$ 
is defined to be the sum of squared norms of its simple objects:
\begin{equation}
\label{categorical dimC}
\dim(\C) = \sum_{X\in O(\C)}\, d_+(X) d_-(X) = \sum_{X\in O(\C)}\, |X|^2,
\end{equation}
where $d_\pm$ are the dimensions introduced above. The categorical dimension
is a totally positive  algebraic integer in $\kRcyc$ which
does not depend on a choice of $\psi$. Moreover, according to \cite{O3}, the ideal
in the ring of algebraic integers generated by $\dim(\C )$ is stable under
$Gal (\bar\BQ /\BQ )$.

\subsubsection{Pivotal structures and categorical dimensions}  \label{pivotal}
By definition, a {\em pivotal structure} on a fusion category $\C$  is a {\em tensor} isomorphism 
$\psi$ between the identity functor $\id_\C: \C\to\C$ to the tensor functor $X\mapsto X^{**}$, $X\in\C$.
If pivotal structures on $\C$ exist (conjecturally, this is always the case) then the set
of all such structures is a torsor over the group of tensor automorphisms of $\id_{\C}$,
which equals $\Hom (U_{\C},k^{\times})$ by Proposition \ref{un-grad}(iii).

Let us fix a pivotal structure $\psi$ on $\C$ and let $d_\pm$ denote 
the dimensions defined using $\psi$  as in \S\ref{squared} above.
It was explained in \cite[\S2]{ENO} that
\[
d_+(X^*)= \overline{d_+(X)} = d_-(X) \qquad
\mbox{ for any object } X \mbox{ in } \C
\]
and that $d_+$  defines a ring homomorphism $d_+ :K_0(\C )\to\kcyc$.
When no confusion is possible we  write $d$ instead of $d_+$.
The element $d(X)\in\kcyc$ is called the {\it categorical} (or {\it quantum})  
{\it dimension\,} of $X\in K_0(\C )$. 

The above homomorphism $d :K_0(\C )\to\kcyc$ depends 
on the pivotal structure as follows: if the pivotal structure is composed with the
tensor automorphism of $\id_C$ corresponding to $f:U_C\to k^{\times}$ then 
$d(X)$ is multiplied by $f(\deg X)$ for every $X\in\O (\C )$. 
Here $\deg : O (\C )\to U_C$ is the universal grading. In particular, a pivotal
structure is uniquely determined by the corresponding homomorphism
$d :K_0(\C )\to\kcyc$.

A pivotal structure is said to be {\it spherical\,} if $d_+(X) = d_-(X)$ for all $X$,
or, equivalently, if  the corresponding homomorphism
$d :K_0(\C )\to\kcyc$ satisfies the relation $d(X^*)=d(X)$. This
is equivalent to the condition
\begin{equation} 
d_+(X)\in \kRcyc  \quad\mbox{for all } X.
\end{equation}
If spherical structures on $\C$
exist then they form a torsor over $\Hom (U_C,\mu_2)$, where 
$\mu_2:=\{ 1,-1\}\subset k^{\times}$.
Note that in the case of a spherical structure equality \eqref{dim1} can be rewritten as
\begin{equation}  \label{dim3}
d_+(X)^2=|X|^2.
\end{equation}
A {\em spherical fusion category\,}  is a fusion category with a spherical structure.

\begin{remark}\label{spherical trace}
If $\C$ is a pivotal category then for any $X\in\C$ and $f\in\End X$ one has the numbers
$\Tr_{\pm} (f)\in k$ defined by formulas \eqref{2 traces +}-\eqref{2 traces -}. If $\C$ is spherical then
$\Tr_+ (f)=\Tr_- (f)$ (if $X$ is simple and $f=\id_X$ this is clear from the equlaity 
$\Tr_{\pm} (f)=d_{\pm}(X)$, and the general case follows).
So if $\C$ is spherical one writes $\Tr (f)$ instead of $\Tr_{\pm} (f)$.
\end{remark}

\subsubsection{Spherical extension of a fusion category}  
\label{spherical extension}
According to \cite{ENO, ENO2}, in any fusion category $\C$ there is a canonical isomorphism 
$g$ between the tensor functors $\id_{\C}$ and  the fourth duality 
functor $X \mapsto X^{****}$, $X\in\C$. 
The spherical extension, or {\em sphericalization} of $\C$ (denoted by $\C^{sph}$)
is defined to be the fusion category whose objects are  pairs $(X, f_X)$, where $X\in \O(\D)$ and  
$f_X: X \to X^{**}$ is an isomorphism such that $f_X^{**}f_X = g_X$,
and a morphism from $(X, f_X)$ to $(X', f_{X'})$ is a morphism $h: X \to X'$ in $\C$
such that $ f_{X'} h = h^{**} f_X$.
Note that for any simple $X$ there are exactly two choices of $f_X$ which differ one from another
by a sign. The category $\C^{sph}$ is a spherical fusion category; 
its spherical structure is given by $f_X: (X, f_{X}) \to  (X^{**}, f_{X^{**}})$.
Note that $\dim(\C^{sph}) = 2 \dim(\C)$.

\begin{remark}  
\label{sphericrem}
The spherical extension 
is a particular case of the {\em equivariantization\,} construction described in \S\ref{equiv2tion}.
Namely, any fusion category $\C$ is equipped with a canonical action of $\mathbb{Z}/2\mathbb{Z}$
such that the non-trivial element of $\mathbb{Z}/2\mathbb{Z}$ takes each $X\in\C$
to $X^{**}$. The fusion category $\C^{sph}$ is nothing but the category of equivariant
objects of $\C$ with respect to this action.
\end{remark}

\subsection{Properties of tensor functors}

\begin{proposition}
\label{tens func}
Let $\C,\, \D$ be fusion categories and let $F: \D \to \C$ be a tensor functor.
\begin{enumerate}
\item[(i)] $F$ is fully faithful if and only if the map $\Hom_\D(X,\, \be)\to \Hom_\C(F(X),\, \be)$ 
is an isomorphism for all $X\in\D$.
\item[(ii)] If  $\FPdim(\C) =\FPdim(\D)$ and $F$ is fully faithful then $F$ is an equivalence.
\item[(iii)] If  $\FPdim(\C) =\FPdim(\D)$ and $F$ is surjective then $F$ is an equivalence.
\end{enumerate}
\end{proposition}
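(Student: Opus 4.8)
The plan is to prove (i) by a standard duality manipulation, to derive (ii) from (i) together with the positivity properties of Frobenius--Perron dimension recalled in \S\ref{fpd}, and to reduce (iii) to (ii) by passing to the right adjoint of $F$. For (i), one implication is trivial, since a fully faithful functor is by definition bijective on all $\Hom$ groups. For the converse I would use that a tensor functor commutes with duals and that in any rigid category there is an isomorphism $\Hom_\D(X,Y) \cong \Hom_\D(X \otimes {}^*Y, \be)$, natural in both variables and built purely out of evaluation and coevaluation maps; consequently the map $\Hom_\D(X,Y) \to \Hom_\C(F(X),F(Y))$ induced by $F$ gets identified, via these isomorphisms and the structural constraints of $F$, with the map $\Hom_\D(X \otimes {}^*Y, \be) \to \Hom_\C(F(X \otimes {}^*Y), \be)$ induced by $F$. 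Applying the hypothesis to the object $X \otimes {}^*Y \in \D$ shows the latter is an isomorphism, hence so is the former, for all $X$ and $Y$.

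For (ii), I would first observe that $F$ is automatically faithful: the induced ring homomorphism $F_*: K_0(\D) \to K_0(\C)$ composed with $\FPdim_\C$ is a ring homomorphism nonnegative on objects, so by uniqueness of the Frobenius--Perron dimension $\FPdim_\C(F(X)) = \FPdim_\D(X) \ge 1$ for $X \ne 0$; hence $F(X) \ne 0$, and faithfulness follows since $\D$ is semisimple with simple unit. If moreover $F$ is full, then $\End_\C(F(X)) = \End_\D(X) = k$ for each simple $X$, so $F$ sends simple objects to simple objects and is injective on isomorphism classes; thus the essential image of $F$ is a fusion subcategory $\D' \subseteq \C$ (namely the full additive subcategory generated by the objects $F(X)$, $X \in \O(\D)$, which is a tensor subcategory closed under direct summands), and $F: \D \to \D'$ is an equivalence. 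Then $\FPdim(\D') = \FPdim(\D) = \FPdim(\C) = \sum_{Z \in \O(\C)}\FPdim(Z)^2$, and since each summand is $\ge 1$ this forces $\O(\D') = \O(\C)$, i.e.\ $\D' = \C$, so $F$ is an equivalence.

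For (iii), since $\C$ and $\D$ are semisimple with finite-dimensional $\Hom$ spaces, $F$ admits a right adjoint $R: \C \to \D$, with $R(Z) = \bigoplus_{X \in \O(\D)} X \otimes_k \Hom_\C(F(X), Z)$. The crucial input is that, because $F$ is surjective, $R$ rescales Frobenius--Perron dimension by the constant $\FPdim(\D)/\FPdim(\C)$, i.e.\ $\FPdim_\D(R(Z)) = \frac{\FPdim(\D)}{\FPdim(\C)}\FPdim_\C(Z)$ for all $Z \in \C$. I would take this from the theory of adjoints of tensor functors in \cite{ENO}; alternatively it may be checked by comparing the images under $F$ of the regular objects $\sum_{X \in \O(\D)}\FPdim(X)\,X$ and $\sum_{Z \in \O(\C)}\FPdim(Z)\,Z$, surjectivity forcing the former to map to a positive multiple of the latter. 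Taking $Z = \be$ and using $\FPdim(\D) = \FPdim(\C)$ gives $\FPdim_\D(R(\be)) = 1$; but $\Hom_\D(\be, R(\be)) \cong \Hom_\C(\be,\be) = k$, so $\be$ occurs in $R(\be)$, and since every simple object has Frobenius--Perron dimension $\ge 1$ this forces $R(\be) = \be$. Hence $\dim\Hom_\C(F(X),\be) = \dim\Hom_\D(X, R(\be)) = \dim\Hom_\D(X,\be)$ for every $X \in \O(\D)$, and so for every $X \in \D$ by additivity; by (i) $F$ is fully faithful, and by (ii) it is an equivalence.

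The step I expect to be the real obstacle is the rescaling identity for $R$ used in (iii): parts (i) and (ii) are essentially formal once one invokes preservation of duals and the bound $\FPdim \ge 1$, whereas this identity genuinely requires $F$ to be surjective — it fails, for instance, for the inclusion of a proper fusion subcategory — and it is exactly what makes the dimension count go through.
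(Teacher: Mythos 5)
Your argument for (i) is exactly the paper's: the paper's entire proof of (i) is the observation that $\Hom_\D(X,Y)=\Hom_\D(X\ot Y^*,\be)$ and $\Hom_\C(F(X),F(Y))=\Hom_\C(F(X\ot Y^*),\be)$, which is your duality manipulation. For (ii) and (iii), however, the paper does not argue at all — it simply cites \cite[Propositions 2.19, 2.20]{EO} — whereas you supply self-contained proofs in the semisimple setting, and these are essentially sound. Your (ii) is a clean counting argument: preservation of $\FPdim$ (by uniqueness of the Frobenius--Perron homomorphism, exactly as the paper itself uses in the proof of Proposition~\ref{equidimprop}), fullness forcing simples to go to pairwise non-isomorphic simples, and then $\FPdim(\C)=\FPdim(\D)$ forcing the image subcategory to exhaust $\O(\C)$. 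Your (iii) rests on the rescaling identity $\FPdim_\D(R(Z))=\tfrac{\FPdim(\D)}{\FPdim(\C)}\FPdim_\C(Z)$ for the right adjoint of a \emph{surjective} tensor functor; this is indeed a result available in \cite{ENO}, and your regular-element sketch is essentially its proof (surjectivity is what makes the multiplication matrix of $\bigoplus_{Y\in\O(\D)}F(Y)$ strictly positive, so $F$ applied to the regular element of $K_0(\D)\ot\mathbb{R}$ must be a positive multiple of the regular element of $K_0(\C)\ot\mathbb{R}$). Two small points you should make explicit: first, $R(\be)=\be$ only gives an \emph{equality of dimensions} $\dim\Hom_\D(X,\be)=\dim\Hom_\C(F(X),\be)$, and to invoke (i) you need the natural map itself to be an isomorphism — this follows because $F$ is faithful, which your argument in (ii) establishes without using fullness (any tensor functor between fusion categories sends nonzero objects to nonzero objects, and split epi/mono factorizations are preserved), so injectivity plus equal dimensions does the job; second, in (ii) the remark that $F$ is "automatically faithful" is redundant there (full faithfulness is hypothesized) but is precisely the ingredient needed in (iii), so it is worth stating it once, in the form just indicated, before both parts. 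With those clarifications your route gives a complete proof where the paper only gives a reference, at the cost of reproving the $\FPdim$-rescaling fact from \cite{ENO}.
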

\begin{proof}
To prove (i) observe that for all objects $X,Y$ in  $\D$ we have
$\Hom_\D(X,\, Y) =\Hom_\D(X\ot Y^*,\, \be)$ and
$\Hom_\C(F(X),\, F(Y)) = \Hom_\C(F(X\ot Y^*),\, \be)$.

Statements (ii) and (iii) are proved in \cite[Propositions 2.19, 2.20]{EO}.
\end{proof}

\begin{lemma}
\label{tens fun from product}
Let $\C$ be a fusion category, $\D_1,\D_2\subset\C$ be fusion subcategories,
and $F:\D_1\bt\D_2\to\C$ be a tensor functor whose restriction to
$\D_i$ is the natural embedding $\D_i\hookrightarrow\C,\, i=1,2$. Then $F$ is 
fully faithful if and only if $\D_1\cap\D_2 =\Vec$.
\end{lemma}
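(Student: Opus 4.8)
The plan is to compute the relevant $\Hom$ space using Proposition~\ref{tens func}(i): the functor $F$ is fully faithful if and only if the natural map
\[
\Hom_{\D_1\bt\D_2}(Z,\be)\to\Hom_\C(F(Z),\be)
\]
is an isomorphism for every object $Z$ of $\D_1\bt\D_2$. Since every object of $\D_1\bt\D_2$ is a direct sum of objects of the form $X_1\bt X_2$ with $X_i\in\D_i$, and every such object is in turn a sum of $X_1\bt X_2$ with $X_i$ simple, it suffices to check the condition for $Z=X_1\bt X_2$ with $X_1\in\O(\D_1)$ and $X_2\in\O(\D_2)$. On the source side, $\Hom_{\D_1\bt\D_2}(X_1\bt X_2,\be)=\Hom_{\D_1}(X_1,\be)\ot\Hom_{\D_2}(X_2,\be)$, which is $k$ if both $X_1$ and $X_2$ are the unit object and $0$ otherwise. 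On the target side, $F(X_1\bt X_2)\cong X_1\ot X_2$ (tensor product taken in $\C$, using that $F$ restricts to the embeddings on each factor), so we must understand $\Hom_\C(X_1\ot X_2,\be)=\Hom_\C(X_1,X_2^*)$.

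First I would treat the ``only if'' direction, which is the easy half: if $X$ is a nonzero object lying in $\D_1\cap\D_2$, pick a simple summand $Y\in\O(\D_1)\cap\O(\D_2)$; then $Y\bt Y^*$ is a nonzero object of $\D_1\bt\D_2$ with $\Hom_{\D_1\bt\D_2}(Y\bt Y^*,\be)=0$ (as $Y\bt Y^*$ is not a sum involving $\be\bt\be$ unless $Y=\be$, and we may assume $Y\neq\be$ since $\D_1\cap\D_2\supsetneq\Vec$ means there is a nontrivial common simple object), yet $\Hom_\C(F(Y\bt Y^*),\be)=\Hom_\C(Y\ot Y^*,\be)\cong\Hom_\C(Y,Y)=k\neq0$; hence $F$ is not fully faithful. (If the only common simple object were $\be$ this argument would not apply, which is exactly why $\D_1\cap\D_2=\Vec$ is the right hypothesis.)

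For the ``if'' direction, assume $\D_1\cap\D_2=\Vec$. We must show $\Hom_\C(X_1,X_2^*)=0$ whenever $(X_1,X_2)\neq(\be,\be)$ in $\O(\D_1)\times\O(\D_2)$, and equals $k$ when $X_1=X_2=\be$; the latter is clear. For the former, $\Hom_\C(X_1,X_2^*)\neq0$ with $X_1,X_2$ simple forces $X_1\cong X_2^*$ in $\C$; then $X_1\cong X_2^*\in\D_1$ and $X_2^*\in\D_2$ (fusion subcategories are closed under duals, being rigid), so $X_2^*\in\O(\D_1\cap\O(\D_2))=\{\be\}$, whence $X_1=X_2=\be$, contradicting our assumption. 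Thus for $Z=X_1\bt X_2$ with $(X_1,X_2)\neq(\be,\be)$ both sides vanish, and for $Z=\be\bt\be$ both sides are $k$; one checks the map is the identity in that case. The main point to be careful about is that ``$\Hom_\C(X_1,X_2^*)\neq 0$ implies $X_1\cong X_2^*$'' genuinely uses semisimplicity and simplicity of the objects (Schur's lemma), and that the identification $F(X_1\bt X_2)\cong X_1\ot X_2$ is compatible with the natural maps to $\be$ — both are routine but need the hypothesis that $F$ restricts to the embeddings. I do not expect a serious obstacle; the only subtlety is organizing the reduction to simple objects and invoking rigidity of fusion subcategories (Corollary~\ref{fusion sub}) to know $\D_i$ is closed under duals.
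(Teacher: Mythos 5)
Your proposal is correct and follows essentially the same route as the paper: reduce via Proposition~\ref{tens func}(i) to the spaces $\Hom_\C(F(X_1\bt X_2),\be)=\Hom_\C(X_1,X_2^*)$ for simple $X_1\in\D_1$, $X_2\in\D_2$, and observe these vanish for all $X_1\bt X_2\neq\be\bt\be$ precisely when $\D_1\cap\D_2=\Vec$. Your write-up merely spells out the Schur's lemma step and the ``only if'' counterexample $Y\bt Y^*$ that the paper leaves implicit.
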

\begin{proof}
Let $X_1\bt X_2$ be a simple object of $\D_1\bt\D_2$. 
Then $\Hom_\C(F(X_1\bt X_2), \be) =  \Hom_\C (X_1\ot X_2,\, \be)= 
\Hom_\C (X_1,\, X_2^*)$. This is zero for all $X_1\bt X_2 \neq \be\bt\be$
if and only if the intersection of $\D_1$ and $\D_2$ is trivial.
Hence the statement follows from  Proposition~\ref{tens func}(i).
\end{proof}

\subsection{Pseudo-unitarity}\label{ps-unitarity}

Let $\C$ be a fusion category and $\varphi :\kRcyc\to\mathbb{R}$
a field embedding.

\begin{definition}
$\C$ is {\em $\varphi$-pseudounitary\,} if 
$\varphi (\dim(\C ))=\FPdim(\C )$. 
\end{definition}

\begin{remark}
According to Proposition 8.21 of \cite{ENO}, for any field embedding $\varphi :\kRcyc\to\mathbb{R}$
one has
\begin{equation}  \label{comparison}
\varphi (|X|^2)\le \FPdim (X)^2 \quad\mbox{for all } X\in\O (\C ).
\end{equation}  
Therefore $\varphi (\dim(\C ))\le\FPdim(\C )$, and $\varphi$-pseudounitarity means
that
\begin{equation}  \label{ps-un}
\varphi (|X|^2)=\FPdim (X)^2 \quad\mbox{for all } X\in\O (\C ).
\end{equation}  
\end{remark}

\begin{definition}
A spherical structure on $\C$ is {\em $\varphi$-positive\,}
if the corresponding homomorphism $d : K_0(\C )\to \kRcyc$ satisfies
\[
\varphi (d(X))>0 \quad\mbox{for all } X\in\O (\C ).
\]
\end{definition}

By \eqref{dim3}, a $\varphi$-positive spherical structure is unique if it exists.

\begin{proposition}\label{positive} 
A $\varphi$-pseudounitary fusion category has a (unique) $\varphi$-positive 
spherical structure.
\end{proposition}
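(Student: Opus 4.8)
\textbf{Proof proposal for Proposition~\ref{positive}.}
The plan is to construct the required spherical structure explicitly using the Frobenius-Perron dimensions, and then verify it has the stated positivity property. First I would recall that a spherical structure is equivalent, by the discussion in \S\ref{pivotal}, to a ring homomorphism $d: K_0(\C) \to \kRcyc$ satisfying $d(X^*) = d(X)$ and such that $d$ arises from a pivotal structure (equivalently, $d(X)^2 = |X|^2$ for simple $X$, by \eqref{dim3}). So the problem reduces to producing such a $d$ with $\varphi(d(X)) = \FPdim(X) > 0$ for all $X\in\O(\C)$. The natural candidate is the homomorphism $d$ defined on simple objects by taking $d(X)$ to be the unique square root of $|X|^2$ in $\kRcyc$ whose image under $\varphi$ is $\FPdim(X)$; this makes sense precisely because the $\varphi$-pseudounitarity hypothesis gives $\varphi(|X|^2) = \FPdim(X)^2$ via \eqref{ps-un}, so $\FPdim(X)$ is a positive real square root of $\varphi(|X|^2)$, and one needs to know that $|X|^2$ is itself a square in $\kRcyc$ — which follows from the existence of \emph{some} pivotal structure (conjecturally always true, but in any case one can pass to the sphericalization $\C^{sph}$ of \S\ref{spherical extension} to get a genuine spherical structure there and descend, as in the argument below).

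The key steps, in order: (1) Using $\varphi$-pseudounitarity, establish \eqref{ps-un}, so that for each simple $X$ the totally positive algebraic integer $|X|^2 \in \kRcyc$ satisfies $\varphi(|X|^2) = \FPdim(X)^2$. (2) Pass to the spherical category $\C^{sph}$, which by \S\ref{spherical extension} is spherical with a homomorphism $d^{sph}: K_0(\C^{sph}) \to \kRcyc$ and $d^{sph}(Y)^2 = |Y|^2$ for its simple objects $Y$; there are exactly two lifts of each simple $X\in\C$, with dimensions $\pm d(X)$ for the two choices of sign, so one of them, call it $d(X)$, has $\varphi(d(X)) = \FPdim(X) > 0$. (3) Show that this assignment $X \mapsto d(X)$ on simple objects of $\C$ extends to a ring homomorphism $K_0(\C) \to \kRcyc$: this is the crux. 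If $Z$ appears in $X\ot Y$, then in $\C^{sph}$ there is a lift of $Z$ appearing in (a lift of $X$) $\ot$ (a lift of $Y$); applying $\varphi$ and using that $\FPdim$ is a ring homomorphism forces the signs to be consistent, i.e.\ $d(Z)$ as defined equals the corresponding product of $\pm d$-values. Concretely, one checks that the function $X\mapsto d(X)/\FPdim(X)$ (after applying $\varphi$, a sign $\pm1$) is multiplicative on the fusion rules, hence defines a grading of $\C$ by $\mu_2$, i.e.\ a character of $U_\C$; twisting the (a priori $\C^{sph}$-induced) pivotal data of $\C$ by this character produces an honest pivotal structure on $\C$ with dimension homomorphism $d$. (4) Since $d(X) \in \kRcyc$ for all $X$, this pivotal structure is spherical, and by construction it is $\varphi$-positive. (5) Uniqueness is already noted in the excerpt from \eqref{dim3}: $d(X)$ is determined as the square root of $|X|^2$ with positive $\varphi$-image.

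The main obstacle is step (3): showing the sign choices are globally consistent, i.e.\ that picking the $\varphi$-positive square root object-by-object actually yields a pivotal (tensor) structure rather than merely a compatible family of scalars. The clean way to see this is the grading argument — the "defect" measuring failure of a pivotal structure to be the positive one is a homomorphism $U_\C \to \mu_2$, and composing the given pivotal structure on $\C$ (pulled from $\C^{sph}$, or any pivotal structure if one exists outright) with the tensor automorphism of $\id_\C$ corresponding to this character corrects it; here one uses Proposition~\ref{un-grad}(iii) identifying $\Aut_\otimes(\id_\C)$ with $\Hom(U_\C, k^\times)$ and the fact from \S\ref{pivotal} that composing a pivotal structure with $f: U_\C \to k^\times$ multiplies $d(X)$ by $f(\deg X)$. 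One should be slightly careful that the whole argument is unconditional: even if $\C$ admits no pivotal structure a priori, $\C^{sph}$ always does, and the computation of $d(X)^2 = |X|^2$ together with the two-to-one covering $\O(\C^{sph}) \to \O(\C)$ supplies everything needed, since the sign-consistency over $\C^{sph}$'s fusion rules restricts to the required consistency over $\C$'s. The remaining verifications — that $d$ respects $X\mapsto X^*$ (automatic since $|X^*|^2 = |X|^2$ and $\FPdim(X^*) = \FPdim(X)$) and that $d$ is a ring homomorphism into $\kRcyc$ — are then routine.
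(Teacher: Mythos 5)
The paper gives no internal proof of this statement: it simply cites \cite[Proposition 8.23]{ENO} for existence (uniqueness being disposed of just before the statement via \eqref{dim3}). So the only comparison is with the ENO argument, which is genuinely different from yours: there one fixes a natural isomorphism $\psi_X:X\iso X^{**}$, normalizes it so that $d_\pm(X)$ equals the square root of $|X|^2$ whose $\varphi$-image is $\FPdim(X)$, and then uses the equality case in the proof of the inequality \eqref{comparison} to conclude that the normalized $\psi$ is automatically a tensor (indeed spherical) structure. Your route through the sphericalization $\C^{sph}$ of \S\ref{spherical extension} is a legitimate alternative, and it has the virtue of being manifestly unconditional, since $\C^{sph}$ is spherical by construction and no pivotal structure on $\C$ is presupposed.

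The one step you must tighten is (3). As phrased, the ``defect is a character $U_\C\to\mu_2$'' argument is circular: twisting by an element of $\Aut_\otimes(\id_\C)$ only moves you among pivotal structures that $\C$ already has, and the multiplicativity of your candidate $d$ (equivalently, the existence of a pivotal structure with dimension function $d$) is exactly what is being proved; likewise $d/d_0$ is only known to be a character once $d$ is known to be a ring homomorphism. The unconditional version you gesture at should be made explicit, entirely inside $\C^{sph}$: let $\C^+\subset\C^{sph}$ be the full additive subcategory generated by the lifts $(X,f_X)$ with $\varphi\bigl(d^{sph}(X,f_X)\bigr)=\FPdim(X)>0$. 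For two such lifts, computing $d^{sph}$ of their tensor product in two ways and applying $\varphi$ gives $\sum_Z (N^+_Z+N^-_Z)\FPdim(Z)=\sum_Z (N^+_Z-N^-_Z)\FPdim(Z)$, where $N^\pm_Z$ are the multiplicities of the positive and negative lifts of $Z\in\O(\C)$; since $\FPdim(Z)>0$, all $N^-_Z=0$. Hence $\C^+$ is closed under tensor product (it contains $\be$ and is closed under duals because $d^{sph}$ and $\FPdim$ are invariant under $X\mapsto X^*$; rigidity also follows from Corollary~\ref{fusion sub}), so it is a fusion subcategory of $\C^{sph}$. The forgetful functor $\C^+\to\C$ is bijective on simples and fully faithful, hence a tensor equivalence, and transporting the spherical structure of $\C^{sph}$ along it yields the $\varphi$-positive spherical structure on $\C$; uniqueness then follows from \eqref{dim3} together with the fact from \S\ref{pivotal} that a pivotal structure is determined by its dimension homomorphism. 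With this replacement for step (3), your proof is complete and correct.
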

This is \cite[Proposition 8.23]{ENO}.
By \eqref{dim3} and \eqref{ps-un}, for the $\varphi$-positive spherical structure on a
pseudounitary fusion category $\C$ one has 
\begin{equation}  \label{dimFPdim}
\varphi (d(X))=\FPdim (X).
\end{equation}

\subsection{Integral and weakly integral fusion categories} \label{FPZ}
\begin{definition}   \label{intdef}
A fusion category $\C$ is {\em weakly integral\,} if $\FPdim (\C )\in\mathbb{Z}$.
We say that $\C$ is {\em integral\,} if $\FPdim (X)\in\mathbb{Z}$ for all $X\in\C$.
\end{definition}

 For weakly integral fusion categories the notion of $\varphi$-pseudounitarity does not depend
on $\varphi :\kRcyc\to\mathbb{R}$, so we will write ``pseudounitarity" instead of 
``$\varphi$-pseudounitarity". 
Moreover, according to the next proposition, in this situation pseudounitarity
is automatic. Set $\mathbb{Q}^{\times}_{>0}:=\{ x\in \mathbb{Q}^{\times}|x>0\}$.

\begin{proposition}\label{FPint1} 
Let $\C$ be a weakly integral fusion category. Then
\begin{enumerate}
\item[(i)] $\C$ is pseudounitary, i.e., $\dim(\C )=\FPdim(\C )$;
\item[(ii)] for every $X\in\O (\C )$ one has $|X|^2=\FPdim (X)^2\in\mathbb{Z}$;
\item[(iii)] the map $\deg :\O (\C )\to\mathbb{Q}^{\times}_{>0}/(\mathbb{Q}^{\times}_{>0})^2$
that takes $X\in\O (\C )$ to the image of $\FPdim (X)^2$
in $\mathbb{Q}^{\times}_{>0}/(\mathbb{Q}^{\times}_{>0})^2$ is
a grading.
\end{enumerate}
\end{proposition}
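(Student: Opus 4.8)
The plan is to deduce all three parts from the general inequality \eqref{comparison} together with the Galois-stability of the ideals generated by $|X|^2$ and $\FPdim(\C)$. First I would fix an arbitrary embedding $\varphi:\kRcyc\to\mathbb{R}$ and argue that the hypothesis $\FPdim(\C)\in\mathbb{Z}$ forces equality in \eqref{comparison} for every simple object; this is exactly pseudounitarity, which is part (i), and it simultaneously gives part (ii) modulo the integrality assertion. The key point for (i): summing \eqref{comparison} over $X\in\O(\C)$ yields $\varphi(\dim(\C))\le\FPdim(\C)$. Now I apply a Galois descent argument. The algebraic integer $\dim(\C)\in\kRcyc$ has all its conjugates bounded above by $\FPdim(\C)$ (apply \eqref{comparison} to each embedding), while $\FPdim(\C)$ is a rational integer. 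Using that the ideal generated by $\dim(\C)$ is $Gal(\bar\BQ/\BQ)$-stable (cited from \cite{O3}) together with the ideal generated by $\FPdim(\C)$ being $Gal(\bar\BQ/\BQ)$-stable, I would show the ratio $\FPdim(\C)/\varphi(\dim(\C))$ is a unit bounded by its conjugates in a way that forces it to be $1$; concretely, the product of all Galois conjugates of $\dim(\C)$ divides a power of $\FPdim(\C)$, and each conjugate is $\le\FPdim(\C)$, so if one conjugate were strictly smaller the norm would be too small relative to the ideal-theoretic constraint. This is the step I expect to be the main obstacle — making the norm/ideal bookkeeping airtight rather than hand-wavy. (Alternatively one can invoke \cite[Proposition 8.24]{ENO} or the analogous statement directly; if the excerpt permits citing it, the cleanest route is simply: weakly integral $\Rightarrow$ pseudounitary is already in \cite{ENO}, and one just records it here.)

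Once (i) is established, equality holds in \eqref{comparison}, i.e.\ $\varphi(|X|^2)=\FPdim(X)^2$ for every $X\in\O(\C)$ and every $\varphi$. Since $|X|^2$ is a totally positive algebraic integer in $\kRcyc$ all of whose images under embeddings into $\mathbb{R}$ equal the single rational number $\FPdim(X)^2$, it must itself be a rational number, hence $|X|^2=\FPdim(X)^2$; and since $\FPdim(X)$ is an algebraic integer with $\FPdim(X)^2\in\BQ$, while $|X|^2$ is an algebraic integer, we get $\FPdim(X)^2\in\BZ$. This is part (ii).

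For part (iii), I would verify the grading axiom directly. Given simple $X,Y,Z$ with $Z$ a summand of $X\ot Y$, I must show $\FPdim(X)^2\FPdim(Y)^2$ and $\FPdim(Z)^2$ have the same image in $\q$, i.e.\ differ by a square in $\mathbb{Q}^\times_{>0}$. Write $X\ot Y=\bigoplus_i n_i Z_i$ with $Z=Z_1$, so $\FPdim(X)\FPdim(Y)=\sum_i n_i\FPdim(Z_i)$. The ambient field $\kcyc$ has Galois group acting on the $\FPdim(Z_i)$; using that $\FPdim(Z_i)^2\in\BZ$ for all $i$ by part (ii), each $\FPdim(Z_i)$ is a square root of a positive integer, so $\FPdim(X)\FPdim(Y)\in\mathbb{Q}\cdot\sqrt{\FPdim(Z)^2}$ would follow if I can show all $Z_i$ occurring in a single tensor product have $\FPdim(Z_i)^2$ in one square class. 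To see this I would use that $X\ot Y^*\ot Y$ contains $X$, so $\FPdim(X)$ divides (in the algebraic-integer sense) $\FPdim(X)\FPdim(Y)^2$, combined with: any simple constituent $W$ of $Z\ot Z^*$ satisfies $\FPdim(W)^2\in\BZ$, and running over the connectivity of the fusion graph shows all simples reachable from $\be$ — that is, all of them, since $\be$ generates — fall into classes compatible with multiplication. More cleanly: the map $X\mapsto\FPdim(X)^2$ lands in $\mathbb{Q}^\times_{>0}$, and for $W$ a constituent of $X\ot Y$ one has $\FPdim(W)\mid\FPdim(X)\FPdim(Y)$ as algebraic integers, which together with $\FPdim(X)\FPdim(Y)\mid\sum\FPdim(W_i)$-type relations and part (ii) forces $\FPdim(W)^2\equiv\FPdim(X)^2\FPdim(Y)^2$ in $\q$. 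I would then conclude that $\deg$ is well-defined and satisfies $\deg(Z)=\deg(X)\deg(Y)$, which is the definition of a grading.
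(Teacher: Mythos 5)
The paper does not prove Proposition \ref{FPint1} at all: it records it as a combination of \cite[Propositions 8.24, 8.27]{ENO} and \cite[Theorem 3.10]{GN}. So your fallback for (i) (simply cite pseudounitarity of weakly integral categories from \cite{ENO}) coincides with what the paper does, and your deduction of (ii) from (i) is essentially correct: once $\varphi(|X|^2)=\FPdim(X)^2$ holds for every embedding $\varphi$, all real embeddings of the totally real algebraic integer $|X|^2$ coincide, hence $|X|^2\in\BQ$, hence $|X|^2=\FPdim(X)^2\in\BZ$ (just be careful not to call $\FPdim(X)^2$ rational before this point --- rationality is the conclusion, not a hypothesis). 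Your from-scratch sketch of (i), however, does not close as written: knowing that every conjugate of $\dim(\C)$ is at most $\FPdim(\C)$ and that the ideal $(\dim(\C))$ is Galois-stable does not force equality (these constraints are compatible with the norm of $\dim(\C)$ being any divisor of a power of $\FPdim(\C)$); the ENO argument needs the further fact that $\FPdim(\C)/\dim(\C)$ is an algebraic integer, applied to all Galois twists of $\C$, which you never invoke. Since you offer the citation as the clean route, I treat this as a presentational weakness rather than the main defect.

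The genuine gap is in (iii). The divisibility you rely on, that $\FPdim(W)$ divides $\FPdim(X)\FPdim(Y)$ as algebraic integers for every simple constituent $W$ of $X\ot Y$, is false: already in $\Rep(A_5)$ the tensor square of a $3$-dimensional simple object contains the $5$-dimensional one, and $5\nmid 9$. The remaining phrases (``running over the connectivity of the fusion graph'', ``would follow if I can show all $Z_i$ occurring in a single tensor product have $\FPdim(Z_i)^2$ in one square class'') assume exactly the statement to be proved, so (iii) is not established. The argument behind \cite[Theorem 3.10]{GN} is different: for each $\sigma\in Gal(\bar\BQ/\BQ)$ the map $X\mapsto\sigma(\FPdim(X))$ is again a ring homomorphism on $K_0(\C)$, and by (ii) it equals $\epsilon_\sigma(X)\FPdim(X)$ on simples with $\epsilon_\sigma(X)=\pm1$; comparing $\sigma(\FPdim(X))\sigma(\FPdim(Y))=\sum_Z N_{XY}^Z\,\sigma(\FPdim(Z))$ with $\FPdim(X)\FPdim(Y)=\sum_Z N_{XY}^Z\,\FPdim(Z)$ and using strict positivity of Frobenius--Perron dimensions forces $\epsilon_\sigma(Z)=\epsilon_\sigma(X)\epsilon_\sigma(Y)$ for every constituent $Z$ and every $\sigma$, which is precisely the assertion that $\FPdim(Z)^2$ and $\FPdim(X)^2\FPdim(Y)^2$ lie in the same class of $\q$. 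Some argument of this kind (or an appeal to \cite{GN}) is needed to complete your part (iii).
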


 This is a combination of \cite[Propositions 8.24, 8.27]{ENO} and \cite[Theorem 3.10]{GN}.
 
\begin{rems}  \label{intrem}
\begin{enumerate}
\item[(a)] The trivial component of the grading from Proposition \ref{FPint1}(iii) is 
the maximal integral fusion subcategory of $\C$.
\item[(b)] By Proposition \ref{FPint1}(i), a weakly integral fusion category $\C$ 
has integral  global dimension $\dim(\C )$. The converse is false, see  
\cite[Remark 8.26]{ENO} for a counterexample.
\item[(c)] Ising fusion categories (see Appendix \ref{Is}) are weakly integral but not integral.
\end{enumerate}
\end{rems}
 
\begin{corollary}\label{subwi}
A fusion subcategory of a weakly integral fusion category is weakly integral.
\end{corollary}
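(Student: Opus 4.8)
\textbf{Proof proposal for Corollary~\ref{subwi}.}

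The plan is to deduce this directly from Proposition~\ref{FPint1}(i), which identifies $\FPdim(\C)$ with the global dimension $\dim(\C)$ for a weakly integral $\C$. Let $\D\subset\C$ be a fusion subcategory; we want $\FPdim(\D)\in\BZ$. First I would recall that for \emph{any} fusion subcategory $\D$ of \emph{any} fusion category $\C$ the quotient $\FPdim(\C)/\FPdim(\D)$ is an algebraic integer: indeed, it equals $\FPdim(\C_\D)$, the Frobenius-Perron dimension of the module category $\C$ over $\D$, and more elementarily, $\FPdim(\C)=\FPdim(\D)\cdot(\FPdim(\C)/\FPdim(\D))$ with the ratio lying in the ring of algebraic integers (this is part of the general theory of \cite{ENO}, e.g.\ the fact that the index of a fusion subcategory is an algebraic integer). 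Combined with the fact that $\FPdim(\D)$ is itself a totally positive algebraic integer, and that $\FPdim(\C)\in\BZ$ by hypothesis, we learn that $\FPdim(\D)$ is an algebraic integer dividing the rational integer $\FPdim(\C)$; but that alone does not force $\FPdim(\D)$ to be rational.

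So the real input must be the distinction between the two dimension ideals. Here is the approach I expect to work. Since $\C$ is weakly integral, by Proposition~\ref{FPint1}(ii) every simple object $X\in\O(\C)$ has $|X|^2=\FPdim(X)^2\in\BZ$. Now $\O(\D)\subset\O(\C)$, so for every simple object $Y$ of $\D$ we also have $|Y|^2=\FPdim(Y)^2\in\BZ$ — note that M\"uger's squared norm $|Y|^2$ is intrinsic to $Y$ and does not depend on whether we compute it in $\D$ or in $\C$, and likewise $\FPdim(Y)$ is the same whether computed in $\D$ or $\C$ since the Grothendieck ring of $\D$ embeds in that of $\C$ and the Frobenius-Perron eigenvalue is determined by the fusion rules of $Y$ inside $\D$ (which are a subset of those in $\C$). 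Summing over $\O(\D)$ gives $\dim(\D)=\sum_{Y\in\O(\D)}|Y|^2\in\BZ$ and also $\sum_{Y\in\O(\D)}\FPdim(Y)^2=\FPdim(\D)$, and these two sums are equal term by term. Hence $\FPdim(\D)=\dim(\D)\in\BZ$, which is exactly the assertion that $\D$ is weakly integral.

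The one point that needs a clean justification — and I expect it to be the main (though minor) obstacle — is the claim that $\FPdim_\D(Y)=\FPdim_\C(Y)$ for a simple object $Y$ of $\D$, i.e.\ that the Frobenius-Perron dimension is insensitive to the ambient category. This follows because $\FPdim$ on $K_0(\C)$ is characterized by being the unique ring homomorphism $K_0(\C)\to\R$ that is nonnegative on the basis of simples, and its restriction to the subring $K_0(\D)\subset K_0(\C)$ is then a ring homomorphism nonnegative on $\O(\D)$, hence equals $\FPdim_\D$ by the uniqueness clause in \S\ref{fpd}. With that identification in hand the argument above closes: $\FPdim(\D)=\sum_{Y\in\O(\D)}\FPdim_\C(Y)^2=\sum_{Y\in\O(\D)}|Y|^2=\dim(\D)$, an algebraic integer in $\kRcyc$ which we have just exhibited as a sum of rational integers, so $\FPdim(\D)\in\BZ$ and $\D$ is weakly integral.
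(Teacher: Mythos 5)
Your proof is correct and follows the paper's own route: the paper's proof is exactly the observation that, by Proposition~\ref{FPint1}(ii), every simple object of $\D$ (being simple in $\C$) has $\FPdim(Y)^2\in\BZ$, so $\FPdim(\D)=\sum_{Y\in\O(\D)}\FPdim(Y)^2\in\BZ$. The opening digression about dimension ratios and the detour through $\dim(\D)$ are unnecessary, but the core argument — including your justification that $\FPdim$ restricted to $K_0(\D)$ equals $\FPdim_\D$ — is the intended one.
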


\begin{proof} This is an immediate consequence of Definition \ref{intdef} and Proposition \ref{FPint1} (ii).
\end{proof}

\begin{proposition}\label{dimgradi}
If a fusion category $\C$ is faithfully graded by a group $G$
then $\FPdim(\C )=|G|\cdot \FPdim(\C_1)$, where $\C_1\subset\C$ is the 
trivial component of the grading. 
\end{proposition}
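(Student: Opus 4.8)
The plan is to exploit the fact that $\FPdim$ is the unique ring homomorphism $K_0(\C)\to\mathbb R$ that is nonnegative on objects, together with the fact that a faithful $G$-grading realizes $\C$ as a direct sum $\C=\bigoplus_{g\in G}\C_g$ in which tensoring by any fixed invertible object permutes the graded pieces transitively. First I would observe that for each $g\in G$ the functor $X\mapsto X$ viewed between additive categories gives a bijection $\O(\C)=\coprod_{g\in G}\O(\C_g)$, so that $\FPdim(\C)=\sum_{g\in G}\bigl(\sum_{X\in\O(\C_g)}\FPdim(X)^2\bigr)$. It therefore suffices to show that the inner sum is independent of $g$, i.e.\ equals $\FPdim(\C_1)$ for every $g$.

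The key step is to produce, for each $g\in G$, a dimension-preserving bijection $\O(\C_1)\iso\O(\C_g)$. Since the grading is faithful, $\C_g$ is nonempty; pick any simple $Y\in\C_g$. Then $Y$ is automatically invertible: indeed $Y\ot Y^*$ lies in $\C_1$ and $\C_{ad}\subset\C_1$, so $Y\ot Y^*$ decomposes into simples of degree $1$, while on the other hand $\FPdim(Y\ot Y^*)=\FPdim(Y)^2$; the standard argument (an object $Y$ with $Y^*\ot Y$ containing $\be$ and $\FPdim(Y^*Y)=\FPdim(Y)^2$, combined with $\FPdim\ge 1$ on nonzero objects) forces $Y\ot Y^*\cong\be$, so $\FPdim(Y)=1$ and $Y$ is invertible. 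Tensoring on the left by $Y$ then gives an equivalence of additive categories $\C_1\iso\C_g$, hence a bijection $\O(\C_1)\to\O(\C_g)$, $X\mapsto Y\ot X$. Because $\FPdim$ is a ring homomorphism and $\FPdim(Y)=1$, we get $\FPdim(Y\ot X)=\FPdim(X)$, so this bijection preserves Frobenius--Perron dimensions of simple objects.

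Assembling the pieces: $\sum_{X\in\O(\C_g)}\FPdim(X)^2=\sum_{X\in\O(\C_1)}\FPdim(Y\ot X)^2=\sum_{X\in\O(\C_1)}\FPdim(X)^2=\FPdim(\C_1)$ for every $g\in G$, and summing over the $|G|$ cosets yields $\FPdim(\C)=|G|\cdot\FPdim(\C_1)$. The only point requiring a little care — the main (mild) obstacle — is the claim that every simple object in a nontrivial graded component is invertible; once that is in hand the rest is bookkeeping. (Alternatively one could avoid invertibility and argue directly that $X\mapsto$ simple summands of $Y\ot X$ sets up a measure-preserving correspondence, but invoking $\C_{ad}\subset\C_1$ from Proposition \ref{un-grad}(ii) makes the invertibility route cleanest.)
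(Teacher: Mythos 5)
Your reduction to showing that each graded component has the same Frobenius--Perron dimension is fine, but the key step --- that any simple $Y\in\C_g$ is invertible --- is false, and with it the dimension-preserving bijection $\O(\C_1)\iso\O(\C_g)$ collapses. Knowing that $Y\ot Y^*$ lies in $\C_1$ puts no upper bound on it whatsoever: $\C_1\supset\C_{ad}$ is a full-fledged fusion subcategory and can contain large objects, so nothing forces $Y\ot Y^*\cong\be$. The ``standard argument'' you invoke only shows $\FPdim(Y)^2\ge 1$ with equality iff $Y\ot Y^*\cong\be$; to conclude invertibility you would need the reverse inequality, which the grading does not supply. A counterexample sits in the paper's own Appendix \ref{Is}: for an Ising category $\mI$ the universal grading is by $\BZ/2\BZ$ with the non-invertible simple $X$ ($\FPdim(X)=\sqrt2$) in the nontrivial component, and $X\ot X^*\cong\be\oplus\delta\in\mI_{ad}=\mI_1$. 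Here $\C_1$ has two simple objects and $\C_g$ has one, so no bijection of simples exists at all --- yet the proposition holds, since both components have Frobenius--Perron dimension $2$. So the equality of component dimensions is genuinely not witnessed objectwise, and your main route cannot be repaired by a cleverer choice of $Y$.

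The paper does not reprove the statement (it cites \cite[Theorem 8.20]{ENO}); the standard argument there is essentially the ``measure-preserving correspondence'' you mention in passing, made precise via the regular element. Write $R_g=\sum_{X\in\O(\C_g)}\FPdim(X)\,[X]$ in $K_0(\C)\ot_{\BZ}\mathbb{R}$, so $R_\C=\sum_g R_g$ and $\FPdim(R_g)=\sum_{X\in\O(\C_g)}\FPdim(X)^2$. For simple $Y\in\C_h$ one has $Y\ot R_\C=\FPdim(Y)R_\C$, and comparing graded components gives $Y\ot R_g=\FPdim(Y)R_{hg}$; applying the ring homomorphism $\FPdim$ and cancelling $\FPdim(Y)\neq 0$ yields $\FPdim(R_g)=\FPdim(R_{hg})$. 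Faithfulness of the grading then makes all $\FPdim(R_g)$ equal to $\FPdim(R_1)=\FPdim(\C_1)$, and summing over the $|G|$ components gives the claim. If you want to salvage your write-up, this is the argument to substitute for the invertibility step.
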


This is  \cite[Theorem 8.20]{ENO}. Later we will prove the analog of Proposition~\ref{dimgradi}
for categorical dimensions (see Corollary~\ref{dimgradcat}).

Combining Proposition \ref{dimgradi}
with Proposition \ref{FPint1}(iii) and using the fact that any
finite subgroup of $\mathbb{Q}^{\times}_{>0}/(\mathbb{Q}^{\times}_{>0})^2$ is
a 2-group one gets the following statement from \cite{GN}.

\begin{corollary}   \label{odd}
A weakly integral fusion category of odd Frobenius-Perron dimension is integral.
\end{corollary}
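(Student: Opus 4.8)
The plan is to deduce Corollary~\ref{odd} from Proposition~\ref{FPint1}(iii) together with Proposition~\ref{dimgradi}, exactly as the paragraph preceding the statement suggests. First I would recall the grading furnished by Proposition~\ref{FPint1}(iii): for a weakly integral $\C$, the map $\deg:\O(\C)\to \q$ sending $X$ to the class of $\FPdim(X)^2$ is a grading of $\C$ by the group $\q$. Let $G\subset\q$ be the image of this grading, so that $\C$ is faithfully graded by the finite group $G$ (it is finite because $\O(\C)$ is finite). By Remark~\ref{intrem}(a), the trivial component $\C_1$ of this grading is the maximal integral fusion subcategory of $\C$, and in particular $\C_1$ is an integral fusion category.

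Next I would invoke the key arithmetic input: every finite subgroup of $\q$ is a $2$-group. This is elementary — $\q$ is an elementary abelian $2$-group (an $\mathbb{F}_2$-vector space), since every element has order dividing $2$; hence any finite subgroup, being finitely generated and torsion of exponent $2$, is a finite elementary abelian $2$-group. Therefore $|G|$ is a power of $2$. Applying Proposition~\ref{dimgradi} to the faithful grading of $\C$ by $G$ gives $\FPdim(\C)=|G|\cdot\FPdim(\C_1)$.

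Now suppose $\FPdim(\C)$ is odd. From $\FPdim(\C)=|G|\cdot\FPdim(\C_1)$ with $|G|$ a power of $2$, we must have $|G|=1$, i.e.\ the grading is trivial and $\C=\C_1$. Since $\C_1$ is integral, $\C$ is integral, which is exactly the assertion. (One should note that $\FPdim(\C_1)$ is a positive integer dividing $\FPdim(\C)$ by Corollary~\ref{subwi} and Proposition~\ref{FPint1}(i), so the divisibility argument is legitimate; alternatively, $|G|$ divides $\FPdim(\C)$ directly from the displayed identity.)

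There is essentially no obstacle here — the corollary is a formal consequence of results already established. The only point requiring a line of justification is the structural fact that finite subgroups of $\q$ are $2$-groups, and that is immediate from the observation that $\q$ has exponent $2$. So the proof is short: set up the $\FPdim$-square grading, note its group is a finite $2$-group, apply the multiplicativity of $\FPdim$ under faithful gradings, and conclude that oddness forces the grading to be trivial, whence $\C$ equals its maximal integral subcategory.
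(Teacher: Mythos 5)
Your proof is correct and follows exactly the route the paper indicates: combine the grading by $\mathbb{Q}^{\times}_{>0}/(\mathbb{Q}^{\times}_{>0})^2$ from Proposition~\ref{FPint1}(iii), the fact that finite subgroups of this group are $2$-groups, and the multiplicativity $\FPdim(\C)=|G|\cdot\FPdim(\C_1)$ of Proposition~\ref{dimgradi}, concluding that oddness forces the grading to be trivial so that $\C$ equals its maximal integral subcategory. No issues to flag.
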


\begin{proposition}   \label{FPint2}
The following properties of a fusion category $\C$ are equivalent:
\begin{enumerate}
\item[(i)] $\C$ is integral;
\item[(ii)] there exists a faithful $k$-linear functor $F:\C\to\Vect$
such that $F(X\otimes Y)\simeq F(X)\otimes F(Y)$ for all $X,Y\in\C$;
\item[(iii)] $\C$ is equivalent to the fusion category of finite dimensional 
representations of a finite dimensional quasi-Hopf algebra $H$.

If $\C$ satisfies (i)-(iii) then $\FPdim (X)=\dim_kF(X)$ for all $X\in\C$,
and therefore $\FPdim (\C )=\dim H$.
\end{enumerate}
\end{proposition}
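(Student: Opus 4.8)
The plan is to prove the cyclic chain of implications $(i)\Rightarrow(ii)\Rightarrow(iii)\Rightarrow(i)$, together with the final formula. The implication $(iii)\Rightarrow(i)$ is immediate: if $\C\simeq\Rep(H)$ for a finite-dimensional quasi-Hopf algebra $H$, then the forgetful functor sends $X$ to an honest vector space of positive integer dimension $n_X$, and since the tensor product of representations has underlying space the tensor product of the underlying spaces, $n$ is a ring homomorphism $K_0(\C)\to\mathbb{Z}$ with $n_X>0$. By the uniqueness of the Frobenius--Perron homomorphism (\S\ref{fpd}, from \cite{ENO}), $n_X=\FPdim(X)$, so all $\FPdim(X)$ are integers and $\C$ is integral; this also gives the displayed formula $\FPdim(\C)=\sum_X n_X^2=\dim_k H$. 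The same uniqueness argument proves $(ii)\Rightarrow$ the statement ``$\FPdim(X)=\dim_k F(X)$'' and hence $(ii)\Rightarrow(i)$: a faithful $k$-linear functor to $\Vect$ that is multiplicative on objects gives a $\mathbb{Z}$-valued character of $K_0(\C)$, positive on nonzero objects (faithfulness forces $\dim_k F(X)\geq 1$ for $X\neq 0$), hence equal to $\FPdim$.

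The substantive implications are $(i)\Rightarrow(ii)$ and $(ii)\Rightarrow(iii)$. For $(i)\Rightarrow(ii)$, I would invoke pseudo-unitarity: an integral fusion category is weakly integral, so by Proposition~\ref{FPint1}(i) it is pseudo-unitary, hence by Proposition~\ref{positive} it carries a $\varphi$-positive spherical structure with $\varphi(d(X))=\FPdim(X)$ for all $X$ (equation~\eqref{dimFPdim}). The idea is then to produce a fiber functor from the regular algebra. Concretely, one takes the canonical Lagrangian-type algebra $A$ in the Drinfeld center $\Z(\C)$ (the image of the regular algebra under the adjoint to the forgetful functor $\Z(\C)\to\C$); its category of modules in $\Z(\C)$ is $\Vec$, and composing $\C\to\Z(\C)$-mod$\to A$-mod$=\Vec$ yields a functor $F$ which is multiplicative on objects and faithful precisely because $\FPdim(X)=\dim_k F(X)>0$, the positivity being guaranteed by pseudo-unitarity. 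Alternatively — and this is probably the cleanest route for the paper — one cites the reconstruction theory of \cite{ENO}: $\dim\Hom(A\ot X,\be)$ computes $\dim_k F(X)$, and integrality ensures this equals $\FPdim(X)$.

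For $(ii)\Rightarrow(iii)$ the plan is Tannakian reconstruction: given a faithful $k$-linear functor $F:\C\to\Vect$ with $F(X\ot Y)\simeq F(X)\ot F(Y)$, one wants to upgrade the object-level isomorphisms to a genuine (quasi-)tensor structure and then set $H:=\End(F)$, realizing $\C\simeq\Rep(H)$ with $H$ a finite-dimensional quasi-Hopf algebra (the associativity constraint of $\C$ becomes the Drinfeld associator of $H$). The point is that a $k$-linear functor which is merely multiplicative on isomorphism classes of objects can, by semisimplicity and finiteness, be rigidified to a quasi-tensor functor; then standard Tannakian/Majid reconstruction applies, and rigidity of $\C$ forces $H$ to have a (quasi-)antipode. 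I expect this step to be the main obstacle: one must be careful that the isomorphisms $F(X\ot Y)\simeq F(X)\ot F(Y)$ need not a priori be coherent, so the argument requires showing they can be replaced by coherent ones — this uses semisimplicity essentially and is the only place where the ``quasi'' (rather than genuine) Hopf algebra is unavoidable. I would handle it by passing to a skeleton, choosing multiplicativity isomorphisms on simples, extending by semisimplicity, and checking that the resulting natural transformation is invertible (automatic, being an isomorphism on each simple), so that $F$ becomes a quasi-tensor functor and \cite{EO}-style reconstruction finishes the proof.
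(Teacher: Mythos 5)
First, a point of comparison: the paper does not actually prove this proposition; it quotes it from \cite{ENO} (Theorem 8.33) and only explains the implication (iii)$\Rightarrow$(i) together with the dimension formula, via Example~\ref{Hopf} (uniqueness of the Frobenius--Perron homomorphism). Your treatment of (iii)$\Rightarrow$(i), of (ii)$\Rightarrow$(i), and of the equality $\FPdim(X)=\dim_kF(X)$ is correct and is exactly that argument. Your outline of (ii)$\Rightarrow$(iii) is also sound and follows the actual proof in \cite{ENO}: by semisimplicity a $k$-linear functor is naturally isomorphic to $\bigoplus_i\Hom(X_i,-)\otimes F(X_i)$, so the two bifunctors $F(-\ot-)$ and $F(-)\ot F(-)$ are naturally isomorphic as soon as they have isomorphic values on pairs of simples; this upgrades the object-wise isomorphisms to a quasi-tensor structure, after which reconstruction gives the quasi-Hopf algebra $H=\End(F)$, with rigidity supplying the quasi-antipode.

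The genuine gap is in (i)$\Rightarrow$(ii). The construction you propose is wrong on two counts. First, for the canonical algebra $A=I(\be)\in\Z(\C)$ ($I$ right adjoint to the forgetful functor $\Z(\C)\to\C$) the category of $A$-modules in $\Z(\C)$ is equivalent to $\C$ itself, not to $\Vec$; only the \emph{local} $A$-modules form $\Vec$, and that is not what your composite uses. Second, and more fundamentally, your route aims to produce a genuine fiber functor, and an integral fusion category need not admit one: $\Vec_G^\omega$ with $[\omega]\neq 1$ is integral but has no tensor functor to $\Vec$ --- this is precisely why ``quasi'' is unavoidable in (iii), so any argument producing an honest fiber functor must break down. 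The pseudo-unitarity detour is likewise unnecessary. The correct and much simpler argument (the one in \cite{ENO}) uses only that the numbers $d_i=\FPdim(X_i)$ are positive integers: set $F(X):=\bigoplus_i\Hom_\C(X_i,X)\otimes k^{d_i}$. This $F$ is $k$-linear and faithful, satisfies $\dim_kF(X)=\FPdim(X)$, and since $\FPdim$ is multiplicative and vector spaces of equal dimension are isomorphic, $F(X\ot Y)\simeq F(X)\ot F(Y)$ for all $X,Y$, which is all that (ii) asserts. With that replacement your cycle (i)$\Rightarrow$(ii)$\Rightarrow$(iii)$\Rightarrow$(i) closes correctly.
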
 

This is  \cite[Theorem 8.33]{ENO}. The main argument of the proof was
explained in Example \ref{Hopf} above. Note that the algebra $H$ from
Proposition \ref{FPint2} is automatically semisimple because the category $\C$ is
assumed to be semisimple.

Combining Propositions \ref{positive} and  \ref{FPint1} with formula 
\eqref{dimFPdim} we get the following statement.
\begin{corollary}    \label{FPint3}
A weakly integral fusion category has a unique 
$\varphi$-positive spherical structure for each field embedding $\varphi :\kRcyc\to\mathbb{R}$. 
For this structure one has $d(X)=\FPdim (X)$.
\end{corollary}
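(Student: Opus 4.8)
The plan is to deduce Corollary~\ref{FPint3} purely formally from results already at hand, specifically Proposition~\ref{positive}, Proposition~\ref{FPint1}, and formula~\eqref{dimFPdim}. First I would observe that a weakly integral fusion category $\C$ is $\varphi$-pseudounitary for every field embedding $\varphi:\kRcyc\to\mathbb{R}$: this is exactly Proposition~\ref{FPint1}(i), which asserts $\dim(\C)=\FPdim(\C)$, an equality of rational integers that is preserved by any $\varphi$ (since $\varphi$ fixes $\mathbb{Q}$). Hence the hypothesis of Proposition~\ref{positive} is satisfied for each such $\varphi$, and that proposition immediately yields a unique $\varphi$-positive spherical structure on $\C$.

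The second assertion, that $d(X)=\FPdim(X)$ for this structure, is then read off from formula~\eqref{dimFPdim}, which says $\varphi(d(X))=\FPdim(X)$ for the $\varphi$-positive spherical structure on a pseudounitary category. The only point needing a word is that the left-hand side is literally $d(X)$ rather than $\varphi(d(X))$: by Proposition~\ref{FPint1}(ii) we have $|X|^2=\FPdim(X)^2\in\mathbb{Z}$, and by~\eqref{dim3} a spherical structure satisfies $d_+(X)^2=|X|^2$, so $d(X)^2=\FPdim(X)^2$ is a rational integer; combined with $\varphi(d(X))=\FPdim(X)>0$ this forces $d(X)=\FPdim(X)$ as an element of $\kRcyc$ (the two square roots of the integer $\FPdim(X)^2$ are $\pm\FPdim(X)$, and $\varphi$ distinguishes them). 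In particular $d(X)$ does not depend on $\varphi$, which is consistent with the remark preceding Proposition~\ref{FPint1} that pseudounitarity is $\varphi$-independent in the weakly integral case.

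There is essentially no obstacle here; the statement is a corollary in the strict sense, and the ``proof'' is the two-line citation chain indicated in the excerpt (``Combining Propositions~\ref{positive} and~\ref{FPint1} with formula~\eqref{dimFPdim}''). If I were to write it out I would simply say: by Proposition~\ref{FPint1}(i), $\C$ is $\varphi$-pseudounitary for every $\varphi$; apply Proposition~\ref{positive} to get the unique $\varphi$-positive spherical structure; and then~\eqref{dimFPdim} together with the integrality of $\FPdim(X)$ from Proposition~\ref{FPint1}(ii) gives $d(X)=\FPdim(X)$. The mildly delicate point, worth a parenthetical remark, is the passage from $\varphi(d(X))=\FPdim(X)$ back to the equality $d(X)=\FPdim(X)$ inside $\kRcyc$, which uses that $d(X)^2=|X|^2=\FPdim(X)^2\in\mathbb{Z}$.
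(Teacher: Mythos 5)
Your proposal is correct and follows exactly the paper's own route: the paper's proof is the one-line citation "Combining Propositions~\ref{positive} and \ref{FPint1} with formula \eqref{dimFPdim}," which is precisely your chain (weak integrality gives $\varphi$-pseudounitarity by Proposition~\ref{FPint1}(i), Proposition~\ref{positive} gives the unique $\varphi$-positive spherical structure, and \eqref{dimFPdim} gives the dimension formula). Your added remark justifying the passage from $\varphi(d(X))=\FPdim(X)$ to the equality $d(X)=\FPdim(X)$ in $\kRcyc$, via $d(X)^2=|X|^2=\FPdim(X)^2\in\mathbb{Z}$, is a harmless and sensible elaboration of a point the paper leaves implicit.
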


\begin{remark}\label{nophi}
For integral fusion categories the notion of $\varphi$-positivity does not
depend on $\varphi$, so one can write ``positive" instead of  $\varphi$-positive.
The homomorphism $d :K_0(\C )\to\mathbb{Z}$ corresponding to
a positive spherical structure equals $\FPdim :K_0(\C )\to\mathbb{Z}$. 
\end{remark}

\begin{example}\label{point sph} Let $\C$ be a pointed category. Then $\FPdim(X)=1$
for any $X\in \O(\C)$, so $\C$ is integral. In this case it is very easy to describe the positive 
spherical structure from Remark \ref{nophi}. Namely, note that for an invertible object $X$ 
the map $\coev_X: \be \to X\otimes X^*$ is an isomorphism,
hence invertible. Now let $\delta_X: X\to X^{**}$ be the composition 
$$
X \xrightarrow{\id_X\otimes \coev_{X^*}} X\otimes X^*\otimes X^{**}
\xrightarrow{\coev_X^{-1} \otimes \id_{X^{**}}}X^{**}.
$$
Then it is easy to see that $\delta_{X\otimes Y}=\delta_X\otimes \delta_Y$ for $X,Y \in \O(\C)$ 
and hence $\delta$
(extended to $\C$ by linearity) is a pivotal structure on $\C$. It is easy to check that for this structure we
have $d(X)=1$ for $X\in \O(\C)$, so this is the positive spherical structure from 
Corollary \ref{FPint3} and Remark \ref{nophi}. 
\end{example}

\bigskip

\subsection{Braided fusion categories and non-degeneracy}
\subsubsection{$\tilde{S}$-matrix of a braided fusion category}
\label{tildeS matrix}

Recall that a {\em braided} tensor  category $\C$ is a  tensor
category equipped with a natural isomorphism $c_{X,Y}: X\otimes Y\simeq
Y\otimes X$ satisfying the hexagon axioms, see \cite{JS, BK}.

Recall that in  \S\ref{squared} given a natural isomorphism
$\psi_X: X \xrightarrow{~} X^{**}$ on $\C$ we defined traces
$\Tr_\pm$ and dimensions $d_\pm(X), \, X\in \O(\C)$, which depend on $\psi$.
Namely, replacing $\psi$ by $\psi\circ \alpha$, where 
$\alpha =\{\alpha_X\}_{X\in \O(\C)}$ is an automorphism of the identity
functor of $\C$, changes $d_\pm(X)$ to $\alpha_X^{\pm 1} d_\pm(X)$.

The following definition was suggested to us by Pavel Etingof.

\begin{definition}
Let $\C$ be a braided fusion category. Define a matrix 
$\tilde{S}=\{\tilde s_{XY}\}_{X,Y\in \O(\C)}$ by
\begin{equation}
\label{def tilde S}
\tilde{s}_{XY} := \dfrac{\Tr_- \ot \Tr_+(c_{Y,X}c_{X,Y})} {d_-(X) d_+(Y)}.
\end{equation}
\end{definition}

Note that  if one replaces $\psi$ by $\psi\circ \alpha$ as above 
then both the numerator and the denominator of \eqref{def tilde S} are
multiplied by $\alpha_X^{-1} \alpha_Y$, so that $\tilde{s}_{XY}$
does not change.
Thus, the matrix $\tilde{S}$ is an invariant of a braided fusion category $\C$
and does {\em not} depend on the choice of $\psi$.

\begin{definition} \label{defnondeg}
A braided fusion category $\C$ is said to be {\em non-degenerate} if
the corresponding  matrix $\tilde{S}$ is non-degenerate.
\end{definition}

Properties of a braided fusion category equivalent to its non-degeneracy 
are listed in Proposition~\ref{Muger1prime} below.

\subsubsection{Pre-modular and modular categories} \label{premod}

\begin{definition} (\cite{Br}) 
A {\em pre-modular} category is a braided fusion category
equipped with a spherical structure.
\end{definition} 

Let us reformulate this definition in terms of {\em twists.}
Recall that in any tensor category $\C$ the functor $X\mapsto X^{**}$ has a natural structure
of tensor functor. A braided structure on $\C$ provides a natural morphism
$u_X: X\to X^{**}$ defined as the composition
\begin{equation}
\label{Drinfeld iso}
X \xrightarrow{\id_X \ot \text{coev}_{X^*}}  X\otimes X^*\otimes X^{**}
\xrightarrow{ c_{X,X^*} \ot \id_{X^{**}}}  X^*\otimes X \otimes X^{**}
\xrightarrow{\text{ev}_{X} \ot  \id_{X^{**}}} X^{**},
\end{equation} 
where  $\text{ev}_X: X^* \ot X \to \be $ and $\text{coev}_X: \be \to X \ot X^*$ 
are the evaluation and coevaluation morphisms; moreover, it is known that
$u_X$ is an isomorphism. In general, the isomorphism $u_X$ is not tensor; instead, it satisfies
\begin{equation}  \label{nontensor}
u_X\otimes u_Y = u_{X\otimes Y} c_{Y,X}c_{X,Y},
\end{equation}
see e.g. \cite[Lemma 2.2.2]{BK}.
Any isomorphism $\psi_X:X\simeq X^{**}$ functorial in $X\in\C$ can be written as 
\begin{equation} \label{deltatheta}
\psi_X= u_X \theta_X, \quad \theta\in\Aut (\id_\C ).
\end{equation}
It follows from \eqref{nontensor} that
$\psi$ is a tensor isomorphism (i.e., a pivotal structure on $\C$) if and only if 
\begin{equation}
\label{balancing ax}
\theta_{X\ot Y} = (\theta_X\ot\theta_Y) c_{Y,X} c_{X,Y}.
\end{equation}
An automorphism $\theta: \id_\C\to \id_\C$ satisfying \eqref{balancing ax} is called a
{\em twist\,} (or a {\em balancing transformation\,}). It is known (e.g., see \cite[2.2.10, 2.3.15]{BK})
that a pivotal structure is  spherical if an only if the corresponding twist $\theta=\psi u^{-1}$
has the property
\begin{equation} \label{ribbon}
\theta_X^*=\theta_{X^*} \mbox{ for all } X\in \C .
\end{equation}
A twist $\theta$ satisfying \eqref{ribbon} is called a {\em ribbon structure.}
So {\em a pre-modular
category $\C$ is the same as a braided fusion category endowed with a ribbon structure\,}
(actually this is the original definition of pre-modular category from \cite{Br}; our definition
is taken from \cite[2.1]{Mu2}).

\medskip
\noindent {\bf Convention.}
For $X\in \O(\C)$ we will often abuse notation and consider $\theta_X$ as an element of $k^\times$
via the isomorphism $\Hom(X,X)\simeq k$. 
\medskip

Let $\C$ be a pre-modular category with a spherical structure  $\psi$.
Let $\Tr$ and $d$ denote the trace and dimension corresponding to $\psi$.

The {\em $S$-matrix} $S=\{s_{XY}\}_{X,Y\in \O(\C)}$ of $\C$ 
is defined  by 
\begin{equation}
\label{def S matrix}
s_{XY} = \Tr(c_{Y,X}c_{X,Y}),
\end{equation}
see \cite{T, BK,Mu2} (we follow here the convention of \cite{Mu2}
which is different from that of \cite{BK}). We have
\[
\tilde{s}_{XY} = \dfrac{s_{XY}}{d(X) d(Y)},
\]
i.e., the $\tilde{S}$-matrix can be thought of as a normalized $S$-matrix.
Note that $S$ and $\tilde{S}$ have the same rank.

\begin{remark}
Unlike the  $\tilde{S}$-matrix, the $S$-matrix of $\C$ depends on the choice
of isomorphism $\psi:X \xrightarrow{\sim} X^{**}$ (which is chosen
to be a spherical structure in \eqref{def S matrix}). A canonical alternative 
to this is to use $u$ from \eqref{Drinfeld iso} instead of $\psi$ in  \eqref{def S matrix}.
This would result in replacing $s_{XY}$  by $\theta_X^{-1} \theta_Y^{-1} s_{XY}$.
\end{remark}

\begin{definition} (\cite{T}) A pre-modular category $\C$ is said to be {\em modular} if its
$S$-matrix is non-degenerate. 
\end{definition}

Clearly, a pre-modular category is modular if and only  if it is non-degenerate in the 
sense of Definition~\ref{defnondeg}.

Recall that for a spherical fusion category $\C$, its object $Y\in \C$ and $f\in \End Y$ we defined
its trace $\Tr(f)\in k$, see Remark \ref{spherical trace}. In particular this applies in the case
of a pre-modular category $\C$, $Y=X\ot X$ for some $X\in \C$ and $f=c_{X,X}^{-1}$. 
We will need the following well known fact.

\begin{proposition} \label{traceofbraiding}
Let $X$ be a simple object of a pre-modular category 
and $\theta_X\in k^\times$ the corresponding twist. 
Then 
\begin{equation}\label{traceofc}
\theta_X^{-1}d(X)=
\Tr(c_{X,X}^{-1}).
\end{equation}
\end{proposition}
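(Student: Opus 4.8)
The plan is to compute the trace $\Tr(c_{X,X}^{-1})$ directly from the definitions of the pre-modular structure, using the balancing axiom \eqref{balancing ax} as the main ingredient. First I would recall that since $X$ is simple, $X^{**}$ is simple as well, so $\Hom(X,X^{**})$ is one-dimensional; both the pivotal isomorphism $\psi_X$ and the Drinfeld isomorphism $u_X$ of \eqref{Drinfeld iso} live in this space, and by \eqref{deltatheta} they are related by the scalar $\theta_X$, i.e. $\psi_X = u_X\theta_X$. The key observation is that the endomorphism $c_{X,X}^{-1}$ of $X\ot X$ can be related to $c_{X,X}c_{X,X}$ via the hexagon axioms and naturality, but more efficiently one uses the well-known identity in a braided category expressing $c_{X,X}^{-1}$ in terms of twists and the double braiding: applying \eqref{nontensor} (equivalently \eqref{balancing ax}) with $Y = X$ gives $\theta_{X\ot X} = (\theta_X\ot\theta_X)\,c_{X,X}c_{X,X}$, so on the component of $X\ot X$ isomorphic to a given simple summand one has precise control of $c_{X,X}c_{X,X}$.

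Concretely, I would proceed as follows. Write $\Tr(c_{X,X}^{-1})$ using the spherical trace on $X\ot X$. Since the braiding is natural and $\theta$ is a twist, one has $\theta_{X\ot X} = c_{X,X}c_{X,X}\,(\theta_X\ot\theta_X)$ as endomorphisms of $X\ot X$ (using that $\theta$ is central). Equivalently, $c_{X,X}^{-1} = c_{X,X}\,(\theta_X\ot\theta_X)\,\theta_{X\ot X}^{-1} = \theta_X^2\, c_{X,X}\,\theta_{X\ot X}^{-1}$. Taking traces and using that $\Tr$ is a categorical (spherical) trace, $\Tr(c_{X,X}^{-1}) = \theta_X^2\,\Tr(c_{X,X}\,\theta_{X\ot X}^{-1})$. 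Now decompose $X\ot X = \bigoplus_{Z} N_{XX}^Z\, Z$ into simples; on the $Z$-isotypic part $\theta_{X\ot X}$ acts by the scalar $\theta_Z$, so $\Tr(c_{X,X}\,\theta_{X\ot X}^{-1}) = \sum_Z \theta_Z^{-1}\,\Tr(c_{X,X}|_Z\text{-part})$. To finish I would instead take the cleaner route: compute $\Tr(c_{X,X}c_{X,X})$ two ways. On one hand, $c_{X,X}c_{X,X} = \theta_{X\ot X}(\theta_X\ot\theta_X)^{-1} = \theta_X^{-2}\theta_{X\ot X}$, whose trace is $\theta_X^{-2}\sum_Z N_{XX}^Z\,\theta_Z\,d(Z) = \theta_X^{-2}\sum_Z \theta_Z\,d(Z)N_{XX}^Z$. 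But the double braiding trace is also $\tilde s$-matrix-related: by \eqref{def S matrix} with $Y = X$, $\Tr(c_{X,X}c_{X,X}) = s_{XX}$. Rather than invoking the full $S$-matrix, the most direct argument for this specific claim simply evaluates $\Tr(c_{X,X}^{-1})$ by inserting the definition of the spherical trace \eqref{2 traces +}, writing $c_{X,X}^{-1}$ via the above twist identity, and using $\Tr(\theta_{X\ot X}^{-1}) = \theta_X^{-2}$ applied to $\id$ — but the honest shortcut is: the operator $c_{X,X}^{-1}$ composed with the rigidity morphisms "closes up" one strand, producing an endomorphism of $X$ which by simplicity is a scalar times $\id_X$, and that scalar is exactly $\theta_X^{-1}$ by the naturality square for $c$ and the definition of $u_X$ in \eqref{Drinfeld iso}; multiplying by the remaining closed loop contributes $d(X)$.

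The main obstacle I anticipate is bookkeeping the rigidity/duality morphisms correctly: one must verify that partially closing $c_{X,X}^{-1}$ against $\ev_X$ and $\coev_X$ (using the spherical structure $\psi$) yields precisely $\theta_X^{-1}\id_X$ rather than, say, $\theta_{X^*}^{-1}\id_X$ or some conjugate, and here the ribbon condition \eqref{ribbon} guarantees these coincide so no ambiguity arises. This step amounts to the standard graphical-calculus computation that the "positive curl" on a single strand labeled by a simple object $X$ equals multiplication by $\theta_X$ (hence the negative curl, coming from $c^{-1}$, gives $\theta_X^{-1}$); I would carry it out by reducing \eqref{2 traces +} applied to $c_{X,X}^{-1}$ to a composite of structure morphisms and recognizing \eqref{Drinfeld iso} inside it, with the twist appearing via \eqref{deltatheta}. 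Once that scalar is identified, the remaining closed loop contributes the categorical dimension $d(X)$, giving \eqref{traceofc}. Because the statement is labeled "well known," I expect this graphical verification to be the entire content, and I would keep it brief, citing \cite{BK} for the curl computation if a fully rigorous diagrammatic argument is deemed too long.
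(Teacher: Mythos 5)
Your final route---evaluating the spherical trace of $c_{X,X}^{-1}$, recognizing the Drinfeld morphism \eqref{Drinfeld iso} inside the partial closure, and converting it into $\theta_X^{-1}$ via \eqref{deltatheta}---is essentially the paper's own proof; the paper carries out exactly the bookkeeping you defer to a ``standard curl computation,'' namely a naturality-plus-hexagon diagram rewriting $u_X$ in terms of the inverse braiding and then matching the resulting composite with $\Tr(c_{X,X}^{-1})$ through the trace formula \eqref{2nd dual trace}. Your earlier detours are rightly discarded: the balancing axiom \eqref{balancing ax} only controls the square $c_{X,X}c_{X,X}$, so no manipulation of twists and fusion multiplicities alone can reach $c_{X,X}^{-1}$ (and the stray identity $\Tr(\theta_{X\ot X}^{-1})=\theta_X^{-2}$ in that passage is false), which is precisely why the rigidity computation you flag as the crux is the entire content of the proposition.
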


\begin{remark} 
One also has $\theta_Xd(X)=\Tr(c_{X,X})$. This follows from formula \eqref{traceofc} 
applied to the tensor category $\C$ equipped with the braiding $\tilde c_{X,Y}:=c_{Y,X}^{-1}$
(it is well known that the twist corresponding to this braiding equals $\theta^{-1}$).
\end{remark}

\begin{proof}
By definition,  
\begin{equation} \label{first_formula}
\theta_X^{-1}d(X)=\Tr(\psi_X^{-1}u_X), 
\end{equation}
where $\psi_X: X \xrightarrow{\sim} X^{**}$ is the spherical structure and
$u_X: X\iso X^{**}$ is defined by~\eqref{Drinfeld iso}.
Consider the following diagram, whose first row is identical to \eqref{Drinfeld iso}:
\begin{equation*}
\xymatrix{
X \ar[rr]^(.4){\id_X\ot \coev_{X^*}}\ar[d]^{\id_X}&&X\ot X^*\ot X^{**}\ar[rr]^{c_{X,X^*}\ot \id_{X^{**}}}
\ar[d]^{c_{X,X^*\ot X^{**}}}&&X^*\ot X\ot X^{**}\ar[rr]^(.65){\ev_{X}\ot \id_{X^{**}}}
\ar[d]^{\id_{X^*\ot X\ot X^{**}}}&&X^{**} \ar[d]^{\id_{X^{**}}}\\
X \ar[rr]^(.4){\coev_{X^*}\ot \id_X}&&X^*\ot X^{**}\ot X\ar[rr]^{\id_{X^*}\ot c^{-1}_{X,X^{**}}}&&X^*\ot X\ot X^{**}\ar[rr]^(.65){\ev_X\ot \id_{X^{**}}}&&X^{**} }.
\end{equation*}
The left square commutes by naturality of the braiding, the middle one commutes by the hexagon 
axiom, and the commutativity of the right square is obvious. Hence the diagram commutes.  
By definition, $u_X$ is the composition in the top row.  It equals the composition in the bottom row,
i.e., 
\begin{equation} \label{new uX}
u_X = (\text{ev}_{X} \ot  \id_{X^{**}})  \, (\id_{X^*}\ot c_{X,X^{**}}^{-1}) \, (\text{coev}_{X^*}\ot \id_X).
\end{equation}
Using the naturality of the braiding and $\psi_X$ we obtain
\begin{equation} \label{new uX times psi}
\psi_X^{-1}u_X = (\text{ev}_{X} \ot  \id_{X}) \, (\psi_{^*X}\ot c_{X,X}^{-1}) \, (\text{coev}_{^*X}\ot \id_X).
\end{equation}
Next, observe that for  any morphism $f: X \to X$ in a spherical fusion category its trace 
equals the composition
\begin{equation} \label{2nd dual trace}
\be \xrightarrow{\text{coev}_{{}^*X}} {}^*X \ot X \xrightarrow{\psi_{{}^*X} \ot f} X^* \ot X \xrightarrow{\ev_X} \be.
\end{equation}
This follows by comparing \eqref{2nd dual trace} with \eqref{2 traces -}.

Now using \eqref{first_formula},  \eqref{new uX times psi}
and \eqref{2nd dual trace}, we see that the l.h.s. of \eqref{traceofc} equals the composition
\begin{equation*}
\be \xrightarrow{\text{coev}_{{}^*X\ot ^*X}} {}^*X \ot {}^*X \ot X \ot X \xrightarrow{\psi_{ {}^*X \ot {}^*X  } \ot c_{X,X}^{-1}}
 {}^*X \ot {}^*X \ot X \ot X  \xrightarrow{\ev_{X\ot X}} \be.
\end{equation*}

The r.h.s. of \eqref{traceofc} equals the same composition. To see this, apply formula
\eqref{2nd dual trace} to $X\ot X$ instead of $X$.
\end{proof}

\subsubsection{The Verlinde formula}  
\label{Qtrace}

Let  $\C$ be a pre-modular category.
Note that the $S$-matrix of $\C$ is symmetric
and satisfies $s_{X^* Y^*} = s_{XY}$ for all $X,Y\in \O(\C)$.
Applying $\Tr$ to both sides of formula \eqref{balancing ax} we obtain
\begin{equation}
\label{sij}
s_{XY} = \theta_X^{-1}\theta_Y^{-1} \sum_{Z\in \O(\C)}
\, N_{XY}^Z \theta_Z d(Z),
\end{equation}
for all $X,Y,Z\in \O(\C)$, 
where $N_{XY}^Z$ is the multiplicity of $Z$ in $X\ot Y$. 

The elements of the $S$-matrix also satisfy the following condition,
see \cite[Theorem 3.1.12]{BK}, \cite[Lemma 2.4 (iii)]{Mu2}:
\begin{equation}
\label{ss= Nds}
s_{XY} s_{XZ} = d(X) \sum_{W\in \O(\C)} \, N_{YZ}^W s_{XW},\quad X,Y,Z\in \O(\C).
\end{equation}
When $S$ is invertible, equation \eqref{ss= Nds} is equivalent to the Verlinde formula
\cite[Theorem 3.1.14]{BK}.

\subsection{Center of a fusion category}\label{center}
For any fusion category $\C$ its {\em center}
$\mathcal{Z}(\C)$ is defined as the category whose objects are
pairs $(X, c_{-,X})$, where $X$ is an object of $\C$ and
$c_{-,X}$ is a natural family of isomorphisms $c_{V, X} : V\ot X
\simeq X \ot V$, $V\in\C$, satisfying a certain
compatibility condition, see \cite[Defini\-tion~XIII.4.1]{Kass}. 
The tensor category $\Z(\C)$ is equipped with a braiding, see 
{\em loc. cit.}
It is proved in
\cite[Theorem 2.15, Proposition 8.12]{ENO} that $\Z(\C)$ is a fusion
category and
\begin{equation}
\label{dimZ}
\dim(\Z(\C))=\dim(\C)^2, \quad \FPdim(\Z(\C))=\FPdim(\C)^2.
\end{equation}

There is an obvious forgetful tensor functor $\Z(\C)\to \C$.
The following fact was proved in \cite[Proposition 3.39(i)]{EO}.

\begin{proposition}
\label{Forg surj}
The forgetful functor $\Z(\C)\to \C$ is surjective.
\end{proposition}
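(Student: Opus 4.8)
\textbf{Proof proposal for Proposition~\ref{Forg surj} (surjectivity of the forgetful functor $\mathcal{Z}(\C)\to\C$).}

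The plan is to use the Frobenius--Perron dimension count in \eqref{dimZ} together with the fact that the forgetful functor $F:\Z(\C)\to\C$ is \emph{dominant} in a quantitative sense, coming from the existence of a canonical ``induction'' (adjoint) functor $I:\C\to\Z(\C)$. First I would recall that $F$ has a two-sided adjoint $I$; concretely, for $X\in\C$ one sets $I(X)=\bigoplus_{V\in\O(\C)} V\ot X\ot V^*$ (a half-braiding is built from the braiding-like structure on this ``coend-type'' object, using the associativity and the canonical evaluation/coevaluation maps), and one has the adjunction $\Hom_{\Z(\C)}(Z,I(X))\cong\Hom_\C(F(Z),X)$. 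The key numerical input is that $F(I(\be))$ contains every simple object of $\C$: indeed $F(I(\be))=\bigoplus_{V\in\O(\C)}V\ot V^*$ as an object of $\C$, and for any simple $Y\in\C$ one has $\Hom_\C(V\ot V^*,Y)\neq 0$ for a suitable $V$ (take $V$ to be any simple constituent of an object with $Y$ as a constituent of $V\ot V^*$; e.g.\ $V=Y$ itself gives $\be\subset Y\ot Y^*$, and one uses that the $V\ot V^*$, $V\in\O(\C)$, generate $\C$ as a fusion category since they generate $\C_{ad}$ and... ). Here one must be slightly careful: $\bigoplus_V V\ot V^*$ generates $\C_{ad}$, not necessarily all of $\C$. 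So instead I would use $F(I(X))=\bigoplus_{V\in\O(\C)}V\ot X\ot V^*$ and note that letting $X$ range over $\O(\C)$, the objects $F(I(X))$ together contain every simple of $\C$ (take $V=\be$).

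The cleanest route, though, avoids counting constituents and instead invokes Proposition~\ref{tens func}(iii): it suffices to show that $F(\Z(\C))$, the image fusion subcategory, has $\FPdim(F(\Z(\C)))=\FPdim(\C)$. By \eqref{dimZ} we know $\FPdim(\Z(\C))=\FPdim(\C)^2$. The image $\D:=F(\Z(\C))\subset\C$ is a fusion subcategory, and $F$ factors as a \emph{surjective} functor $\Z(\C)\twoheadrightarrow\D$ followed by the inclusion $\D\hookrightarrow\C$. Now $\Z(\C)$ is braided, and $F$ is a \emph{central functor}: its image lands in the centralizer of $\D$ inside its own center in a way that forces $\Z(\C)$ to be a module category over $\Z(\D)$; more precisely, the universal property of the center gives a braided tensor functor $\Z(\C)\to\Z(\D)$ compatible with the forgetful functors, hence $\FPdim(\Z(\C))\le\FPdim(\Z(\D))=\FPdim(\D)^2$. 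Combined with $\FPdim(\Z(\C))=\FPdim(\C)^2$ this yields $\FPdim(\C)\le\FPdim(\D)$, and since $\D\subset\C$ we get $\FPdim(\D)=\FPdim(\C)$, so $\D=\C$ by Proposition~\ref{tens func}(ii) applied to the inclusion (or directly: a fusion subcategory of the same FP-dimension equals the whole category).

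\textbf{Main obstacle.} The delicate point is establishing the inequality $\FPdim(\Z(\C))\le\FPdim(\D)^2$ where $\D=F(\Z(\C))$; this requires knowing that the center construction is in an appropriate sense functorial/monotone under the data of a braided category mapping into $\C$ through $\D$. One way to make this rigorous is to exhibit $\Z(\C)$ as a fusion subcategory of $\Z(\D)\bt(\text{something})$, or, following \cite{EO}, to use that $\Z(\C)$ is equivalent as a module category over $\C\bt\C^{\mathrm{rev}}$ to $\C$ itself and then transport this along $\D\hookrightarrow\C$; alternatively one can bypass the whole issue by the constituent-counting argument of the first paragraph, which shows directly that $\D$ contains every simple object of $\C$ (each simple $Y$ occurs in $F(I(Y^*))\supset Y\ot Y^*\ot Y\ni Y$), hence $\D=\C$. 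Since the excerpt has already set up Frobenius--Perron dimensions and Proposition~\ref{tens func}, I would present the constituent-counting version as the primary argument — it is short and self-contained — and mention the dimension-count version as an alternative. In either case the real content is the existence of the adjoint $I$ to $F$ with $F(I(X))=\bigoplus_{V\in\O(\C)}V\ot X\ot V^*$, which is standard (see \cite[\S3]{EO}) and which I would simply cite.
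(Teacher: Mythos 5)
The paper does not prove this statement itself: it simply cites \cite[Proposition 3.39(i)]{EO}, and your primary argument — the adjoint (induction) functor $I$ of the forgetful functor $F$ with $F(I(X))=\bigoplus_{V\in\O(\C)}V\ot X\ot V^*$, so that the $V=\be$ summand shows every simple $Y\in\O(\C)$ is contained in $F(I(Y))$ and hence lies in the image in the sense of Definition~\ref{image} — is exactly the standard argument behind that citation, so your main proof is correct and essentially the paper's (cited) one. Two caveats. First, in your ``main obstacle'' paragraph the bookkeeping slips: with your formula the $V=Y$ summand of $F(I(Y^*))$ is $Y\ot Y^*\ot Y^*$, not $Y\ot Y^*\ot Y$; the clean choice is the one you already made, namely $V=\be$, $X=Y$. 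Second, the alternative dimension-count route should be dropped: there is no canonical braided tensor functor $\Z(\C)\to\Z(\D)$ for a fusion subcategory $\D\subset\C$ (restricting half-braidings only lands in the relative center $\Z_\D(\C)$ used in \S\ref{Muger2-FPdimproof}, not in $\Z(\D)$), and even if some tensor functor existed, $\FPdim$ is not monotone along tensor functors (the forgetful functor $\Rep(G)\to\Vec$ drops $\FPdim$ from $|G|$ to $1$), so the inequality $\FPdim(\Z(\C))\le\FPdim(\D)^2$ has no justification as stated. Since you flagged this as the delicate point and made the constituent-counting argument primary, the proposal stands once the alternative is removed.
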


It is known that the center of a spherical fusion category $\C$ has an
(obviously unique) spherical structure such that the forgetful functor
$\Z(\C )\to\C$ commutes with spherical structures
(see \cite{Kass, Mu4}). Thus $\Z(\C)$ is a pre-modular category. In fact,
$\Z(\C)$ is modular
(this is proved in {\em loc. cit.} under the assumption
$\dim(\C)\ne 0$, which holds automatically by \cite[Theorem 2.3]{ENO}).

\begin{rems} \label{dimFX}
\begin{enumerate}
\item[(i)] 
If a fusion category $\C$ is $\varphi$-pseudounitary in the sense of
\S\ref{ps-unitarity} then so
is $\mathcal{Z}(\C)$ (this follows from \eqref{dimZ}). If a spherical
structure on $\C$ is
$\varphi$-positive in the sense of \S\ref{ps-unitarity} then so is the
corresponding spherical structure on
$\mathcal{Z}(\C)$ (to see this, note that since the forgetful functor
$F:\Z(\C )\to\C$ commutes with spherical structures one has $d(X)=d(F(X))$
for all $X\in\Z(\C)$).
\item[(ii)] 
If a fusion category $\C$ is integral in the sense of Definition \ref{intdef} then so is $\Z(\C)$. 
This is because the function 
$\FPdim : K_0(\Z (\C))\to {\mathbb R}$
equals the composition $f:K_0(\Z (\C))\to K_0(\C)\stackrel{\FPdim}{\longrightarrow} {\mathbb R}$ 
(note that $f$ is a homomorphism and $f(X)>0$ for all nonzero $X\in\Z(\C)$,
i.e., $f$ has the properties that uniquely characterize the homomorphism
$\FPdim : K_0(\Z (\C))\to {\mathbb R}$ according to \S\ref{fpd}). The converse statement is
also true: if $\Z(\C)$ is integral then so is $\C$. This follows from \cite[Corollary 8.36]{ENO}
and Proposition~\ref{Forg surj}.
\end{enumerate}
\end{rems}

We will need the following result in \S \ref{elementxc}.

\begin{proposition} \label{centerweaklyintegral}
Let $\C$ be a fusion category with $\Z(\C)$ weakly integral. Then $\C$ is weakly integral.
\end{proposition}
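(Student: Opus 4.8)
The plan is to deduce weak integrality of $\C$ from the relation between $\FPdim(\C)$ and $\FPdim(\Z(\C))$ together with the arithmetic of Frobenius--Perron dimensions. By \eqref{dimZ} we have $\FPdim(\Z(\C)) = \FPdim(\C)^2$, so if $\FPdim(\Z(\C)) \in \mathbb{Z}$ then $\FPdim(\C)^2$ is a (positive) integer. Hence it suffices to show that $\FPdim(\C)$, which is a priori only a totally positive cyclotomic algebraic integer in $\mathbb{R}$, is automatically rational once its square is an integer; a rational algebraic integer is an integer, and then $\C$ is weakly integral by Definition~\ref{intdef}.

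To prove rationality of $\FPdim(\C)$ I would invoke the Galois-stability property of $\FPdim(\C)$ recalled in \S\ref{fpd}: by \cite{O3} the ideal of the ring of algebraic integers generated by $\FPdim(\C)$ is stable under $\mathrm{Gal}(\bar{\BQ}/\BQ)$. Thus for every $\sigma$ in the Galois group, $\sigma(\FPdim(\C))$ and $\FPdim(\C)$ generate the same ideal, so their ratio is an algebraic unit. But $\sigma(\FPdim(\C))^2 = \sigma(\FPdim(\C)^2) = \FPdim(\C)^2$ since $\FPdim(\C)^2 \in \BZ$ is fixed by $\sigma$; therefore $\sigma(\FPdim(\C)) = \pm\FPdim(\C)$, and since both $\FPdim(\C)$ (being $\ge 1$) and its image under $\sigma$ (being again a Frobenius--Perron-type quantity, or simply because $\FPdim(\C)^2 = |\sigma(\FPdim(\C))|^2$ forces $|\sigma(\FPdim(\C))| = \FPdim(\C) > 0$) are... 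I need to be careful here: $\sigma(\FPdim(\C))$ need not be real. The clean argument is: $\sigma(\FPdim(\C))^2 = \FPdim(\C)^2$, a fixed positive rational, so $\sigma(\FPdim(\C)) = \pm \FPdim(\C)$ for each $\sigma$; in either case $\sigma(\FPdim(\C)^{\,}) \in \mathbb{Q}(\FPdim(\C))$ lies in $\{\pm\FPdim(\C)\}$, so the Galois orbit of $\FPdim(\C)$ has at most two elements, namely $\pm\FPdim(\C)$. Hence $\FPdim(\C)$ has degree at most $2$ over $\BQ$ and generates a subfield of a cyclotomic field; if it has degree exactly $2$ its minimal polynomial is $x^2 - \FPdim(\C)^2$, forcing $\BQ(\FPdim(\C)) = \BQ(\sqrt{m})$ for a squarefree integer $m > 1$, a real quadratic field.

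The remaining point — which I expect to be the main obstacle — is ruling out the real quadratic case, i.e., showing $\FPdim(\C)$ cannot be a genuine quadratic irrationality $\sqrt{m}$. Here I would bring in the refined information that the $\FPdim(X)$ for $X \in \O(\C)$ are themselves cyclotomic algebraic integers and $\FPdim(\C) = \sum_X \FPdim(X)^2$; more usefully, the grading of Proposition~\ref{FPint1}(iii) is designed exactly for weakly integral categories, so one should instead argue directly that $\FPdim(\C)^2 \in \BZ$ already implies $\FPdim(\C) \in \BZ$ for a fusion category by a dimension-theoretic argument. Alternatively, and probably most efficiently, I would apply the conclusion to $\Z(\C)$ and use that $\Z(\C)$ is weakly integral by hypothesis together with Proposition~\ref{FPint1}(iii): the grading of $\Z(\C)$ by $\mathbb{Q}^{\times}_{>0}/(\mathbb{Q}^{\times}_{>0})^2$ has trivial component the maximal integral subcategory, and pull back structure via the forgetful functor $\Z(\C) \to \C$ (which is surjective, Proposition~\ref{Forg surj}) to constrain $\FPdim$ on $\C$. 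The cleanest route, which I would write up, is: $\FPdim(\C)^2 = \FPdim(\Z(\C)) \in \BZ$ combined with the fact that for any fusion category the square $\FPdim(\C)^2$ being an integer forces $\FPdim(\C)$ to be an integer — this last implication follows because $\FPdim(\C)$ divides $\FPdim(\Z(\C))$ in the ring of algebraic integers (a standard divisibility, $\FPdim(\Z(\C))/\FPdim(\C) = \FPdim(\C)$ is an algebraic integer), hence $\FPdim(\C)$ is an algebraic integer whose square is a rational integer and whose Galois conjugates are all $\pm\FPdim(\C)$; being totally positive in the real quadratic case is impossible since a real quadratic irrational $\sqrt m$ has the conjugate $-\sqrt m < 0$, contradicting... no — $\FPdim(\C)$ itself is real positive but its Galois conjugate $-\FPdim(\C)$ is negative, yet Frobenius--Perron dimensions of fusion categories need not be totally positive. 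So the decisive input must be Proposition~\ref{FPint1}: I would instead show $\C$ is weakly integral by noting $\FPdim(\C)^2\in\BZ$ and then applying \cite[Remark 8.26]{ENO}-type reasoning is \emph{not} available (the converse there is false). Therefore the actual proof I propose is the Galois argument above finished off by: since $\Z(\C)$ is weakly integral, Proposition~\ref{FPint1}(i) gives $\dim(\Z(\C)) = \FPdim(\Z(\C)) \in \BZ$, hence $\dim(\C)^2 \in \BZ$ by \eqref{dimZ}, and the same Galois-stability statement for $\dim(\C)$ (also from \cite{O3}, valid for all fusion categories) yields that $\dim(\C)$ is real and its conjugates are $\pm\dim(\C)$; combined with $\dim(\C) = \FPdim(\C)$ or $\dim(\C) \le \FPdim(\C)$ and $\varphi(\dim(\C)) > 0$ for every embedding $\varphi$ — i.e., $\dim(\C)$ \emph{is} totally positive (Remark after \eqref{categorical dimC}) — we conclude $\dim(\C)$ cannot have a negative conjugate $-\dim(\C)$, so its Galois orbit is $\{\dim(\C)\}$, forcing $\dim(\C) \in \BQ$, hence $\dim(\C) \in \BZ$. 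Then $\FPdim(\C) = \sqrt{\FPdim(\C)^2}$ with $\FPdim(\C)^2 \in \BZ$, and since $\FPdim(\C) \ge \dim(\C)$-type comparison plus the algebraic-integer property pins it down, $\FPdim(\C) \in \BZ$, i.e., $\C$ is weakly integral. I expect the write-up to hinge on this use of total positivity of $\dim(\C)$ to kill the quadratic case, which is the step I would be most careful about.
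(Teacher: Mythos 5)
Your reduction is fine as far as it goes: from \eqref{dimZ} you correctly get $\FPdim(\C)^2=\FPdim(\Z(\C))\in\BZ$, so every Galois conjugate of $\FPdim(\C)$ is $\pm\FPdim(\C)$, and the whole problem becomes the implication $\FPdim(\C)^2\in\BZ\Rightarrow\FPdim(\C)\in\BZ$. But that implication is exactly where your write-up never closes. The Galois argument would need \emph{total} positivity of $\FPdim(\C)$ (positivity of all conjugates), which you correctly observe you do not have from what the paper recalls about $\FPdim$; the divisibility remark ($\FPdim(\C)$ divides $\FPdim(\Z(\C))$) is vacuous since the quotient is $\FPdim(\C)$ itself; and your fallback through the categorical dimension only yields $\dim(\C)\in\BZ$ (that part is correct, since $\dim(\C)$ \emph{is} totally positive), but integrality of $\dim(\C)$ does not imply weak integrality --- the paper states this explicitly in Remark~\ref{intrem}(b), citing the counterexample of \cite[Remark 8.26]{ENO}. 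Invoking $\dim(\C)=\FPdim(\C)$ to bridge the gap would be circular, since pseudounitarity (Proposition~\ref{FPint1}(i)) is only available \emph{after} weak integrality is known. So as written the proof has a genuine hole at its last step.

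The missing ingredient is Proposition~\ref{FPint1}(ii): in a weakly integral fusion category every simple object satisfies $|X|^2=\FPdim(X)^2\in\BZ$, which localizes integrality to simple objects and hence makes weak integrality pass to fusion subcategories (Corollary~\ref{subwi}). That is how the paper argues, in two lines: $\FPdim(\C\boxtimes\C)=\FPdim(\Z(\C))\in\BZ$, so $\C\boxtimes\C$ is weakly integral, and $\C\simeq\C\boxtimes\be$ is a fusion subcategory of it, hence weakly integral. In other words, the true content of ``$\FPdim(\C)^2\in\BZ\Rightarrow\FPdim(\C)\in\BZ$'' is not Galois theory on the global dimension but the ENO pseudounitarity statement for weakly integral categories; if you want to keep your route, you would have to import that input (or a total-positivity statement for $\FPdim(\C)$ and all its conjugates), at which point the subcategory argument is both shorter and already in the paper.
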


\begin{proof} Using \eqref{dimZ} we see that $\FPdim(\C\boxtimes \C)=\FPdim(\Z(\C))\in \BZ$, so
the category $\C\boxtimes \C$ is weakly integral. Now $\C =\C \boxtimes \be \subset \C \boxtimes
\C$ and we are done by Corolla\-ry~\ref{subwi}.
\end{proof}

\subsection{Center of a braided fusion category}  \label{centre}
Let $\C$ be a braided fusion category with braiding $c$. 
Let $\C^{\oP}$ be the same fusion category equipped with the braiding $\tilde c_{X,Y}:=c_{Y,X}^{-1}$.
Then the assignment $X\mapsto (X,\, c_{-,X})$ extends to a braided functor $\C\to \Z(\C)$.
Similarly, the assignment $X \mapsto (X,\, \tilde c_{-,X})$ extends to a braided functor $\C^{\oP}\to\Z(\C)$.
Both functors are fully faithful. They combine together
into a single braided tensor functor $G: \C \boxtimes \C^{\oP}\to \Z(\C)$
(see Proposition~\ref{Muger1prime}(iii) and Remark~\ref{newrems}(a) below). 
In Proposition \ref{Muger1prime} we will prove that $G$ is an equivalence if and only if $\C$ is 
non-degenerate. For pre-modular categories this is due to M.~M\"uger \cite{Mu4}.

\subsection{Pointed braided categories and metric groups}\label{CGQ}

\subsubsection{Quadratic forms}\label{Quadra}
Let $G$ and $B$ be abelian groups. We will use the multiplicative notation for the
group operations. Following, e.g., \cite{JS} we define a {\em quadratic form\,} on
$G$ with values in $B$ to be a map $q: G\to B$ 
such that $q(g^{-1})=q(g)$ and the symmetric function $b(g,h):=\frac{q(gh)}{q(g)q(h)}$ 
is bimultiplicative, i.e., $b(g_1g_2,h)= b(g_1,h)b(g_2,h)$ for all $g_1, g_2, h\in G$.
We say that $b:G\times G\to B$ is the bimultiplicative form associated to $q$. If $B=k^{\times}$ for
some field $k$ we will also say that $b:G\times G\to k^{\times}$ is the 
{\em bicharacter associated to\,} $q$. We say that $q$ is {\em non-degenerate\,} if $b$ is.

\begin{rems}  \label{easyrems}
\begin{enumerate}
\item[(i)] 
It is easy to show that if $q$ is a quadratic form in our sense then $q(g^n)=q(g)^{n^2}$ 
for all $g\in G$, $n\in \BZ$. Therefore our notion of quadratic form is equivalent to
Bourbaki's notion of a quadratic form on a $\mathbb{Z}$-module.
\item[(ii)] Not every quadratic form $q:G\to B$ can be represented as $q(g)=\beta (g,g)$,
where $\beta :G\times G\to B$ is bimultiplicative. Counterexample: $G=\BZ/2\BZ$,
$B=\mathbb{C}^{\times}$, $q(n)=i^{n^2}$.
\item[(iii)] Let $q: G\to B$ be a quadratic form and $H\subset G$ the kernel of the corresponding
bimultiplicative form. Then the restriction of $q$ to $H$ is a homomorphism $\chi :H\to B$
such that $\chi (h)^2=1$ for all $h\in H$. Moreover, $q$ is the pullback of a quadratic form
$\tilde q:G/H_0\to B$, where $H_0:=\Ker\chi$. 
\end{enumerate}
\end{rems}

\subsubsection{The category of pre-metric groups}
Now fix an algebraically closed field  $k$ of characteristic 0 and define
$G^*:=\Hom (G,k^{\times})$.

\begin{definition} A {\em pre-metric group\,} is a pair $(G,q)$ where $G$ is a finite abelian group 
and $q:G \to k^\times$ is a quadratic form. A {\em morphism of pre-metric groups\,} 
$(G_1,q_1)\to (G_2,q_2)$ is a homomorphism $\phi: G_1\to G_2$ such that 
$q_2(\phi(g))=q_1(g)$ for all $g\in G_1$.
\end{definition}

Pre-metric groups form a category, which will be denoted by $\PM_k$.

\subsubsection{Metric groups}\label{metra}
Let $G$ be a finite abelian group. A quadratic form $q:G\to k^{\times}$ is said to be 
{\em non-degenerate\,} if the associated bicharacter $b:G\times G\to k^{\times}$ is 
(i.e., if the homomorphism $G\to G^*$ corresponding to $b$ is an isomorphism).

\begin{remark}  \label{warning}
If $|G|$ is odd then by Remark \ref{easyrems}(iii) every quadratic form $q:G\to k^{\times}$ 
comes from a non-degenerate quadratic form on a quotient of $G$. This is not true if 
$G=\mathbb{Z}/2\mathbb{Z}$ and $q(n)=(-1)^n$.
\end{remark}

\begin{definition} 
A {\em metric group} is a pre-metric group $(G,q)$ such that $q$
is non-degenerate.
\end{definition}

\subsubsection{Pointed braided categories and pre-metric groups}   \label{pbcpmg}
Let $\PB_k$ denote the 1-categorical truncation of the 2-category of pointed braided fusion
categories. In other words, $\PB_k$ is the category whose objects are pointed braided fusion
categories and whose morphisms are braided functors up to braided isomorphism.

Let $\C$ be a pointed braided fusion category. Then isomorphism classes of simple objects of $\C$
form a finite abelian group $G$. If $g\in G$ let $q(g)\in k^{\times}$ denote the braiding 
$c_{XX}\in\Aut (X\ot X)=k^{\times}$, where $X$ is a simple object of $\C$ whose isomorphism class equals
$g$. It is easy to show that $q:G\to k^{\times}$ is a quadratic form. By associating to $\C$ the 
pre-metric group $(G,q)$ one gets a functor $F:\PB_k\to\PM_k$.

\begin{proposition}\label{braided metric}
The functor $F$ is an equivalence.
\end{proposition}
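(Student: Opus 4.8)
The plan is to construct an explicit quasi-inverse $R: \PM_k \to \PB_k$ and check that $F \circ R$ and $R \circ F$ are isomorphic to the respective identity functors. First I would recall the classical construction: given a pre-metric group $(G, q)$, the associated bicharacter $b(g,h) = q(gh)/q(g)q(h)$ is symmetric and bimultiplicative, and by a standard cohomological computation (using that $H^3_{\text{ab}}(G, k^\times)$, the abelian cohomology in the sense of Eilenberg--MacLane, is isomorphic via the trace map to the group of quadratic forms on $G$ with values in $k^\times$) one obtains a pair $(\omega, \sigma)$ where $\omega \in Z^3(G, k^\times)$ is an associator and $\sigma$ is a braiding cochain, well-defined up to the appropriate equivalence. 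This produces the pointed braided fusion category $\Vec_G^{\omega}$ with braiding determined by $\sigma$, and one checks directly that $c_{X_g, X_g} = q(g)$, so that $F(R(G,q)) \cong (G,q)$ as pre-metric groups; functoriality of $R$ on morphisms follows because a homomorphism $\phi: G_1 \to G_2$ preserving $q$ pulls back the abelian cocycle data compatibly.

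The key structural input is the theorem of Eilenberg--MacLane (or Joyal--Street, \cite{JS}) identifying braided monoidal structures on $\Vec_G$, up to braided equivalence, with $H^3_{\text{ab}}(G, k^\times)$, together with the quadratic-form interpretation of the latter: the map sending a braided structure to the function $g \mapsto c_{X_g, X_g}$ is a bijection onto the set of quadratic forms $q: G \to k^\times$. I would cite this as the known computation of $H^3_{\text{ab}}$ and then verify the two compatibilities. Faithfulness and fullness of $F$ on morphisms amount to: a braided functor $\Vec_{G_1}^{\omega_1} \to \Vec_{G_2}^{\omega_2}$ is determined up to braided isomorphism by the underlying group homomorphism $G_1 \to G_2$ (faithfulness, since any two braided functors inducing the same map on objects differ by a braided natural isomorphism — the relevant obstruction lives in a cohomology group that vanishes once one works up to isomorphism), and every $q$-preserving homomorphism lifts to a braided functor (fullness, which is exactly the statement that $\phi^* (\omega_2, \sigma_2)$ is equivalent to $(\omega_1, \sigma_1)$ when $q_2 \circ \phi = q_1$, again by the quadratic-form description of abelian cohomology).

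Essential surjectivity of $F$ on objects is immediate from the construction of $R$: every pre-metric group is $F(R(G,q))$. Conversely, given an arbitrary pointed braided fusion category $\C$ with associated group $G$ and quadratic form $q$, one has an abelian $3$-cocycle representing its braided structure, and by the classification this cocycle is cohomologous to $R(G,q)$'s, giving a braided equivalence $\C \simeq R(F(\C))$; naturality of this equivalence in $\C$ gives the isomorphism $R \circ F \cong \id$.

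The main obstacle is the bookkeeping around abelian group cohomology: one must be careful that the quadratic form $q$ alone — not $q$ together with extra data — suffices to pin down the braided structure up to braided equivalence, i.e., that the trace map $H^3_{\text{ab}}(G, k^\times) \xrightarrow{\sim} \{\text{quadratic forms } G \to k^\times\}$ is genuinely an isomorphism (Eilenberg--MacLane); over an algebraically closed field of characteristic $0$ this holds, but it requires knowing that $k^\times$ is divisible and that there are no extension obstructions. Everything else is routine verification of compatibility of cocycle data under pullback along morphisms, so I expect the write-up to be short, modulo a clean citation for the Eilenberg--MacLane computation (alternatively, the self-contained proof deferred to Appendix~\ref{likbez}).
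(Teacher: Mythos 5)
Your argument is correct in outline, but it is essentially the Eilenberg--MacLane/Joyal--Street route that the paper simply cites rather than reproves: the full statement is attributed to \cite[\S 3]{JS} (whose proof is exactly the cohomological classification you invoke) and to \cite[Proposition 2.5.1]{Q}, and the paper's own contribution is confined to Appendix~\ref{likbez}, where only the \emph{essential surjectivity} of $F$ is reproved, by a genuinely different mechanism. There, one first treats the ``easy case'' where $q(g)=\beta(g,g)$ for some bicharacter $\beta$ (giving $\Vec_G$ with braiding $\beta$ directly), then handles a general $(G,q)$ by choosing a surjection $\pi:\widetilde{G}\twoheadrightarrow G$ such that $q\circ\pi$ does come from a bicharacter $\widetilde{\beta}$, observing that the subcategory of $\C_{\widetilde{\beta}}$ supported on $A=\Ker\pi$ is Tannakian, and de-equivariantizing: $\C_{\widetilde{\beta}}\boxtimes_{\E}\Vec$ realizes $(G,q)$. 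That argument trades the computation of $H^3_{\mathrm{ab}}(G,k^\times)$ (equivalently of $H_*(K(G,2))$) for the equivariantization machinery of \S\ref{isotsub}, but says nothing about full faithfulness; your approach proves the whole equivalence at once, at the price of taking the abelian-cohomology classification as input. Note also that your last remark about Appendix~\ref{likbez} overstates what it contains: it is not a self-contained proof of the proposition, only of surjectivity.

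Two small corrections to your cohomological bookkeeping. The trace isomorphism $H^3_{\mathrm{ab}}(G,k^\times)\xrightarrow{\sim}\{\text{quadratic forms }G\to k^\times\}$ requires no divisibility of $k^\times$: it holds for arbitrary coefficients, since $H_3(K(G,2))=0$ and $H_4(K(G,2))\cong\Gamma(G)$ with $\Hom(\Gamma(G),M)=\mathrm{Quad}(G,M)$. Where divisibility \emph{is} needed is in your faithfulness step, which you leave vague: two braided functors covering the same homomorphism $\phi$ differ by a class in $H^2_{\mathrm{ab}}(G_1,k^\times)\cong\Ext(G_1,k^\times)$, and it is the vanishing of this $\Ext$ group (because $k^\times$ is divisible) that makes the lift unique up to braided isomorphism; for plain tensor functors the ambiguity would be an $H^2(G_1,k^\times)$-torsor, which is generally nontrivial, so the braided condition must be used explicitly here.
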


This proposition is proved in \cite[\S 3]{JS} using some results on the homology of
Eilenberg - Mac Lane spaces. An elementary computational proof is given in
 \cite[Proposition 2.5.1]{Q}. In Appendix \ref{likbez} we give a proof of the essential 
 surjectivity of $F$.

\subsubsection{The pre-modular category $\C(G,q)$}\label{thetaq}
Let $(G,q)$ be a pre-metric group. By Proposition \ref{braided metric}, it corresponds
to a braided fusion category. Any such braided fusion category will be denoted by
$\C(G,q)$. So $\C(G,q)$ is defined up to a braided equivalence, which is unique up to 
(non-unique) isomorphism. 

Example \ref{point sph} provides a spherical structure and
therefore a pre-modular structure on the category $\C(G,q)$; we use the same symbol
$\C(G,q)$ for this pre-modular category. The corresponding twist is given by 
\begin{equation}\label{tw}
\theta_g=q(g),
\end{equation}
see \eqref{traceofc}.

The fact that $\theta_g=q(g)$ satisfies \eqref{balancing ax} means that if $X,Y$ are simple objects
of $\C(G,q)$ and $\bar X,\bar Y\in G$ are their isomorphism classes then
\begin{equation}\label{cc=b}
c_{Y,X} c_{X,Y}=b(\bar X,\bar Y),
\end{equation}
where $b:G\times G\to k^{\times}$ is the bicharacter associated with $q$. So
the $S$-matrix of the category $\C(G,q)$ is given by $S_{gh}=b(g,h)$, $g,h\in G$.
Thus {\em the pre-modular category $\C(G,q)$ is modular if
and only if the pre-metric group $(G,q)$ is a metric group.}

\subsubsection{The pre-modular category $\C(G,q,\chi )$}\label{genpointed}
In the situation of \S\ref{thetaq} let $\chi :G\to k^{\times}$ be a character such that $\chi^2=1$.
It defines a tensor automorphism $\sigma_{\chi}$ of the identity functor $\C(G,q)\to\C(G,q)$, namely
$\sigma_{\chi}$ acts on a simple object $X$ as multiplication by $\chi (\bar X)$, where $\bar X$
is the class of $X$ in $G$. Define a pre-modular category $\C(G,q,\chi )$ as follows: as a braided fusion category it equals $\C(G,q)$, 
but the spherical structure on $\C(G,q, \chi )$ differs from the one considered in \S\ref{thetaq} by $\sigma_{\chi}$. 
Thus one gets a bijection between
isomorphism classes of triples $(G,q,\chi )$ and pointed pre-modular categories up to
pre-modular equivalence.

The simple object of $\C(G,q,\chi )$ with isomorphism class $g\in G$ has dimension
$\chi (g)$. The twist in $\C(G,q,\chi )$ is given by $\theta_g=q(g)\chi (g)$. The 
$S$-matrix of $\C(G,q,\chi )$ is given by $S_{gh}=b(g,h)\chi (g)\chi (h)$, $g,h\in G$.

\subsection{Symmetric fusion categories}\label{Deligne}
A braided fusion category $\C$ is called {\em symmetric} if $\C'=\C$. Equivalently,
$c_{Y,X}c_{X,Y} =\id_{X\ot Y}$ for all objects $X,Y \in \C$; in this case the braiding $c$ is
called {\em symmetric}. 

The category $\Rep(G)$ of representations of a finite group $G$ equipped with its standard
symmetric braiding $\sigma_{XY}(x\otimes y):=y\otimes x$
is an example of a symmetric fusion category. Deligne  \cite[Corollaire 0.8]{De}
proved that any symmetric fusion category is equivalent to a "super" generalization of $\Rep(G)$. 
Namely, let $G$ be a finite group and let $z\in G$
be a central element such that $z^2=1$. Then the fusion category $\Rep(G)$ has a braiding
$\sigma'$ defined as follows:
\[
\sigma'_{XY}(x\otimes y)=(-1)^{mn}y\otimes x \,\mbox{ if }\,
x\in X, \; y\in Y, \; zx=(-1)^{m}x,\; zy=(-1)^{n}y.
\]
The fusion category $\Rep(G)$ equipped with this braiding will be denoted by $\Rep(G,z)$.
Equivalently, $\Rep(G,z)$ can be described as a full subcategory of the category of super-representations of $G$; 
namely, $\Rep(G,z)$ consists of those super-representations $V$ on 
which $z$ acts by the {\em parity automorphism\,} 
(i.e., $zv=v$ if $v\in V$ is even and $zv=-v$ if $v\in V$ is odd). 

\begin{example} Let $G=\BZ/2\BZ$ and $z$ be the nontrivial element of $G$. Then
$\Rep(G,z)$ is the category of super-vector spaces $s\Vec$.
\end{example}

Recall (see \cite{De}) that for a symmetric fusion category $\C$ a {\em super fiber functor} is
a braided tensor functor $F: \C \to s\Vec$. For a super fiber functor $F: \C \to s\Vec$ let
$G_F$ be the group of tensor automorphisms of $F$. Let $z_F\in G_F$ denote the parity
automorphism of $F$ (i.e., for each $c\in\C$ the automorphism of $F(c)\in s\Vec$ corresponding
to $F(c)$ is the parity automorphism). It is clear that $z_F$ is central and $(z_F)^2=1$. 
A super fiber functor $F: \C \to s\Vec$ has an obvious lifting $\tilde F: \C \to \Rep(G_F,z_F)$. 

\begin{theorem} {\em (\cite{De})} \label{Deligne1} Let $\C$ be a symmetric fusion category.
Then
\begin{enumerate}
\item[(i)] there exist super fiber functors $\C \to s\Vec$;
\item[(ii)]  all super fiber functors $\C \to s\Vec$ are isomorphic to each other;
\item[(iii)]  for any super fiber functor $F: \C \to s\Vec$ the functor $\tilde F: \C \to \Rep(G_F,z_F)$
constructed above is an equivalence.
\end{enumerate}
\end{theorem}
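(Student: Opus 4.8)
The plan is to follow Deligne's argument while isolating the two genuinely non-formal ingredients. First I would construct a super fiber functor. For this the natural approach is to use the fact that a symmetric fusion category $\C$ has a well-defined notion of ``dimension'' coming from the symmetric braiding (the categorical dimension $d_\pm$ agrees for all pivotal structures up to sign once one uses the symmetry), and Deligne's key technical input is that in a symmetric tensor category over a field of characteristic $0$ every object has dimension a non-negative integer after a suitable sign twist — more precisely, that the symmetric braiding forces the Schur functors (symmetric and exterior powers) to vanish in high degree. The cleanest route is to cite Deligne's theorem (\cite{De}, Corollaire 0.8) that $\C$ is super-Tannakian, which immediately yields (i); but since the excerpt is presenting this as a theorem ``(\cite{De})'' I would give the reconstruction-style proof: use the symmetry to build a commutative algebra structure internally and extract a fiber functor, or invoke the super-analogue of the Tannakian reconstruction theorem.

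Second, for part (ii) I would argue that any two super fiber functors $F_1, F_2 : \C \to s\Vec$ are isomorphic. The idea: $F_1$ and $F_2$ are both faithful exact braided tensor functors to $s\Vec$. Consider the object $\uHom(F_1, F_2)$ of natural transformations, or equivalently work with the Tannakian formalism — the functors $F_i$ correspond to points of an affine super-group-scheme, and one shows this scheme is a torsor, hence (over an algebraically closed field of characteristic $0$, using that $H^1$ vanishes for the relevant pro-reductive super-group) has a rational point, giving an isomorphism $F_1 \simeq F_2$. Concretely one reduces to the statement that the automorphism super-group of a super fiber functor is smooth and connected-plus-$\mathbb{Z}/2$, or simply cites the rigidity of fiber functors in the super-Tannakian setting from \cite{De}.

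Third, for part (iii): given a super fiber functor $F$, set $G_F := \Aut_\otimes(F)$, a pro-algebraic super-group which is in fact an ordinary finite group because $\C$ has finitely many simple objects and $F$ is exact and faithful (so $G_F$ acts faithfully on the finite-dimensional super-vector space $\bigoplus_{X\in\O(\C)} F(X)$, and reductivity plus the discreteness forced by semisimplicity makes it finite). The parity automorphism $z_F$ is central with $z_F^2 = 1$ by construction. The lift $\tilde F : \C \to \Rep(G_F, z_F)$ is then a braided tensor functor by the universal property of $\Rep(G_F)$ as the category of comodules over the coordinate ring. To see $\tilde F$ is an equivalence, check it is fully faithful — this is the statement that $\C \xrightarrow{\sim} \Rep(\Aut_\otimes(F))$ is the Tannakian reconstruction, which holds because $F$ is exact, faithful, and $\C$ is abelian with the right finiteness — and then check essential surjectivity, i.e., that every $(G_F, z_F)$-representation arises; this follows from comparing $\FPdim$ (or $\dim_k$) on both sides: $\dim_k \mathcal O(G_F) = \sum_{X \in \O(\C)} (\dim_k F(X))^2 = \FPdim(\C)$, so by Proposition~\ref{tens func}(ii) the fully faithful functor $\tilde F$ between categories of equal Frobenius-Perron dimension is an equivalence.

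The main obstacle is part (i): producing the super fiber functor in the first place. Everything after that is relatively formal Tannakian yoga (reconstruction, comparison of dimensions, and the $H^1$-vanishing for (ii)), but the existence of a super fiber functor is exactly the hard theorem of Deligne and relies on the characteristic-$0$ ``integrality of dimensions'' phenomenon in symmetric categories — the vanishing of a sufficiently high Schur functor $\mathbb{S}_\lambda$ on each object — which has no short proof. Accordingly I would either cite \cite{De} outright for (i) and derive (ii)--(iii) as above, or, if a self-contained treatment is wanted, reduce (i) to the statement that $\C$ admits a nonzero symmetric tensor functor to an abelian category where ranks are honest integers and bootstrap from there; but realistically the honest move is to invoke Deligne's embedding theorem and spend the effort on making (iii) precise via the $\FPdim$ comparison.
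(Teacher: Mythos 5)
The paper gives no proof of this statement at all: it is quoted from Deligne, and the label ``(\cite{De})'' is the entire justification. So your decision to defer part (i) (and, really, part (ii) as well, where your ``$H^1$-vanishing for pro-reductive super-groups'' is just a verbose form of the standard remark that $\mathrm{Isom}^\otimes(F_1,F_2)$ is a nonempty torsor under the finite group scheme $\Aut_\otimes(F_1)$ and hence has a $k$-point over an algebraically closed field of characteristic $0$) to \cite{De} is exactly what the paper does; there is nothing in the paper to compare your sketch against.

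The one place where your own argument has a genuine gap is essential surjectivity in (iii). You use the equality $\dim_k \mathcal{O}(G_F)=\sum_{X\in\O(\C)}(\dim_k F(X))^2$ as an input to the $\FPdim$ comparison, but (given full faithfulness of $\tilde F$) this equality is \emph{equivalent} to the statement that the $F(X)$, $X\in\O(\C)$, exhaust the irreducible representations of $(G_F,z_F)$ --- i.e.\ to essential surjectivity itself --- so invoking Proposition~\ref{tens func}(ii) this way is circular. The standard repair avoids counting: the essential image of the fully faithful functor $\tilde F$ is a fusion subcategory of $\Rep(G_F,z_F)$ closed under subobjects, hence of the form $\Rep(G_F/N,\bar z)$ for some normal subgroup $N\subset G_F$; any $n\in N$ then acts trivially on every $F(X)$, i.e.\ $n$ is the identity natural transformation of $F$, so $N=\{1\}$ by the very definition $G_F=\Aut_\otimes(F)$. (Equivalently: $G_F$ acts faithfully on $\bigoplus_{X\in\O(\C)}F(X)$, and every irreducible representation of a finite group occurs in mixed tensor powers of a faithful representation, so the image, being closed under $\ot$, duals and subobjects, is everything.) With that, the dimension identity $|G_F|=\FPdim(\C)$ comes out as a corollary rather than an assumption. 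Your claim that $G_F$ is an honest finite group, and the identification $\FPdim(X)=\dim_k F(X)$ via the composite $\C\to s\Vec\to\Vec$, are fine.
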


\begin{remark} 
Theorem \ref{Deligne1} shows that the isomorphism class of a pair
$(G,z)$ is uniquely determined by the symmetric fusion category $\Rep(G,z)$.
\end{remark} 

\begin{example} 
Let $\C$ be a pointed braided category corresponding to a pre-metric group
$(\Gamma ,q)$ (see \S\ref{CGQ}). Such $\C$ is symmetric if and only if the 
bicharacter $b$ associated to $q$ is trivial. Then $q:\Gamma\to k^{\times}$ is a character 
such that $q^2=1$, and $\C\simeq\Rep(G,z)$ with 
$G=\Gamma^*:=\Hom (\Gamma,k^{\times})$, $z=q$.
\end{example}

The following statement immediately follows from Theorem \ref{Deligne1} and Example~\ref{Hopf}.

\begin{corollary} \label{symint}
A symmetric fusion category is integral in the sense of Defini\-tion~\ref{intdef}.
\end{corollary}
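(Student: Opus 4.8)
The plan is to reduce immediately to Deligne's theorem. By Theorem~\ref{Deligne1}(iii), any symmetric fusion category $\C$ admits a super fiber functor $F:\C\to s\Vec$ (part (i)) and the induced functor $\tilde F:\C\to\Rep(G_F,z_F)$ is a braided equivalence, where $G_F$ is the (finite) group of tensor automorphisms of $F$ and $z_F\in G_F$ is the parity automorphism. Thus as a fusion category $\C$ is equivalent to $\Rep(G_F,z_F)$.

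Next I would observe that the property of being integral in the sense of Definition~\ref{intdef} depends only on the underlying fusion category, not on the braiding: it is a condition on the homomorphism $\FPdim:K_0(\C)\to\mathbb{R}$, which is intrinsic to $\C$ as a fusion category. Since $\Rep(G_F,z_F)$ has the same underlying fusion category as $\Rep(G_F)$ (only the braiding differs, by the description in \S\ref{Deligne}), it suffices to check that $\Rep(G_F)$ is integral.

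Finally I would apply Example~\ref{Hopf} with $H=k[G_F]$, the group algebra of $G_F$. Since $k$ has characteristic $0$ and $G_F$ is finite, $k[G_F]$ is a finite-dimensional semisimple Hopf algebra, and $\Rep(k[G_F])=\Rep(G_F)$. By Example~\ref{Hopf}, $\FPdim(X)=\dim_k(X)\in\mathbb{Z}$ for every $X\in\Rep(G_F)$, so $\Rep(G_F)$ is integral, and hence so is $\C$.

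There is no real obstacle here: all the substantive work is contained in Theorem~\ref{Deligne1}, which we are entitled to assume. The only point requiring a word of care is the remark that integrality is a property of the underlying fusion category (so that the nonstandard braiding of $\Rep(G,z)$ is irrelevant); everything else is a one-line invocation of Example~\ref{Hopf}.
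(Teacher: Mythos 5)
Your proof is correct and follows exactly the route the paper intends: the paper states that the corollary ``immediately follows from Theorem~\ref{Deligne1} and Example~\ref{Hopf}'', which is precisely your reduction to $\Rep(G_F,z_F)$ and the observation that its underlying fusion category is $\Rep(G_F)=\Rep(k[G_F])$, where Frobenius--Perron dimensions are ordinary vector space dimensions. The extra remark that integrality is a property of the underlying fusion category (independent of the braiding) is a sensible point of care but does not constitute a different argument.
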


A symmetric fusion category $\C$ has a spherical structure such that the corresponding twist 
(see \S\ref{premod}) equals the identity. In general, this spherical structure is not positive: 
it is easy to see that if $\C =\Rep(G,z)$ then the categorical dimension of $X\in\C$ equals the 
{\em super-}dimension of the super-vector space $X$ (i.e., the trace of the operator $z$ acting on the vector space $X$), so it can be negative. 

\begin{definition}\label{tannaka}
A symmetric fusion category $\C$ is said to be {\em Tannakian} 
if it satisfies the following equivalent conditions: 
\begin{enumerate}
\item[(i)] the spherical structure on $\C$ defined above is positive;
\item[(ii)] $\C$ admits a {\em fiber functor}, i.e., a  braided tensor functor $\C\to\Vec$;
\item[(iii)]  the essential image of any super fiber functor $F: \C \to s\Vec$ is contained in
$\Vec \subset s\Vec$;
\item[(iv)] there exists a finite group $G$ such that $\C$ is equivalent to $\Rep(G)$ as a braided fusion
category.
\end{enumerate}
\end{definition} 

\begin{example}
A pointed braided category corresponding to a pre-metric group $(G,\, q)$ is Tannakian
if and only if $q=1$.
\end{example}

\begin{lemma}   \label{tan-eq}
If $\C_1, \C_2$ are symmetric categories
and $F: \C_1 \to \C_2$ is a surjective braided functor then
$\C_1$ is Tannakian if and only if $\C_2$ is.
\end{lemma}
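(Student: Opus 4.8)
The plan is to use the characterizations of Tannakian categories given in Definition~\ref{tannaka}, most conveniently condition~(ii): a symmetric fusion category is Tannakian if and only if it admits a fiber functor, i.e.\ a braided tensor functor to $\Vec$.

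First I would prove the easy direction: if $\C_2$ is Tannakian, then so is $\C_1$. Indeed, if $H: \C_2 \to \Vec$ is a fiber functor, then the composition $H \circ F: \C_1 \to \Vec$ is a braided tensor functor, hence a fiber functor on $\C_1$. (No surjectivity of $F$ is needed here.) This gives one implication immediately.

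For the converse, suppose $\C_1$ is Tannakian, so by Theorem~\ref{Deligne1} and Definition~\ref{tannaka}(iv) we may write $\C_1 \simeq \Rep(G_1)$ for a finite group $G_1$. By Theorem~\ref{Deligne1}(i), $\C_2$ admits a super fiber functor $F_2: \C_2 \to s\Vec$, and to show $\C_2$ is Tannakian it suffices to verify condition~(iii), namely that the essential image of $F_2$ lies in $\Vec \subset s\Vec$. Now consider the composite $F_2 \circ F: \C_1 \to s\Vec$; this is a braided tensor functor, i.e.\ a super fiber functor on $\C_1$. Since $\C_1$ is Tannakian, by Definition~\ref{tannaka}(iii) the essential image of $F_2 \circ F$ is contained in $\Vec$. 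But $F$ is surjective, so every simple object of $\C_2$ is a summand of $F(X)$ for some simple $X$ in $\C_1$; hence $F_2$ of every simple object of $\C_2$ is a summand of $(F_2 \circ F)(X) \in \Vec$, and a summand of an object of $\Vec$ inside $s\Vec$ is purely even. Thus the essential image of $F_2$ lies in $\Vec$, so $\C_2$ is Tannakian.

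The main obstacle, such as it is, is making precise the passage ``surjectivity of $F$ forces $F_2$ to land in $\Vec$'': one must recall that $\Vec$ is closed under subobjects and direct summands inside $s\Vec$, and invoke Definition~\ref{image} together with the remark there that the simple objects of the image $F(\C_1)=\C_2$ are exactly the simple summands of the $F(X)$ for $X$ simple in $\C_1$. Alternatively, one could bypass the super fiber functor entirely and argue directly: the group $G_{F_2}$ of tensor automorphisms of a super fiber functor $F_2$ on $\C_2$ maps, via restriction along $F$, to the analogous group for $\C_1$, and the parity element $z_{F_2}$ maps to $z_{F_2\circ F}$, which is trivial when $\C_1$ is Tannakian; a small diagram chase then shows $z_{F_2}$ itself acts trivially on the image, giving condition~(iii) for $\C_2$. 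Either route is short; the version via Definition~\ref{tannaka}(iii) is cleanest and is the one I would write up.
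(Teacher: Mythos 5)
Your proof is correct and is essentially the paper's own argument: the easy direction uses condition (ii) of Definition~\ref{tannaka} by composing a fiber functor on $\C_2$ with $F$, and the converse uses surjectivity of $F$ to transfer condition (iii) from $\C_1$ to $\C_2$, exactly as in the paper (which states this tersely); your write-up merely fills in the routine details about simple objects of the image being summands of $F(X)$.
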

\begin{proof}
If $\C_2$ saitisfies property (ii) from Definition~\ref{tannaka}
then so does $\C_1$. By surjectivity, if $\C_1$ satisfies (iii) then
so does $\C_2$.
\end{proof}

Now let $\C$ be a general symmetric fusion category. By Corollary \ref{symint},
Proposition \ref{FPint3}, and Remark \ref{nophi}, $\C$
has a unique positive spherical structure. The corresponding 
twist $\theta$ is a tensor  automorphism of the identity functor $\id_{\C}$, so by Proposition 
\ref{un-grad}(iii) it corresponds to a $\mu_2$-grading of 
$\C$. Let $\C_1\subset\C$ be the trivial component of this grading, i.e., $\C_1$ is the full subcategory
of objects $X\in\C$ such that $\theta_X=\id_X$. In other words, $\C_1$ is the maximal Tannakian 
subcategory of $\C$.

\begin{corollary}  \label{deligne} 
\begin{enumerate}
\item[(i)] If $\FPdim(\C)$ is odd then $\C_1=\C$. In other words, a symmetric fusion category
of odd Frobenius-Perron dimension is Tannakian.
\item[(ii)] In general, either $\C_1=\C$ or $\FPdim(\C_1)=\frac12\FPdim(\C)$.
\end{enumerate}
\end{corollary}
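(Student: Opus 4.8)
The plan is to derive everything from the structure established just before the statement: $\C$ is symmetric, hence integral (Corollary~\ref{symint}), hence has a unique positive spherical structure (Corollary~\ref{FPint3}, Remark~\ref{nophi}), and the corresponding twist $\theta$ is a tensor automorphism of $\id_\C$. By Proposition~\ref{un-grad}(iii), $\theta$ corresponds to a homomorphism $f_\theta\colon U_\C\to k^\times$; since $\theta_X^2=\id_X$ for every simple $X$ (this is forced by the positivity of the spherical structure together with the fact that $\theta$ is a ribbon structure on a symmetric category, so $\theta_X=\theta_{X^*}$ and $\theta_X\theta_{X^*}=d(X)^{-2}\Tr(c_{X^*,X}c_{X,X^*})=1$ because the braiding is symmetric), the image of $f_\theta$ lies in $\mu_2$. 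Thus $\C$ is faithfully graded by a subgroup $G_0\subseteq\mu_2$, i.e. $G_0$ is either trivial or $\BZ/2\BZ$, and $\C_1$ is the trivial component of this grading.

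First I would dispose of part (i). If $\FPdim(\C)$ is odd, then by Proposition~\ref{dimgradi} applied to the faithful $G_0$-grading we get $\FPdim(\C)=|G_0|\cdot\FPdim(\C_1)$; since $\FPdim(\C)$ is odd, $|G_0|$ must be odd, forcing $|G_0|=1$, i.e. the grading is trivial and $\C_1=\C$. Then $\C$ is its own maximal Tannakian subcategory, so $\C$ is Tannakian by definition. (Alternatively one could invoke Corollary~\ref{odd} to note $\C$ is integral and then argue directly, but the grading argument is cleanest.)

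For part (ii): if $G_0$ is trivial then $\C_1=\C$ and we are done. If $G_0=\BZ/2\BZ$, then the grading $\C=\C_1\oplus\C_g$ is faithful, and Proposition~\ref{dimgradi} gives $\FPdim(\C)=2\FPdim(\C_1)$, i.e. $\FPdim(\C_1)=\tfrac12\FPdim(\C)$, as claimed. This is essentially all there is to it once the twist is identified with a $\mu_2$-grading, which was already set up in the paragraph preceding the corollary.

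The one point that needs a little care — the main (minor) obstacle — is justifying that $\theta_X\in\mu_2$ for all simple $X$, i.e. that the grading really takes values in $\mu_2$ rather than in all of $k^\times$. This is where symmetry is used essentially: in a symmetric category $c_{Y,X}c_{X,Y}=\id$, so the balancing axiom \eqref{balancing ax} becomes $\theta_{X\ot Y}=\theta_X\ot\theta_Y$, i.e. $g\mapsto\theta_g$ is a genuine character of the fusion ring, and combined with $\theta_{X^*}=\theta_X$ (ribbon property, which holds because we took the canonical spherical structure whose twist satisfies \eqref{ribbon}) one gets $\theta_g\theta_{g^{-1}}=\theta_g^2=1$. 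This is exactly the content of Remark~\ref{easyrems}(iii) in the guise of the symmetric category picture, and it is also implicit in the construction of $\C_1$ in the paragraph before the statement; I would just make the $\mu_2$-valued-ness explicit and then quote Proposition~\ref{dimgradi}.
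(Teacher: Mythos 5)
Your overall route is exactly the paper's: the twist of the positive spherical structure is a tensor automorphism of $\id_\C$, hence by Proposition~\ref{un-grad}(iii) it defines a grading with values in $\mu_2$, and Proposition~\ref{dimgradi} applied to the induced faithful grading gives both (i) and (ii) at once; the paper's proof is literally this one line (with an alternative reduction to $\Rep(G,z)$ via Theorem~\ref{Deligne1}). So the skeleton of your argument, and your handling of (i) and (ii) once the $\mu_2$-valuedness is in place, match the paper and are correct.

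The one step you should repair is your justification that $\theta_X\theta_{X^*}=1$. The identity you invoke, $\theta_X\theta_{X^*}=d(X)^{-2}\Tr(c_{X^*,X}c_{X,X^*})$, is not a valid identity in pre-modular categories: in the pointed category $\C(G,q)$ of \S\ref{thetaq} the left-hand side is $q(g)^2$ while the right-hand side is $s_{g,-g}=b(g,-g)=q(g)^{-2}$, so it fails whenever $q(g)^4\neq 1$; and in the symmetric case it is merely a restatement of what you want, since the trace equals $d(X)^2$ exactly because the monodromy is trivial. The correct one-line argument: since $c_{X^*,X}c_{X,X^*}=\id$, the balancing axiom \eqref{balancing ax} shows $\theta_{X\ot X^*}=\theta_X\theta_{X^*}\cdot\id_{X\ot X^*}$, and naturality of $\theta$ applied to $\coev_X:\be\to X\ot X^*$ together with $\theta_\be=\id$ forces this scalar to equal $1$; combined with $\theta_{X^*}=\theta_X$ (sphericity, \eqref{ribbon}) this yields $\theta_X^2=1$. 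Even simpler, and closer to how the paper sets this up: spherical structures form a torsor over $\Hom(U_\C,\mu_2)$ (\S\ref{pivotal}), and a symmetric fusion category has the canonical spherical structure with identity twist, so the twist of the positive spherical structure automatically takes values in $\mu_2$ — which is the $\mu_2$-grading already asserted in the paragraph preceding the corollary.
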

\begin{proof} 
This follows from Proposition~\ref{dimgradi}. One can also use Theorem \ref{Deligne1} to
reduce the statement to the case $\C =\Rep(G,z)$. In this case one easily checks that the twist
$\theta_X$ equals $z|_X$, $\C_1=\Rep(G/\langle z\rangle)\subset \Rep(G)$, $\FPdim(\C)=|G|$,
and  $\FPdim(\C_1)=|G/\langle z\rangle |$.
\end{proof}

\subsection{The regular algebra of a Tannakian category}  \label{subsec-reg}
Let $\E$ be a Tannakian category (see Definition
\ref{tannaka}). By Theorem \ref{Deligne1} (iii), a choice of a fiber functor $F: \E \to \Vec$
defines a finite group $G_F:=\Aut F$ and a braided equivalence $\tilde F :\E\simeq \Rep(G_F)$.
Let $Fun(G_F)$ be the algebra of functions $G_F\to k$; the group $G_F$
acts on $Fun(G_F)$ via left translations, so $Fun(G_F)$ is a commutative algebra in the category
$\Rep(G_F)$. Therefore $A_{\E,F}:=\tilde F^{-1}Fun(G_F)$ is a commutative algebra in $\E$.

\begin{definition}\label{cca-drinf}
The algebra $A_{\E,F}$ constructed above is called the {\em regular algebra} of $(\E ,F)$.
\end{definition}

In practice, we will often denote it by $A_{\E}$ and call it the regular algebra in $\E$.
To some extent, this abuse of language and notation is justified because the isomorphism class of $A_{\E,F}$ does not depend on $F$ (recall that by Theorem \ref{Deligne1}(ii) all fiber functors are isomorphic).

\begin{remark}  \label{regular-rem}
One has a natural isomorphism
\begin{equation}  \label{regalg} 
\Hom (V, A_{\E,F}):=\Hom (F(V), \be ), \quad V\in\E ,
\end{equation}
where $\be \in \Vec$ is the unit object. In other words, $A_{\E,F}=I( \be)$, where 
$I: \Vec \to \E$ is right adjoint  to $F$. One can use \eqref{regalg} as a definition of
the regular algebra and describe the algebra structure on $A_{\E,F}$ as follows.
Since $F$ is a tensor functor for any $V_1,V_2\in\E$ one gets a map 
\[
\Hom (F(V_1), \be )\ot\Hom (F(V_2), \be )\to\Hom (F(V_1)\ot F(V_2), \be )=\Hom (F(V_1\ot V_2),\be ),
\]
i.e., a natural map  $\Hom (V_1, A_{\E,F})\ot \Hom (V_2, A_{\E,F})\to\Hom (V_1\ot V_2, A_{\E,F})$. 
Such a natural map  is the same as a morphism $A_{\E,F}\ot A_{\E,F}\to A_{\E,F}$. This is
the algebra structure on $A_{\E,F}$.
\end{remark}

For any algebra $A$ in a monoidal category the algebra $A$ itself and $A\ot A$ both have
natural structures of $A$-bimodules. Moreover, the multiplication morphism $A\ot A\to A$
is a morphism in the category of $A$-bimodules.
We will often use the following property of the regular algebra.

\begin{proposition} \label{splitting} 
Let $A=A_{\E,F}$ be the regular algebra in a Tannakian fusion category $\E$. Then the multiplication
morphism $A\ot A\to A$ admits a unique  $A$-bimodule splitting. 
In particular, the bimodule $A$ is a direct summand of the bimodule $A\ot A$.  \hfill\qedsymbol
\end{proposition}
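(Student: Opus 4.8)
The plan is to identify the regular algebra $A = A_{\E,F}$ with the algebra $\Fun(G)$ in $\Rep(G)$ (where $G = G_F$) via the braided equivalence $\tilde F$, and to exhibit the required bimodule splitting there, since the claim is manifestly invariant under braided tensor equivalences. Under this identification, $A \ot A$ becomes $\Fun(G) \ot \Fun(G) = \Fun(G \times G)$, with $G$ acting diagonally by left translations and the bimodule structure over $\Fun(G)$ coming from the two factors. The multiplication map $\Fun(G \times G) \to \Fun(G)$ is restriction to the diagonal $\Delta \subset G \times G$; concretely, it is (up to the obvious identifications) $m(f_1 \ot f_2)(g) = f_1(g) f_2(g)$.

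First I would describe the $A$-bimodule structure on $A \ot A$ explicitly: the left action is multiplication in the first tensor factor, the right action multiplication in the second. An $A$-bimodule map $A \ot A \to A$ is then the same as an $A \ot A^{\mathrm{op}}$-module map, i.e. (since $A$ is commutative) a $\Fun(G \times G)$-module map $\Fun(G\times G) \to \Fun(\Delta)$, which in turn is completely determined by the image of the unit $1 \in \Fun(G \times G)$ — and the module structure forces that image to be an idempotent supported on $\Delta$. The splitting $s: A \to A \ot A$ of $m$ must send $1 \mapsto \mathbf{1}_\Delta$, the characteristic function of the diagonal, viewed inside $\Fun(G \times G)$; one checks $m \circ s = \id_A$ because restricting $\mathbf{1}_\Delta$ to the diagonal gives the constant function $1$, and $s$ is a bimodule map because $\mathbf{1}_\Delta$ is idempotent and the two module structures agree on the diagonal. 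Uniqueness follows because any bimodule splitting is determined by $s(1)$, which must be an idempotent $e \in \Fun(G\times G)$ with $m(e) = 1$ and $e$ fixed by the diagonal $G$-action (to be a morphism in $\Rep(G)$); the conditions $m(e) = 1$ and $e$ supported so that $(f \ot 1)e = (1 \ot f)e$ for all $f$ force $e = \mathbf{1}_\Delta$.

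An alternative, more intrinsic route that avoids picking a fiber functor: use the characterization \eqref{regalg}, $\Hom(V, A) = \Hom(F(V), \be)$, to compute $\Hom$ out of $A \ot A$ and out of $A$ in the category of $A$-bimodules, and show the restriction map $\Hom_{A\text{-bimod}}(A \ot A, M) \to \Hom_{A\text{-bimod}}(A, M)$ induced by $m$ is a bijection for every bimodule $M$ — equivalently that $m$ is a split epi with the stated splitting. Either way, the categorical content is just that $\Fun(G)$ is a separable (indeed étale) commutative algebra: the diagonal of a finite set is open and closed, so its characteristic function provides the separability idempotent.

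The main obstacle, such as it is, is bookkeeping: one must be careful about which tensor factors carry which module structures, and about the fact that the splitting is required to be a morphism in $\E$ (hence $G$-equivariant after transport) and not merely a set-theoretic bimodule map — but the diagonal idempotent is visibly $G$-invariant, so this causes no real trouble. There is no deep step; the proof is a direct verification once the dictionary $A \leftrightarrow \Fun(G)$, $A \ot A \leftrightarrow \Fun(G \times G)$, $m \leftrightarrow \text{restriction to }\Delta$ is in place.
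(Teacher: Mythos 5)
Your proof is correct, and it follows the standard argument that the paper implicitly relies on (the proposition is stated with no written proof): transport $A$ to $\Fun(G)$ via $\tilde F$, identify $A\ot A$ with $\Fun(G\times G)$ so that the multiplication is restriction to the diagonal, and observe that a bimodule splitting is determined by $s(\be)=e$ with $(f\ot 1)e=(1\ot f)e$ and $m(e)=1$, which forces $e=\mathbf{1}_\Delta$, a visibly $G$-invariant element. The only cosmetic point is that idempotency of $e$ is a consequence rather than a constraint, and the brief discussion of maps $A\ot A\to A$ in the middle of your argument is superfluous, since the splitting goes the other way; neither affects correctness.
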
 

\section{Centralizers in and non-degeneracy of braided fusion categories} \label{Sect3}
\subsection{The subject of this section}
The main goal of this section is to extend the results of M\"uger \cite{Mu1, Mu2, Mu3}
and Altsch\"uler-Brugui\`eres \cite{AB} on centralizers to the setting of arbitrary 
braided fusion categories (not necessarily non-degenerate or pre-modular ones). 
We also prove some other results  (e.g., see Theorem~\ref{Muger2-FPdim} and the 
theory of  projective  centralizers in \S\ref{projcentsubsec}).

In terms of the the heuristic Principle~\ref{intro principles}(ii),
the M\"uger-Altsch\"uler-Brugui\-\`eres theory can be viewed as a
``quantization" of the following obvious proposition.

\begin{proposition}
\label{heuristic centr}
Let $(C,q)$ be a pre-metric Lie coalgebra (see Definition~\ref{Casimirdef}) and 
$\mathfrak{a}\subset C$ a Lie subcoalgebra.
\begin{enumerate}
\item[(i)] $\mathfrak{a}$ is a metric Lie coalgebra 
if and only if $\mathfrak{a}  \cap \mathfrak{a}^\perp = 0$. In particular, $C$
is a metric Lie coalgebra if and only if $C^\perp=0$.
\item[(ii)] $\dim \mathfrak{a} + \dim \mathfrak{a}^\perp =
\dim C + \dim C^\perp$ and $\mathfrak{a}^{\perp\perp} = \mathfrak{a} + C^\perp$.
In particular, if $\mathfrak{a} \supset C^\perp$  then
$\mathfrak{a}^{\perp\perp} = \mathfrak{a}$.
\item[(iii)] Assume that  $\mathfrak{a}$ is a metric Lie coalgebra. Then $C\simeq \mathfrak{a} \oplus \mathfrak{a}^\perp$
and $C$ is a metric Lie coalgebra if and only if $\mathfrak{a}^\perp$ is.  \hfill\qedsymbol
\end{enumerate}
\end{proposition}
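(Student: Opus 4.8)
The statement is purely linear algebra about a finite-dimensional vector space $C$ with a quadratic form $q$ (equivalently a symmetric bilinear form, since $\operatorname{char} K>2$) and a distinguished subspace $\mathfrak a$; the Lie coalgebra structure plays no role whatsoever in (i)--(iii), it is only there so that $\mathfrak a^\perp$ is again a subcoalgebra, which was already observed in the text. So the plan is to forget the Lie structure entirely and just prove three facts about $(C,q)$ and $\mathfrak a\subset C$.

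For (i): write $b$ for the symmetric bilinear form associated to $q$, and recall $\mathfrak a$ is a metric Lie coalgebra iff $b|_{\mathfrak a}$ is non-degenerate. But the radical of $b|_{\mathfrak a}$ is exactly $\{x\in\mathfrak a : b(x,\mathfrak a)=0\}=\mathfrak a\cap\mathfrak a^\perp$, so $b|_{\mathfrak a}$ is non-degenerate iff $\mathfrak a\cap\mathfrak a^\perp=0$. Taking $\mathfrak a=C$ gives $C^\perp = C\cap C^\perp$, proving the "in particular". For (ii): the map $C\to\mathfrak a^*$, $x\mapsto b(x,-)|_{\mathfrak a}$ has kernel $\mathfrak a^\perp$ and image equal to the annihilator in $\mathfrak a^*$ of $\mathfrak a\cap C^\perp$ (one inclusion is clear; equality follows by a dimension count using that $b$ induces a perfect pairing $C/C^\perp \times C/C^\perp\to K$). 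Hence $\dim C - \dim\mathfrak a^\perp = \dim\mathfrak a - \dim(\mathfrak a\cap C^\perp)$. Applying this with $\mathfrak a$ replaced by $\mathfrak a^\perp$, and using $\mathfrak a\subseteq\mathfrak a^{\perp\perp}$ together with $C^\perp\subseteq\mathfrak a^{\perp\perp}$, one shows $\dim\mathfrak a^{\perp\perp}\le\dim(\mathfrak a+C^\perp)$, while the reverse inclusion $\mathfrak a+C^\perp\subseteq\mathfrak a^{\perp\perp}$ is immediate, giving $\mathfrak a^{\perp\perp}=\mathfrak a+C^\perp$; the dimension identity $\dim\mathfrak a+\dim\mathfrak a^\perp=\dim C+\dim C^\perp$ then drops out of the two displayed formulas, or alternatively from $\dim\mathfrak a^\perp = \dim C - \dim\mathfrak a + \dim(\mathfrak a\cap C^\perp)$ combined with $\mathfrak a\cap C^\perp = (\mathfrak a + C^\perp)^\perp \cap C^\perp$ — I would just pick whichever bookkeeping is cleanest and present it directly. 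The case $\mathfrak a\supseteq C^\perp$ is then immediate from $\mathfrak a^{\perp\perp}=\mathfrak a+C^\perp=\mathfrak a$.

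For (iii): assume $b|_{\mathfrak a}$ is non-degenerate. Then $\mathfrak a\cap\mathfrak a^\perp = 0$ by (i), and $\dim\mathfrak a + \dim\mathfrak a^\perp = \dim C + \dim C^\perp\ge\dim C$; but also, for any $x\in C$ the functional $b(x,-)|_{\mathfrak a}$ is represented by a unique element of $\mathfrak a$ (non-degeneracy of $b|_{\mathfrak a}$), so $x$ minus that element lies in $\mathfrak a^\perp$, giving $C=\mathfrak a+\mathfrak a^\perp$; combined with $\mathfrak a\cap\mathfrak a^\perp=0$ this is $C=\mathfrak a\oplus\mathfrak a^\perp$, and the sum is orthogonal by definition of $\mathfrak a^\perp$. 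Orthogonal direct sum of quadratic spaces means $q$ is non-degenerate on $C$ iff it is non-degenerate on each summand; since $q|_{\mathfrak a}$ is non-degenerate by hypothesis, $q|_C$ is non-degenerate iff $q|_{\mathfrak a^\perp}$ is, i.e.\ $C$ is a metric Lie coalgebra iff $\mathfrak a^\perp$ is. Finally I would remark that the splitting is compatible with the coalgebra structure: $\mathfrak a^\perp$ is a subcoalgebra (stated in the text), and since $C^\perp\subseteq\mathfrak a$ one has $C^\perp\cap\mathfrak a^\perp=0$, whence $\mathfrak a^\perp$ is even a metric-coalgebra complement; but strictly this is not needed for the statement as written.

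I do not expect any real obstacle — everything is elementary linear algebra over a field in which $2$ is invertible, so $q$ and $b$ determine each other and "isotropic/orthogonal complement" behave in the textbook way. The only mild care needed is that $b$ is degenerate on $C$ in general, so one cannot blindly quote "$\dim V^\perp = \dim C - \dim V$"; the correct replacement, $\dim V^\perp = \dim C - \dim V + \dim(V\cap C^\perp)$, is what part (ii) is really recording, and getting the $C^\perp$-corrections placed correctly in every formula is the one spot where it is worth being deliberate rather than fast.
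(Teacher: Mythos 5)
Your overall framing is the right one, and it is essentially all the paper intends: Proposition~\ref{heuristic centr} is stated without proof as an obvious fact, the Lie-coalgebra structure enters only through the observation (already made in \S\ref{classical}) that $\mathfrak{a}^\perp$ is again a subcoalgebra, and the rest is bilinear algebra over a field in which $2$ is invertible. Your treatment of (i), of the identity $\mathfrak{a}^{\perp\perp}=\mathfrak{a}+C^\perp$, and of the splitting and non-degeneracy transfer in (iii) is correct.

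The genuine gap is the first identity of (ii). Your own key formula is $\dim C-\dim\mathfrak{a}^\perp=\dim\mathfrak{a}-\dim(\mathfrak{a}\cap C^\perp)$, i.e.\ $\dim\mathfrak{a}+\dim\mathfrak{a}^\perp=\dim C+\dim(\mathfrak{a}\cap C^\perp)$, and that is what is true; the version with $\dim C^\perp$ in place of $\dim(\mathfrak{a}\cap C^\perp)$ does not follow from it and is false whenever $C^\perp\not\subseteq\mathfrak{a}$ (take $C=K^2$ with zero cobracket and $q=0$, and $\mathfrak{a}$ a line: then $\mathfrak{a}^\perp=C$ and $1+2\neq 2+2$). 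So it cannot ``drop out of the two displayed formulas'': applying your formula to $\mathfrak{a}^\perp$ gives $\dim\mathfrak{a}^\perp+\dim\mathfrak{a}^{\perp\perp}=\dim C+\dim C^\perp$, which combined with $\mathfrak{a}^{\perp\perp}=\mathfrak{a}+C^\perp$ again yields the corrected identity, not the printed one. Your fallback identity $\mathfrak{a}\cap C^\perp=(\mathfrak{a}+C^\perp)^\perp\cap C^\perp$ is circular: $(\mathfrak{a}+C^\perp)^\perp=\mathfrak{a}^\perp$ and $C^\perp\subseteq\mathfrak{a}^\perp$, so it just asserts $\mathfrak{a}\cap C^\perp=C^\perp$, i.e.\ $C^\perp\subseteq\mathfrak{a}$ --- the very hypothesis under which the printed identity holds. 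The honest statement is $\dim\mathfrak{a}+\dim\mathfrak{a}^\perp=\dim C+\dim(\mathfrak{a}\cap C^\perp)$, which is also the exact additive counterpart of equation~\eqref{Dim D dim D'} in Theorem~\ref{Muger2}; either prove that, or add the hypothesis $\mathfrak{a}\supseteq C^\perp$ --- the ``in particular'' clause and everything downstream of (ii) is unaffected. A smaller slip of the same kind occurs in your closing aside to (iii): under its hypothesis $\mathfrak{a}\cap C^\perp\subseteq\mathfrak{a}\cap\mathfrak{a}^\perp=0$, so $C^\perp\subseteq\mathfrak{a}$ would force $C^\perp=0$; the correct containment is $C^\perp\subseteq\mathfrak{a}^\perp$ (you flag that aside as unnecessary, and it is).
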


The ``quantum'' analogs of the statements of Proposition~\ref{heuristic centr} are
Theorem~\ref{Muger1} and Corollary~\ref{a friend of Muger1} for (i), Theorem~\ref{Muger2}
and Corollary~\ref{Muger-2'} for (ii), and Theorem~\ref{Muger3} for (iii).

The notion of projective  centralizer from \S\ref{projcentsubsec} also has a ``classical" analog:
namely, given a subcoalgebra $\mathfrak{a}$ of  a Lie subcoalgebra $C=\fg^*$ one can
form the subcoalgebra $[\fg ,\mathfrak{a}]^{\perp}$.

\subsection{Centralizers}\label{centra-drinf}

In \cite{Mu1, Mu2} M\"uger introduced the following definition.

\begin{definition}
\label{def centralizer}
Objects $X$ and $Y$ of a braided fusion category $\C$ are said to
{\em centralize\,}  each other if
\begin{equation} \label{monodromy-drinf} 
c_{Y,X}c_{X,Y} =\id_{X\ot Y}.
\end{equation}
\end{definition}
The {\em centralizer\,} $\D'$ of a fusion subcategory $\D\subset\C$ is defined to
be the full subcategory of objects of $\C$ that centralize each object of $\D$.
It is easy to see that $\D'$ is a fusion subcategory. 
Clearly, $\D$ is symmetric if and only if $\D\subset\D'$.

Recall from Definition~\ref{defnondeg} that  we defined the $\tilde{S}$-matrix
$\tilde{S}=\{\tilde{s}_{XY}  \}_{X,Y\in \O(\C)}$ of a braided fusion category $\C$ by
\[
\tilde{s}_{XY} =\dfrac{\Tr_-\ot \Tr_+(c_{Y,X} c_{X,Y})}{d_-(X)d_+(Y)}.
\]
Here traces  $\Tr_\pm$ and dimensions $d_\pm$ depend on the choice
of a natural isomorphism $\psi: X\xrightarrow{\sim} X^{**}$,
but $\tilde{S}$ does not. Recall that  $\C$ is called {\em non-degenerate} 
if and only if $\tilde{S}$ is non-degenerate.

For every $X\in \O(\C)$  the squared norm $|X|^2=d_+(X)d_-(X)$ is independent
from the choice of $\psi$. Recall also that the sum 
\[
\dim(\C):= \sum_{X\in \O(\C)}\, |X|^2
\]
is non-zero  by \cite{ENO} (see \S\ref{squared}).

A characterization of the centralizer of a fusion subcategory of a pre-modular
fusion category  in terms of its $S$-matrix was given by 
D.~Altsch\"uler and A.~Brugui\`eres in \cite{AB}. In  Appendix~\ref{appensix Alt-Br} 
we prove a generalization of their result for a braided (not necessarily spherical)
fusion category $\C$. Namely, we prove in Proposition~\ref{AB criterion} that
for an object $V\in \O(\C)$ we have
\begin{equation}
\label{O(D')}
V\in \O(\D') \Longleftrightarrow \tilde{s}_{YV} = 1 \mbox{ for all } Y \in \O(\D),
\end{equation}
and 
\begin{equation}
\label{not O(D')}
V\not\in \O(\D') \Longleftrightarrow   \sum_{Y\in \O(\D)}\, |Y|^2\tilde{s}_{YV} =0 
\Longleftrightarrow   \sum_{Y\in \O(\D)}\, \tilde{s}_{VY} |Y|^2 =0.
\end{equation}

\begin{example} \label{pointedcase}
Let $(G,q)$ be a pre-metric group and $H\subset G$ a subgroup. If $\C$ is a pointed braided
category corresponding to a pre-metric group $(G,q)$ and $\mathcal{K}$ corresponds to
a subgroup $H\subset G$ then $\mathcal{K}'$ corresponds to $H^{\perp}$. This immediately follows from \eqref{cc=b}.
\end{example}

Now we will formulate the main properties of the centralizers. 
In the case of {\em pointed\,} braided categories (see Example \ref{pointedcase}) 
Theorems \ref{Muger1}, \ref{Muger2}, and \ref{Muger3} below amount
to standard properties of orthogonal complements.

Given a fusion subcategory $\D \subset \C$ we introduce an equivalence relation on $\O(\C)$ 
as follows. We say that objects $Y, Z \in \O(\C)$ are equivalent if there is an object $W$
in $\D'$ such that  $Y$ is contained in $Z \ot W$.  It is easy to see that this
is indeed an equivalence relation on $\O(\C)$. The full abelian subcategory of
$\C$ generated by an equivalence class of this relation will be called 
a {\em $\D'$-component} of $\C$. This terminology
is justified by the fact that $Y$ and $Z$ are equivalent if and only if
they belong to the same indecomposable component of a $\D'$-module category $\C$ 
(see \cite{O1} for relevant definitions). The following result is a generalization
of \cite[Theorem 3.1]{Br} and \cite[Theorem 3.2]{Mu2}.

For a fusion subcategory $\D$ of a braided fusion ctageory $\C$ let us consider
the following submatrix of the $\tilde{S}$-matrix of $\C$: 
\begin{equation}
\label{restricted tilde S}
\tilde{S}_\D :=\{ \tilde{s}_{XY} \}_{X\in \O(\D),Y\in \O(\C)}.
\end{equation}
Note that $\tilde{S} = \tilde{S}_\C$.

\begin{theorem}   
\label{Muger1}
Let $\C$ be a braided fusion category and let $\D \subset \C$ be a fusion subcategory.
The rank of $\tilde{S}_\D$  is equal to the number of $\D'$-components of $\C$. 
In particular, $\C$ is non-degenerate if and only if $\C'=\Vec$.
\end{theorem}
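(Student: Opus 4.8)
The plan is to deduce the theorem from the Altsch\"uler--Brugui\`eres characterization of centralizers (formulas \eqref{O(D')} and \eqref{not O(D')}), together with elementary linear algebra over $\kRcyc$. The key point is that \eqref{O(D')} and \eqref{not O(D')} tell us exactly how the rows of $\tilde S_\D$ behave: for $X\in\O(\D)$, the entry $\tilde s_{XY}$ equals $1$ whenever $Y\in\O(\D')$, and the weighted column sums $\sum_{X\in\O(\D)}|X|^2\tilde s_{XY}$ vanish precisely when $Y\notin\O(\D')$. So first I would fix a $\D'$-component $\mathcal O\subset\O(\C)$ and argue that, for each $X\in\O(\D)$, the function $Y\mapsto \tilde s_{XY}$ is constant on $\mathcal O$. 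This should follow from the multiplicativity properties of the $\tilde S$-matrix: if $Y$ is contained in $Z\otimes W$ with $W\in\D'$, then (since $W$ centralizes $X\in\D$) the double braiding $c_{W,X}c_{X,W}$ is the identity, and a trace computation analogous to \eqref{ss= Nds} gives $\tilde s_{XY}=\tilde s_{XZ}$. I would either cite the relevant multiplicativity identity from \S\ref{Sect3} or re-derive it directly from the naturality of the braiding. Thus $\tilde S_\D$ has at most as many distinct columns as there are $\D'$-components, so $\rank \tilde S_\D \le (\text{number of }\D'\text{-components})$.

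For the reverse inequality I would exhibit a linearly independent set of columns of size equal to the number of components. Pick one representative $Y_i\in\mathcal O_i$ from each $\D'$-component $\mathcal O_i$, say $\mathcal O_1=\O(\D')$ (the component of $\mathbf 1$). Consider the corresponding column vectors $v_i=(\tilde s_{XY_i})_{X\in\O(\D)}$. By \eqref{O(D')}, $v_1$ is the all-ones vector, which is nonzero. Now form the square matrix whose columns are the $v_i$; I would compute the ``Gram-type'' pairing using the weights $|X|^2$, i.e.\ look at $\sum_{X\in\O(\D)}|X|^2\, \tilde s_{XY_i}\,\overline{\tilde s_{XY_j}}$, and show this matrix is nondegenerate. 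Here \eqref{not O(D')} is the crucial input: the analog of the orthogonality relations for the $\tilde S$-matrix of $\D$ itself (which, restricted to $\O(\D)\times\O(\D)$, is the $\tilde S$-matrix of the braided category $\D$) shows that distinct components give orthogonal vectors under this pairing, while $\sum_X|X|^2|\tilde s_{XY_i}|^2\neq 0$ because it equals a sum related to $\dim(\D')$-type quantities — essentially one uses that within $\O(\D)$ one has the partition into $\D\cap\D'$-classes and the orthogonality relations \eqref{ss= Nds} for the (possibly sphericalized) category $\D$. This forces the $v_i$ to be linearly independent, giving $\rank\tilde S_\D\ge(\text{number of }\D'\text{-components})$.

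I expect the main obstacle to be the reverse inequality: proving that the column vectors attached to distinct $\D'$-components are linearly independent. The subtlety is that $\D$ need not be pre-modular or non-degenerate, so one cannot literally invoke the Verlinde-type orthogonality relations \eqref{ss= Nds} for $\D$; one must instead work with the $\tilde S$-matrix of $\D$ and use \eqref{not O(D')} applied with $\D$ in place of $\C$, which identifies the kernel-type behavior of the weighted column sums over $\O(\D)$ in terms of $(\D\cap\D')$-components of $\D$. A clean way to organize this is: the restricted matrix $\tilde S_{\D\cap\D'}$ viewed inside $\D$ has rank equal to the number of $(\D\cap\D')'$-components of $\D$, and an induction or a direct rank computation relates this back to $\tilde S_\D$. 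Alternatively, one can pass to the sphericalization $\D^{sph}$ (which is pre-modular, and whose $\tilde S$-matrix coincides with that of $\D$ since $\tilde S$ is pivotal-independent), thereby importing the genuine Verlinde relations; one must check that $\D'$-components and centralizers are compatible with sphericalization, but this is straightforward because $\tilde s_{XY}$ is unchanged.

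Finally, the last sentence of the theorem is the special case $\D=\C$: then $\D'=\C'$, and the $\C'$-components of $\C$ are the $\C'$-orbits on $\O(\C)$. The number of these equals $1$ if and only if $\O(\C')=\O(\C)$, i.e.\ $\C'=\C$ — no wait, one wants ``$\C$ non-degenerate $\iff$ $\C'=\Vec$''. Non-degeneracy means $\tilde S=\tilde S_\C$ has full rank $|\O(\C)|$, which by the main assertion means there are $|\O(\C)|$ many $\C'$-components, i.e.\ each $\D'$-equivalence class is a singleton, i.e.\ no nontrivial $W\in\C'$ can appear with $Z$ contained in $Z\otimes W$ in a way moving between simples — concretely, $W\in\O(\C')$ forces $\mathbf 1\otimes W=W$ to be equivalent to $\mathbf 1$, so all of $\O(\C')$ lies in the component of $\mathbf 1$; having all components singletons forces $\O(\C')=\{\mathbf 1\}$, i.e.\ $\C'=\Vec$. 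Conversely if $\C'=\Vec$ then every component is a singleton and the rank is maximal. I would spell this short equivalence out explicitly at the end.
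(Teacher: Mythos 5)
Your overall architecture is the same as the paper's proof in \S\ref{proof Muger1}: view the columns of $\tilde{S}_\D$ as functions $h_Y$ on $\O(\D)$, show they are constant on $\D'$-components (your trace computation for $Y\subset Z\ot W$ with $W\in\D'$ is exactly Lemma~\ref{YZ same component}, and it gives rank $\le$ number of components), and then try to show that one column per component yields a linearly independent family by means of the weighted pairing $(f,g)_\D=\sum_{X\in\O(\D)}|X|^2f(X)g(X^*)$. The ``different components $\Rightarrow$ orthogonal'' half also works as you indicate, via the non-spherical Verlinde-type identity (Lemma~\ref{non-spherical Verlinde}) combined with \eqref{not O(D')}, and your treatment of the special case $\D=\C$ at the end is correct. (One small point: define the pairing with $\tilde{s}_{XY^*}$, i.e.\ with the dual object, rather than with complex conjugates; the entries of $\tilde{S}$ are not known to lie in $\kcyc$ for a general braided fusion category, and the whole argument should stay inside $k$.)

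The genuine gap is exactly the step you flag as the main obstacle: nondegeneracy of your Gram matrix, which amounts to $(h_Y,h_Y)_\D=\sum_{X\in\O(\D)}|X|^2\,\tilde{s}_{XY}\tilde{s}_{XY^*}\neq 0$ for every $Y$. Expanding by the Verlinde-type identity and \eqref{not O(D')} this equals $\dim(\D)\sum_{W\in\O(\D')}\tilde{N}_{YY^*}^W$, and neither of your proposed routes shows this sum is nonzero: the coefficients $\tilde{N}_{YY^*}^W$ (or, in the spherical case, $N_{YY^*}^W\,d(W)/d(Y)^2$) need not be positive, so cancellation over $W\in\O(\D')$ cannot be excluded. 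Sphericalization does not help here, because $\C^{sph}$ is spherical but not pseudo-unitary --- quantum dimensions can still be negative --- and positivity is precisely the extra hypothesis in M\"uger's original argument that this theorem must avoid; moreover one would have to sphericalize the ambient $\C$ (not $\D$ alone) and control how $\D'$-components interact with the new transparent object. The paper closes the gap by a different, indirect argument (Lemma~\ref{hXhY}): if $h_Y$ paired trivially with its own component, then by constancy on components and the easy orthogonality it would satisfy $(h_Y,h_Z)_\D=0$ for \emph{all} $Z\in\O(\C)$; multiplying by $|Z|^2$, summing over all $Z\in\O(\C)$, exchanging the order of summation, and applying \eqref{O(D')}--\eqref{not O(D')} to the ambient category $\C$ itself (so that $\sum_{Z\in\O(\C)}\tilde{s}_{XZ}|Z|^2$ vanishes unless $X\in\C'$, where it equals $\dim(\C)$) collapses the double sum to $\dim(\C)\dim(\D\cap\C')$, which is nonzero by \cite{ENO} since $\be\in\D\cap\C'$. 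This reduction to the nonvanishing of global dimensions, rather than any positivity of the fusion data, is the one idea your proposal is missing; without it the reverse inequality does not follow.
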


The proof is given in Section \ref{proof Muger1}.

\begin{corollary}
\label{a friend of Muger1}
Let $\C$ be a braided fusion category. A fusion subcategory $\D\subset\C$ is a
non-degenerate braided category   if and only if 
$\D\cap\D'=\Vec$. 
$\square$
\end{corollary}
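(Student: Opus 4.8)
The plan is to deduce Corollary~\ref{a friend of Muger1} from Theorem~\ref{Muger1} by applying the latter twice: once to the braided fusion category $\D$ itself, and once (implicitly) to extract the needed information about $\D\cap\D'$. The key observation is that the centralizer of $\D$ \emph{taken inside $\D$} is exactly $\D\cap\D'$, where $\D'$ denotes the centralizer in the ambient $\C$. Indeed, an object $X\in\D$ centralizes every object of $\D$ (in the sense of Definition~\ref{def centralizer}, which only involves the braiding and is the same whether computed in $\D$ or in $\C$) if and only if $X\in\D'$; combined with $X\in\D$ this says precisely $X\in\D\cap\D'$. So the centralizer of $\D$ in $\D$ is $\D\cap\D'$.

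Next I would apply the ``in particular'' clause of Theorem~\ref{Muger1} to the braided fusion category $\D$ in place of $\C$: a braided fusion category is non-degenerate if and only if its M\"uger center is trivial, i.e.\ equals $\Vec$. By the previous paragraph the M\"uger center of $\D$ is $\D\cap\D'$. Therefore $\D$ is non-degenerate as a braided fusion category if and only if $\D\cap\D'=\Vec$, which is exactly the assertion of the corollary.

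The only point requiring a little care is the claim that M\"uger's notion of centralizer is ``intrinsic'': the condition $c_{Y,X}c_{X,Y}=\id_{X\ot Y}$ for $X,Y\in\D$ is manifestly unaffected by whether we regard $X,Y$ as living in $\D$ or in $\C$, since the braiding of $\D$ is by definition the restriction of that of $\C$. Likewise the $\tilde S$-matrix of $\D$ is the submatrix $\{\tilde s_{XY}\}_{X,Y\in\O(\D)}$ of the $\tilde S$-matrix of $\C$, by the formula \eqref{def tilde S} (the traces and dimensions $d_\pm$ can be computed using any natural isomorphism $\psi$, and one may use the restriction to $\D$ of a chosen $\psi$ on $\C$); this makes the application of Theorem~\ref{Muger1} to $\D$ legitimate. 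I do not anticipate any genuine obstacle here — the corollary is a formal consequence of Theorem~\ref{Muger1} once the self-contained nature of the centralizer construction is noted — so the ``hard part'' is really just making that bookkeeping remark explicit. The proof will be only a couple of lines.
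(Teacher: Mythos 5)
Your proof is correct and is exactly the argument the paper intends (the corollary is stated with no separate proof precisely because it follows by applying the last sentence of Theorem~\ref{Muger1} to $\D$ itself, after noting that the M\"uger center of $\D$ is $\D\cap\D'$). The bookkeeping remark that the centralizer condition and the $\tilde S$-matrix of $\D$ are intrinsic to the restricted braiding is accurate and is all that needs to be said.
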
 

\begin{corollary}
\label{object-component bijection}
Let  $\D$ be a fusion subcategory of a non-degenerate braided fusion category $\C$.
Then the number of $\D'$-components of $\C$ is equal to $|\O(\D)|$.
\end{corollary}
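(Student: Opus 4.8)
The plan is to deduce Corollary~\ref{object-component bijection} from Theorem~\ref{Muger1} together with the non-degeneracy hypothesis on $\C$. By Theorem~\ref{Muger1}, the number of $\D'$-components of $\C$ equals $\rank \tilde{S}_\D$, where $\tilde{S}_\D = \{\tilde s_{XY}\}_{X\in\O(\D),\, Y\in\O(\C)}$. So the task reduces to showing that this $|\O(\D)| \times |\O(\C)|$ submatrix of $\tilde{S}$ has rank exactly $|\O(\D)|$, i.e., that its rows (indexed by $X\in\O(\D)$) are linearly independent.

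The key idea is as follows. Suppose a linear relation $\sum_{X\in\O(\D)} a_X \tilde s_{XY} = 0$ holds for all $Y\in\O(\C)$. I want to show all $a_X = 0$. Apply Theorem~\ref{Muger1} to the subcategory $\D$ viewed inside $\C$, but more usefully, restrict attention to $Y$ ranging over $\O(\D)$: this gives that the square matrix $\tilde{S}_{\D,\D} := \{\tilde s_{XY}\}_{X,Y\in\O(\D)}$ also kills the vector $(a_X)$. Now I would invoke Theorem~\ref{Muger1} once more, this time for the braided fusion category $\D$ itself (with its inherited braiding): the rank of the full $\tilde S$-matrix of $\D$ equals the number of $(\D_{\text{in }\D})'$-components, i.e., the number of $\D'\cap\D$-components of $\D$. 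But since $\C$ is non-degenerate, we need to relate $\D'\cap\D$ computed in $\C$ to the M\"uger center of $\D$. The centralizer of $\D$ taken \emph{within} $\D$ is $\D\cap\D'$ (centralizer in $\C$ intersected with $\D$), which is precisely the M\"uger center $\D' \cap \D$ of the braided category $\D$.

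Here is where non-degeneracy of $\C$ enters decisively, though not quite in the naive way: $\C$ non-degenerate does \emph{not} force $\D$ non-degenerate. So instead the cleaner route is: the $\tilde S$-matrix of $\C$ is non-degenerate (Definition~\ref{defnondeg}), hence has rank $|\O(\C)|$, so \emph{all} its rows are linearly independent; in particular any subset of rows — namely those indexed by $X\in\O(\D)$ — is linearly independent. That is exactly $\rank\tilde S_\D = |\O(\D)|$. Combined with Theorem~\ref{Muger1}, the number of $\D'$-components equals $|\O(\D)|$, which is the claim.

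So the proof is essentially two lines: rows of the full $\tilde S$-matrix indexed by $\O(\D)$ are among the rows of a non-degenerate square matrix, hence linearly independent, so $\rank\tilde S_\D = |\O(\D)|$; then apply Theorem~\ref{Muger1}. The only subtlety — and the single point that warrants a moment's care — is making sure that $\tilde S_\D$ as defined in \eqref{restricted tilde S} really is the submatrix formed by selecting the rows of $\tilde S = \tilde S_\C$ indexed by $\O(\D)$ (keeping all columns), so that row-independence of $\tilde S$ immediately gives row-independence of $\tilde S_\D$; this is immediate from the definitions. I do not anticipate any genuine obstacle here; it is a direct corollary.

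\begin{proof}
By Theorem~\ref{Muger1}, the number of $\D'$-components of $\C$ equals the rank of the matrix $\tilde{S}_\D = \{\tilde{s}_{XY}\}_{X\in\O(\D),\, Y\in\O(\C)}$. This matrix is obtained from the full $\tilde{S}$-matrix $\tilde{S} = \tilde{S}_\C$ of $\C$ by selecting the rows indexed by the elements of $\O(\D)\subset\O(\C)$. Since $\C$ is non-degenerate, $\tilde{S}$ is a non-degenerate square matrix, so all of its rows are linearly independent; in particular the rows indexed by $\O(\D)$ are linearly independent. Hence $\rank \tilde{S}_\D = |\O(\D)|$, and therefore the number of $\D'$-components of $\C$ equals $|\O(\D)|$.
\end{proof}
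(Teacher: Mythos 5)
Your proof is correct and coincides with the paper's own argument: non-degeneracy of the full $\tilde{S}$-matrix makes its rows linearly independent, so the rows of $\tilde{S}_\D$ are independent and Theorem~\ref{Muger1} gives the count. The exploratory detour before your final proof is unnecessary, but the proof itself is exactly the intended one.
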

\begin{proof}
Since $\C$ is non-degenerate, the rows of $\tilde{S}_D$ must be linearly independent.
\end{proof}

Now we collect various conditions equivalent to the non-degeneracy of a braided
fusion category.

\begin{proposition}\label{Muger1prime}
The following conditions are equivalent for a braided fusion category $\C$:
\begin{enumerate}
\item[(i)] $\C$ is non-degenerate (see Definition \ref{defnondeg});
\item[(ii)] $\C'=\Vec$;
\item[(iii)] Let $G:\C \boxtimes \C^{\oP}\to \Z (\C)$ 
be the unique braided functor whose restrictions to $\C$ and $\C^{\oP}$ are the braided functors from 
\S\ref{centre}, then $G$ is an equivalence.
\end{enumerate}
\end{proposition}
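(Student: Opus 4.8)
The plan is to prove the chain of equivalences $(i)\Leftrightarrow(ii)\Leftrightarrow(iii)$ by establishing $(i)\Leftrightarrow(ii)$ first and then $(ii)\Leftrightarrow(iii)$. The equivalence $(i)\Leftrightarrow(ii)$ is essentially immediate from Theorem~\ref{Muger1}: taking $\D=\C$ there, the rank of $\tilde S_\C=\tilde S$ equals the number of $\C'$-components of $\C$. Now $\C'$-components are the equivalence classes where $Y\sim Z$ iff $Y$ is contained in $Z\ot W$ for some $W\in\C'$, so there is exactly one $\C'$-component precisely when $\C'=\Vec$ (if $\C'\ne\Vec$ pick a nontrivial simple $W\in\C'$ and note $\be$ and $W$ lie in different components unless $W\cong\be$; conversely if $\C'=\Vec$ the only $W$ available is $\be$, forcing $Y\cong Z$, so every simple object is its own component — wait, that would give $|\O(\C)|$ components, not one). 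Let me restate: the correct reading is that $\C$ is non-degenerate iff $\rank\tilde S=|\O(\C)|$, and Theorem~\ref{Muger1} with $\D=\C$ says $\rank\tilde S$ equals the number of $\C'$-components; one checks directly that the number of $\C'$-components equals $|\O(\C)|$ iff $\C'=\Vec$, since a $\C'$-component containing a simple $Z$ consists of all simple constituents of $Z\ot W$, $W\in\O(\C')$, and this is a single object for all $Z$ exactly when $\O(\C')=\{\be\}$. This yields $(i)\Leftrightarrow(ii)$.

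For $(ii)\Leftrightarrow(iii)$, first I would construct the functor $G:\C\bt\C^{\op}\to\Z(\C)$ and verify it is a well-defined braided tensor functor: the two functors $\C\to\Z(\C)$, $X\mapsto(X,c_{-,X})$, and $\C^{\op}\to\Z(\C)$, $X\mapsto(X,\tilde c_{-,X})$, are braided and fully faithful (standard, cf.\ \S\ref{centre}), and their images centralize each other inside $\Z(\C)$ because for $X\in\C$ regarded in the first copy and $Y\in\C$ in the second, the double braiding in $\Z(\C)$ of $(X,c_{-,X})$ and $(Y,\tilde c_{-,Y})$ is $c_{Y,X}\circ\tilde c_{X,Y}=c_{Y,X}\circ c_{Y,X}^{-1}=\id$. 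By Lemma~\ref{tens fun from product} the combined functor $G$ is fully faithful iff $\C\cap\C^{\op}=\Vec$ inside $\Z(\C)$, and this intersection is precisely (the image of) $\C'$: a simple $X$ lies in both images iff $c_{-,X}=\tilde c_{-,X}$, i.e.\ $c_{Y,X}c_{X,Y}=\id$ for all $Y$, i.e.\ $X\in\C'$. Hence $G$ is fully faithful iff $\C'=\Vec$.

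It remains to upgrade ``fully faithful'' to ``equivalence'' under the hypothesis $\C'=\Vec$, and this is where Proposition~\ref{tens func}(ii) enters: I would compare Frobenius--Perron dimensions. We have $\FPdim(\C\bt\C^{\op})=\FPdim(\C)^2$ and, by \eqref{dimZ}, $\FPdim(\Z(\C))=\FPdim(\C)^2$ as well, so a fully faithful tensor functor between them is automatically an equivalence. Conversely, if $G$ is an equivalence it is in particular fully faithful, forcing $\C'=\Vec$ by the previous paragraph. This closes the loop $(ii)\Leftrightarrow(iii)$ and completes the proof.

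The main obstacle I anticipate is the bookkeeping in $(i)\Leftrightarrow(ii)$: one must be careful about what ``number of $\D'$-components'' means when $\D=\C$ and reconcile it cleanly with the rank condition — the combinatorics of when distinct simple objects get merged into a single $\C'$-component needs to be spelled out, using that $W\mapsto$ (constituents of $Z\ot W$) together with rigidity forces $W$ invertible and in fact $W\cong\be$ when $\C'=\Vec$. The functoriality and braided-compatibility checks for $G$ in $(iii)$ are routine but tedious; I would cite \S\ref{centre} and Remark~\ref{newrems}(a) for the existence of $G$ rather than reproducing the coherence diagrams, and concentrate the argument on the fully-faithfulness criterion via Lemma~\ref{tens fun from product} and the dimension count via \eqref{dimZ} and Proposition~\ref{tens func}(ii).
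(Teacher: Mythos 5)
Your proposal is correct and follows essentially the same route as the paper: (i)$\Leftrightarrow$(ii) via Theorem~\ref{Muger1} (you merely rederive its ``in particular'' clause explicitly), and (ii)$\Leftrightarrow$(iii) via Lemma~\ref{tens fun from product} to reduce full faithfulness of $G$ to the triviality of the intersection $\C\cap\C^{\oP}=\C'$ inside $\Z(\C)$, together with the Frobenius--Perron dimension count $\FPdim(\Z(\C))=\FPdim(\C)^2=\FPdim(\C\bt\C^{\oP})$ and Proposition~\ref{tens func}(ii). The self-correction in your first paragraph resolves itself properly, so no gap remains.
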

\begin{proof}
The equivalence (i)$\Leftrightarrow$(ii) is already proved in Theorem \ref{Muger1}. So we
just need to prove the equivalence (ii)$\Leftrightarrow$(iii). 
We have  $\FPdim(\Z(\C))=\FPdim(\C)^2=
\FPdim(\C \boxtimes \C^{\oP})$, see \eqref{dimZ}.
Hence, by Proposition~\ref{tens func}(ii)  it suffices to prove that $\C'=\Vec$
if and only if $G$ is fully faithful. By Lemma~\ref{tens fun from product},
the latter condition holds if and only if the images of $\C$
and $\C^{\oP}$ in $\Z(\C)$ have trivial intersection . 
But this intersection is precisely  $\C'$.
\end{proof}

\begin{rems}   \label{newrems}
\begin{enumerate}
\item[(a)] In Proposition \ref{Muger1prime}(iii) the existence of $G$ is clear because the fusion subcategories 
$\C, \C^{\oP}\hookrightarrow\Z (\C)$ centralize each other.
\item[(b)] Condition (iii) from Proposition \ref{Muger1prime} is known 
as {\em factorizability.}
\end{enumerate}
\end{rems}

\begin{corollary} \label{centerfactorizable}
Let $\C$ be a fusion category. Then its center $\Z (\C)$ is non-degenerate.
\end{corollary}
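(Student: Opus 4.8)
The plan is to deduce this directly from the equivalence (i)$\Leftrightarrow$(iii) in Proposition~\ref{Muger1prime}, applied not to $\Z(\C)$ itself but to a category built from $\C$. The key observation is that the center of a fusion category is itself (equivalent to) a center of a braided category in the following sense: there is a standard braided equivalence $\Z(\C) \simeq \Z(\Z(\C)/\!\!/\C)$ available, but more to the point, one shows that $\Z(\C)$ as a braided category is already of the form appearing in condition (iii). Concretely, first I would recall that for any fusion category $\C$, the center $\Z(\C)$ comes with the two fully faithful braided embeddings of $\Z(\C)$ and $\Z(\C)^{\op}$ into $\Z(\Z(\C))$ from \S\ref{centre}, combining into $G: \Z(\C)\boxtimes\Z(\C)^{\op}\to\Z(\Z(\C))$. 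By Proposition~\ref{Muger1prime}, $\Z(\C)$ is non-degenerate if and only if this $G$ is an equivalence.

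So the task reduces to showing $\Z(\Z(\C))\simeq \Z(\C)\boxtimes\Z(\C)^{\op}$ as braided categories, i.e.\ that $\Z(\C)$ is \emph{factorizable} in the sense of Remark~\ref{newrems}(b). The cleanest route is dimension-counting via Proposition~\ref{tens func}(ii): we have $\FPdim(\Z(\Z(\C))) = \FPdim(\Z(\C))^2 = \FPdim(\Z(\C)\boxtimes\Z(\C)^{\op})$ by \eqref{dimZ}, so it suffices to prove that $G$ is fully faithful, and by Lemma~\ref{tens fun from product} this holds precisely when the images of $\Z(\C)$ and $\Z(\C)^{\op}$ inside $\Z(\Z(\C))$ intersect trivially. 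That intersection is exactly $\Z(\C)'$ (the M\"uger center of $\Z(\C)$), so we must show $\Z(\C)' = \Vec$.

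To see $\Z(\C)'=\Vec$ directly: an object $(Z, c_{-,Z})$ of $\Z(\C)$ lies in the M\"uger center iff $c_{W,Z}c_{Z,W}=\id$ for every $W\in\Z(\C)$. Taking $W$ to run over the image of the canonical embedding $\C^{\op}\hookrightarrow\Z(\C)$ forces the half-braiding $c_{-,Z}$ to agree with the given braiding of $\C$ on the underlying object; taking $W$ in the image of $\C\hookrightarrow\Z(\C)$ forces it to agree with the reverse braiding; together these pin down the underlying object of $Z$ to centralize all of $\C$ with respect to \emph{both} braidings, which (since $\C\boxtimes\C^{\op}$ generates $\Z(\C)$ by Proposition~\ref{Forg surj} applied appropriately, or simply because every simple of $\Z(\C)$ is a summand of something induced from $\C$) forces $Z$ to be a multiple of the unit. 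The main obstacle I anticipate is making this last centralizing argument fully rigorous without circularity --- one wants to be sure that "centralizes the images of $\C$ and $\C^{\op}$" genuinely implies "centralizes all of $\Z(\C)$", which requires knowing those images generate $\Z(\C)$ as a fusion category. A safer alternative avoiding this point entirely is: since $\Z(\C)$ carries a spherical (indeed modular, when $\dim\C\ne 0$, which always holds by \cite[Theorem 2.3]{ENO}) structure, and modularity of $\Z(\C)$ for spherical $\C$ is already recorded in \S\ref{center} as proved in \cite{Kass,Mu4}, one may simply invoke that plus the remark that modular $=$ non-degenerate; but the point of stating this corollary here is presumably to have it follow from the just-established Proposition~\ref{Muger1prime} without the spherical hypothesis, so I would present the factorizability argument above as the main proof and mention the spherical route only as a remark.
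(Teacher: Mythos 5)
Your proposal has a genuine gap, and it sits exactly where the real content of the corollary lies. The paper's own proof is one line: it quotes \cite[Proposition 4.4]{ENO2}, which asserts that $\Z(\C)$ satisfies condition (iii) of Proposition~\ref{Muger1prime} (factorizability) for an \emph{arbitrary} fusion category $\C$, and then non-degeneracy follows from the already-proved equivalence (i)$\Leftrightarrow$(iii). Your reduction steps (the $\FPdim$ count via \eqref{dimZ}, Proposition~\ref{tens func}(ii) and Lemma~\ref{tens fun from product}) merely retrace the proof of (ii)$\Leftrightarrow$(iii) inside Proposition~\ref{Muger1prime}, so after them you are facing the statement $\Z(\C)'=\Vec$, which is just an equivalent restatement of the corollary; that is fine as bookkeeping, but it means everything hinges on your ``direct'' verification of $\Z(\C)'=\Vec$ --- and that verification does not work. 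It invokes ``the canonical embeddings $\C,\C^{\op}\hookrightarrow\Z(\C)$'' and compares the half-braiding of $Z$ with ``the given braiding of $\C$'' and its reverse. Those embeddings (from \S\ref{centre}) exist only when $\C$ is \emph{braided}; in the corollary $\C$ is an arbitrary fusion category and carries no braiding, so the central step of your argument does not even parse. Moreover, even granting a braiding on $\C$, the auxiliary claim you lean on --- that the images of $\C$ and $\C^{\op}$ generate $\Z(\C)$ --- is precisely condition (iii) for $\C$ itself, i.e.\ it holds if and only if $\C$ is non-degenerate (it fails, e.g., for $\C=\Rep(G)$); Proposition~\ref{Forg surj} is about surjectivity of the forgetful functor $\Z(\C)\to\C$ and gives nothing in this direction, and ``every simple of $\Z(\C)$ is a summand of an induced object'' does not help because induced objects carry half-braidings that are neither the braiding nor its reverse.

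Your fallback route also does not cover the statement as made: modularity of $\Z(\C)$ recorded in \S\ref{center} requires $\C$ to be \emph{spherical}, and the existence of a spherical (or even pivotal) structure on an arbitrary fusion category is only conjectural (\S\ref{pivotal}); avoiding that hypothesis is exactly the point of the corollary in this paper, and the issue is not $\dim(\C)\neq 0$. So what is missing from your proposal is the actual nontrivial input: a proof that $\Z(\C)\bt\Z(\C)^{\op}\to\Z(\Z(\C))$ is an equivalence (equivalently, that the M\"uger center of $\Z(\C)$ is trivial) for a general, not necessarily braided or spherical, fusion category $\C$. The paper obtains this by citation to \cite[Proposition 4.4]{ENO2}; if you want a self-contained argument you would have to reproduce something of that strength (e.g.\ via M\"uger's Morita/duality description of $\Z(\C)$ as the dual of $\C\bt\C^{\op}$ acting on $\C$), not the centralizing computation you sketch.
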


\begin{proof} It is proved in \cite[Proposition 4.4]{ENO2} that the center of a fusion category 
satisfies condition (iii) from Proposition \ref{Muger1prime}.
\end{proof}

The next theorem is a generalization of the result of M.~M\"uger 
\cite[Theorem 3.2]{Mu2} on  modular categories.

\begin{theorem}       \label{Muger2}
Let $\C$ be a braided fusion category and let $\B,\,\D\subset\C$ be fusion
subcategories. 
\begin{enumerate}
\item[(i)]
We have
\begin{equation}
\label{Dim B,D}
\dim(\B \cap \D') \dim(\D) = \dim(\D \cap \B') \dim(\B),
\end{equation}
in particular,
\begin{equation} 
\label{Dim D dim D'}
\dim(\D) \dim(\D') = \dim(\C) \dim(\D \cap \C').
\end{equation}
\item[(ii)]
If $\D\supset \C'$ then 
\begin{equation}
\label{K''=K}
\D''=\D.
\end{equation}
\end{enumerate}
\end{theorem}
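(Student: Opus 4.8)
The plan is to deduce everything from Theorem~\ref{Muger1} together with the Altsch\"uler--Brugui\`eres characterization \eqref{O(D')}--\eqref{not O(D')} of centralizers. The key bookkeeping device is to count, in two ways, the ``size'' of a $\D'$-component of $\B$. Concretely, for a fusion subcategory $\D\subset\C$ let me partition $\O(\B)$ into $\B\cap\D'$-components \emph{inside $\B$} (using the braiding of $\B$, which is inherited from $\C$); note that two objects $Y,Z\in\O(\B)$ lie in the same such component precisely when $Z$ is contained in $Y\ot W$ for some $W\in\B\cap\D'$. Applying Theorem~\ref{Muger1} to the braided fusion category $\B$ and its subcategory $\B\cap\D'$ shows that the number of these components equals the rank of the submatrix $\{\tilde s_{XY}\}_{X\in\O(\B\cap\D'),\,Y\in\O(\B)}$ of $\tilde S$. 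The first step, then, is to prove the ``dimension count per component'' lemma: each $\B\cap\D'$-component of $\B$ has total squared norm equal to $\dim(\B\cap\D')\cdot(\text{something uniform})$, or more precisely that $\dim(\B)$ equals $\dim(\B\cap\D')$ times the number of components; this is the analog of Proposition~\ref{dimgradi} for module-category components and should follow from \eqref{O(D')}--\eqref{not O(D')} by a character-orthogonality argument over the rows of $\tilde S$.

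Granting that, here is the route to \eqref{Dim B,D}. I would show that the number of $\B\cap\D'$-components of $\B$ equals the number of $\D\cap\B'$-components of $\D$, by exhibiting both as the rank of one and the same matrix. Indeed, $\{\tilde s_{XY}\}_{X\in\O(\B\cap\D'),\,Y\in\O(\B)}$ has the same rank as $\{\tilde s_{XY}\}_{X\in\O(\B),\,Y\in\O(\D)}$: the rows of $\tilde S$ indexed by $\O(\B)$ that are orthogonal (in the $|Y|^2$-weighted sense of \eqref{not O(D')}) to all of $\O(\D)$ are exactly the rows coming from $\O(\B\cap\D')$, by \eqref{O(D')}, so passing to the quotient by that ``radical'' and using symmetry of $\tilde S$ under $X\leftrightarrow Y$ (together with $\tilde s_{XY}=\tilde s_{Y^*X^*}$, or rather the relevant symmetry established in \S\ref{Sect3}) identifies the two ranks. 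Combining with the per-component dimension count applied on both sides gives
\[
\frac{\dim(\B)}{\dim(\B\cap\D')}=\#\{\B\cap\D'\text{-components of }\B\}=\#\{\D\cap\B'\text{-components of }\D\}=\frac{\dim(\D)}{\dim(\D\cap\B')},
\]
which is \eqref{Dim B,D}. Then \eqref{Dim D dim D'} is the special case $\B=\C$, using $\C\cap\D'=\D'$ and $\D\cap\C'=\D\cap\C'$ together with $\dim(\C\cap\D)$-type triviality, i.e.\ $\B'=\C'$ when $\B=\C$.

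For part (ii): assume $\D\supset\C'$. Apply \eqref{Dim D dim D'} to $\D$ and also to $\D'$, giving $\dim(\D)\dim(\D')=\dim(\C)\dim(\D\cap\C')=\dim(\C)\dim(\C')$ (since $\D\supset\C'$) and $\dim(\D')\dim(\D'')=\dim(\C)\dim(\D'\cap\C')$. Now $\C'\subset\C''=\C'{}'$ always, and one checks $\C'\subset\D'$ because $\D\supset\C'$ forces $\D'\subset\C''=\C'$... wait — rather, $\D\supset\C'$ gives $\D'\subset(\C')'=\C''$, and since $\C'$ is symmetric, $\C'\subset\C''$, hence $\C'\subset\D'$ iff $\C'$ centralizes $\D'$, which holds because $\C'$ centralizes everything in $\C$. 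So $\D'\cap\C'=\C'$, giving $\dim(\D')\dim(\D'')=\dim(\C)\dim(\C')$ as well. Comparing the two displayed products yields $\dim(\D'')=\dim(\D)$. Since $\D\subset\D''$ always, equality of dimensions forces $\D''=\D$ (a fusion subcategory containing another of equal global dimension must coincide with it, by additivity of $\dim$ over simple objects and positivity of squared norms). I expect the main obstacle to be the first step — the clean ``each centralizer-component contributes $\dim(\B\cap\D')$ to the global dimension'' statement — since making the weighted row-orthogonality of $\tilde S$ precise without a spherical structure requires care with $\Tr_+$ versus $\Tr_-$ and with the possible non-reality of the $d_\pm$; the symmetry properties of $\tilde S$ needed to swap $\B$ and $\D$ must be quoted from the Altsch\"uler--Brugui\`eres material in Appendix~\ref{appensix Alt-Br} rather than taken for granted.
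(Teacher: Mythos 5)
Your part (ii) is fine and is essentially the paper's argument (two applications of \eqref{Dim D dim D'}, the observation that $\C'\subset\D'$ always so $\D'\cap\C'=\C'$, and positivity of squared norms to pass from equal dimensions to equality of subcategories). The gap is in part (i): the ``dimension count per component'' lemma on which your whole route rests is false. Take $\C=\B=\D=\mI$ an Ising modular category (Appendix~\ref{Is}). Then $\D'=\Vec$, so $\B\cap\D'=\Vec$ and the $\B\cap\D'$-components of $\B$ are the individual simple objects: there are $3$ of them, while $\dim(\B)/\dim(\B\cap\D')=4$. Moreover the components need not all have the same total squared norm (here they have squared norms $1,1,2$), so no ``uniform contribution'' version can hold either. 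Component counts track the \emph{number} of simple objects, not dimensions: indeed Corollary~\ref{object-component bijection} says that for non-degenerate $\C$ the number of $\D'$-components of $\C$ equals $|\O(\D)|$, which is in general different from $\dim(\C)/\dim(\D')$. So the chain $\dim(\B)/\dim(\B\cap\D')=\#\{\mbox{components}\}=\dim(\D)/\dim(\D\cap\B')$ fails at both ends. There is also a secondary misuse of Theorem~\ref{Muger1}: applied inside $\B$ with rows indexed by $\O(\B\cap\D')$, it counts components with respect to the \emph{centralizer} of $\B\cap\D'$ in $\B$, not with respect to $\B\cap\D'$ itself; these agree only when $\B\cap\D'$ equals its double centralizer in $\B$, which is not automatic.

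The paper's actual proof of (i) avoids components entirely and is much shorter: it evaluates the weighted double sum $\sum_{X\in\O(\D)}\sum_{Z\in\O(\B)}|X|^2\,\tilde{s}_{XZ}\,|Z|^2$ in two ways using Proposition~\ref{AB criterion}. By \eqref{O(D')}--\eqref{not O(D')} the inner sum over $Z$ equals $\dim(\B)$ if $X\in\O(\B')$ and $0$ otherwise, so the double sum is $\dim(\B)\dim(\B'\cap\D)$; summing in the other order gives $\dim(\D)\dim(\D'\cap\B)$, which is \eqref{Dim B,D}, and $\B=\C$ gives \eqref{Dim D dim D'}. The squared-norm weights in this computation are exactly what your unweighted component count is missing; if you want to salvage your plan, replace ``number of components'' by this weighted orthogonality argument, which is the correct form of the character-orthogonality you had in mind.
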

The proof is given in Section \ref{proof Muger2}. 

For a pair of fusion subcategories $\mathcal{A},\,\B \subset \C$ let  $\mathcal{A} \vee \B$ denote 
the smallest fusion subcategory of $\C$ containing $\mathcal{A}$ and $\B$.

\begin{corollary}  \label{Muger-2'}
If $\D\subset\C$ is any fusion subcategory then $\D'' =\D \vee \C'$.
\end{corollary}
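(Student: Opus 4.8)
The plan is to deduce Corollary~\ref{Muger-2'} from Theorem~\ref{Muger2} by a dimension-counting argument combined with the monotonicity of centralizers. First I would record the elementary formal properties of the centralizer operation: it is inclusion-reversing (if $\mathcal{A}\subset\mathcal{B}$ then $\mathcal{B}'\subset\mathcal{A}'$), and one always has $\mathcal{A}\subset\mathcal{A}''$. From these, $(\D\vee\C')'=\D'\cap\C''$, and since $\C'$ is symmetric it is contained in its own centralizer, and moreover $\C'\subset\D'$ automatically because everything in $\C'$ centralizes all of $\C$, hence in particular all of $\D$; thus $\D'\supset\C'$ and so by Theorem~\ref{Muger2}(ii) applied to $\D'$ in place of $\D$ we get $\D'''=\D'$, i.e. $\D''$ is ``centralizer-closed''.

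The containment $\D\vee\C'\subset\D''$ is the easy direction: $\D\subset\D''$ is formal, and $\C'\subset\D''$ holds because $\C'$ centralizes all of $\C$ hence in particular all of $\D'$, so $\C'\subset(\D')'=\D''$. Since $\D''$ is a fusion subcategory containing both $\D$ and $\C'$, it contains the subcategory they generate, $\D\vee\C'\subset\D''$.

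For the reverse containment I would compare global dimensions. Set $\mathcal{F}:=\D\vee\C'$; by the previous paragraph $\mathcal{F}\subset\D''$, so it suffices to show $\dim(\mathcal{F})=\dim(\D'')$, because a fusion subcategory contained in another one of equal global dimension must coincide with it (global dimension is additive over simple objects and strictly positive on each). Applying \eqref{Dim D dim D'} to $\D$ gives $\dim(\D)\dim(\D')=\dim(\C)\dim(\D\cap\C')$, and applying it to $\D'$ (using $\D''' = \D'$, or just the formula directly) gives $\dim(\D')\dim(\D'')=\dim(\C)\dim(\D'\cap\C')$. Since $\C'$ is symmetric we have $\C'\subset\C''$... more usefully, $\D'\cap\C' = \C'$ because $\C'\subset\D'$ as noted above; hence $\dim(\D')\dim(\D'')=\dim(\C)\dim(\C')$. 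Dividing the two displayed relations yields $\dim(\D'')/\dim(\D)=\dim(\C')/\dim(\D\cap\C')$, i.e. $\dim(\D'')=\dim(\D)\dim(\C')/\dim(\D\cap\C')$. On the other hand $\D\cap\C'$ is precisely the centralizer relationship one needs to analyze $\mathcal{F}=\D\vee\C'$: I would invoke \eqref{Dim B,D} with $\mathcal{B}=\D$ and the subcategory $\C'$, or better use the known multiplicativity $\dim(\D\vee\C')=\dim(\D)\dim(\C')/\dim(\D\cap\C')$ which follows from Theorem~\ref{Muger2}(i) (this is the standard consequence: take $\mathcal{B}=\D$, $\D=\C'$ there, noting $(\C')'=\C''\supset\C$, so $\D\cap(\C')'=\D$, giving $\dim(\D\cap\C')\dim(\C') = \dim((\C')\cap\D')\dim(\D)$, and combining with the analog computation identifies $\dim(\mathcal{F})$). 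Comparing, $\dim(\mathcal{F})=\dim(\D'')$, and we are done.

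The main obstacle I anticipate is establishing cleanly that $\dim(\D\vee\C')=\dim(\D)\dim(\C')/\dim(\D\cap\C')$ purely from Theorem~\ref{Muger2}; this "multiplicativity of dimension under join" is not stated as a separate lemma in the excerpt, so I would need to derive it. The trick should be to apply \eqref{Dim B,D} to the pair $(\mathcal{A},\mathcal{B})=(\D,\C')$ and separately understand $(\D\vee\C')'=\D'\cap\C''$; since $\C\subset\C''$, one has $\D'\cap\C''\supset\D'\cap\C = \D'$... this needs care because $\D'$ need not lie in $\C''$ unless $\C'\subset\C''$, which does hold as $\C'$ is symmetric, giving $\C'\subset(\C')'=\C''$, hence actually $\C''\supset\C$ and also $\C''\supset\C'$, so $\C''\supset\C\vee\C'$; then $\D'\cap\C''=\D'$ since $\D'\supset\C'$ is already inside $\C''$ and $\D'\subset\C\subset\C''$. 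Thus $(\D\vee\C')'=\D'$, so $(\D\vee\C')''=\D''$, and by Theorem~\ref{Muger2}(ii) (valid since $\D\vee\C'\supset\C'$) we get $\D\vee\C'=(\D\vee\C')''=\D''$ directly — which bypasses the dimension computation entirely. So in fact the cleanest route is: show $(\D\vee\C')'=\D'$, then apply $\D\vee\C'\supset\C'$ and Theorem~\ref{Muger2}(ii) to conclude $\D\vee\C' = (\D\vee\C')'' = \D''$.
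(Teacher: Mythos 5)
Your concluding argument --- establishing $(\D\vee\C')'=\D'\cap\C''=\D'$ (using $\D'\subset\C\subset\C''$) and then applying Theorem~\ref{Muger2}(ii) to $\D\vee\C'\supset\C'$ to get $\D''=(\D\vee\C')''=\D\vee\C'$ --- is exactly the paper's proof, and it is correct. The dimension-counting detour preceding it is superfluous (and the join-dimension formula it leans on is not available at this point in the paper), but since you discard that route yourself in favor of the clean one, there is no gap.
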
 
\begin{proof}
Let $\widetilde{\D} = \D \vee \C'$.
Then $\D'=\widetilde{\D}'$, so $\D''=\widetilde{\D}''=\widetilde{\D}$ by Theorem \ref{Muger2}(ii).
\end{proof}

\begin{corollary}
\label{diamond cat dims}
Let $\C$ be a braided fusion category and let $\mathcal{A},\, \B \subset\C$ be fusion subcategories.
Then
\begin{equation}
\label{cat dim  diamond eqn}
\dim(\mathcal{A})\dim(\B) = \dim(\mathcal{A}\vee \B)  \dim(\mathcal{A}\cap \B). 
\end{equation}
\end{corollary}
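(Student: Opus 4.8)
The plan is to derive Corollary~\ref{diamond cat dims} from Theorem~\ref{Muger2}(i) by a single clever choice of the two fusion subcategories to which we apply the dimension identity \eqref{Dim B,D}. The natural candidates are $\mathcal{B}$ and $\mathcal{D} := \mathcal{A}'$. With these choices, the left-hand side of \eqref{Dim B,D} becomes $\dim(\mathcal{B} \cap \mathcal{A}'')\,\dim(\mathcal{A}')$ and the right-hand side becomes $\dim(\mathcal{A}' \cap \mathcal{B}')\,\dim(\mathcal{B})$. The first thing to do is simplify these expressions using the tools already available: Corollary~\ref{Muger-2'} gives $\mathcal{A}'' = \mathcal{A} \vee \mathcal{C}'$, and one checks readily that $\mathcal{A}' \cap \mathcal{B}' = (\mathcal{A} \vee \mathcal{B})'$ since an object centralizes both $\mathcal{A}$ and $\mathcal{B}$ iff it centralizes the subcategory they generate.

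Next I would invoke equation \eqref{Dim D dim D'} of Theorem~\ref{Muger2}(i), which for any fusion subcategory $\mathcal{F} \subset \mathcal{C}$ reads $\dim(\mathcal{F})\dim(\mathcal{F}') = \dim(\mathcal{C})\dim(\mathcal{F} \cap \mathcal{C}')$. Applying this with $\mathcal{F} = \mathcal{A}$ lets me rewrite $\dim(\mathcal{A}')$ as $\dim(\mathcal{C})\dim(\mathcal{A} \cap \mathcal{C}')/\dim(\mathcal{A})$, and applying it with $\mathcal{F} = \mathcal{A} \vee \mathcal{B}$ converts the factor $\dim((\mathcal{A}\vee\mathcal{B})') = \dim(\mathcal{A}'\cap\mathcal{B}')$ on the right-hand side into $\dim(\mathcal{C})\dim((\mathcal{A}\vee\mathcal{B})\cap\mathcal{C}')/\dim(\mathcal{A}\vee\mathcal{B})$. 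After substituting everything into \eqref{Dim B,D} and cancelling a common factor of $\dim(\mathcal{C})$, the identity to be proved reduces to
\begin{equation*}
\dim(\mathcal{B} \cap (\mathcal{A}\vee\mathcal{C}'))\,\frac{\dim(\mathcal{A}\cap\mathcal{C}')}{\dim(\mathcal{A})} = \frac{\dim((\mathcal{A}\vee\mathcal{B})\cap\mathcal{C}')\,\dim(\mathcal{B})}{\dim(\mathcal{A}\vee\mathcal{B})}.
\end{equation*}
Rearranging, this is $\dim(\mathcal{A}\vee\mathcal{B})\dim(\mathcal{B}\cap(\mathcal{A}\vee\mathcal{C}'))\dim(\mathcal{A}\cap\mathcal{C}') = \dim(\mathcal{A})\dim(\mathcal{B})\dim((\mathcal{A}\vee\mathcal{B})\cap\mathcal{C}')$, and what we want, $\dim(\mathcal{A})\dim(\mathcal{B}) = \dim(\mathcal{A}\vee\mathcal{B})\dim(\mathcal{A}\cap\mathcal{B})$, follows provided $\dim(\mathcal{A}\cap\mathcal{B})\dim(\mathcal{B}\cap(\mathcal{A}\vee\mathcal{C}'))\dim(\mathcal{A}\cap\mathcal{C}') = \dim(\mathcal{A}\cap\mathcal{B})\dim((\mathcal{A}\vee\mathcal{B})\cap\mathcal{C}')$ — i.e.\ we still need one more lattice-theoretic dimension identity among subcategories containing pieces of $\mathcal{C}'$. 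The cleanest route is probably to apply the induction on $\dim(\mathcal{C})$ or to first prove the Corollary in the case $\mathcal{B} \supset \mathcal{C}'$ (where \eqref{K''=K} holds for $\mathcal{B}$) and then bootstrap; alternatively, one can simply apply \eqref{Dim B,D} a second time to the pair $\mathcal{A}\cap\mathcal{C}'$... but the slickest presentation applies \eqref{Dim B,D} with $\mathcal{D} = \mathcal{B}'$ in place of $\mathcal{A}'$.

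The main obstacle, then, is bookkeeping: making sure that the chosen substitutions close up without needing anything stronger than Theorem~\ref{Muger2}. I expect the clean argument is the following two-line one: apply \eqref{Dim B,D} with the roles ``$\mathcal{B} \rightsquigarrow \mathcal{A}$, $\mathcal{D} \rightsquigarrow \mathcal{B}'$'' to get $\dim(\mathcal{A}\cap\mathcal{B}'')\dim(\mathcal{B}') = \dim(\mathcal{B}'\cap\mathcal{A}')\dim(\mathcal{A})$, i.e.\ (using Corollary~\ref{Muger-2'} and $\mathcal{A}'\cap\mathcal{B}' = (\mathcal{A}\vee\mathcal{B})'$) $\dim(\mathcal{A}\cap(\mathcal{B}\vee\mathcal{C}'))\dim(\mathcal{B}') = \dim((\mathcal{A}\vee\mathcal{B})')\dim(\mathcal{A})$; then eliminate the three centralizer-dimensions $\dim(\mathcal{B}')$, $\dim((\mathcal{A}\vee\mathcal{B})')$ via \eqref{Dim D dim D'}, and separately apply \eqref{Dim B,D} once more with ``$\mathcal{B}\rightsquigarrow\mathcal{A}$, $\mathcal{D}\rightsquigarrow\mathcal{C}'$'' (or directly use that $\dim(\mathcal{A}\cap(\mathcal{B}\vee\mathcal{C}')) = \dim(\mathcal{A}\cap\mathcal{B})\cdot\dim(\text{something in }\mathcal{C}')$ when $\mathcal{C}'$ is handled carefully). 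I would allocate the bulk of the write-up to checking that the $\mathcal{C}'$-contributions genuinely cancel; everything else is substitution. If the direct manipulation turns out to require a case split, the fallback is to reduce via Corollary~\ref{Muger-2'} to subcategories containing $\mathcal{C}'$, for which \eqref{K''=K} makes the subcategory lattice behave like the lattice of subspaces containing a fixed subspace, where the desired modular-type identity \eqref{cat dim diamond eqn} is classical.
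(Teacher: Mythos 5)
Your computational core is the right one---and in fact it is the same as the paper's: apply \eqref{Dim B,D} with $\D=\mathcal{A}'$, identify $\mathcal{A}'\cap\B'=(\mathcal{A}\vee\B)'$, and eliminate the centralizer dimensions via \eqref{Dim D dim D'}. But the proof as written is not complete, and the incompleteness is exactly where you sense it: working in a possibly degenerate $\C$, Corollary~\ref{Muger-2'} forces $\mathcal{A}''=\mathcal{A}\vee\C'$ rather than $\mathcal{A}$, and you end up needing a further identity, namely $\dim(\B\cap(\mathcal{A}\vee\C'))\,\dim(\mathcal{A}\cap\C')=\dim(\mathcal{A}\cap\B)\,\dim((\mathcal{A}\vee\B)\cap\C')$ (your displayed version of this has a spurious extra factor $\dim(\mathcal{A}\cap\B)$ on the left). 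None of the proposed ways of closing this gap is carried out: the ``bootstrap from the case $\B\supset\C'$'' needs, to descend back to $\mathcal{A}$ and $\B$, instances of the very corollary being proved (e.g.\ for the pair $(\mathcal{A},\C')$, to compute $\dim(\mathcal{A}\vee\C')$), and the modular law of Lemma~\ref{Dedekind} does not identify $(\mathcal{A}\vee\C')\cap(\B\vee\C')$ with $(\mathcal{A}\cap\B)\vee\C'$ unless $\C'$ sits inside one of the two subcategories; the ``induction on $\dim(\C)$'' and the second application of \eqref{Dim B,D} are left as hopes. So as it stands the $\C'$-contributions have not been shown to cancel.

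The missing idea is a one-line reduction that makes all of this bookkeeping disappear: one may assume $\C$ is non-degenerate, because otherwise one replaces $\C$ by its center $\Z(\C)$, which is non-degenerate by Corollary~\ref{centerfactorizable}; the subcategories $\mathcal{A}$ and $\B$, their join, their intersection, and all the dimensions involved are unchanged under the (fully faithful, braided) embedding $\C\hookrightarrow\Z(\C)$, even though the ambient category changes. Once $\C'=\Vec$, your own substitutions terminate immediately: \eqref{Dim D dim D'} reads $\dim(\D)\dim(\D')=\dim(\C)$ for every $\D$, Theorem~\ref{Muger2}(ii) gives $\mathcal{A}''=\mathcal{A}$, and the chain
\begin{equation*}
\dim(\mathcal{A}\vee\B)=\frac{\dim(\C)}{\dim(\mathcal{A}'\cap\B')}
=\frac{\dim(\C)\dim(\B)}{\dim(\mathcal{A}')\dim(\mathcal{A}\cap\B)}
=\frac{\dim(\mathcal{A})\dim(\B)}{\dim(\mathcal{A}\cap\B)}
\end{equation*}
is the whole proof. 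Without this reduction (or a genuinely completed direct argument in $\C$), the proposal does not yet establish the statement.
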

\begin{proof}
We can assume that $\C$ is non-degenerate, for otherwise we replace $\C$ by $\Z(\C)$
which is non-degenerate by  Corollary~\ref{centerfactorizable}. Using 
equation \eqref{Dim D dim D'} twice and equation \eqref{Dim B,D} with $\D =\mathcal{A'}$ we have
\begin{equation*}
 \dim(\mathcal{A}\vee \B)  
=\frac{\dim(\C)}{\dim(\mathcal{A}' \cap \B')} 
= \frac{ \dim(\C) \dim(\B) } { \dim(\mathcal{A}') \dim(\mathcal{A} \cap \B)} 
= \frac{ \dim(\mathcal{A}) \dim(\B) } {\dim(\mathcal{A} \cap \B) }. \qedhere
\end{equation*} 
\end{proof}

The following result is an easy consequence of Theorems \ref{Muger1} and \ref{Muger2}(ii).
In the case when $\C$ is modular it is due to M.~M\"uger \cite{Mu2, Mu4}. 

\begin{theorem}        \label{Muger3}
Let $\C$ be a braided fusion category and $\K\subset\C$ a fusion subcategory. Suppose that 
the braided category $\K$ is non-degenerate. Then 
\begin{enumerate}
\item[(i)] the natural braided functor $\K\bt\K'\to\C$ is an equivalence;
\item[(ii)] $\K'$ is non-degenerate if and only if $\C$ is.
\end{enumerate}
\end{theorem}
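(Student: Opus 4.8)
The plan is to prove part (i) first and then deduce part (ii) from it. For part (i), the natural braided functor $F:\K\bt\K'\to\C$ exists precisely because $\K$ and $\K'$ centralize each other inside $\C$; this is the same mechanism as in Remark~\ref{newrems}(a). Since $\K$ is non-degenerate, Corollary~\ref{a friend of Muger1} gives $\K\cap\K'=\Vec$, so by Lemma~\ref{tens fun from product} the functor $F$ is fully faithful. By Proposition~\ref{tens func}(ii), to conclude that $F$ is an equivalence it suffices to show $\FPdim(\K\bt\K')=\FPdim(\C)$, i.e. $\FPdim(\K)\FPdim(\K')=\FPdim(\C)$. The cleanest route is through categorical dimensions rather than Frobenius-Perron dimensions: since $\K$ is non-degenerate, $\K'=\K\cap\C'$ in the sense that the $\tilde S$-matrix row data of $\K$ are independent (Corollary~\ref{object-component bijection}), and we want a dimension formula. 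Apply equation~\eqref{Dim D dim D'} of Theorem~\ref{Muger2}(i) with $\D=\K$: $\dim(\K)\dim(\K')=\dim(\C)\dim(\K\cap\C')$. But $\K\cap\C'\subset\K\cap\K'=\Vec$ since $\C'\subset\K'$ (as $\C'$ centralizes everything, in particular $\K$), so $\dim(\K\cap\C')=1$ and hence $\dim(\K)\dim(\K')=\dim(\C)$. The same argument applied in $\Z(\C)$-pullback form, or a direct comparison of $\FPdim$ using that $F$ is fully faithful together with the categorical-dimension identity and pseudounitarity-type bookkeeping, upgrades this to the $\FPdim$ statement; alternatively one observes that a fully faithful braided functor whose source and target have equal global dimension must be an equivalence by the same reasoning as Proposition~\ref{tens func}(ii) applied to $\dim$ in place of $\FPdim$ (using $\dim(\C)\neq 0$). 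This yields (i).

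For part (ii), once $\K\bt\K'\simeq\C$ as braided categories, compute the Müger center: the center of a Deligne tensor product of braided categories is the Deligne tensor product of the centers, so $\C'\simeq(\K\bt\K')'=\K'_{\K}\bt(\K')'_{\K'}=\Vec\bt(\K')'$, using that $\K$ is non-degenerate so its own Müger center is trivial. Hence $\C'\simeq(\K')'$ (Müger center of $\K'$ computed inside $\K'$). Therefore $\C'=\Vec$ if and only if $(\K')'=\Vec$, which by Theorem~\ref{Muger1} (the criterion $\C$ non-degenerate $\iff\C'=\Vec$) is exactly the statement that $\C$ is non-degenerate if and only if $\K'$ is.

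The step I expect to be the main obstacle is establishing the dimension equality $\FPdim(\K)\FPdim(\K')=\FPdim(\C)$ cleanly: Theorem~\ref{Muger2}(i) is stated for categorical (global) dimensions, so one gets $\dim(\K)\dim(\K')=\dim(\C)$ for free, but transferring this to Frobenius-Perron dimensions — which is what Proposition~\ref{tens func}(ii) requires — needs care, since the two dimension functions need not agree for non-pseudounitary categories. The resolution is to note that fully faithfulness of $F$ can be tested and exploited at the level of global dimensions directly: $F$ fully faithful forces $\dim(\K\bt\K')\le\dim(\C)$ with equality iff $F$ is essentially surjective, and combined with the exact equality $\dim(\K)\dim(\K')=\dim(\C)$ this forces essential surjectivity, hence equivalence, without ever invoking $\FPdim$. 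One should double-check that "fully faithful braided functor with equal global dimensions is an equivalence" is legitimate here; it follows because a fully faithful functor identifies $\K\bt\K'$ with a fusion subcategory of $\C$ of the same global dimension, and a fusion subcategory of the same dimension must be the whole category (as $\dim(\C/\text{subcategory})$ considerations, or simply counting simples with their squared norms, force it). Once this is pinned down, the rest of both parts is routine.
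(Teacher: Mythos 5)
Your proof is correct, but it takes a genuinely different route from the paper's. For (i), after the common first step ($\K\cap\K'=\Vec$ by non-degeneracy, hence $\K\bt\K'\to\C$ fully faithful by Lemma~\ref{tens fun from product}), you establish essential surjectivity by a dimension count: equation \eqref{Dim D dim D'} with $\D=\K$ gives $\dim(\K)\dim(\K')=\dim(\C)\dim(\K\cap\C')$, and since $\C'\subset\K'$ forces $\K\cap\C'=\Vec$, the image is a fusion subcategory of $\C$ of full global dimension, hence all of $\C$ (total positivity of squared norms). Your worry about Proposition~\ref{tens func}(ii) being stated for $\FPdim$ is resolved correctly: no $\FPdim$ is needed, since a proper fusion subcategory has strictly smaller global dimension under any real embedding of $\kRcyc$. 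The paper instead avoids dimensions entirely: setting $\D=\K\vee\K'$, it first proves (ii) by identifying the M\"uger center of $\K'$ as $\K''\cap\K'=\C'$ (using Corollary~\ref{Muger-2'} to write $\K''=\K\vee\C'\simeq\K\bt\C'$ inside $\D$), and then gets (i) from the double centralizer theorem: $\D=\D''=(\K''\cap\K')'=\C''=\C$. Your (ii) is derived from (i) via the (easy, but unstated in the paper) fact that the M\"uger center of a Deligne product is the product of the M\"uger centers; the paper's order is the reverse, with (ii) feeding into (i). The paper's argument buys a purely structural proof with no positivity input and yields the identity $\D'=\C'$ along the way; yours buys a more self-contained, quantitative argument at the cost of the extra lemma that equal global dimension plus full faithfulness implies equivalence. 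One cosmetic remark: your sentence ``since $\K$ is non-degenerate, $\K'=\K\cap\C'$ in the sense that\dots'' is garbled (presumably you meant $\K\cap\C'=\Vec$), but it is not load-bearing and the surrounding argument is sound.
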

\begin{proof}
By Theorem~\ref{Muger1}, since $\K$ is non-degenerate $\K \cap \K' =\Vec$.
Let $\D$ be the fusion subcategory of $\C$ generated by $\K$ and $\K'$.
The tensor functor $\K \boxtimes \K' \to \D$ defined by $X \boxtimes Y \mapsto X\ot Y$
is braided. It is an equivalence by Lemma~\ref{tens fun from product}
(this fact is also proved in \cite[Proposition 7.7]{Mu4}). 

By Corollary~\ref{Muger-2'}, $\K''$ is the smallest subcategory of $\C$ containing
$\K$ and $\C'$. Therefore,  $\K'' \simeq \K \bt \C' \subset \K \bt \K' \simeq \D$
and $\K'' \cap \K' =\C'$.  In view of Theorem~\ref{Muger1}
this proves (ii). Applying Theorem~\ref{Muger2}(ii) to $\D$ we have
$\D = \D'' = (\K'' \cap \K')' = \C'' =\C$, proving (i).
 \end{proof}

The next theorem is a modification  of equation \eqref{Dim D dim D'} of 
Theorem~\ref{Muger2}(i)  with categorical dimensions replaced by Frobenius-Perron.

\begin{theorem}        \label{Muger2-FPdim}
Let $\C$ be a braided fusion category and let 
$\D\subset\C$ be a fusion subcategory.
Then
\begin{equation}
\label{FPDim D FPdim D'}
\FPdim(\D)\FPdim(\D')=\FPdim(\C)\FPdim(\D\cap \C').
\end{equation}
\end{theorem}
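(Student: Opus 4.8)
The plan is to mimic the strategy used for the categorical-dimension statement \eqref{Dim D dim D'} in Theorem~\ref{Muger2}(i), but to replace every appeal to the $\tilde S$-matrix and to categorical dimensions by an appeal to Frobenius-Perron dimensions. The key observation is that formula \eqref{not O(D')} from the Altsch\"uler-Brugui\`eres criterion already packages the information we need: for $V\in\O(\C)$, one has $V\in\O(\D')$ iff $\tilde s_{YV}=1$ for all $Y\in\O(\D)$, and $V\notin\O(\D')$ iff $\sum_{Y\in\O(\D)}|Y|^2\tilde s_{YV}=0$. This is a \emph{categorical}-dimension statement, so to get the Frobenius-Perron analog I would first reduce to the case where $\C$ is pseudounitary, or better, pass through the center.

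First I would reduce to the non-degenerate case exactly as in the proof of Corollary~\ref{diamond cat dims}: replace $\C$ by $\Z(\C)$, which is non-degenerate by Corollary~\ref{centerfactorizable}, and note that both sides of \eqref{FPDim D FPdim D'} are controlled by the corresponding statement inside $\Z(\C)$. More precisely, for a fusion subcategory $\D\subset\C\subset\Z(\C)$ one has $\D'^{\,\Z(\C)}\supset\C^{\oP}$ and, by the factorizability equivalence $\C\bt\C^{\oP}\simeq\Z(\C)$ of Proposition~\ref{Muger1prime}(iii), $\D'^{\,\C}$ and $\D'^{\,\Z(\C)}$ are related by tensoring with $\C^{\oP}$; using the multiplicativity $\FPdim(\mathcal{A}\bt\mathcal{B})=\FPdim(\mathcal{A})\FPdim(\mathcal{B})$ and $\FPdim(\Z(\C))=\FPdim(\C)^2$ this should convert the identity in $\Z(\C)$ into the identity in $\C$. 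So it suffices to prove \eqref{FPDim D FPdim D'} when $\C$ is non-degenerate, in which case $\C'=\Vec$ by Theorem~\ref{Muger1} and the claimed formula becomes $\FPdim(\D)\FPdim(\D')=\FPdim(\C)$.

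For the non-degenerate case I would argue as follows. Consider $\C$ as a module category over the fusion subcategory $\D'$; its decomposition into $\D'$-components (in the sense of the equivalence relation defined before Theorem~\ref{Muger1}) has exactly $|\O(\D)|$ components by Corollary~\ref{object-component bijection}. Each $\D'$-component is a module category over $\D'$, and a counting argument with Frobenius-Perron dimensions of simple objects — using that $\FPdim$ is additive, multiplicative under $\ot$, and that within one $\D'$-component any two simples are linked by tensoring with an object of $\D'$ — shows that each component has the same total $\sum\FPdim(X)^2$, namely $\FPdim(\D')$. Summing over the $|\O(\D)|$ components gives $\FPdim(\C)=|\O(\D)|\cdot\FPdim(\D')$. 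It then remains to identify $|\O(\D)|$ with $\FPdim(\D)/\FPdim(\D')$... but in the non-degenerate reduction $\D'$ here should be read as the centralizer \emph{inside $\C$}, and one instead wants $\FPdim(\D)\FPdim(\D')=\FPdim(\C)$ directly: so I would instead pair this component count for $\D$ acting on $\C$ with the symmetric component count for $\D'$ acting on $\C$ (which has $|\O(\D')|$ components each of $\FPdim$-mass $\FPdim(\D)$ — wait, that needs $\D''=\D$, valid since $\C'=\Vec\subset\D$ by Theorem~\ref{Muger2}(ii)), obtaining $\FPdim(\C)=|\O(\D')|\FPdim(\D)$ and $\FPdim(\C)=|\O(\D)|\FPdim(\D')$; multiplying and using $\FPdim(\D)=\sum_{X\in\O(\D)}\FPdim(X)^2\ge|\O(\D)|$ is not quite an equality, so the honest route is the two relations together with the fact (provable from the same module-category decomposition applied to $\D'$ acting on $\D''=\D$... ) — in any case the cleanest packaging is: apply the $\D'$-component count to deduce both $\FPdim(\C)=\big(\sum_{\text{components}}\big)$ and combine with Theorem~\ref{Muger2}(i) transported via pseudounitarity.

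\emph{The main obstacle} I anticipate is precisely this last bookkeeping: the Altsch\"uler-Brugui\`eres criterion \eqref{O(D')}--\eqref{not O(D')} and Theorem~\ref{Muger1} are stated for \emph{categorical} dimensions and the $\tilde S$-matrix, whereas $\FPdim$ is a different dimension function that does not see $\tilde S$ directly. Bridging the two requires either (a) reducing to a pseudounitary situation — e.g.\ replacing $\C$ by $\Z(\C)$ and invoking that $\Z(\C)$ of an integral category is integral (Remark~\ref{dimFX}(ii)), or more generally using that $\FPdim$ and $\dim$ obey \emph{the same} formal multiplicativity and grading identities (Propositions~\ref{dimgradi}, \ref{un-grad}) so that the proof of \eqref{Dim D dim D'} can be rerun verbatim with $\FPdim$ in place of $\dim$ — or (b) directly redoing the $\D'$-component analysis of Theorem~\ref{Muger1}'s proof with $\FPdim$. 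Option (a), \textbf{rerunning the proof of Theorem~\ref{Muger2}(i) line-by-line with $\FPdim$ replacing $\dim$}, is almost certainly the intended and shortest path: the only inputs to that proof that are dimension-specific are the grading formula of Proposition~\ref{dimgradi} (which holds for $\FPdim$) and the component-counting of Theorem~\ref{Muger1} (whose proof, examined closely, counts $\D'$-components combinatorially and assigns to each the appropriate dimension-mass, and so goes through for either dimension). Thus the write-up will consist of: invoke the $\D'$-component decomposition of $\C$, observe each component is a $\D'$-module category of $\FPdim$-mass $\FPdim(\D')$ (by Proposition~\ref{dimgradi} applied to the universal grading, or by a direct argument), count the number of components as in Theorem~\ref{Muger1}, and assemble the equality; the one genuinely new point to check is that the ``number of components'' enters symmetrically, i.e.\ that $|\{\D'\text{-components of }\C\}|\cdot\FPdim(\D')\cdot\FPdim(\D\cap\C')=\FPdim(\C)\cdot(\text{correction})$, which I would verify by comparing with the already-established categorical identity \eqref{Dim D dim D'} in the case $\C=\Z(\C)$ where $\FPdim=\dim$ up to the pseudounitary normalization, and then noting both sides of \eqref{FPDim D FPdim D'} are ``cyclotomic integers determined by the fusion rules'' so agree in general.
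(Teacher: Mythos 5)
Your argument for the non-degenerate case breaks at its central claim, namely that every $\D'$-component of $\C$ has Frobenius--Perron mass $\FPdim(\D')$, so that $\FPdim(\C)=|\O(\D)|\cdot\FPdim(\D')$. This is false: take $\D=\C$ non-degenerate and not pointed, so $\D'=\Vec$ and each $\D'$-component consists of a single simple object $X$, with mass $\FPdim(X)^2$; these masses are neither equal to one another nor to $\FPdim(\D')=1$, and $|\O(\D)|\neq\FPdim(\D)$. The identity you would need to repair this is precisely $\FPdim(\C)=\FPdim(\D)\FPdim(\D')$, i.e.\ the theorem itself, so counting components cannot close the loop -- as you noticed yourself mid-argument. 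The fallback of rerunning the proof of Theorem~\ref{Muger2}(i) with $\FPdim$ in place of $\dim$ also fails: that proof rests entirely on the Altsch\"uler--Brugui\`eres relations \eqref{O(D')}--\eqref{not O(D')}, which are orthogonality statements for the $\tilde{S}$-matrix weighted by the squared norms $|X|^2$, i.e.\ statements about categorical traces, and they have no Frobenius--Perron analogue. The closing remark that ``both sides are cyclotomic integers determined by the fusion rules'' is not an argument either: $\dim$ and $\FPdim$ genuinely differ for non-pseudounitary categories, and neither pseudounitarity of $\C$ nor of $\Z(\C)$ is available. Finally, your reduction to the non-degenerate case invokes the factorizability $\C\bt\C^{\oP}\simeq\Z(\C)$ of Proposition~\ref{Muger1prime}(iii), which holds exactly when $\C$ is non-degenerate -- the case you are trying to reduce to -- so the reduction as written is circular; the only relation available in general is $\D'=\D^{\wr}\cap\C$, where $\D^{\wr}$ is the centralizer in $\Z(\C)$, and that alone does not convert the identity in $\Z(\C)$ into the one in $\C$.

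For comparison, the paper's proof avoids the $\tilde{S}$-matrix altogether and supplies the structural input your sketch is missing. It first proves (Lemma~\ref{diamond dimensions}), via regular elements of the Grothendieck ring, that $\FPdim(\A)\FPdim(\B)=\FPdim(\A\vee\B)\FPdim(\A\cap\B)$ whenever simple objects of $\A$ and $\B$ commute. For non-degenerate $\C$ it then uses the relative center $\Z_\D(\C)$: by duality of $\Z_\D(\C)$ with $\D\bt\C^{\op}$ one has $\FPdim(\Z_\D(\C))=\FPdim(\D)\FPdim(\C)$, the two canonical copies $\C_\pm\subset\Z_\D(\C)$ satisfy $\C_+\cap\C_-=\D'$ and $\C_+\vee\C_-=\Z_\D(\C)$ (the latter using surjectivity of the forgetful functor $\Z(\C)\to\Z_\D(\C)$ together with factorizability of the non-degenerate $\C$), and the lemma then yields $\FPdim(\D)\FPdim(\D')=\FPdim(\C)$. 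The general case is handled inside $\Z(\C)$ by computing $\FPdim(\D\vee\C^{\wr})$ in two different ways, using the lemma once and the non-degenerate identity twice, which is where the factor $\FPdim(\D\cap\C')$ emerges. If you want to pursue your outline, it is this use of $\Z_\D(\C)$ and its Frobenius--Perron dimension that must replace the $S$-matrix/component counting; the module-category decomposition of $\C$ over $\D'$ by itself does not control $\FPdim$.
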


The proof is given in Section \ref{Muger2-FPdimproof}.

\subsection{Projective centralizers} \label{projcentsubsec}
As before, let $\C$ be a braided fusion category. 

\subsubsection{The notion of projective centralizer}  \label{projcentnot}

\begin{lemma} \label{projcentlem1}
Let $X\in \C$. For each $\lambda \in k^\times$ let $\D_{\lambda}$ be the full subcategory of
objects $Y\in\D$ such that $c_{Y,X}c_{X,Y}=\lambda\cdot\id_{X\otimes Y}$. 
\begin{enumerate}
\item[(i)] If $Y\in\D_{\lambda}$ and $Z\in\D_{\mu}$ then $Y\ot\Z\in\D_{\lambda\mu}$;
\item[(ii)] If $Y\in\D_{\lambda}$ then $Y^*\in\D_{\lambda^{-1}}$;
\item[(iii)] The category 
\begin{equation}  \label{k-grading}
\D = \bigoplus_{\lambda\in k^\times} \D_{\lambda}
\end{equation}
is a fusion subcategory of $\C$.
\end{enumerate}
\end{lemma}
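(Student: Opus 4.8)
The plan is to verify parts (i) and (ii) by direct computation with the double braiding, and then to deduce (iii) as a formal consequence. For (i), suppose $Y\in\D_\lambda$ and $Z\in\D_\mu$, so that $c_{Y,X}c_{X,Y}=\lambda\cdot\id_{X\ot Y}$ and $c_{Z,X}c_{X,Z}=\mu\cdot\id_{X\ot Z}$. I would compute the double braiding of $X$ with $Y\ot Z$ using the hexagon axioms, which express $c_{X,Y\ot Z}$ and $c_{Y\ot Z,X}$ in terms of $c_{X,Y}$, $c_{X,Z}$, $c_{Y,X}$, $c_{Z,X}$ together with associativity constraints. After cancelling associators, the composite $c_{Y\ot Z,X}c_{X,Y\ot Z}$ is seen to act on $X\ot Y\ot Z$ as the double braiding with $Y$ followed by the double braiding with $Z$ (this is the standard fact that the monodromy is ``multiplicative in the second slot''), hence equals $\lambda\mu\cdot\id_{X\ot Y\ot Z}$. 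Therefore $Y\ot Z\in\D_{\lambda\mu}$.

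For (ii), let $Y\in\D_\lambda$. Using the rigidity of $\C$ and the compatibility of the braiding with duals (the fact that $c_{X,Y^*}$ is obtained from $c_{Y,X}^{-1}$ by conjugating with evaluation and coevaluation morphisms, as in formula \eqref{new uX} in the proof of Proposition~\ref{traceofbraiding}), I would show that the double braiding $c_{Y^*,X}c_{X,Y^*}$ is obtained from $(c_{Y,X}c_{X,Y})^{-1}$ by a duality transport that sends scalars $\lambda\cdot\id$ to $\lambda^{-1}\cdot\id$. Concretely, since $Y\ot Y^*$ contains $\be$ as a direct summand and the double braiding with $\be$ is trivial, part (i) applied to $Y$ and $Y^*$ forces the scalar for $Y^*$ to be $\lambda^{-1}$; this is perhaps the cleanest route and avoids an explicit diagram chase.

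For (iii), first observe that $\D_1$ is nonempty since $\be\in\D_1$, and that for each $\lambda$ the subcategory $\D_\lambda$ is closed under direct sums and subobjects: if $c_{Y,X}c_{X,Y}=\lambda\cdot\id$ and $c_{Y',X}c_{X,Y'}=\lambda\cdot\id$ then naturality of the braiding gives the same scalar on $Y\oplus Y'$ and on any direct summand. Since $\C$ is semisimple and every simple object $W$ of $\C$ lies in exactly one $\D_\lambda$ (the double braiding $c_{W,X}c_{X,W}$ is an automorphism of the simple object $X\ot W$, hence a scalar), we get the decomposition \eqref{k-grading} as additive categories, with only finitely many nonzero summands. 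Parts (i) and (ii) show this is closed under tensor product and duals, and it contains the unit object, so $\D=\bigoplus_{\lambda}\D_\lambda$ is a fusion subcategory of $\C$ by Corollary~\ref{fusion sub} in Appendix~\ref{fusionring}. (In fact (i) says more: $\lambda\mapsto\D_\lambda$ is a $k^\times$-grading of this fusion subcategory.)

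\textbf{Main obstacle.} The only genuinely technical point is the hexagon computation in (i): one must be careful to track the associativity constraints and confirm that the monodromy of $X$ against a tensor product $Y\ot Z$ factors as claimed. Once that multiplicativity is in hand, (ii) follows painlessly from (i) using $\be\subset Y\ot Y^*$, and (iii) is purely formal given semisimplicity and the already-cited rigidity of tensor-closed full subcategories.
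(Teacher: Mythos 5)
Your proof of (i) is fine and coincides with the paper's (it is immediate from the hexagon axiom). The problem is in (ii): the route you declare "cleanest" and rely on in your closing summary is circular. Part (i) can only be applied to the pair $(Y,Y^*)$ once you already know that $Y^*$ lies in some $\D_\mu$, i.e.\ that the monodromy $c_{Y^*,X}c_{X,Y^*}$ is a \emph{scalar} multiple of $\id_{X\ot Y^*}$ — and that is exactly what (ii) has to prove. Since neither $X$ nor $Y$ is assumed simple (and even for simple ones $X\ot Y^*$ is in general not simple), this endomorphism has no a priori reason to be scalar; the observation that $\be$ is a summand of $Y\ot Y^*$ only pins down the \emph{value} of the scalar once its existence is granted. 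To close the gap you must actually carry out the duality-transport computation you mention only in passing: using rigidity one writes $c_{Y,X}^{-1}$ and $c_{X,Y}$ as conjugates of $c_{Y^*,X}$ and $c_{X,Y^*}^{-1}$ by evaluation/coevaluation (this is the content of the commutative diagram \eqref{Kassel diagram} in the paper, yielding \eqref{Kassel 1}--\eqref{Kassel 2}), from which one gets that $Y^*\in\D_{\lambda}$ forces $Y\in\D_{\lambda^{-1}}$, a statement equivalent to (ii). So the diagram chase you hoped to avoid is not a technical nicety; it is the substance of (ii).

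There is also a false assertion in your treatment of (iii): you claim every simple $W\in\C$ lies in exactly one $\D_\lambda$ because "$c_{W,X}c_{X,W}$ is an automorphism of the simple object $X\ot W$". The object $X\ot W$ is rarely simple, and simple objects need not projectively centralize $X$ at all — if your claim were true, the projective centralizer would always be the whole of $\C$, which is not the case. Fortunately this claim is not needed: for (iii) it suffices that each $\D_\lambda$ is closed under direct sums and summands (naturality of the braiding), that distinct $\D_\lambda$ share no nonzero objects, that $\be\in\D_1$, and that the resulting full additive subcategory is closed under tensor product and duals by (i) and (ii); alternatively, as the paper remarks, (iii) already follows from (i) together with Corollary~\ref{fusion sub}, since a tensor-closed full subcategory of a fusion category is automatically rigid.
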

\begin{remark}
According to (i), the decomposition \eqref{k-grading} is a grading of $\D$.
\end{remark}
\begin{proof}
Statement (i) is immediate from the hexagon axiom of braiding.
To prove statement (ii)  let $X,Y$ be objects of $\C$ and consider the following diagram
\begin{equation}
\label{Kassel diagram}
\xymatrix{
X \ot Y \ar[rr]^(.4){\id_X \ot \coev_Y \ot \id_Y}  \ar[d]_{\coev_Y \ot \id_{X\ot Y}} &&
X\ot Y \ot Y^*\ot Y  \ar[rr]^(.6){\id_{X \ot Y} \ot \ev_Y} &&
X\ot Y \ar[d]^{c_{Y,X}^{-1}} \\
Y\ot Y^*  \ot  X\ot Y \ar[urr]^(.4){c_{Y\ot Y^*, X}\ot \id_Y} \ar[rr]_{\id_Y\ot c_{Y^*,X}\ot \id_Y} &&
Y\ot X \ot Y^*\ot Y   \ar[u]_{c_{Y,X}\ot \id_{Y^*\ot Y}}  \ar[rr]_(.6){\id_{Y\ot X}\ot \ev_Y} && Y\ot X.
}
\end{equation}
 
Here, as usual, $\ev_Y$ and  $\coev_Y$ denote the evaluation and coevaluation morphisms of $Y$
and $c$ denotes the braiding of $\C$.  We suppress the associativity and unit constraints.
The left triangle and the right square commute by naturality of the braiding, and the middle triangle 
commutes by the hexagon axiom.  Hence the diagram \eqref{Kassel diagram} is commutative,
i.e., 
\begin{equation}
\label{Kassel 1}
c_{Y,X}^{-1} = (\id_{Y\ot X}\ot \ev_Y) (\id_Y\ot c_{Y^*,X}\ot \id_Y) (\coev_Y \ot \id_{X\ot Y}).
\end{equation}
Similarly, we have
\begin{equation}
\label{Kassel 2}
c_{X,Y} = (\id_{Y\ot X}\ot \ev_Y) (\id_Y\ot c_{X, Y^*}^{-1} \ot \id_Y) (\coev_Y \ot \id_{X\ot Y}).
\end{equation}
If $Y^*\in\D_{\lambda}$ (i.e., $c_{Y^*,X} = \lambda c_{X, Y^*}^{-1}$) then by 
\eqref{Kassel 1}-\eqref{Kassel 2}, $c_{Y,X}^{-1}=\lambda c_{X,Y}$, i.e., $Y\in\D_{\lambda^{-1}}$.
This is equivalent to (ii).

Finally, (iii) follows from (i) and (ii) combined.
\end{proof}

\begin{rems}
\begin{enumerate}
\item[(i)]  Identites \eqref{Kassel 1} and \eqref{Kassel 2} are well known. Our proofs of them
are the same as  in \cite[Proposition XIV.3.1]{Kass}).
\item[(ii)] Statement (iii) of Lemma~\ref{projcentlem1} can be deduced directly from (i)
because a fusion subcategory of a fusion category
is rigid (see Corollary~\ref{fusion sub} in Appendix~\ref{fusionring}). 

\end{enumerate}
\end{rems}

\begin{definition} \label{projcentdef1}
We say that simple objects $X,Y\in\C$ 
{\em projectively centralize\,}  each other if 
$c_{Y,X}c_{X,Y}=\lambda\cdot\id_{X\otimes Y}$ for some $\lambda \in k^\times$.
If $X$ and $Y$ are arbitrary objects of $\C$ we say that they projectively centralize
each other if every simple component of $X$ projectively centralizes every simple component of $Y$.
We say that full subcategories $\C_1,\C_2\subset\C$ projectively centralize each other if each
object of $\C_1$ projectively centralizes each object of $\C_2$.
\end{definition}

\begin{definition} \label{projcentdef2}
The {\em projective centralizer of an object $X\in\C$\,} is the
full subcategory of objects of $\C$ projectively centralizing $X$. The {\em projective centralizer of a full
subcategory $\D\subset\C$\,} is the full subcategory of objects of $\C$ projectively centralizing
each object of $\D$.
\end{definition}

Lemma \ref{projcentlem1} implies that the projective centralizer of an object of $\C$ (or of a full subcategory $\D\subset\C$) is a fusion subcategory.

\subsubsection{Centralizers and projective centralizers} 
\begin{lemma} \label{EO/2}
Let $Y\in\C$, $Y\ne 0$, $A:=Y\otimes Y^*$. Equip $A\in\C$ with the algebra structure 
$\id_Y\ot\ev_Y\ot\id_Y: Y\ot Y^*\ot Y\ot Y^*\to Y\ot Y^*$. Define a functor $F$ from $\C$ to the category of $A$-modules in $\C$ by $F(X):=Y\ot X$.
Then $F$ is fully faithful.
\end{lemma}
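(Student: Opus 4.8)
The plan is to recognize $F$ as essentially a standard free-module functor and compute $\Hom$-spaces directly. First I would note that $A = Y\ot Y^*$ with the stated multiplication is the algebra $\uHom(Y,Y)$ of internal endomorphisms of $Y$, and $F(X) = Y\ot X$ is naturally an $A$-module via the action $(\id_Y\ot\ev_Y\ot\id_X):Y\ot Y^*\ot Y\ot X\to Y\ot X$. So $F$ is a well-defined functor $\C\to A\text{-Mod}$, and the claim is that it is fully faithful, i.e., that for all $X,X'\in\C$ the map
\[
\Hom_\C(X,X')\to\Hom_A(Y\ot X,\, Y\ot X')
\]
is a bijection. By semisimplicity of $\C$ and additivity of everything in sight, it suffices to treat the case where $X$ and $X'$ are simple.

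The key computation is the adjunction. The functor $F$ has a right adjoint: forgetting the $A$-module structure gives a functor $A\text{-Mod}\to\C$, $M\mapsto M$, and one checks that $\Hom_A(Y\ot X, M)\cong\Hom_\C(X, \uHom(Y,M))$ where $\uHom(Y,M)=Y^*\ot M$; more simply, since $Y$ is rigid, $\Hom_A(Y\ot X, Y\ot X')\cong\Hom_\C(X,\, Y^*\ot Y\ot X')^{A}$, the subspace of morphisms compatible with the $A$-actions, and one identifies this with $\Hom_\C(X, X')$. Concretely: a morphism $f: Y\ot X\to Y\ot X'$ in $\C$ corresponds under rigidity of $Y$ to a morphism $\tilde f: X\to Y^*\ot Y\ot X'$, and the condition that $f$ be $A$-linear translates (using the algebra structure on $A=Y\ot Y^*$ and a diagram chase with $\ev_Y$, $\coev_Y$) into the condition that $\tilde f$ factors through the "unit" $\coev_Y$-inclusion $X'\cong \be\ot X'\hookrightarrow Y^*\ot Y\ot X'$, i.e. $\tilde f = (\coev_{Y^*}\ot\id_{X'})\circ g$ — wait, rather that $\tilde f$ lands in the image of $\id_{Y^*\ot Y}$ applied after the coevaluation, which is precisely a morphism $g: X\to X'$ in $\C$ with $f = \id_Y\ot g$. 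This gives the inverse to $g\mapsto\id_Y\ot g$.

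I expect the main obstacle to be bookkeeping: writing down the module structure on $F(X)$ and the $A$-linearity condition cleanly enough that the diagram chase identifying $A$-linear maps $Y\ot X\to Y\ot X'$ with $\id_Y\ot g$ is transparent. The cleanest route is probably to invoke the general fact (used already in \cite{EO}, cf. the use of \cite[Prop.~3.39]{EO} above) that for a nonzero object $Y$ in a fusion category the functor $X\mapsto Y\ot X$ from $\C$ to the category of modules over $A=Y\ot Y^*=\uHom(Y,Y)$ is an equivalence of $\C$ onto the image, and in particular fully faithful — this is just the statement that $\C$ is the category of modules over the internal endomorphism algebra of any nonzero object, which in turn is an instance of the Barr–Beck / Morita-theoretic description of module categories. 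If one wants a self-contained argument, the explicit adjunction computation above suffices and is short. Either way, once $\Hom_A(Y\ot X, Y\ot X')\cong\Hom_\C(X,X')$ is established and seen to be implemented by $g\mapsto\id_Y\ot g$, full faithfulness of $F$ is immediate.
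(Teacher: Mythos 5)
Your overall strategy --- compute $\Hom_A(Y\ot X,\,Y\ot X')$ directly and show that every $A$-linear morphism is of the form $\id_Y\ot g$ --- is genuinely different from the paper's argument and can be pushed through, but as written it has a gap at exactly its central step. The assertion that $A$-linearity of $f\colon Y\ot X\to Y\ot X'$ ``translates into'' the statement that its transpose factors through the unit (i.e.\ that $f=\id_Y\ot g$) is precisely the content of the lemma; you defer it to an unspecified diagram chase, and the one version you articulate is wrong as stated: $\coev_Y$ is a morphism $\be\to Y\ot Y^*$, not $\be\to Y^*\ot Y$, and without a pivotal structure there is no canonical morphism $\be\to Y^*\ot Y$, so to transpose the outer copy of $Y$ you must use the left dual, with $\coev_{{}^*Y}\colon\be\to{}^*Y\ot Y$ and $\ev_{{}^*Y}\colon Y\ot{}^*Y\to\be$ (likewise the meaning of $\Hom_\C(X,Y^*\ot Y\ot X')^A$ is never defined). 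More importantly, the chase needs an input you never isolate, and it is the only place where the hypothesis $Y\ne 0$ --- without which the lemma is false --- can enter: $\ev_Y\colon Y^*\ot Y\to\be$ is a (split) epimorphism, equivalently $\be$ is a direct summand of $Y^*\ot Y$. With that the route does close: transposing the $A$-linearity identity along the outer $Y$ gives $\tilde f\circ(\ev_Y\ot\id_X)=(\coev_{{}^*Y}\ot\id_{X'})\circ(\ev_Y\ot\id_{X'})\circ(\id_{Y^*}\ot f)$, where $\tilde f:=(\id_{{}^*Y}\ot f)\circ(\coev_{{}^*Y}\ot\id_X)$; precomposing with $s\ot\id_X$ for a splitting $s$ of $\ev_Y$ yields $\tilde f=(\coev_{{}^*Y}\ot\id_{X'})\circ g$ with $g:=(\ev_Y\ot\id_{X'})(\id_{Y^*}\ot f)(s\ot\id_X)$, and the zigzag identities then give $f=\id_Y\ot g$. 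Your reduction to simple $X,X'$ does not substitute for any of this; and the fallback of invoking the Etingof--Ostrik statement that $F$ is an equivalence is not a proof but a citation of a strictly stronger result (the paper quotes it only in the remark following the lemma, precisely because it wants a short self-contained argument).

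For comparison, the paper sidesteps any analysis of $A$-linear maps out of the non-free module $Y\ot X$: since $Y\ne 0$, every object of $\C$ is a direct summand of $Y^*\ot X$ for some $X$ (again because $\ev_Y$ splits), so it suffices to prove full faithfulness on objects of this form; but $F(Y^*\ot X)=A\ot X$ is a free $A$-module, so $\Hom_A(A\ot X,\,Y\ot X')=\Hom(X,\,Y\ot X')$, and under this identification the map $\varphi\mapsto\id_Y\ot\varphi$ becomes the standard adjunction isomorphism $\Hom(Y^*\ot X,X')\iso\Hom(X,Y\ot X')$. That reduction to free modules is the idea your proposal is missing; with it, nothing beyond the ordinary zigzag identities is needed.
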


\begin{remark}
In fact, $F$ is an equivalence, see, e.g., \cite[Lemma 3.28]{EO}. 
\end{remark}

\begin{proof}
Since $Y\ne 0$ any object of $\C$ is a direct summand of $Y^*\ot X$ for some $X\in\C$. So it suffices
to show that for any $X,X'\in\C$ the map 
\begin{equation}   \label{Ytensor}
\Hom (Y^*\ot X,X')\to\Hom_A(Y\ot Y^*\ot X,Y\ot X'), \quad   \varphi\mapsto\id_Y\ot\varphi
\end{equation} 
is an isomorphism. In fact, the r.h.s. of \eqref{Ytensor} equals
$\Hom_A(A\ot X, Y\ot X')=\Hom (X, Y\ot X')$
and the map \eqref{Ytensor} is the usual isomorphism $\Hom (Y^*\ot X,X')\iso\Hom (X, Y\ot X')$.
\end{proof}

\begin{proposition} \label{projective}
For any simple objects $X,Y\in \C$ the following conditions are equivalent:

(i) $X$ centralizes $Y\otimes Y^*$;

(ii) $X\otimes X^*$ centralizes $Y$;

(iii) $X$ and $Y$ projectively centralize each other.
\end{proposition}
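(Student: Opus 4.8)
The plan is to prove the equivalences via the cyclic chain (i)$\Rightarrow$(iii)$\Rightarrow$(ii)$\Rightarrow$(i), using the symmetry of the whole statement under swapping $X$ and $Y$ to cut the work roughly in half. The key elementary fact I would isolate first is the following ``monodromy multiplicativity'': for any simple $X$ and any simple summand $W$ of $X\ot X^*$, the double braiding $c_{W,Z}c_{Z,W}$ is a scalar whenever $c_{X,Z}c_{Z,X}$ is — indeed, by Lemma~\ref{projcentlem1}(i),(ii) applied with the roles of the objects interchanged, if $X$ projectively centralizes $Z$ with scalar $\lambda$ then $X^*$ projectively centralizes $Z$ with scalar $\lambda^{-1}$, hence $X\ot X^*$ projectively centralizes $Z$ with scalar $1$, i.e.\ $X\ot X^*$ (and each of its simple summands $W$) \emph{genuinely} centralizes $Z$. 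This already gives (iii)$\Rightarrow$(ii) directly: if $X$ and $Y$ projectively centralize each other, then applying the observation with $Z=Y$ shows $X\ot X^*$ centralizes $Y$. By the $X\leftrightarrow Y$ symmetry, (iii)$\Rightarrow$(i) as well, and of course (i) and (ii) are exchanged by this symmetry, so it remains only to prove one of the implications (i)$\Rightarrow$(iii) or (ii)$\Rightarrow$(iii).

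To prove (ii)$\Rightarrow$(iii): assume $A := X\ot X^*$ centralizes $Y$. I want to conclude that $c_{Y,X}c_{X,Y}$ is a scalar. The natural tool is Lemma~\ref{EO/2}: equip $A = X\ot X^*$ with its algebra structure and let $F:\C\to A\text{-Mod}(\C)$, $F(Z) := X\ot Z$, which is fully faithful. The double braiding with $Y$ gives a natural automorphism of the functor $Z\mapsto Y\ot Z$; I would like to transport this to an endomorphism of $F$ that respects the $A$-module structure, which forces it to be multiplication by a scalar on each simple $F(Z)$. Concretely, the hypothesis that $A$ centralizes $Y$ means $c_{Y,A}c_{A,Y} = \id$; combined with the hexagon axioms, this should show that the double braiding $c_{Y,X}c_{X,Y}: X\ot Y \iso X\ot Y$, viewed through the identification $X\ot Y = F(Y)$ — wait, more carefully, I should view $c_{Y,X}c_{X,Y}$ as acting on $X\ot Y$ and check that it is an $A$-module map for the $A$-module structure on $F(Y) = X\ot Y$. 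The point is that $A$ centralizing $Y$ is exactly what makes the $A$-action commute with the double braiding by $Y$ (this is a hexagon computation: the $A$-action on $X\ot Y$ is $\id$ on the $Y$ factor and the algebra multiplication on $X\ot X^* \ot X = A\ot X\to X$, and the double braiding by $Y$ can be slid past it using that $Y$ centralizes $A$). So $c_{Y,X}c_{X,Y}\in\End_A(F(Y))$. Since $Y$ is simple and $F$ is fully faithful, $\End_A(F(Y)) = \End(Y) = k$, hence $c_{Y,X}c_{X,Y}$ is a scalar — which is exactly (iii).

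For (i)$\Rightarrow$(iii) I would argue symmetrically, swapping the roles of $X$ and $Y$: if $X$ centralizes $Y\ot Y^*$, apply the above with $Y$ in place of $X$ (using $F'(Z) := Y\ot Z$ and $B := Y\ot Y^*$) to conclude $c_{X,Y}c_{Y,X}$ is a scalar, hence so is $c_{Y,X}c_{X,Y}$. Thus one genuinely only needs the single module-category argument, proved once, plus the symmetry. I would also double-check the remaining implication (iii)$\Rightarrow$(i): this is again the monodromy-multiplicativity observation from the first paragraph with the roles of $X,Y$ interchanged, so no new input is needed.

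\textbf{Main obstacle.} The crux — and the only genuinely nontrivial step — is verifying that ``$A = X\ot X^*$ centralizes $Y$'' implies ``$c_{Y,X}c_{X,Y}$ is an $A$-module endomorphism of $F(Y) = X\ot Y$''. This is a diagram chase with the hexagon axioms: one must express the $A$-action on $X\ot Y$ explicitly (it is built from $\coev_X$, $\ev_X$ with $\id_Y$ riding along), express $c_{Y,X}c_{X,Y}$, and slide the double braiding by $Y$ through the algebra multiplication $X\ot X^*\ot X\to X$ using the hypothesis $c_{Y,X\ot X^*}c_{X\ot X^*,Y} = \id$ together with naturality. I expect this to be a routine but slightly fiddly string-diagram argument — the kind of computation where drawing the picture makes it transparent but writing it linearly is tedious. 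Everything else (fully faithfulness of $F$, simplicity forcing scalars, the symmetry reductions, and (iii)$\Rightarrow$(i),(ii)) is immediate from results already in the excerpt.
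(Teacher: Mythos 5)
Your proposal is correct and follows essentially the same route as the paper: the easy implications come from Lemma~\ref{projcentlem1}(i)-(ii), and the hard direction is proved by viewing the double braiding as an endomorphism of the free module $F(Y)$ over the algebra $X\ot X^*$ (the paper does the mirror-image case, $Y\ot Y^*$ acting on $Y\ot X$), invoking Lemma~\ref{EO/2} to get $\End_A(F(Y))=k$, and checking $A$-linearity via naturality and the hexagon axioms. The step you flag as the crux is exactly the computation the paper carries out with its two commuting diagrams.
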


\begin{proof} It suffices to show that (i) and (iii) are equivalent. The implication (iii)$\Rightarrow$(i)
follows from Lemma \ref{projcentlem1}(i-ii). 

Let us deduce (iii) from (i). We have to show that $c_{X,Y}c_{Y,X}\in k$. Consider $A:=Y\otimes Y^*$ as an algebra in $\C$. Then $Y\otimes X$ has an $A$-module structure, namely, 
$f\ot\id_X:A\ot Y\ot X\to Y\ot X$,
where $f:A\ot Y=Y\ot Y^*\ot Y\to Y$, $f=\id_Y\ot\ev_Y$.
By Lemma~\ref{EO/2}, $\End_A(Y\otimes X)=\End (X)=k$. So to prove that
$c_{X,Y}c_{Y,X}\in k$ it suffices to show that $c_{X,Y}c_{Y,X}$ is an endomorphism of $Y\otimes X$ in the category of $A$-modules, which means that the diagram 
\begin{equation}\label{endom}
\xymatrix{A\ot Y\ot X\ar[rr]^{\hspace{20pt} f\ot \id_X} \ar[d]_{\id_{A}\ot c_{X,Y}c_{Y,X}
}&&Y\ot X\ar[d]^{c_{X,Y}c_{Y,X}}\\ A\ot Y\ot X\ar[rr]^{\hspace{20pt}f\ot \id_X}&&Y\ot X}
\end{equation}
commutes. The functoriality of the braiding implies that the diagram
\begin{equation} \label{endom2}
\xymatrix{A\ot Y\ot X\ar[rr]^{\hspace{20pt} f\ot \id_X} \ar[d]_{c_{X,A\ot Y}
c_{A\ot Y,X}
}&&Y\ot X\ar[d]^{c_{X,Y}c_{Y,X}}\\ A\ot Y\ot X\ar[rr]^{\hspace{20pt}f\ot \id_X}&&Y\ot X}
\end{equation}
commutes, so it remains to show that the diagrams \eqref{endom} and \eqref{endom2} are equal.
By the hexagon axiom,
$c_{X,A\ot Y}=(\id_{A}\ot c_{X,Y})\circ (c_{X,A}\ot \id_Y)$ and
$c_{A\ot Y,X}=(c_{A,X}\ot \id_Y)\circ (\id_{A}\ot c_{Y,X})$.
Finally, $c_{X,A}c_{A,X}=\id_{X\ot A}$ by (i).
\end{proof}

\subsubsection{Application: the centralizer of the adjoint subcategory}
In \S\ref{adcat} we defined the adjoint category $\mathcal{K}_{ad}$ of a fusion category $\K$.
The next proposition describes $(\mathcal{K}_{ad})'$, where $\K$ is a fusion subcategory of
a braided category $\C$. To formulate it, we need the following definition from \cite{GN}.

\begin{definition}  \label{Kco}
Let $\K$ be a fusion subcategory  of a fusion category $\C$. The {\em commutator\,} of $\K$ 
is the fusion subcategory $\mathcal{K}^{co}\subset\C$ generated 
by  all simple objects $X\in\C$ such that $X\ot X^*\in \mathcal{K}$.
\end{definition}

Equivalently, $\mathcal{K}^{co}$ is the biggest fusion subcategory $\mathcal{R}\subset\C$ such that
$\mathcal{R}_{ad}\subset\K$. Clearly $(\mathcal{K}^{co})_{ad} \subset \mathcal{K} \subset
(\mathcal{K}_{ad})^{co}$. 

\begin{example}
If $\C=\Rep(G)$, where $G$ is
a finite group, then any fusion subcategory $\mathcal{K}$ of $\C$
is of the form $\mathcal{K}=\Rep(G/N)$ for some normal subgroup
$N$ of $G$, and $\mathcal{K}^{co} = \Rep(G/[G, N])$ (see \cite{GN}). 
\end{example}

The first part of the next statement was proved in \cite[Proposition 6.6]{GN} in the case where
$\C$ is pseudounitary.

\begin{proposition}\label{comm-drinf}
Let $\mathcal{K}$ be a fusion subcategory of a 
braided fusion category $\C$.  Then $(\mathcal{K}_{ad})'=(\mathcal{K}')^{co}$.
In fact, both $(\mathcal{K}_{ad})'$ and $(\mathcal{K}')^{co}$ are equal to the projective centralizer
of $\K$.
\end{proposition}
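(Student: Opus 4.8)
The plan is to prove the two equalities $(\mathcal{K}_{ad})' = (\mathcal{K}')^{co}$ and $(\mathcal{K}_{ad})' = \mathcal{P}$ simultaneously, where $\mathcal{P}$ denotes the projective centralizer of $\mathcal{K}$, by comparing membership of simple objects. Since all three subcategories are fusion subcategories of $\C$ (the first and third by the remarks following Definitions~\ref{projcentdef1}--\ref{projcentdef2} and by \S\ref{adcat}, the second by Definition~\ref{Kco}), it suffices to check that a simple object $X\in\C$ lies in one of them if and only if it lies in the others. The main tool is Proposition~\ref{projective}, which relates centralizing $Y\ot Y^*$, being centralized by $X\ot X^*$, and projectively centralizing each other.

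First I would show $\mathcal{P} = (\mathcal{K}')^{co}$. Let $X\in\O(\C)$. By Definition~\ref{Kco} applied inside $\C$, $X\in(\mathcal{K}')^{co}$ iff $X\ot X^*\in\mathcal{K}'$, i.e.\ iff $X\ot X^*$ centralizes every simple $Y\in\O(\mathcal{K})$. By Proposition~\ref{projective} (equivalence of (ii) and (iii)), for each simple $Y$ this holds iff $X$ and $Y$ projectively centralize each other. Quantifying over $Y\in\O(\mathcal{K})$, this says precisely $X\in\mathcal{P}$ (using that projective centralization of a subcategory is tested on simple generators, as in Definition~\ref{projcentdef1}). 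Hence $\mathcal{P}=(\mathcal{K}')^{co}$.

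Next I would show $\mathcal{P} = (\mathcal{K}_{ad})'$. Recall $\mathcal{K}_{ad}$ is generated as a fusion subcategory by the objects $Y\ot Y^*$, $Y\in\O(\mathcal{K})$; hence a simple object $X$ centralizes all of $\mathcal{K}_{ad}$ iff it centralizes each $Y\ot Y^*$, $Y\in\O(\mathcal{K})$ (centralizing is preserved under tensor products, duals, and subobjects, as noted after Definition~\ref{def centralizer}). By Proposition~\ref{projective} (equivalence of (i) and (iii)), $X$ centralizes $Y\ot Y^*$ iff $X$ and $Y$ projectively centralize each other. Quantifying over $Y\in\O(\mathcal{K})$ gives $X\in(\mathcal{K}_{ad})'$ iff $X\in\mathcal{P}$, so $(\mathcal{K}_{ad})'=\mathcal{P}=(\mathcal{K}')^{co}$, which is the assertion.

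The only genuine point requiring care — and the main potential obstacle — is the reduction of "centralizes $\mathcal{K}_{ad}$" to "centralizes each $Y\ot Y^*$ for simple $Y$": one must know that a simple object of $\mathcal{K}_{ad}$ is always a subobject of a tensor product of objects of the form $Y\ot Y^*$, and that the centralizer relation is closed under such operations. Both facts are standard (the first is the description of the fusion subcategory generated by a set of objects given in \S\ref{geneneralities}; the second follows from the hexagon axioms and rigidity, cf.\ Lemma~\ref{projcentlem1}), so the proof is short once Proposition~\ref{projective} is in hand; indeed the heart of the matter is entirely contained in that proposition. I would write the argument compactly in the paper, citing Proposition~\ref{projective} as the key input and \S\ref{geneneralities} for the generation statement.
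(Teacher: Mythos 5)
Your argument is correct and follows essentially the same route as the paper's proof: test membership on simple objects, observe that $X\in(\K_{ad})'$ (resp.\ $X\in(\K')^{co}$) amounts to $X$ centralizing $Y\ot Y^*$ (resp.\ $X\ot X^*$ centralizing $Y$) for all $Y\in\O(\K)$, and conclude by Proposition~\ref{projective}. The extra care you take with the reduction to the generators $Y\ot Y^*$ is a point the paper leaves implicit, but it is the same proof.
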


\begin{proof}
By definition,
the simple objects of $(\mathcal{K}_{ad})'$ are those $X\in \O(\C)$ which
centralize $Y\otimes Y^*$ for any $Y\in \O(\K)$. Similarly, the simple objects of $(\mathcal{K}')^{co}$
are those $X\in \O(\C)$ for which $X\otimes X^*$ centralizes any $Y\in \O(\K)$. Now use
Proposition \ref{projective}. 
\end{proof}

\begin{corollary} \label{comm-cor}
Let $\mathcal{K}$ be a fusion subcategory of a 
braided fusion category $\C$. Then $(\K')_{ad}\subset (\K^{co})'$.
If $\C$ is non-degenerate then $(\K')_{ad}=(\K^{co})'$.
\end{corollary}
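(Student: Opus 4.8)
The plan is to reduce everything to Proposition~\ref{comm-drinf}, which identifies $(\K_{ad})'$ with $(\K')^{co}$ for any fusion subcategory $\K$ of a braided fusion category $\C$, together with the double-centralizer results of \S\ref{Sect3}. First I would prove the inclusion $(\K')_{ad}\subset (\K^{co})'$. Apply Proposition~\ref{comm-drinf} to the fusion subcategory $\K'$ in place of $\K$: this gives $((\K')_{ad})' = ((\K')')^{co} = (\K'')^{co}$. By Corollary~\ref{Muger-2'} we have $\K'' = \K\vee\C'$, and since $\C'$ centralizes everything, $(\K'')' = \K'$; more to the point, any simple object with $X\ot X^*\in\K''$ certainly has $X\ot X^*\in$ (something containing $\K$), so one must be a little careful. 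Instead I would argue directly: $(\K^{co})'$ consists of the simple $X$ centralizing every object of $\K^{co}$; since $\K^{co}\supset\K^{co}_{ad}$'s ``parent'' — more cleanly, $(\K')_{ad}$ is generated by objects $Y\ot Y^*$ with $Y\in\O(\K')$, so it suffices to show each such $Y\ot Y^*$ centralizes each object of $\K^{co}$. If $Z\in\O(\K^{co})$ then $Z\ot Z^*\in\K$, so by Proposition~\ref{projective} (applied with the roles $X=Z$, and noting $Y$ centralizes $\K$), $Z$ and $Y$ projectively centralize each other, hence by Proposition~\ref{projective} again $Z\ot Z^*$ centralizes $Y$ and symmetrically $Y\ot Y^*$ centralizes $Z$. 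As $\K^{co}$ is generated by such $Z$, we get $Y\ot Y^*\in (\K^{co})'$, proving the inclusion.

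For the reverse inclusion under the non-degeneracy hypothesis, I would use dimension counting via Theorem~\ref{Muger2}. From Proposition~\ref{comm-drinf} applied to $\K$ we have $(\K_{ad})' = (\K')^{co}$; taking centralizers and using Theorem~\ref{Muger2}(ii) (valid since $\C$ is non-degenerate, so every subcategory contains $\C'=\Vec$) gives $\K_{ad} = ((\K')^{co})'$. Now replace $\K$ by $\K'$ in this identity — legitimate since $\K'$ is again a fusion subcategory of the non-degenerate $\C$ — to obtain $(\K')_{ad} = ((\K'')^{co})' = ((\K)^{co})'$, where the last step uses $\K'' = \K$ (Theorem~\ref{Muger2}(ii), non-degenerate case). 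This gives equality directly, which in particular contains the first inclusion as a special case; but it is cleaner to present the general inclusion first and then note it is an equality when $\C$ is non-degenerate. Alternatively one can combine Corollary~\ref{a friend of Muger1} and the equality $\dim(\D)\dim(\D')=\dim(\C)$ (from \eqref{Dim D dim D'} with $\C'=\Vec$) to convert the established inclusion into an equality by matching global dimensions of $(\K')_{ad}$ and $(\K^{co})'$, using $\dim((\K')_{ad})\dim(((\K')_{ad})') = \dim(\C)$ and the analogous identity for $(\K^{co})'$.

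The main obstacle I anticipate is the bookkeeping around which ``$co$'' and ``$ad$'' operations commute with which centralizers, and making sure Theorem~\ref{Muger2}(ii) is invoked only when the relevant subcategory contains $\C'$ — in the non-degenerate case this is automatic, but for the general inclusion one must avoid it and argue at the level of simple objects via Proposition~\ref{projective}. So I would keep the two halves cleanly separated: a generators-and-Proposition~\ref{projective} argument for the inclusion valid in all braided fusion categories, and a double-centralizer/dimension argument for the reverse inclusion that uses non-degeneracy essentially.

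\begin{proof}
The simple objects of $(\K')_{ad}$ are the components of $Y\ot Y^*$ with $Y\in\O(\K')$. Fix such $Y$ and let $Z\in\O(\K^{co})$, so $Z\ot Z^*\in\K$. Since $Y\in\K'$ centralizes $Z\ot Z^*$, Proposition~\ref{projective} shows that $Z$ and $Y$ projectively centralize each other, and hence (again by Proposition~\ref{projective}, with the roles of the two objects interchanged) $Y\ot Y^*$ centralizes $Z$. As $\K^{co}$ is generated by such $Z$, the object $Y\ot Y^*$ lies in $(\K^{co})'$; therefore $(\K')_{ad}\subset(\K^{co})'$.

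Now assume $\C$ is non-degenerate, so $\C'=\Vec$ and Theorem~\ref{Muger2}(ii) gives $\D''=\D$ for every fusion subcategory $\D\subset\C$. Applying Proposition~\ref{comm-drinf} to $\K'$ in place of $\K$, we get
\begin{equation*}
((\K')_{ad})' = ((\K')')^{co} = (\K'')^{co} = \K^{co}.
\end{equation*}
Taking centralizers of both sides and using $((\K')_{ad})''=(\K')_{ad}$ yields $(\K')_{ad} = (\K^{co})'$, as claimed.
\end{proof}
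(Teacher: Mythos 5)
Your proof is correct and follows essentially the same route as the paper: the non-degenerate equality is argued identically, via Proposition~\ref{comm-drinf} applied to $\K'$ together with $\D''=\D$ from Theorem~\ref{Muger2}(ii). For the general inclusion you simply unwind Proposition~\ref{comm-drinf} back to Proposition~\ref{projective} at the level of simple objects (using, as the paper notes after Definition~\ref{Kco}, that $(\K^{co})_{ad}\subset\K$), whereas the paper obtains it by applying Proposition~\ref{comm-drinf} to $\K'$ and using $(\K'')^{co}\supset\K^{co}$ — a purely cosmetic difference.
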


\begin{proof} 
By Proposition \ref{comm-drinf},
$((\K')_{ad})'=(\mathcal{K}'')^{co}\supset\K^{co}$, so  $(\K')_{ad}\subset (\K^{co})'$.

If $\C$ is non-degenerate then $\C'=\Vec$, so by Theorem \ref{Muger2}(ii), $\D''=\D$
for any fusion subcategory $\D\subset\C$. Therefore in the non-degenerate case
Proposition \ref{comm-drinf} immediately implies the equality $(\K')_{ad}=(\K^{co})'$.
\end{proof}

\begin{corollary} \label{adpt}
If $\C$ is non-degenerate then $(\C_{ad})'$ equals the maximal pointed subcategory $\C_{pt}\subset\C$,
i.e., the fusion subcategory generated by the invertible objects of $\C$. In addition
$(\C_{pt})'=\C_{ad}$.
\end{corollary}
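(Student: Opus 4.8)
The plan is to read off both identities from Proposition~\ref{comm-drinf} once we exploit the non-degeneracy hypothesis. First I would apply Proposition~\ref{comm-drinf} with $\mathcal{K}=\C$, obtaining $(\C_{ad})'=(\C')^{co}$. Since $\C$ is non-degenerate, Theorem~\ref{Muger1} (or Proposition~\ref{Muger1prime}) gives $\C'=\Vec$, so $(\C_{ad})'=\Vec^{co}$. It then remains to identify $\Vec^{co}$ with $\C_{pt}$: by Definition~\ref{Kco}, $\Vec^{co}$ is the fusion subcategory generated by those simple $X\in\C$ with $X\ot X^*\in\Vec$, i.e.\ $X\ot X^*\simeq\be$. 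For simple $X$ the object $X\ot X^*$ always contains $\be$ with multiplicity one, so $X\ot X^*\simeq\be$ is equivalent to $\FPdim(X)^2=1$, i.e.\ to $X$ being invertible; the converse is obvious. Hence $\Vec^{co}=\C_{pt}$, which proves $(\C_{ad})'=\C_{pt}$.

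For the second identity I would invoke the double-centralizer property. Since $\C'=\Vec$ is contained in every fusion subcategory of $\C$, Theorem~\ref{Muger2}(ii) applies to $\D=\C_{ad}$ and yields $(\C_{ad})''=\C_{ad}$. Combining this with the first part, $(\C_{pt})'=\bigl((\C_{ad})'\bigr)'=(\C_{ad})''=\C_{ad}$, as desired.

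I do not expect a real obstacle here: in the non-degenerate case the corollary is essentially a direct specialization of Proposition~\ref{comm-drinf} combined with $\C'=\Vec$ and the double-centralizer identity $\D''=\D$. The only place deserving a line of justification is the equality $\Vec^{co}=\C_{pt}$, i.e.\ the passage from ``$X\ot X^*$ lies in the trivial subcategory'' to ``$X$ is invertible'', which is the short Frobenius--Perron dimension count indicated above.
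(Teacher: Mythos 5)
Your proof is correct and follows exactly the paper's own argument: apply Proposition~\ref{comm-drinf} with $\K=\C$, use $\C'=\Vec$ to get $(\C_{ad})'=\Vec^{co}=\C_{pt}$, and deduce $(\C_{pt})'=\C_{ad}$ from the double-centralizer statement of Theorem~\ref{Muger2}(ii). The only difference is that you spell out the identification $\Vec^{co}=\C_{pt}$ via a Frobenius--Perron dimension count, which the paper treats as immediate from Definition~\ref{Kco}.
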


\begin{proof} 
Apply Proposition \ref{comm-drinf} in the case $\K =\C$. Since $\C$ is non-degenerate $\C'=\Vec$, so
we get $(\C_{ad})'=(\Vec)^{co}$. Finally,  $(\Vec)^{co} =\C_{pt}$ (this is immediate from Definition~\ref{Kco}).
The second statement follows immediately from the first one by Theorem \ref{Muger2}(ii).
\end{proof} 

In Corollary~\ref{adpt} we assume that $\C'=\Vec$. To prove a similar statement in the case
$\C'=s\Vec$ (see Proposition~\ref{s-adpt} below), we need the following lemma, which is
due to M.~M\"uger \cite[Lemma 5.4]{Mu1}.

\begin{lemma}  \label{Dimalem}
Let $\C$ be a braided fusion category and $\delta\in\C'$ an invertible object such that the
fusion subcategory of $\C$ generated by $\delta$ is braided equivalent to $s\Vec$. Then
$\delta\ot X\not\simeq X$ for every simple $X\in\C$. 
\end{lemma}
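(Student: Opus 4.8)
The plan is to argue by contradiction: suppose $\delta\otimes X\simeq X$ for some simple object $X\in\C$. First I would observe that since $\delta\in\C'$, the object $\delta$ centralizes $X$, so $c_{\delta,X}c_{X,\delta}=\id_{\delta\otimes X}$; moreover $\delta$ centralizes itself (the subcategory generated by $\delta$ is symmetric, being equivalent to $s\Vec$), and in that subcategory $\delta\otimes\delta\simeq\be$ with twist $\theta_\delta=-1$, which is the key numerical input that will produce a sign $-1$ where we expect $+1$.

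The core of the argument is a trace computation. From the isomorphism $\delta\otimes X\simeq X$ I would fix an isomorphism $f:\delta\otimes X\iso X$ and consider the composite endomorphism of $X$ (or of $\delta\otimes X$) obtained by applying $f$ repeatedly together with the braiding identifications. Concretely, the standard trick (this is M\"uger's argument in \cite[Lemma 5.4]{Mu1}) is to look at the map $\delta\otimes\delta\otimes X\to X$ built from $\id_\delta\otimes f$ followed by $f$, compare it via the associativity and the braiding $c_{\delta,\delta}$ acting on $\delta\otimes\delta$, and take a suitable trace ($\Tr_+$ or $\Tr_-$ in the sense of \S\ref{squared}, so as not to need a spherical structure). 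Because $\delta$ centralizes everything in sight, the double braiding contributes trivially on the "cross" terms, but the self-braiding $c_{\delta,\delta}$ on $\delta\otimes\delta\simeq\be$ contributes the twist $\theta_\delta=-1$. On the other hand, the same trace computed directly using $\delta\otimes X\simeq X$ and $d_\pm(\delta)=\pm 1$ (or $d_+(\delta)d_-(\delta)=|\delta|^2=1$) gives a nonzero quantity with the opposite sign. Comparing the two yields $d_\pm(X)=-d_\pm(X)$, hence $d_+(X)d_-(X)=|X|^2=0$, contradicting the fact that $|X|^2\neq 0$ for a simple object.

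An alternative, perhaps cleaner, route: the isomorphism $\delta\otimes X\simeq X$ means $X$ lies in the trivial component of the $\BZ/2\BZ$-grading of $\C$ coming from tensoring with $\delta$; equivalently, $X$ carries an action of the group algebra of $\langle\delta\rangle$ in an appropriate sense. One then uses that the symmetric subcategory $\langle\delta\rangle\simeq s\Vec$ has the nontrivial twist, so an object of $\C$ on which $\delta$ "acts" cannot be simple — this is really the statement that $s\Vec$ has no fiber functor, i.e., is not Tannakian. I would phrase this via the de-equivariantization or, more elementarily, via the observation that a self-intertwiner squaring (through the braiding on $\delta\otimes\delta$) to $-\id$ cannot exist on a one-dimensional endomorphism space $\End(X)=k$.

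The main obstacle I anticipate is bookkeeping the associativity and unit constraints carefully enough that the sign from $c_{\delta,\delta}$ (equivalently $\theta_\delta=-1$) is correctly tracked and not accidentally cancelled, and making sure the argument genuinely avoids any spherical-structure assumption — this is why I would work with $\Tr_\pm$ and $d_\pm$ throughout and only invoke $|X|^2\neq 0$ at the end, rather than $d_+(X)\neq 0$, since the former is the canonical, $\psi$-independent statement guaranteed by \cite{ENO}. The diagrammatic verification that the two ways of computing the trace agree up to the sign is routine but needs the hexagon axiom and naturality of the braiding applied in the right order, exactly as in the proofs of Lemma~\ref{projcentlem1} and Proposition~\ref{traceofbraiding} above.
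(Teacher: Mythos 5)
Your sketch assembles the right ingredients (transparency of $\delta$, the sign $-1$ attached to $\delta$, traces $\Tr_\pm$ to avoid choosing a spherical structure, nonvanishing of dimensions of simple objects, and the correct source \cite[Lemma 5.4]{Mu1}), but the central mechanism you propose does not produce a contradiction. Since $\delta\ot\delta\simeq\be$, the self-braiding $c_{\delta,\delta}$ is \emph{literally} the scalar $-\id_{\delta\ot\delta}$ (this is the value $q(\delta)=-1$ of the quadratic form of $s\Vec$, cf.\ \eqref{tw}); hence inserting $c_{\delta,\delta}\ot\id_X$ into your composite $f\circ(\id_\delta\ot f):\delta\ot\delta\ot X\to X$ merely multiplies that morphism by $-1$. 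There is no second, independent evaluation of the same morphism (or of its trace) which naturality and the hexagon would force to equal $+$ itself, so comparing ``with and without $c_{\delta,\delta}$'' is vacuous. Likewise the input ``$d_\pm(\delta)=\pm 1$'' is not available for an arbitrary $\psi$: rescaling $\psi_\delta$ rescales $d_+(\delta)$ arbitrarily, and the only canonical quantity, $|\delta|^2=1$, carries no sign. A useful sanity check that braiding/naturality identities involving $f$ alone cannot suffice: in $s\Vec$ itself take the (non-simple) object $X=\be\oplus\delta$; then $\delta\ot X\simeq X$ and all such identities are consistent -- what vanishes there is precisely the canonical dimension of $X$. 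So any correct proof must manufacture an isomorphism-invariant dimension with a canonical sign, i.e.\ it must use duality data, not just $f$ and the braiding.

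The missing step is exactly the paper's proof: fix the natural isomorphism $\psi_X$ to be the canonical Drinfeld morphism $u_X$ of \eqref{Drinfeld iso}. Formula \eqref{nontensor} together with $\delta\in\C'$ gives $u_{\delta\ot X}=u_\delta\ot u_X$, and $u_\delta$ is $(-1)$ times the canonical identification $\delta\simeq\delta^{**}$ -- this is where the hypothesis that $\langle\delta\rangle$ is braided equivalent to $s\Vec$ (twist $-1$), rather than to $\Rep(\BZ/2\BZ)$, enters. Hence $d_+(\delta\ot X)=-d_+(X)$ for this fixed choice $\psi=u$; since $d_+$ is constant on isomorphism classes and $d_\pm(X)\neq 0$ for simple $X$, one concludes $\delta\ot X\not\simeq X$. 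Your instinct to avoid sphericity by working with $\Tr_\pm$ is right, but the sign has to be extracted from $u_\delta$ (equivalently from the twist of the odd line of $s\Vec$), not from $c_{\delta,\delta}$ inserted into an iterate of an arbitrary isomorphism $f$. Finally, the closing step of your alternative route is false as stated: over an algebraically closed field an endomorphism of $X$ squaring to $-\id_X$ certainly exists (namely $i\cdot\id_X$), so ``no self-intertwiner squaring to $-\id$ on $\End(X)=k$'' cannot be the obstruction; the genuine obstruction is again the dimension computation above.
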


\begin{proof}
In \ref{squared} we defined the dimension $d_+(X)=\Tr_+ (\id_X)$, where $\Tr_+$ depends
on the choice of $\psi_X:X\iso X^{**}$. We will show that if one takes $\psi_X$ to be the
canonical isomorphism $u_X: X\to X^{**}$ from \eqref{Drinfeld iso} then
\begin{equation}   \label{Dimaeq}
d_+( \delta\otimes X)=-d_+( X).
\end{equation}
To prove \eqref{Dimaeq}, note that since $X$ centralizes $\delta$ formula \eqref{nontensor} 
says that $u_{\delta\otimes X} = u_\delta\otimes u_X  = -  id_\delta\otimes  u_X$, and 
\eqref{Dimaeq} follows. Since $d_+( X)\ne 0$ formula \eqref{Dimaeq} implies that 
$\delta\ot X\not\simeq X$.
\end{proof}

\begin{proposition}   \label{s-adpt}
Let $\C$ be a braided fusion category such that $\C'\simeq s\Vec$. Then 
\begin{equation}   \label{eq-s-adpt}
(\C_{ad})'=\C_{pt},\quad (\C_{pt})'=\C_{ad}\vee\C'.
\end{equation}   
\end{proposition}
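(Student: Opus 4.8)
The plan is to mimic the proof of Corollary~\ref{adpt}, but now working with the situation $\C'\simeq s\Vec$ rather than $\C'=\Vec$. The key difference is that we no longer have $\D''=\D$ for every fusion subcategory, only for those containing $\C'$; so Proposition~\ref{comm-drinf} gives the containments directly but the reverse containments require an extra argument, and Lemma~\ref{Dimalem} is exactly what supplies that extra input.

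First I would establish $(\C_{ad})'=\C_{pt}$. By Proposition~\ref{comm-drinf} applied with $\K=\C$ we have $(\C_{ad})'=(\C')^{co}$, and since $\C'\simeq s\Vec$ this is the fusion subcategory generated by all simple $X$ with $X\ot X^*\in s\Vec$, i.e.\ $X\ot X^*$ a direct sum of copies of $\be$ and of the generator $\delta$ of $\C'$. If $X$ is simple then $X\ot X^*$ contains $\be$ with multiplicity one, so the only possibilities are $X\ot X^*\simeq\be$ (meaning $X$ is invertible) or $X\ot X^*\simeq\be\oplus\delta$. I would rule out the latter: if $X\ot X^*\simeq\be\oplus\delta$ then $\delta$ is a summand of $X\ot X^*$, equivalently $X$ is a summand of $\delta\ot X$, which by simplicity forces $\delta\ot X\simeq X$, contradicting Lemma~\ref{Dimalem}. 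Hence every simple object of $(\C')^{co}$ is invertible, so $(\C_{ad})'=(\C')^{co}=\C_{pt}$.

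Next I would deduce $(\C_{pt})'=\C_{ad}\vee\C'$. Apply the centralizer operation to the equality just proved: $(\C_{pt})'=((\C_{ad})')'=(\C_{ad})''$. Since $\C_{ad}\supset\C'$ is false in general — wait, actually $\C_{ad}$ need not contain $\C'$, so instead I use Corollary~\ref{Muger-2'}, which holds for an arbitrary fusion subcategory: $(\C_{ad})''=\C_{ad}\vee\C'$. Therefore $(\C_{pt})'=\C_{ad}\vee\C'$, which is the second equality in \eqref{eq-s-adpt}.

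The main obstacle is the step ruling out $X\ot X^*\simeq\be\oplus\delta$ for simple $X$; everything else is formal manipulation with centralizers using results already in the excerpt. The resolution is clean via Lemma~\ref{Dimalem}: being a summand of $\delta\ot X$ forces $\delta\ot X\simeq X$ by simplicity of $X$ and of $\delta\ot X$, and this is precisely what Lemma~\ref{Dimalem} forbids. One should double-check that the fusion subcategory of $\C$ generated by $\delta$ is indeed braided equivalent to $s\Vec$ rather than to $\C(\BZ/2\BZ, q)$ for a nontrivial $q$ — but this is guaranteed because $\delta\in\C'$ centralizes everything, in particular itself, so $c_{\delta,\delta}c_{\delta,\delta}=\id$, and combined with $\delta^2\simeq\be$ and $\delta\not\simeq\be$ the generated category is symmetric pointed of rank $2$; it is $s\Vec$ rather than $\Rep(\BZ/2\BZ)$ precisely under the hypothesis $\C'\simeq s\Vec$, which forces the twist of $\delta$ to be $-1$. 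I would include a one-line remark to this effect so the application of Lemma~\ref{Dimalem} is fully justified.
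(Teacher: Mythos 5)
Your argument is correct and is essentially the paper's own proof: both use Proposition~\ref{comm-drinf} to identify $(\C_{ad})'$ with $(\C')^{co}$, Lemma~\ref{Dimalem} to rule out $\delta$ appearing in $X\ot X^*$ (your reformulation via $\delta\ot X\simeq X$ is exactly how the lemma applies), and Corollary~\ref{Muger-2'} to get $(\C_{pt})'=(\C_{ad})''=\C_{ad}\vee\C'$. The extra check that the subcategory generated by $\delta$ is $s\Vec$ is immediate, since that subcategory is $\C'$ itself.
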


\begin{proof}
By Proposition~\ref{comm-drinf}, a simple object $X\in\C$ belongs to $(\C_{ad})'$ if and only if
$X\in(\C')^{co}$, i.e., if and only if $X\ot X^*\in\C'$. By Lemma~\ref{Dimalem}, $X\ot X^*$ cannot
contain the nontrivial simple object of  $s\Vec$, so $X\ot X^*\in\C'$ if and only if $X\ot X^*\in\Vec$,
i.e., if and only if $X\in \C_{pt}$. Thus $(\C_{ad})'=\C_{pt}$. Combining this with 
Corollary~\ref{Muger-2'} we see that $(\C_{pt})'=\C_{ad}\vee\C'$.
\end{proof}

\begin{remark}   \label{generalized}
One can prove \eqref{eq-s-adpt} for any braided fusion category $\C$ such that $\C'$ is pointed
and the group $\O (\C')$ is cyclic. We will not need this fact.
\end{remark}

\subsubsection{A useful pairing} \label{useful pairing}
Here is a more explicit version of Corollary \ref{adpt}.
An invertible object $X$ of a braided fusion category $\C$
defines an automorphism $\alpha(X)$ of the identity functor given by the composition
\[
Y \xrightarrow{\id_Y\otimes \text{coev}_X} Y\otimes X\otimes X^*
\xrightarrow{c_{X,Y}c_{Y,X}\otimes \id_{X^*}} Y\otimes X\otimes X^*
\xrightarrow {\id_Y\otimes \text{coev}_X^{-1}} Y.
\]

The following result is well known.

\begin{lemma}\label{alpha}
 Let $X$ be an invertible object of a braided fusion category $\C$.
\begin{enumerate}
\item[(i)] $\alpha(X)$ is a tensor automorphism of the identity functor;
\item[(ii)] the map $\alpha: \O(\C_{pt})\to \Aut_{\otimes}(\id_\C)$ is a homomorphism of groups;
\item[(iii)] for a non-degenerate category $\C$ the homomorphism $\alpha$ is isomorphism.
\end{enumerate}
\end{lemma}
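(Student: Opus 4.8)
The plan is to verify the three statements in order, each reducing to a short diagram computation together with results already available in the excerpt. For (i), the claim is that $\alpha(X)$ is a \emph{tensor} automorphism, i.e., $\alpha(X)_{Y\otimes Z}=\alpha(X)_Y\otimes\alpha(X)_Z$. First I would rewrite $\alpha(X)_Y$ using \eqref{nontensor}: the double braiding $c_{X,Y}c_{Y,X}$ (for $X$ invertible, $c_{Y,X}c_{X,Y}$ acting on $Y\otimes X$) is governed by the non-tensoriality of the Drinfeld isomorphism $u$, namely $u_X\otimes u_Y=u_{X\otimes Y}c_{Y,X}c_{X,Y}$. Since $X$ is invertible, $u_X\in\Aut(X)=k^\times$ can be treated as a scalar, and $\alpha(X)_Y$ is, up to the canonical identifications, exactly the scalar $u_{X\otimes Y}u_X^{-1}u_Y^{-1}$ acting on $Y$ — more precisely the automorphism of $\id_\C$ that sends $Y$ to $c_{X,Y}c_{Y,X}\in\Aut(Y)$ via the coevaluation of $X$. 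The hexagon axiom gives $c_{X,Y\otimes Z}=(\id_Y\otimes c_{X,Z})(c_{X,Y}\otimes\id_Z)$ and similarly for $c_{Y\otimes Z,X}$, so the double braiding of $X$ with $Y\otimes Z$ factors as the composite of the double braidings with $Y$ and with $Z$; feeding this through the definition of $\alpha(X)$ and using that all the maps involved are scalars (invertibility of $X$) yields $\alpha(X)_{Y\otimes Z}=\alpha(X)_Y\otimes\alpha(X)_Z$. This is the heart of (i), and it is the one step requiring genuine care with the associativity and unit constraints that the statement suppresses.

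For (ii), the homomorphism property $\alpha(X_1\otimes X_2)=\alpha(X_1)\alpha(X_2)$ again reduces to the hexagon axiom applied in the other slot: $c_{X_1\otimes X_2,Y}c_{Y,X_1\otimes X_2}$ decomposes (using the two hexagons and naturality) as $(c_{X_1,Y}c_{Y,X_1})$ composed with $(c_{X_2,Y}c_{Y,X_2})$ in $\Aut(Y)$, and since each factor is a scalar they commute and multiply; tracking this through $\coev_{X_1\otimes X_2}$ versus $\coev_{X_1}$ and $\coev_{X_2}$ gives the claim. One also checks $\alpha(\be)=\id$, which is immediate. Thus $\alpha:\O(\C_{pt})\to\Aut_\otimes(\id_\C)$ is a group homomorphism.

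For (iii), suppose $\C$ is non-degenerate. By Proposition~\ref{un-grad}(iii), $\Aut_\otimes(\id_\C)\cong\Hom(U_\C,k^\times)$, and by Corollary~\ref{adpt} (which uses non-degeneracy), $(\C_{pt})'=\C_{ad}$, while by Proposition~\ref{un-grad}(ii) the universal grading group $U_\C$ has the property that the grading $\C=\bigoplus_g\C_g$ has trivial component $\C_{ad}$. I would match up the two sides as follows: the automorphism $\alpha(X)$ for $X$ invertible sends a simple $Y\in\C_g$ to the scalar given by the double braiding of $X$ with $Y$, which by \eqref{cc=b}-type reasoning on the pointed subcategory depends only on the class of $Y$ in $U_\C$ — that is, $\alpha(X)$ corresponds under the identification $\Aut_\otimes(\id_\C)=\Hom(U_\C,k^\times)$ to a character $\chi_X$ of $U_\C$. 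Injectivity of $\alpha$: if $\alpha(X)=\id$ then $X$ centralizes every object, so $X\in\C'=\Vec$ (Theorem~\ref{Muger1}), hence $X\cong\be$. Surjectivity: one has $\FPdim(\C_{pt})=|\O(\C_{pt})|$ and, since $\C_{pt}$ and $\C_{ad}$ are mutual centralizers in the non-degenerate category $\C$, Theorem~\ref{Muger2-FPdim} gives $\FPdim(\C_{pt})\FPdim(\C_{ad})=\FPdim(\C)$; comparing with $\FPdim(\C)=|U_\C|\cdot\FPdim(\C_{ad})$ from Proposition~\ref{dimgradi} (the universal grading is faithful with trivial component $\C_{ad}$) yields $|\O(\C_{pt})|=|U_\C|=|\Hom(U_\C,k^\times)|$. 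Since $\alpha$ is an injective homomorphism between finite groups of the same order, it is an isomorphism. The main obstacle throughout is bookkeeping — keeping the suppressed associativity/unit isomorphisms straight in the diagram chases of (i) and (ii) — rather than any conceptual difficulty; the non-degeneracy input in (iii) is entirely packaged in the cited results.
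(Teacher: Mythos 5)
Your proposal is correct, and for parts (i)--(ii) it simply fills in the hexagon-axiom computations that the paper dismisses as ``straightforward'' (the aside that $u_X\in\Aut(X)=k^{\times}$ is a small slip, since $u_X\colon X\to X^{**}$, but you immediately restate the definition correctly and the actual argument rests on the hexagons plus the fact that the double braiding with an invertible object is a scalar on simple objects, which is fine). For part (iii) your route genuinely differs from the paper's in the surjectivity step. Both proofs establish injectivity the same way ($\alpha(X)=\id$ iff $X\in\C'=\Vec$) and both invoke Corollary~\ref{adpt} ($(\C_{pt})'=\C_{ad}$, which indeed does not depend on Lemma~\ref{alpha}, so there is no circularity) and Proposition~\ref{un-grad}(iii). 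But where you count: $|\O(\C_{pt})|=\FPdim(\C_{pt})=\FPdim(\C)/\FPdim(\C_{ad})=|U_\C|=|\Hom(U_\C,k^{\times})|$ via Theorem~\ref{Muger2-FPdim} and Proposition~\ref{dimgradi}, and then conclude because an injective homomorphism of finite groups of equal order is an isomorphism --- the paper instead dualizes: since $\alpha$ is injective, the dual map $\alpha^{\vee}\colon U_\C\to\Hom(\O(\C_{pt}),k^{\times})$ is surjective and hence defines a faithful grading of $\C$ whose trivial component is $(\C_{pt})'=\C_{ad}$; as $\C_{ad}$ is also the trivial component of the universal grading, $\Ker\alpha^{\vee}$ is trivial, so $\alpha^{\vee}$ and hence $\alpha$ are isomorphisms. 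Your counting argument is shorter but leans on the Frobenius--Perron dimension machinery; the paper's duality argument is dimension-free and more structural --- it exhibits the grading of $\C$ by $\Hom(\O(\C_{pt}),k^{\times})$ explicitly, which is exactly the picture reused right afterwards to define the pairing \eqref{thepairing} in \S\ref{useful pairing}. Either way the conclusion stands.
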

\begin{proof} (i) and (ii) are straightforward. For (iii) note that $X\in \O(\C_{pt})$
is in kernel of $\alpha$ if and only if $X\in \C'$. Thus under assumptions of (iii)
$\alpha$ is injective. Recall that by Proposition \ref{un-grad} (iii) we have
$\Aut_{\otimes}(\id_\C)=\Hom(U_\C,k^\times)$ where $U_\C$ is the universal grading group. 
It is clear that for a braided category $\C$ the group $U_\C$ is commutative and hence
$U_\C=\Hom(\Aut_{\otimes}(\id_\C),k^\times)$. The homomorphism 
$\alpha^\vee: U_\C=\Hom(\Aut_{\otimes}(\id_\C),k^\times)\to \Hom(\O(\C_{pt}),k^\times)$ dual to 
$\alpha$ is surjective and hence defines a faithful grading of $\C$ 
by the group $\Hom(\O(\C_{pt}),k^\times)$, see \S \ref{gradsubsec}.
It is clear from the definitions that the trivial component of this grading is precisely
$(\C_{pt})'$. By Lemma \ref{adpt} we have $(\C_{pt})'=\C_{ad}$ and by Proposition \ref{un-grad} (ii)
$\C_{ad}$ is the trivial component of the universal grading. 
Hence $\alpha^\vee$ and $\alpha$ are isomorphisms
and the lemma is proved.
\end{proof}

Proposition \ref{un-grad}(iii) provides a
canonical isomorphism $\Aut_{\otimes}(\id_\C)=\Hom(U_\C,k^\times)$,
where $U_\C$ is the group of the universal grading. So 
we can consider the homomorphism $\alpha: \O(\C_{pt})\to \Aut_{\otimes}(\id_\C)$ as a pairing 
\begin{equation}  \label{thepairing}
\langle\, ,\,\rangle : \O(\C_{pt})\times U_\C\to k^\times.
\end{equation}
Now Lemma \ref{alpha}(iii) says that for a non-degenerate braided fusion category $\C$
this pairing is perfect. 

\subsubsection{A more general pairing}

\begin{proposition} \label{bigrading}
Suppose that fusion subcategories $\D_1,\D_2\subset\C$ projectively centralize each other.
\begin{enumerate}
\item[(i)] There exists a faithful grading $\D_1=\bigoplus\limits_{g\in G_1}(\D_1)_g$
with trivial component $\D_1\cap (\D_2)'$ and a faithful grading 
$\D_2=\bigoplus\limits_{g\in G_2}(\D_2)_g$
with trivial component $\D_2\cap (\D_1)'$. These gradings are unique up to equivalence.
\item[(ii)] There is a unique pairing $b:G_1\times G_2\to k^{\times}$ such that 
\begin{equation}
c_{X_2X_1}c_{X_1X_2}=b(g_1,g_2)\cdot\id_{X_1\otimes X_2} 
\mbox { if }X_i\in (\D_i)_{g_i}\, , \, g_i\in G_i\, .
\end{equation}
This pairing is non-degenerate. If $\D_1=\D_2$ it is symmetric.
\end{enumerate}
\end{proposition}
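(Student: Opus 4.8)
The plan is to construct the gradings in (i) by viewing the "projective monodromy scalar" as a map out of $\D_1$ (resp. $\D_2$) into a group, to show this is a grading, and then to identify its trivial component and its target group; the pairing in (ii) then emerges tautologically from this construction. Throughout I will use Lemma~\ref{projcentlem1}, which already packages the essential bookkeeping: for a fixed simple $X\in\C$, the subcategory of objects $Y$ with $c_{Y,X}c_{X,Y}=\lambda\cdot\id$ defines a $k^\times$-grading on the projective centralizer of $X$.

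First I would fix a simple object $X_2\in\O(\D_2)$ and apply Lemma~\ref{projcentlem1} with this $X_2$ in the role of ``$X$'' and $\D_1$ in the role of ``$\D$'': since $\D_1$ projectively centralizes $\D_2$, every simple $X_1\in\O(\D_1)$ satisfies $c_{X_1,X_2}c_{X_2,X_1}=\lambda(X_1,X_2)\cdot\id$ for a well-defined $\lambda(X_1,X_2)\in k^\times$, and part (i) of the lemma says $X_1\mapsto\lambda(X_1,X_2)$ is a grading of $\D_1$ by $k^\times$. Letting $X_2$ range over $\O(\D_2)$ and combining, I get a grading of $\D_1$ by $(k^\times)^{\O(\D_2)}$; its trivial component consists of those $X_1$ with $\lambda(X_1,X_2)=1$ for all $X_2$, i.e.\ those $X_1$ centralizing every object of $\D_2$, which is exactly $\D_1\cap(\D_2)'$. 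Passing to the associated \emph{faithful} grading (factor out the subgroup generated by the image of $U_{\D_1\cap(\D_2)'}$, as in Corollary~\ref{trivial component}, using Remark~\ref{automat} for the automatic normality in the braided case), I obtain a faithful grading $\D_1=\bigoplus_{g\in G_1}(\D_1)_g$ with trivial component $\D_1\cap(\D_2)'$; its uniqueness up to equivalence is precisely Corollary~\ref{trivial component}. The construction for $\D_2$ is symmetric.

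Next, for the pairing in (ii): by construction the scalar $\lambda(X_1,X_2)$ defined by $c_{X_2,X_1}c_{X_1,X_2}=\lambda(X_1,X_2)\cdot\id$ depends only on the $k^\times$-degree of $X_1$ (for fixed $X_2$) and, symmetrically, only on the $k^\times$-degree of $X_2$; by the hexagon axiom (Lemma~\ref{projcentlem1}(i)) it is bimultiplicative in the two degrees. So it descends first to the universal-grading-group level and then to the faithful quotients $G_1$ and $G_2$ — I must check that the subgroups we quotiented out act trivially, but that is immediate: an $X_1\in\D_1\cap(\D_2)'$ has $\lambda(X_1,X_2)=1$ for all $X_2$, and likewise on the other side. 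This yields a well-defined bimultiplicative $b:G_1\times G_2\to k^\times$ with the stated property, and uniqueness is clear since $G_1,G_2$ are generated by degrees of simple objects. Non-degeneracy: if $g_1\in G_1$ pairs trivially with all of $G_2$, then a simple $X_1$ of degree $g_1$ centralizes every simple object of $\D_2$, hence lies in $\D_1\cap(\D_2)'$, hence $g_1=1$ since the grading is faithful with that trivial component; symmetrically on the right. Finally, if $\D_1=\D_2$, then for simple $X_1,X_2$ the scalar $c_{X_2,X_1}c_{X_1,X_2}$ is manifestly symmetric under swapping $X_1\leftrightarrow X_2$ (the monodromy $c_{Y,X}c_{X,Y}$ and $c_{X,Y}c_{Y,X}$ agree as scalars when both are scalars — conjugate by the braiding — so $b(g_1,g_2)=b(g_2,g_1)$), giving symmetry of $b$.

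The main obstacle I anticipate is not any single hard computation but rather the careful verification that the naive $k^\times$-valued grading on $\D_1$ really descends to a grading by the \emph{finite} group $G_1$ with trivial component \emph{exactly} $\D_1\cap(\D_2)'$ — in particular checking that the trivial component of the combined $(k^\times)^{\O(\D_2)}$-grading is no larger than $\D_1\cap(\D_2)'$ (this uses that ``centralizes every simple object of $\D_2$'' is equivalent to ``centralizes every object of $\D_2$'', which is clear) and that passing to the faithful quotient does not accidentally enlarge or shrink the trivial component (handled by Corollary~\ref{trivial component} together with Remark~\ref{automat}). Once the grading-theoretic bookkeeping is set up correctly, everything else — bimultiplicativity, non-degeneracy, symmetry — follows formally from Lemma~\ref{projcentlem1} and the hexagon axioms.
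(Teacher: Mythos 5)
Your argument is correct and uses essentially the same ingredients as the paper's proof: Lemma~\ref{projcentlem1}(i), the universal gradings $d_i:\O(\D_i)\to U_{\D_i}$, and Corollary~\ref{trivial component}. The only differences are cosmetic — the paper defines $G_i:=U_{\D_i}/K_i$ directly as the quotient by the kernels of the pairing $\tilde b$ on $U_{\D_1}\times U_{\D_2}$ (so non-degeneracy is built in and the trivial components are identified afterwards) and deduces symmetry from uniqueness rather than by your direct conjugation argument; your reordering (trivial component first, faithful grading via Corollary~\ref{trivial component}, then descent of the pairing and the non-degeneracy check) produces the same groups and is equally valid.
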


Note that if $\C$ is non-degenerate, $\D_1=\C_{pt}$, and $\D_2 =\C$ then $b$ is the
pairing~\eqref{thepairing}.

\begin{proof}
The uniqueness statement from (i) follows from Corollary~\ref{trivial component}. 
The uniqueness statement from (ii) is obvious. The symmetry property follows from uniqueness.

To prove the existence statements, consider the universal grading 
$d_i:\O(\D_i)\to U_{\D_i}$ (see \S\ref{gradsubsec}). By Lemma~\ref{projcentlem1}(i),
there is a unique pairing $\tilde b:U_{\D_1}\times U_{\D_2}\to k^{\times}$ such that
$c_{X_2X_1}c_{X_1X_2}=\tilde b(d_1(X_1),d_2(X_2))\cdot\id_{X_1\otimes X_2}$
for all simple $X_1\in\D_1$, $X_2\in\D_2$. Let $K_1\subset U_{\D_1}$ and $K_2\subset U_{\D_2}$
be the left and right kernel of this pairing. Set $G_i:=U_{\D_i}/K_i$. Then $\tilde b$ induces a 
non-degenerate pairing $b:G_1\times G_2\to k^{\times}$. Clearly $\D_i$ is graded by $G_i$. The trivial
component of the $G_1$-grading of $\D_1$ is generated by those simple objects of $\D_1$
which centralize each object of $\D_2$, so it equals $\D_1\cap\D_2'$. Similarly, the trivial
component of the $G_2$-grading of $\D_2$ equals $\D_2\cap\D_1'$.
\end{proof}

\subsection{Proof of Theorem \ref{Muger1}}\label{proof Muger1}
\label{Proof of Muger-1}

\subsubsection{Idea of the proof}

Given a fusion subcategory $\D \subset \C$
let us define an inner product of functions $f,g: \O(\D)\to k$ by
\begin{equation}
\label{inner product D}
( f,\, g )_\D := \sum_{X\in \O(\D)}\, |X|^2 f(X)g(X^*).
\end{equation}
For any $Y\in \O(\C)$ define  $h_Y: \O(\D)\to k$ by $h_Y(X) =\tilde{s}_{XY}$.

Recall that we introduced an equivalence relation on $\O(\C)$ by taking
$Y,\, Z\in \O(\C)$ to be equivalent if there is $W\in \O(\D')$ such that
$Y$ is contained in $Z\ot W$. We call the equivalence classes of the above
relation {\em $\D'$-components} of $\C$.

The idea of the proof is to show that functions $h_Y$ and $h_Z$
are equal if and only if $Y,\,Z$ belong to the same component 
and are orthogonal with respect to the inner product \eqref{inner product D}
if and only if they belong to different $\D'$-components of $\C$, 
see Lemmas~\ref{YZ same component} and~\ref{hXhY}
below.   This will imply that distinct  columns of $\tilde{S}_\D$ 
are linearly independent and are parameterized by $\D'$-components of $\C$.

\subsubsection{A non-spherical analogue of the Verlinde formula~\eqref{ss= Nds}}
Pick a natural isomorphism $\psi_X: X \xrightarrow{\sim} X^{**},\, X\in \O(\C)$.
For all $Y,\,Z,\,W\in \O(\C)$ consider the linear  automorphism
\[
t_{YZ}^W : \Hom(W,\, Y\ot Z) \to \Hom(W,\, Y\ot Z), \quad
f \mapsto \psi_{Y\ot Z}^{-1} (\psi_Y \ot \psi_Z)f.
\]
The automorphisms $t_{YZ}^W$ measure the failure of $\psi$ to be an isomorphism
of {\em tensor} functors. Let $T_{YZ}^W$ denote the trace of $t_{YZ}^W$. 

\begin{remark}
According to  \cite[3.1]{ENO}, for a certain choice of the isomorphisms $\psi_X$ one
has $T_{YZ}^W\in\BZ$ (we will not need this fact in the sequel). 
When $\psi$ is a pivotal structure (i.e., an isomorphism of tensor functors)
we have $T_{YZ}^W=  N_{YZ}^W$, where $N_{YZ}^W$ denotes the multiplicity of $W$ in $Y\ot Z$.
\end{remark}

The numbers $T_{YZ}^W$ depend on the choice of $\psi$. Namely, 
if one replaces $\psi$ by $\psi\circ \alpha$,  where 
$\alpha =\{\alpha_X\}_{X\in \O(\C)}$ is an automorphism of the identity
functor of $\C$, then $T_{YZ}^W$ gets multiplied by $\tfrac{\alpha_Y\alpha_Z}{\alpha_W}$.
Therefore the numbers
\begin{equation}
\label{tilde N}
\tilde{N}_{YZ}^W:=  \dfrac{d_+(W)}{d_+(Y)d_+(Z)} T_{YZ}^W,\qquad Y,\,Z,\,W\in \O(\C),
\end{equation}
do not depend on the choice of $\psi$. Note that $\tilde{N}_{YZ}^W=0$ when
$N_{YZ}^W=0$.

\begin{lemma}
\label{non-spherical Verlinde}
For all $X,\, Y,\, Z\in \O(\C)$ we have
\begin{equation}
\label{non spherical ss= Nds}
\tilde{s}_{XY} \tilde{s}_{XZ} = \sum_{W\in \O(\C)}\, \tilde{N}_{YZ}^W \tilde{s}_{XW}.
\end{equation}
\end{lemma}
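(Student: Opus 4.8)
\textbf{Proof strategy for Lemma~\ref{non-spherical Verlinde}.}
The plan is to mimic the classical derivation of the Verlinde-type identity \eqref{ss= Nds}, but keeping track of the fact that $\psi$ is not a pivotal structure, so that $N_{YZ}^W$ must be replaced by $T_{YZ}^W$ (equivalently $\tilde N_{YZ}^W$). First I would fix the chosen natural isomorphism $\psi_X : X \iso X^{**}$ once and for all and work with the associated traces $\Tr_\pm$ and dimensions $d_\pm$; since both sides of \eqref{non spherical ss= Nds} are built out of the $\psi$-independent quantities $\tilde s$ and $\tilde N$, it is legitimate to verify the identity after this choice. The key graphical/categorical input is the hexagon axiom in the form $c_{X,\,Y\ot Z} = (\id_Y\ot c_{X,Z})(c_{X,Y}\ot\id_Z)$ together with its companion for $c_{Y\ot Z,\,X}$, which lets one compute the double braiding $c_{Y\ot Z,X}c_{X,Y\ot Z}$ on a summand $W\subset Y\ot Z$ in terms of the double braidings on $Y$ and on $Z$.

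The main computation I would carry out is as follows. Decompose $Y\ot Z = \bigoplus_{W\in\O(\C)} W\ot\Hom(W,Y\ot Z)$, i.e. write $\id_{Y\ot Z}=\sum_W \sum_i a_i^W b_i^W$ where $\{b_i^W : W\to Y\ot Z\}$ and $\{a_i^W : Y\ot Z\to W\}$ are dual bases of $\Hom(W,Y\ot Z)$ and $\Hom(Y\ot Z,W)$ respectively. Apply the functional $\Tr_-\ot\Tr_+\bigl((-)\circ c_{Y\ot Z,X}c_{X,Y\ot Z}\bigr)$ to this decomposition. On the one hand, by the hexagon axiom the operator $c_{Y\ot Z,X}c_{X,Y\ot Z}$ is computed through the individual double braidings $c_{Y,X}c_{X,Y}$ and $c_{Z,X}c_{X,Z}$, and pushing these around (using naturality of the braiding and the definition \eqref{def tilde S}) produces the left-hand side $\tilde s_{XY}\tilde s_{XZ}$ after the appropriate normalization by $d_-(X)d_+(Y)$ and $d_-(X)d_+(Z)$. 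On the other hand, the double braiding $c_{Y\ot Z,X}c_{X,Y\ot Z}$ commutes with every $b_i^W$ up to the scalar recording how it acts on $W$, namely it restricts on the isotypic component of $W$ to $(\tilde s_{XW}d_-(X)d_+(W)/d_+(W))$ times an operator whose $\Tr_-\ot\Tr_+$ against $c_{W,X}c_{X,W}$ gives $\tilde s_{XW}$ up to the same kind of normalization — and the discrepancy between the ``naive'' count and the actual trace is exactly the operator $t_{YZ}^W$ whose trace is $T_{YZ}^W$. Collecting the normalizing dimension factors and using \eqref{tilde N} to convert $d_+(W)T_{YZ}^W/(d_+(Y)d_+(Z))$ into $\tilde N_{YZ}^W$ yields the right-hand side $\sum_W \tilde N_{YZ}^W\tilde s_{XW}$.

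The step I expect to be the main obstacle is the bookkeeping of the several non-pivotal ``correction'' morphisms: because $\psi$ need not be tensor, the isomorphisms $\psi_{Y\ot Z}^{-1}(\psi_Y\ot\psi_Z)$ enter when one moves $\psi$'s past the decomposition of $Y\ot Z$, and one must check that all such corrections combine into precisely the single operator $t_{YZ}^W$ appearing in the definition of $T_{YZ}^W$, with no leftover scalars. A clean way to organize this is to first establish the identity in the auxiliary situation where one replaces $\psi$ by the canonical (generally non-tensor) isomorphism $u_X$ of \eqref{Drinfeld iso}, for which \eqref{nontensor} gives an exact formula $u_{Y\ot Z}^{-1}(u_Y\ot u_Z)=(c_{Z,Y}c_{Y,Z})^{-1}$ controlling the failure of tensoriality, and then invoke the already-noted $\psi$-independence of $\tilde s$ and $\tilde N$ to transfer the result to an arbitrary $\psi$. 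Alternatively, one can simply argue that both sides of \eqref{non spherical ss= Nds} are independent of $\psi$ (as established right before the Lemma) and then verify the identity for one convenient choice, e.g. a pivotal $\psi$ if the category happens to admit one, or $u$ in general; I would include the $\psi$-independence remark explicitly so that the computation may be done in whichever framework minimizes the correction terms.
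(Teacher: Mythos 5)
Your strategy is essentially the paper's proof: there one applies the partial-trace functional $\Tr_+(\Tr_-\ot \id_Y\ot\Tr_+)$ to the hexagon identity $(c_{Y,X}\ot\id_Z)(\id_Y\ot c_{Z,X}c_{X,Z})(c_{X,Y}\ot\id_Z)=c_{Y\ot Z,X}c_{X,Y\ot Z}$, obtaining $d_-(X)d_+(Y)d_+(Z)\,\tilde{s}_{XY}\tilde{s}_{XZ}$ on one side and, via the isotypic idempotents $\pi^{(W)}_{Y\ot Z}$, the sum $d_-(X)\sum_W d_+(W)\,T_{YZ}^W\,\tilde{s}_{XW}$ on the other, which is exactly your decomposition with the non-tensoriality of $\psi$ absorbed into $T_{YZ}^W$ as you describe. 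Your suggested reduction to the special choice $\psi=u$ via \eqref{nontensor} is harmless but not needed: the paper simply fixes an arbitrary $\psi$ and lets the definition of $T_{YZ}^W$ carry the correction.
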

\begin{proof}
Consider the equality
\begin{equation}
\label{braiding triangle}
(c_{Y,X} \ot \id_{Z}) (\id_Y \ot c_{Z,X}c_{X, Z})(c_{X, Y}\ot \id_{Z}) =
c_{Y\ot Z, X} c_{X, Y\ot Z}, \qquad X\in \O(\C).
\end{equation}
Let us apply $\Tr_+(\Tr_-\ot \id_Y \ot  \Tr_+)$ to both sides of \eqref{braiding triangle}.
The left hand side gives $d_-(X)d_+(Y)d_+(Z)  \tilde{s}_{XY} \tilde{s}_{XZ}$. 
For $W\in \O(\C)$ let $\pi_{Y\ot Z}^{(W)}$ be the idempotent endomorphism of $Y\ot  Z$
corresponding to the projection on the maximal multiple of $W$ contained in $Y\ot  Z$.
Then the right hand side of \eqref{braiding triangle} gives 
\[
d_-(X) \sum_{W\in \O(\C)}\, \Tr_+(\id_Y  \ot \Tr_+)(\pi_{Y\ot Z}^{(W)}) \tilde{s}_{XW}
= d_-(X) \sum_{W\in \O(\C)}\,  d_+(W) T_{Y,Z}^W  \tilde{s}_{XW}.
\]
Comparing the results we obtain \eqref{non spherical ss= Nds}.
\end{proof}

\begin{remark}
In the case when $\psi$ is pivotal our proof of Lemma~\ref{non-spherical Verlinde} 
reduces to  the computation given in \cite[Lemma 2.4]{Mu2}.
\end{remark}

\subsubsection{Orthogonality of columns of the submatrix \eqref{restricted tilde S} of $\tilde{S}$-matrix}

\begin{lemma}
\label{YZ same component}
Let $\D \subset \C$ be a fusion subcategory.
Suppose  $Y, Z\in \O(\C)$  are in the same $\D'$-component of $\C$.
Then 
\[
h_Y = h_Z.
\]
\end{lemma}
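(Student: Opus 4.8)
The statement to prove is that if $Y, Z \in \O(\C)$ lie in the same $\D'$-component, then $h_Y = h_Z$, i.e.\ $\tilde{s}_{XY} = \tilde{s}_{XZ}$ for all $X \in \O(\D)$. Since being in the same $\D'$-component is the transitive closure of the relation ``$Y$ is a summand of $Z \ot W$ for some $W \in \O(\D')$,'' it suffices to treat that basic relation. So I would fix $W \in \O(\D')$, assume $Y$ is a simple summand of $Z \ot W$, and prove $h_Y = h_Z$; the general case then follows by symmetry of the equivalence relation and transitivity.

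First I would record what $W \in \O(\D')$ buys us: for every $X \in \O(\D)$ the objects $X$ and $W$ centralize each other, so $c_{W,X} c_{X,W} = \id_{X \ot W}$, hence $\tilde{s}_{XW} = 1$ by the definition of the $\tilde{S}$-matrix (the numerator $\Tr_- \ot \Tr_+ (c_{W,X}c_{X,W}) = \Tr_-\ot\Tr_+(\id) = d_-(X)d_+(W)$). Now I would feed $W$ into the non-spherical Verlinde formula of Lemma~\ref{non-spherical Verlinde} applied with the pair $(Z, W)$: for any $X \in \O(\D)$,
\[
\tilde{s}_{XZ}\,\tilde{s}_{XW} = \sum_{Y' \in \O(\C)} \tilde{N}_{ZW}^{Y'}\,\tilde{s}_{XY'}.
\]
Using $\tilde{s}_{XW} = 1$, the left side is just $\tilde{s}_{XZ}$, so $\tilde{s}_{XZ} = \sum_{Y'} \tilde{N}_{ZW}^{Y'}\,\tilde{s}_{XY'}$, where the sum runs over simple summands $Y'$ of $Z \ot W$ and $\tilde{N}_{ZW}^{Y'} = \frac{d_+(Y')}{d_+(Z)d_+(W)} T_{ZW}^{Y'}$.

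The remaining point is to see that this forces $\tilde{s}_{XY'} = \tilde{s}_{XZ}$ for \emph{each} summand $Y'$ of $Z\ot W$, not just on average. Here I would use that $W$ is invertible as far as centralizing goes — more precisely, since $W \in \O(\D')$ and $W$ is simple, tensoring with $W$ is an autoequivalence of $\C$ permuting $\O(\C)$; but more to the point, I would argue one of two ways. Option A: apply the same identity with the roles reversed — multiply through by $\tilde{s}_{XW^*} = 1$ and use that $Z$ is a summand of $Y' \ot W^*$ whenever $Y'$ is a summand of $Z \ot W$, to get the reverse inequality/relation and pin down equality; combined with the fact that all the relevant $\tilde{s}_{X\cdot}$ are algebraic numbers of controlled absolute value, or simply that $|\tilde s_{XY}|$ is suitably bounded, one concludes each term equals $\tilde s_{XZ}$. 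Option B (cleaner): observe that for $X \in \O(\D)$ the function $Y \mapsto \tilde{s}_{XY}/d$-normalization behaves like a character: from the hexagon identity $(c_{Y,X}\ot\id)(\id\ot c_{W,X}c_{X,W})(c_{X,Y}\ot\id) = c_{Y\ot W, X}c_{X, Y\ot W}$ with $c_{W,X}c_{X,W}=\id$, one gets directly that $c_{Y\ot W,X}c_{X,Y\ot W}$ acts on each summand $Y'$ the same way $c_{Y,X}c_{X,Y}$ acts on $Y$ — because the middle monodromy is trivial — and since on a simple object $Y'$ this monodromy is multiplication by the scalar $\tilde s_{XY'}$ (here I use that $X\in\O(\D)$ centralizing structure makes the monodromy scalar via Lemma~\ref{projcentlem1} or the argument of Proposition~\ref{AB criterion}), we get $\tilde{s}_{XY'} = \tilde{s}_{XY}$ on the nose. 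I expect \textbf{the main obstacle} to be justifying that the monodromy $c_{Y',X}c_{X,Y'}$ is a scalar on the simple summand $Y'$ — this is not automatic for general $X$, but it \emph{is} automatic when $X$ ranges over a subcategory all of whose objects centralize $W$, which is exactly our situation; making this precise (essentially re-deriving the scalar-on-simple-summands statement in the presence of a centralizing object) is the one step requiring care, and I would lean on the Altsch\"uler--Brugui\`eres-type analysis already invoked for \eqref{O(D')}.
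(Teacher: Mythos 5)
Your reduction to the basic relation ($Y$ a summand of $Z\ot W$ with $W\in\O(\D')$) and the hexagon/naturality step giving $c_{Z\ot W,X}c_{X,Z\ot W}=c_{Z,X}c_{X,Z}\ot\id_W$ are exactly the right start, and they coincide with the paper's proof. But neither of your two ways of finishing is sound. In Option B the claim you flag as the main obstacle is in fact false: for $X\in\O(\D)$ and a simple summand $Y'$ of $Z\ot W$, the monodromy $c_{Y',X}c_{X,Y'}$ is a scalar precisely when $X$ and $Y'$ projectively centralize each other (Definition~\ref{projcentdef1}), and this is \emph{not} implied by $X$ centralizing $W$. Indeed, taking $W=\be\in\O(\D')$ and $Y'=Z$ arbitrary, your claim would say every simple object of $\D$ projectively centralizes every simple object of $\C$; already in an Ising braided category $c_{X,X}^2$ acts by different scalars ($\zeta^2$ and $\zeta^{-6}$) on the two summands of $X\ot X$. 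In Option A the identity $\tilde s_{XZ}=\sum_{Y'}\tilde{N}_{ZW}^{Y'}\tilde s_{XY'}$ is correct (the weights sum to $1$, as one sees by taking $X=\be$), but the $\tilde{N}_{ZW}^{Y'}$ need be neither non-negative nor real, and without a spherical or pseudo-unitary structure there is no a priori bound on $|\tilde s_{XY'}|$; so one cannot conclude term-by-term equality from a weighted average with weights summing to $1$.

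What closes the argument is weaker than scalarity of the monodromy and is automatic: the \emph{partial trace over $X$}, namely $(\Tr_-\ot\id_Z)(c_{Z,X}c_{X,Z})$, is an endomorphism of the simple object $Z$, hence equals $\lambda\,\id_Z$ with $\lambda=d_-(X)\,\tilde s_{XZ}$. By naturality of the braiding, $(\id_X\ot p)(c_{Z\ot W,X}c_{X,Z\ot W})(\id_X\ot i)=c_{Y,X}c_{X,Y}$ for any splitting $i:Y\to Z\ot W$, $p:Z\ot W\to Y$ with $pi=\id_Y$; using your hexagon identity the middle term is $c_{Z,X}c_{X,Z}\ot\id_W$, so the partial trace over $X$ of $c_{Y,X}c_{X,Y}$ equals $p(\lambda\,\id_{Z\ot W})i=\lambda\,\id_Y$. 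Applying $\Tr_+$ and dividing by $d_-(X)d_+(Y)$ gives $\tilde s_{XY}=\lambda/d_-(X)=\tilde s_{XZ}$. This is the paper's proof (\S\ref{proof Muger1}): the missing ingredient in your write-up is not the Altsch\"uler--Brugui\`eres criterion but simply that $\End(Z)=k$ for simple $Z$, applied to the partial trace rather than to the monodromy itself.
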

\begin{proof}
Let us choose $W$ in $\D'$ so that $Y$ is contained in $Z\ot W$.
Let $\pi^{(Y)}_{Z\ot W}: Z\ot W \to Y$  
be a projection from $Z\ot W$ onto $Y$ and let $i^{(Y)}_{Z\ot W}: Y\to Z\ot W$
be an inclusion of $Y$ into $Z\ot W$
such that $p^{(Y)}_{Z\ot W} i^{(Y)}_{Z\ot W} =\id_Y$.
For any $X\in \O(\D)$ we compute
\begin{eqnarray*}
\lefteqn{\tilde{s}_{XY} 
= d_-(X)^{-1} d_+(Y)^{-1} (\Tr_-\ot \Tr_+)(c_{Y,X} c_{X,Y}) }\\
&=& d_-(X)^{-1} d_+(Y)^{-1} (\Tr_-\ot \Tr_+) ( (\id_X \ot p^{(Y)}_{Z\ot W})(c_{Z\ot W, X} c_{X, Z\ot W})(\id_X \ot i^{(Y)}_{Z\ot W})) \\
&=& d_-(X)^{-1} d_+(Y)^{-1} (\Tr_-\ot \Tr_+) ( (\id_X \ot p^{(Y)}_{Z\ot W})(c_{Z, X} c_{X, Z} \ot \id_W)(\id_X \ot i^{(Y)}_{Z\ot W})) \\
&=& d_-(X)^{-1} d_+(Y)^{-1} \left( d_-(Z)^{-1} (\Tr_-\ot \Tr_+) ( c_{Z,X} c_{X,Z} ) \right) \Tr_+( \id_Y) \\
&=& \tilde{s}_{XZ},
\end{eqnarray*}
as required.
\end{proof}

\begin{lemma}
\label{hXhY}
For $Y, Z\in \O(\C)$ we have $( h_Y,\, h_{Z})_\D =0$ if and only if
$Y$ and $Z$ are in different $\D'$-components of $\C$.
\end{lemma}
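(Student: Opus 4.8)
The plan is to combine the non-spherical Verlinde formula (Lemma~\ref{non-spherical Verlinde}) with the Altsch\"uler--Brugui\`eres criterion \eqref{O(D')}--\eqref{not O(D')} applied to the subcategory $\D'$ rather than $\D$. First I would fix a natural isomorphism $\psi$ and recall that, by \eqref{not O(D')} applied to the subcategory $\D\subset\C$, for $V\in\O(\C)$ one has $\sum_{X\in\O(\D)}|X|^2\tilde s_{XV}=0$ if and only if $V\notin\O(\D')$, while for $V\in\O(\D')$ the sum equals $\dim(\D)\neq 0$ (since $\tilde s_{XV}=1$ for all $X\in\O(\D)$ in that case). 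The key observation is that $(h_Y,h_Z)_\D$ can be rewritten, using \eqref{non spherical ss= Nds} with the roles of the indices arranged so that the product $\tilde s_{XY}\tilde s_{XZ^*}$ appears, as a linear combination $\sum_{W\in\O(\C)}\tilde N_{YZ^*}^W\bigl(\sum_{X\in\O(\D)}|X|^2\tilde s_{XW}\bigr)$. Indeed $(h_Y,h_Z)_\D=\sum_{X\in\O(\D)}|X|^2\tilde s_{XY}\tilde s_{X^*Z}$, and since $\tilde s_{X^*Z}=\tilde s_{XZ^*}$ (a symmetry of the $\tilde S$-matrix that I would note from the definition, or circumvent by reindexing $X\mapsto X^*$ in the sum, which permutes $\O(\D)$ and preserves $|X|^2$), Lemma~\ref{non-spherical Verlinde} gives $\tilde s_{XY}\tilde s_{XZ^*}=\sum_W\tilde N_{YZ^*}^W\tilde s_{XW}$.

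Carrying this out, $(h_Y,h_Z)_\D=\sum_{W\in\O(\C)}\tilde N_{YZ^*}^W\,\sigma(W)$ where $\sigma(W):=\sum_{X\in\O(\D)}|X|^2\tilde s_{XW}$. By the criterion quoted above, $\sigma(W)=0$ unless $W\in\O(\D')$, and $\sigma(W)=\dim(\D)>0$ when $W\in\O(\D')$. Hence
\begin{equation*}
(h_Y,h_Z)_\D=\dim(\D)\sum_{W\in\O(\D')}\tilde N_{YZ^*}^W.
\end{equation*}
Now $\tilde N_{YZ^*}^W$ is a positive multiple of the ordinary multiplicity $N_{YZ^*}^W$ (it is $\frac{d_+(W)}{d_+(Y)d_+(Z^*)}T_{YZ^*}^W$, and one checks $\tilde N_{YZ^*}^W=0\iff N_{YZ^*}^W=0$, as recorded after \eqref{tilde N}; moreover for a suitable choice of $\psi$ the numbers $T$ are nonnegative integers, but I only need the vanishing equivalence). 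Therefore $(h_Y,h_Z)_\D=0$ if and only if there is no $W\in\O(\D')$ with $N_{YZ^*}^W\neq 0$, i.e.\ no $W\in\O(\D')$ contained in $Y\ot Z^*$. Finally, $W$ is contained in $Y\ot Z^*$ if and only if $Y$ is contained in $Z\ot W$ (both being equivalent to $\Hom(W,Y\ot Z^*)\neq 0$ via rigidity), so the condition ``$(h_Y,h_Z)_\D\neq 0$'' is exactly the condition that $Y$ and $Z$ lie in the same $\D'$-component of $\C$. This is the claim.

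The main obstacle I anticipate is bookkeeping with duals and the precise symmetry properties of the (non-symmetric, non-spherical) $\tilde S$-matrix: one must be careful that $\tilde s_{X^*Z}$, $\tilde s_{XZ^*}$ and $\overline{\tilde s_{XZ}}$ are related as needed, and that replacing $X$ by $X^*$ in the sum over $\O(\D)$ is legitimate (it is, since $\O(\D)$ is closed under duality and $|X^*|^2=|X|^2$). A secondary point to handle cleanly is that I should apply \eqref{not O(D')} in the direction that gives both the vanishing \emph{and} the non-vanishing ($\sigma(W)=\dim(\D)\neq 0$ for $W\in\O(\D')$, which uses $\tilde s_{XW}=1$ there and $\dim(\D)\neq 0$ from \cite{ENO}). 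Once these are pinned down the argument is a short computation; no deeper input beyond Lemma~\ref{non-spherical Verlinde} and the Altsch\"uler--Brugui\`eres criterion is needed, and in particular Lemma~\ref{YZ same component} already gives the ``only if'' direction's easy half ($h_Y=h_Z$ forces $(h_Y,h_Z)_\D=(h_Y,h_Y)_\D$, which one sees is nonzero).
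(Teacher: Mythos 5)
Your ``different components $\Rightarrow$ zero'' half is exactly the paper's computation (non-spherical Verlinde plus the Altsch\"uler--Brugui\`eres criterion), but the converse has a genuine gap at the step where you claim $\tilde N_{YZ^*}^W$ is a positive multiple of $N_{YZ^*}^W$, so that $\dim(\D)\sum_{W\in\O(\D')}\tilde N_{YZ^*}^W\neq 0$ as soon as some $W\in\O(\D')$ occurs in $Y\ot Z^*$. In this non-spherical setting that is unjustified on two counts. First, $T_{YZ^*}^W$ is the trace of the automorphism $t_{YZ^*}^W$ of $\Hom(W,\,Y\ot Z^*)$, not its dimension; since $\psi$ is not assumed pivotal (pivotal structures are only conjectured to exist in general), this automorphism need not be the identity and its trace can vanish even when $N_{YZ^*}^W\neq 0$ --- the paper records only the trivial implication $N_{YZ^*}^W=0\Rightarrow\tilde N_{YZ^*}^W=0$, and the cited fact that for a suitable $\psi$ one has $T\in\BZ$ gives integrality, not positivity. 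Second, even if every relevant $\tilde N_{YZ^*}^W$ were nonzero, these are elements of $k$ carrying $W$-dependent factors $d_+(W)/\bigl(d_+(Y)d_+(Z^*)\bigr)$, so the finite sum over $\O(\D')$ can cancel; positivity of these structure constants is exactly what one has in M\"uger's pseudounitary situation and exactly what is unavailable here. Your fallback remark that $(h_Y,h_Y)_\D$ ``is seen to be nonzero'' begs the same question: that nonvanishing is the entire content of this direction.

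The paper closes this direction differently, by contradiction and without any positivity input: if $(h_Y,h_{Z_0})_\D=0$ for some $Z_0$ in the $\D'$-component of $Y$, then Lemma~\ref{YZ same component} together with the already-proved orthogonality across components gives $(h_Y,h_Z)_\D=0$ for every $Z\in\O(\C)$; multiplying by $|Z|^2$, summing over $Z\in\O(\C)$, exchanging the two sums and applying Proposition~\ref{AB criterion} with $\C$ in place of $\D$ (so $\sum_{Z}\tilde s_{XZ}|Z|^2$ equals $\dim(\C)$ for $X\in\O(\C')$ and $0$ otherwise, while $\tilde s_{XY}=1$ for $X\in\O(\C')$) yields $0=\dim(\C)\dim(\D\cap\C')$, contradicting the nonvanishing of categorical dimensions. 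To salvage your route you would have to assume pseudounitarity (defeating the point of the lemma, which is stated for arbitrary braided fusion categories) or supply an argument of this global kind; as written, the biconditional you extract from $(h_Y,h_Z)_\D=\dim(\D)\sum_{W\in\O(\D')}\tilde N_{YZ^*}^W$ does not follow.
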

\begin{proof}
Fix $Y\in \O(\C)$. If $Z\in \O(\C)$
is such that $Y$ and $Z$ are in different
$\D'$-components of $\C$, then
\begin{eqnarray*}
( h_Y,\, h_{Z})_\D
&=&  \sum_{X\in \O(\D)}\, |X|^2 \tilde{s}_{XY} \tilde{s}_{XZ^*} \\
&=&   \sum_{W\in \O(\C)}\, \tilde{N}_{YZ^*}^W 
     \left(  \sum_{X\in \O(\D)}\, |X|^2 \tilde{s}_{XW} \right) \\
&=& \dim(\C) \sum_{W\in \O(\D')}\, \tilde{N}_{YZ^*}^W  =0,
\end{eqnarray*}
where we used Lemma~\ref{non-spherical Verlinde} and 
Proposition~\ref{AB criterion}, see \eqref{not O(D')}.


Assume there is $Z_0$ in the $\D'$-component
of $Y$ such that $(h_Y,\,h_{Z_0})_\D=0$. Then by Lemma~\ref{YZ same component}  
we have $(h_Y,\,h_Z)_\D=0$ for any simple $Z$ from the $\D'$-component of $Y$.
Thus we have seen that for any $Z\in \O(\C)$
\[
\sum_{X\in \O(\D)}\, |X|^2 \tilde{s}_{XY}\tilde{s}_{XZ^*} =( h_Y,\, h_{Z})_\D = 0.
\]
Multiplying the last equation by $|Z|^2$, taking the sum over $Z\in \O(\C)$,
and using Proposition~\ref{AB criterion} (see  \eqref{O(D')}, \eqref{not O(D')}) we obtain
\begin{eqnarray*}
0
&=& \sum_{X\in \O(\D)}\, |X|^2 \tilde{s}_{XY} 
\left( \sum_{Z\in \O(\C)}\, \tilde{s}_{XZ} |Z|^2 \right) \\
&=&  \dim(\C) \sum_{X\in \O(\D \cap \C')}\, |X|^2 \tilde{s}_{XY}\\
&=& \dim(\C) \dim(\D \cap \C'),
\end{eqnarray*}
which is a contradiction, since the categorical dimension of a fusion category
is non-zero by \cite{ENO} (see \S\ref{squared}). 
\end{proof}

It follows that the functions $h_Y$, where $Y$ 
runs through the set of simple objects representing
different $\D'$-components of $\C$, form an orthogonal basis
of the vector space generated by columns of $\tilde{S}_\D$. 
Theorem~\ref{Muger1} is proved.
\subsection{Proof of Theorem \ref{Muger2}}\label{proof Muger2}
Our argument follows that of \cite[Theorem 3.2]{Mu2}, where 
\eqref{Dim D dim D'} and  \eqref{K''=K} were proved
for a pseudo-unitary braided fusion  category $\C$. 
We use Proposition~\ref{AB criterion}
(see \eqref{O(D')} and \eqref{not O(D')}) to compute
\begin{eqnarray*}
\sum_{X\in \O(\D)}\, \sum_{Z\in \O(\B)}\, |X|^2 \tilde{s}_{XZ} |Z|^2
&=&  \sum_{X\in \O(\D)}\, |X|^2  \left(   \sum_{Z\in \O(\B)}\, \tilde{s}_{XZ}  |Z|^2  \right) \\
&=& \dim(\B) \sum_{X\in \O(\B' \cap \D)}\, |X|^2 \\
&=& \dim(\B) \dim(\B' \cap \D).
\end{eqnarray*}
Changing the order of summation in the above computation 
we see that the same expression also equals 
$\dim(\D) \dim(\B \cap \D')$, proving \eqref{Dim B,D}.
Taking $\B =\C$ we obtain \eqref{Dim D dim D'}.
Finally, when $\D \supset \C'$ we have
\[
\dim(\D) \dim(\D') = \dim(\C)\dim(\C') = \dim(\D') \dim(\D''),
\]
whence $\dim(\D) = \dim(\D'')$ and $\D =\D''$, proving \eqref{K''=K}.

\subsection{Proof of Theorem \ref{Muger2-FPdim}}  \label{Muger2-FPdimproof}
We start with the following result.
Let $\C$ be a fusion category.
If $\A,\B\subset\C$ are fusion subcategories let $\A\vee \B$ denote the smallest fusion
subcategory of  $\C$  containing $\A$ and $\B$. Suppose that  
\begin{equation}\label{weakercom} 
X\ot Y\simeq Y\otimes X \mbox{ for all }X\in \O(\A),\, Y\in \O(\B).
\end{equation}
Property \eqref{weakercom} implies that the simple objects of 
$\A\vee \B$ are the simple summands of $X\otimes Y$ with $X\in \O(\A)$ and $Y\in \O(\B)$.

\begin{lemma}
\label{diamond dimensions}
Let $\C,\,\A,\,\B$ be as above. Then
\begin{equation}
\label{AveeB}
\FPdim(\A)\FPdim(\B) = \FPdim(\A \vee \B) \FPdim(\A\cap\B).
\end{equation}
\end{lemma}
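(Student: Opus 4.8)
The statement is an equality of Frobenius–Perron dimensions, so the natural strategy is to use the Frobenius–Perron dimension as a $\mathbb{R}$-valued functional that is additive over direct sums and multiplicative under $\boxtimes$, together with the explicit description of the simple objects of $\A\vee\B$ provided by the commutativity hypothesis \eqref{weakercom}. First I would record the auxiliary fact (cited in the text, Proposition~\ref{tens func}(ii)--(iii)) that a tensor functor between fusion categories of equal Frobenius–Perron dimension which is either fully faithful or surjective is an equivalence; this is exactly the tool that converts a dimension count into an honest categorical equality, and it is the analog of the dimension-counting arguments used for categorical dimensions in Corollary~\ref{diamond cat dims}.

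The key step is to consider the tensor functor
\[
F:\A\boxtimes\B\longrightarrow\A\vee\B,\qquad X\boxtimes Y\mapsto X\ot Y,
\]
which is well defined precisely because of \eqref{weakercom} (the braiding is not needed — commutativity of the relevant objects up to isomorphism suffices to define the functor on the level needed here). By the remark following \eqref{weakercom}, $F$ is surjective. Next I would analyze the kernel-type obstruction: restrict attention to $\A\cap\B$, which embeds diagonally, and factor $F$ through the de-equivariantization / relative tensor product $\A\boxtimes_{\A\cap\B}\B$. Concretely, one uses that $\FPdim(\A\boxtimes_{\A\cap\B}\B)=\FPdim(\A)\FPdim(\B)/\FPdim(\A\cap\B)$, which follows from the multiplicativity of $\FPdim$ under $\boxtimes$ and the fact that passing to modules over the regular algebra $A_{\A\cap\B}$ (an algebra of Frobenius–Perron dimension $\FPdim(\A\cap\B)$) divides the Frobenius–Perron dimension by $\FPdim(\A\cap\B)$. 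The induced functor $\bar F:\A\boxtimes_{\A\cap\B}\B\to\A\vee\B$ is still surjective.

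Then I would show $\bar F$ is fully faithful by a $\Hom$-computation in the spirit of Lemma~\ref{tens fun from product}: for simple objects one checks that $\Hom_{\A\vee\B}(F(X_1\boxtimes Y_1),\,F(X_2\boxtimes Y_2))$ has the expected dimension, using that $\A\cap\B$ is exactly the overlap along which the two embeddings agree. Granting full faithfulness, Proposition~\ref{tens func}(ii) (equal Frobenius–Perron dimension plus fully faithful implies equivalence) — or alternatively the surjectivity version, Proposition~\ref{tens func}(iii) — forces $\FPdim(\A\vee\B)=\FPdim(\A\boxtimes_{\A\cap\B}\B)=\FPdim(\A)\FPdim(\B)/\FPdim(\A\cap\B)$, which is \eqref{AveeB}.

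**Main obstacle.** The delicate point is the full faithfulness of $\bar F$, equivalently controlling the multiplicities: one must rule out "unexpected" $\Hom$'s between $X_1\ot Y_1$ and $X_2\ot Y_2$ in $\A\vee\B$ beyond those coming from the relative tensor product over $\A\cap\B$. This is where \eqref{weakercom} is used in an essential way — it lets one move all $\A$-factors to one side — and where the structure of modules over the regular algebra $A_{\A\cap\B}$ (Proposition~\ref{splitting}, the splitting of the multiplication $A\ot A\to A$) enters to identify the relative tensor product correctly. Everything else is bookkeeping with the multiplicativity and additivity of $\FPdim$.
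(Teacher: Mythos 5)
There is a genuine gap, and it occurs at the very first step. The hypothesis \eqref{weakercom} only asserts the existence of \emph{some} isomorphism $X\ot Y\simeq Y\ot X$ for simple $X\in\A$, $Y\in\B$; it provides no coherent (natural, hexagon-compatible) family of such isomorphisms. Consequently the assignment $X\bt Y\mapsto X\ot Y$ does not carry a tensor structure, and your claim that ``the braiding is not needed'' to define $F:\A\bt\B\to\A\vee\B$ as a tensor functor is unjustified: defining such a functor is exactly equivalent to giving a relative braiding (central structure) between $\A$ and $\B$, which the lemma does not assume (indeed $\C$ here is a general fusion category, not a braided one). The subsequent steps inherit this problem and add two more. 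First, the relative product $\A\bt_{\A\cap\B}\B$ is not available in this generality: in this paper $\D\bt_{\E}\Vec$ is defined only for $\E$ Tannakian, via the regular algebra coming from a fiber functor, so ``the regular algebra $A_{\A\cap\B}$'' and Proposition~\ref{splitting} do not apply, and the formula $\FPdim(\A\bt_{\A\cap\B}\B)=\FPdim(\A)\FPdim(\B)/\FPdim(\A\cap\B)$ is essentially the statement being proved rather than an input. Second, the full faithfulness of $\bar F$ — which you correctly identify as the crux — is only asserted (``one checks that $\Hom$ has the expected dimension''); Lemma~\ref{tens fun from product} gives this only when $\A\cap\B=\Vec$, and no argument is offered beyond that case. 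So the proposal, as written, does not prove the lemma.

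For comparison, the paper's proof avoids all categorical constructions and works entirely in $K_0(\A\vee\B)\otimes_{\BZ}\mathbb{R}$: condition \eqref{weakercom} is used only to know that the simple objects of $\A\vee\B$ are the simple summands of $X\ot Y$ with $X\in\O(\A)$, $Y\in\O(\B)$, so that $V=\oplus_{X,Y}X\ot Y$ has a strictly positive multiplication matrix; then both $R_\A\otimes R_\B$ and $R_{\A\vee\B}$ are Frobenius--Perron eigenvectors of $V$, hence proportional, and the proportionality constant is computed as the multiplicity of $\be$ in $R_\A\otimes R_\B$, which equals $\FPdim(\A\cap\B)$. Taking $\FPdim$ of both sides yields \eqref{AveeB}. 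If you want to salvage a categorical argument along your lines, you would at minimum have to add a central or braided structure to the hypotheses and still supply the missing full-faithfulness/relative-product analysis; the eigenvector argument sidesteps all of this.
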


\begin{proof}
Recall that the regular element of
$K_0(\C)\otimes_\mathbb{Z} {\mathbb R}$ is
$R_\C =\sum_{X\in \O(\C)}\, \FPdim(X) X$.
It is defined up to a scalar multiple  by the property that
$Y \otimes R_\C  = \FPdim(Y) R_\C$ for all $Y\in\O(\C)$, see \cite[\S 8.2]{ENO}.

Using \cite[Proposition 8.5]{ENO}, one can see that
\begin{equation}
\label{diamond} R_\A \otimes R_\B =a R_{\A\vee \B},
\end{equation}
for some positive $a$. Indeed, take $V=\oplus_{X\in \O(\A), Y\in \O(\B)}\, X\ot Y$.
Then the multiplication matrix of $V$ in $K_0(\A\vee \B)$ has strictly positive
entries and both sides of \eqref{diamond} are Frobenius-Perron eigenvectors
of $V$. Hence, they differ by a positive scalar.

The scalar $a$ equals the multiplicity of the unit
object $\mathbf{1}$ in $R_\A \ot R_\B$, which is the same as the
multiplicity of $\mathbf{1}$ in $\sum_{Z\in \O(\A\cap\B)}\, \FPdim(Z)^2
Z \otimes Z^*$. Hence, $a = \FPdim(\A\cap\B)$. Taking the Frobenius-Perron 
dimensions of both sides of \eqref{diamond} we get the result.
\end{proof}

Now we will prove Theorem \ref{Muger2-FPdim}.
For any fusion category $\C$ and any fusion subcategory $\D\subset\C$
let $\Z_\D(\C)$ be the fusion category whose objects are  
pairs $(X, c_{X, -})$, where $X$ is an object of $\C$ and
$c_{X,-}$ is a natural family of isomorphisms $c_{X, V} : X\ot V
\simeq V \ot X$ for all objects $V$ in $\D$ satisfying the same
compatibility conditions as in the definition of the center of $\C$
(so, in particular, $\Z_\C(\C) =\Z(\C)$). 
Note that  $\Z_\D(\C)$
is dual to the fusion category $\D\bt \C^{\op}$ with respect to its 
module category $\C$, where $\D$ and $\C^{\op}$ act on $\C$ via
the left and right multiplication respectively
(see \cite{Mu-I, O1} for the definitions of
module category and duality for fusion categories). 
So by \cite[Corollary 8.14]{ENO} we have
\begin{equation}\label{firstequality}
\FPdim(\Z_\D(\C)) = \FPdim(\D\bt \C^{\op})= \FPdim(\D) \FPdim(\C).
\end{equation}
Also, by  \cite[Proposition 5.3]{ENO} the forgetful
tensor functor $F: \Z(\C)\to \Z_\D(\C)$ is surjective. 

Now assume that $\C$  is a non-degenerate braided fusion category 
with braiding $c$.  There are two embeddings $X\mapsto (X,\, c_{X,-})$ 
and  $X\mapsto (X,\, c_{-,X}^{-1})$ of $\C$ into $\Z_\D(\C)$.
Their images will be denoted by $\C_+$ and $\C_-$. Clearly $\C_+\cap \C_- = \D'$.
On the other hand, $\C_+=FG(\C\bt \be)$ and $\C_-=FG(\be\bt \C)$, where 
$G: \C \bt \C^\op\iso \Z(\C)$ is the functor from Proposition~\ref{Muger1prime}.
Since $F$ is surjective we see that $\C_+\vee \C_-=\Z_\D(\C)$. 
So applying Lemma~\ref{diamond dimensions} to $\mathcal{A}=\C_+$
and $\mathcal{B}=\C_-$ we obtain
\[
\FPdim(\Z_\D(\C))\FPdim(\D') = \FPdim(\C_+) \FPdim(\C_-)= \FPdim(\C )^2.
\]
By \eqref{firstequality}, this means that
\begin{equation}
\label{FPDim D FPdim D' non-deg}
\FPdim(\D)\FPdim(\D') = \FPdim(\C).
\end{equation}

Now let $\C$ be any (possibly degenerate) braided fusion category.
Let $\D^\wr,\C^\wr$ denote the centralizers of $\D,\C$  in the non-degenerate
braided fusion category $\Z(\C)$. Then  the usual centralizer of $\D$
in $\C$ is  $\D' =\D^\wr\cap \C$ and   $\D\cap \C^\wr =\D \cap \C'$.

We can compute $\FPdim(\D \vee \C^\wr)$
in two different ways. First, by Lemma~\ref{diamond dimensions},
\begin{equation} \label{way1}
\FPdim(\D\vee \C^\wr) = \frac{\FPdim(\D)\FPdim(\C^\wr)}{\FPdim(\D\cap \C^\wr)}
= \frac{\FPdim(\D)\FPdim(\C)}{\FPdim(\D\cap \C')},
\end{equation}
(we have used the equality $\FPdim(\C^\wr) =\FPdim(\C)$,
which follows from \eqref{FPDim D FPdim D' non-deg}). Second,
by \eqref{FPDim D FPdim D' non-deg}  and \eqref{dimZ}
we have
\begin{equation} \label{way2}
\FPdim(\D\vee \C^\wr)
= \frac{\FPdim(\Z(\C))}{\FPdim(\D^\wr\cap \C)}
= \frac{\FPdim(\C)^2}{\FPdim(\D')}
\end{equation}
Comparing \eqref{way1} and \eqref{way2} we get Theorem \ref{Muger2-FPdim}.

\section{Equivariantization and de-equivariantization}
\label{isotsub}

In this section we recall the equivariantization and de-equivariantization
constructions (first in the general setting of \additive categories and then for fusion categories).
The main principles are formulated in \ref{princ1}, \ref{princ2}, \ref{princ3}, and 
Theorem~\ref{Last_principle}.

Unless stated otherwise, we assume that all categories are small (i.e., objects form a set rather
than a class).

\subsection{Equivariantization and de-equivariantization in the linear setting}
\label{linear setting}
\subsubsection{Basic definitions}
As usual, $k$ denotes an algebraically closed field of characteristic 0.

Let $\C$ be a \additive category. 
 Let $\underline{End}(\C)$ denote the category of \additive functors $\C \to \C$.
This is a monoidal \additive category (the tensor product is the composition of functors).

\begin{definition} We say that a monoidal category $\M$ {\em acts on} $\C$ if we
are given a monoidal functor $F: \M \to \underline{End}(\C)$. If $\M$ is a \additive monoidal 
category and $F$ is \additive we say that $\M$ acts on $\C$
{\em \additively}.
\end{definition}

\noindent {\bf Notation:} for a \additive action $F: \M \to \underline{End}(\C)$
and objects $V\in \M$, $X\in \C$
we write $V\otimes X$ instead of $F(V)(X)$. In particular, we write $V\otimes X$ for
the tensor product of a finite-dimensional $k$-vector space $V$ and an object $X\in \C$.

\medskip

For a finite group $G$ let $\underline{G}$ denote the corresponding monoidal category: the
objects of $\underline{G}$ are elements of $G$, the only morphisms are the identities and the tensor
product is given by multiplication in $G$. 

\begin{definition} 
An action of $G$ on $\C$ is the same as an action of $\underline{G}$ on $\C$.
\end{definition}

\begin{remark} 
Let $\Vec_G$ be the fusion category of finite-dimensional $G$-graded $k$-vector spaces 
with the obvious associativity constraint. We have a monoidal functor $\underline{G}\to\Vec_G$
which takes $g\in G$ to the vector space $k$ placed in degree $g$. Thus $\Vec_G$
identifies with the \additive hull of $\underline{G}$. So
 {\em an action of $G$ on $\C$ is the same as a
\additive action of $\Vec_G$ on $\C$}. 
\end{remark}

Assume that a monoidal category $\M$ acts on $k$-linear categories
$\A_1$ and $\A_2$. 
Then one defines the category of {\em module functors\,} from $\A_1$ to 
$\A_2$. Recall (see e.g. \cite{O2}) that a {\em module functor} from $\A_1$ to 
$\A_2$ is a $k$-linear functor $F: \A_1\to \A_2$, which {\em commutes\,} with the action of $\M$,
i.e., $F$ is equipped with isomorhisms $f_{V,X}:F(V\otimes X)\simeq V\otimes F(X)$ functorial 
in $V\in \M$, $X\in \A_1$ so that the diagram
\begin{equation*}
\xymatrix{F(V\otimes W\otimes X)\ar[rr]^{f_{V,W\otimes X}} \ar[rd]_{f_{V\otimes W,X}
}&&V\otimes F(W\otimes X)\ar[ld]^{\id_V\otimes f_{W,X}}\\ &V\otimes W\otimes F(X)&}
\end{equation*}
is commutative for every $V,W\in \M$ and $X\in \A_1$. 

So {\em 
$k$-linear categories equipped with $\M$-action form a 2-category.}

\subsubsection{Main principle} \label{princ1}
Let $\Rep(G)$ denote the monoidal \additive category of finite-dimensional 
representations of $G$. Recall that an additive category $\A$ is said to be {\em Karoubian\,}  if
every idempotent endomorphism of an object 
$a\in\A$ comes from a decomposition $a=a_1\oplus a_2$.
In \S\ref{equivtion}-\ref{enrich2} we recall the following well-known principle
(see \cite{AG,G,FG}):
{\em the datum of a Karoubian \additive category with an action of $G$ is equivalent
to the datum of a Karoubian \additive category with a \additive action of $\Rep(G)$.}
More precisely, in \S\ref{equivtion}-\ref{deequivtion} we recall two constructions (or if you prefer, two 2-functors)
\[
\mbox{\{\additive categories with }G\mbox{-action\}} \rightleftarrows
\mbox{\{\additive categories with }\Rep(G)\mbox{-action\}},
\]
which are called {\em equivariantization\,} and {\em de-equivariantization.} 
Theorem \ref{eqdepre} 
says that these constructions are mutually inverse when applied to Karoubian categories.

\subsubsection{Equivariantization: from $G$-action to $\Rep(G)$-action} \label{equivtion}
Let $G$ be a finite group acting on a Karoubian \additive category $\C$. For any $g\in G$ let $F_g\in 
\underline{\End}(\C)$ be the corresponding functor and for any $g,h\in G$ let $\gamma_{g,h}$ be
the isomorphism $F_g\circ F_h\simeq F_{gh}$ that defines the
tensor structure on the functor $\underline{G}\to \underline{\End}(\C)$. 
A {\em $G$-equivariant object\,} of $\C$ is
an object $X\in \C$ together with isomorphisms $u_g: F_g(X)\simeq X$ such that the diagram
\begin{equation*}
\label{equivariantX}
\xymatrix{F_g(F_h(X))\ar[rr]^{F_g(u_h)} \ar[d]_{\gamma_{g,h}(X)
}&&F_g(X)\ar[d]^{u_g}\\ F_{gh}(X)\ar[rr]^{u_{gh}}&&X}
\end{equation*}
commutes for all $g,h\in G$.
One defines morphisms of equivariant objects to be morphisms in $\C$ commuting with $u_g,\; g\in G$.
The category of $G$-equivariant objects of $\C$ will be denoted by $\C^G$. It is a Karoubian \additive category, which is often called the  {\em equivariantization\,} of $\C$.

The category $\Rep(G)$ acts \additively on the category $\C^G$. Namely, to a
representation $\rho :G\to GL(V)$ and an object $(X,u_g)\in \C^G$
one associates the object $(V\otimes X,u^V_g)$, where $u^V_g: F_g(V\otimes X)\simeq V\otimes X$ is the composition
$$F_g(V\otimes X)\simeq V\otimes F_g(X)\stackrel{\rho(g)\otimes u_g}{\longrightarrow}V\otimes X.$$

\subsubsection{De-equivariantization: from $\Rep(G)$-action to $G$-action}   \label{deequivtion}
Let $A:=Fun (G)$ be the algebra of functions $G\to k$, a.k.a. the regular algebra.
The group $G$ acts on $A$ by left 
translations, so $A$ can be considered as an algebra in the monoidal category $\Rep (G)$.
Now let $\D$ be a Karoubian category with a \additive action of $\Rep(G)$.
Since $\Rep (G)$ acts on $\D$ there is a notion of $A$-module in $\D$. The category of
$A$-modules in $\D$ will be called the {\em de-equivariantization\,} of $\D$. For a while, we
will denote it by $\D_G$ (in \S\ref{alternat} we will introduce alternative notation and terminology). 
Clearly $\D_G$
is a Karoubian \additive category. Note that $G$
acts on $A$ viewed as an algebra in $Rep (G)$ (this is the action of $G$ on $Fun (G)$ by {\em right\,}
translations). So $G$ acts on the category of $A$-modules in $\D$, i.e., on $\D_G$.

\subsubsection{Main result} 

\begin{theorem}  \label{eqdepre} 
The 2-functors $\C \mapsto \C^G$ and $\D \mapsto \D_G$ are
mutually inverse 2-equivalences between the 2-category of Karoubian $k$-linear categories equipped
with $G$-action and the 2-category of Karoubian $k$-linear categories equipped with a
$k$-linear $\Rep(G)$-action. 
\end{theorem}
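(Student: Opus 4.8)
The plan is to construct natural comparison functors in both directions and show they are mutually quasi-inverse, working at the level of the underlying $k$-linear categories first and then checking compatibility with all the structure (the $G$-action, the $\Rep(G)$-action, and the $\M$-linearity of module functors). First I would fix a Karoubian $k$-linear category $\C$ with $G$-action and analyze $(\C^G)_G$. An object of $(\C^G)_G$ is an $A$-module in $\C^G$, i.e., an equivariant object $(M,u_g)$ together with an $A=\Fun(G)$-action. Decomposing $M=\bigoplus_{g\in G} M_g$ along the idempotents $e_g\in A$ (here I use that $\C$, hence $\C^G$, is Karoubian), one sees the equivariant structure maps $u_g$ permute the components and that the whole datum is determined by the single component $M_1$, which is an arbitrary object of $\C$; conversely any $X\in\C$ gives rise to the ``induced'' $A$-module $\Ind(X):=(A\otimes X, \dots)$ in $\C^G$. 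This gives a $k$-linear equivalence $(\C^G)_G\simeq\C$, and I would check it intertwines the two $G$-actions (the $G$-action on $(\C^G)_G$ coming from right translations on $A$ corresponds, under the component decomposition, to the original $G$-action on $\C$).

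Next I would treat the other composite: start with a Karoubian $k$-linear category $\D$ with a $k$-linear $\Rep(G)$-action, and analyze $(\D_G)^G$. An object of $\D_G$ is an $A$-module $(N,\mu\colon A\otimes N\to N)$ in $\D$, and $G$ acts via right translations on $A$; an object of $(\D_G)^G$ is such an $N$ equipped with a compatible $G$-equivariant structure. The natural functor $\D\to(\D_G)^G$ sends $Y\in\D$ to $A\otimes Y$ with its tautological $A$-module structure and the equivariant structure coming from the $G$-action on $A$; I would produce a quasi-inverse by sending an equivariant $A$-module to its ``space of invariants'', realized categorically as $\Hom_A(A,N)^G$ or equivalently as the image of a suitable idempotent built from the $A$-action and the averaging over $G$ (again Karoubian-ness is essential here to split that idempotent). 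Checking that these are mutually inverse amounts to the two standard isomorphisms: the $A$-module $A\otimes Y$ has invariants $Y$, and an equivariant $A$-module $N$ is recovered as $A\otimes(\text{invariants of }N)$ — this last is the analog of the statement that over $\Fun(G)$ an equivariant module is ``free'', and it is where Proposition~\ref{splitting} (the $A$-bimodule splitting of $A\otimes A\to A$) enters to guarantee the relevant idempotent is well-behaved.

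Having the pointwise equivalences, I would then upgrade them to 2-equivalences: given a module functor $F\colon\A_1\to\A_2$ between $\Rep(G)$-linear categories, it sends $A$-modules to $A$-modules and commutes with the $G$-actions, hence induces $F_G\colon(\A_1)_G\to(\A_2)_G$, and similarly a $G$-equivariant functor induces an equivariant functor between equivariantizations; naturality of all the constructions in the category argument makes $\C\mapsto\C^G$ and $\D\mapsto\D_G$ into 2-functors, and the pointwise natural equivalences assemble into 2-natural equivalences $\mathrm{id}\simeq((-)^G)_G$ and $\mathrm{id}\simeq((-)_G)^G$. Finally I would verify the triangle identities, or simply invoke that two 2-functors each admitting a 2-natural equivalence to the identity on both composites are mutually inverse 2-equivalences.

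The main obstacle I expect is not any single computation but the bookkeeping of the equivariant $A$-module structure: proving cleanly that an equivariant $A$-module over $A=\Fun(G)$ is canonically induced from its component at $1\in G$ (equivalently from its invariants), with all the coherence isomorphisms for the $\M$-action (in the 2-functoriality part) tracked correctly. Concretely, the delicate point is constructing the idempotent whose image gives ``invariants'' and showing its splitting is preserved by module functors — this is exactly the Karoubian hypothesis doing its work, and it is the step where I would be most careful, likely citing \cite{AG,G,FG} for the parts that are genuinely standard and spelling out only the $A$-module/idempotent argument in detail.
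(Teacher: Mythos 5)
Your handling of the first composite is exactly the paper's: an object of $(\C^G)_G$ is an equivariant object with an algebra map $\Fun(G)\to\End_{\C}(\,\cdot\,)$, the idempotents $e_g$ split it (Karoubianness), the equivariant structure permutes the pieces, and everything is recovered from the component at $1$; this is Lemma~\ref{eqtztion} together with Proposition~\ref{eqdeeq}. Where you genuinely diverge is the other composite. The paper never analyzes $(\D_G)^G$ directly: it re-encodes a Karoubian category with $\Rep(G)$-action as a complete $G$-Karoubian $\Repr(G)$-enriched category (\S\ref{enrich1}) and proves, by Beilinson's argument (Theorem~\ref{Sashastheorem}), that equivariantization is a 2-equivalence onto that 2-category, using only the induction/forgetful adjunction and the characteristic-zero averaging of Lemma~\ref{eqtztion}(iv); since Proposition~\ref{eqdeeq} exhibits $\D\mapsto\D_G$ as a one-sided quasi-inverse, it is automatically two-sided, and no computation with equivariant $A$-modules is ever needed. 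Your plan instead proves $(\D_G)^G\simeq\D$ head-on by a Galois-descent/Hopf-module type argument. That route can be made to work, but it is strictly more work than the paper's, and compatibility with the $\Rep(G)$-actions, which the paper gets formally, must then be checked by hand.

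The concrete gap is the assertion that an equivariant $A$-module $N$ in $\D$ is recovered as $A\otimes N^{\mathrm{inv}}$. The mechanism that made the first composite easy is unavailable here: for a general $\Rep(G)$-action the object $A\otimes N$ is \emph{not} a direct sum of copies of $N$ (take $\D=\Rep(G)$ acting on itself), because the minimal idempotents $e_g\in\Fun(G)$ are not morphisms $\be\to A$ in $\Rep(G)$; so "decompose along $e_g$" cannot be repeated. What you do have: the averaging idempotent $\tfrac{1}{|G|}\sum_{g}u_g$ lives in $\End_{\D}(N)$ and splits by Karoubianness, and the natural map $\alpha_N\colon A\otimes N^{\mathrm{inv}}\to N$ (module action composed with the inclusion) is $A$-linear and equivariant. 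But proving $\alpha_N$ is an isomorphism is the actual content, and Proposition~\ref{splitting} does not deliver it in the way you suggest: separability of $A$ gives that $N$ is an $A$-module direct summand of the free module on its underlying object, and, after averaging over $G$, an equivariant direct summand of a free equivariant module $A\otimes Y$ with $Y\in\D$. The correct completion of your argument is therefore: verify $\alpha$ is an isomorphism on free equivariant modules (your computation that the invariants of $A\otimes Y$ are $\be\otimes Y\cong Y$ is the main step there), then conclude in general from naturality and additivity, since an isomorphism of natural transformations passes to equivariant direct summands. As written, "Proposition~\ref{splitting} guarantees the relevant idempotent is well-behaved" points at the wrong difficulty, and the freeness statement is asserted as standard rather than proved; either supply the free-plus-summand argument above, or do what the paper does and avoid the statement altogether via the enrichment reformulation.
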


A proof of this well known theorem is given in \S\ref{forgetequiv}-\ref{enrich2} below.

\begin{remark}
In particular, the theorem says that for any Karoubian $k$-linear category $\C$
equipped with $G$-action there is an equivalence $\C \iso (\C^G)_G$ 
which commutes with the action of $\Vec_G$, and for any Karoubian $k$-linear category 
equipped with a $k$-linear $\Rep(G)$-action there is an equivalence $\D \iso (\D_G)^G$
which commutes with the action of $\Rep(G)$.
\end{remark}

\subsubsection{The forgetful functor $\C^G\to\C$ and the equivalence $\C\iso (\C^G)_G$} 
\label{forgetequiv}
Let $\C$ be a Karoubian \additive category with an action of a finite group $G$.

\begin{lemma} \label{eqtztion}  
\begin{enumerate}
\item[(i)] The forgetful functor $\Phi :\C^G\to \C$ is faithful. 
\item[(ii)] The induction functor
\begin{equation}  \label{induction}
\Ind :\C\to\C^G, \quad \Ind\, (X):=\mathop\oplus\limits_{g\in G} F_g(X)
\end{equation}
is both left and right adjoint to $\Phi$.
\item[(iii)] Each $X\in\C$ can be represented as
a direct summand of $\Phi(Y)$ for some $Y\in\C^G$. Namely, one can take $Y=\Ind\, (X)$.
\item[(iv)] Every $X\in\C^G$ can be represented as a $G$-equivariant direct summand of 
$\Ind\, (\Phi (X))$.
\end{enumerate}
\end{lemma}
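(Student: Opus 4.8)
The plan is to verify the four assertions in order, using only the basic structure of the equivariantization $\C^G$ and elementary adjunction arguments. Throughout I write $\Phi$ for the forgetful functor and, for $g\in G$, $F_g$ for the autoequivalence of $\C$ given by the $G$-action, with structural isomorphisms $\gamma_{g,h}\colon F_g\circ F_h\iso F_{gh}$.

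First, for (i): a morphism in $\C^G$ between $(X,u_g)$ and $(X',u'_g)$ is by definition a morphism $f\colon X\to X'$ in $\C$ satisfying the equivariance constraint $f u_g = u'_g F_g(f)$ for all $g$; since $\Phi$ simply returns $f$, two parallel morphisms of equivariant objects mapping to the same $f$ are literally equal, so $\Phi$ is faithful. For (ii): I would equip $\Ind(X) = \bigoplus_{g\in G} F_g(X)$ with the equivariant structure whose component $F_h(\Ind(X))\iso\Ind(X)$ is built out of the isomorphisms $\gamma_{h,g}(X)\colon F_h(F_g(X))\iso F_{hg}(X)$, i.e.\ the $G$-action permutes the summands by left translation; one checks the cocycle diagram in the definition of equivariant object commutes, which is exactly the associativity/pentagon constraint on $\gamma$. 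Then I would exhibit the adjunction bijections $\Hom_{\C^G}(\Ind(X),(Y,u_g))\cong\Hom_\C(X,Y)$ and $\Hom_{\C^G}((Y,u_g),\Ind(X))\cong\Hom_\C(Y,X)$: in the first, a $G$-equivariant map out of $\bigoplus_g F_g(X)$ is determined by its restriction to the $g=1$ summand $X$ (the other components are forced by equivariance, using $u_g$), and conversely any map $X\to Y$ extends uniquely; the second is dual, using that each $F_g$ is an equivalence with quasi-inverse $F_{g^{-1}}$. Naturality of these bijections in both variables is routine and gives that $\Ind$ is simultaneously left and right adjoint to $\Phi$.

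For (iii): the unit $X\to\Phi\Ind(X)$ of the adjunction from (ii) is the inclusion of the $g=1$ summand $X\hookrightarrow\bigoplus_{g}F_g(X)$, which is visibly split (by the projection onto that summand) in $\C$; since $\C$ is Karoubian this splitting exhibits $X$ as a direct summand of $\Phi(\Ind(X))$, so $Y=\Ind(X)$ works. For (iv): given $X=(X_0,u_g)\in\C^G$, consider the counit $\varepsilon\colon\Ind(\Phi(X))\to X$ of the adjunction, a morphism in $\C^G$; I would show it is split \emph{in $\C^G$}, i.e.\ there is a $G$-equivariant section. The natural candidate section is the composite $X\xrightarrow{\ \cong\ }\frac{1}{|G|}\sum_{g} (u_g$-twisted inclusion of $F_g(X_0))$ — more carefully, the map $X_0\to\bigoplus_g F_g(X_0)$ whose $g$-th component is $u_g^{-1}\colon X_0\to F_g(X_0)$, rescaled by $1/|G|$ (using $\operatorname{char}k=0$); one checks this is equivariant for the induced structure on $\Ind(\Phi(X))$ and that $\varepsilon$ composed with it is $\id_X$. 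This realizes $X$ as a $G$-equivariant direct summand of $\Ind(\Phi(X))$, again invoking that $\C^G$ is Karoubian (which follows from $\C$ being Karoubian).

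The main obstacle I anticipate is bookkeeping in part (iv): writing down the correct equivariant structure on $\Ind(\Phi(X))$ and the section, and verifying the equivariance of the averaged section, requires carefully tracking how the $\gamma_{g,h}$ and the $u_g$ interact — this is where signs/orderings of left- vs.\ right-translation actions tend to trip one up. Parts (i)–(iii) are essentially formal once the adjunction in (ii) is set up, and (ii) itself is a standard "restrict to the identity component" argument; but the averaging/splitting in (iv) is the one place where one genuinely uses $\operatorname{char}k=0$ (or at least $|G|$ invertible) and where the diagram chase is least mechanical.
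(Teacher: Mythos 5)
Your proposal is correct and follows essentially the same route as the paper: parts (i)--(iii) are the routine verifications the paper declares ``clear,'' and your averaged section in (iv) is exactly $\tfrac{1}{|G|}$ times the other adjunction morphism $X\to\Ind(\Phi(X))$, so your argument amounts to the paper's observation that the composite $X\to\Ind(\Phi(X))\to X$ of the two adjunctions equals $|G|\cdot\id_X$, which is where characteristic $0$ enters.
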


\begin{proof}
Statements (i-iii) are clear. To prove (iv), notice that the composition of the adjunctions
$X\to\Ind\, (\Phi (X))\to X$ equals $|G|\cdot\id_X$ (it is here that we use that $k$ has characteristic 0).
\end{proof}

Now we will construct a $G$-equivariant equivalence $F:\C\iso(\C^G)_G$ such that the diagram
\begin{equation}   \label{drawit}
\xymatrix{\C\ar[rr]^F \ar[rd]_{\Ind}&&(\C^G)_G\ar[ld]^{\text{Forget}}\\ &\C^G&}
\end{equation}
commutes. Namely, for $X\in\C$ we define $F(X)$ to be the object 
$\Ind\, (X)\in\C^G$ (see \eqref{induction}) equipped with following action of the
algebra $A:=\Fun (G)$: a function $f\in\Fun (G)$ acts on $F_g(X)\subset\Ind\, (X)$ as 
multiplication by $f(g)$. The functor $F$ commutes with $G$-action
in the obvious way (use the isomorphism $F_g(F_{g'}(X))\iso F_{gg'}(X)$).

\begin{proposition}    \label{eqdeeq}
$F:\C\to (\C^G)_G$ is an equivalence.
\end{proposition}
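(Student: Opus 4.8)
The plan is to exhibit an explicit quasi-inverse to $F$ and check the two composites are naturally isomorphic to the identity, using the adjunction data from Lemma~\ref{eqtztion} together with the fact that $k$ has characteristic $0$. First I would unwind the definition: an object of $(\C^G)_G$ is a pair $(Y, m)$ where $Y\in\C^G$ and $m:A\ot Y\to Y$ is an action of the algebra $A=\Fun(G)$ in $\C^G$ (the action of $\Rep(G)$ on $\C^G$ being the one from \S\ref{equivtion}); the $\Rep(G)$-module structure unwinds to an action of $G$ on the forgetful image $\Phi(Y)\in\C$ compatible with the equivariance isomorphisms. Since $A=\Fun(G)=\oplus_{g\in G}ke_g$ splits as a sum of the orthogonal idempotents $e_g$, giving an $A$-module structure on $Y$ is the same as a direct-sum decomposition $Y=\oplus_{g\in G}Y_g$ in $\C$ (the image of $e_g$), and the $G$-equivariance of $(Y,u_h)$ together with the $A$-module axioms forces the isomorphisms $u_h:F_h(Y)\iso Y$ to permute these summands according to $u_h(F_h(Y_g))=Y_{hg}$. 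In particular $F_g(Y_1)\cong Y_g$ for all $g$, so $Y\cong\Ind(Y_1)$ as a $G$-equivariant object, and one recovers $Y_1\in\C$ as the component on which $A$ acts through evaluation at the identity $1\in G$.

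This suggests defining the candidate quasi-inverse $H:(\C^G)_G\to\C$ by $H(Y,m):=\ker(\,\id_Y - m(e_1\ot -)\,)$, i.e. the image of the idempotent $m\circ(e_1\ot\id_Y):Y\to Y$ (which exists since $\C$, hence $\C^G$, is Karoubian). Concretely $H(Y,m)=Y_1$ in the notation above, viewed as an object of $\C$ via $\Phi$. I would then construct natural isomorphisms $H\circ F\iso\id_\C$ and $F\circ H\iso\id_{(\C^G)_G}$. For $H\circ F$: by definition $F(X)=\Ind(X)=\oplus_{g}F_g(X)$ with $e_g$ acting as the projection onto $F_g(X)$, and $F_1(X)\cong X$ canonically (the unit of the tensor structure $\underline G\to\underline{\End}(\C)$), so $H(F(X))=F_1(X)\cong X$, naturally in $X$. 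For $F\circ H$: given $(Y,m)$, the inclusion $Y_1\hookrightarrow Y$ is a map in $\C$, and applying $\Ind$ and using the equivariance isomorphisms $u_g$ of $Y$ one builds a morphism $\Ind(Y_1)\to Y$ in $\C^G$; the decomposition $Y=\oplus_g Y_g$ with $F_g(Y_1)\cong Y_g$ shows this is an isomorphism, and it is straightforward to check it intertwines the $A$-module structures (the one on $\Ind(Y_1)=F(Y_1)$ being the tautological one, the one on $Y$ being $m$). Finally one checks $F$ commutes with the $G$-action, which is built into the definition of $F$ via the isomorphisms $F_g(F_{g'}(X))\iso F_{gg'}(X)$ and the commutativity of diagram~\eqref{drawit}, so $F$ is an equivalence of categories with $G$-action.

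The routine parts are the compatibility verifications (that the idempotent $m\circ(e_1\ot\id_Y)$ lives in $\C^G$, that $H$ is a functor, that the two natural transformations respect all the structure); I would state these and do one representative diagram chase. The main obstacle, and the point deserving genuine care, is the identification $Y\cong\Ind(Y_1)$ as $G$-equivariant $A$-modules: one must verify that the equivariance isomorphisms $u_h$ of $Y$, which a priori are just part of the datum of $Y\in\C^G$, are forced by the $A$-module axioms to be compatible with the grading $Y=\oplus_g Y_g$ in exactly the way needed for the canonical map $\Ind(Y_1)\to Y$ to be $G$-equivariant and $A$-linear. This comes down to the fact that $A$ is the regular representation and $G$ acts on it simply transitively by right translation, so the $G$-action forces the components $Y_g$ to be cyclically permuted; I would spell this out carefully since it is the crux of why de-equivariantization inverts equivariantization.
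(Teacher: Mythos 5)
Your proposal is correct and follows the same route as the paper: the paper's proof also defines the inverse by sending an $A$-module $Y$ in $\C^G$ to the direct summand $Y_1$ cut out by the idempotent $e_1\in\Fun(G)$, using that $\C$ is Karoubian. You merely spell out the verification (the decomposition $Y=\bigoplus_g Y_g$ with $u_h(F_h(Y_g))\cong Y_{hg}$, hence $Y\cong\Ind(Y_1)$) that the paper leaves implicit, and this verification is accurate.
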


\begin{proof}
We will construct $F^{-1}$. Let $Y\in (\C^G)_G$, i.e., $Y$ is a $G$-equivariant object of $\C$
with a $G$-equivariant algebra homomorphism $\Fun (G)\to\End (Y)$. For $g\in G$ let $e_g\in Fun (G)$
be the corresponding minimal idempotent, i.e., $e_g$ is the function on $G$ such that $e_g(g)=1$
and $e_g(g')=0$ for $g'\ne g$. Since $\C$ is Karoubian the images of the idempotents $e_g$ in
$\End (Y)$ define a decomposition $Y=\bigoplus\limits_{g\in G}Y_g$. Now set $F^{-1}(Y):=Y_1$,
where $1$ is the the unit of $G$.
\end{proof}

To prove Theorem \ref{eqdepre}, it remains to show that the 2-functor $\C \mapsto \C^G$ is
a 2-equivalence. In \S\ref{enrich2} we give a proof of this fact which was communicated to us 
by A.~Beilinson. It is based on a convenient way of thinking about 
categories equipped with $\Rep (G)$-action, which is explained in \S\ref{enrich1}.

\subsubsection{$\Rep (G)$-actions and $\Repr (G)$-enrichments}\label{enrich1}
Notation: $\Repr (G)$ is the category of all representations of $G$ over $k$
and $\Rep (G)\subset\Repr (G)$ is the full subcategory of finite-dimensional representations; 
$\VecT$ is the category of all vector spaces over $k$ and $\Vec\subset\VecT$ is the full subcategory
of finite-dimensional spaces.

Let $\D$ be a \additive category with a \additive $\Rep (G)$-action. For every $X,Y\in\D$ we have
a contravariant \additive functor
\begin{equation}  \label{contrav}
\Rep (G)\to\VecT ,\quad V\mapsto \Hom_{\D}(V\ot X,Y).
\end{equation}
Since $\Rep (G)$ is semisimple any \additive functor $\Rep (G)\to\VecT$ is representable by an
object of $\Repr (G)$. Let $\uHom (X,Y)\in\Repr (G)$ be the object representing the functor \eqref{contrav}, i.e., 
\begin{equation}  \label{inthom}
\Hom_G(V, \uHom (X,Y)) =\Hom_{\D} (V\ot X,Y).
\end{equation}
Thus we get a $\Repr (G)$-enriched category $\fD$. Clearly $\D$ can be uniquely reconstructed
from $\fD$: the objects are the same, $\Hom_{\D} (X,Y)= \uHom (X,Y)^G$, 
and the action of $\Rep (G)$ on $\D$ is uniquely determined by \eqref{inthom} if it exists, i.e., if
$\fD$ is {\em complete\,} in the following sense.

\begin{definition}
Let $\fD$ be a $\Repr (G)$-enriched category and $\D$  the \additive category with
$\Ob\,\D=\Ob\,\fD$ and $\Hom_{\D}(X,Y)= \uHom_{\fD}(X,Y)^G$. We say that 
$\fD$ is {\em complete\,} if for every $X\in\fD$ and $V\in\Rep (G)$
there exists $V\ot X\in\fD$ such that \eqref{inthom} holds functorially in $Y\in\D$.
We say that $\fD$ is {\em $G$-Karoubian\,} if $\D$ is Karoubian.
\end{definition}

So a Karoubian \additive category $\D$ equipped with a \additive $\Rep (G)$-action is essentially the
same as a $G$-Karoubian complete $\Repr (G)$-enriched category $\fD$.

\subsubsection{$\Repr (G)$-enrichments and $G$-actions}\label{enrich2}
Let $\C$ be a \additive category with $G$-action. In this subsection we view $\C^G$ as a
$\Repr (G)$-enriched category. So the objects of $\C^G$ are the $G$-equivariant objects in $\C$,
for every $X,Y\in\C^G$ the space $\uHom (X,Y)$ equals $\Hom_{\C}(X,Y)$, and the $G$-action on
$\uHom (X,Y)$ comes from the $G$-action on $\C$. If $\C$ is Karoubian then $\C^G$ is
$G$-Karoubian.

On the other hand, if $\fD$ is a $\Repr (G)$-enriched category we can forget the $G$-action on morphisms and get a \additive category, denoted by $\fD_{plain}$. Then we can interpret the
$G$-action on the morphisms of $\fD$ as a $G$-action on $\fD_{plain}$.
Let  $\fD_G$ denote the Karoubi envelope (a.k.a idempotent completion) of $\fD_{plain}$.
Clearly $G$ acts on $\fD_G$.

\begin{theorem}  \label{Sashastheorem}
The 2-functors $\C\mapsto\C^G$ and $\fD\mapsto\fD_G$ are mutually inverse 2-equivalences
between the 2-category of Karoubian
\additive categories equipped with $G$-action and the 2-category of $G$-Karoubian complete
$\Repr (G)$-enriched categories. 
\end{theorem}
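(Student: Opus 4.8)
The plan is to establish Theorem~\ref{Sashastheorem} by constructing the two 2-functors explicitly, producing natural 2-isomorphisms witnessing that their compositions are the respective identity 2-functors (up to 2-equivalence), and checking functoriality at the level of 1-morphisms (module functors / $G$-equivariant functors) and 2-morphisms (natural transformations). The conceptual point is that a $\Repr(G)$-enrichment on a category repackages exactly the data of a $G$-action on $\Hom$-spaces, while completeness and $G$-Karoubianness match up the tensoring and idempotent-splitting properties; once this dictionary is set up the equivalence is essentially formal.

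First I would make the two constructions precise as 2-functors. For a Karoubian $k$-linear category $\C$ with $G$-action, the assignment $\C\mapsto\C^G$ viewed as a $\Repr(G)$-enriched category sends a $G$-equivariant functor $\C_1\to\C_2$ to the evident enriched functor on equivariant objects, and a natural transformation of such to the induced enriched natural transformation; one checks this respects composition and identities strictly. In the other direction, for a $G$-Karoubian complete $\Repr(G)$-enriched category $\fD$ the recipe $\fD\mapsto\fD_G$ first forgets the $G$-action on morphisms to get $\fD_{plain}$, reinterprets the $G$-action on morphisms as a genuine $G$-action on the plain category, then passes to the Karoubi envelope $\fD_G$ which inherits the $G$-action because taking idempotent completion is functorial. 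A $G$-enriched functor and a $G$-enriched natural transformation descend to the Karoubi envelopes compatibly with the $G$-actions, so this too is a strict 2-functor.

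Next I would produce the unit and counit 2-natural equivalences. Going $\C\mapsto\C^G\mapsto(\C^G)_G$: an object of $(\C^G)_G$ is obtained by splitting an idempotent of a finite direct sum of $G$-equivariant objects, forgetting equivariance on the $\Hom$; I claim the original $\C$ embeds as the full subcategory of such objects via $X\mapsto\Ind(X)$ with the canonical $\Fun(G)$-grading, exactly as in the proof of Proposition~\ref{eqdeeq}, and Karoubianness of $\C$ makes this essentially surjective onto $(\C^G)_G$. This gives a $G$-equivariant equivalence $\C\iso(\C^G)_G$, natural in $\C$. Going the other way, $\fD\mapsto\fD_G\mapsto(\fD_G)^G$: an object of $\fD_G$ is a pair (object of $\fD_{plain}$, idempotent), $G$ acts, and a $G$-equivariant object of $\fD_G$ is such a pair together with a compatible system of isomorphisms under $G$; using completeness of $\fD$ — which supplies the tensored objects $V\ot X$ and in particular the object $\Fun(G)\ot X$ together with the adjunction~\eqref{inthom} — one identifies $(\fD_G)^G$ with $\fD$ back again (the regular-representation object $\Fun(G)\ot X$ plays the role of $\Ind$, and its $G$-equivariant structure recovers $X$). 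Again naturality in $\fD$ must be checked.

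The main obstacle I anticipate is the bookkeeping on the $\fD\mapsto\fD_G\mapsto(\fD_G)^G$ side: one must verify that completeness of $\fD$ exactly guarantees the existence of enough tensored objects to reconstruct $\fD$ as $(\fD_G)^G$ — i.e. that the induction functor built inside $\fD_G$ really does land in (the image of) $\fD$ and that every $G$-equivariant object of $\fD_G$ is, up to equivariant isomorphism, an equivariant summand of an induced object, which is the enriched analogue of Lemma~\ref{eqtztion}(iv) and again uses $\operatorname{char}k=0$ to average over $G$. Once this compatibility between ``complete'' and ``has an induction functor'' is nailed down, together with the observation that $G$-Karoubian on the $\fD$ side corresponds to Karoubian on the $\C$ side by definition, the remaining verifications (coherence of the 2-isomorphisms, strictness of composition of the 2-functors) are routine diagram chases which I would state but not spell out in full. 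I would close by noting that Theorem~\ref{Sashastheorem} combined with Proposition~\ref{eqdeeq} and the identification in \S\ref{enrich1} of ``Karoubian category with $\Rep(G)$-action'' with ``$G$-Karoubian complete $\Repr(G)$-enriched category'' immediately yields Theorem~\ref{eqdepre}.
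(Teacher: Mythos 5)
Your proposal is correct and takes essentially the same route as the paper: the paper's proof likewise checks that $(\C^G)_G=\C$ (the forgetful functor identifies $(\C^G)_{plain}$ with the full subcategory of objects of $\C$ admitting an equivariant structure, whose Karoubi envelope is $\C$ by Lemma~\ref{eqtztion}(iii)), and that $(\fD_G)^G=\fD$ by writing any equivariant object of $\fD_G$ as a $G$-equivariant direct summand of $\Ind\, Y$ with $Y\in\fD_{plain}$ (Lemma~\ref{eqtztion}(iv), i.e.\ averaging in characteristic $0$), observing that $\Ind\, Y=\Fun (G)\ot Y$ lies in $\fD$ by completeness and that $G$-Karoubianness lets you split the equivariant idempotent — exactly the points you single out as the crux. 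The only cosmetic difference is that for the composite $\C\mapsto\C^G\mapsto(\C^G)_G$ the paper argues directly with the forgetful functor rather than with $\Ind$ and the $\Fun(G)$-idempotents in the style of Proposition~\ref{eqdeeq}.
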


\begin{rems}
(i) In particular, Theorem \ref{Sashastheorem} states that
the 2-functor $\C \mapsto \C^G$ is a 2-equivalence.
In view of the results in \S\ref{forgetequiv},
this implies Theorem \ref{eqdepre}.

(ii) Combining Theorems~\ref{Sashastheorem} and \ref{eqdepre} we see that if $\D$ and $\fD$
are related as in \S\ref{enrich1} then  $\fD_G$ identifies with $\D_G$.
\end{rems}

\begin{proof}
(i) Let $\C$ be a Karoubian \additive category equipped with a $G$-action.
The forgetful functor $\C^G\to\C$ identifies $(\C^G)_{plain}$ with the full subcategory of those
objects of $\C$ which admit a $G$-equivariant structure. By Lemma~\ref{eqtztion}(iii), 
the Karoubi envelope of this full subcategory equals $\C$. So $(\C^G)_G=\C$.

(ii) Now let $\fD$ be a $G$-Karoubian complete $\Repr (G)$-enriched category, $\C_0:=\fD_{plain}$,
and $\C=\fD_G$ (so $\C$ is the Karoubi envelope of $\C_0$). We will show that the obvious fully
faithful $G$-equivariant functors $\fD\hookrightarrow\C_0^G\hookrightarrow\C^G$
are equivalences, i.e., $(\fD_G)^G=\fD$.

Our $\fD$ is $G$-Karoubian, so to prove that $\C^G=\fD$ it suffices to represent any $X\in\C^G$ as
a $G$-equivariant direct summand of an object of $\D$. By Lemma~\ref{eqtztion}(iv), 
$X$ can be represented as a $G$-equivariant direct summand of $\Ind (\Phi (X))$, where
$\Phi :\C^G\to\C$ is the forgetful functor
and $\Ind :\C\to\C^G$ is the induction functor.
But $\C$ is the Karoubi envelope of $\C_0$, so $\Phi (X)$ is a direct
summand of some $Y\in\C_0$ and therefore $\Ind (\Phi (X))$ is a $G$-equivariant direct summand
of $\Ind\, Y$. Finally, $\Ind\, Y\in\fD\subset\C^G$ because $\C$ is complete ($\Ind\, Y$ is the tensor
product of $\Fun (G)\in\Rep (G)$ and $Y\in\fD$).
\end{proof}

\subsubsection{The canonical functor $\D\to\D_G$}  \label{Thecanonfunc}
By definition, $\D_G$ is the category of $A$-modules in $\D$, where $A\in\Rep (G)$ is the regular algebra. The forgetful functor $\Psi :\D_G\to\D$ has a left adjoint, namely,
\begin{equation}  \label{free-mod}
\mbox{Free}: \D\to\D_G, \quad X\mapsto A\ot X.
\end{equation}

If $\D=\C^G$ we can identify $\C$ with $\D_G$ using the equivalence $F:\C\iso\D_G$ from
\S\ref{forgetequiv}. Then the induction functor $\Ind :\C\to\C^G$ identifies with $\Psi :\D\to\D_G$
(see diagram \eqref{drawit}). The forgetful functor $\C^G\to\C$ is left adjoint to $\Ind$,
so it identifies with the functor \eqref{free-mod}.

\subsubsection{De-equivariantization: alternative notation and terminology}  \label{alternat}
Let $\D$ be a \additive category with an action of $\E:=\Rep (G)$. We will often
write $\D\boxtimes_{\E}\Vec$ instead of $\D_G$, and we will often call it the
{\em fiber category.} This is especially convenient if the Tannakian category 
$\E$ is the primary object and $G$ appears as the automorphism group
of a fiber functor $\E\to\Vec$.

\begin{rems}
(i) The idea behind the name  ``fiber category'' is that
$\D$ can be viewed as a family of categories over the classifying 
stack of the finite group $G$ corresponding to $\E$
and $\D\boxtimes_{\E}\Vec$ is the fiber of this family.

(ii) The notation $\D\boxtimes_{\E}\Vec$ suggests that the fiber category can be 
interpreted as a tensor product of $\E$-module categories
$\D$ and $\Vec$ (the structure of $\E$-module category on $\Vec$ comes from
the fiber functor $\E\to\Vec$).  It is indeed possible to give such an interpretation
(and to define the general notion of tensor product of module categories).
But in this article we treat the notation $\D\boxtimes_{\E}\Vec$
as a single ``hieroglyph''.
\end{rems}

\subsubsection{Semisimplicity}  \label{semis}
We say that a  \additive category is {\em semisimple\,} if it is equivalent
to a direct sum of finitely many copies of $\Vec$.
\begin{proposition}   \label{semisprop}
Let $\C$ be a Karoubian \additive category with an action of a finite group $G$.
Then $\C^G$ is semisimple if and only if $\C$ is.
\end{proposition}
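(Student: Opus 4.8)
The plan is to prove both implications of Proposition~\ref{semisprop} by relating the objects of $\C^G$ and $\C$ via the adjoint pair $(\Ind, \Phi)$ from Lemma~\ref{eqtztion}, keeping careful track of endomorphism algebras. Throughout, recall that a Karoubian \additive category is semisimple in the sense of \S\ref{semis} precisely when it has finitely many isomorphism classes of indecomposable objects and the endomorphism algebra of every object is finite-dimensional and semisimple; since $k$ is algebraically closed this is the same as saying $\End(X)$ is a finite product of matrix algebras over $k$ for every $X$, equivalently $\End(X)$ is semisimple for all $X$.

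First I would prove the easy direction: if $\C^G$ is semisimple then so is $\C$. Given $X\in\C$, Lemma~\ref{eqtztion}(iii) represents $X$ as a direct summand of $\Phi(Y)$ with $Y=\Ind(X)\in\C^G$. It therefore suffices to show $\End_\C(\Phi(Y))$ is semisimple for every $Y\in\C^G$; but by the adjunction $(\Ind,\Phi)$ (with $\Ind$ left adjoint to $\Phi$, Lemma~\ref{eqtztion}(ii)) there is an isomorphism $\Hom_\C(\Phi(Y),\Phi(Y'))\simeq\Hom_{\C^G}(\Ind\Phi(Y),Y')$, and in particular $\End_\C(\Phi(Y))\simeq\Hom_{\C^G}(\Ind\Phi(Y),Y)$. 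A cleaner route: the functor $\Phi$ is faithful and $k$-linear, and $\C$ is, up to idempotent completion, the full image of $\Phi$; since $\Phi$ admits the two-sided adjoint $\Ind$, the algebra $\End_\C(\Phi(Y))$ embeds into $\End_{\C^G}(\Ind\Phi(Y))$ as the image of an idempotent — indeed $\Phi(Y)$ is a summand of $\Phi(\Ind\Phi(Y))$ by Lemma~\ref{eqtztion}(iv), applied after $\Phi$ — hence $\End_\C(\Phi(Y))$ is a corner $e\,\End_{\C^G}(Z)\,e$ of a semisimple algebra, hence semisimple. A corner $eBe$ of a semisimple finite-dimensional algebra $B$ is semisimple, so we are done.

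Next, the converse: if $\C$ is semisimple then $\C^G$ is semisimple. Fix $X\in\C^G$; I must show $\End_{\C^G}(X)$ is finite-dimensional semisimple. By Lemma~\ref{eqtztion}(i), $\Phi$ is faithful, so $\End_{\C^G}(X)$ is a $k$-subalgebra of $\End_\C(\Phi(X))$, which is finite-dimensional semisimple by hypothesis; hence $\End_{\C^G}(X)$ is at least finite-dimensional. To get semisimplicity I would instead use the two-sided adjunction to present $\End_{\C^G}(X)$ as a corner. Concretely, by Lemma~\ref{eqtztion}(iv), $X$ is a $G$-equivariant direct summand of $\Ind(\Phi(X))$, so $\End_{\C^G}(X)$ is a corner $e\,\End_{\C^G}(\Ind\Phi(X))\,e$; and by the adjunction $\Ind\dashv\Phi$ together with $\Phi\dashv\Ind$, $\End_{\C^G}(\Ind Y)\simeq\Hom_\C(Y,\Phi\Ind Y)=\Hom_\C(Y,\bigoplus_{g}F_g(Y))$ for $Y=\Phi(X)$, and using that $\C$ is semisimple the right-hand side is a finite-dimensional algebra which I claim is semisimple. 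The crucial input here is that $k$ has characteristic $0$, so that the averaging idempotent $\frac1{|G|}\sum_g(\text{adjunction unit-counit})$ exists, exactly as in the proof of Lemma~\ref{eqtztion}(iv); this is what makes $\End_{\C^G}(\Ind Y)$ a separable (hence semisimple) algebra over the semisimple $\End_\C(Y\text{-isotypic pieces})$. The main obstacle is establishing semisimplicity of $\End_{\C^G}(\Ind Y)$: one wants that it is a crossed-product-type algebra $\End_\C(Y\,\text{-part})\rtimes(\text{stabilizer})$ and invoke Maschke's theorem in characteristic $0$; getting the bookkeeping of the twisted group algebra structure right across the $G$-action on the semisimple category $\C$ is the delicate point.

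I would therefore structure the write-up as: (1) reduce ``semisimple'' to ``all endomorphism algebras are finite-dimensional semisimple and there are finitely many indecomposables''; (2) the corner-of-semisimple lemma $eBe$ semisimple; (3) the direction $\C^G$ semisimple $\Rightarrow$ $\C$ semisimple via $\Phi$, $\Ind$ and Lemma~\ref{eqtztion}(iii)--(iv); (4) for the converse, identify $\End_{\C^G}(\Ind Y)$ with a twisted group algebra over a product of matrix algebras and apply Maschke in characteristic $0$, then use Lemma~\ref{eqtztion}(iv) to pass to an arbitrary $X\in\C^G$ as a corner; (5) finiteness of the set of indecomposables in $\C^G$: every indecomposable summand of any $X$ is a summand of some $\Ind Y$ with $Y$ indecomposable in $\C$, and there are finitely many such $Y$, each $\Ind Y$ having finitely many summands — so the count is finite. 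I expect step (4), the semisimplicity of $\End_{\C^G}(\Ind Y)$, to be where the real content lies; everything else is formal manipulation of adjunctions already packaged in Lemma~\ref{eqtztion}.
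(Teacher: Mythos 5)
Your argument for the direction ``$\C^G$ semisimple $\Rightarrow$ $\C$ semisimple'' contains a step that fails. You claim that $\End_\C(\Phi(Y))$ sits inside $\End_{\C^G}(\Ind\Phi(Y))$ as a corner $e\,\End_{\C^G}(Z)\,e$, justified by the fact that $\Phi(Y)$ is a summand of $\Phi(\Ind\Phi(Y))$. But that fact only exhibits $\End_\C(\Phi(Y))$ as a corner of $\End_\C(\Phi\Ind\Phi(Y))$, an endomorphism algebra computed in $\C$, not in $\C^G$; in general $\End_{\C^G}(Z)$ is only the $G$-invariant subalgebra of $\End_\C(\Phi(Z))$. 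Moreover the claimed corner statement is simply false: take $\C=\Vec\times\Vec$ with $G=\BZ/2\BZ$ permuting the two factors, and let $Y\in\C^G$ be $(k,k)$ with the swap as its equivariant structure (so $\C^G\simeq\Vec$ and $Y$ corresponds to $k$). Then $\End_\C(\Phi(Y))=k\times k$, while $\End_{\C^G}(\Ind\Phi(Y))\cong (k\times k)\rtimes\BZ/2\BZ\cong M_2(k)$, and $k\times k$ is not a corner of $M_2(k)$ (the corners of $M_2(k)$ are $0$, $k$, $M_2(k)$). What is true, and what would repair this direction within your framework, is the isomorphism $\End_{\C^G}(\Ind\Phi(Y))\cong\End_\C(\Phi(Y))\rtimes G$ together with a radical argument: the Jacobson radical of $\End_\C(\Phi(Y))$ is a $G$-stable nilpotent ideal, hence generates a nilpotent two-sided ideal of the crossed product, which must vanish if the crossed product is semisimple. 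That is a different argument from the one you wrote.

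For the converse direction you yourself flag the decisive point --- semisimplicity of $\End_{\C^G}(\Ind Y)$ as a twisted group algebra over data in the semisimple category $\C$, via a Maschke-type theorem --- as ``where the real content lies'' and leave the bookkeeping open, so as it stands this is a plan rather than a proof. Note also that the paper's own argument bypasses all endomorphism-algebra bookkeeping: it characterizes semisimplicity by four properties (abelian; every monomorphism splits; finite-dimensional $\Hom$'s; ascending chains of strictly full Karoubian subcategories stabilize), observes that (i), (iii), (iv) transfer between $\C$ and $\C^G$ at once because $\C=(\C^G)_G$ is the category of $\Fun(G)$-modules in $\C^G$ (Theorem~\ref{eqdepre}), and settles splitting of monomorphisms in two lines: averaging a retraction over $G$ in one direction, and in the other applying $\Ind$ to a monomorphism $X\subset Y$ in $\C$, splitting $\Ind X\subset\Ind Y$ in $\C^G$, and composing with $X=F_1(X)\hookrightarrow\Ind X$ and $\Ind Y\twoheadrightarrow F_1(Y)=Y$. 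Even after patching the corner step and carrying out your Maschke step, your route (Krull--Schmidt-type characterization plus crossed-product algebra) is considerably heavier than what is actually needed.
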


\begin{proof}  
A \additive category $\A$ is semisimple if and only if it has the following properties:
\begin{enumerate}
\item[(i)] $\A$ is abelian;
\item[(ii)] every monomorphism in $\A$ splits;
\item[(iii)] $\dim\Hom (X,Y)<\infty$ for all $X,Y\in\A$;
\item[(iv)] every increasing sequence of strictly full Karoubian subcategories of $\A$ stabilizes.
\end{enumerate}

Now let $\C$ and $G$ be as in the proposition. By Theorem \ref{eqdepre}, 
$\C=(\C ^G)_G$, i.e., $\C$ is the category of
$\Fun (G)$-modules in $\C^G$. So it is clear that properties (i), (iii), (iv) hold for $\C^G$
if and only if they hold for $\C$. It remains to prove this for (ii).

Suppose that (ii) holds for $\C$. Let $Y\in\C^G$ and let $X\subset Y$ be a $G$-equivariant 
subobject. In $\C$ we have a retraction $r:Y\to X$. By averaging $r$ with respect to $G$ we
get a $G$-equivariant retraction $Y\to X$, i.e., a retraction in $\C^G$.

Now suppose that (ii) holds for $\C^G$. One has the functor $\Ind :\C\to\C^G$, 
$\Ind\, X:=\bigoplus\limits_{g\in G}F_g(X)$.
If $Y\in\C$ and $X\subset Y$ then $\Ind\, X\subset\Ind\, Y$, so
we have a retraction $\Ind\, Y\to\Ind\, X$. Composing it with the embedding 
$X=F_1(X)\hookrightarrow\Ind\, X$ and the projection $\Ind\,  Y\twoheadrightarrow F_1(Y)=Y$ we
get a retraction $Y\to X$.
\end{proof}  

\subsection{Equivariantization and de-equivariantization of tensor categories} \label{equi}
\subsubsection{Basic definitions}   \label{basicfus}
Let $\C$ be a tensor category (in the sense of \S\ref{terminology}). Consider the category $\underline{\Aut}(\C)$, whose objects are
tensor auto-equivalences of $\C$ and whose morphisms are isomorphisms of tensor functors.
$\underline{\Aut}(\C)$ has an obvious structure of monoidal category,
in which the tensor product is the composition of tensor functors.  

Recall that if $G$ is a group  then $\underline{G}$ denotes the corresponding monoidal category: the
objects of $\underline{G}$ are elements of $G$, the only morphisms are the identities and the tensor
product is given by multiplication in $G$. 

\begin{definition} \label{action}
An {\em action} of a group $G$ on a tensor category $\C$ is a monoidal functor
$\underline{G}\to \underline{\Aut}(\C)$. In this situation we also say that $G$ {\em acts} on $\C$.
\end{definition}

If $\C$ is equipped with a braided structure the category
$\underline{\Aut}(\C)$ contains the full subcategory $\underline{\Aut}^{br}(\C)$  consisting of braided equivalences. 

\begin{definition}
We say that an action $\underline{G}\to \underline{\Aut}(\C)$ respects the braided 
structure if the image of $\underline{G}$ lies in $\underline{\Aut}^{br}(\C)$. 
In this case we also say that $G$ acts on $\C$ viewed as
a braided tensor category.
\end{definition} 

We will use the following notion from \cite[2.1]{Be}.

\begin{definition}  \label{centraldef}
A {\em central functor\,} from a braided category $\B$ to a tensor category $\C$ is
a braided functor $\B\to\Z (\C )$.
\end{definition} 

Now let us introduce categorical analogs of the notions of associative unital 
algebra and commutative associative unital algebra over a commutative associative ring $E$.
Let $\E$ be a symmetric tensor category (it will play the role of  $E$).

\begin{definition}  \label{overdef}
A {\em tensor category over $\E$\,} is a tensor category $\C$ equipped with a central functor
$\E\to\C$ (i.e., a braided functor $\E\to\Z (\C )$. A {\em braided tensor category over $\E$\,}  is a 
braided tensor  category $\C$ equipped with a braided functor $\E\to\C'$ (see \S\ref{centra-drinf} for 
the definition of~$\C'$). If a tensor (resp. braided tensor) category $\C$ over $\E$ is a fusion category we say that {\em $\C$ is a fusion (resp. braided fusion) category over $\E$.}
\end{definition}

\begin{remark}
A braided tensor category $\C$ over $\E$ can also be viewed as a tensor category over $\E$:
namely, define $F: \E\to\Z (\C )$ to be the composition 
$\E\to\C'\hookrightarrow\C{\buildrel{\Phi}\over{\longrightarrow}}\Z(\C)$,
where $\Phi$ comes from the braided structure on $\C$. Note that the composition
$\C'\hookrightarrow\C{\buildrel{\Phi}\over{\longrightarrow}}\Z(\C)$ identifies with the composition
$\C'\iso (\C')^{\oP}\hookrightarrow\C^{\oP}\to\Z(\C)$.
\end{remark}

For a fixed group $G$,  tensor categories equipped with $G$-action form a 2-category
with 1-morphisms being tensor functors respecting the action and 2-morphisms being
tensor isomorphisms of tensor functors. Similarly, one has the 2-category of
braided tensor categories endowed 
with a $G$-action respecting the braided structure.

For a fixed symmetric tensor category $\E$, 
tensor categories over $\E$ (or braided tensor categories over $\E$) form a 2-category. 

\subsubsection{Equivariantization} \label{equiv2tion}
Suppose that a group $G$ acts on a tensor category $\C$. 
Let $\C^G$ be the category of $G$-equivariant objects (a.k.a. the equivariantization)
of $\C$, which was defined in \S\ref{equivtion}. Then $\C^G$ has
an obvious  structure of tensor category. We have a tensor functor
\begin{equation}  \label{embed}
\Vec\to\C,
\end{equation}
which canonically decomposes as $\Vec\to\Z (\C)\to\C$. Accordingly, we have a tensor functor 
\begin{equation}  \label{theembed}
\Rep(G)=\Vec^G\to\C^G, 
\end{equation}
which canonically decomposes as $\Rep(G)\to\Z (\C^G)\to\C^G$.
So $\C^G$ has a structure of tensor category over $\Rep(G)$.

Now suppose that $\C$ is a braided tensor category and the action of $G$ on $\C$ 
respects the braiding. Then $\C^G$ is a braided tensor category. Moreover, \eqref{theembed}
is a braided functor, so $\C^G$ is a braided fusion category over $\Rep(G)$.

\subsubsection{Main principle} \label{princ2}
The following result is due to A.~ Bru\-gui\`{e}res \cite{Br} and M.~M\"u\-ger~\cite{Mu1} (they formulated and proved it in the setting of pre-modular categories).

\begin{theorem}  \label{princprinc} 
Let $G$ be a finite group.

\begin{enumerate}
\item[(i)] Equivariantization  defines a 2-equivalence $\C\mapsto\C^G$ between the 2-category of Karoubian tensor categories with an action of $G$ and the 2-category of Karoubian tensor categories 
over $\Rep(G)$.
\item[(ii)] Equivariantization  defines a 2-equivalence $\C\mapsto\C^G$ between the 2-category of
 Karoubian braided tensor categories with an action of $G$ and the 2-category of Karoubian 
 braided tensor categories over $\Rep(G)$.
\item[(iii)] Let $\C$ be a Karoubian tensor category with an action of $G$. Then $\C$ is a fusion
 category if and only if $\C^G$ is a fusion category and the functor \eqref{theembed} is fully faithful.
\end{enumerate}
\end{theorem}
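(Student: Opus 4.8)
The plan is to deduce Theorem~\ref{princprinc} from the linear statement Theorem~\ref{eqdepre} together with the observation that all the extra structure (tensor product, braiding, the central functor from $\Rep(G)$) is transported cleanly under the two mutually inverse $2$-functors $\C\mapsto\C^G$ and $\D\mapsto\D_G$. The starting point is that Theorem~\ref{eqdepre} already gives a $2$-equivalence between Karoubian \additive categories with $G$-action and Karoubian \additive categories with $\Rep(G)$-action, and that under this equivalence the forgetful functor $\D\to\D_G=\D\bt_{\Rep(G)}\Vec$ corresponds to the forgetful functor $\C^G\to\C$, while the free-module functor $\mbox{Free}:\D\to\D_G$ corresponds to the induction functor $\Ind:\C\to\C^G$ (see \S\ref{Thecanonfunc}). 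So the only thing to check is that monoidal and braided structures match up.

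First I would treat (i). Given a Karoubian tensor category $\C$ with $G$-action, $\C^G$ is a tensor category in the obvious way, and \S\ref{equiv2tion} produces the central functor $\Rep(G)=\Vec^G\to\Z(\C^G)\to\C^G$, so $\C^G$ is a tensor category over $\Rep(G)$; this assignment is clearly $2$-functorial. Conversely, given a Karoubian tensor category $\D$ over $\Rep(G)$, the regular algebra $A=\Fun(G)$ is a \emph{commutative} algebra in $\Rep(G)$, and via the central functor $\Rep(G)\to\Z(\D)$ it becomes a commutative algebra in $\Z(\D)$; hence the category $\D_G$ of $A$-modules in $\D$ inherits a tensor structure (tensor product over $A$), and the residual $G$-action on $A$ by right translations gives a $G$-action on $\D_G$ by tensor autoequivalences. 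One then checks that these two constructions are $2$-inverse to each other: the underlying \additive statement is exactly Theorem~\ref{eqdepre}, and one verifies that the equivalences $\C\iso(\C^G)_G$ and $\D\iso(\D_G)^G$ constructed there (via $F$ in \S\ref{forgetequiv} and $F^{-1}$ in Proposition~\ref{eqdeeq}) are compatible with the tensor structures and with the central functors from $\Rep(G)$. I expect this to be essentially a diagram chase using that $A$ is a commutative algebra with the splitting property of Proposition~\ref{splitting}.

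For (ii) I would run the same argument one level up: if the $G$-action on a braided $\C$ respects the braiding, then $\C^G$ is braided and \eqref{theembed} lands in $(\C^G)'$ (equivariant objects of $\Vec$ centralize everything since $\Vec\subset\Z(\C)$ maps into the M\"uger center), so $\C^G$ is a braided tensor category over $\Rep(G)$; conversely, for $\D$ braided over $\Rep(G)$ the image of $A$ lies in $\D'$, which makes $A$ not merely commutative but compatibly so with the braiding, and this is precisely what is needed for $\D_G$ to acquire a braiding (the standard fact that de-equivariantizing by an algebra in $\D'$ yields a braided category). The $2$-inverseness is then inherited from (i). For (iii), the ``only if'' direction: if $\C$ is a fusion category with $G$-action, semisimplicity and finiteness of $\C^G$ follow from Proposition~\ref{semisprop} together with the obvious finiteness of $\O(\C^G)$ (an equivariant object is built from finitely many $\C$-simples with a $G$-action on the relevant multiplicity spaces), rigidity of $\C^G$ is automatic since duals of equivariant objects are equivariant, simplicity of the unit is clear, and full faithfulness of \eqref{theembed} holds because $\Hom_{\C^G}(\be,\be)=k$; for the ``if'' direction, given that $\C^G$ is fusion and \eqref{theembed} is fully faithful, one recovers $\C=(\C^G)_G$ as $A$-modules in $\C^G$ with $A=\Fun(G)$, and since $\FPdim(A)=|G|<\infty$ and $\C^G$ is fusion, the category of $A$-modules is again semisimple with finitely many simples, finite-dimensional Hom's, and rigid, with $\End(\be_\C)=k$ forced by the full faithfulness hypothesis.

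\textbf{Main obstacle.} The routine bookkeeping is large but harmless; the one genuinely delicate point is verifying that the de-equivariantization $\D_G$ really carries a tensor (resp.\ braided) structure and that the residual $G$-action is by tensor (resp.\ braided) autoequivalences — i.e., unwinding that $\otimes_A$ is well-defined, associative and unital in the categorical setting, and that the commutativity (resp.\ the fact that $A$ maps into $\D'$) is exactly the hypothesis that makes everything coherent. This is where Proposition~\ref{splitting} on the $A$-bimodule splitting of $A\ot A\to A$ does the real work, and it is the step I would write out most carefully.
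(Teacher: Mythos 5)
Your plan is sound, but it follows a genuinely different route from the paper. The paper does \emph{not} construct the inverse $2$-functor as the category of $A$-modules with $\ot_A$: instead, given a Karoubian tensor category $\D$ over $\Rep(G)$, it uses the central functor $\Rep(G)\to\Z(\D)$ only to get a $k$-linear $\Rep(G)$-action, passes to the associated $\Repr(G)$-enriched category $\fD$ of \S\ref{enrich1}, shows that the tensor product of $\D$ enriches to a $\Repr(G)$-enriched tensor structure on $\fD$ (a purely formal functoriality check), and then takes the Karoubi envelope of the plain category underlying $\fD$; the statement that this is inverse to equivariantization is then inherited from Theorem~\ref{Sashastheorem}, and (ii) is ``similar''. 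This sidesteps exactly the point you single out as delicate: in the paper, the tensor and braided structures on the $A$-module category $\D_G$ are established only \emph{afterwards} (Propositions~\ref{indeed_deeq} and~\ref{br-indeed_deeq}), and there the braiding on $\D_G$ is obtained by using Theorem~\ref{princprinc}(ii) to reduce to the case $\D=\C^G$, so that route would be circular if used verbatim inside the proof. Your alternative — defining $\ot_A$ directly via Proposition~\ref{splitting}, checking coherence, and descending the braiding using that $A$ maps into $\D'$ — is the Brugui\`eres--M\"uger/Pareigis/Kirillov--Ostrik approach that the paper explicitly cites as giving direct proofs of existence of the braiding (\cite{P}, \cite{Mu1}, \cite{KiO}); it buys an explicit description of the inverse from the outset at the cost of the coherence verifications and of checking that the unit and counit equivalences of Theorem~\ref{eqdepre} are monoidal and compatible with the central functors, which is where the real work sits in your version. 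Your treatment of (iii) essentially coincides with the paper's (Proposition~\ref{semisprop}, Lemma~\ref{eqtztion}(iii), and the equivalence between simplicity of $\be_\C$ and full faithfulness of \eqref{theembed}); the one place I would tighten is your ``if'' direction, where you assert that the category of $A$-modules in a fusion category is automatically semisimple and rigid — this is true but not free, and it is cleaner (and is what the paper does) to deduce rigidity and semisimplicity of $\C$ from those of $\C^G$ via the forgetful/induction adjunction rather than via properties of module categories over \'etale algebras.
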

\begin{proof}
To prove (i), we will construct the inverse 2-functor. Let $\D$ be a Karoubian tensor category over 
$\Rep (G)$. Note that $\Z(\D)$ acts by multiplication on $\D$ viewed as a \additive category.
So the tensor functor $\Rep(G)\to \Z(\D)$ defines an action of $\Rep(G)$ on
$\D$ viewed as a \additive category. Therefore by
\S\ref{enrich1}, $\D$ gives rise to a 
$\Repr (G)$-enriched category $\fD$. Let us define on $\fD$ a structure of
$\Repr (G)$-enriched {\em tensor\,} category. We already have the tensor product on
the objects of $\fD$ (which are the same as those of $\D$). To define it on
morphisms, note that for $X_1,X_2,Y_1,Y_2\in\D$ and $V_1,V_2\in\Rep (G)$ the tensor
product map
\[
\Hom_{\D}(V_1\ot X_1,Y_1)\ot\Hom_{\D}(V_2\ot X_2,Y_2)\to\Hom_{\D}(V_1\ot V_2\ot X_1\ot X_2,Y_1\ot Y_2)
\]
can be interpreted as a map
\begin{eqnarray}   \label{enrichedtensprod}
\Hom_G(V_1, \uHom (X_1,Y_1))\ot\Hom_G(V_2, \uHom (X_2,Y_2))\to \\
\to\Hom_G(V_1\ot V_2, \uHom (X_1\ot X_2,Y_1\ot Y_2)). \nonumber
\end{eqnarray}
 
Since \eqref{enrichedtensprod} is functorial in $V_1,V_2\in\Rep (G)$ it yields a $\Repr(G)$-morphism
\[
\uHom (X_1,Y_1)\ot \uHom (X_2,Y_2)\to\uHom (X_1\ot X_2,Y_1\ot Y_2).
\] 
So we get a functor
$\ot :\fD\times\fD\to\fD$. In fact, $\fD$ is a $\Repr (G)$-enriched tensor category, whose associativity
isomorphisms are those of $\D$. Just as in \S\ref{enrich2}, define $\fD_G$ to be
the Karoubi envelope (a.k.a idempotent completion) of the plain (not enriched) tensor category
corresponding to $\fD$. Then $\fD_G$ is a Karoubian tensor category equipped with an action of $G$.
The 2-functor $\fD\mapsto\fD_G$ is inverse to the equivariantization 2-functor $\C\mapsto\C^G$
(see Theorem~\ref{Sashastheorem} and its proof). 

So we have proved (i). The proof of (ii) is similar. 

Let us prove (iii). By definition, a tensor category $\C$ is 
fusion if it is rigid, semisimple in the sense of \S\ref{semis}, and $\be_{\C}$ is simple. 
Now suppose that $\C$ is equipped with an action of $G$. Then $\C^G$ is rigid if and only if
$\C$ is (the ``only if" statement follows from Lemma~\ref{eqtztion}(iii)). By Proposition ~\ref{semisprop},
$\C^G$ is semisimple if and only if $\C$ is. Finally, the simplicity of $\be_{\C}$ is equivalent to the
fully faithfullness of the functor \eqref{embed}, which is equivalent to the fully faithfullness of 
its equivarianti\-za\-tion~\eqref{theembed}.
\end{proof}

\subsubsection{De-equivariantization of tensor categories} \label{deeqfus}
In the proof of Theorem~\ref{princprinc} we described the inverse of the equivariantization 2-functor
$\C\mapsto\C^G$. Here is an alternative description due to \cite{Br,Mu1}.

Let $G$ be a finite group and $\E:=\Rep(G)$. Or equivalently, let $\E$ be a Tannakian
fusion category equipped with a fiber functor $\E\to\Vec$, and let $G$ be the automorphism
group of the fiber functor.  Let $\D$ be a Karoubian tensor category over $\E$. Let $A$ be the regular algebra in $\Rep(G)$ (i.e., $A$ is the algebra of functions on $G$ equipped with the action of $G$ by left translations). The tensor functor $\Rep(G)\to \Z(\D)$ allows to talk about $A$-modules in $\D$. Let 
$\D_G$ be the category of $A$-modules in $\D$. It is a tensor category with respect to the functor
\begin{equation}  \label{tens_A}
M_1\ot_A M_2:=(M_1\ot M_2)\ot_{A\ot A}A, \quad M_1,M_2\in \D_G
\end{equation}
(note that by Proposition \ref{splitting}, the multiplication morphism $A\ot A\to A$ admits an
$(A\ot A)$-module splitting, so the r.h.s. of \eqref{tens_A} makes sense, even though
$\D$ is not assumed to be abelian). In \S\ref{deequivtion} we defined an action of 
$G$ on the $k$-linear category $\D_G$ (it comes from the action of $G$ on $A$ by right translations).
Clearly $G$ acts on $\D_G$ viewed as a tensor category. 

\begin{proposition}  \label{indeed_deeq}
The 2-functor $\D\mapsto\D_G$ from the 2-category of Karoubian tensor categories over 
$\Rep(G)$ to the 2-category of Karoubian tensor categories with $G$-action is inverse to the
2-functor $\C\mapsto\C^G$.
\end{proposition}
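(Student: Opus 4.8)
The plan is to show that the 2-functor $\D\mapsto\D_G$ defined here coincides with the inverse 2-functor $\D\mapsto\fD_G$ constructed in the proof of Theorem~\ref{princprinc}(i), since the latter was already shown to be inverse to equivariantization. Thus what must really be checked is that ``category of $A$-modules in $\D$ with the tensor product \eqref{tens_A}'' and ``Karoubi envelope of the plain tensor category underlying $\fD$'' describe the same tensor category with $G$-action, 2-naturally in $\D$. As an intermediate step I would instead argue directly that the two composites $\C\mapsto(\C^G)_G$ and $\D\mapsto(\D_G)^G$ are 2-naturally equivalent to the respective identity 2-functors, using the explicit model of $\D_G$ as $A$-modules.

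First I would treat the composite $(\C^G)_G$. Here $\C$ is a Karoubian tensor category with $G$-action, and $\C^G$ is a tensor category over $\Rep(G)$ via \eqref{theembed}. By Proposition~\ref{eqdeeq} (and diagram \eqref{drawit}) we already have a $k$-linear equivalence $F:\C\iso(\C^G)_G$ sending $X$ to $\Ind(X)$ equipped with its natural $A=\Fun(G)$-module structure, and $F$ commutes with the $G$-action. It remains to upgrade $F$ to a tensor equivalence, i.e.\ to produce natural isomorphisms $F(X)\ot_A F(Y)\iso F(X\ot Y)$ compatible with associativity and units. The key computation is that $\Ind(X)\ot\Ind(Y)$, as an $(A\ot A)$-module in $\C^G$, has $\Ind(X\ot Y)$ as its base change along $A\ot A\to A$; this follows from the explicit description $\Ind(X)=\bigoplus_{g}F_g(X)$, the fact that $A=\Fun(G)$ as an algebra in $\Rep(G)$ decomposes via the idempotents $e_g$, and the tensor structure isomorphisms $F_g(F_h(X))\iso F_{gh}(X)$ of the $G$-action on $\C$. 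One checks that $\Ind(X)\ot\Ind(Y)=\bigoplus_{g,h}F_g(X)\ot F_h(Y)$, that the $(A\ot A)$-action is by $(e_g,e_h)$, and that $\ot_{A\ot A}A$ picks out the diagonal $g=h$, producing $\bigoplus_g F_g(X\ot Y)=\Ind(X\ot Y)$ after using $F_g(X)\ot F_g(Y)\iso F_g(X\ot Y)$. The compatibility with the central functor $\Rep(G)\to\Z(\C^G)$ and with associativity/unit constraints is then a diagram chase.

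For the reverse composite $(\D_G)^G$, I would start from the canonical free-module functor $\mbox{Free}:\D\to\D_G$, $X\mapsto A\ot X$ from \S\ref{Thecanonfunc}. Since $A$ carries its (right-translation) $G$-equivariant structure, $\mbox{Free}(X)$ is naturally a $G$-equivariant object of $\D_G$, so $\mbox{Free}$ lifts to a tensor functor $\D\to(\D_G)^G$; it is a tensor functor because $\mbox{Free}(X)\ot_A\mbox{Free}(Y)=(A\ot X)\ot_A(A\ot Y)\iso A\ot X\ot Y=\mbox{Free}(X\ot Y)$, using Proposition~\ref{splitting} to make sense of $\ot_A$. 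That this functor is an equivalence is precisely the statement $(\D_G)^G\simeq\D$, which on the level of $k$-linear categories with $G$-action is the content of Theorems~\ref{eqdepre} and~\ref{Sashastheorem} (after identifying $\D_G=\fD_G$ via the second Remark following Theorem~\ref{Sashastheorem}); it then remains only to observe that the $k$-linear equivalence produced there is the tensor functor $\mbox{Free}$ above, so it is automatically a tensor equivalence. Finally I would note 2-naturality: a tensor functor $\D_1\to\D_2$ over $\Rep(G)$ sends $A$-modules to $A$-modules $G$-equivariantly and commutes with $\ot_A$, so the equivalences above assemble into a 2-natural transformation.

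The main obstacle I anticipate is the bookkeeping in the tensor-functoriality isomorphism $F(X)\ot_A F(Y)\iso F(X\ot Y)$: one must verify that the isomorphism built from the $(e_g,e_h)$-decomposition and the $\gamma_{g,h}$ constraints of the $G$-action is compatible with the associativity constraint of $\D_G$ (which itself is defined through the splitting of $A\ot A\to A$) and with the central structure over $\Rep(G)$. This is a finite but somewhat intricate coherence verification; everything else is a reference to Theorems~\ref{eqdepre}, \ref{princprinc}, and~\ref{Sashastheorem}, Propositions~\ref{eqdeeq} and~\ref{splitting}, and the explicit formulas for $\Ind$ and $\mbox{Free}$.
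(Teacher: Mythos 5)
Your first half coincides with the paper's proof: the paper also takes the $k$-linear equivalence $F_{\C}:\C\iso(\C^G)_G$ of Proposition~\ref{eqdeeq} and upgrades it to a tensor equivalence by exactly the computation you describe, namely that $F(X)\ot_A F(Y)$ is $\bigoplus_{g\in G}\bigl(F_g(X)\ot F_g(Y)\bigr)$ with its natural $A$-module structure while $F(X\ot Y)$ is $\bigoplus_{g\in G}F_g(X\ot Y)$, these being identified via the constraints $\gamma_{g,h}$ of the $G$-action (the paper states this in one parenthetical sentence; your spelled-out version with the idempotents $e_g\ot e_h$ and base change along $A\ot A\to A$ is the same argument). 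Where you diverge is the second composite. The paper does not touch $(\D_G)^G$ at all: having shown that $\D\mapsto\D_G$ is a \emph{left} inverse of equivariantization (tensor structure on $F_\C$ plus naturality in $\C$), it invokes Theorem~\ref{princprinc}(i), which already says that $\C\mapsto\C^G$ is a 2-equivalence, so a left inverse is automatically a two-sided inverse. This formal step replaces your entire second paragraph and is the cheaper route, since Theorem~\ref{princprinc}(i) is available.

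Your alternative treatment of $(\D_G)^G\simeq\D$ via the lifted functor $\mbox{Free}:X\mapsto A\ot X$ (with the right-translation equivariant structure) is workable, but note the soft spot: Theorems~\ref{eqdepre} and~\ref{Sashastheorem} only guarantee the \emph{existence} of an equivalence $\D\iso(\D_G)^G$ commuting with the $\Rep(G)$-action; they do not by themselves say that your specific lifted Free functor is that equivalence. To close this you must either identify the two (a Lemma~\ref{nasty}-type verification, including matching the equivariant structures after writing $\D\simeq\C^G$), or prove directly that the lift of Free is an equivalence (fully faithfulness follows from the adjunction $\Hom_{\D_G}(A\ot X,\,A\ot Y)\simeq\Hom_\D(X,\,A\ot Y)$ and averaging over the right-translation action, which cuts $A$ down to $\be$; essential surjectivity needs an argument of the same kind as in the proof of Theorem~\ref{Sashastheorem}). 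As written, ``it then remains only to observe\dots'' understates this step; either fill it in or, better, drop the second composite altogether and use the left-inverse-of-a-2-equivalence argument as the paper does.
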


\begin{proof}
The equivalence $F=F_{\C}:\C\iso (\C^G)_G$ constructed in \S\ref{forgetequiv} has a natural tensor
structure. (Indeed, if $X,Y\in\C$ then  $F(X\ot Y)$ is the object 
$\bigoplus\limits_{g\in G}F_g(X\ot Y)\in\C^G$ equipped with the natural $A$-module structure and $F(X)\ot_AF(Y)$ is the object 
$\bigoplus\limits_{g\in G}(F_g(X)\ot F_g(Y))\in\C^G$  equipped with the natural
$A$-module structure.)

The equivalence $F_{\C}:\C\iso (\C^G)_G$ is natural in $\C$. 
So the 2-functor $\D\mapsto\D_G$ is left inverse to the
2-functor $\C\mapsto\C^G$. By Theorem~\ref{princprinc}(i), the
2-functor $\C\mapsto\C^G$ is a 2-equivalence, so its left inverse is also a right inverse.
\end{proof}

\begin{definition}
The tensor category $\D_G$ is called the {\em de-equivariantization\,} of $\D$
or the  {\em fiber category.}
\end{definition}

We will often use the notation $\D\bt_{\E}\Vec$ instead of $\D_G$
(the idea behind this notation and the name ``fiber category" was explained in \S\ref{alternat}).

\begin{remark}
By Theorem \ref{princprinc}(iii), $\D_G$ is a fusion category if and only if $\D$ is a fusion category
and the functor $\Rep (G)\to\D$ is fully faithful.. 
\end{remark}

\subsubsection{De-equivariantization of braided tensor categories}       \label{deeqbr}       
Let $\D$ be a braided category over $\E =\Rep (G)$. As explained in \ref{deeqfus},
$\D_G=\D\bt_{\E}\Vec$ is a tensor category with respect to the functor \eqref{tens_A}.

\begin{proposition}  \label{br-indeed_deeq}
\begin{enumerate}
\item[(i)] $\D_G$ has a unique braiding $M_1\ot_A M_2\iso M_2\ot_A M_1$
such that for every $M_1,M_2\in\D_G$ the diagram  
\begin{equation}   \label{braidingdia}
\xymatrix{M_1\ot M_2\ar[rr] \ar[d]
&&M_2\ot M_1\ar[d]\\ M_1\ot_A M_2\ar[rr]&&M_2\ot_A M_1}
\end{equation}
commutes.
\item[(ii)] The 2-functor $\D\mapsto\D_G$ from the 2-category of Karoubian braided
tensor categories over $\Rep(G)$ to the 2-category of Karoubian braided tensor categories with 
$G$-action is inverse to the 2-functor $\C\mapsto\C^G$.
\end{enumerate}
\end{proposition}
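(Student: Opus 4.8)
The plan is to prove (i) directly from the braided structure of $\D$ and the transparency of the regular algebra, and then to deduce (ii) by equipping the de-equivariantization $2$-functor of Proposition~\ref{indeed_deeq} with the braiding produced in (i).

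First, for (i), I would record the structural facts. Since $\D$ is a braided tensor category over $\E=\Rep(G)$, the image of the regular algebra $A$ under the given braided functor $\E\to\D'$ is a commutative algebra lying in M\"uger's center $\D'\subset\D$, so $A$ is \emph{transparent}: $c_{A,Y}c_{Y,A}=\id_{Y\otimes A}$ and $c_{A,Y}=c_{Y,A}^{-1}$ for every $Y\in\D$. By Proposition~\ref{splitting} the multiplication $A\otimes A\to A$ splits as a map of $(A\otimes A)$-modules, so for $A$-modules $M_1,M_2$ the object $M_1\otimes_A M_2$ of \eqref{tens_A} exists in the Karoubian category $\D$ and the canonical map $p_{M_1,M_2}\colon M_1\otimes M_2\to M_1\otimes_A M_2$ is a split epimorphism coequalizing the two arrows $M_1\otimes A\otimes M_2\rightrightarrows M_1\otimes M_2$ given by the right $A$-action on $M_1$ and the left $A$-action on $M_2$. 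The maps $p_{M_i,M_j}$ are exactly the vertical arrows of \eqref{braidingdia}, so uniqueness of the braiding in (i) is immediate. For existence I would show that $p_{M_2,M_1}\circ c_{M_1,M_2}$ coequalizes the same pair coequalized by $p_{M_1,M_2}$: rewriting $c_{M_1,M_2}\circ(\rho_{M_1}\otimes\id_{M_2})$ and $c_{M_1,M_2}\circ(\id_{M_1}\otimes\lambda_{M_2})$ via naturality of the braiding and the hexagon axioms moves the factor $A$ across $M_1$ and $M_2$, and transparency of $A$ turns the two resulting maps into the two maps coequalized by $p_{M_2,M_1}$ (this is where one uses that for a commutative transparent $A$ the left and right $A$-actions on a module are interchanged by $c_{A,-}$). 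Hence $c_{M_1,M_2}$ descends to $\beta_{M_1,M_2}\colon M_1\otimes_A M_2\iso M_2\otimes_A M_1$; a parallel chase shows $\beta_{M_1,M_2}$ is $A$-bilinear, hence a morphism of $\D_G$, applying the same construction to $c_{M_1,M_2}^{-1}$ yields a two-sided inverse, and the naturality and hexagon identities for $\beta$ follow from those of $c$ once composed with the epimorphisms $p$.

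For (ii), recall from Theorem~\ref{princprinc}(ii) that $\C\mapsto\C^G$ is a $2$-equivalence between Karoubian braided tensor categories with $G$-action and Karoubian braided tensor categories over $\Rep(G)$. I would equip $\D_G$ with the braiding $\beta$ of part (i); this turns the de-equivariantization $2$-functor of Proposition~\ref{indeed_deeq} into a $2$-functor in the opposite direction, because $(\D_G,\beta)$ is braided by (i) and $G$ acts on it by braided autoequivalences (the $G$-action on $\D_G$ only alters the $A$-module structure, while $\beta$ is built from the braiding of $\D$ and the maps $p$, so $F_h(\beta_{M_1,M_2})=\beta_{F_hM_1,F_hM_2}$ by the uniqueness in (i)). By Proposition~\ref{indeed_deeq} these two $2$-functors are mutually inverse on underlying tensor categories, so it only remains to check that the natural tensor equivalence $F_\C\colon\C\iso(\C^G)_G$ of \S\ref{forgetequiv} is braided for the braiding $\beta$ on the target. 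Using the identification $F_\C(X)\otimes_A F_\C(Y)=\bigoplus_{g\in G}\bigl(F_g(X)\otimes F_g(Y)\bigr)$ from the proof of Proposition~\ref{indeed_deeq} together with the fact that the braiding of $\C^G$ restricted to the $g$-summand of $\Ind(X)\otimes\Ind(Y)$ equals $F_g(c^{\C}_{X,Y})$ (because each $F_g$ is a braided autoequivalence of $\C$), one sees that $\beta$ pulls back along $F_\C$ to $c^{\C}$; since $\C\mapsto\C^G$ is already a $2$-equivalence of braided structures, the $2$-functor $\D\mapsto(\D_G,\beta)$ is forced to be its inverse, giving (ii). The main obstacle is the diagram chase in part (i): one must track the left and right $A$-module structures carefully and apply the hexagon axioms together with $c_{A,Y}=c_{Y,A}^{-1}$ in the correct places, after which everything reduces to formal manipulations with the epimorphisms $p_{M_1,M_2}$.
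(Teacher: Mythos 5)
Your argument is correct, but for part (i) it takes a genuinely different route from the paper. The paper does not construct the braiding on $\D_G$ directly: it invokes Theorem~\ref{princprinc}(ii) to write $\D=\C^G$ for some braided tensor category $\C$ with braided $G$-action, transports the braiding of $\C$ to $\D_G$ through the tensor equivalence $F\colon\C\iso\D_G$ of \S\ref{forgetequiv}, and verifies the square \eqref{braidingdia} simply by writing $M_i=F(X_i)$; uniqueness is dismissed as clear. You instead build the braiding intrinsically, using that $A$ lands in the M\"uger center $\D'$ (hence is transparent) and that the splitting of $A\ot A\to A$ makes $M_1\ot_A M_2$ a retract of $M_1\ot M_2$, so that $c_{M_1,M_2}$ descends after a hexagon/naturality chase. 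This is precisely the style of the direct proofs the paper points to in the remark following the proposition (\cite[Proposition 2.2]{P}, \cite[Lemma 3.10]{Mu1}, \cite[Theorem 1.10]{KiO}), and transparency is indeed the key ingredient you correctly isolate; what it buys is a construction of $\beta$ independent of the equivariantization machinery, at the cost of the diagram chase the paper deliberately avoids. For part (ii) both arguments share the same skeleton (a left inverse of the 2-equivalence $\C\mapsto\C^G$ is automatically a two-sided inverse), but because you constructed $\beta$ independently you must additionally check that $F_\C\colon\C\iso(\C^G)_G$ is braided, which you do via the decomposition $F_\C(X)\ot_A F_\C(Y)=\bigoplus_{g}\bigl(F_g(X)\ot F_g(Y)\bigr)$; in the paper this step is tautological since the braiding on $\D_G$ is defined so that $F$ is braided. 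Also make sure the uniqueness statement and the $G$-equivariance of $\beta$ are both run through the split epimorphisms $p_{M_1,M_2}$ exactly as you indicate; with that, your proof is complete.
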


\begin{proof}
(i) Uniqueness is clear. Let us prove existence. By Theorem~\ref{princprinc}(ii), we can assume
that $\D=\C^G$ for some braided tensor category $\C$. Then we have a tensor equivalence
$F:\C\iso\D_G$ (see \S\ref{forgetequiv} and the proof of Proposition~\ref{indeed_deeq}).
So we get a braided structure on $\D_G$. To check the commutativity of \eqref{braidingdia},
write $M_i$ as $F(X_i)$, $X_i\in\C$.

(ii) By the construction of the braided structure on $\D_G$, the 2-functor $\D\mapsto\D_G$ is
left inverse to the 2-functor $\C\mapsto\C^G$. By Theorem~\ref{princprinc}(i), the
2-functor $\C\mapsto\C^G$ is a 2-equivalence, so its left inverse is also a right inverse.
\end{proof}

\begin{remark}
For direct proofs of existence in Proposition~\ref{br-indeed_deeq}(i) see
\cite[Proposition 2.2]{P}, \cite[Lemma 3.10]{Mu1}, or the proof of  \cite[Theorem 1.10]{KiO}
(especially, figure 6 from the proof).   
\end{remark}

\subsubsection{The tensor functors $\C^G\to\C$ and $\D\to\D_G$} \label{Thetensorfunc}
Let $\D$ be a tensor category over $\Rep (G)$. By definition, $\D_G$ is the category of $A$-modules in $\D$ equipped with the tensor product \eqref{tens_A} (here $A\in\Rep (G)$ is the regular algebra).
So the ``free $A$-module" functor
\begin{equation}  \label{freemod}
\mbox{Free}: \D\to\D_G, \quad X\mapsto A\ot X
\end{equation}
has an obvious structure of tensor functor. If $\D$ is a {\em braided\,} tensor category over 
$\Rep (G)$ then this tensor functor is braided. 

On the other hand, if $\C$ is a tensor (resp. braided tensor) category with $G$-action then the forgetful functor $\Phi :\C^G\to \C$ is a tensor (resp. braided) functor.

Now let $\D=\C^G$ and let $F:\C\iso\D_G$ be the tensor equivalence from the proof of
Proposition~\ref{indeed_deeq}.

\begin{lemma}  \label{nasty}
The tensor functor $F\Phi : \D\to\D_G$ is canonically isomorphic to the tensor functor
\eqref{freemod}.  \hfill\qedsymbol
\end{lemma}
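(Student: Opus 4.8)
The statement to prove is Lemma~\ref{nasty}: for $\D=\C^G$ and $F:\C\iso\D_G$ the tensor equivalence from the proof of Proposition~\ref{indeed_deeq}, the composite tensor functor $F\circ\Phi:\D=\C^G\to\D_G$ is canonically isomorphic to the free-module functor $\mbox{Free}:\D\to\D_G$, $X\mapsto A\ot X$. The plan is to make everything explicit on objects and then check the identification is compatible with the tensor and (if relevant) braided structures. Recall from \S\ref{forgetequiv} that for $X\in\C$, $F(X)\in\C^G=\D$ is the object $\Ind(X)=\bigoplus_{g\in G}F_g(X)$ with its canonical equivariant structure, equipped with the $A$-module structure where $f\in A=\Fun(G)$ acts on the summand $F_g(X)$ by multiplication by $f(g)$; and the identification of $\C$ with $(\C^G)_G=\D_G$ sends $X$ to this $A$-module $F(X)$. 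Here $\Phi:\C^G\to\C$ is the forgetful functor, so concretely we must describe $F(\Phi(Y))$ for $Y\in\C^G$ and compare it with $A\ot Y$ computed inside $\D_G=(\C^G)_G$.

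First I would unwind $A\ot Y$ for $Y\in\D=\C^G$. By the very definition of the $\Rep(G)$-action on $\C^G$ (see \S\ref{equivtion}), for the regular representation $A=\Fun(G)\in\Rep(G)$ the object $A\ot Y$ has underlying object $A\ot Y$ (vector-space tensor with the underlying object of $Y$ in $\C$), equivariant structure $u^A_g$ built from left translation on $A$ and the equivariant structure of $Y$, together with its tautological $A$-module structure coming from the algebra structure on $A$. The key elementary fact is that as an object of $\C^G$ with $G$-action unwound, $A\ot Y$ is canonically isomorphic to $\bigoplus_{g\in G}F_g(\Phi(Y))=\Ind(\Phi(Y))$: indeed $A=\bigoplus_{g\in G}k\cdot e_g$ as a vector space, so $A\ot Y=\bigoplus_{g}e_g\ot Y$, and one checks that the $g$-graded piece, with the twisted equivariant structure, is precisely $F_g(\Phi(Y))$ — this is the standard ``projection formula'' $A\ot Y\simeq\Ind(\Phi(Y))$ already implicit in Lemma~\ref{eqtztion}(ii) and \S\ref{Thecanonfunc}. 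Under this isomorphism the $A$-module structure on $A\ot Y$ (where $e_h$ acts by the idempotent picking out the $h$-graded component) matches the $A$-module structure on $F(\Phi(Y))$ described above (where $f\in\Fun(G)$ acts on $F_g(\Phi(Y))$ by $f(g)$). This gives the desired isomorphism of objects $F\Phi(Y)\iso\mbox{Free}(Y)=A\ot Y$, and naturality in $Y$ is immediate since every construction is functorial.

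Second, I would promote this to an isomorphism of tensor functors. Both $F\Phi$ and $\mbox{Free}$ carry canonical tensor structures ($\Phi$ is tensor, $F$ is tensor by the proof of Proposition~\ref{indeed_deeq}, and $\mbox{Free}$ is tensor by \S\ref{Thetensorfunc}). One must check that the object-level isomorphism above is compatible with these tensor structures, i.e.\ that the evident square relating $(F\Phi(Y_1))\ot_A(F\Phi(Y_2))$, $F\Phi(Y_1\ot Y_2)$, $\mbox{Free}(Y_1)\ot_A\mbox{Free}(Y_2)$ and $\mbox{Free}(Y_1\ot Y_2)$ commutes. Using the explicit description of $\ot_A$ from \eqref{tens_A} together with the $A$-bimodule splitting of $A\ot A\to A$ guaranteed by Proposition~\ref{splitting}, this reduces to the diagonal-versus-product bookkeeping for $\bigoplus_g F_g(X_1\ot X_2)$ versus $\bigoplus_g(F_g(X_1)\ot F_g(X_2))$ that already appeared in the proof of Proposition~\ref{indeed_deeq}; it is routine once the isomorphism of the previous paragraph is written down. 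In the braided case one similarly invokes diagram \eqref{braidingdia} to see the isomorphism is braided.

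\textbf{Main obstacle.} The only real subtlety is the bookkeeping in identifying the $A$-module and equivariant structures: one must verify carefully that the minimal idempotents $e_g\in A$ act on $A\ot Y$ exactly as the projections onto $F_g(\Phi(Y))$ after the identification $A\ot Y\simeq\Ind(\Phi(Y))$, and that the equivariant structure $u^A_g$ produced by the $\Rep(G)$-action on $\C^G$ matches the canonical equivariant structure on $\Ind(\Phi(Y))$ (permuting summands). This is a finite, mechanical check but it is where sign/index conventions could go wrong; everything else (naturality, compatibility with tensor and braiding) then follows formally or by the same computations already done in \S\ref{forgetequiv} and the proofs of Propositions~\ref{indeed_deeq} and~\ref{br-indeed_deeq}. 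I would phrase the proof so that once the object-level identification is pinned down, the tensor-structure compatibility is literally the same computation as in Proposition~\ref{indeed_deeq}, so the lemma can be stated with a short proof referring back to those.
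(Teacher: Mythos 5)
Your proposal is correct, and it is exactly the argument the paper has in mind: the lemma is stated with its proof omitted (the \qedsymbol at the end of the statement), the surrounding text (\S\ref{Thecanonfunc}, \S\ref{forgetequiv}, and the proof of Proposition~\ref{indeed_deeq}) already containing all the ingredients you use. Your identification $A\ot Y\simeq\Ind(\Phi(Y))$ with matching $A$-module structures, followed by the observation that compatibility with the tensor structures is the same diagonal-versus-product bookkeeping as in the proof of Proposition~\ref{indeed_deeq}, is precisely the intended filling-in of the details, and the equivariance check you flag as the main obstacle does go through with the stated conventions.
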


\noindent {\bf Reformulation of the lemma.} 
{\em If one identifies $\C$ with $\D_G$ using the tensor equivalence
$F:\C\iso\D_G$ then the forgetful functor $\C^G\to\C$ identifies, as a
tensor functor, with \eqref{freemod}.}

\begin{remark}  \label{pseudotensrem}
Recall that the functor $\mbox{Free}: \D\to\D_G$ and the forgetful functor $\C^G\to\C$
have right adjoints, namely, the forgetful functor $\D_G\to\D$ and the induction functor
$\Ind :\C\to\C^G$ (see formula~\eqref{induction} and \S\ref{Thecanonfunc}). These right adjoints
are {\em pseudotensor\,} functors in the sense of \cite[\S1.1.5 and \S1.1.6(ii)]{CH}. 
E.g., the structure of pseudotensor functor on $\Ind :\C\to\C^G$ is given by the morphism
\begin{equation}   \label{pseudotenstruct}
\Ind (X)\ot \Ind (Y)\to \Ind (X\ot Y), \quad X,Y\in\C
\end{equation}
that kills $\alpha (X)\ot\beta (Y)\subset\Ind (X\ot Y)$ if 
$\alpha,\beta\in G$ are not equal and maps $\alpha (X)\ot\alpha (Y)\subset\Ind (X\ot Y)$ to 
$\alpha (X\ot Y)\subset\Ind (X\ot Y)$ in the obvious way. If $G\ne \{ 1\}$ the morphism of functors
\eqref{pseudotenstruct} is not an isomorphism, so $\Ind$ is not a tensor functor.
\end{remark}

\subsubsection{Dimensions}\label{equidim}
From now on we assume that $\C$ is a fusion category with $G$-action and $\D$ is
a fusion category over $\Rep (G)$ such that the functor $\Rep (G)\to\D$ is fully faithful
($\C$ and $\D$ are not assumed to be braided).
The following result shows that dimensions are well behaved under equivariantization and
de-equivariantization.

\begin{proposition}\label{equidimprop}
 We have $\dim(\C^G)=|G|\cdot\dim(\C)$, {\em $\FPdim(\C^G)=|G|\cdot\FPdim(\C)$}.
Equivalently, $\dim(\D_G)=\frac1{|G|}\dim(\D)$ and {\em $\FPdim(\D_G)=\frac1{|G|}\FPdim(\D)$}.
\end{proposition}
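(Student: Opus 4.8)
The plan is to establish the $G$-equivariant statement $\dim(\C^G)=|G|\cdot\dim(\C)$ first, since the de-equivariantization version follows by applying it to $\D_G$ in place of $\C$ (using Theorem~\ref{princprinc} to write $\D=(\D_G)^G$, together with the hypothesis that $\Rep(G)\to\D$ is fully faithful, which is exactly what makes $\D_G$ a fusion category by Theorem~\ref{princprinc}(iii)). The two equalities within each version, the one for $\dim$ and the one for $\FPdim$, will be proved by essentially the same argument, so I will set up the bookkeeping to cover both at once where possible; the $\FPdim$ statement also follows immediately from Proposition~\ref{dimgradi} once one observes that $\C^G$ is faithfully graded by a suitable group, but the $\dim$ statement requires a little more care because we have not yet proved the categorical analogue of Proposition~\ref{dimgradi} (indeed Corollary~\ref{dimgradcat}, to which the excerpt refers, presumably uses this very proposition).

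First I would exhibit a faithful grading of $\C^G$ with trivial component (a category equivalent to) $\C$. The natural candidate comes from the fact that $\C^G$ is a fusion category over $\Rep(G)$: the regular algebra $A=\Fun(G)\in\Rep(G)\subset\C^G$, and the function assigning to a simple object of $\C^G$ the appropriate ``$A$-isotypic'' data. Concretely, by Theorem~\ref{eqdepre} we have $\C\simeq(\C^G)_G$, i.e. $\C$ is the category of $A$-modules in $\C^G$, and the forgetful functor $\Phi:\C^G\to\C$ together with its adjoint $\Ind$ give tight control of simple objects: every simple $X\in\O(\C)$ sits inside $\Phi(Y)$ for some $Y\in\C^G$, and $\Phi$ is faithful. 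I would use the induction-restriction adjunction of Lemma~\ref{eqtztion} to compute $\dim\Hom_{\C^G}(\Ind X,\Ind X)$ in two ways and relate dimensions of simple objects of $\C^G$ to those of $\C$. The upshot should be the ``orthogonality'' relation that lets one sum squared norms: $\sum_{Y\in\O(\C^G)}|Y|^2 = |G|\sum_{X\in\O(\C)}|X|^2$, and likewise for $\FPdim$.

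The cleanest route, which I expect to be the one actually taken, is via the functors $\Phi:\C^G\to\C$ (forgetful) and $\Ind:\C\to\C^G$. Since $\Ind$ is simultaneously the left and right adjoint of $\Phi$ (Lemma~\ref{eqtztion}(ii)), for any $X,X'\in\C$ we get $\dim\Hom_{\C^G}(\Ind X,\Ind X')=\dim\Hom_{\C}(X,\Phi\Ind X')=\dim\Hom_{\C}(X,\bigoplus_{g}F_g(X'))$. Choosing $X,X'$ simple and exploiting that each $F_g$ is an autoequivalence, this says $\Ind X$ decomposes with total multiplicity $|G|$ over its simple constituents (counted with the pairing given by $\Hom$ into the constituents of $\Ind X'$). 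Dually, $\Phi\Ind X=\bigoplus_g F_g(X)$ has $\FPdim$ and $\dim$ equal to $|G|$ times that of $X$ (the dimension functions are preserved by the tensor autoequivalences $F_g$ and $\Ind$ is additive). Summing $\FPdim(\Ind X)\cdot\FPdim(X)$, resp. the squared-norm pairing, over $X\in\O(\C)$ and comparing with the expansion over $\O(\C^G)$ using the adjunction yields the two claimed identities. For the $\dim$ statement one must know that $\Phi$ and $\Ind$ respect the squared-norm pairing; this should follow because $\Phi$ is a tensor functor between fusion categories (hence $|\Phi(Y)|^2=|Y|^2$ for $Y$ simple, as $\Phi$ sends simple to semisimple and preserves the canonical $\psi$'s up to the usual identifications) and $\Ind$ is its biadjoint.

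\textbf{Main obstacle.} The delicate point is the $\dim$ (categorical dimension) half: unlike $\FPdim$, the function $|{-}|^2$ is not multiplicative on a fusion category and is not obviously transported along a non-tensor functor like $\Ind$, so one cannot simply invoke a grading argument of the Proposition~\ref{dimgradi} type without proving the categorical analogue first. I expect the proof to handle this by a direct trace computation: pick a natural isomorphism $\psi:X\iso X^{**}$ on $\C$, push it through $\Phi$ and $\Ind$ to get a compatible one on $\C^G$, and verify that $\Tr_\pm$ on $\C^G$ pulls back along $\Phi$ to $\Tr_\pm$ on $\C$ (a diagram chase with evaluation/coevaluation morphisms, using that $\Phi$ is a tensor functor commuting with duality). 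Once $d_\pm^{\,\C^G}(Y)=d_\pm^{\,\C}(\Phi Y)$ is established for the compatible $\psi$'s, the squared-norm identity over simples drops out and the sum over $\O(\C^G)$ versus $|G|$ copies of the sum over $\O(\C)$ becomes a formal consequence of the adjunction $\Hom_{\C^G}(\Ind X,Y)\simeq\Hom_{\C}(X,\Phi Y)$ and its mate, exactly as in the $\FPdim$ case.
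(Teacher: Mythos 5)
Your main argument (the ``cleanest route'') is correct, but it organizes the count differently from the paper. The paper fixes a simple $X\in\O(\C)$ with stabilizer $H\subset G$ of its isomorphism class, parameterizes the simple objects of $\C^G$ lying over the $G$-orbit of $X$ by the irreducible representations $V$ of a central extension $\hat H$ of $H$ (those on which $k^\times$ acts tautologically), proves the pointwise formulas $|X_V|^2=(G:H)^2\dim(V)^2|X|^2$ and $\FPdim(X_V)=(G:H)\dim(V)\FPdim(X)$, and then sums using $\sum_V\dim(V)^2=|H|$. Your double counting via Frobenius reciprocity --- evaluating $\sum_{X\in\O(\C)}d_-(X)\,d_+(\Ind X)$ (and the analogous sum with $\FPdim$) in two ways, once through $\Phi\Ind X\cong\bigoplus_g F_g(X)$ and once by expanding $\Ind X$ over $\O(\C^G)$ via $\dim\Hom_{\C^G}(Y,\Ind X)=\dim\Hom_\C(\Phi Y,X)$ --- avoids classifying the simples of $\C^G$ altogether, so it is more formal and shorter; the paper's Clifford-theoretic route costs more but yields the refined object-by-object formulas \eqref{proj1}--\eqref{proj}, which are used nowhere else in your argument but are of independent interest (e.g.\ for Corollary~\ref{equiintegral}). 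Both proofs hinge on exactly the same non-formal input, which you correctly isolate at the end: a compatible pair $(\psi,\psi^G)$ (equivalently, a $G$-invariant normalization on $\O(\C)$) with $d^{\C^G}_\pm(Y)=d^{\C}_\pm(\Phi Y)$, plus $\FPdim(Y)=\FPdim(\Phi Y)$ from uniqueness of the Frobenius--Perron homomorphism; and your reduction of the $\D_G$-statement to the $\C^G$-statement is the same as the paper's appeal to Proposition~\ref{indeed_deeq}.

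Two side remarks in your proposal are wrong and should be dropped, though neither is load-bearing for your main route. First, $\C^G$ does not in general admit a faithful grading with trivial component equivalent to $\C$: for $\C=\Vec$ with trivial action of a group $G$ with trivial center, $\C^G=\Rep(G)$ has trivial universal grading group, so the $\FPdim$ half does not ``follow immediately from Proposition~\ref{dimgradi}''; the faithfully $G$-graded object in this picture is the crossed product $\Vec_G\ltimes\C$, not $\C^G$, and in the paper the categorical-dimension analogue (Corollary~\ref{dimgradcat}) is deduced from the present proposition rather than the other way around. Second, the parenthetical ``$|\Phi(Y)|^2=|Y|^2$ for $Y$ simple'' is not meaningful as stated: $\Phi(Y)$ is typically non-simple, and $d_+d_-$ of a non-simple object depends on the choice of $\psi$. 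What is true, and what your final paragraph actually uses, is $d^{\C^G}_\pm(Y)=d^{\C}_\pm(\Phi Y)$ for the compatible choice, from which $|Y|^2=d_+(\Phi Y)\,d_-(\Phi Y)$ follows.
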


\begin{proof} The equivalence of the formulas involving $\C$ and $\D$ follows immediately from
Proposition~\ref{indeed_deeq}. We will prove only the formulas involving $\C$.

Let $X$ be a simple object of $\C$ and 
$H\subset G$ the stabilizer of its isomorphism class. We have a central extension 
$0\to k^{\times}\to \hat H\to H\to 0$,
where $\hat H$ is the set of pairs consisting of an element $h\in H$ and an isomorphism
$F_h(X)\iso X$. The set of isomorphism classes of simple objects of $\C^G$ whose
image under the forgetful functor $\C^G\to \C$ contains $X$ is in natural bijection 
with the set $\text{Irr}^1(\hat H)$ of irreducible finite-dimensional representations of
$\hat H$ such that each $\lambda\in k^{\times}\subset\hat H$ acts as multiplication by $\lambda$. 
For $V\in \text{Irr}^1(\hat H)$
let $X_V$ denote the corresponding simple object of $\C^G$. We will show that
\begin{equation}\label{proj1}
|X_V|^2=(G:H)^2\cdot \dim(V)^2\cdot |X|^2
\end{equation}
\begin{equation}\label{proj}
\FPdim(X_V)=(G:H)\cdot \dim(V)\cdot \FPdim(X).
\end{equation}
To prove \eqref{proj1}, notice that there exists a pair $(\psi,\psi^G)$ of
functorial (but not necessarily tensor) isomorphisms
$\psi_X: X\to X^{**},\;  X\in \C$ and $\psi_Y^G: Y\to Y^{**},\;
Y\in \C^G$ such that $F(\psi^G)=\psi$ where $F: \C^G\to \C$ is the forgetful functor (such pairs
are in bijection with $G$-invariant functions $\O(\C)\to k^\times$, see \S \ref{squared}). For such a pair
 we clearly have $d_\pm(X_V)=d_\pm(F(X_V))=(G:H)\cdot\dim(V)\cdot d_\pm(X)$, so 
\eqref{proj1} follows from \eqref{dim1}. Similarly, \eqref{proj} follows from the formula
$\FPdim(X_V)=\FPdim(F(X_V))$, which is a consequence of the uniqueness of the Frobenius-Perron
dimension, see \S \ref{fpd}.  

Now the proposition follows easily from \eqref{proj1}, \eqref{proj}, and the formula
$$\sum_{V\in \text{Irr}^1(\hat H)}\dim(V)^2=|H|.$$
\end{proof}

\begin{corollary} \label{equiintegral}
The category $\C^G$ is integral (resp. weakly integral) if and only if $\C$ is. \hfill\qedsymbol
\end{corollary}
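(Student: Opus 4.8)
The plan is to reduce everything to Proposition~\ref{equidimprop}. Recall that by Definition~\ref{intdef}, a fusion category $\A$ is weakly integral iff $\FPdim(\A)\in\BZ$, and integral iff $\FPdim(X)\in\BZ$ for every simple $X\in\A$; by Proposition~\ref{FPint1}(ii), in the weakly integral case the latter is equivalent to $\FPdim(X)^2\in\BZ$ for all simple $X$.

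\begin{proof}
We treat the two cases separately.

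\emph{Weak integrality.} By Proposition~\ref{equidimprop} we have $\FPdim(\C^G)=|G|\cdot\FPdim(\C)$. Hence $\FPdim(\C^G)\in\BZ$ if and only if $|G|\cdot\FPdim(\C)\in\BZ$. If $\FPdim(\C)\in\BZ$ this is clear. Conversely, if $\FPdim(\C^G)\in\BZ$ then $\FPdim(\C)=\FPdim(\C^G)/|G|\in\BQ$; but $\FPdim(\C)$ is also an algebraic integer (it is a cyclotomic algebraic integer by \S\ref{fpd}), hence $\FPdim(\C)\in\BZ$. Thus $\C^G$ is weakly integral if and only if $\C$ is.

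\emph{Integrality.} Suppose first that $\C$ is integral. Then in particular $\C$ is weakly integral, so by the previous case $\C^G$ is weakly integral. Now let $Y\in\O(\C^G)$ and let $X$ be a simple constituent of the image of $Y$ under the forgetful functor $\Phi:\C^G\to\C$. By formula~\eqref{proj} in the proof of Proposition~\ref{equidimprop}, $\FPdim(Y)=(G:H)\cdot\dim(V)\cdot\FPdim(X)$ for the appropriate subgroup $H\subset G$ and irreducible projective representation $V$; since $(G:H)$, $\dim(V)$, and $\FPdim(X)$ are all positive integers, $\FPdim(Y)\in\BZ$. Hence $\C^G$ is integral.

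Conversely, suppose $\C^G$ is integral. Then $\C^G$ is weakly integral, so $\C$ is weakly integral, and it remains to show $\FPdim(X)\in\BZ$ for every $X\in\O(\C)$. Fix such an $X$, with stabilizer $H\subset G$, and choose any $V\in\mathrm{Irr}^1(\hat H)$; such $V$ exists because $\sum_{V\in\mathrm{Irr}^1(\hat H)}\dim(V)^2=|H|>0$. Then $X_V\in\O(\C^G)$ and $\FPdim(X_V)=(G:H)\cdot\dim(V)\cdot\FPdim(X)\in\BZ$ by hypothesis, so $\FPdim(X)=\FPdim(X_V)/((G:H)\dim(V))\in\BQ$. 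Since $\FPdim(X)$ is an algebraic integer, $\FPdim(X)\in\BZ$. Therefore $\C$ is integral.
\end{proof}

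The only mildly delicate point is the use of the fact that Frobenius-Perron dimensions are algebraic integers to promote ``$\FPdim\in\BQ$'' to ``$\FPdim\in\BZ$'' in the converse directions; this is recorded in \S\ref{fpd}. Everything else is a direct bookkeeping of the multiplicativity formulas \eqref{proj} and $\FPdim(\C^G)=|G|\FPdim(\C)$ already established for Proposition~\ref{equidimprop}, so I expect no serious obstacle — the main thing to be careful about is simply getting the two converse implications right rather than only proving the easy forward directions.
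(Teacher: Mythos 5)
Your proof is correct and follows exactly the route the paper intends: its one-line proof says to use Proposition~\ref{equidimprop} for the weakly integral case and formula~\eqref{proj} for the integral case, which is precisely what you do, with the (harmless and standard) extra observation that Frobenius--Perron dimensions are algebraic integers to pass from $\BQ$ to $\BZ$ in the converse directions. No issues.
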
 

\begin{proof}
In the weakly integral case use Proposition \ref{equidimprop}. In the integral case use 
formula~\eqref{proj}.
\end{proof}

The following corollary is an analog of Proposition~\ref{dimgradi} for categorical dimensions.

\begin{corollary}     \label{dimgradcat}
Let $G$ be a finite group and let $\C =\oplus_{g\in G}\, \C_g$ be a faithfully $G$-graded fusion category. 
Then $\dim(\C) = |G|\cdot\dim(\C_1)$.
\end{corollary}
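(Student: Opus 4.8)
The plan is to deduce Corollary~\ref{dimgradcat} from Corollary~\ref{equidimprop} by reinterpreting a faithful $G$-grading as a de-equivariantization. Recall that a faithful $G$-grading $\C=\bigoplus_{g\in G}\C_g$ is the same datum as a surjective (central) braided tensor functor from the symmetric category $\Vec_G^* = \Rep(\widehat G)$ into the center of $\C$; more precisely, by Proposition~\ref{un-grad}(iii) and Corollary~\ref{trivial component} a faithful grading by $G$ corresponds to a tensor embedding $\Rep(\widehat G)\to\Z(\C)$ whose image lies in the image of the forgetful functor, where $\widehat G=\Hom(G,k^\times)$ (here I use that $G$ is abelian-or-at-least the grading group embeds appropriately; in fact the cleanest route, since the dual group enters, is to use the group $\Gamma:=\widehat G$ and observe $|\Gamma|=|G|$).

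First I would make precise the statement that a faithfully $G$-graded fusion category $\C$ is the equivariantization of a $\Gamma$-action, where $\Gamma=\widehat G$. Indeed, the $G$-grading gives a homomorphism $\Gamma=\widehat G\to\Aut_\otimes(\id_\C)$, hence an action of $\Gamma$ on $\C$ by tensor autoequivalences that are isomorphic to the identity functor; but this is not quite what is needed. The correct and standard fact (see \S\ref{equi}) is the converse: to say $\C$ carries an action of $\Gamma$ is, after equivariantization, to produce a $\Gamma$-graded... Let me instead use directly the statement of Theorem~\ref{princprinc} together with the following observation: a fusion category $\D$ over $\Rep(\Gamma)$ for which $\Rep(\Gamma)\to\D$ is fully faithful de-equivariantizes to a fusion category $\D_\Gamma$ carrying a $\Gamma$-action, and conversely. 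Now take $\D:=\C$ with the trivial-grading-induced... no. The clean input is: \emph{a faithful $G$-grading on a fusion category $\K$ is equivalent to exhibiting $\K$ as $\D_\Gamma$ for a suitable $\D$ over $\Rep(\Gamma)$ with $\Gamma=\widehat G$, under which $\K_1=\C_1$ becomes $(\D')_\Gamma$}; but since this correspondence is developed only in later sections, the safest self-contained route inside this excerpt is different.

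Therefore I would instead argue as follows. Set $\Gamma = \widehat G$ and consider the fusion category $\C':=\C$ equipped with the given faithful $G$-grading. By Proposition~\ref{un-grad}(iii), the grading is classified by a surjective homomorphism $U_\C \twoheadrightarrow G$, equivalently by an injection $\widehat G = \Gamma \hookrightarrow \widehat{U_\C} = \Aut_\otimes(\id_\C)$, i.e.\ a $\Gamma$-action on $\C$ through tensor automorphisms of the identity. Form the equivariantization $\C^\Gamma$. A simple object of $\C^\Gamma$ lying over a simple $X\in\C_g$ is an object with a $\Gamma$-equivariant structure; since $\gamma\in\Gamma$ acts on $X$ by the scalar $\gamma(g)\in k^\times$, the stabilizer of $[X]$ is all of $\Gamma$ and the relevant central extension $\hat\Gamma$ splits with a canonical character-twist, so simple objects of $\C^\Gamma$ over $X$ are indexed by $\{\chi\in\Rep(\Gamma) : \chi \text{ restricts correctly}\}$; running the dimension bookkeeping of the proof of Proposition~\ref{equidimprop} in this case gives $\dim(\C^\Gamma)=|\Gamma|\cdot\dim(\C)=|G|\cdot\dim(\C)$. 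On the other hand one checks that the grading degree-$1$ piece $\C_1$ is recovered inside $\C^\Gamma$: namely $\C^\Gamma$ is itself $\widehat\Gamma = G$-graded (by the recipe of Proposition~\ref{un-grad}(iii) applied to the images of the equivariant structures), its trivial component is $(\C_1)^\Gamma$, and by Proposition~\ref{dimgradi} applied to $\C^\Gamma$ together with Proposition~\ref{equidimprop} one gets
\begin{equation*}
|G|\cdot\dim(\C) = \dim(\C^\Gamma) = |G|\cdot\dim((\C^\Gamma)_1) = |G|\cdot\dim((\C_1)^\Gamma) = |G|\cdot|\Gamma|\cdot\dim(\C_1) = |G|^2\cdot\dim(\C_1),
\end{equation*}
which is off by a factor $|G|$ — signalling that I have over-counted and the correct normalization is that $\C^\Gamma$ is \emph{not} faithfully $G$-graded but rather the relation is $\dim(\C) = |G|\cdot\dim(\C_1)$ after dividing out. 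The main obstacle is exactly pinning down this normalization cleanly: I expect the honest proof to run the argument of Proposition~\ref{equidimprop}'s computation \emph{directly} on the grading — i.e.\ for each $g\in G$ the endofunctor "tensoring structure" identifies $\dim(\C_g)$ with $\dim(\C_1)$ via an invertible bimodule (concretely, $\tilde S$-matrix/trace arguments as in \S\ref{squared}, or simply noting $X\mapsto$ its orbit gives $\dim\C_g$ independent of $g$ once one trivializes using any invertible object, but there need be no invertible object of degree $g$) — so the real content is showing $\dim(\C_g)=\dim(\C_1)$ for all $g$, after which $\dim(\C)=\sum_{g}\dim(\C_g)=|G|\dim(\C_1)$ is immediate. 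To establish $\dim(\C_g)=\dim(\C_1)$ I would invoke the equivariantization picture properly: $\C=\D_\Gamma$ for $\D=\C^\Gamma$ over $\Rep(\Gamma)$, the $G=\widehat\Gamma$-grading on $\C$ is the one induced by the $\Gamma$-action (each $\C_g$ is a single orbit-type piece), and Proposition~\ref{equidimprop} gives $\dim(\D)=|\Gamma|\dim(\C)$ while the orbit-sum formula $\sum_{V\in\mathrm{Irr}^1(\hat H)}\dim(V)^2=|H|$ used there, specialized to each graded component, yields the equality of the $\dim(\C_g)$. The hard part, then, is not any single estimate but assembling these pieces so that the bookkeeping factors of $|G|$ cancel exactly once; I would do this by stating precisely the dictionary "faithful $G$-grading on $\C$ $\leftrightarrow$ $\Gamma$-action on $\C^\Gamma$ with $\C=(\C^\Gamma)_\Gamma$" and then citing Proposition~\ref{equidimprop} once.
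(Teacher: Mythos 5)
Your proposal has a genuine gap --- several, in fact, and you flag most of them yourself. First, the construction is built on $\Gamma=\widehat G=\Hom(G,k^\times)$, which only sees the abelianization of $G$; the corollary assumes nothing about $G$, and for non-abelian $G$ the dual group is too small, so the argument never gets started (your parenthetical ``abelian-or-at-least'' is never resolved). Second, even for abelian $G$, Proposition~\ref{un-grad}(iii) produces a homomorphism $\widehat G\to\Aut_\otimes(\id_\C)$, i.e.\ tensor automorphisms of the identity \emph{functor}; this is not an action of $\widehat G$ on $\C$ by tensor autoequivalences, so the category ``$\C^\Gamma$'' you want to feed into Proposition~\ref{equidimprop} is not defined by the data you actually have (if you instead take the literally trivial action, then $\C^\Gamma\simeq\C\bt\Rep(\Gamma)$ and its distinguished subcategory has nothing to do with $\C_1$). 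Third, the identification $(\C^\Gamma)_1\simeq(\C_1)^\Gamma$ is asserted, not proved; as written your displayed chain would formally give the desired equality, but exactly those middle identifications are the unproved content, and you end by conceding that the real point is $\dim(\C_g)=\dim(\C_1)$ for all $g$ --- which is equivalent to the statement being proved. The fallback you sketch (rerunning the orbit computation componentwise, or invoking an invertible bimodule equivalence $\C_1\simeq\C_g$) is not carried out and is not available from the results quoted; note in particular that $X\mapsto|X|^2$ is not known to be a character of $K_0(\C)$ without a pivotal structure, which is why the Frobenius--Perron argument behind Proposition~\ref{dimgradi} does not simply transfer to categorical dimensions.

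For comparison, the paper's proof sidesteps all of this by passing to the center: by \cite[Proposition 2.9(ii)]{ENO3}, for any finite $G$ the center $\Z(\C)$ of a faithfully $G$-graded fusion category contains a Tannakian subcategory $\E=\Rep(G)$ (underlying objects are multiples of $\be$, the half-braiding acting on $\C_g$ by $\rho(g)$, cf.\ Remark~\ref{my_remarks}(ii)) such that $\E'\bt_\E\Vec\cong\Z(\C_1)$. Then \eqref{dimZ} gives $\dim(\Z(\C))=\dim(\C)^2$, Theorem~\ref{Muger2}(i) together with the non-degeneracy of $\Z(\C)$ gives $\dim(\E')=\dim(\C)^2/|G|$, Proposition~\ref{equidimprop} gives $\dim(\E'\bt_\E\Vec)=\dim(\C)^2/|G|^2$, and comparing with $\dim(\Z(\C_1))=\dim(\C_1)^2$ and taking positive square roots yields the claim. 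If you want to salvage your strategy, the correct dictionary is this central one: a faithful $G$-grading corresponds to a copy of $\Rep(G)$ inside $\Z(\C)$, not to a $\widehat G$-action on $\C$ itself; with that replacement, your single application of Proposition~\ref{equidimprop} becomes exactly the paper's argument.
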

\begin{proof}
By \cite[Proposition 2.9(ii)]{ENO3}, $\Z(\C)$ contains 
a Tannakian subcategory $\E =\Rep(G)$
such that $\E' \bt_\E \Vec \cong \Z(\C_1)$. Computing the categorical dimensions of both sides
using    \eqref{dimZ}, Theorem~\ref{Muger2}(i), and Proposition~\ref{equidimprop} we get
$\dim(\C)^2/ |G|^2 = \dim(\C_1)^2$.
\end{proof}

\begin{corollary} \label{relativedim}
Let $\C$ be a fusion category over a symmetric fusion category $\E$. Set
$\dim_{\E}(\C):=\dim (\C)/\dim (\E)$, $\FPdim_{\E}(\C):=\FPdim (\C)/\FPdim (\E)$.
Then $\dim_{\E}(\C)$ and $\FPdim_{\E}(\C)$ are algebraic integers.
\end{corollary}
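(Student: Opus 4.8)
\emph{Plan of proof.} The idea is to reduce everything to Proposition~\ref{equidimprop} by de-equivariantizing, handling first the case $\E$ Tannakian and then bootstrapping to an arbitrary symmetric $\E$ via Deligne's theorem (Corollary~\ref{deligne}). Throughout I use the standing convention of \S\ref{equidim}, i.e.\ that the central functor $\E\to\Z(\C)\to\C$ is fully faithful. Note at the outset that $\E$ is integral (Corollary~\ref{symint}), hence pseudounitary (Proposition~\ref{FPint1}(i)), so $\dim(\E)=\FPdim(\E)$, and likewise $\dim=\FPdim$ on every fusion subcategory of $\E$ and on each Tannakian category that will appear below.

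The heart of the matter is the case $\E=\Rep(G)$ Tannakian, where $\dim(\E)=\FPdim(\E)=|G|$. Here the fiber category $\C\bt_\E\Vec$ is a fusion category, because $\Rep(G)\to\C$ is fully faithful (Remark after Proposition~\ref{indeed_deeq}). By Proposition~\ref{equidimprop},
\[
\dim(\C\bt_\E\Vec)=\tfrac{1}{|G|}\,\dim(\C)=\dim_\E(\C),\qquad
\FPdim(\C\bt_\E\Vec)=\tfrac{1}{|G|}\,\FPdim(\C)=\FPdim_\E(\C).
\]
The left-hand sides are the categorical and the Frobenius--Perron dimension of a fusion category, hence a totally positive algebraic integer in $\kRcyc$ and a cyclotomic algebraic integer respectively (\S\ref{squared}, \S\ref{fpd}); this settles the Tannakian case.

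For general symmetric $\E$, let $\E_0\subseteq\E$ be the maximal Tannakian subcategory. By Corollary~\ref{deligne} either $\E_0=\E$, which is the case just treated, or $\FPdim(\E_0)=\tfrac12\FPdim(\E)$, hence also $\dim(\E_0)=\tfrac12\dim(\E)$; assume the latter and write $\E_0=\Rep(G_0)$. Restricting the central functor to $\E_0$ makes $\C$ a fusion category over $\E_0$ with $\E_0\to\C$ fully faithful, so by the Tannakian case the fusion category $\D:=\C\bt_{\E_0}\Vec$ satisfies $\dim(\D)=\dim(\C)/\dim(\E_0)=2\dim_\E(\C)$ and $\FPdim(\D)=2\FPdim_\E(\C)$, both algebraic integers; it remains to show that $\dim(\D)/2$ and $\FPdim(\D)/2$ are again algebraic integers. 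Now $\E\bt_{\E_0}\Vec$ is a braided fusion category of dimension $\dim(\E)/\dim(\E_0)=2$; it is symmetric but not Tannakian (otherwise its $G_0$-equivariantization $\E=(\E\bt_{\E_0}\Vec)^{G_0}$ would be Tannakian), so $\E\bt_{\E_0}\Vec\cong s\Vec$. Functoriality of de-equivariantization embeds it into $\D=\C\bt_{\E_0}\Vec$ centrally and fully faithfully; let $\delta\in\D$ be the image of the nontrivial simple object, an invertible object coming from $\Z(\D)$, with $\delta^{\ot 2}\cong\be$ and with half-braiding at $\delta$ equal to $-\id$. Since $\delta$ comes from $\Z(\D)$, the functor $X\mapsto\delta\ot X$ is a tensor autoequivalence of $\D$, hence preserves squared norms and Frobenius--Perron dimensions; and running the Drinfeld-isomorphism computation of Lemma~\ref{Dimalem} inside the modular category $\Z(\D)$, with the half-braided lift of $\delta$ in the role played there by a transparent fermion, shows that $\delta\ot X\not\cong X$ for every simple $X\in\D$. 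Therefore $\O(\D)$ splits into orbits $\{X,\delta\ot X\}$ of size two, so $\dim(\D)=2\sum|X|^2$ and $\FPdim(\D)=2\sum\FPdim(X)^2$, the sums being over orbit representatives; as each $|X|^2$ and each $\FPdim(X)^2$ is an algebraic integer, both $\dim(\D)/2$ and $\FPdim(\D)/2$ are algebraic integers. Hence $\dim_\E(\C)=\tfrac12\dim(\D)$ and $\FPdim_\E(\C)=\tfrac12\FPdim(\D)$ are algebraic integers, as claimed.

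The main obstacle is the last step of the non-Tannakian case, the assertion $\delta\ot X\not\cong X$ for the central fermion $\delta$: unlike in Lemma~\ref{Dimalem}, $\D$ is not braided and $\delta$ is only central, not transparent, so one cannot apply that lemma directly and must instead argue inside $\Z(\D)$, using the interaction of the half-braided lift of $\delta$ with the induction functor $\D\to\Z(\D)$ to substitute for transparency. Everything else is a direct application of Proposition~\ref{equidimprop} together with the structural results of \S\ref{isotsub} and the basic properties of the two dimensions.
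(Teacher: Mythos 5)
Your Tannakian case is exactly the paper's, and your reduction of the general case to the maximal Tannakian subcategory $\E_0\subset\E$ is a reasonable idea; but the step you yourself flag as "the main obstacle" is not just unproved, it is false. The claim is that the invertible object $\delta\in\D=\C\bt_{\E_0}\Vec$ coming from $\E\bt_{\E_0}\Vec\simeq s\Vec$, which is only \emph{central} (a braided lift to $\Z(\D)$) and not transparent, must satisfy $\delta\ot X\not\cong X$ for every simple $X$. Counterexample inside your own setting: take $\E=s\Vec$, so $\E_0=\Vec$, $G_0=1$ and $\D=\C$, and let $\C$ be an Ising category $\mI_\zeta$. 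By Lemma~\ref{adsuper} the subcategory $\langle\delta\rangle=(\mI_\zeta)_{ad}$ is braided equivalent to $s\Vec$, so the braided embedding $\mI_\zeta\hookrightarrow\Z(\mI_\zeta)$ restricts to a braided, fully faithful functor $s\Vec\to\Z(\mI_\zeta)$ whose composition with the forgetful functor is the inclusion $\langle\delta\rangle\subset\mI_\zeta$; thus $\mI_\zeta$ is a fusion category over $s\Vec$ in the sense of Definition~\ref{overdef}, with the full-faithfulness convention satisfied. Yet $\delta\ot X\cong X$ for the non-invertible simple $X$. This is precisely why Lemma~\ref{Dimalem} requires $\delta\in\C'$: in $\Z(\D)$ the lift of $\delta$ is never transparent (the M\"uger center of $\Z(\D)$ is trivial), and no manipulation with the induction functor can substitute for transparency, because the conclusion itself fails. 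So your orbit-counting argument that $\dim(\D)/2$ and $\FPdim(\D)/2$ are algebraic integers collapses (the Corollary's conclusion happens to survive in the Ising example, where $\dim_\E(\C)=2$, but not by your route).

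The paper circumvents exactly this obstruction by a limiting trick rather than a divisibility-by-$2$ argument: for every $n$ it regards $\C^{\bt n}$ as a fusion category over the maximal Tannakian subcategory $\E_n\subset\E^{\bt n}$, which has index $2$ by Theorem~\ref{Deligne1}(iii), and applies the Tannakian case (Proposition~\ref{equidimprop}) to conclude that $2(\dim_\E(\C))^n$ is an algebraic integer for all $n$; since $\BZ[\dim_\E(\C)]$ then lies in a finitely generated $\BZ$-module, $\dim_\E(\C)$ is an algebraic integer, and likewise for $\FPdim_\E(\C)$. If you want to keep your de-equivariantization framing, you should replace the freeness claim by this power trick (or by some other argument that does not assert a fixed-point-free action of $\delta$), since categories of "Ising type" are genuine instances of your situation.
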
 

\begin{proof}
If $\E$ is Tannakian this follows from Proposition~\ref{equidimprop} and the statements from
\S\ref{fpd} and \S\ref{pivotal}.
Now suppose that $\E$ is not Tannakian. For any $n\in\BN$ let $\C^{\bt n}$ denote the tensor product
of $n$ copies of $\C$. Let $\E_n\subset\E^{\bt n}$ be the maximal Tannakian subcategory. 
Theorem~\ref{Deligne1}(iii) implies that
$\dim (\E_n)=\frac{1}{2}\dim (\E^{\bt n})$, so considering $\C^{\bt n}$ as a fusion category over 
$\E_n$ we see that $2(\dim_{\E}(\C))^n$ is an algebraic integer for any $n\in\BN$. Therefore
$\dim_{\E}(\C)$ is an algebraic integer. A similar argument works for $\FPdim_{\E}(\C)$.
\end{proof}

\subsubsection{Fusion subcategories and gradings}
Let $\D$ be a fusion category over $\E=\Rep (G)$
such that the functor $\E\to\D$ is fully faithful.
Then the fiber category $\C:=\D\bt_{\E}\Vec$
is a fusion category with $G$-action, and $\D =\C^G$.

\begin{proposition}    \label{lattice isomorphism}
\begin{enumerate}
\item[(i)] There is an isomorphism between the lattice of fusion subcategories $\tilde\D\subset\D$ 
containing $\E$ and the lattice  of $G$-stable fusion subcategories $\tilde\C\subset\C$:
namely, $\tilde\D=\tilde\C^G$, $\tilde\C=\tilde\D\bt_{\E}\Vec$.
\item[(ii)] There is a bijection between the set of $G$-invariant
gradings of $\C$ and the set of gradings of $\D$ trivial on $\E$: namely, to  a 
grading $\C =\oplus_{b\in B}\,\C_b$ such that each $\C_b$ is $G$-stable
one associates the grading of $\D$ by $B$ defined by $\D_b:= (\C_b)^G$, $b\in B$.
This bijection preserves faithfulness of the gradings.
\item[(iii)] Let $\D$ be a braided fusion category over $\E$ (so
$\C:=\D\bt_{\E}\Vec$ is braided). Then the isomorphism from (i)
commutes with taking the centralizer, i.e.,
$(\tilde\C^G)'=(\tilde\C')^G$, $(\tilde\D\bt_{\E}\Vec )'=\tilde\D'\bt_{\E}\Vec$.
\end{enumerate}
\end{proposition}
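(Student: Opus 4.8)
The plan is to derive all three parts from the equivariantization/de\-equivariantization correspondence (Theorem~\ref{princprinc} and Proposition~\ref{indeed_deeq}, together with their braided refinements), exploiting two facts: under the identification $\D\cong\C^G$ the forgetful functor $\Phi\colon\D=\C^G\to\C$ is faithful and is a (braided) tensor functor; and $\C=\D\bt_\E\Vec$ is the category of $A$-modules in $\D$, where $A$ is the regular algebra of $\E$, with the $G$-action merely twisting the $A$-module structure while fixing the underlying object of $\D$.

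For (i): starting from a $G$-stable fusion subcategory $\tilde\C\subset\C$, I would restrict the $G$-action and observe that $\tilde\C^G$ is a full subcategory of $\C^G=\D$; since $\Phi$ detects tensor products, duals and direct summands, $\tilde\C^G$ is a fusion subcategory, and it contains $\Vec^G=\E$ because $\Vec\subset\tilde\C$. Conversely, from a fusion subcategory $\tilde\D\subset\D$ with $\tilde\D\supset\E$ I would form the category of $A$-modules in $\tilde\D$; it is a full subcategory $\tilde\D\bt_\E\Vec$ of $\C$, it is a fusion subcategory (closedness under $\ot_A$ uses Proposition~\ref{splitting}), and it is $G$-stable because $G$ acts trivially on underlying objects. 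That these assignments are mutually inverse is Proposition~\ref{indeed_deeq} applied to $\tilde\C$ (with its restricted $G$-action) and to $\tilde\D$ (a fusion category over $\E$ with $\E\to\tilde\D$ fully faithful); here one uses that the equivalence $F\colon\tilde\C\iso(\tilde\C^G)_G$ of \S\ref{forgetequiv} is the restriction of the corresponding equivalence for $\C$. Monotonicity holds in both directions ($\tilde\C_1\subset\tilde\C_2$ gives $\tilde\C_1^G\subset\tilde\C_2^G$, and applying $(-)\bt_\E\Vec$ yields the converse), so the bijection is a lattice isomorphism.

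For (ii): I would use that, by the analysis in the proof of Proposition~\ref{equidimprop}, every simple object of $\D=\C^G$ lies over a single $G$-orbit of simple objects of $\C$ under $\Phi$. Hence if $\C=\bigoplus_{b\in B}\C_b$ has each $\C_b$ $G$-stable, every simple of $\D$ belongs to exactly one $\D_b:=(\C_b)^G$, so $\D=\bigoplus_b\D_b$; the grading axiom follows by decomposing a simple summand of $Y_1\ot Y_2$ after applying the tensor functor $\Phi$, and the grading is trivial on $\E$ since $\Vec\subset\C_1$. In the reverse direction, if $\D=\bigoplus_b\D_b$ is trivial on $\E$ then $A\in\E\subset\D_1$, so every $A$-module in $\D$ splits along the $B$-grading of its underlying object; setting $\C_b:=\{M\in\C:\Psi(M)\in\D_b\}$ produces a $G$-stable grading of $\C$ (grading axiom again via Proposition~\ref{splitting}). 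The two constructions are inverse under $\C\cong(\C^G)_G$, and each preserves faithfulness because $\C_b\neq 0$ if and only if $\D_b=(\C_b)^G\neq 0$ (a simple object of $\C_b$ induces a nonzero object of $(\C_b)^G$; conversely $\Phi$ is faithful).

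For (iii): the key observation is that, since $G$ acts on $\C$ by braided autoequivalences, $\Phi\colon\C^G\to\C$ is a faithful \emph{braided} tensor functor, so two objects of $\D=\C^G$ centralize each other precisely when their underlying objects do in $\C$. Given a $G$-stable fusion subcategory $\tilde\C$ (with $\tilde\D=\tilde\C^G$ as in (i)), $\tilde\C'$ is again $G$-stable, and for $X\in\D$ one has $X\in\tilde\D'$ iff $\Phi(X)$ centralizes $\Phi(Y)$ for every $Y\in\tilde\C^G$; by Lemma~\ref{eqtztion}(iii) each object of $\tilde\C$ is a summand of $\Ind(Z)=\bigoplus_{g}F_g(Z)$ with $Z\in\tilde\C$, and $\Ind(Z)\in\tilde\C^G$ by $G$-stability, so, the centralizer of $\Phi(X)$ being closed under summands, $X\in\tilde\D'$ iff $\Phi(X)\in\tilde\C'$, i.e. $\tilde\D'=(\tilde\C')^G$. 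Translating via (i) gives $(\tilde\D\bt_\E\Vec)'=\tilde\D'\bt_\E\Vec$. I expect the only genuinely substantive step to be this last observation linking centralizers upstairs and downstairs; the remaining work is the routine but somewhat lengthy verification in (i) that $\tilde\C^G$ and $\tilde\D\bt_\E\Vec$ are bona fide fusion subcategories and that the inverse equivalences of Proposition~\ref{indeed_deeq} are compatible with the ambient inclusions.
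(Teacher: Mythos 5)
Your proof is correct and follows essentially the same route as the paper's: part (i) via the equivariantization/de-equivariantization dictionary (which the paper simply declares clear), part (ii) via the observation that $M_1\ot_A M_2$ is a direct summand of $M_1\ot M_2$ (Proposition~\ref{splitting}), and part (iii) via the faithful braided forgetful functor together with Lemma~\ref{eqtztion}(iii). The only difference is that you spell out details (the splitting of $A$-modules along the grading, faithfulness, mutual inverseness) that the paper leaves implicit.
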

\begin{proof} 
Statement (i) is clear. 
To prove (ii), we will construct the inverse map 
\[
\{\mbox{Gradings of }\D\mbox{ trivial on }\E \} \to \{G\mbox{-invariant gradings of }\C\}.
\]
Namely, given a grading $\D =\oplus_{b\in B}\,\D_b$ trivial
on $\E$ we set 
\[
\C_b:=(\D_b)_G:=\{\mbox{the category of }A\mbox{-modules in }\D_b\},
\]
where $A$ is the regular algebra of $\E$. We have to show that if $M_1\in\C_{b_1}$
and $M_2\in\C_{b_2}$ then $M_1\ot_A M_2\in\C_{b_1b_2}$. This follows from the fact
that $M_1\ot_A M_2$  is a direct summand of  $M_1\ot M_2$.

Finally, let us prove (iii).
Let $\tilde\C\subset\C$ be a $G$-stable fusion subcategory.
Clearly $(\tilde\C')^G$ centralizes $\tilde\C^G$, so 
$(\tilde\C')^G\subset (\tilde\C^G)'$. Let $\Phi :\C^G\to\C$ be the forgetful
functor. To prove that $(\tilde\C^G)'\subset (\tilde\C')^G $, we have to show that if 
$X\in\C^G$ centralizes $\tilde\C^G$ then $\Phi (X)$ centralizes each object $Y\in\tilde\C$. 
By Lemma \ref{eqtztion}(iii), $Y$ is a direct summand of $\Phi (Z)$ for some $Z\in\tilde\C^G$. 
Since $X$ centralizes $Z$ we see that $\Phi (X)$ centralizes $\Phi (Z)$. So $\Phi (X)$ centralizes $Y$.
\end{proof}

\begin{corollary} \label{newc}
(i) The braided fusion category $\D\bt_{\E}\Vec$ is non-degenerate if and only if $\D'=\E$.

(ii) $\D\bt_{\E}\Vec$ is symmetric (resp. Tannakian)  if and only if $\D$ is.

(iii) The isomorphism from Proposition \ref{lattice isomorphism} maps the set of symmetric (resp. Tannakian) subcategories of $\D$ containing $\E$ onto the set of $G$-stable symmetric (resp. Tannakian)  subcategories of $\D\bt_{\E}\Vec$. 
\end{corollary}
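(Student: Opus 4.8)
The plan is to deduce all three statements from Proposition \ref{lattice isomorphism} together with the dimension formulas and the characterizations of non-degeneracy, symmetry, and Tannakian-ness established earlier. For (i), recall from Proposition \ref{Muger1prime} that a braided fusion category is non-degenerate if and only if its M\"uger center is trivial. Applying Proposition \ref{lattice isomorphism}(iii) with $\tilde\C=\C:=\D\bt_\E\Vec$, we get $\C'=(\C^G)'\bt_\E\Vec=\D'\bt_\E\Vec$. Now $\C'=\Vec$ if and only if $\D'\bt_\E\Vec=\Vec$, which (by Proposition \ref{tens func}(ii) and the dimension count $\FPdim(\D'\bt_\E\Vec)=\FPdim(\D')/|G|$ from Proposition \ref{equidimprop}) holds if and only if $\FPdim(\D')=|G|=\FPdim(\E)$, i.e. (since $\E\subset\D'$ always, as $\E\to\D'$ is the defining central functor) if and only if $\D'=\E$. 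This uses that a full tensor subcategory of the same Frobenius-Perron dimension coincides with the whole category.

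For (ii), first handle the symmetric case. The category $\C=\D\bt_\E\Vec$ is symmetric iff $\C'=\C$; by the computation above $\C'=\D'\bt_\E\Vec$, and the inclusion $\C'\subset\C$ corresponds under Proposition \ref{lattice isomorphism}(i) to the inclusion $\D'\subset\D$ (both are $G$-stable fusion subcategories containing $\E$, and de-equivariantization is an isomorphism of lattices). Hence $\C'=\C$ iff $\D'=\D$, i.e. $\C$ is symmetric iff $\D$ is. For the Tannakian case, suppose $\D$ (hence $\C$) is symmetric. The free-module functor $\mathrm{Free}:\D\to\C=\D_G$ of \S\ref{Thetensorfunc} is a braided (hence surjective, by Proposition \ref{Forg surj}-type reasoning — actually surjectivity is clear since every object of $\D_G$ is a quotient of a free module, or one invokes that $\mathrm{Free}$ is dominant) tensor functor between symmetric categories, so by Lemma \ref{tan-eq} $\D$ is Tannakian iff $\C$ is Tannakian. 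Alternatively, and perhaps more cleanly: if $\C$ is Tannakian it has a fiber functor $\C\to\Vec$, whose composition with $\mathrm{Free}$ gives a fiber functor on $\D$; conversely if $\D$ is Tannakian, write $\D=\Rep(\tilde G)$ and identify $\C=\D\bt_\E\Vec=\Rep(\tilde G)\bt_{\Rep(G)}\Vec=\Rep(N)$ where $N=\ker(\tilde G\to G)$ (the de-equivariantization of $\Rep(\tilde G)$ along a surjection $\tilde G\to G$ is $\Rep(N)$), which is Tannakian.

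For (iii), combine Proposition \ref{lattice isomorphism}(i) with (ii) of this corollary applied relatively: a fusion subcategory $\tilde\D$ of $\D$ with $\E\subset\tilde\D$ corresponds to the $G$-stable subcategory $\tilde\C=\tilde\D\bt_\E\Vec$ of $\C$, and by part (ii) applied to $\tilde\D$ viewed as a (braided) fusion category over $\E$ — so that $\tilde\C=\tilde\D\bt_\E\Vec$ — the subcategory $\tilde\D$ is symmetric (resp. Tannakian) if and only if $\tilde\C$ is. This gives the claimed bijection on symmetric (resp. Tannakian) subcategories. The main obstacle I anticipate is the Tannakian direction of (ii): one must be careful that "$\D$ symmetric and $\C$ Tannakian $\Rightarrow$ $\D$ Tannakian" really does follow — the cleanest route is the fiber-functor argument via pullback along the braided tensor functor $\mathrm{Free}:\D\to\C$, using that $\mathrm{Free}$ is braided so a fiber functor on $\C$ (a braided tensor functor $\C\to\Vec$) pulls back to one on $\D$; the reverse uses surjectivity of $\mathrm{Free}$ and Lemma \ref{tan-eq}, or the explicit $\Rep(N)$ description. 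Everything else is a bookkeeping consequence of the lattice isomorphism and the dimension formula.
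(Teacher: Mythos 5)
Your proof is correct and follows essentially the same route as the paper's: Proposition~\ref{lattice isomorphism}(iii) (compatibility of the lattice isomorphism with centralizers) together with the M\"uger-center criterion for non-degeneracy handles (i) and the symmetric case of (ii), Lemma~\ref{tan-eq} applied to the surjective braided free-module functor $\D\to\D\bt_\E\Vec$ handles the Tannakian case, and (iii) is just (ii) applied to subcategories via Proposition~\ref{lattice isomorphism}(i). The only cosmetic difference is in (i), where the paper passes back to the equivariant side via Lemma~\ref{eqtztion}(iii) ($\C'=\Vec$ iff $(\C')^G=\Rep(G)$) instead of your Frobenius--Perron dimension count; both steps are immediate.
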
 
\begin{proof} 
Let $\C:=\D\bt_{\E}\Vec$, so $\D =\C^G$.
By Theorem \ref{Muger1}, $\C$ is non-degenerate if and only if $\C'=\Vec$. 
On the other hand, by Lemma \ref{eqtztion}(iii),  $\C'=\Vec$ if and only if $(\C')^G$ equals $\Vec^G$,
which is the same as $\Rep(G)$. Now statement (i) follows from Proposition \ref{lattice isomorphism}(ii). 

To prove (ii) in the symmetric case, use Proposition \ref{lattice isomorphism}(ii).
In the Tannakian case use Lemma \ref{tan-eq}. Clearly (iii) follows from (ii).
\end{proof}

\subsubsection{Some canonical equivalences}  \label{catdiam}

\begin{lemma}
\label{3rd iso}
Let $\D$ be a fusion category over the Tannakian category $\E:=\Rep (G)$
such that the functor $\E\to\D$ is fully faithful
and let  $\F$ be a fusion subcategory of $\E$.
Let  $\tilde{\D} := \D \bt_\F \Vec$ and
$\tilde{\E} := \E \bt_\F \Vec$.
\begin{enumerate}
\item[(i)] There is a $G$-equivariant  tensor equivalence
\begin{equation}
\label{3rd equiv}
\tilde{\D}\bt_{\tilde{\E}} \Vec \cong \D\bt_{\E} \Vec.
\end{equation}
\item[(ii)] If $\D$ is a braided fusion category over  $\E$ 
(so both sides of  \eqref{3rd equiv}  are braided)
then \eqref{3rd equiv} is an equivalence of  braided fusion categories. 
\end{enumerate}
\end{lemma}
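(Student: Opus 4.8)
The plan is to reduce the statement to two successive applications of the equivalence between equivariantization and de-equivariantization, i.e.\ Proposition~\ref{indeed_deeq} (and, for (ii), its braided counterpart Proposition~\ref{br-indeed_deeq}).

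First I would pass to the ``group side''. Set $\C:=\D\bt_\E\Vec$; by Proposition~\ref{indeed_deeq} this is a fusion category with $G$-action and $\D\cong\C^G$ as fusion categories over $\E=\Rep(G)$. Since $\F$ is a fusion subcategory of $\Rep(G)$ there is a normal subgroup $N\triangleleft G$ with $\F=\Rep(G/N)$, the pullback of $\Rep(G/N)$ along $G\to G/N$. Restricting the $G$-action on $\C$ to $N$ yields a fusion category $\C^N$, which is naturally a fusion category over $\Vec^N=\Rep(N)$ and carries a residual action of $G/N$ with $(\C^N)^{G/N}\cong\C^G\cong\D$. The crucial point, which I would verify using the coherence of the constructions of \S\ref{isotsub}, is that under this identification the central functor $\F=\Rep(G/N)\hookrightarrow\E\to\D$ coincides with the canonical central functor $\Vec^{G/N}=\Rep(G/N)\to(\C^N)^{G/N}=\D$; applying the same remark to $\C=\Vec$ with trivial action shows at the same time that $\tilde\E=\E\bt_\F\Vec$ is equivalent (as a symmetric fusion category) to $\Rep(N)=\Vec^N$.

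Granting these compatibilities, Proposition~\ref{indeed_deeq} gives a $G/N$-equivariant tensor equivalence
\[
\tilde\D=\D\bt_\F\Vec\cong(\C^N)^{G/N}\bt_{\Rep(G/N)}\Vec\cong\C^N,
\]
under which the canonical central functor $\tilde\E=\Rep(N)\to\tilde\D$ is identified with $\Vec^N=\Rep(N)\to\C^N$; applying Proposition~\ref{indeed_deeq} once more then yields
\[
\tilde\D\bt_{\tilde\E}\Vec\cong\C^N\bt_{\Vec^N}\Vec\cong(\C^N)_N\cong\C\cong\D\bt_\E\Vec.
\]
For the $G$-equivariance asserted in (i) one observes that the action of $G$ on $\tilde\D\bt_{\tilde\E}\Vec$ is the one assembled from the $N$-action produced by de-equivariantizing along $\tilde\E=\Rep(N)$ and the residual $G/N$-action on $\tilde\D$; since $N$ is normal these glue to an action of $G$, and the composite chain of equivalences above intertwines it with the original $G$-action on $\C\cong\D\bt_\E\Vec$. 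This proves (i). For (ii) one runs the same chain with Proposition~\ref{br-indeed_deeq} in place of Proposition~\ref{indeed_deeq}: if $\D$ is a braided category over $\E$, each intermediate category is a braided category over the relevant Tannakian subcategory, every de-equivariantization above carries the braiding of Proposition~\ref{br-indeed_deeq}(i), and each equivalence in the chain is braided by Proposition~\ref{br-indeed_deeq}(ii).

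The main obstacle is the ``crucial point'' above: one must check that after each round of the equivalence of Theorem~\ref{princprinc} the relevant central (braided) functors match the canonical ones — equivalently, that de-equivariantization is genuinely transitive — and that the $N$- and $G/N$-structures assemble into the correct $G$-action. These are routine but delicate naturality checks, and they are where the coherence of the equivariantization/de-equivariantization machinery of \S\ref{isotsub} is really used. (Alternatively one can argue on the module side: $\D\bt_\E\Vec$ is the category of $A_\E$-modules in $\D$ for the regular algebra $A_\E=\Fun(G)$ of $\E$; the inclusion $A_\F=\Fun(G/N)\hookrightarrow A_\E$ makes $A_\E$ a commutative $A_\F$-algebra, hence a commutative algebra $\bar A$ in $\tilde\D=\D\bt_\F\Vec$ (the category of $A_\F$-modules in $\D$), and $\bar A$-modules in $\tilde\D$ are the same as $A_\E$-modules in $\D$; it then remains to identify $\bar A$ with the regular algebra $A_{\tilde\E}$ of $\tilde\E\cong\Rep(N)$, which amounts to the elementary observation that $\Fun(G)$, viewed as a $G$-equivariant vector bundle on $G/N$, has fiber at the base point the regular algebra $\Fun(N)$ of $N$.)
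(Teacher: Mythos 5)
Your proposal is correct and follows essentially the same route as the paper's own proof: both identify $\D$ with $\C^G$ for $\C:=\D\bt_\E\Vec$, write $\F=\Rep(G/N)$, use the two-step equivariantization $\C^G\cong(\C^N)^{G/N}$ to get $\tilde{\D}=\D\bt_\F\Vec\cong\C^N$ and $\tilde{\E}\cong\Rep(N)$, and then de-equivariantize once more to recover $\C\cong\D\bt_\E\Vec$, with the braided case handled by the braided version of the same machinery. The compatibility facts you single out as the ``crucial point'' are exactly what the paper asserts without further detail, so your write-up is, if anything, more explicit than the original.
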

\begin{proof}
We will derive the statements of the lemma from the corresponding results about
equivariantizations.  Namely, let $\C$ be a fusion category with an action of a group $G$
and let $H$ be a normal subgroup of $G$.  The quotient group  $G/H$ acts
on $\C^H$ and there is an equivalence
\begin{equation} 
\label{2step equiv}
\C^G \cong (\C^H )^{G/H}  
\end{equation}
of fusion categories over $\Rep(G)$.  Clearly, if $\C$ is a {\em braided} fusion category and 
the action of $G$ respects its braiding then \eqref{2step equiv} is an equivalence of 
braided categories over $\Rep(G)$.

Set $\D =\C^G,\, \F =\Rep(G/H)$, then
$\D \bt_\F \Vec = \C^H$. Taking de-equivariantizations 
of both sides of \eqref{2step equiv} we obtain a $G$-equivariant 
equivalence \eqref{3rd equiv}.
\end{proof} 

\begin{remark}    \label{Psifunctor}
In the situation of Lemma~\ref{3rd iso} one gets a canonical $G$-equivariant tensor functor
\begin{equation} \label{functorPsi}
 \Psi : \D \bt_\F \Vec\to\D\bt_{\E} \Vec
 \end{equation}
by composing the
tensor functor $\D \bt_\F \Vec=\tilde{\D}\to\tilde{\D}\bt_{\tilde{\E}} \Vec$
with the equivalence~\eqref{3rd equiv}. If $\D$ is a braided fusion category over  $\E$ 
then $ \Psi$ is braided.
\end{remark}

\begin{proposition} \label{diamond2isomorphism} 
\begin{enumerate}
\item[(i)] Let $G$ be a finite group and $\D$ a fusion category over the Tannakian category 
$\E:=\Rep (G)$ such that the functor $\E\to\D$ is fully faithful. Let $\D_1\subset\D$ be a fusion subcategory. Then there is a canonical $G$-equivariant tensor equivalence 
\begin{equation}
\label{diaomd2eqn}
\D_1\boxtimes_{\D_1\cap \E } \Vec \to (\D_1 \vee \E)\boxtimes_\E \Vec.
\end{equation}
\item[(ii)] Suppose that $\D$ is a {\em braided\,} fusion category over $\E$ (so both sides of 
\eqref{diaomd2eqn} are braided). Then the equivalence \eqref{diaomd2eqn} is braided.
\end{enumerate}
\end{proposition}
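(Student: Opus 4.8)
The plan is to write down the asserted equivalence \eqref{diaomd2eqn} explicitly as a composite of two known functors, and then recognize it as an equivalence by combining surjectivity with a Frobenius--Perron dimension count. Set $\F:=\D_1\cap\E$, a fusion subcategory of $\E=\Rep(G)$. Since the central functor $\E\to\D$ is fully faithful, so is $\F\to\D_1$, and $\D_1$ as well as $\D_1\vee\E$ are fusion categories over $\F$ (braided ones in case (ii)). First I would apply the de-equivariantization $2$-functor (Propositions~\ref{indeed_deeq} and \ref{br-indeed_deeq}) to the inclusion $\D_1\hookrightarrow\D_1\vee\E$ of tensor categories over $\F$, getting a tensor (braided) functor $a\colon \D_1\boxtimes_\F\Vec\to(\D_1\vee\E)\boxtimes_\F\Vec$ whose image is the fusion subcategory $\D_1\boxtimes_\F\Vec$. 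Then I would compose $a$ with the canonical functor $\Psi\colon(\D_1\vee\E)\boxtimes_\F\Vec\to(\D_1\vee\E)\boxtimes_\E\Vec$ of Remark~\ref{Psifunctor} (applied to the fusion category $\D_1\vee\E$ over $\E$ and the subcategory $\F\subset\E$). The composite $F:=\Psi\circ a$ is the desired functor; it is $G$-equivariant and, in case (ii), braided, because $a$ and $\Psi$ both are (Remark~\ref{Psifunctor}, Proposition~\ref{br-indeed_deeq}).

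By Proposition~\ref{tens func}(iii) it then suffices to show $F$ is surjective and that the two sides of \eqref{diaomd2eqn} have equal Frobenius--Perron dimension. For the dimensions, Proposition~\ref{equidimprop} gives $\FPdim(\D_1\boxtimes_\F\Vec)=\FPdim(\D_1)/\FPdim(\F)$ and $\FPdim((\D_1\vee\E)\boxtimes_\E\Vec)=\FPdim(\D_1\vee\E)/\FPdim(\E)$. Since the central functor $\E\to\D$ forces $X\otimes Y\cong Y\otimes X$ for all $X\in\O(\D_1),\,Y\in\O(\E)$, condition \eqref{weakercom} holds for $\D_1$ and $\E$, so Lemma~\ref{diamond dimensions} yields $\FPdim(\D_1)\FPdim(\E)=\FPdim(\D_1\vee\E)\FPdim(\F)$; dividing, both expressions equal $\FPdim(\D_1)/\FPdim(\F)$.

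For surjectivity of $F$, I would use that by Remark~\ref{Psifunctor} and Lemma~\ref{3rd iso} the functor $\Psi$ is, up to the equivalence \eqref{3rd equiv}, the free-module functor $\tilde\D\to\tilde\D\boxtimes_{\tilde\E}\Vec$ with $\tilde\D=(\D_1\vee\E)\boxtimes_\F\Vec$ and $\tilde\E=\E\boxtimes_\F\Vec$; such a functor is surjective (every simple module is a summand of a free one), and its restriction to $\tilde\E$ factors through $\tilde\E\to\tilde\E\boxtimes_{\tilde\E}\Vec=\Vec$, so $\Psi(\tilde\E)=\Vec$. Next I would establish the identity $(\D_1\vee\E)\boxtimes_\F\Vec=(\D_1\boxtimes_\F\Vec)\vee(\E\boxtimes_\F\Vec)$: every simple of the left side is a summand of a free module $A\otimes Z$ over the regular algebra $A$ of $\F$ with $Z\in\O(\D_1\vee\E)$, hence, using \eqref{weakercom}, of $A\otimes X\otimes Y$ with $X\in\O(\D_1),\,Y\in\O(\E)$, and (since $A$ is central) $A\otimes X\otimes Y$ is a summand of $(A\otimes X)\otimes_A(A\otimes Y)$. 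Because $\mathrm{Image}(a)=\D_1\boxtimes_\F\Vec$ generates $\tilde\D$ together with $\tilde\E$, and $\Psi$ is surjective with $\Psi(\tilde\E)=\Vec$, applying $\Psi$ to $\mathrm{Image}(a)$ produces a generating set of $(\D_1\vee\E)\boxtimes_\E\Vec$, i.e.\ $F$ is surjective. This finishes the proof by Proposition~\ref{tens func}(iii).

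The hard part will be the bookkeeping: keeping track of the $G$-action (which acts through the quotient of $G$ corresponding to $\F$) across all the de-equivariantizations, and verifying that $\vee$ (and, implicitly, $\cap$) commute with de-equivariantization by $\F$; for these I would lean on the lattice isomorphism of Proposition~\ref{lattice isomorphism}. As an alternative, more structural route, one can reduce to the special case $\D_1\cap\E=\Vec$: de-equivariantizing the whole configuration by $\F$ and invoking Lemma~\ref{3rd iso} for the iterated de-equivariantization reduces the general statement to that case, in which Lemma~\ref{tens fun from product}, together with Lemma~\ref{diamond dimensions} and Proposition~\ref{tens func}(ii), gives directly a (braided) equivalence $\D_1\boxtimes\E\xrightarrow{\sim}\D_1\vee\E$; de-equivariantizing by $\E$ then turns $(\D_1\vee\E)\boxtimes_\E\Vec$ into $\D_1\boxtimes(\E\boxtimes_\E\Vec)=\D_1$, the $G$-action on the target being visibly trivial, which matches the $G$-action on $\D_1\boxtimes_{\D_1\cap\E}\Vec=\D_1\boxtimes_\Vec\Vec$.
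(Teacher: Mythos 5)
Your proposal is correct, and it constructs exactly the same canonical functor as the paper does: the composite of the de-equivariantization of the inclusion $\D_1\hookrightarrow\D_1\vee\E$ over $\F=\D_1\cap\E$ with the functor $\Psi$ of Remark~\ref{Psifunctor} (this is the paper's composition \eqref{eqstar}). Where you differ is in how the composite is shown to be an equivalence. The paper never counts dimensions: it first isolates the ``easy'' case $\D_1\cap\E=\Vec$, where the tensor functor $\D_1\boxtimes\E\to\D_1\vee\E$ is surjective and fully faithful by Lemma~\ref{tens fun from product}, hence an equivalence, so that $(\D_1\vee\E)\boxtimes_\E\Vec\simeq(\D_1\boxtimes\E)\boxtimes_\E\Vec=(\D_1^G)_G=\D_1$; the general case is then reduced to this one by de-equivariantizing everything by $\F$ and invoking Proposition~\ref{lattice isomorphism}(i) together with Lemma~\ref{3rd iso}. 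Your main route instead verifies the equivalence numerically via Proposition~\ref{tens func}(iii): surjectivity (through the identification of $\Psi$ with a free-module functor, $\Psi(\tilde\E)\subset\Vec$, and the join computation $\tilde\D=\tilde\D_1\vee\tilde\E$, which is just Proposition~\ref{lattice isomorphism}(i)) plus equality of Frobenius--Perron dimensions from Proposition~\ref{equidimprop} and Lemma~\ref{diamond dimensions}, where you correctly note that condition \eqref{weakercom} holds because $\E$ is central in $\D$. This works, at the cost of extra Frobenius--Perron input and a somewhat fiddlier surjectivity argument; the paper's reduction buys the sharper statement that in the trivial-intersection case the target is identified with $\D_1$ itself (with visibly trivial $G$-action), which also makes the equivariance bookkeeping transparent. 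Note finally that the ``alternative, more structural route'' you sketch at the end is essentially the paper's proof, so you have in effect recovered both arguments; just make sure, for part (ii), that all central structures in play (on $\D_1$ and $\D_1\vee\E$ over $\F$, and on $\D_1\vee\E$ over $\E$) are the restrictions of the given one on $\D$, so that your $a$ and $\Psi$ are indeed morphisms of (braided) categories over $\F$, respectively $\E$.
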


\begin{rems}     \label{notrem}
\begin{enumerate}
\item[(i)] The Tannakian subcategory $\D_1\cap \E\subset\E$ equals $\Rep (G_1)$ for some quotient group $G_1$ of $G$.
\item[(ii)] Recall that $(\D_1 \vee \E)\boxtimes_\E\Vec$ is the notation for the de-equivariantized
category $(\D_1 \vee \E)_G$. Similarly, $\D_1\boxtimes_{\D_1\cap \E } \Vec:=(\D_1)_{G_1}$.
So $G$ acts on both sides of \eqref{diaomd2eqn} (the action on the r.h.s. factors through $G_1$).
\end{enumerate}
\end{rems}

In the proof below we will be using the notation from Remarks \ref{notrem}.

\begin{proof}
Define the functor \eqref{diaomd2eqn} to be the composition
\begin{equation}  \label{eqstar}
\D_1\boxtimes_{\D_1\cap \E } \Vec {\buildrel{\Phi}\over{\longrightarrow}}
(\D_1 \vee \E)\boxtimes_{\D_1\cap \E } \Vec 
{\buildrel{\Psi}\over{\longrightarrow}}\ (\D_1 \vee \E)\boxtimes_\E \Vec, 
\end{equation}
where $\Phi$ comes from the embedding $\D_1\hookrightarrow\D_1 \vee \E$ of
fusion categories over $\D_1 \cap \E$ and $\Psi$ is the functor~\eqref{functorPsi}.
Both $\Phi$ and $\Psi$ are tensor functors, and in the situation of (ii) they are braided.
Clearly $\Phi$ is $G_1$-equivariant and $\Psi$ is $G$-equivariant, so $\Psi\circ\Phi$ is
$G$-equivariant. It remains to prove that $\Psi\circ\Phi$ is an equivalence.

First consider the easy case where $\D_1\cap \E=\Vec$. Then we have to show that the composition
$\D_1\hookrightarrow\D_1\vee\E\to (\D_1\vee\E )\bt_{\E}\Vec$ is an equivalence. Since $\D$ is
over $\E$ we have a canonical tensor functor $\D_1\bt\E\to\D_1\vee\E$ given by
the tensor product of $\D$. It is obviously surjective and it is fully faithful 
by Lemma~\ref{tens fun from product}.
So it is an equivalence and $(\D_1\vee\E )\bt_{\E}\Vec=(\D_1\bt\E )\bt_{\E}\Vec$. But $\D_1\bt\E=\D_1^G$
(here we consider the trivial action of $G$ on $\D_1$). 
So $(\D_1\bt\E)\bt_{\E}\Vec=(\D_1^G)_G=\D_1$. 

Now consider the general case. Set $\tilde{\D}_1 := \D_1\boxtimes_{\D_1\cap \E } \Vec$
and $\tilde{\E}:= \E \boxtimes_{\D_1\cap \E } \Vec$. Then 
$\tilde{\D}_1 \cap \tilde{\E} =({\D}_1 \cap {\E}) \bt_{{\D}_1 \cap {\E}} \Vec =\Vec$,
so we are in the easy situation considered above. Therefore  the composition
\begin{equation} \label{eqsta2}
\tilde\D_1\to\tilde\D_1 \vee \tilde\E\to
(\tilde\D_1 \vee \tilde\E)\boxtimes_{\tilde\E} \Vec
\end{equation}
is an equivalence. By Proposition~\ref{lattice isomorphism}(i),
$\tilde\D_1 \vee \tilde\E= (\D_1 \vee \E) \boxtimes_{\D_1\cap \E} \Vec$.  
Clearly $\D_1 \vee \E$ is a fusion
category over  $\E$, so applying  Lemma~\ref{3rd iso} to
the Tannakian subcategory $\F:=\D_1\cap \E\subset\E$ we identify 
$(\tilde\D_1 \vee \tilde\E)\boxtimes_{\tilde\E} \Vec$ with 
$(\D_1 \vee \E) \boxtimes_\E \Vec$. Thus diagram \eqref{eqsta2} identifies with diagram \eqref{eqstar}.
So the composition in \eqref{eqstar} is an equivalence.
\end{proof}

\begin{corollary}    \label{diamondcor}
Let $\D$ be a braided fusion category, $\D_1\subset\D$ a fusion subcategory,
and $\E\subset\D$ a Tannakian fusion subcategory equipped with a fiber functor $F: \E \to \Vec$.  
Let $G_F: = \Aut (F)$, so $\E:=\Rep (G_F)$. Then
\begin{enumerate}
\item[(i)]
there is a canonical $G_F$-equivariant tensor equivalence 
$$\D_1\boxtimes_{\D_1\cap \E } \Vec \iso (\D_1 \vee \E)\boxtimes_\E \Vec,$$
where $\boxtimes_\E \Vec$ and $\boxtimes_{\D_1\cap \E } \Vec$ correspond
to the fiber functors $F: \E \to \Vec$ and $F|_{\D_1\cap\E}: \D_1\cap\E\to \Vec$, respectively;
\item[(ii)]
if  $\E$ centralizes $\D_1$ then this equivalence is braided.
\end{enumerate}
\end{corollary}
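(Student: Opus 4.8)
The plan is to deduce both parts of the corollary from Proposition~\ref{diamond2isomorphism}; no new computation will be needed, and the work consists in checking that the hypotheses of that proposition hold in the present, slightly more general, setting and in matching up the notation for de-equivariantizations.

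For part (i), the first step is to make $\D$ into a fusion category over $\E$ using only its braiding: composing the braided functor $\D\to\Z(\D)$, $X\mapsto(X,c_{-,X})$, from \S\ref{centre} with the inclusion $\E\hookrightarrow\D$ gives a central functor $\E\to\Z(\D)$ whose underlying functor $\E\to\D$ is the (fully faithful) inclusion. Since the statement has identified $\E$ with $\Rep(G_F)$ through the chosen fiber functor $F$, and since $\D_1\cap\E=\Rep(G_1)$ for a quotient $G_1$ of $G_F$ with $F|_{\D_1\cap\E}$ the forgetful functor of $\Rep(G_1)$ (Remark~\ref{notrem}(i)), the de-equivariantizations $\boxtimes_\E\Vec$ and $\boxtimes_{\D_1\cap\E}\Vec$ appearing in the corollary are exactly those in Proposition~\ref{diamond2isomorphism}. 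Then Proposition~\ref{diamond2isomorphism}(i) directly yields the canonical $G_F$-equivariant tensor equivalence $\D_1\boxtimes_{\D_1\cap\E}\Vec\iso(\D_1\vee\E)\boxtimes_\E\Vec$.

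For part (ii) the plan is to pass to the fusion subcategory $\tilde\D:=\D_1\vee\E\subset\D$ and apply Proposition~\ref{diamond2isomorphism}(ii) there. The key step is to observe that, under the hypothesis that $\E$ centralizes $\D_1$, the category $\tilde\D$ is a \emph{braided} fusion category over $\E$: writing $\E'$ for the centralizer of $\E$ inside $\D$, it is a fusion subcategory which contains $\E$ (since $\E$, being Tannakian, is symmetric) and contains $\D_1$ (by hypothesis), hence contains $\D_1\vee\E=\tilde\D$; thus $\E$ centralizes $\tilde\D$, the inclusion $\E\hookrightarrow\tilde\D$ factors through $\tilde\D'$, and this endows $\tilde\D$ with the structure of Definition~\ref{overdef}. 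Since $\D_1\cap\E$ and $\D_1\vee\E$ are the same whether computed in $\D$ or in $\tilde\D$, and $\D_1\vee\E=\tilde\D$, Proposition~\ref{diamond2isomorphism}(ii) applied to $\tilde\D$ produces a braided equivalence $\D_1\boxtimes_{\D_1\cap\E}\Vec\iso(\D_1\vee\E)\boxtimes_\E\Vec$.

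The final step is to check that this braided equivalence is the same tensor functor as the one obtained in part (i). This holds because the equivalence of Proposition~\ref{diamond2isomorphism} is the composition \eqref{eqstar} of the functor induced by the embedding $\D_1\hookrightarrow\D_1\vee\E$ over $\D_1\cap\E$ with the functor $\Psi$ of Remark~\ref{Psifunctor} attached to $\F=\D_1\cap\E\subset\E$, and every category and canonical structure entering this construction ($\D_1$, $\D_1\vee\E$, $\E$, $\D_1\cap\E$, and the structures over $\E$ and over $\D_1\cap\E$ coming from the braiding) already lives inside $\tilde\D$; hence the construction is insensitive to replacing $\D$ by $\tilde\D$. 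I expect the main (though still mild) obstacle to be exactly this identification, together with the verification in part (ii) that the weaker hypothesis ``$\E$ centralizes $\D_1$'' --- rather than ``$\E\subseteq\D'$'' as in Proposition~\ref{diamond2isomorphism}(ii) --- already suffices to make $\D_1\vee\E$ braided over $\E$: this is the only point where symmetry of the Tannakian category $\E$ is genuinely used, and it is what allows the corollary to assume less than the proposition. Everything else is formal bookkeeping with the equivariantization/de-equivariantization dictionary of \S\ref{isotsub}.
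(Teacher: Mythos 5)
Your proposal is correct and follows essentially the same route as the paper: part (i) is obtained by making $\D$ a fusion category over $\E$ via its braiding and citing Proposition~\ref{diamond2isomorphism}(i), and part (ii) by observing that when $\E$ centralizes $\D_1$ the subcategory $\D_1\vee\E$ is a braided fusion category over $\E$ and applying Proposition~\ref{diamond2isomorphism}(ii) to it. Your additional checks --- that $\E\subset(\D_1\vee\E)'$ because $\E$ is symmetric and centralizes $\D_1$, and that the equivalence produced inside $\D_1\vee\E$ coincides with the canonical one from (i) since the construction \eqref{eqstar} takes place entirely within $\D_1\vee\E$ --- simply make explicit what the paper's terse proof leaves implicit.
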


\begin{proof}
$\D$ has a structure of fusion category over $\E$:
namely, define the braided functor $\E\to\Z (\D )$ to be the composition 
$\E\hookrightarrow\D{\buildrel{\Phi}\over{\longrightarrow}}\Z(\D)$,
where $\Phi$ comes from the braided structure on $\D$. 
So statement (i) immediately follows from Proposition~\ref{diamond2isomorphism}(i).
If  $\E$ centralizes $\D_1$ then 
$\D_1 \vee \E$ is a braided fusion category over $\E$, so statement (ii) follows from 
Proposition~\ref{diamond2isomorphism}(ii).
\end{proof}

\subsection{Tensor categories $\D$ equipped with a tensor functor $\Rep(G)\to\D$} \label{sketchy}
Fix a finite group $G$.  By Theorem~\ref{princprinc}(i), the 2-category of Karoubian tensor categories $\D$ equipped with a {\em central\,} functor $F:\Rep (G)\to\D$ is 2-equivalent to the 2-category of Karoubian tensor categories $\C$ equipped with $G$-action. The equivalence is given by equivariantization and 
de-equivariantization, i.e., $\C=\D_G\,$, $\D=\C^G$.

In this subsection 
we briefly discuss the situation where $F:\Rep (G)\to\D$ is not a central functor but merely a tensor functor. Let $\K_1$ be the 2-category of Karoubian tensor categories $\D$ equipped with a tensor
functor $F:\Rep (G)\to\D$. Let $\K_2$ be the 2-category of Karoubian tensor categories $\cP$ 
equipped with a tensor functor $\Phi:\Vec_G\to\C$. 

Let $A$ denote the regular algebra in $\Rep (G)$. If $\D\in\K_1$ then $A$-bimodules in $\D$
form a Karoubian tensor category $\cP$. It is easy to see that the tensor category of 
$A$-bimodules in $\Rep (G)$ identifies with $\Vec_G\,$, so we get a tensor functor $\Phi:\Vec_G\to\cP$.
Thus we have defined a 2-functor $\K_1\to\K_2$.

On the other hand, let $B$ denote the group algebra $k[G]$ viewed as an algebra in $\Vec_G$.
If $\cP\in\K_2$ then $B$-bimodules in $\cP$ form a Karoubian tensor category $\D$. It is easy to see that the tensor category of $B$-bimodules in $\Vec_G$ identifies with $\Rep (G)$, so we get a tensor functor $F:\Rep (G)\to\D$. Thus we have defined a 2-functor $\K_2\to\K_1$.

\begin{theorem}   \label{Last_principle}
These 2-functors $\K_1\to\K_2$ and $\K_2\to\K_1$ are mutually inverse 2-equivalences.
\end{theorem}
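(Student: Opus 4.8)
The plan is to show that the two 2-functors $\K_1 \to \K_2$ (via $A$-bimodules) and $\K_2 \to \K_1$ (via $B$-bimodules) are mutually inverse by verifying that, up to canonical natural 2-isomorphism, the round trips in both directions are the identity. The key algebraic input is the pair of Morita-type equivalences: the category of $A$-bimodules in $\Rep(G)$ is $\Vec_G$, and the category of $B$-bimodules in $\Vec_G$ is $\Rep(G)$; moreover these identifications are mutually inverse in the strong sense that $B$ (the group algebra $k[G]$ viewed in $\Vec_G$) corresponds, under the first equivalence, to the "regular bimodule'' that implements the Morita equivalence back. Concretely, $A = \Fun(G) \in \Rep(G)$ and $B = k[G] \in \Vec_G$ are related by Fourier duality: $A$-bimodules in $\Rep(G)$ are the same as objects of $\Vec_G$, and tensoring with $B$ over the image of this correspondence returns $\Rep(G)$. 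This is the "classical'' (group-level) content, and everything else is formal.

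First I would set up the $\K_1 \to \K_2$ functor carefully: given $(\D, F\colon \Rep(G) \to \D)$, the object $A' := F(A) \in \D$ inherits an algebra structure (since $F$ is a tensor functor and $A$ is an algebra in $\Rep(G)$), and $A'$-bimodules in $\D$ form a Karoubian tensor category $\cP$ under $\ot_{A'}$ (here one uses Proposition~\ref{splitting}: the multiplication $A \ot A \to A$ splits as an $A$-bimodule map, hence so does $A' \ot A' \to A'$, so the relative tensor product is well-defined even though $\D$ need not be abelian). Applying this with $\D = \Rep(G)$ and $F = \id$ gives the tensor equivalence $\{A\text{-bimodules in }\Rep(G)\} \simeq \Vec_G$; composing $F$ with this (more precisely, sending $V \in \Vec_G$ to the corresponding $A$-bimodule in $\Rep(G)$ and then pushing forward along $F$, i.e. taking $- \ot_A A'$) yields the tensor functor $\Phi\colon \Vec_G \to \cP$. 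The $\K_2 \to \K_1$ direction is entirely parallel with $B = k[G] \in \Vec_G$ in place of $A$. I would check functoriality on 1- and 2-morphisms, which is routine once the base identifications are fixed.

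Next I would prove that both composites are 2-isomorphic to the identity. Starting from $(\D, F) \in \K_1$, forming $\cP = A'\text{-bimod in }\D$, then forming $B\text{-bimod in }\cP$, one must produce a tensor equivalence back to $\D$ compatible with the structure functors. The crucial point is that $B$, transported into $\cP$ via $\Phi$, corresponds to the $A'$-bimodule $A'$ itself (this is exactly the statement that $k[G]$-bimodules in $\Vec_G$ recover $\Rep(G)$, pushed forward along $F$); and a $B$-bimodule in $\cP$ then unwinds to a $\D$-object equipped with the structure of an $(A', A')$-bimodule that is also an $A'$-bimodule map module — which by the splitting of $A' \ot A' \to A'$ is canonically an object of $\D$. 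Symmetrically for the other composite. The cleanest way to organize this is to observe, following the strategy already used in this section (cf. Theorem~\ref{princprinc} and Proposition~\ref{indeed_deeq}), that it suffices to establish one composite is naturally the identity: since the relevant 2-categories have the feature that a 2-functor with a left inverse has that inverse as a two-sided inverse (once we know, say from the central case or from direct construction, that the 2-functors are "essentially surjective enough''), the other follows. Alternatively one verifies both directly, which is symmetric.

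The main obstacle I expect is bookkeeping the bimodule structures and the relative tensor products in the non-abelian (merely Karoubian) setting: one must consistently use Proposition~\ref{splitting} to guarantee that $M_1 \ot_{A'} M_2$ exists and is computed as a direct summand of $M_1 \ot M_2$, track that the two one-sided $A'$-actions on an iterated bimodule object are compatible, and confirm that all the natural transformations (associativity of $\ot_{A'}$, the comparison isomorphisms in the round trips) satisfy the coherence needed to be 2-natural. The actual group theory — that $A$-bimodules in $\Rep(G)$ give $\Vec_G$ and $B$-bimodules in $\Vec_G$ give $\Rep(G)$, mutually inversely — is standard Fourier/Morita duality and I would either cite it or dispatch it by a short direct computation; it is not where the difficulty lies. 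Everything else is a formal manipulation of the kind carried out throughout \S\ref{isotsub}.
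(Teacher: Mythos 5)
First, a point of comparison: the paper does not actually prove Theorem~\ref{Last_principle}. It is explicitly deferred to \cite{ENO4}, and the remarks that follow only sketch the intended route, which is the categorified Morita argument of Remark (i): a tensor category equipped with a tensor functor from $\Rep(G)$ is an algebra in the monoidal 2-category of $\Rep(G)$-bimodule categories, and since $\Rep(G)$ and $\Vec_G$ are Morita equivalent (a reformulation of Theorem~\ref{eqdepre}), these monoidal 2-categories agree. So the only thing I can assess is whether your direct unwinding of the double bimodule construction is sound, and it has a genuine gap at exactly the point that carries the content of the theorem.

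Your ``crucial point'' is false as stated. Under the identification of $A$-bimodules in $\Rep(G)$ with $\Vec_G$, the unit $A$-bimodule $A$ corresponds to the unit object of $\Vec_G$ (the line in degree $e$), whereas $B=k[G]=\bigoplus_{g\in G}k_g$ corresponds to $\bigoplus_{g}A_g$ ($A$ with the right action twisted by $g$), i.e.\ to $A\ot A$ with its outer bimodule structure --- not to $A$ itself. Hence $B'=\Phi(B)\in\cP$ is the algebra $A'\ot A'$ with outer $A'$-actions, and a $B'$-bimodule in $\cP$ is an object of $\D$ carrying several commuting $A'$-actions. Showing that this category collapses back to $\D$ (and, symmetrically, that $F(A)$-bimodules in the $B$-bimodule category collapse back to $\cP$) is precisely the substance of Theorem~\ref{Last_principle}: it requires a categorified Morita argument using the separability of $A'$ coming from Proposition~\ref{splitting} together with an explicit pair of mutually inverse functors (say $X\mapsto A'\ot X\ot A'$ one way and a splitting-idempotent construction the other way), in the spirit of, but substantially more delicate than, Proposition~\ref{eqdeeq}. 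You wave this off as ``bookkeeping'' plus ``standard Fourier/Morita duality,'' but the group-level identifications ($A$-bimodules in $\Rep(G)\simeq\Vec_G$, $B$-bimodules in $\Vec_G\simeq\Rep(G)$) are the easy part; the iterated-bimodule collapse inside an arbitrary Karoubian $\D$ (resp.\ $\cP$) is never carried out, and with the misidentified $B'$ it would not even be set up correctly. Finally, the shortcut ``one composite being the identity suffices'' is not available for free here: in Proposition~\ref{indeed_deeq} it worked only because the 2-functor $\C\mapsto\C^G$ had been shown independently to be a 2-equivalence (Theorem~\ref{princprinc}(i), resting on Theorem~\ref{Sashastheorem}); your argument supplies no analogous independent input, so both composites would have to be handled, or the bimodule 2-functors shown to be 2-equivalences by some other means.
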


This theorem is proved in \cite{ENO4}. In the following remarks we explain the idea of the proof.
Theorem~\ref{Last_principle} will not be used in this article.

\begin{rems}
\begin{enumerate}
\item[(i)] 
If $R_1$ and $R_2$ are Morita-equivalent rings then the category of
rings $L$ equipped with a homomorphism $R_i\to L$ does not depend on $i$ (because it can be
interpreted as the category of algebras in the tensor category of $R_i$-bimodules and this tensor
category does not depend on $i$).

\item[(ii)] M.~M\"uger \cite{Mu-I} introduced a notion of Morita equivalence for fusion 
categories\footnote{In this situation M\"uger used the name ``weak monoidal Morita equivalence".}
parallel to the classical notion of Morita equivalence for rings 
(modules over rings are replaced by module categories over fusion categories, see \cite{Mu-I, O1}).
It is known that the fusion categories $\Rep (G)$ and $\Vec_G$ are Morita equivalent 
(this is a reformulation of Theorem~\ref{eqdepre}). There are also other examples of Morita equivalent fusion categories (see \cite{EO, Mu-I, Mu4}).

\item[(iii)] In Theorem~\ref{Last_principle} one can replace $\Rep (G)$ and $\Vec_G$ by
any pair of fusion categories related by a Morita equivalence. This generalization of 
Theorem~\ref{Last_principle} is proved in \cite{ENO4} using essentially the argument from Remark~(i) above.
\end{enumerate}
\end{rems}

\begin{remark}   \label{bimodules&deeq}
Let us explain the relation between the de-equivariantization 2-functor $\D\mapsto\D_G$ from 
\S\ref{deeqfus} and the 2-functor $\K_1\to\K_2$ from Theorem~\ref{Last_principle}.
Suppose that $\D$ is a Karoubian tensor category over $\Rep (G)$ (i.e., one has a 
{\em central\,} functor $\Rep (G)\to\D$). Let $\C:=\D_G$ be its de-equivariantization (i.e., $\C$ is the category of $A$-modules in $\D$). Then $\C$ is equipped with $G$-action, and it is easy to show
that the category of $A$-bimodules in $\D$ canonically identifies with the semidirect product 
$\Vec_G\ltimes\C$.
\end{remark}

\subsection{Braided categories containing $\Rep(G)$ and braided $G$-crossed categories}\label{orbi}
In this subsection we consider only those tensor categories which are fusion.
However, it is not hard to formulate and prove analogs of the definitions and results of
\S\ref{subsubsec-containing}-\ref{deeqcrossed} for tensor categories which are not necessarily fusion (in the spirit of Theorem~\ref{princprinc}).

\subsubsection{Braided categories containing a symmetric category}  \label{subsubsec-containing}
\begin{definition}  \label{containingdef}
Let $\E$ be a symmetric fusion category. A {\em braided fusion category containing $\E$\,} is a braided fusion category $\D$ equipped with a fully faithful braided functor $\E\to\D$.
\end{definition}

\subsubsection{Main principle}     \label{princ3}
Let $G$ be a finite group. Theorem~\ref{princprinc}(ii) describes those braided fusion categories
$\D$ containing $\Rep(G)$ which satisfy the additional property $\Rep(G)\subset\D'$;
namely, it says that such $\D$ is essentially the same as a braided fusion category with a braided
$G$-action. A.~Kirillov, Jr. and M.~M\"uger \cite{Ki,Mu3} discovered a similar description of
{\em arbitrary\,} braided fusion categories $\D$ containing $\Rep(G)$. Their main principle says
that {\em the datum of a braided fusion category $\D$ containing $\Rep(G)$ is equivalent to the datum of 
a braided $G$-crossed category $\C$.} The latter notion (due to
V.~Turaev \cite{Tuor}) is recalled in \S\ref{brcrossed} below. The above two types of data are 
related via the mutually inverse equivariantization and de-equivariantization constructions, 
which are described in \S\ref{equiv3tion} and \S\ref{deeqcrossed} below. For a precise
statement see Theorem \ref{princcross} below.

\subsubsection{Braided $G$-crossed fusion categories}  \label{brcrossed}
\begin{definition}\label{crossed}
 A {\em braided $G$-crossed fusion category} is a fusion category $\C$
equipped with the following structures:

\begin{enumerate}
\item[(i)] an action of $G$ on $\C$;
\item[(ii)] a (not necessarily faithful) grading $\C=\bigoplus\limits_{g\in G}\C_g$;
\item[(iii)] isomorphisms 
\begin{equation} \label{G-braid}
c_{X,Y}: X\ot Y\iso g(Y)\ot X, \quad g\in G, \, X\in \C_g, \, Y\in \C,
\end{equation}
which are called the {\em $G$-braiding\,} isomorphsims.
 \end{enumerate}
These data should satisfy the following conditions:
\begin{enumerate}
\item[(a)]$g(\C_h)\subset \C_{ghg^{-1}}$  for all $g,h\in G$;
\item[(b)] the isomorphisms $c_{X,Y}$ are functorial in $X$ and $Y$;
\item[(c)] the isomorphisms $c_{X,Y}$ are compatible with $G$-action, i.e.,
\begin{equation} \label{G-compat}
\gamma (c_{X,Y})=c_{\gamma (X),\gamma (Y)}, \quad \gamma\in G;
\end{equation}
\item[(d)] the following diagrams commute for all $g,h\in G$, $X\in \C_g$, and $Y\in \C_h$:
\begin{equation}   \label{G-hex1}
\xymatrix{X\ot Y\ot Z\ar[rr]^{c_{X,Y\ot Z}} \ar[d]_{c_{X,Y}\ot \id_Z
}&&g(Y\ot Z)\ot X\ar[d]^{||}\\ g(Y)\ot X\ot Z\ar[rr]^{\id_{g(Y)}\ot c_{X,Z}}&&g(Y)\ot g(Z)\ot X}
\end{equation}
\begin{equation}   \label{G-hex2}
\xymatrix{X\ot Y\ot Z\ar[rr]^{c_{X\ot Y,Z}} \ar[rd]_{\id_X\ot c_{Y,Z}
}&&gh(Z)\ot X\ot Y\\ &X\ot h(Z)\ot Y\ar[ru]_{c_{X,h(Z)}\ot \id_Y}&}
\end{equation}
\end{enumerate}
\end{definition}

\begin{rems}   \label{crossedrems}
\begin{enumerate}
\item[(i)] A braided $G$-crossed fusion category concentrated in degree $1\in G$
(i.e., such that $\C_g=0$ for $g\ne 1$) is the same as a braided fusion category
with a braided $G$-action.
\item[(ii)] According to Proposition~\ref{ptcr} below, a pointed fusion category $\C$ with
$\O (\C)=G$ has a natural structure of  braided $G$-crossed fusion category.
\item[(iii)] In \S\ref{Bra} we introduce the notion of a braided $G$-crossed category $\A$ in the
setting of abstract (rather than linear) monoidal categories, and in \S\ref{crcatcrmod} we explain
how to construct examples of such categories in which all objects and morphisms are
invertible. Let $\A$ be such a category. If the automorphism group
of the unit object of $\A$ equals $k^{\times}$ and both $G$ and the group of isomorphism classes
of objects of $\A$ are finite then the $k$-linear hull of $\A$ is a braided $G$-crossed fusion category.
(Here ``linear hull" means the category of all contravaraint functors $F:\A\to\Vec$ such that for
every object $X\in\A$ each $\lambda\in k^{\times}=\Aut (X)$ acts on $F(X)\in\Vec$ as multiplication
by $\lambda$.)
\item[(iv)] If $\C$ is a braided $G$-crossed fusion category and $H\subset G$ is a normal  subgroup
then $\C^H$ has a natural structure of braided $(G/H)$-crossed fusion category 
(in \S\ref{equiv3tion} below we define this structure in the case $H=G$, and the general definition is 
similar). E.g., $\Vec_G$ is a braided $G$-crossed fusion category by Remark (ii) above, so 
$(\Vec_G)^H$ is a braided $(G/H)$-crossed fusion category.
\end{enumerate}
\end{rems}

\subsubsection{Equivariantization}  \label{equiv3tion} 
Let $\C$ be a braided $G$-crossed fusion category. In particular, $\C$ is
a fusion category with $G$-action, so by Theorem~\ref{princprinc}(iii), applying the equivariantization
construction from \S\ref{equiv2tion} one gets a fusion category $\C^G$ over $\Rep (G)$.
Now we will define a braided structure on~$\C^G$.

If $X\in \C_g$ and $Y\in \C^G$ define a $\C$-isomorphism $\tilde c_{X,Y}: X\ot Y\iso Y\ot X$ to be
the composition $X\ot Y\xrightarrow{c_{X,Y}}g(Y)\ot X
\xrightarrow{u_g\ot \id_X} Y\ot X$, where $u_g$ comes from the structure of
equivariant object on $Y$. Extend $\tilde c_{X,Y}$  to all
objects $X\in \C$ by linearity. It follows from \eqref{G-compat} that if $X$ has a structure
of $G$-equivariant object then $\tilde c_{X,Y}$ respects the equivariant structures.
So for $X,Y\in \C^G$ we get a $\C^G$-isomorphism $\tilde c_{X,Y}: X\ot Y\to Y\ot X$.
It is functorial in $X,Y\in\C^G$. To show that $\tilde c$ is a braiding on the fusion category $\C^G$
it remains to check the two hexagon axioms. They follow from the commutativity of
\eqref{G-hex1} and \eqref{G-hex2}. It is easy to check that $\tilde c$ restricts to the standard
symmetric braiding on $\Rep(G)=\Vec^G\subset \C^G$. So $\C^G$ is a braided fusion category
containing $\Rep (G)$.

\begin{rems} \label{my_remarks}
\begin{enumerate}
\item[(i)] The isomorphisms $\tilde c_{X,Y}: X\ot Y\iso Y\ot X$ for $Y\in \C^G$, $X\in \C$ define
a braided functor $\C^G\to\Z (\C )$ whose composition with the forgetful functor $\Z (\C )\to\C$
equals the forgetful functor $\C^G\to\C$. In other words, {\em the forgetful functor 
$\C^G\to\C$ has a canonical structure of  central functor\,} in the sense of Definition~\ref{centraldef}.

\item[(ii)] Restricting the above central functor $\C^G\to\C$ to $\Rep (G)=\Vec^G\subset\C^G$
we get a central functor $\Phi :\Rep (G)\to\C$. As a tensor functor, $\Phi$ is just the composition of
the forgetful functor $\Rep (G)\to\Vec$ and the embedding $\Vec\hookrightarrow\C$. But the
structure of central functor on $\Phi$ is more interesting because it encodes the $G$-grading of $\C$.
Namely, if $X\in\C_g$ and $\rho :G\to\Aut V$ is a finite-dimensional representation then the
isomorphism $X\ot \Phi (V,\rho )\iso\Phi (V,\rho )\ot X $ equals the composition
\begin{equation}   \label{centralstructure}
X\ot V\xrightarrow{\id_X\ot\rho (g)} X\ot V=V\ot X.
\end{equation}

\end{enumerate}
\end{rems}

\subsubsection{Main Theorem} \label{mainth}
Let $\K_1$ denote the $2$-category of braided $G$-crossed fusion categories.
Let $\K_2$ denote  the $2$-category of braided fusion categories containing $\E:=\Rep (G)$.
In \S\ref{equiv3tion} we constructed a 2-functor $\K_1 \to \K_2$, namely $\C\mapsto\C^G$.

\begin{theorem}  \label{princcross} 
This 2-functor $\K_1 \to \K_2$ is a 2-equivalence.
\end{theorem}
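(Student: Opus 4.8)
The plan is to construct an explicit inverse 2-functor $\K_2\to\K_1$ via de-equivariantization and show that the two 2-functors are mutually quasi-inverse. Let $\D$ be a braided fusion category containing $\E:=\Rep(G)$, i.e., equipped with a fully faithful braided functor $\E\to\D$. Since $\E$ is Tannakian, $\D$ is in particular a tensor category over the symmetric category $\E$ in the sense of Definition~\ref{overdef} (using the central functor $\E\hookrightarrow\D\xrightarrow{\Phi}\Z(\D)$ coming from the braiding), so we may form the de-equivariantization $\C:=\D\bt_\E\Vec=\D_G$, the category of $A$-modules in $\D$ where $A$ is the regular algebra in $\E$. By Theorem~\ref{princprinc}(iii) and the remark after Proposition~\ref{diamond2isomorphism}, $\C$ is a fusion category with $G$-action, and $\D=\C^G$ as a tensor category over $\E$. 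The point of the theorem is to upgrade this to a \emph{braided $G$-crossed} structure on $\C$ matching the braiding of $\D$.

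First I would construct the $G$-grading on $\C$. The braided functor $\Phi:\E\to\Z(\D)$ restricted to $A$ gives, for each $A$-module $M$ in $\D$, a half-braiding; the grading arises because $A\cong\bigoplus_{g\in G}$ (lines) after passing to a fiber functor, so one checks that each simple $A$-module $M$ in $\D$ has the property that the double braiding of $M$ with the image of $\E$ is scalar, and the resulting character of $G$ (equivalently, the failure of $M$ to centralize $\E$) defines $\deg M\in G$. Concretely: the pairing of $\O(\C_{pt})$-type considerations in \S\ref{projcentsubsec} and Lemma~\ref{projcentlem1} show that $c_{-,M}c_{M,-}$ restricted to $\E$ is given by an element of $\Hom(G,k^\times)^{\vee}\cong G$ (using that $G$ need not be abelian requires a small additional argument: the relevant automorphism of the forgetful functor lands in $Z(\widehat{G})$-type data; one can bypass this by working with the canonical functor $\E\to\Z(\D)$ and the $G$-action on $A$ by right translations, which produces a genuine $G$-grading on the category of $A$-modules). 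This grading satisfies condition (a) of Definition~\ref{crossed} with respect to the $G$-action on $\C$ coming from \S\ref{deequivtion}.

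Next I would define the $G$-braiding $c_{X,Y}:X\ot_A Y\iso g(Y)\ot_A X$ for $X\in\C_g$. The braiding of $\D$ gives $c^\D_{X,Y}:X\ot Y\iso Y\ot X$ of the underlying objects; the content is that when $X$ has degree $g$, this does \emph{not} descend to an isomorphism $X\ot_A Y\to Y\ot_A X$ of $A$-modules, but rather, after precomposing with the $G$-action isomorphism encoding how $A$ acts through the degree-$g$ twist (this is exactly the phenomenon in formula~\eqref{centralstructure} of Remark~\ref{my_remarks}(ii), run backwards), it descends to $X\ot_A Y\iso g(Y)\ot_A X$. Verifying that this is well-defined on $\ot_A$ uses Proposition~\ref{splitting} (the $A$-bimodule splitting of $A\ot A\to A$) exactly as in Proposition~\ref{br-indeed_deeq}(i); compatibility with the $G$-action (condition (c)) follows from naturality of $c^\D$ and $G$-equivariance of the structures; the two hexagons (d) follow from the two hexagons for $c^\D$ together with the grading axiom, by the same diagram chase as in \S\ref{equiv3tion} read in reverse. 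This produces a 2-functor $\K_2\to\K_1$, $\D\mapsto\C$.

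Finally I would check that the two 2-functors are mutually quasi-inverse. One direction is essentially Theorem~\ref{princprinc}(ii) applied after forgetting the grading and crossed braiding, combined with the observation that the braided structure $\tilde c$ on $\C^G$ constructed in \S\ref{equiv3tion} recovers the braiding of $\D$ — this is a compatibility of the formula $X\ot Y\xrightarrow{c_{X,Y}}g(Y)\ot X\xrightarrow{u_g\ot\id}Y\ot X$ with the construction in the previous paragraph, verified by writing everything in terms of the tensor equivalence $F:\C\iso(\C^G)_G$ of \S\ref{forgetequiv} as in the proof of Proposition~\ref{br-indeed_deeq}(i). The other direction: starting from a braided $G$-crossed $\C$, forming $\C^G$, and de-equivariantizing back recovers $\C$ with its grading and $G$-braiding, which again reduces to Theorem~\ref{princprinc} for the underlying $G$-action plus a check that the grading and crossed braiding are recovered — the grading because the $G$-action on $A$ by right translations detects exactly the $\C_g$-decomposition, and the crossed braiding by naturality. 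The faithfulness/fullness on 2-morphisms is inherited from Theorem~\ref{princprinc}.

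The main obstacle I anticipate is \emph{defining the $G$-grading cleanly when $G$ is nonabelian}. The naive "double-braiding with $\E$ is a scalar" recipe gives a character of $G$ valued in $k^\times$, which only sees the abelianization; the genuine $G$-grading must come from the $G$-action on the regular algebra $A$ by right translations and how it interacts with $A$-module structures, and one has to verify that condition (a), $g(\C_h)\subset\C_{ghg^{-1}}$, holds — the conjugation here is precisely what forces one to use the right-translation $G$-action rather than a character-theoretic shortcut. I expect this to require carefully tracking the $G$-equivariant algebra structure on $A$ throughout, and it is the place where a routine-looking argument hides the real work.
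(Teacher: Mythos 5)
Your route is not the paper's: you are reconstructing the Kirillov--M\"uger description of the inverse 2-functor (one-sided de-equivariantization $\C=\D_G$, with the grading read off from how the double braiding with the regular algebra $A$ compares with right translations, and the $G$-braiding obtained by descending the braiding of $\D$), which in the paper appears only \emph{a posteriori} in \S\ref{deeqcrossed} and Proposition~\ref{explicitdescr}. The paper's actual proof of Theorem~\ref{princcross} goes through the auxiliary 2-category $\K_3$ of Definition~\ref{K3def}: the two-sided de-equivariantization $\A=\D_{G\times G}$ carries a $(G\times G)$-equivariant decomposition $\be=\bigoplus_{g}E_g$ of its unit object, the induced matrix decomposition $\A_{g_1,g_2}=E_{g_1}\ot\A\ot E_{g_2}$ of Lemma~\ref{matrices} produces the $G$-grading on $\C=\A^{G\times\{1\}}$ structurally (so exhaustiveness and the conjugation condition (a) of Definition~\ref{crossed} come for free), the relative braiding is manufactured from the Joyal--Street observation applied to the multiplication functor $\D\bt\D\to\D$ viewed as a tensor functor over $\E\bt\E$ (Lemma~\ref{doubledeequivariantization}), and the $G$-braiding is obtained by extending that relative braiding from equivariant objects to all of $\A$ (Lemma~\ref{from_relative_to_crossed}, Corollary~\ref{Cconstruction}). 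This machinery exists precisely to avoid the direct verifications your route requires.

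And those verifications are where your proposal has a genuine gap: the grading and the crossed braiding on $\D_G$ are the substance of the theorem, and you do not construct them. Your first recipe (scalar double braiding with $\E$, hence a character of $G$) only sees the abelianization, as you yourself observe; the corrected recipe --- compare the double braiding of an $A$-module with $A$ against right translations, as in Proposition~\ref{explicitdescr}(i) --- is merely asserted to ``produce a genuine $G$-grading''. To make it a proof one must show that \emph{every} simple $A$-module is homogeneous in this sense, that the degree is multiplicative under $\ot_A$, that $g(\C_h)\subset\C_{ghg^{-1}}$, and that $\tilde c_{M,N}$ actually descends to an isomorphism $M\ot_A N\iso g(N)\ot_A M$ compatible with the hexagons; none of these is routine for nonabelian $G$, and your closing paragraph explicitly defers them (``I expect this to require\dots''). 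Note also that you cannot borrow Proposition~\ref{explicitdescr} here: its proof begins by invoking Theorem~\ref{princcross} to assume $\D=\C^G$, so citing it in your direction would be circular. Completing your route would require reproducing the direct arguments of \cite{Ki,Mu3}; as written, the key steps are sketched but not established.
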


The proof of the theorem is given in \S\ref{trickyproof} below. A different proof was given by 
A.~Kirillov, Jr. and M.~M\"uger \cite{Ki,Mu3}; their description of the inverse 2-functor 
$\K_2 \to \K_1$ is given in \S\ref{deeqcrossed} below. Neither proof is transparent enough.
One could try to find a transparent proof using Theorem~\ref{Last_principle}
and Remark~\ref{bimodules&deeq}.

\subsubsection{Proof of Theorem~\ref{princcross}}  \label{trickyproof}

To construct the inverse 2-functor $\K_2 \to \K_1$, we will first define a new $2$-category $\K_3$.
\begin{definition}   \label{K3def}
An object of $\K_3$ is a semisimple rigid tensor category $\A$ (in the sense of \S\ref{semis} and
\S\ref{terminology}) 
equipped with the following structures:
\begin{enumerate}
\item[(i)] an action of  $G\times G$ on $\A$;
\item[(ii)] a $(G\times G)$-equivariant decomposition 
\begin{equation}
\label{Eg}
\be =\bigoplus_{g\in G}\, E_g
\end{equation}
of the unit object of $\A$ into a sum of simple objects (here $G\times G$ acts on $G$ by
two-sided translations, i.e., $(g_1,g_2)\in G\times G$ takes $g\in G$ to $g_1gg_2^{-1}$);
\item[(iii)] a $(G\times G)$-equivariant relative braiding between the forgetful tensor functors
 $F_1:\A^{G\times \{1\}}\to \A$ and $F_2:\A^{\{1\}\times G}\to \A$; by a  {\em relative braiding\,}  we mean
 a functorial collection of isomorphisms
\begin{equation}  \label{relbraiding}
c_{X,Y}:F_1(X)\ot F_2(Y)  \xrightarrow{\sim} F_2(Y)\ot F_1(X),
\qquad X\in\A^{G\times \{1\}},\, Y\in \A^{\{1\}\times G},
\end{equation}
satisfying the compatibility axioms analogous to the hexagon axioms of a braided category.
\end{enumerate}
One defines $1$-morphisms and $2$-morphisms in $\K_3$ in the obvious way.
\end{definition}

\begin{rems}   \label{Sh}
\begin{enumerate}
\item[(i)] By \cite[Proposition XI.2.4]{Kass}, the ring $\End (\be )$ is commutative, so
$\Hom (E_{g_1}, E_{g_2})=0$ for $g_1\ne g_2$.
One also has $E_{g_1}\ot E_{g_2}=0$ for $g_1\ne g_2$ (this follows from the equality
$A\ot\id_{\be}=\id_{\be}\ot A$, $A\in\End (\be )$, see {\em loc.\,cit.}). So the isomorphism 
$\be\ot\be\iso\be$ induces isomorphisms $E_g\ot E_g\iso E_g$, $g\in G$. 
\item[(ii)] Let $Sh(G)$ denote the tensor category of sheaves of finite-dimensional $k$-vector spaces 
on $G$. By the previous remark, the decomposition \eqref{Eg} defines a fully faithful tensor functor 
\begin{equation}   \label{Sheavesembed}
Sh(G)\hookrightarrow\A, \quad \F\mapsto \bigoplus_{g\in G} (\F_g\ot E_g).
\end{equation}
\end{enumerate}
\end{rems}

We will construct the inverse $2$-functor $\K_2\to \K_1$ as a composition 
$\K_2\iso \K_3\iso\K_1$. The following lemma yields a $2$-functor $\K_3\to \K_2$.

\begin{lemma}  \label{K3toK2}
Let $\A\in\K_3$. Then
\begin{enumerate}
\item[(i)] the tensor category $\A^{G\times G}$ is fusion;
\item[(ii)] the relative braiding on $\A$ induces a braiding on $\A^{G\times G}$;
\item[(iii)] the functor \eqref{Sheavesembed} induces a fully faithful braided functor
\begin{equation}
\label{sheave embed}
\Rep (G)=Sh(G)^{G\times G}\hookrightarrow\A^{G\times G}.
\end{equation}
\end{enumerate}
\end{lemma}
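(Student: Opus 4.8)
\textbf{Proof plan for Lemma~\ref{K3toK2}.}
The plan is to deduce all three statements from the general equivariantization machinery developed in \S\ref{equi}, together with the two-step equivariantization identity $\C^{G\times G}\cong(\C^{G\times\{1\}})^{\{1\}\times G}$ used in Lemma~\ref{3rd iso}. First I would prove (i). By Definition~\ref{K3def}(i), $G\times G$ acts on the semisimple rigid tensor category $\A$, and the unit object $\be$ is a sum of simple objects $E_g$ permuted transitively by $G\times G$ (two-sided translations), so $\End(\be)\cong\Fun(G)$ as a $(G\times G)$-algebra. This is precisely the statement that $\A$ is a fusion category over $\Vec_{G\times G}$ in the sense that its ``support sheaf'' is $\Sh(G)$; more concretely, passing to $(G\times G)$-equivariant objects, the unit of $\A^{G\times G}$ is $\bigoplus_g E_g$ with its canonical equivariant structure, which is simple because $G\times G$ acts transitively on the $E_g$'s. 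Combined with Proposition~\ref{semisprop} (semisimplicity of $\A^{G\times G}$, since $\A$ is semisimple) and the fact that $\A^{G\times G}$ is rigid (rigidity passes to equivariantization — cf.\ the proof of Theorem~\ref{princprinc}(iii)) and has finitely many simple objects, we conclude $\A^{G\times G}$ is fusion.

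For (ii), I would use the relative braiding $c_{X,Y}:F_1(X)\ot F_2(Y)\iso F_2(Y)\ot F_1(X)$ from Definition~\ref{K3def}(iii). The key observation is that $\A^{G\times G}$ sits inside both $\A^{G\times\{1\}}$ and $\A^{\{1\}\times G}$ via the forgetful functors, and an object of $\A^{G\times G}$ is an object of $\A^{G\times\{1\}}$ equipped with a compatible $\{1\}\times G$-equivariant structure (and symmetrically). Given $X,Y\in\A^{G\times G}$, regard $X$ as an object of $\A^{G\times\{1\}}$ and $Y$ as an object of $\A^{\{1\}\times G}$; then $c_{X,Y}$ is an isomorphism in $\A$, and the $(G\times G)$-equivariance of the relative braiding (built into Definition~\ref{K3def}(iii)) guarantees that $c_{X,Y}$ respects the full $(G\times G)$-equivariant structures, hence is a morphism in $\A^{G\times G}$. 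Functoriality in $X,Y$ is immediate, and the two hexagon axioms for the braiding on $\A^{G\times G}$ follow from the ``hexagon-analogous compatibility axioms'' postulated for the relative braiding. This is essentially the same mechanism by which the $G$-braiding of a braided $G$-crossed category induces the braiding on its equivariantization in \S\ref{equiv3tion}, so I would phrase the argument to parallel that one.

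For (iii), note that $\Sh(G)$ with its two-sided $G\times G$-action has $\Sh(G)^{G\times G}=\Rep(G)$ (equivariant sheaves on $G$ for the two-sided translation action are the same as representations of the diagonal stabilizer, namely $G$ embedded diagonally — this is a standard computation). The functor \eqref{Sheavesembed} is a fully faithful tensor functor $\Sh(G)\hookrightarrow\A$ compatible with the $(G\times G)$-actions (the $E_g$ are permuted exactly as the skyscraper sheaves are), so it induces a tensor functor $\Rep(G)=\Sh(G)^{G\times G}\to\A^{G\times G}$; full faithfulness is preserved because taking $(G\times G)$-equivariant objects is a $2$-functor that preserves fully faithful functors (cf.\ Theorem~\ref{Sashastheorem} and Lemma~\ref{eqtztion}). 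It remains to check this functor is braided, i.e.\ that on the image of $\Rep(G)$ the induced braiding of (ii) agrees with the standard symmetric braiding of $\Rep(G)$; this reduces to checking that the relative braiding restricts to the identity-type symmetry on the skyscrapers $E_g$, which follows from Remark~\ref{Sh}(i) ($E_{g_1}\ot E_{g_2}=0$ for $g_1\ne g_2$ and the symmetry on each $E_g\ot E_g\iso E_g$ is forced) together with the $(G\times G)$-equivariance of the relative braiding.

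\textbf{Main obstacle.} The genuinely delicate point is (ii): making precise the claim that an object of $\A^{G\times G}$ can be simultaneously viewed in $\A^{G\times\{1\}}$ and in $\A^{\{1\}\times G}$ in a way that the relative braiding $c_{X,Y}$ is well-defined and equivariant for the \emph{full} group $G\times G$, not just the subgroup used to define $F_1$ resp.\ $F_2$. One must verify that the $G\times\{1\}$-equivariant structure on $X$ and the $\{1\}\times G$-equivariant structure on $Y$ interact correctly with the cross-directions, and that the hexagon-type axioms in Definition~\ref{K3def}(iii) are exactly strong enough to yield both hexagons after equivariantization. I expect the cleanest route is to unwind \eqref{relbraiding} explicitly on equivariant objects and compare diagram-by-diagram with the axioms \eqref{G-hex1}--\eqref{G-hex2}-style conditions implicit in Definition~\ref{K3def}(iii), exactly mirroring the verification in \S\ref{equiv3tion}.
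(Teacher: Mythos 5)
Your route is essentially the paper's: the paper simply declares (ii) and (iii) clear, observes that $\A^{G\times G}$ is rigid, invokes Proposition~\ref{semisprop} for semisimplicity, and gets simplicity of the unit from the full faithfulness in (iii); your direct computation $\End_{\A^{G\times G}}(\be)=\Fun(G)^{G\times G}=k$ (transitivity of the two-sided action on the $E_g$'s, using Remark~\ref{Sh}(i)) is an equivalent way to make that last point. Your treatment of (ii) -- regard $X\in\A^{G\times G}$ as an object of $\A^{G\times\{1\}}$ and $Y$ as an object of $\A^{\{1\}\times G}$, use the $(G\times G)$-equivariance of the relative braiding to see that $c_{X,Y}$ is a morphism in $\A^{G\times G}$, and get the hexagons from the hexagon-type axioms of \eqref{relbraiding} -- is exactly the intended argument.

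The one step that does not hold up as stated is the braidedness check in (iii). Knowing that the braiding is ``identity-type'' on the $E_g$'s (equivalently on the unit $\be=\bigoplus_g E_g$, where it is automatic) does not determine it on the image of $\Rep(G)$: objects there are of the form $M_V=\bigoplus_g V\ot E_g$ with a nontrivial multiplicity space $V$, the relative braiding is natural only with respect to morphisms in $\A^{G\times\{1\}}$ and $\A^{\{1\}\times G}$ (the individual $E_g$ are not objects of either category, so ``restricting the relative braiding to the skyscrapers'' is not literally available), and $\Rep(G)$ admits braidings other than the symmetric one that restrict to the identity on the unit object -- so the two facts you cite cannot by themselves force the conclusion. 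The missing ingredient is the observation that under the forgetful functor $\A^{G\times G}\to\A^{G\times\{1\}}$ the image of $\Rep(G)$ consists of multiples of the unit object, because $\Sh(G)^{G\times\{1\}}\simeq\Vec$ (and similarly for $\A^{\{1\}\times G}$). Since the (relative) braiding with the unit is given by the unit constraints, and since arbitrary linear maps of the multiplicity spaces are morphisms in these partial equivariantizations, naturality in each variable separately identifies $c_{M_V,M_W}$ with the flip of $V$ and $W$, i.e.\ with the standard symmetric braiding of $\Rep(G)$ under the tensor structure of \eqref{sheave embed}. With this replacement your argument for (iii) is complete; the rest of the proposal is fine.
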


\begin{remark}
In (iii) we used the equivalence
$Sh(G)^{G\times G}\iso\Rep (G), \quad \F\mapsto\F_e$,
where $e\in G$ is the unit.
\end{remark}

\begin{proof}
(ii) and (iii) are clear. It is also clear that $\A^{G\times G}$ is a rigid tensor category. By
Proposition~\ref{semisprop}, it is semisimple. By (iii), the ring of endomorphisms of the unit object of
$\A^{G\times G}$ equals $k$, so $\A^{G\times G}$ is fusion.
\end{proof}

\begin{remark}
\label{no putanitsa}
Let $\D:=\A^{G\times G}$.
The embedding $\Rep(G) \hookrightarrow \D$ from  \eqref{sheave embed} 
yields a  braided functor $\Rep(G) \bt \Rep(G)  \hookrightarrow \D \bt \D^\op$.
Combining it with  the braided functor   $ \D \bt \D^\op  \hookrightarrow  \Z(\D)$ 
from \S\ref{centre} one gets a functor $\Rep(G)\bt \Rep(G)   \to \Z(\D)$.
It is straightforward to check that this functor equals the braided functor
$\Rep(G\times G) \to \Z(\D)$ that comes from the fact that $\D$ is the category
of $(G\times G)$-equivariant objects.
\end{remark}

\begin{lemma}   \label{doubledeequivariantization}
The 2-functor $\K_3\to \K_2$ constructed in Lemma~\ref{K3toK2} is a 2-equivalence.
\end{lemma}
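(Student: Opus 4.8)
The plan is to construct an explicit inverse 2-functor $\K_2\to\K_3$ and check that it is quasi-inverse to the 2-functor of Lemma~\ref{K3toK2}. Given a braided fusion category $\D$ containing $\E:=\Rep(G)$, the natural candidate for the corresponding object of $\K_3$ is
\[
\A:=\D\bt_{\E\bt\E}\Vec,
\]
the de-equivariantization of $\D$ with respect to the two commuting copies of $\E$ inside $\D$: one copy sits in $\D$ via the given embedding $\E\hookrightarrow\D$, the other via $\E\hookrightarrow\D$ followed by the braided equivalence $\D\iso\D^{\oP}$ (so that the two copies centralize each other in $\Z(\D)$, hence give a braided functor $\E\bt\E\to\Z(\D)$). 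Equivalently, first form $\D':=\D\bt_\E\Vec$ using the first copy (this is a braided fusion category with $G$-action by Theorem~\ref{princprinc}(ii) applied to the first copy), and then de-equivariantize again by the residual copy of $\E$. The group $G\times G$ acts on $\A$ by the two de-equivariantization actions of \S\ref{deequivtion}; the image in $\A$ of the regular algebra $A\in\E$ under each of the two embeddings gives, as in Remark~\ref{Sh}(i), the two commuting $G$-grading data, and these combine into the $(G\times G)$-equivariant decomposition $\be=\bigoplus_{g\in G}E_g$ of \eqref{Eg} (here one uses that under de-equivariantization by $\E$ the image of $A$ becomes $\bigoplus_{g\in G}$ of invertible objects indexed by $G$, permuted by the $G$-action); and the braiding of $\D$ descends to the required $(G\times G)$-equivariant relative braiding \eqref{relbraiding} between $F_1:\A^{G\times\{1\}}\to\A$ and $F_2:\A^{\{1\}\times G}\to\A$, exactly as in Proposition~\ref{br-indeed_deeq}(i).

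First I would check that $\A$ as constructed really is an object of $\K_3$: rigidity and semisimplicity are automatic (de-equivariantization of a fusion category is a fusion category once the relevant Tannakian functor is fully faithful, which here it is since $\E\bt\E\to\D\bt\D^{\oP}\to\Z(\D)$ is fully faithful because $\E\hookrightarrow\D$ is), the decomposition \eqref{Eg} and its $(G\times G)$-equivariance come from the two regular-algebra images, and the relative braiding comes from Proposition~\ref{br-indeed_deeq}(i) applied one copy of $\E$ at a time, with compatibility (the ``relative hexagons'') inherited from the hexagon axioms in $\D$. Next I would verify functoriality: a braided functor $\D_1\to\D_2$ over $\E$ induces a $(G\times G)$-equivariant functor on the de-equivariantizations respecting all the structures, so $\D\mapsto\A$ is a 2-functor $\K_2\to\K_3$. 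Then the two composites must be identified with the identity 2-functors. For $\K_3\to\K_2\to\K_3$ one uses the iterated de-equivariantization identity of Lemma~\ref{3rd iso} (in the form $(\A^{G\times G})\bt_{\Rep(G\times G)}\Vec\cong\A$, with Remark~\ref{no putanitsa} ensuring the two copies of $\Rep(G)$ are matched correctly) together with Remark~\ref{sphericrem}-style bookkeeping of the $(G\times G)$-action; for $\K_2\to\K_3\to\K_2$ one uses that re-equivariantizing $\A=\D\bt_{\E\bt\E}\Vec$ by $G\times G$ recovers $\D$, which is Theorem~\ref{princprinc}(ii) applied twice, once for each factor of $G$, via the ``two-step'' equivalence \eqref{2step equiv}.

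The main obstacle will be the careful matching of the two $\Rep(G)$'s and the verification that the relative braiding on $\A^{G\times\{1\}}$ versus $\A^{\{1\}\times G}$ assembles consistently — i.e., that the isomorphisms \eqref{relbraiding} descend well-definedly from the braiding $c$ of $\D$ after two successive de-equivariantizations and satisfy the relative-hexagon axioms of Definition~\ref{K3def}(iii). This is where one must be most attentive to which copy of $\E$ is used and in which order, and to the fact (Remark~\ref{no putanitsa}) that the composite $\Rep(G)\bt\Rep(G)\to\Z(\D)$ is the genuine $\Rep(G\times G)$-structure; everything else is a (somewhat lengthy but formal) application of the equivariantization/de-equivariantization machinery of \S\ref{isotsub}, in particular Theorem~\ref{princprinc}, Proposition~\ref{br-indeed_deeq}, and Lemma~\ref{3rd iso}. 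I would keep the write-up at the level of ``the inverse is $\D\mapsto\D\bt_{\E\bt\E}\Vec$, and that the two composites are identities follows from Theorem~\ref{princprinc} and Lemma~\ref{3rd iso}'', leaving the diagram-chase for the relative hexagons as a routine check.
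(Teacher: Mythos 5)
Your construction of the candidate inverse is the same as the paper's: you view $\D$ as a fusion category over $\E\bt\E=\Rep(G\times G)$ via $\E\bt\E\hookrightarrow\D\bt\D^{\oP}\to\Z(\D)$, set $\A:=\D_{G\times G}$, and obtain the decomposition \eqref{Eg} from the image of the regular algebra (Remark~\ref{Sh}); your verification that the two composites are identities via Theorem~\ref{princprinc} and Lemma~\ref{3rd iso} is reasonable (the paper in fact leaves that part implicit). So up to the relative braiding, the two arguments coincide.

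The gap is precisely at the relative braiding, which you dispose of by saying the braiding of $\D$ ``descends\dots exactly as in Proposition~\ref{br-indeed_deeq}(i)'' and leaving the relative hexagons as a routine check. Proposition~\ref{br-indeed_deeq}(i) — and likewise your parenthetical claim that the one-copy de-equivariantization $\D\bt_\E\Vec$ is a \emph{braided} fusion category with $G$-action by Theorem~\ref{princprinc}(ii) — requires the Tannakian subcategory to land in the M\"uger center $\D'$. Here neither copy of $\Rep(G)$ centralizes $\D$ in general (if it did, Theorem~\ref{princcross} would reduce to Theorem~\ref{princprinc}(ii) and the $G$-grading on the de-equivariantization would be trivial). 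Consequently the braiding of $\D$ does \emph{not} descend to a braiding on $\A$; the existence of the isomorphisms \eqref{relbraiding}, defined only between $(G\times\{1\})$-equivariant and $(\{1\}\times G)$-equivariant objects, is the nontrivial content of the lemma rather than a routine check. The paper handles it by the Joyal--Street correspondence: a relative braiding between $F_1$ and $F_2$ is the same datum as a tensor functor $\A^{G\times\{1\}}\bt\A^{\{1\}\times G}\to\A$ restricting to $F_1,F_2$, and such a functor is produced by de-equivariantizing the multiplication functor $M:\D\bt\D\to\D$ of \eqref{JS functor}, whose tensor structure encodes the braiding of $\D$, after checking (the footnoted diagram) that $M$ is a tensor functor over $\E\bt\E$ when $\D\bt\D$ carries the central structure \eqref{choose2from4}. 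To repair your write-up you must either reproduce this maneuver or verify directly that $c_{X,Y}$ induces a well-defined morphism on the $\ot_A$-products for $X$ equivariant under $G\times\{1\}$ and $Y$ under $\{1\}\times G$, together with the relative hexagon axioms; it cannot be justified by citing Proposition~\ref{br-indeed_deeq}(i). (Also, Remark~\ref{sphericrem} concerns sphericalization and plays no role in the bookkeeping of the $(G\times G)$-action.)
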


\begin{proof}
We will construct the inverse 2-functor.
Let $\D$ be a braided fusion
cate\-gory containing $\E:=\Rep(G)$.
One has a braided functor $\D \bt \D^{\oP}\to\Z (\D )$ (see Proposition~\ref{Muger1prime}(iii) and
Remark~\ref{newrems}(a)). Composing it with the embedding 
$\E\bt\E= \E\bt \E^{\oP}\hookrightarrow \D \bt \D^{\oP}$  one gets a braided functor $\E\bt\E\to\Z (\D )$.
So $\D$ is a fusion category over $\E\bt\E =\Rep (G\times G)$. 
Let $\A := \D_{G\times G}$ be its de-equivariantization in the sense of \S\ref{equi}.
Since $\A$ contains $\E_{G\times G}= Sh (G)$ we conclude that
$\A$ has structures (i) and (ii) from Definition~\ref{K3def}.

To construct the relative braiding, we will use the following observation due to Joyal and Street \cite{JS}:
for any tensor categories $\B_1,\,\B_2,\,\B$ the category of pairs of
tensor functors $F_i:\B_i\to \B,\,i=1,2$, equipped with a relative braiding between $F_1$ and $F_2$
is canonically equivalent to the category of tensor functors $\B_1\bt\B_2\to\B$.
Applying this for $\B_1=\B_2=\B=\D$, $F_1=F_2=\id_{\D}$ we see that
the braided structure on $\D$ defines a tensor functor $\D\bt\D\to\D$. This is
the multiplication functor
\begin{equation}\label{JS functor}
M: \D\boxtimes \D\to \D ,\quad  X\bt Y \mapsto X\ot Y
\end{equation}
with the tensor structure 
$M((X_1\bt Y_1)\ot (X_2\bt Y_2))\iso M(X_1\bt Y_1)\ot M(X_2\bt Y_2)$
being 
$\id_{X_1}\ot c_{X_2,Y_1}\ot \id_{Y_2}:X_1\ot X_2\ot Y_1\ot Y_2 \iso X_1\ot Y_1\ot X_2\ot Y_2$
(here $c$ denotes the braiding on $\D$).
Recall that $\D$ is a fusion category over $\E\bt \E$. Consider 
$\D\boxtimes \D$ as a fusion category over $\E\bt \E$ via
the embedding
\begin{equation}\label{choose2from4}
\E \bt \E \hookrightarrow \Z(\D)\bt \Z(\D) =\Z(\D\bt\D) : 
U\bt V \mapsto  (U,\, c_{U,-}^{-1})\bt (V,\, c_{-,V}).
\end{equation}
Then it is easy to check\footnote{Since $\D \bt \D$ and $\D$ are fusion categories over
$\E \bt \E$ we have central functors $\Phi_1:\E \bt \E \to \D \bt \D$ and $\Phi_2:\E \bt \E \to \D$.
Moreover, the tensor functors $M\circ\Phi_1:\E \bt \E \to \D$ and $\Phi_2:\E \bt \E \to \D$
are canonically isomorphic (both identify with the tensor functor $A\bt B\mapsto A\ot B$).
So it remains to check that $M$ is compatible with the {\em central\,} structures on $\Phi_1,\Phi_2$.
Explicitly, one has to check that for all $X,Y\in \D$, $A,B\in \E$ the following diagram commutes:
\begin{equation*}
\xymatrix{(X\ot Y)\ot (A\ot B)\ar[rr]\ar[d]&&(A\ot B)\ot (X\ot Y)\ar[d]\\
(X\ot A)\ot (Y\ot B)\ar[rr]&&(A\ot X)\ot (B\ot Y)}
\end{equation*}
(here the vertical arrows come from the tensor structure on the functor \eqref{JS functor}
and the horizontal ones come from the central structures on $\Phi_1,\Phi_2$).
This is left to the reader.}
that the functor \eqref{JS functor} is a tensor functor over $\E\bt \E$.

Taking its de-equivariantization we get a tensor functor
\begin{equation}\label{JSdeeq}
M_{G\times G} :  \D_{\{1\}\times G}\boxtimes \D_{G\times \{1\}} 
= \A^{G\times \{1\}}  \boxtimes \A^{\{1\}\times G}
\to \D_{G\times G}  =\A
\end{equation}
whose restrictions to $\A^{G\times \{1\}}$ and $\A^{\{1\}\times G}$ are
the forgetful tensor functors $F_1:\A^{G\times \{1\}}\to \A$ and $F_2:\A^{\{1\}\times G}  \to \A$. 
Again using the Joyal-Street observation, we get a
$(G\times G)$-equivariant relative braiding between $F_1$ and~$F_2$.
\end{proof}

Now we have to construct a $2$-functor $\K_3\to \K_1$. We will define it by
$\A\mapsto\A^{G\times \{1\}}$, but to introduce the structure of braided $G$-crossed category
on $\A^{G\times \{1\}}$ we need some lemmas.

\begin{lemma}   \label{matrices}
Let $\A\in\K_3$.
\begin{enumerate}
\item[(i)] $\A=\bigoplus\limits_{g_1,g_2\in G}\A_{g_1,g_2}$, where $\A_{g_1,g_2}:=E_{g_1}\ot\A \ot E_{g_2}$.
\item[(ii)] If $X\in\A_{g_1,g_2}$ and $Y\in\A_{g_3,g_4}$ then $X\ot Y\in\A_{g_1,g_4}$ and
$X\ot Y\ne 0$ only if $g_2=g_3$.
\item[(iii)] $\A$ is equipped with $G$-gradings
\begin{equation}  \label{firstgrading}
\A=\bigoplus\limits_{g\in G}\A_g, \quad \A_g:=\bigoplus\limits_{x\in G} \A_{gx,x}
\end{equation}
\begin{equation}  \label{secondgrading}
\A=\bigoplus\limits_{g\in G}\A_{(g)}, \quad \A_{(g)}:=\bigoplus\limits_{x\in G} \A_{x.xg}
\end{equation}
The gradings \eqref{firstgrading} and \eqref{secondgrading} are preserved by $\{1\} \times G$ and  
$G  \times \{1\}$, respectively.
\end{enumerate}
\end{lemma}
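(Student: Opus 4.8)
The plan is to extract everything from the decomposition $\be=\bigoplus_{g\in G}E_g$ together with Remark~\ref{Sh}(i), which says that $E_g\ot E_h=0$ for $g\ne h$, that there are isomorphisms $m_g\colon E_g\ot E_g\iso E_g$, and that the $E_g$ are pairwise non-isomorphic simple objects. First I would record an auxiliary fact: the left and right duals of $E_g$ are again isomorphic to $E_g$. Indeed, since $\be$ is canonically self-dual and $(-)^*$ is an anti-equivalence carrying simple objects to simple objects, applying $(-)^*$ to $\be\cong\bigoplus_h E_h$ and invoking Krull--Schmidt gives a permutation $\sigma$ of $G$ with $E_h^*\cong E_{\sigma(h)}$; but the coevaluation $\be\to E_g\ot E_g^*\cong E_g\ot E_{\sigma(g)}$ is nonzero (the snake identity forbids the coevaluation of a nonzero object from vanishing), so $E_g\ot E_{\sigma(g)}\ne 0$, whence $\sigma(g)=g$ by Remark~\ref{Sh}(i); the left dual is handled the same way.

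For (i): tensoring distributes over direct sums, so $X\cong\be\ot X\ot\be\cong\bigoplus_{g_1,g_2}(E_{g_1}\ot X\ot E_{g_2})$ for every $X\in\A$, and each summand lies in $\A_{g_1,g_2}=E_{g_1}\ot\A\ot E_{g_2}$; so it remains to prove orthogonality, i.e.\ that $\Hom_\A(E_{g_1}\ot Z\ot E_{g_2},\,E_{g_3}\ot W\ot E_{g_4})=0$ unless $(g_1,g_2)=(g_3,g_4)$. By rigidity this $\Hom$ space is isomorphic to $\Hom_\A(Z,\,E_{g_1}^\vee\ot E_{g_3}\ot W\ot E_{g_4}\ot E_{g_2}^\vee)$ for the appropriate left/right duals, and by the auxiliary fact $E_{g_1}^\vee\cong E_{g_1}$, $E_{g_2}^\vee\cong E_{g_2}$; since $E_{g_1}\ot E_{g_3}$ and $E_{g_4}\ot E_{g_2}$ occur as adjacent tensor factors on the right, this object vanishes by Remark~\ref{Sh}(i) as soon as $g_1\ne g_3$ or $g_2\ne g_4$. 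Orthogonality then also forces each $\A_{g_1,g_2}$ to be closed under direct summands inside $\A$, and (i) follows.

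Part (ii) is a direct computation: if $X$ is (a summand of) $E_{g_1}\ot Z\ot E_{g_2}$ and $Y$ is (a summand of) $E_{g_3}\ot W\ot E_{g_4}$, then $X\ot Y$ is (a summand of) $E_{g_1}\ot Z\ot(E_{g_2}\ot E_{g_3})\ot W\ot E_{g_4}$, which is $0$ when $g_2\ne g_3$ and, when $g_2=g_3$, is via $m_{g_2}$ isomorphic to a summand of $E_{g_1}\ot(Z\ot E_{g_2}\ot W)\ot E_{g_4}\in\A_{g_1,g_4}$. For (iii), observe that \eqref{firstgrading} and \eqref{secondgrading} are just the two ways of organising the index set of the decomposition of (i) by the invariants $g_1g_2^{-1}$ and $g_1^{-1}g_2$; since $E_x\cong E_x\ot\be\ot E_x\in\A_{x,x}$ the unit lies in $\A_1$ and in $\A_{(1)}$, and the containments $\A_g\ot\A_h\subset\A_{gh}$, $\A_{(g)}\ot\A_{(h)}\subset\A_{(gh)}$ are immediate from (ii). Finally, the $(G\times G)$-equivariance of $\be=\bigoplus E_g$ (with $G\times G$ acting on the indices by two-sided translation) means $(g_1,g_2)$ sends $\A_{h_1,h_2}$ to $\A_{g_1h_1g_2^{-1},\,g_1h_2g_2^{-1}}$; taking $g_1=1$ preserves the invariant $h_1h_2^{-1}$, hence preserves \eqref{firstgrading}, and taking $g_2=1$ preserves $h_1^{-1}h_2$, hence preserves \eqref{secondgrading}.

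I do not expect a serious obstacle; the one place needing genuine care is the orthogonality in (i), where one must keep straight which dual enters the rigidity adjunction and verify the self-duality of the $E_g$, after which parts (ii) and (iii) are pure bookkeeping with the group $G\times G$.
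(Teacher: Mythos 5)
Your proof is correct, and it rests on the same key input as the paper's (one-line) proof, namely Remark~\ref{Sh}(i): $\Hom(E_{g_1},E_{g_2})=0$ and $E_{g_1}\ot E_{g_2}=0$ for $g_1\ne g_2$, together with $E_g\ot E_g\iso E_g$; your treatment of (ii), (iii) and the $(G\times G)$-equivariance is exactly the intended bookkeeping. The only place where you add machinery beyond Remark~\ref{Sh}(i) is the Hom-orthogonality in (i), which you obtain via rigidity after proving $E_g^*\simeq E_g\simeq{}^*E_g$ by the snake/Krull--Schmidt argument; this is fine, but the duals can be avoided: the unit constraints give a morphism $E_{g_1}\ot X\ot E_{g_2}\to X$ natural in $X$, which is an isomorphism for $X\in\A_{g_1,g_2}$ and has target component zero when the source component of a map $f:X\to Y$ differs from that of $Y$ (since then $E_{g_1}\ot Y\ot E_{g_2}=0$), so naturality forces $f=0$ directly. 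Either route is valid; yours buys nothing extra here but costs the self-duality lemma, which is itself correctly proved.
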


\begin{proof}
Use Remark~\ref{Sh} (i).
\end{proof}

The next lemma says that the relative braiding defined on equivariant objects of $\A\in\K_3$
extends to a ``crossed braiding" defined on all objects of $\A$. Notation: if $\A\in\K_3$
the result of the action of $(g_1,g_2)\in G\times G$ on an object $X\in\A$ will be denoted by
${}^{g_1} {X}^{g_2^{-1}}$. 

\begin{lemma}   \label{from_relative_to_crossed}
Let $\A\in\K_3$.
\begin{enumerate}
\item[(i)] There exists a unique functorial collection of isomorphisms 
\begin{equation}  \label{crbrading}
c_{X,Y} : X \ot Y \xrightarrow{\sim}  Y^{g^{-1}} \ot {}^{h^{-1}} X,\quad X\in\A_{(g)} ,\; Y\in\A_h
\end{equation}
such that if $X$ is $(G  \times \{1\})$-equivariant and $Y$ is $(\{1\} \times G)$-equivariant then
\eqref{crbrading} equals the isomorphism \eqref{relbraiding}.

\item[(ii)] The image of $c_{X,Y}$ under any $(\alpha,\beta)\in G\times G$ equals
$c_{{}^{\alpha}X^{\beta^{-1}},{}^{\alpha}Y^{\beta^{-1}}}\,$.
\item[(iii)] The following diagram commutes for all $g_1,g_2,h\in G$, $X_i\in\A_{(g_i)}$, $Y\in\A_{h}$:
\begin{equation}  \label{firsthexagon}
\xymatrix{
X_1\ot X_2\ot Y\ar[rrrr]^{c_{X_1\ot X_2,Y}} \ar[d]_{\id_{X_1}\ot c_{X_2,Y}}&&&&
Y^{g_2^{-1}g_1^{-1}}\ot {}^{h^{-1}}(X_1\ot X_2)\ar[d]^{||}\\ 
X_1\ot Y^{g_2^{-1}}\ot {}^{h^{-1}} X_2\ar[rrrr]^{c_{X_1, Y^{g_2^{-1}}} \ot \id_{  {{}^{h^{-1}}X_2}}   }&&&&
Y^{g_2^{-1}g_1^{-1}} \ot {}^{h^{-1}}  X_1 \ot {}^{h^{-1}}   X_2.
}
\end{equation}
\item[(iv)] The following diagram commutes for all $g,h_1,h_2\in G$, $X\in\A_{(g)}$, $Y_i\in\A_{h_i}$:
\begin{equation}  \label{secondhexagon}
\xymatrix{X\ot Y_1\ot Y_2\ar[rrrr]^{c_{X,Y_1\ot Y_2}} \ar[d]_{c_{X,Y_1}\ot \id_{Y_2}}&&&&
(Y_1\ot Y_2)^{g^{-1}}\ot {}^{h_2^{-1} h_1^{-1} } X \ar[d]^{||}\\ 
Y_1^{g^{-1}}\ot {}^{h_1^{-1} } X \ot Y_2\ar[rrrr]^{\id_{ Y_1^{g^{-1}} }\ot c_{{}^{h_1^{-1} } X,Y_2} }&&&&
Y_1^{g^{-1}}\ot Y_2^{g^{-1}}\ot {}^{h_2^{-1} h_1^{-1}}  X.
}
\end{equation}
\end{enumerate}
\end{lemma}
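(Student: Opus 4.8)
The plan is to build $c_{X,Y}$ out of the given relative braiding by the standard ``induce, braid, restrict'' device, and then to deduce uniqueness, equivariance, and the two crossed hexagons by pulling each identity back to the corresponding property of the relative braiding on $\A^{G\times\{1\}}\times\A^{\{1\}\times G}$.

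\smallskip
\noindent\textbf{Reduction to equivariant objects.} The key preliminary observation is that every object $X$ lying in a single piece $\A_{(g)}$ is a direct summand, in $\A$, of the forgetful image of a $(G\times\{1\})$-equivariant object that again lies in $\A_{(g)}$. Indeed, take $\hat X:=\Ind_1(X)=\bigoplus_{a\in G}{}^{a}X$ (induction along $G\times\{1\}$, cf.\ Lemma~\ref{eqtztion}); it lies in $\A_{(g)}$ because $\A_{(g)}$ is $(G\times\{1\})$-stable (Lemma~\ref{matrices}(iii)), is canonically $(G\times\{1\})$-equivariant, and contains $X={}^{e}X$ as a summand via $\A$-morphisms $i_X\colon X\to\hat X$, $p_X\colon\hat X\to X$ with $p_X i_X=\id_X$ (Lemma~\ref{eqtztion}(iii)). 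Symmetrically $\hat Y:=\Ind_2(Y)\in\A_h$ is $(\{1\}\times G)$-equivariant with $Y$ a summand, via $i_Y,p_Y$. For a pair of \emph{equivariant} objects there is no freedom in \eqref{crbrading}: when $X$ is $(G\times\{1\})$-equivariant and $Y$ is $(\{1\}\times G)$-equivariant, the equivariant structures furnish canonical isomorphisms $X\iso{}^{h^{-1}}X$ and $Y\iso Y^{g^{-1}}$, and \eqref{crbrading} must equal the relative braiding \eqref{relbraiding} followed by these; write $c^{\mathrm{eq}}_{X,Y}$ for this isomorphism. Finally, since $\Hom$ vanishes between distinct graded pieces $\A_{g_1,g_2}$ (Lemma~\ref{matrices}), the isomorphism $c^{\mathrm{eq}}_{\hat X,\hat Y}$ is block-diagonal for the $\A_{g_1,g_2}$-grading, hence restricts to an isomorphism of the summand $X\ot Y\subset\hat X\ot\hat Y$ onto $Y^{g^{-1}}\ot{}^{h^{-1}}X\subset\hat Y^{g^{-1}}\ot{}^{h^{-1}}\hat X$.

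\smallskip
\noindent\textbf{Definition, existence and uniqueness (part (i)).} Set
\[
c_{X,Y}:=(p_Y^{\,g^{-1}}\ot{}^{h^{-1}}p_X)\circ c^{\mathrm{eq}}_{\hat X,\hat Y}\circ(i_X\ot i_Y),
\]
an isomorphism $X\ot Y\iso Y^{g^{-1}}\ot{}^{h^{-1}}X$ by the last remark (the general case of $X\in\A_{(g)}$, $Y\in\A_h$ follows by additivity, working one graded piece at a time). I would then check: (a) this is independent of the auxiliary data $(\hat X,i_X,p_X,\hat Y,i_Y,p_Y)$, and (b) it is functorial in $X\in\A_{(g)}$ and in $Y\in\A_h$ --- both because the relative braiding is a \emph{functorial} family, so two admissible choices are connected through $X$ (resp.\ $Y$) by morphisms of equivariant objects and naturality of $c^{\mathrm{eq}}$ equates the resulting formulas. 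Taking $\hat X=X$, $\hat Y=Y$ when $X,Y$ are already equivariant recovers $c^{\mathrm{eq}}_{X,Y}$, so the normalization demanded in (i) holds, proving existence; conversely any functorial family with this normalization must, by applying naturality to $i_X,p_X,i_Y,p_Y$, coincide with the displayed formula, proving uniqueness.

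\smallskip
\noindent\textbf{Equivariance and hexagons (parts (ii)--(iv)).} Part (ii) is a direct transport of structure: $\Ind_1$ and $\Ind_2$ commute with the $G\times G$-action and the relative braiding is $(G\times G)$-equivariant by Definition~\ref{K3def}(iii), so applying $(\alpha,\beta)$ to the displayed formula for $c_{X,Y}$ yields the formula for $c_{{}^{\alpha}X^{\beta^{-1}},{}^{\alpha}Y^{\beta^{-1}}}$. For (iii) and (iv), I would reduce the crossed hexagons \eqref{firsthexagon} and \eqref{secondhexagon} to the two hexagon-type axioms packaged into the notion of relative braiding (Definition~\ref{K3def}(iii)): writing $X_1,X_2,Y$ (resp.\ $X,Y_1,Y_2$) as summands of induced equivariant objects, and using that a tensor product of two such induced objects is again a summand of an induced equivariant object, both composites in \eqref{firsthexagon} (resp.\ \eqref{secondhexagon}) become restrictions to a graded piece of a single hexagon identity for $c^{\mathrm{eq}}$ on $\A^{G\times\{1\}}\times\A^{\{1\}\times G}$. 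The main obstacle is exactly this last reduction: matching the $G$-grading twists ${}^{h^{-1}}(-)$ and $(-)^{g^{-1}}$ occurring in \eqref{firsthexagon}--\eqref{secondhexagon} with the identifications between $F_1,F_2$ of induced objects and direct sums of $G$-translates, and verifying that the two relative-braiding hexagons are precisely what is needed --- this is where all the bookkeeping sits, whereas the conceptual point (that a relative braiding on equivariant objects determines, and is constrained exactly as much as, a crossed braiding on $\A$) is already contained in the reduction above.
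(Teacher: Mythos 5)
Your construction is exactly the paper's: the paper defines $c_{X,Y}$ (for $X,Y$ in single pieces $\A_{g_1,g_2}$, $\A_{g_2,g_3}$) by inducing to $\tilde X=\bigoplus_{\alpha}{}^{\alpha}X\in\A^{G\times\{1\}}$ and $\tilde Y=\bigoplus_{\beta}Y^{\beta}\in\A^{\{1\}\times G}$, applying the relative braiding, and restricting to the relevant $\A_{g_1,g_3}$-component, which is precisely your ``induce, braid, restrict'' formula. The paper confines its proof to this construction and leaves uniqueness, equivariance and the two crossed hexagons to the reader, so your additional sketches of (i)--(iv) are consistent with, and go slightly beyond, what the paper records.
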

\begin{proof}
We will only construct the isomorphism
\begin{equation}  \label{2crbrading}
c_{X,Y} : X \ot Y \xrightarrow{\sim}  Y^{g_2^{-1}g_1} \ot {}^{g_3g_2^{-1}} X,
\quad X\in\A_{g_1,g_2} ,\; Y\in\A_{g_2,g_3} 
\end{equation}
Define $\tilde X\in\A^{G\times\{1\}}$ and $\tilde Y\in \A^{\{1\} \times G}$ by 
$\tilde X:=\bigoplus\limits_{\alpha\in G}{}^{\alpha}X $, $\tilde Y:=\bigoplus\limits_{\beta\in G} Y^{\beta}$.
The $\A_{g_1,g_3}$-components of $\tilde X\ot\tilde Y$ and $\tilde Y\ot\tilde X$ equal $X\ot Y$
and $Y^{g_2^{-1}g_1} \ot {}^{g_3g_2^{-1}} X$, respectively. So the relative braiding 
$\tilde X\ot\tilde Y\xrightarrow{\sim}\tilde Y\ot\tilde X$ induces an isomorphism
$X \ot Y \xrightarrow{\sim}  Y^{g_2^{-1}g_1} \ot {}^{g_3g_2^{-1}} X$. This is $c_{X,Y}$.
\end{proof}

\begin{remark}
In formula \eqref{2crbrading} $Y^{g_2^{-1}g_1} \in\A_{g_1,g_4}$ and 
${}^{g_3 g_2^{-1}} X \in\A_{g_4,g_2}$, where $g_4=g_3g_2^{-1}g_1$. Note that $g_1,g_2,g_3,g_4$
form a ``parallelogram", i.e., $g_4g_1^{-1}=g_3g_2^{-1}$ or equivalently, $g_2^{-1}g_1=g_3^{-1}g_4$.
\end{remark}

\begin{corollary}   \label{Cconstruction}
If $\A\in\K_3$ then $\C:=\A^{G \times \{1\}}$ is a braided $G$-crossed fusion category: the $G$-action
on $\C$ comes from the $(\{ 1\} \times G )$-action on $\A$, the $G$-grading on $\C$ comes from 
the grading \eqref{secondgrading}, and the $G$-braiding on $\C$ comes from the 
iso\-mor\-phism ~\eqref{2crbrading}. \hfill\qedsymbol
\end{corollary}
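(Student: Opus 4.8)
The plan is to assemble the data required by Definition~\ref{crossed} from Lemmas~\ref{matrices} and \ref{from_relative_to_crossed}; once the dictionary between the two $G$-gradings of $\A$ and the $(G\times G)$-action is set up, the rest is a (somewhat lengthy) verification. First I would check that $\C:=\A^{G\times\{1\}}$ is a fusion category: it is rigid because $\A$ is (as in the proof of Theorem~\ref{princprinc}(iii)), it is semisimple by Proposition~\ref{semisprop}, and its unit object $\be=\bigoplus_{g}E_g$, equipped with the $(G\times\{1\})$-equivariant structure that permutes the $E_g$ simply transitively, has endomorphism algebra $k$ by Remark~\ref{Sh}(i), hence is simple. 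The $(\{1\}\times G)$-action on $\A$ commutes with the $(G\times\{1\})$-action, so it descends to an action of $G$ on $\C$, which is structure (i). By Lemma~\ref{matrices}(iii) the grading \eqref{secondgrading} is stable under $G\times\{1\}$, so it descends to a grading $\C=\bigoplus_{g}\C_{g}$ with $\C_g=(\A_{(g)})^{G\times\{1\}}$, which is structure (ii). Condition (a), $g(\C_h)\subset\C_{ghg^{-1}}$, then reduces to the identity $(1,g)(\A_{(h)})=\A_{(ghg^{-1})}$, which is immediate from \eqref{secondgrading} and the definition of the two-sided translation action on $G$.

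For the $G$-braiding (iii) I would observe that, in the notation of Lemma~\ref{from_relative_to_crossed}, the element $g\in G$ acts on $\A$ as $(1,g)$, so $g(Y)=Y^{g^{-1}}$, and that $(1,g)$ preserves the grading \eqref{firstgrading} degree by degree. Given $X\in\C_{g}$ and $Y\in\C$, I write $Y=\bigoplus_{h}Y_h$ for the decomposition of $Y$ along \eqref{firstgrading} (the summands $Y_h\in\A_h$ need not lie in $\C$) and define $c_{X,Y}$ to be the sum over $h\in G$ of the composites in $\A$
\[
X\ot Y_h\ \xrightarrow{\ c_{X,Y_h}\ }\ g(Y_h)\ot {}^{h^{-1}}X\ \xrightarrow{\ \id\,\ot\, u\ }\ g(Y_h)\ot X,
\]
where the first arrow is the isomorphism of Lemma~\ref{from_relative_to_crossed}(i) and $u\colon {}^{h^{-1}}X\iso X$ is the component at $(h^{-1},1)$ of the $(G\times\{1\})$-equivariant structure on $X$. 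Since $\bigoplus_h g(Y_h)=g(Y)$, this is an isomorphism $c_{X,Y}\colon X\ot Y\iso g(Y)\ot X$ of the required shape. Functoriality (axiom (b)) is inherited from Lemma~\ref{from_relative_to_crossed}(i); that $c_{X,Y}$ is a morphism in $\C$, i.e.\ is $(G\times\{1\})$-equivariant, and that it satisfies the compatibility \eqref{G-compat} with the $G$-action on $\C$, both follow from Lemma~\ref{from_relative_to_crossed}(ii), using that the isomorphisms $u$ are themselves compatible with the $(G\times\{1\})$-action.

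The bulk of the work — and the step I expect to be the main obstacle — is the verification of the two hexagon diagrams \eqref{G-hex1} and \eqref{G-hex2}. These should be deduced from the diagrams \eqref{secondhexagon} and \eqref{firsthexagon} of Lemma~\ref{from_relative_to_crossed}(iv) and (iii) respectively: \eqref{G-hex1} governs the behaviour of the braiding in its second argument and corresponds to \eqref{secondhexagon}, while \eqref{G-hex2} governs the first argument and corresponds to \eqref{firsthexagon}. The difficulty is bookkeeping rather than conceptual: one must rewrite \eqref{secondhexagon} and \eqref{firsthexagon} — stated on general objects of $\A$, using the first grading for one tensor factor and the second grading for the other — in terms of objects of $\C$, reinserting the equivariance isomorphisms $u$ at each stage and invoking the degreewise $(1,g)$-invariance of \eqref{firstgrading} to identify the twisted targets. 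Granting this, all conditions of Definition~\ref{crossed} hold and $\C=\A^{G\times\{1\}}$ is a braided $G$-crossed fusion category.
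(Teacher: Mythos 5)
Your proposal is correct and follows exactly the route the paper intends: the paper marks Corollary~\ref{Cconstruction} as immediate from Lemmas~\ref{matrices} and~\ref{from_relative_to_crossed}, and your construction (decompose $Y$ along the grading \eqref{firstgrading}, apply the isomorphisms of Lemma~\ref{from_relative_to_crossed}(i) componentwise, and remove the twist ${}^{h^{-1}}X$ via the $(G\times\{1\})$-equivariant structure of $X$, with the hexagons \eqref{G-hex1}, \eqref{G-hex2} coming from \eqref{secondhexagon}, \eqref{firsthexagon}) is precisely the omitted verification. Your added checks that $\A^{G\times\{1\}}$ is fusion (rigidity, Proposition~\ref{semisprop}, simplicity of the unit via Remark~\ref{Sh}(i)) are also correct and consistent with the paper's treatment of $\A^{G\times G}$ in Lemma~\ref{K3toK2}.
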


Now we can finish the proof of Theorem~\ref{crossed}.
By Corollary \ref{Cconstruction}, we have a 2-functor $\K_3\to\K_1$, 
$\A\mapsto\C:=\A^{G \times \{1\} }$. It is easy to see that it is a 2-equivalence.
Recall that our 2-functor $\K_1\to\K_2$ takes $\C$ to $\C^G$, so the composition
$\K_3\iso\K_1\to\K_2$ is the 2-functor $\A\mapsto\A^{G\times G}$, which is a 2-equivalence
by Lemma~\ref{doubledeequivariantization}. Therefore the  2-functor $\K_1\to\K_2$ is a 2-equivalence.
\hfill\qedsymbol

\subsubsection{De-equivariantization}   \label{deeqcrossed} 
According to Theorem~\ref{princcross}, the 2-functor 
$\K_1 \to \K_2$ has an inverse. It will be denoted by $\D\mapsto\D_G\,$.
We will give an explicit description of the braided $G$-crossed fusion category $\D_G\,$.

Below we use the notation $\tilde c$ for the braiding on $\D$ and $c$ for the $G$-braiding on $\D_G$
to be constructed. As before, $A$ will denote the regular algebra in $\Rep (G)$, i.e., the algebra
of functions on $G$ equipped with the action of $G$ by left translations. As an algebra in $\Rep (G)$,
it carries an action of $G$, namely, the action by right translations. 

Recall that $\D$ is a fusion category over $\Rep (G\times G)$ (see the proof of
Lemma~\ref{doubledeequivariantization}). As a fusion category with $G$-action,
$\D_G$ is the de-equivariantization of $\D$ with respect to $\{ 1\}\times G\subset G\times G$
(this follows from \S\ref{deeqfus} or from the proof of Theorem~\ref{princcross}). In other words,
$\D_G$ is the category of left\footnote{Given a structure of left $A$-module on $M\in\D$ 
one can define a structure of right $A$-module on $M$ using the isomorphism 
$\tilde c_{A,M}^{-1}:A\ot M\iso M\ot A$.} $A$-modules in $\D$,
and the tensor product of $A$-modules is defined using the composition 
$\E=\E^{\oP}\hookrightarrow\D^{\oP}\to\Z (\D )$.
Explicitly, this means that if $M$ and $N$ are left $A$-modules in $\D$ then to define $M\ot_AN$ one makes $M$ into an $A$-bimodule using the isomorphism $\tilde c_{A,M}^{-1}: M\ot A\iso A\ot M$. The 
$G$-action on $\D_G$ comes from its action on $A$.

It remains to describe the $G$-grading and the $G$-braiding on $\D_G\,$. Here is the description
from \cite{Ki,Mu3}.

\begin{proposition}  \label{explicitdescr}
\begin{enumerate}
\item[(i)] A left $A$-module $M$ belongs to the graded component $(\D_G)_g$, $g\in G$, if and only if
the following diagram commutes:
\begin{equation*}
\xymatrix{A\ot M\ar[rr]^{\tilde c_{M,A} \tilde c_{A,M}} \ar[d]_{g^{-1}\ot\id_M}&&A\ot M\ar[d]\\ A\ot M\ar[rr]^{}&&M}
\end{equation*}
Here $g^{-1}$ denotes the operator $A\to A$ that takes a function $f:G\to k$ to the function 
$x\mapsto f(xg^{-1})$.

\item[(ii)] If $M\in (\D_G)_g$ and $N\in\D_G$ then the $G$-braiding
$c_{M,N}: M\ot_AN\iso g(N)\ot_AM$ is the unique morphism
such that the following diagram commutes:
 \begin{equation*}
\xymatrix{M\ot N\ar[rr]^{\tilde c_{M,N}} \ar[d]&&N\ot M\ar[d]\\ M\ot_AN\ar[rr]^{c_{M,N}}&&g(N)\ot_AM}
\end{equation*}
\end{enumerate}
\end{proposition}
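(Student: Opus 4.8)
The plan is to obtain both statements by unwinding, step by step, the construction of $\D_G$ given in \S\ref{trickyproof}, namely $\D_G=\A^{G\times\{1\}}$ with $\A:=\D_{G\times G}\in\K_3$. First I would set up the dictionary. Iterating de-equivariantization (as in \eqref{2step equiv} and Lemma~\ref{3rd iso}) gives $(\D_{G\times G})^{G\times\{1\}}\cong\D_{\{1\}\times G}$, so $\D_G$ is the de-equivariantization of $\D$ with respect to the second copy of $G$; concretely it is the category of left $A$-modules in $\D$, where $A$ is the regular algebra of $\E=\Rep(G)\subset\D$ equipped with the half-braiding fixed in \S\ref{deeqcrossed}, and the tensor product is $\ot_A$ as in \eqref{tens_A}. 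Under this identification, by construction the $G$-action on $\D_G$ is the $(\{1\}\times G)$-action on $\A$ (which is the action of $G$ on $A$ by right translations), the $G$-grading is the restriction to $\D_G$ of the grading \eqref{secondgrading} $\A=\bigoplus_g\A_{(g)}$, $\A_{(g)}=\bigoplus_x\A_{x,xg}$, and the $G$-braiding is the restriction of the crossed braiding \eqref{2crbrading} of Lemma~\ref{from_relative_to_crossed}. So everything reduces to rewriting these three transported structures in terms of $A$-modules.

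For (i), I would characterize membership in $\A_{(g)}$ intrinsically. Using the $(G\times G)$-equivariant decomposition $\be_\A=\bigoplus_xE_x$ from Definition~\ref{K3def}(ii), an object of $\A$ lies in $\A_{(g)}$ exactly when its ``left support'' $\{x:E_x\ot M\neq0\}$ and ``right support'' $\{y:M\ot E_y\neq0\}$ are related by right translation by $g$. The next step is a direct computation, keeping track of the $\K_3$-structure on $\A$ and of the half-braiding of $A$, showing that for a left $A$-module $M$ the composite $A\ot M\xrightarrow{\tilde c_{M,A}\tilde c_{A,M}}A\ot M\xrightarrow{\text{action}}M$ coincides with $A\ot M\xrightarrow{g^{-1}\ot\id_M}A\ot M\xrightarrow{\text{action}}M$ precisely when $M$ sits in $\A_{(g)}$; here $g^{-1}$ is the translation automorphism of $\Fun(G)$ described in the statement, which is indeed an automorphism of $A$ in $\E$. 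This is the commuting square in (i).

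For (ii), I would trace where the crossed braiding \eqref{2crbrading} comes from. By \S\ref{trickyproof} the relative braiding on $\A\in\K_3$ is the de-equivariantization of the tensor functor $M:\D\bt\D\to\D$ of \eqref{JS functor}, whose tensor structure is built from $\tilde c$; hence, for left $A$-modules $M,N$, the induced relative braiding $M\ot_AN\to N\ot_AM$ is simply the $A$-module morphism determined by $\tilde c_{M,N}:M\ot N\to N\ot M$ after passing to the quotients $M\ot N\twoheadrightarrow M\ot_AN$ and $N\ot M\twoheadrightarrow N\ot_AM$. The remaining point is the index bookkeeping in \eqref{2crbrading}: for $M\in(\D_G)_g$ and the pieces $M_{x,xg}$, $N_{xg,xgh}$ that can be multiplied, the superscript $g_2^{-1}g_1$ in \eqref{2crbrading} evaluates to $(xg)^{-1}x=g^{-1}$, so under the $(\{1\}\times G)$-action the target becomes $g(N)\ot_AM$ rather than $N\ot_AM$, while the prescripts ${}^{h^{-1}}X$ disappear upon passing to $(G\times\{1\})$-equivariant objects. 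Together with the evident surjectivity of $M\ot N\twoheadrightarrow M\ot_AN$ (which gives uniqueness of $c_{M,N}$), this is the commuting square in (ii).

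The step I expect to be the real obstacle is not any one calculation but the consistent reconciliation of conventions across the chain of constructions: which of the two half-braidings on $A$ is in force, the left-versus-right direction of each of the two $G$-actions on $\A$, and the orientation of the parallelogram indices $g_1,g_2,g_3,g_4$ in \eqref{2crbrading}; all of these must be aligned so that the translation operator in (i) and the autoequivalence $g(-)$ in (ii) come out as the same $g$ with matching variance. If this turns out to be unwieldy, a cleaner alternative is to take the grading and $G$-braiding of the proposition as the definition of a braided $G$-crossed structure on $A$-mod, verify directly the axioms of Definition~\ref{crossed} (the one genuinely nontrivial check being that the $G$-braiding $M\ot_AN\to g(N)\ot_AM$ is well defined and satisfies the hexagons \eqref{G-hex1}--\eqref{G-hex2}) together with the fact that its equivariantization recovers $\D$ as a braided fusion category over $\Rep(G)$, and then invoke the uniqueness in Theorem~\ref{princcross}.
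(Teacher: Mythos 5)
Your overall strategy---unwinding the inverse $2$-functor through $\A=\D_{G\times G}\in\K_3$ and transporting the grading \eqref{secondgrading} and the crossed braiding \eqref{2crbrading} into the $A$-module picture---is a legitimate route, but it is not the paper's, and as written it stops short of a proof exactly where the work lies. The paper uses a reduction you did not consider: by Theorem~\ref{princcross} one may assume $\D=\C^G$ for a braided $G$-crossed category $\C$; the equivalence $\C\iso(\C^G)_G$ of \S\ref{forgetequiv} then identifies $X\in\C$ with the $A$-module $\Ind(X)$ (with $A=\Ind(\be_\C)$ and the map $M\ot N\to M\ot_A N$ realized as the canonical morphism $\Ind(X)\ot\Ind(Y)\to\Ind(X\ot Y)$), and both statements collapse to the commutativity of a single square, \eqref{the-square}, which is immediate from the definition of $\tilde c$ in \S\ref{equiv3tion}; part (i) is extracted from that square by specializing to $X=\be$, $Y=Z$ and to $X=Z$, $Y=\be$, together with the observation that the maps $u_{g,Z}$ of \eqref{comp2} are pairwise distinct, which pins the grading down. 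This reduction is precisely what makes the convention bookkeeping you flag as ``the real obstacle'' disappear.

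In your primary route, the two items you label ``a direct computation'' and ``index bookkeeping'' are the actual content of the proposition: that the monodromy $\tilde c_{M,A}\tilde c_{A,M}$ followed by the action agrees with the $g^{-1}$-translated action exactly on $\A_{(g)}$, and that the crossed braiding of Lemma~\ref{from_relative_to_crossed} descends from $\tilde c_{M,N}$ with target $g(N)\ot_A M$ rather than $N\ot_A M$. Neither is carried out, so the proposal is a plan rather than a proof. Your fallback---take the formulas as the definition of a braided $G$-crossed structure, verify Definition~\ref{crossed}, and invoke uniqueness in Theorem~\ref{princcross}---can be made to work, but note two points: the $2$-equivalence only yields an equivalence of braided $G$-crossed categories, not equality of structures on the given category of $A$-modules, so to obtain the proposition as stated you still need the rigidity remarks the paper uses (surjectivity of $M\ot N\to M\ot_A N$ for (ii), distinctness of the candidate gradings for (i)); and verifying the hexagons \eqref{G-hex1}--\eqref{G-hex2} for $\ot_A$ directly is no lighter than the computation you were trying to avoid, whereas on the equivariantization side it is already built into the construction of $\tilde c$.
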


\begin{proof}
By Theorem~\ref{princcross}, we can assume that $\D=\C^G$, where $\C$ is a braided $G$-crossed
fusion category. 
In this case we will check (i) and (ii) straightforwardly.

Let \,$\Ind :\C\to\C^G$ be the induction functor~\eqref{induction}. For $X,Y\in\C$ one has a canonical morphism $\Ind (X)\ot \Ind (Y)\to \Ind (X\ot Y)$, see \eqref{pseudotenstruct}.
The fusion category $\D_G=(\C^G)_G$ identifies with $\C$ so that $X\in\C$ corresponds to the
$A$-module $M=\Ind\, (X)$ (viewed as an object of $\D =\C^G$, our $A$ identifies with 
$\Ind\, (\be_{\C})$, and the $A$-module structure on $M$ is given by the canonical morphism 
$\Ind\, (\be_{\C})\ot\Ind\, (X)\to\Ind\, (X)\,$). Moreover, if $Y\in\C$ and $N=\Ind (Y)\in\D$ then the
usual morphism $M\ot N\to M\ot_A N$ identifies with the canonical 
morphism $\Ind (X)\ot \Ind (Y)\to \Ind (X\ot Y)$.

Now we see that statement (ii) is equivalent to the commutativity of the diagram
 \begin{equation}  \label{the-square}
\xymatrix{\Ind\, (X)\ot \Ind\, (Y)\ar[rr]^{\tilde c_{\Ind\, (X),\Ind\, (Y)}} \ar[d]&&\Ind\, (Y)\ot \Ind\, (X)\ar[d]\\ \Ind\, (X\ot Y)\ar[rr]^{\Ind\, (c_{X,Y})}&&\Ind\, (g(Y)\ot X)}
\end{equation}
for any $X\in\C_g$, $Y\in\C$. This commutativity immediately follows from the definition of the 
braiding $\tilde c$ in $\C^G$ (see \S\ref{equiv3tion}).

The automorphism $g^{-1}:A\to A$ from 
(i) identifies with the
automorphism $\varphi_g:\Ind (\be )\to\Ind (\be )$ that induces for each $\alpha\in G$ the
``identity map" between $\alpha (\be )\subset\Ind (\be )$ and $\alpha g (\be )\subset\Ind (\be )$.
So to prove (i), we have to check that an object $Z\in\C$ belongs to $\C_g$ if and only if 
the composition
\begin{equation}   \label{comp1}
\Ind (\be)\ot\Ind (Z)\xrightarrow{\tilde c_{\Ind (Z),\Ind (\be )}\cdot \tilde c_{\Ind (\be ),\Ind (Z)}}
\Ind (\be)\ot\Ind (Z)\to\Ind (Z)
\end{equation}
equals $u_{g,Z}$, where $u_{g,Z}$ is the composition
\begin{equation}     \label{comp2}
\Ind (\be)\ot\Ind (Z)\xrightarrow{\varphi_g\ot\id}\Ind (\be)\ot\Ind (Z)\to\Ind (Z).
\end{equation}
Since $u_{g,Z}\ne u_{g',Z}$ whenever $g\ne g'$ and $Z\ne 0$, it suffices to prove the ``only if"
statement, i.e., to show that if $Z\in\C_g$ then \eqref{comp1} equals \eqref{comp2}. To do this,
apply the commutativity of \eqref{the-square} for $X=\be$, $Y=Z$ and for $X=Z$, $Y=\be$.
\end{proof}

\subsubsection{Complements} \label{complem}
Let $G$ be a finite group and $\E :=\Rep(G)$. Let $\D$ be a braided
fusion category containing $\E$. Its de-equivariantization $\C:=\D_G$ is a braided $G$-crossed
fusion category and $\C^G=\D$. Let $\C_1\subset \C$ be the trivial component of the grading.
Recall that $\C_1$ is a braided category.

\begin{proposition}\label{crosscentralizers}
 (i) $\C_1=\E'_G=\E'\boxtimes_\E \Vec$ and $\E'=\C_1^G$.

(ii) $\D$ is non-degenerate if and only if $\C_1$ is non-degenerate and the grading on $\C$
is faithful.

(iii) $\D'\vee \E=(\C_1')^G$, where $\C_1'$ is the centralizer of $\C_1$ in $\C_1$.

(iv) Let $H\subset G$ be the image of the grading $\O(\C)\to G$. Then $H$ is a normal subgroup
and $\D'\cap \E$ equals $\Rep(G/H)\subset \Rep(G)=\E$.
\end{proposition}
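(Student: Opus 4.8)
The strategy is to transfer every statement from $\D$ (a braided fusion category containing $\E=\Rep(G)$) to the braided $G$-crossed category $\C=\D_G$ via the equivalence $\C^G\iso\D$ of Theorem~\ref{princcross} and the dictionary of Proposition~\ref{lattice isomorphism} and Corollary~\ref{newc}. The key observation making everything work is that $\E\subset\C_1^G$: indeed $\E=\Vec^G$ and $\Vec\subset\C_1$ because $\Vec$ is concentrated in degree $1$, so $\E$ is a $G$-stable fusion subcategory of $\C_1$ sitting inside $\C$, and its de-equivariantization is precisely the fiber functor data we started with.

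\textbf{Part (i).} First I would show $\C_1=\E'\bt_\E\Vec$. Under the correspondence of Proposition~\ref{lattice isomorphism}(i) between fusion subcategories of $\D$ containing $\E$ and $G$-stable fusion subcategories of $\C$, the subcategory $\C_1\subset\C$ corresponds to $\C_1^G\subset\D$; I claim $\C_1^G=\E'$. By Remark~\ref{my_remarks}(i) the forgetful functor $\C^G\to\C$ is a central functor, and the braiding $\tilde c$ on $\D=\C^G$ restricts on $\C_1$ to the ordinary braiding inherited from the $G$-crossed structure; an object $Y\in\C^G$ centralizes $\E=\Vec^G$ in $\D$ if and only if, after forgetting, its monodromy with every object of $\Vec$ (i.e. with the grading operators) is trivial, which by the description of $\tilde c$ in \S\ref{equiv3tion} and the $G$-grading means exactly that $Y$ has degree $1$, i.e. $Y\in\C_1^G$. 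Hence $\E'=\C_1^G$, and de-equivariantizing (using Proposition~\ref{lattice isomorphism}(i) again) gives $\C_1=\E'\bt_\E\Vec$. The main subtlety here is pinning down precisely which monodromy condition in $\C^G$ corresponds to ``degree $1$ in $\C$'', which requires unwinding formula \eqref{centralstructure} of Remark~\ref{my_remarks}(ii); this is the step I expect to take the most care.

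\textbf{Parts (ii)--(iv).} For (ii): by Proposition~\ref{Muger1prime}, $\D$ is non-degenerate iff $\D'=\Vec$. Now $\D'\supset\E$ forces $\E'=\C_1^G\supset\D'$; combining with (i), non-degeneracy of $\D$ is equivalent to $\D'=\Vec$, and since $\E\subset\D$ one has $\D$ non-degenerate $\Rightarrow$ $\E'=\D$ (as $\D'=\Vec$ and centralizing is an involution on subcategories containing $\C'$, cf. Theorem~\ref{Muger2}). I would instead argue directly: $\D$ is non-degenerate iff $\dim(\D')=1$; by Theorem~\ref{Muger2-FPdim} applied in $\C^G$ and Proposition~\ref{equidimprop}, and the fact (Turaev, or a direct $G$-crossed computation) that a braided $G$-crossed category with trivial component $\C_1$ and faithful $H$-grading has $\C^G$ non-degenerate iff $\C_1$ is non-degenerate and $H=G$. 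The cleanest route is: $\C_1$ non-degenerate and grading faithful $\Leftrightarrow$ $(\C_1)'_{\text{in }\C_1}=\Vec$ and $\O(\C)\to G$ surjective; then use (iii) and (iv) to identify $\D'$. For (iii): $\D'\vee\E$ corresponds under Proposition~\ref{lattice isomorphism}(i) to a $G$-stable subcategory of $\C$; since $\D'\subset\E'=\C_1^G$, everything happens inside $\C_1^G$, and there Proposition~\ref{lattice isomorphism}(iii) gives $(\D')'_{\text{in }\C_1^G}$ corresponds to $(\C_1')$ — more precisely $\D'=(\C_1^G)'$ computed in $\D$, which by the $G$-equivariant compatibility of centralizers (Proposition~\ref{lattice isomorphism}(iii)) equals $(\C_1')^G$ after adjoining $\E$; so $\D'\vee\E=(\C_1')^G$ where $\C_1'$ is the M\"uger center of $\C_1$. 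For (iv): the image $H$ of $\O(\C)\to G$ is normal since conjugation by $g\in G$ permutes the graded components (axiom (a) of Definition~\ref{crossed}) and $G$ acts transitively on... more directly, $H=\supp\C$ is $G$-stable under conjugation hence normal; and $\D'\cap\E$ consists of those $V\in\Rep(G)$ centralizing all of $\D=\C^G$, which by \eqref{centralstructure} means $\rho(h)=\id$ for all $h\in H$, i.e. $\rho$ factors through $G/H$, giving $\D'\cap\E=\Rep(G/H)$. The main obstacle throughout is bookkeeping the four-fold interplay (grading, $G$-action, braiding, centralizer) in the $G$-crossed setting; each individual verification is a short diagram chase using the axioms of Definition~\ref{crossed} and the explicit braiding formula of \S\ref{equiv3tion}, but assembling them consistently is where errors would creep in.
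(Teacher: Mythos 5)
Your treatment of (i) and (iv) follows the paper's own route: both reduce to unwinding the central structure \eqref{centralstructure}, which shows that the monodromy of $Y=(V,\rho)\in\E$ with an equivariant object $X$ acts on $X_g\ot V$ as $\id_{X_g}\ot\rho(g)$; hence $\E'=\C_1^G$ (and then $\C_1=\E'\bt_\E\Vec$ by Proposition \ref{lattice isomorphism}(i)), and $\D'\cap\E=\Rep(G/H)$, with normality of $H$ coming from axiom (a) of Definition \ref{crossed}. That part is fine.

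The genuine gap is in (iii). Your chain asserts ``$\D'=(\C_1^G)'$ computed in $\D$'', which is false: the centralizer of $\C_1^G=\E'$ in $\D$ is $\E''$, and what you actually need is $\E''=\D'\vee\E$. The inclusion $\D'\vee\E\subset\E''$ is trivial, but the reverse inclusion is exactly Corollary \ref{Muger-2'} (resting on the double centralizer theorem, Theorem \ref{Muger2}(ii)) --- this is the only nontrivial input in (iii), and your proposal never invokes it; the phrase ``after adjoining $\E$'' does not supply it. Having $\D'\vee\E=\E''\subset\E'$, one then applies Proposition \ref{lattice isomorphism}(iii) not to $\C$ (which is only $G$-crossed braided, so a centralizer of $\C_1$ taken in $\C$ makes no sense) but to $\E'$ viewed as a braided category over $\E$ with de-equivariantization $\C_1$; this identifies the M\"uger center of $\C_1^G$ with $(\C_1')^G$. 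Relatedly, in (ii) your first route is circular: the ``fact'' you quote (that $\C^G$ is non-degenerate iff $\C_1$ is non-degenerate and the grading faithful) is precisely statement (ii), and the intermediate claim ``$\D$ non-degenerate $\Rightarrow\E'=\D$'' is wrong. Only your last suggestion matches the paper, and it still needs to be executed: by (iii), $\C_1'=\Vec$ iff $\D'\subset\E$; by (iv), faithfulness of the grading iff $\D'\cap\E=\Vec$; the conjunction of the two is $\D'=\Vec$, i.e.\ non-degeneracy of $\D$ by Proposition \ref{Muger1prime}.
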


\begin{proof} 
Let $X\in\C$, $X=\bigoplus\limits_{g\in G}X_g$, $X_g\in\C_g$. Let 
$Y=(V,\rho )\in\Rep (G)=\E\subset\C^G$. Then by the definition from \S \ref{equiv3tion},
$c_{Y,X}\tilde c_{X,Y}: X\ot Y\to X\ot Y$ is the endomorphism of $X\ot V$ whose restriction to
$X_g\ot V$ equals $\id_{X_g}\ot\rho (g)$. By definition, $\E'$ consists of all $X\in\C^G$ such that
$c_{Y,X}\tilde c_{X,Y}=\id_{X\ot Y}$ for all $Y=(V,\rho )\in\Rep (G)$. This happens if and only if
$X_g=0$ for $g\ne 1$.
So $\E'=\C_1^G$. This proves the
second part of (i). The first one follows by Proposition \ref{lattice isomorphism} (i).

On the other hand, $\D'\cap\E$ consists of all $Y=(V,\rho )\in\Rep (G)$ such that
$c_{Y,X}\tilde c_{X,Y}=\id_{X\ot Y}$ for all $G$-equivariant $X$. This happens if and only if
$\rho (g)=1$ for all $g\in H$. This proves (iv).

For (iii), note that $\D'\vee \E=\E''$ by Corollary \ref{Muger-2'} and $\D'\vee \E \subset \E'$. Thus the result
follows from  (i) and Proposition \ref{lattice isomorphism} (iii).

By (iii), $\C_1'=\Vec$ if and only if $\D'\subset \E$. Combining this with (iv) one gets~(ii).
\end{proof}

Recall the following definition from the introduction.

\begin{definition}
\label{Lagrangian subcategories}
Let $\D$ be a braided fusion category. We say that a Tannakian subcategory $\E \subset \D$
is {\em Lagrangian} if $\E'=\E$.
\end{definition}

\begin{proposition} \label{Lagrange}
The following properties of a Tannakian subcategory $\E \subset \D$ are equivalent:

(i) $\E$ is Lagrangian;

(ii) $\C_1=\Vec$;

(iii) $\C$ is pointed and $|\O(\C_g)|\le 1$ for every $g\in G$.
\end{proposition}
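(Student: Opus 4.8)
The plan is to prove the equivalences (i) $\Leftrightarrow$ (ii) $\Leftrightarrow$ (iii) using the dictionary between braided fusion categories containing $\E=\Rep(G)$ and braided $G$-crossed categories established in \S\ref{orbi}, together with Proposition~\ref{crosscentralizers}. Throughout, write $\C:=\D_G$ for the braided $G$-crossed category with $\C^G=\D$, and let $\C_1$ denote the trivial component of its grading.

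\textbf{(i) $\Leftrightarrow$ (ii).} By Proposition~\ref{crosscentralizers}(i), $\C_1 = \E'\boxtimes_\E\Vec$, and by Corollary~\ref{newc}(i), $\E'\boxtimes_\E\Vec$ is non-degenerate if and only if $\E'' = \E$. Alternatively, and more directly: first I would observe that $\E$ is Lagrangian means $\E' = \E$, while $\C_1 = \Vec$ means $\E'\boxtimes_\E\Vec = \Vec$, which by Proposition~\ref{tens func}(ii) (comparing Frobenius-Perron dimensions) holds if and only if $\FPdim(\E') = \FPdim(\E)$. Since $\E\subset\E'$ always (as $\E$ is symmetric, $\E\subset\E'$), the equality $\FPdim(\E')=\FPdim(\E)$ is equivalent to $\E'=\E$. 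This gives (i) $\Leftrightarrow$ (ii) cleanly; one just needs $\FPdim(\E'\boxtimes_\E\Vec) = \FPdim(\E')/\FPdim(\E)$ from Proposition~\ref{equidimprop}, and the fact that a fusion category is trivial iff its Frobenius-Perron dimension is $1$.

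\textbf{(ii) $\Leftrightarrow$ (iii).} This is where I expect the main content to lie. The implication (iii) $\Rightarrow$ (ii) is easy: if $\C$ is pointed and $|\O(\C_g)|\le 1$ for all $g$, then in particular $|\O(\C_1)|\le 1$, so $\C_1$, being a fusion category with at most one simple object, equals $\Vec$. For (ii) $\Rightarrow$ (iii), suppose $\C_1=\Vec$. I would use Proposition~\ref{equidimprop} together with Proposition~\ref{dimgradi} applied to the grading $\C = \bigoplus_{g\in G}\C_g$: one has $\FPdim(\C) = \FPdim(\D)/|G|$ and, if $H\subset G$ is the support of the grading (a normal subgroup by Proposition~\ref{crosscentralizers}(iv)), then $\FPdim(\C) = |H|\cdot\FPdim(\C_1) = |H|$. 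The key point is then to show each nonzero graded component $\C_g$ (for $g\in H$) has exactly one simple object, each of Frobenius-Perron dimension $1$. For this I would use the fact that $\C_1$ acts on $\C_g$ by tensor multiplication: since each $X\in\C_g$ gives $X\otimes X^*\in\C_1 = \Vec$, every simple object of $\C_g$ is invertible, so $\C$ is pointed. Then $\C_g$, being a module category over the trivial category $\C_1 = \Vec$ generated by invertible objects, has $\FPdim(\C_g) = |\O(\C_g)|$; but summing over $g\in H$ gives $|H| = \FPdim(\C) = \sum_{g\in H}|\O(\C_g)|$ with $|H|$ terms, each a positive integer, forcing $|\O(\C_g)| = 1$ for all $g\in H$ and $\C_g = 0$ otherwise.

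The main obstacle, I expect, is making the step "every simple object of $\C_g$ is invertible" fully rigorous: one needs that $X\otimes X^*\in\C_1=\Vec$ forces $X$ invertible, which follows because $\FPdim(X\otimes X^*) = \FPdim(X)^2$ must then be $1$ (the only object of $\Vec$ up to the relevant multiplicity), hence $\FPdim(X)=1$. A second delicate point is justifying $\FPdim(\C_g) = |\O(\C_g)|$ once $\C$ is pointed — this is immediate since all simple objects have Frobenius-Perron dimension $1$, so $\FPdim(\C_g) = \sum_{X\in\O(\C_g)}\FPdim(X)^2 = |\O(\C_g)|$. Finally I would remark that the normality of $H$ and the identification $\D'\cap\E = \Rep(G/H)$ from Proposition~\ref{crosscentralizers}(iv) are not strictly needed for this proposition but clarify the picture. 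I would present the argument in the order (i) $\Leftrightarrow$ (ii), then (iii) $\Rightarrow$ (ii), then (ii) $\Rightarrow$ (iii), citing Propositions~\ref{crosscentralizers}, \ref{equidimprop}, \ref{dimgradi}, and \ref{tens func} as the workhorses.
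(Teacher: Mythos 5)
Your overall structure matches the paper's: (i)$\Leftrightarrow$(ii) via Proposition~\ref{crosscentralizers}(i) (your Frobenius--Perron count $\FPdim(\E')=\FPdim(\E)\Leftrightarrow\E'=\E$ is a fine substitute for the paper's direct use of $\E'=\C_1^G$), (iii)$\Rightarrow$(ii) is trivial in both, and for (ii)$\Rightarrow$(iii) both proofs hinge on showing that $\C$ is pointed. But the justification you give for that hinge step has a real gap: from $X\ot X^*\in\C_1=\Vec$ you conclude that $\FPdim(X\ot X^*)$ "must be $1$" because $\be$ is "the only object of $\Vec$ up to the relevant multiplicity." That does not follow — an object of $\Vec$ is a multiple $\be^{\oplus m}$ and its Frobenius--Perron dimension is the multiplicity $m$, which a priori could exceed $1$. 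The missing (one-line) argument, which is exactly what the paper supplies, is that for simple $X$ the multiplicity of $\be$ in $X\ot{}^*X$ equals $\dim\Hom(X\ot{}^*X,\be)=\dim\Hom(X,X)=1$; hence $X\ot{}^*X\simeq\be$ and $X$ is invertible. Without this, "$X\ot X^*\in\Vec$ forces $\FPdim(X)=1$" is unproved.

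Once pointedness is in hand, your finish via dimension counting (support $H$, $\FPdim(\C)=|H|\cdot\FPdim(\C_1)=|H|$ from Proposition~\ref{dimgradi}, then $|H|=\sum_{g\in H}|\O(\C_g)|$ forces each term to be $1$) is valid, but it is heavier than needed: the paper simply observes that $\O(\C)$ is a group and the grading map $\O(\C)\to G$ is a group homomorphism whose kernel is $\O(\C_1)=\{1\}$ by (ii), hence injective, giving $|\O(\C_g)|\le 1$ directly — no appeal to Proposition~\ref{dimgradi}, \ref{equidimprop}, or normality of $H$ is required. So: patch the invertibility step as above, and consider replacing the counting argument by the injectivity-of-the-grading argument for a shorter proof.
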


\begin{proof} It is clear from Proposition \ref{crosscentralizers}~(i) that (i) is equivalent to (ii).
It is also clear that (iii) implies (ii). 

Finally, assume that (ii) holds. Then $X\ot {\null^*X}\in \C_1=\Vec$ for every simple $X\in\C$.
On the other hand, $\dim \Hom(X\ot {\null^*X},\be)=\dim\Hom(X,X)=1$. Therefore every simple $X\in\C$
is invertible, i.e.,  $\C$ is pointed.  So $\O (\C)$ is a group. By (ii), the grading homomorphism
$\O (\C)\to G$ has trivial kernel. So it is injective, i.e., $|\O(\C_g)|\le 1$ for all $g\in G$.
Thus (ii) implies (iii).
\end{proof}

The next result is immediate from Propositions \ref{Lagrange} and \ref{crosscentralizers}~(ii).

\begin{corollary} \label{crossLag}
The following properties are equivalent:

(i) $\D$ is non-degenerate and $\E \subset \D$ is Lagrangian;

(ii) $\C$ is pointed and the grading map $\O(\C)\to G$ is bijective.
\end{corollary}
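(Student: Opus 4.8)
Corollary \ref{crossLag} asks to prove the equivalence of: (i) $\D$ is non-degenerate and $\E\subset\D$ is Lagrangian; (ii) $\C$ is pointed and the grading map $\O(\C)\to G$ is bijective.

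The plan is to combine two earlier results — Proposition \ref{Lagrange} and Proposition \ref{crosscentralizers}(ii) — so the proof is essentially a two-line deduction. First I would recall the setup: $\D$ is a braided fusion category containing $\E=\Rep(G)$, its de-equivariantization $\C:=\D_G$ is a braided $G$-crossed fusion category with $\C^G=\D$, and $\C_1$ denotes the trivial component of the grading of $\C$. By Proposition \ref{crosscentralizers}(ii), $\D$ is non-degenerate if and only if $\C_1$ is non-degenerate and the grading $\O(\C)\to G$ is faithful (i.e., $\C=\bigoplus_g\C_g$ has each $\C_g\neq 0$). By Proposition \ref{Lagrange}, $\E$ is Lagrangian if and only if $\C_1=\Vec$, which by the same proposition holds if and only if $\C$ is pointed and $|\O(\C_g)|\le 1$ for every $g\in G$.

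So the deduction runs as follows. Assume (i). Then $\E$ Lagrangian gives, via Proposition \ref{Lagrange}, that $\C$ is pointed with $|\O(\C_g)|\le 1$ for all $g$; in particular $\C_1=\Vec$ is non-degenerate automatically. Then $\D$ non-degenerate forces, via Proposition \ref{crosscentralizers}(ii), that the grading is faithful, i.e., $\O(\C_g)\neq\emptyset$, hence $|\O(\C_g)|=1$ for every $g\in G$. Since $\C$ is pointed, $\O(\C)$ is a group and the grading map $\O(\C)\to G$ is a group homomorphism which is surjective (faithfulness) and injective (because $\C_1=\Vec$ means the kernel is trivial), hence bijective; this is (ii). Conversely, assume (ii). Then $\C$ pointed with $\O(\C)\to G$ bijective gives $|\O(\C_g)|=1\le 1$ for all $g$, so by Proposition \ref{Lagrange}, $\E$ is Lagrangian and $\C_1=\Vec$. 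Since $\C_1=\Vec$ is trivially non-degenerate and the grading is faithful ($\O(\C_g)\neq\emptyset$ for all $g$ by surjectivity), Proposition \ref{crosscentralizers}(ii) gives that $\D$ is non-degenerate; this is (i).

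There is essentially no obstacle here, since the two input propositions do all the work; the only point requiring a moment's care is bookkeeping the equivalence between "faithful grading" (each $\C_g\neq 0$) and "$\O(\C)\to G$ surjective as a map of sets", and, when $\C$ is known to be pointed, noting that this map is then a homomorphism of abelian groups so that "injective $+$ surjective $=$ bijective" is legitimate. Accordingly I will simply write:

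\begin{proof}
By Proposition~\ref{crosscentralizers}(ii), $\D$ is non-degenerate if and only if $\C_1$ is non-degenerate and the grading $\O(\C)\to G$ is faithful. By Proposition~\ref{Lagrange}, $\E$ is Lagrangian if and only if $\C_1=\Vec$, which in turn is equivalent to $\C$ being pointed with $|\O(\C_g)|\le 1$ for every $g\in G$.

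Suppose (i) holds. Then $\C_1=\Vec$, so $\C$ is pointed and $|\O(\C_g)|\le 1$ for all $g$; in particular $\C_1$ is non-degenerate, so by Proposition~\ref{crosscentralizers}(ii) the grading is faithful, i.e. $\C_g\ne 0$ for all $g\in G$. Hence $|\O(\C_g)|=1$ for all $g$. Since $\C$ is pointed, $\O(\C)$ is a group and the grading map $\O(\C)\to G$ is a group homomorphism; it is surjective by faithfulness and injective because $\C_1=\Vec$ forces its kernel to be trivial. Thus it is bijective, proving (ii).

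Conversely, suppose (ii) holds. Then $|\O(\C_g)|\le 1$ for all $g\in G$, so by Proposition~\ref{Lagrange} the subcategory $\E$ is Lagrangian and $\C_1=\Vec$. In particular $\C_1$ is non-degenerate, and the grading is faithful since $\O(\C)\to G$ is surjective. By Proposition~\ref{crosscentralizers}(ii), $\D$ is non-degenerate, proving (i).
\end{proof}
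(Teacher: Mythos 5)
Your proof is correct and follows exactly the route the paper intends: the paper simply declares the corollary ``immediate from Propositions~\ref{Lagrange} and \ref{crosscentralizers}(ii)'', and your argument is just the careful spelling-out of that deduction (including the harmless extra remark that $\Vec$ is non-degenerate and the group-homomorphism bookkeeping for bijectivity).
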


\subsubsection{Pointed categories as braided $G$-crossed categories} \label{ptbr}
\begin{remark}    \label{Adrem}
Let $\A$ be a monoidal category. Then every invertible object $X\in\A$ defines a monoidal equivalence 
\[
\Ad_X : \A\iso\A, \quad \Ad_X (Y):=X\ot Y\ot X^{-1}.
\]
If, in addition, $\A$ is $k$-linear and $\End (\be_{\A})=k$ then for every invertible object 
$X\in\A$ the homomorphism $k^{\times}=\Aut (X)\to\Aut (\Ad_X)$ is trivial. So $\Ad_X$ depend
(up to unique isomorphism) only on the isomorphism class of $X$. Thus the group of isomorphism
classes of invertible objects of $X$ acts on the monoidal category $\A$.
\end{remark}

Now let $\C$ be a pointed category and $G=\O (\C )$. Then all simple objects of $\C$ are invertible,
so $G$ is a group. According to Remark \ref{Adrem}, $G$ acts on $\C$ by conjugation, 
$g\mapsto\Ad_g$. On the other hand, $\C$ has a canonical $G$-grading 
(namely, the identity map $\O (\C )\iso G$). We claim that the $G$-action and the $G$-grading
can be combined together into a structure of braided $G$-crossed category on $\C$. Here is
a precise statement.

\begin{proposition}   \label{ptcr}
\begin{enumerate}
\item[(i)] The canonical $G$-grading on $\C$ can be upgraded to a structure of 
braided $G$-crossed category on $\C$. Such an upgrade is unique up to unique isomorphism.
\item[(ii)] The corresponding action of $G$ on $\C$ is the action by conjugation.
\item[(iii)] The braided fusion category $\C^G$ corresponding to the braided $G$-crossed category 
$\C$ canonically identifies with the center $\Z (\C )$. More precisely, the braided functor
$\C^G\to\Z (\C )$ from Remark~\ref{my_remarks}(i) is an equivalence.
\end{enumerate}
\end{proposition}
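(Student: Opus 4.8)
The plan is to prove the three statements essentially by bootstrapping off the general machinery established in \S\ref{orbi}, together with the special features of the pointed situation. For part (i), the key observation is that the center $\Z(\C)$ of a pointed category $\C$ contains a canonical Tannakian subcategory equivalent to $\Rep(G)$ with $G=\O(\C)$: indeed, the pairing from \S\ref{useful pairing} (or more directly the $G$-grading of $\C$ itself, viewed through Proposition~\ref{un-grad}(iii) applied to $U_\C$) produces a braided functor $\Rep(G)\to\Z(\C)$ whose image is Tannakian. Granting this, $\Z(\C)$ is a braided fusion category containing $\Rep(G)$, so by Theorem~\ref{princcross} its de-equivariantization $\Z(\C)_G$ is a braided $G$-crossed fusion category, and $(\Z(\C)_G)^G \cong \Z(\C)$. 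The first task is then to identify $\Z(\C)_G$ with $\C$ as a fusion category; once that is done, transporting the $G$-crossed structure on $\Z(\C)_G$ through this identification gives the desired structure on $\C$, and the uniqueness assertion will follow from the uniqueness clause in Theorem~\ref{princcross} (combined with the fact that the grading is fixed in advance and a $G$-crossed structure compatible with a given grading is rigid).

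For the identification $\Z(\C)_G\cong\C$, I would use dimension counting together with Theorem~\ref{princprinc}(iii) and Proposition~\ref{equidimprop}: one has $\FPdim(\Z(\C))=\FPdim(\C)^2$ by \eqref{dimZ}, so $\FPdim(\Z(\C)_G)=\FPdim(\Z(\C))/|G|=\FPdim(\C)^2/|\O(\C)|=\FPdim(\C)$ since a pointed category has $\FPdim(\C)=|\O(\C)|$. So it suffices to exhibit a surjective (equivalently, by Proposition~\ref{tens func}(iii), an equivalence once dimensions match) tensor functor between $\C$ and $\Z(\C)_G$, or to directly construct the equivalence. The cleanest route is to note that the forgetful functor $\Z(\C)\to\C$ is a central functor in the evident way and its restriction to the subcategory $\Rep(G)\subset\Z(\C)$ is the ``regular'' functor $\Rep(G)\to\Vec\hookrightarrow\C$; the de-equivariantization of the forgetful functor is then a tensor functor $\Z(\C)_G\to\C$, which one checks is fully faithful (using Proposition~\ref{tens func}(i) and that the Tannakian subcategory $\Rep(G)$ is precisely what gets killed) and hence, by the dimension equality, an equivalence. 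This simultaneously sets up part (iii): the equivalence $\C^G\cong\Z(\C)$ of Proposition~\ref{ptcr}(iii) is just the inverse of this construction, and the claim that the braided functor $\C^G\to\Z(\C)$ of Remark~\ref{my_remarks}(i) realizes it follows by tracing through the definitions in \S\ref{equiv3tion} and \S\ref{deeqcrossed}.

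For part (ii), that the resulting $G$-action on $\C$ is the conjugation action $g\mapsto\Ad_g$ of Remark~\ref{Adrem}: this is a matter of unwinding the explicit description of the $G$-action on a de-equivariantization $\D_G$ (it comes from the action of $G$ on the regular algebra $A$ by right translations, see \S\ref{deeqcrossed}) and matching it against the conjugation action under the identification $\Z(\C)_G\cong\C$. Concretely, one picks simple objects, writes the half-braidings explicitly in terms of the bicharacter $b$ associated to the quadratic form $q$ on $G$ (using $c_{Y,X}c_{X,Y}=b(\bar X,\bar Y)$ from \eqref{cc=b} in the pointed setting), and computes both sides. Since $\C$ is pointed, conjugation $\Ad_g$ acts trivially on each object up to isomorphism but acts on morphisms/equivariant structures in a way governed by $b$; I would verify that this matches the $G$-action coming from right translations on $A$.

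The main obstacle will be the bookkeeping in the identification $\Z(\C)_G\cong\C$ and the subsequent matching of the braided $G$-crossed structures — i.e.\ checking that the $G$-grading produced by Proposition~\ref{explicitdescr}(i) coincides with the tautological grading $\O(\C)\iso G$, and that the $G$-braiding from Proposition~\ref{explicitdescr}(ii) is the correct one. These are ``straightforward but not transparent'' verifications, as the authors themselves note about the proof of Theorem~\ref{princcross}; the cleanest way to organize them is probably to fix a fiber functor on $\Rep(G)$, work with the regular algebra $A=\Fun(G)$ concretely, and reduce every claim to an identity about characters of $G$ and the bicharacter $b$. Everything else (existence and uniqueness of the $G$-crossed structure, the identification of $\C^G$ with $\Z(\C)$) is then formal, flowing from Theorem~\ref{princcross}, Proposition~\ref{equidimprop}, and Proposition~\ref{tens func}.
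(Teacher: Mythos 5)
Your proposal has two genuine gaps, both traceable to a misreading of the setting. First, in Proposition~\ref{ptcr} the category $\C$ is an \emph{arbitrary} pointed fusion category and $G=\O(\C)$ may be nonabelian; $\C$ carries no braiding, so there is no quadratic form $q$, no bicharacter $b$, and formula \eqref{cc=b} (which lives in the pointed \emph{braided} world of \S\ref{CGQ}) is not available. Consequently your proposed verification of part (ii) --- ``write the half-braidings in terms of $b$ and compute both sides'' --- does not parse in this generality. For the same reason, the Tannakian subcategory of $\Z(\C)$ cannot be produced from the pairing of \S\ref{useful pairing} or from $\Aut_\otimes(\id_\C)=\Hom(U_\C,k^\times)$: for nonabelian $G$ that only yields $\Rep(G^{ab})$. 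The correct subcategory is $\pi^{-1}(\Vec)\subset\Z(\C)$ as in Remark~\ref{rmk}, and your identification $\Z(\C)_G\simeq\C$ via ``de-equivariantizing the forgetful functor'' also needs care, since $\Rep(G)\to\C$ is not fully faithful and no such operation is set up in the paper; one would have to build the functor by hand (essentially the fiber-at-$e$ construction) and then still check that the induced grading is the tautological one and the induced action is conjugation --- exactly the points you defer as ``bookkeeping.''

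Second, the uniqueness-up-to-unique-isomorphism in (i) is not actually derived. Theorem~\ref{princcross} says equivariantization is a 2-equivalence $\K_1\to\K_2$; it does not say that a \emph{fixed} fusion category with a \emph{fixed} faithful grading admits at most one compatible crossed structure up to unique isomorphism, and your parenthetical ``a $G$-crossed structure compatible with a given grading is rigid'' is precisely the claim to be proved (deducing it from the classification of realizations would lean on Theorem~\ref{more precise}, which in the paper is proved \emph{using} Proposition~\ref{ptcr}). The paper's argument is much more direct and avoids $\Z(\C)$ entirely for (i)--(ii): since every simple object is invertible, giving the $G$-braiding $c_{X,Y}:X\ot Y\iso g(Y)\ot X$ for $X\in\C_g$ is the same as giving tensor isomorphisms $\varphi_g:\Ad_g\iso g(-)$, and axioms \eqref{G-hex1}--\eqref{G-hex2} say exactly that $\{\varphi_g\}$ is an isomorphism between the conjugation action and the given action; hence the essentially unique upgrade is $g(Y)=\Ad_g(Y)$, $\varphi_g=\id$, which gives (i) and (ii) simultaneously, with (a) and (c) then checked easily. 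Part (iii) is then a direct comparison of the equivariantization construction of \S\ref{equiv3tion} with the definition of $\Z(\C)$ in \S\ref{center}, rather than a dimension count plus transport of structure.
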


\begin{remark}    \label{upgrgroupoid}
In (i) the words ``to upgrade" mean ``to define an action of $G$ on $\C$ and a functorial collection of isomorphisms \eqref{G-braid} satisfying properties (a)--(d) from Definition \ref{crossed}.
All such ``upgrades" form a groupoid: an action is a monoidal functor 
$G\to\{\mbox{tensor functors } \C\to\C\}$, so there is a notion of isomorphism between two actions
and therefore a notion of isomorphism between two upgrades.
\end{remark}

\begin{proof}
Specifying a functorial collection of isomorphisms \eqref{G-braid} for {\em all\,} $X\in\C_g$ is the
same as specifying \eqref{G-braid} for {\em invertible\,}  $X\in\C_g$. But if $X\in\C_g$ is invertible
then \eqref{G-braid} is the same as an isomorphism $X\ot Y\ot X^{-1}\iso g(Y)$, i.e., an
isomorphism $\varphi_g :\Ad_g (Y)\iso g(Y)$ functorial in $Y\in\C$. The commutativity of
\eqref{G-hex1} means that $\varphi_g$ is an isomorphism of {\em tensor\,} functors for each
$g\in G$, and the commutativity of \eqref{G-hex2} means that the collection $\{\varphi_g\}_{g\in G}$
is an isomorphism between the two actions of $G$ on $\C$, namely, the action $g\mapsto\Ad_g$ and the action $(g,Y)\mapsto g(Y)$. There is one and essentially only one way to define an action 
$(g,Y)\mapsto g(Y)$ and a collection of isomorphisms $\varphi_g :\Ad_g (Y)\iso g(Y)$ with these properties: namely, $g(Y)=\Ad_g (Y)$, $\varphi_g=\id$. By construction, the corresponding
collection of isomorphisms \eqref{G-braid} satisfies properties (b) and (d) from Definition~\ref{crossed}.
It is easy to check that it also satisfies (a) and (c).

So we have proved (i) and (ii). Statement (iii) is checked straightforwardly by comparing
\S\ref{equiv3tion} with the definition of $\Z (\C )$ from \S\ref{center}. 
\end{proof}

\begin{remark}
In the proof of Proposition \ref{ptcr} it turned out that properties (a) and (c) from Definition~\ref{crossed}
follow from (b) and (d). This also happens in the situation of Proposition \ref{BraidCen}, which
is a generalization of Proposition \ref{ptcr}.
\end{remark}

\subsubsection{Centers of pointed categories and Lagrangian subcategories}  
\label{Centers of pointed}
Recall that the center of any fusion category is a non-degenerate braided fusion category,
see Corollary \ref{centerfactorizable}.

Recall that the center of any fusion category is a non-degenerate braided fusion category,
see Corollary~\ref{centerfactorizable}.

\begin{theorem} \label{centerhasLagr}\label{main1}
A non-degenerate braided fusion category $\D$ is braided equivalent to the
center of some pointed fusion category if and only if $\D$ contains a Lagrangian subcategory.
\end{theorem}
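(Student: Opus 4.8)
The plan is to prove both directions using the machinery of braided $G$-crossed categories developed in \S\ref{orbi}, together with the de-equivariantization correspondence.

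\textbf{The ``only if'' direction.} Suppose $\D$ is non-degenerate and $\E\subset\D$ is a Lagrangian subcategory. Choose a fiber functor $F:\E\to\Vec$ and let $G:=\Aut(F)$, so $\E=\Rep(G)$. Form the de-equivariantization $\C:=\D_G$, which by \S\ref{deeqcrossed} is a braided $G$-crossed fusion category with $\C^G=\D$. By Corollary~\ref{crossLag}, the non-degeneracy of $\D$ together with $\E$ being Lagrangian is equivalent to $\C$ being pointed with the grading map $\O(\C)\to G$ bijective. Now I want to identify $\C$, as a braided $G$-crossed category, with the canonical braided $G$-crossed structure on a pointed category coming from Proposition~\ref{ptcr}. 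The group $\O(\C)$ is abstractly isomorphic to $G$ via the grading, and $\C$ is pointed; by Proposition~\ref{ptcr}(i) the category $\C$ carries a (unique up to isomorphism) braided $\O(\C)$-crossed structure, and transporting along the grading isomorphism $\O(\C)\iso G$ gives a braided $G$-crossed structure. The content is to check that this agrees with the braided $G$-crossed structure coming from $\D$ — I expect this to follow from the uniqueness statement in Proposition~\ref{ptcr}(i) (every braided $G$-crossed structure refining the canonical grading of a pointed category is canonically determined) once one checks the grading on $\C$ induced from $\D$ is indeed the canonical one up to relabeling. Then by Proposition~\ref{ptcr}(iii), $\C^G\simeq\Z(\C)$ as braided categories, hence $\D\simeq\Z(\C)$ is the center of the pointed category $\C$.

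\textbf{The ``if'' direction.} Suppose $\D\simeq\Z(\C)$ for a pointed fusion category $\C$ with $G:=\O(\C)$. By Proposition~\ref{ptcr}(iii), $\C$ carries its canonical braided $G$-crossed structure and $\D\simeq\C^G$ as braided categories, so $\D$ contains $\E:=\Rep(G)$ as a fusion subcategory via the embedding $\Vec^G\hookrightarrow\C^G$ of \S\ref{equiv3tion}; this $\E$ is Tannakian. By Corollary~\ref{centerfactorizable}, $\D$ is non-degenerate. To see $\E$ is Lagrangian, apply Proposition~\ref{crosscentralizers}(i): the trivial component $\C_1$ of the $G$-grading of $\C$ satisfies $\E'=\C_1^G$; but $\C$ is pointed and the grading $\O(\C)\iso G$ is bijective, so $\C_1=\Vec$ by Proposition~\ref{Lagrange}, whence $\E'=\Vec^G=\E$. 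Thus $\E$ is Lagrangian.

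\textbf{Main obstacle.} The routine parts (non-degeneracy of centers, the Tannakian/Lagrangian bookkeeping) are immediate from the cited results. The real work is in the ``only if'' direction: given that $\D_G$ is pointed with bijective grading map, one must produce the actual pointed fusion category $\C$ realizing $\D$ as $\Z(\C)$, which means recognizing the braided $G$-crossed category $\D_G$ as arising from Proposition~\ref{ptcr}. The subtlety is that Proposition~\ref{ptcr} equips a pointed category with \emph{its} canonical grading (the identity $\O(\C)\iso G$) and one must match this against the grading $\O(\D_G)\to G$ coming from the Lagrangian $\E$; this requires checking that the abstract group isomorphism $\O(\D_G)\iso G$ provided by the bijective grading map is compatible with the $G$-action and $G$-braiding data, so that the two braided $G$-crossed structures on the pointed category coincide. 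I would handle this by invoking the uniqueness clause of Proposition~\ref{ptcr}(i): once the grading is fixed to be (a relabeling of) the canonical one, the braided $G$-crossed refinement is unique, forcing agreement with the structure inherited from $\D$. For full rigor one should also record that the assignment $\E\mapsto(\C,\text{realization})$ and its inverse are mutually quasi-inverse, which is the more precise Theorem~\ref{more precise} promised in the introduction; for the present statement it suffices that both constructions exist.
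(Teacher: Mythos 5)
Your proof is correct and follows essentially the same route as the paper: its Theorem~\ref{more precise} is proved by combining Theorem~\ref{princcross}, Corollary~\ref{crossLag}, and Proposition~\ref{ptcr} (including the uniqueness clause), which is exactly the chain you use for the hard (Lagrangian $\Rightarrow$ center) direction, while for the easy direction the paper uses a direct dimension count (Remark~\ref{rmk}) where you invoke Proposition~\ref{crosscentralizers}(i) --- an immaterial difference. Note only that your ``if''/``only if'' labels are swapped relative to the statement of the theorem.
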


We will formulate and prove a more precise statement (see Theorem \ref{more precise}).
But first, note that the ``only if" part of Theorem~\ref{main1} is an immediate consequence of the
following remark.

\begin{remark}\label{rmk}
Let $\C$ be a pointed fusion category. By Corollary~\ref{centerfactorizable}, its center $\Z(\C )$ 
is a non-degenerate braided fusion category. Set $\E :=\pi^{-1}(\Vec) \subset\Z(\C )$, where
$\pi: \Z(\C )\to\C$ is the forgetful functor and $\Vec \subset \C$ is the fusion subcategory
generated by $\be_\C$. Then $\E\subset\Z(\C )$ is Tannakian and $\pi |_{\E}: \E \to \Vec$ is 
a fiber functor (in fact, if $\C =\Vec_G^\omega$ then $\E =\Rep (G)$ and $\pi |_{\E}$ is
the forgetful functor $\Rep(G)\to \Vec$).  Moreover, $\E$ is Lagrangian. Since $\E\subset\E'$
this can be proved by showing that $\dim (\E')$=$\dim (\E )$. We have $\dim (\E )=\dim (\C )$. 
By  \eqref{Dim D dim D'} and \eqref{dimZ}, 
$\dim (\E)\cdot\dim (\E' )=\dim (\Z(\C ))=\dim (\C )^2$.
So $\dim (\E')$=$\dim (\E )$.
\end{remark}

Now let $\fP$ be the 2-groupoid of pointed fusion categories (1-morphsims of $\fP$ are
tensor equivalences and 2-morphisms are isomorphisms between them). Let $\fL$ be 
the 2-groupoid of triples $(\D ,\E ,F)$, where $\D$ is a non-degenerate braided fusion category,
$\E\subset\D$ is a Lagrangian subcategory, and $F:\E\to\Vec$ is a fiber functor.

\begin{theorem}\label{more precise}
The 2-functor $\fP\to\fL$ defined in Remark~\ref{rmk} is a 2-equivalence.
\end{theorem}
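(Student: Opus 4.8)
The plan is to construct an inverse 2-functor $\fL\to\fP$ and check that the two 2-functors are mutually quasi-inverse. Given a triple $(\D,\E,F)\in\fL$, the fiber functor $F:\E\to\Vec$ makes $\E$ into $\Rep(G)$ for $G:=\Aut(F)$, and since $\E$ is Lagrangian we have $\E'=\E\subset\D$, so $\D$ is a braided fusion category containing $\Rep(G)$. By Theorem~\ref{princcross} (the Kirillov--M\"uger description, $\D=\C^G$ with $\C$ a braided $G$-crossed category) we get $\C:=\D_G$, and by Corollary~\ref{crossLag} (applied using that $\D$ is non-degenerate and $\E$ Lagrangian) the category $\C$ is pointed and the grading map $\O(\C)\to G$ is bijective. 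So $\C$ is a pointed fusion category; this is the object we assign to $(\D,\E,F)$. Functoriality on 1- and 2-morphisms is inherited from the functoriality of the de-equivariantization 2-functor of \S\ref{deeqcrossed} together with the functoriality of the passage from $(\E,F)$ to $G=\Aut(F)$.

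Next I would check that these two 2-functors are mutually quasi-inverse. One composite: starting from a pointed category $\C\in\fP$, Remark~\ref{rmk} produces the triple $(\Z(\C),\E,\pi|_\E)$ with $\E=\pi^{-1}(\Vec)$ Lagrangian; applying $\fL\to\fP$ requires identifying $G$ with $\Aut(\pi|_\E)$ — here one uses that for $\C$ pointed $\E=\Rep(G)$ with $G=\O(\C)$ and $\pi|_\E$ the forgetful functor, so $\Aut(\pi|_\E)=G$ canonically — and then de-equivariantizing $\Z(\C)$ by $G$. By Proposition~\ref{ptcr}(iii), $\Z(\C)$ is exactly $\C^G$ for the canonical braided $G$-crossed structure on $\C$, so its de-equivariantization is canonically $\C$ again (Theorem~\ref{princcross}, or directly Proposition~\ref{indeed_deeq}). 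The other composite: starting from $(\D,\E,F)$, form $\C=\D_G$, which is pointed with $\O(\C)\xrightarrow{\sim}G$; then apply Remark~\ref{rmk} to $\C$, getting $(\Z(\C),\E_{\C},\pi|_{\E_{\C}})$. One must produce a canonical braided equivalence $\Z(\C)\iso\D$ carrying $\E_\C$ to $\E$ and $\pi|_{\E_\C}$ to $F$. Since $\C$ is a braided $G$-crossed category with $\C^G=\D$ (Theorem~\ref{princcross}), Proposition~\ref{ptcr}(iii) says $\C^G\simeq\Z(\C)$; composing gives $\D\simeq\Z(\C)$. That this equivalence matches the Lagrangian subcategories and fiber functors follows from Proposition~\ref{crosscentralizers}(i) (which identifies $\E'=\C_1^G$, and here $\C_1=\Vec$ by Proposition~\ref{Lagrange}) together with the explicit description of the forgetful functor $\Z(\C)\to\C$.

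Finally I would verify that all the identifications above are natural in the 1-morphisms and 2-morphisms, so that one really has an equivalence of 2-groupoids and not merely a bijection on equivalence classes. The bookkeeping here is routine given the naturality statements already built into Theorems~\ref{princprinc}, \ref{princcross} and Propositions~\ref{indeed_deeq}, \ref{br-indeed_deeq}, \ref{ptcr}, \ref{crosscentralizers}.

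The main obstacle is not any single deep computation but rather the careful matching of data at the level of 2-groupoids: one must keep straight, simultaneously, the group $G=\Aut(F)$ depending on the chosen fiber functor, the identification $\E\simeq\Rep(G)$, the braided $G$-crossed structure on $\C=\D_G$, and the canonical equivalence $\C^G\simeq\Z(\C)$ of Proposition~\ref{ptcr}(iii), and verify that the Lagrangian subcategory and fiber functor produced by Remark~\ref{rmk} from $\C$ are carried back to $(\E,F)$ under this chain of equivalences. The heart of why it works is the combination of Corollary~\ref{crossLag} (non-degenerate $\D$ with Lagrangian $\E$ $\Leftrightarrow$ $\C=\D_G$ pointed with $\O(\C)\xrightarrow{\sim}G$) and Proposition~\ref{ptcr}(iii) (center of a pointed category $=$ equivariantization of its canonical braided $G$-crossed structure); once these are in place, the 2-equivalence is a formal consequence of Theorem~\ref{princcross} and the naturality of de-equivariantization.
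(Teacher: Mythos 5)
Your proposal is correct and follows essentially the same route as the paper: the paper's proof also rests precisely on Theorem~\ref{princcross} together with Corollary~\ref{crossLag} (identifying $\fL$ with pairs $(\C,G)$ where $\C$ is pointed braided $G$-crossed with bijective grading) and then Proposition~\ref{ptcr} (identifying such pairs with pointed fusion categories and their centers). Your version merely unpacks this chain of identifications into an explicit inverse 2-functor with verification of both composites, which is the same argument in more detail.
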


\begin{proof} 
Theorem \ref{princcross} and Corollary~\ref{crossLag} identify $\L$ with the 2-groupoid of
pairs $(\C ,G)$, where $G$ is a finite group and $\C$ is a pointed braided $G$-crossed category
such that the grading map $\O (\C )\to G$ is an isomorphism. The latter 2-groupoid identifies
with $\fP$ by Proposition \ref{ptcr}.
\end{proof}

\begin{rems}  \label{morethan1}
\begin{enumerate}
\item[(i)] 
A braided fusion category $\C$ may have more than one Lagrangian subcategory.
E.g., if $\C$ is the pointed braided category $\C (G,q)$ corresponding to a metric group $(G,q)$
then Lagrangian subcategories of $\C (G,q)$ bijectively correspond to Lagrangian subgroups
of $G$ .
\item[(ii)] 
Combining the previous remark  with Theorem \ref{more precise} we see that a braided fusion 
category may have more than one representation as $\Z(\Vec_G^\omega)$. 
Given two pairs $(G,\omega )$ and $(G',\omega ')$ it is natural to ask how to describe all braided equivalences between $\Z(\Vec_G^\omega)$ and $\Z(\Vec_{G'}^{\omega '})$ in group-theoretical terms.
An answer to this question is given in \cite{NN}.
\end{enumerate}
\end{rems}

\section{The core of a braided fusion category} \label{coresec}
\subsection{Warm-up: the core of a pre-metric Lie coalgebra}  \label{warmup}
As explained in \S\ref{heursubsubsec}, non-degenerate braided categories are ``quantum"
analogs of pre-metric Lie coalgebras. Let us define the notion of the core of a
pre-metric Lie coalgebra $C$.

Let $\fg =C^*$ be the Lie algebra dual to $C$.
For each isotropic Lie subcoalgebra  $\fa\subset C$ we have the pre-metric Lie coalgebra
$C_{\fa}:=\fa^{\perp}/\fa$. Moreover, 
$C_{\fa}$ is a pre-metric Lie $\fg$-coalgebra, i.e., the coadjoint action of $\fg$ on $C$ 
gives rise to  a homomorphism
\begin{equation}   \label{Lieaction}
\fg\to\Der_m (C_{\fa}),
\end{equation}
where $\Der_m (C_{\fa})$ is the Lie algebra of derivations of $C_{\fa}$ preserving the 
inner product (here by a derivation of $C_{\fa}$ we mean its linear
endomorphism dual to a derivation of $\fg$).

An isotropic Lie subcoalgebra $\fa\subset C$ is said to be {\em maximal\,} if it is
not contained in any other isotropic Lie subcoalgebra. In this case $\fa$ contains the
kernel of the bilinear form on $C$, so $C_{\fa}$ is a {\em metric\,} Lie coalgebra.

\begin{lemma} \label{maximality1}
An isotropic Lie subcoalgebra $\fa\subset C$ is maximal if and only if the pre-metric Lie coalgebra
$C_{\fa}$ has no nonzero isotropic Lie subcoalgebras stable with respect to the action 
\eqref{Lieaction}  of $\fg$.
\hfill \qedsymbol
\end{lemma}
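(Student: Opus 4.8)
The statement is a purely linear-algebraic fact about a finite-dimensional vector space $C$ equipped with a Lie coalgebra structure, a quadratic form $q$, and the coadjoint action of $\fg=C^*$; I would prove both implications by a dimension/radical argument. Write $R\subset C$ for the radical (kernel) of the bilinear form $b$ associated to $q$. The crucial preliminary observation is that a Lie subcoalgebra $\fa\subset C$ is stable under the coadjoint $\fg$-action (equivalently $\fa^*$ is a quotient \emph{algebra} of $\fg$), and that $\fa$ is isotropic if and only if $\fa\subset\fa^\perp$. Note also that if $\fa$ is isotropic then $\fa\subset\fa^\perp$, both are $\fg$-stable subcoalgebras, and $C_\fa=\fa^\perp/\fa$ inherits a pre-metric Lie coalgebra structure together with the induced $\fg$-action \eqref{Lieaction}; moreover there is an inclusion-preserving bijection between $\fg$-stable subcoalgebras of $C_\fa$ and $\fg$-stable subcoalgebras $\fb$ of $C$ with $\fa\subset\fb\subset\fa^\perp$ (pull back along the projection $\fa^\perp\to C_\fa$), and under this bijection isotropic subcoalgebras of $C_\fa$ correspond exactly to isotropic subcoalgebras of $C$ sandwiched between $\fa$ and $\fa^\perp$. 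This correspondence, together with the elementary identity that the preimage in $\fa^\perp$ of the radical of the form on $C_\fa$ equals $\fa+(\fa^\perp\cap R)=\fa^\perp\cap(\fa+R)$, does essentially all the work.

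\textbf{The two implications.} First I would prove the contrapositive of ``maximal $\Rightarrow$ no nonzero $\fg$-stable isotropic subcoalgebra of $C_\fa$'': if $\bar\fb\subset C_\fa$ is a nonzero $\fg$-stable isotropic Lie subcoalgebra, lift it to $\fb$ with $\fa\subsetneq\fb\subset\fa^\perp$; then $\fb$ is a $\fg$-stable Lie subcoalgebra of $C$, and isotropy of $\bar\fb$ gives $\fb\subset\fb^\perp$ (using $\fa\subset\fa^\perp$ and the fact that the form on $C_\fa$ is induced from $b$), so $\fb$ is an isotropic Lie subcoalgebra strictly containing $\fa$, contradicting maximality. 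Conversely, suppose $\fa$ is \emph{not} maximal, so there is an isotropic Lie subcoalgebra $\fa\subsetneq\fb\subset C$; intersecting with $\fa^\perp$ and using that $\fb$ isotropic forces $\fb\subset\fb^\perp\subset\fa^\perp$, we may assume $\fa\subsetneq\fb\subset\fa^\perp$. Then the image $\bar\fb$ of $\fb$ in $C_\fa$ is a nonzero $\fg$-stable Lie subcoalgebra, and it is isotropic because the form on $C_\fa$ is the one induced by $b$ and $\fb$ was isotropic for $b$. This gives the reverse implication. The remark preceding the lemma (that a maximal isotropic $\fa$ contains $R$, so $C_\fa$ is metric) should be recorded along the way, since it is exactly the special case $\bar\fb=$ the radical of $C_\fa$: if $R\not\subset\fa$ then $\fa+R$ is a strictly larger isotropic (still isotropic since $R$ is in the kernel of $b$) $\fg$-stable subcoalgebra --- here one must check $\fa+R$ is a subcoalgebra, i.e. $\fg$-stable, which holds since both $\fa$ and $R$ are $\fg$-stable.

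\textbf{Main obstacle.} The routine parts are the dimension bookkeeping and the isotropy translations; the one point requiring genuine care is verifying that all the relevant subspaces ($\fb$, $\fa+R$, $\fa^\perp\cap\fb$, and the preimages of $\fg$-stable subcoalgebras of $C_\fa$) are again Lie subcoalgebras, i.e. $\fg$-stable --- in particular that $R$ itself is $\fg$-stable (true because $b$ is $\fg$-invariant, so the coadjoint action preserves $R=\ker b$) and that $\fa^\perp$ is $\fg$-stable (true because $\fa$ is, combined with $\fg$-invariance of $b$, exactly as observed in the paragraph before Definition~\ref{metricLiedef} of the excerpt). Once these stability facts are in hand, the proof is a short chain of the lattice correspondence above. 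I expect the whole argument to occupy only a few lines once the correspondence between $\fg$-stable isotropic subcoalgebras of $C_\fa$ and those of $C$ lying between $\fa$ and $\fa^\perp$ is stated cleanly.
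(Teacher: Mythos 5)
Your proof is correct, and in fact the paper states this lemma with no proof at all (the terminal \qedsymbol signals it is regarded as immediate); the lattice correspondence you set up between $\fg$-stable isotropic Lie subcoalgebras of $C_{\fa}=\fa^{\perp}/\fa$ and isotropic Lie subcoalgebras of $C$ sandwiched between $\fa$ and $\fa^{\perp}$ is precisely the routine argument the authors intend, and your stability checks ($\fa^{\perp}$, $R=\ker b$, preimages and images under $\fa^{\perp}\to C_{\fa}$ are again $\fg$-stable, hence subcoalgebras) cover the only points needing care. One small remark: when you pass between "$q$ vanishes on a subspace'' and "$b$ vanishes on it'' (e.g.\ in concluding $\fb\subset\fb^{\perp}$ from isotropy of $\bar\fb$, or $q|_{R}=0$), you are using $\mathrm{char}\,K>2$, which the paper assumes, so this is fine but worth saying explicitly.
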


\begin{lemma} \label{independence}
If $\fa_1,\fa_2\subset C$ are maximal isotropic  Lie subcoalgebras 
then $C_{\fa_1}$ and $C_{\fa_2}$ are isomorphic as metric Lie $\fg$-coalgebras.
\end{lemma}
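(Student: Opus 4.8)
The statement is the classical shadow of the invariance of the core (Theorem~\ref{canonical modularization}), so I would prove it by the standard ``reduction to a common maximal isotropic subcoalgebra'' argument, modeled on M\"uger's original reasoning for modular categories. First I would reduce to the case where one of the two subcoalgebras is contained in the other. Precisely, given maximal isotropic Lie subcoalgebras $\fa_1,\fa_2\subset C$, it suffices to produce a chain of maximal isotropic Lie subcoalgebras $\fa_1=\fb_0,\fb_1,\dots,\fb_n=\fa_2$ in which consecutive ones are comparable, and to prove the lemma when $\fa_1\subset\fa_2$. (In fact, since any two maximal isotropic subcoalgebras both contain $C^\perp$, one can even pass to $C/C^\perp$ and assume $C$ is metric from the start, which trims the bookkeeping.)

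\textbf{The comparable case.} Suppose $\fa_1\subset\fa_2$, both maximal isotropic. Since $\fa_1$ is isotropic, the form descends to $C_{\fa_1}=\fa_1^\perp/\fa_1$, which is metric by the remark preceding Lemma~\ref{maximality1}. The image $\bar\fa_2:=\fa_2/\fa_1\subset C_{\fa_1}$ is an isotropic Lie subcoalgebra, and because $\fa_2\supset\fa_1$ one checks directly that $\fa_2^\perp/\fa_1=\bar\fa_2^{\perp}$ inside $C_{\fa_1}$ (using Proposition~\ref{heuristic centr}(ii) and $\fa_2\supset C^\perp$). Hence there is a canonical identification
\[
C_{\fa_2}=\fa_2^\perp/\fa_2=(\fa_2^\perp/\fa_1)/(\fa_2/\fa_1)=\bar\fa_2^{\perp}/\bar\fa_2=(C_{\fa_1})_{\bar\fa_2},
\]
and this identification is compatible with the forms and with the $\fg$-actions, since both are induced by the coadjoint action of $\fg=C^*$. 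Moreover $\bar\fa_2$ is a $\fg$-stable isotropic Lie subcoalgebra of $C_{\fa_1}$ (it is the image of a $\fg$-submodule), and maximality of $\fa_1$ forces, by Lemma~\ref{maximality1}, that $C_{\fa_1}$ has \emph{no} nonzero $\fg$-stable isotropic Lie subcoalgebra; therefore $\bar\fa_2=0$ and $C_{\fa_2}=C_{\fa_1}$ as metric Lie $\fg$-coalgebras. (Symmetrically this also re-proves that $\fa_1$ maximal $\Rightarrow\fa_2=\fa_1$ when $\fa_1\subset\fa_2$, so the only genuine content is the non-comparable case.)

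\textbf{Connecting two arbitrary maximal isotropics.} It remains to link arbitrary $\fa_1,\fa_2$ by a chain of comparable maximal isotropics. Here I would imitate the Witt-type argument: set $\fc:=(\fa_1\cap\fa_2^\perp)+(\fa_2\cap\fa_1^\perp)$ — wait, more robustly: consider $\fa_1\cap\fa_2$, which is an isotropic Lie subcoalgebra contained in both; extend it to a maximal isotropic Lie subcoalgebra $\fb$. Now run the comparable case twice if $\fb$ happens to be comparable to each of $\fa_1,\fa_2$; in general one needs the intermediate fact that any two maximal isotropic subcoalgebras containing a fixed isotropic $\fa_1\cap\fa_2$ give isomorphic cores — which is exactly the comparable-case statement applied inside $C_{\fa_1\cap\fa_2}$ combined with the observation that the images of $\fa_1,\fa_2$ in $C_{\fa_1\cap\fa_2}$ are maximal isotropic there with zero intersection, i.e. they are ``transverse Lagrangians'' in a metric Lie $\fg$-coalgebra and one then uses that all Lagrangian quotients of a metric Lie coalgebra by a Lagrangian are canonically the same (they are all zero-dimensional? no — rather, a metric Lie coalgebra with a $\fg$-Lagrangian has trivial core, so $C_{\fa_1}\cong 0\cong C_{\fa_2}$ in that reduced setting). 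The main obstacle is precisely this last step: checking that $\fa_1\cap\fa_2$ can always be enlarged so that its core sees both $\fa_1$ and $\fa_2$ as maximal, and that transversality then kills the core — this is where one must be careful that ``maximal isotropic Lie subcoalgebra'' behaves like ``Lagrangian'' (it need not be half-dimensional!), so the cleanest route is to phrase everything via Lemma~\ref{maximality1}: $\fa_i$ maximal $\iff$ $C_{\fa_i}$ has no nonzero $\fg$-stable isotropic Lie subcoalgebra, and then the chain argument reduces to showing that for any two such ``$\fg$-anisotropic'' quotients arising from $C$ there is a zig-zag of morphisms inducing isomorphisms, which follows formally once the comparable case is established. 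I would expect to spend most of the write-up nailing down this connectivity, the rest being the routine identifications of forms and actions sketched above.
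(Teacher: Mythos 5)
There is a genuine gap, and it sits exactly where you said you expected to spend most of the write-up. Your reduction strategy cannot get off the ground: the ``comparable case'' is vacuous, since (as you yourself observe) two maximal isotropic Lie subcoalgebras with $\fa_1\subset\fa_2$ are equal, so any zig-zag of pairwise comparable maximal isotropics connects only equal subcoalgebras and the non-comparable case is never reduced to anything. The attempted rescue via $C_{\fa_1\cap\fa_2}$ also fails: after passing to $C_{\fa_1\cap\fa_2}$ the images $\bar\fa_1,\bar\fa_2$ are maximal isotropic with $\bar\fa_1\cap\bar\fa_2=0$, but they are \emph{not} Lagrangian in general (maximal isotropic need not equal its own orthogonal), so your claim that the cores vanish in this reduced setting (``$C_{\fa_1}\cong 0\cong C_{\fa_2}$'') is false -- e.g.\ $C_{\fa_i}$ is a nonzero anisotropic metric Lie $\fg$-coalgebra whenever $\fa_i$ is a proper maximal isotropic with $\fa_i^\perp\neq\fa_i$. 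Reducing modulo $\fa_1\cap\fa_2$ simply reproduces the original problem in the special case of trivial intersection, and no isomorphism $C_{\fa_1}\cong C_{\fa_2}$ is ever constructed.

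The missing idea is to compare both cores directly with the common subquotient $(\fa_1^\perp\cap\fa_2^\perp)/(\fa_1\cap\fa_2)$, which is what the paper does. The natural map $(\fa_1^\perp\cap\fa_2^\perp)/(\fa_1\cap\fa_2)\to\fa_1^\perp/\fa_1=C_{\fa_1}$ is an isomorphism provided (a) $\fa_1\cap\fa_2^\perp\subset\fa_2$ and (b) $(\fa_1^\perp\cap\fa_2^\perp)+\fa_1=\fa_1^\perp$. Inclusion (a) holds because otherwise $\fa_2+(\fa_1\cap\fa_2^\perp)$ would be an isotropic Lie subcoalgebra strictly larger than $\fa_2$, contradicting maximality; and by the modular law $(\fa_1^\perp\cap\fa_2^\perp)+\fa_1=\fa_1^\perp\cap(\fa_2^\perp+\fa_1)$, so (b) reduces to $\fa_2^\perp+\fa_1\supset\fa_1^\perp$, i.e.\ to $\fa_2\cap\fa_1^\perp\subset\fa_1$, which is (a) with the roles exchanged. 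Both identifications are manifestly compatible with the forms and the coadjoint $\fg$-action, and the lemma follows by symmetry -- no chain, no Witt-type extension, and no appeal to Lemma~\ref{maximality1} is needed. I would suggest you rewrite the argument along these lines; your instinct to use $\fa_1\cap\fa_2$ was pointing at the right object, but the comparison must go through $(\fa_1^\perp\cap\fa_2^\perp)/(\fa_1\cap\fa_2)$ rather than through a connectivity argument among maximal isotropics.
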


\begin{proof} 
We will show that both $C_{\fa_1}$ and $C_{\fa_2}$ are isomorphic to
$(\fa_1^\perp\cap\fa_2^\perp )/(\fa_1\cap\fa_2)$. Let us do this for $C_{\fa_1}$.

We have an obvious homomorphism 
$(\fa_1^\perp\cap\fa_2^\perp )/(\fa_1\cap\fa_2)\to\fa_1^{\perp}/\fa_1 =C_{\fa_1}$.
To prove that it is an isomorphism we have to show that
\begin{equation} \label{idty1}
\fa_1\cap\fa_2^\perp\subset\fa_2,
\end{equation}
\begin{equation}   \label{idty2}
(\fa_1^\perp\cap\fa_2^\perp )+\fa_1=\fa_1^\perp .
\end{equation}
\eqref{idty1} holds because otherwise $\fa_2+(\fa_1\cap\fa_2^\perp )$ would be an isotropic 
Lie subcoalgebra bigger than $\fa_2$. By the modular law, 
$(\fa_1^\perp\cap\fa_2^\perp )+\fa_1=\fa_1^\perp\cap (\fa_2^\perp +\fa_1)$. So to prove
\eqref{idty1} it suffices to show that $\fa_2^\perp +\fa_1\supset\fa_1^\perp$. This is
equivalent to the inclusion $\fa_2\cap\fa_1^\perp\subset\fa_1$, which is similar to
\eqref{idty1}.
\end{proof}

\begin{definition} 
The {\em core\,} of $C$ is ``the" metric Lie $\fg$-coalgebra $C_{\fa}$, where
$\fa\subset C$ is a maximal isotropic Lie subcoalgebra.
\end{definition}

\begin{remark}  \label{whyquot}
We put ``the'' in quotation marks for the following reason.
The proof of Lemma~\ref{independence} provides
a concrete isomorphism $f_{12}: \fa_1^\perp /\fa_1\stackrel{\sim}{\to} \fa_2^\perp /\fa_2$. 
But it may happen that given an  isotropic Lie subcoalgebra
$\fa_3\subset C$ one has $f_{13}\ne f_{23}\circ f_{12}$ (this happens already if $\fg$ is abelian).
\end{remark}

\subsection{Plan of this section}
The goal is to define the notion of core of a braided category $\C$.
The role of isotropic Lie subcolagebras  will be played by Tannakian subcategories $\E\subset\C$.
To a maximal  Tannakian subcategory $\E\subset\C$ we will associate in \S\ref{maxTan}
a braided category $\mbox{Core}_\E(\C)$ 
using the de-equivariantization construction from \S\ref{deeqfus}-\ref{deeqbr}.
To show that the equivalence class of $\mbox{Core}_\E(\C)$ does not depend on $\E$
we need an analog of the modular law used
in the proof of Lemma \ref{independence}. We prove it in \S\ref{lattice}.

\begin{remark}  
In the situation of \S\ref{warmup} the metric Lie coalgebra $C_{\fa}$ is 
equipped with a homomorphism \eqref{Lieaction}. Let $\fh_{\fa}$ denote the image of the
corresponding homomorphism $\fg\to\Der_m (C_{\fa})/\Int (C_{\fa})$, where
$\Int (C_{\fa})\subset\Der_m (C_{\fa})$ is the ideal of internal derivations.
In the setting of braided category we will define an analog of $\fh_{\fa}$.
Namely, the group of isomorphism classes of braided
autoequivalences of $\mbox{Core}_\E(\C)$ will be equipped with a certain subgroup
$\Gamma_{\E}$, which is analogous to $\fh_{\fa}$. 
\end{remark}

A ``quantum" analogue of Lemma~\ref{maximality1} is established in \S\ref{propofcore}.
It leads to the study of {\em weakly anisotropic} braided categories (see  Definition~\ref{def anisotropic})
in \S\ref{propofcore}-\ref{whyreasonable}.
Such categories are analogs of  metric Lie coalgebras without non-trivial isotropic 
Lie subcoalgebras stable under all metric-preserving derivations. 

\subsection{The lattice of fusion subcategories}  \label{lattice}

Let $L(\C)$  denote the lattice of fusion subcategories of a
fusion category $\C$. For any two subcategories $\mathcal{A}$ and
$\mathcal{B}$ their meet is their intersection and their join is
the category $\mathcal{A}\vee \mathcal{B}$.

\begin{lemma}
\label{Dedekind}
Let $\C$ be a fusion category such that
\begin{equation}\label{weakcom} 
X\ot Y\simeq Y\otimes X \mbox{ for }X,Y\in \O(\C).
\end{equation}
Then the lattice $L(\C)$ is modular, i.e.,
\begin{equation}
\label{modular law}
\mathcal{A} \cap (\mathcal{B} \vee \D)  = (\mathcal{A} \cap \mathcal{B}) \vee \D
\mbox { for all } \mathcal{A},\, \mathcal{B},\, \D\in L(\C) \mbox{ such that }\D\subset\mathcal{A}.
\end{equation}
\end{lemma}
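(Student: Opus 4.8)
The inclusion $(\mathcal{A}\cap\mathcal{B})\vee\D \subset \mathcal{A}\cap(\mathcal{B}\vee\D)$ is automatic: $\D\subset\mathcal{A}$ and $\mathcal{A}\cap\mathcal{B}\subset\mathcal{A}$ give $(\mathcal{A}\cap\mathcal{B})\vee\D\subset\mathcal{A}$, while $\mathcal{A}\cap\mathcal{B}\subset\mathcal{B}\subset\mathcal{B}\vee\D$ and $\D\subset\mathcal{B}\vee\D$ give $(\mathcal{A}\cap\mathcal{B})\vee\D\subset\mathcal{B}\vee\D$. So the whole content is the reverse inclusion. The plan is to argue on the level of simple objects, using the description of simples in $\mathcal{B}\vee\D$ available under hypothesis \eqref{weakcom}: since $X\ot Y\simeq Y\ot X$ for all simples, the simple objects of $\mathcal{B}\vee\D$ are exactly the simple summands of $Y\ot Z$ with $Y\in\O(\mathcal{B})$, $Z\in\O(\D)$ (this is the remark immediately preceding Lemma~\ref{diamond dimensions}).

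First I would take a simple object $W\in\O(\mathcal{A}\cap(\mathcal{B}\vee\D))$. By the description above, $W$ is a summand of $Y\ot Z$ for some $Y\in\O(\mathcal{B})$, $Z\in\O(\D)$. Since $\D\subset\mathcal{A}$ we have $Z\in\mathcal{A}$, hence $Z^*\in\mathcal{A}$, and therefore $W\ot Z^*\in\mathcal{A}$ because $W\in\mathcal{A}$ and $\mathcal{A}$ is a fusion subcategory (closed under tensor products and summands). On the other hand $W$ is a summand of $Y\ot Z$, so $Y$ is a summand of $W\ot Z^*$ (here I use rigidity: $W\subset Y\ot Z$ implies $\Hom(W\ot Z^*,Y)=\Hom(W,Y\ot Z)\neq 0$, and $Y$ simple forces $Y$ to be a summand of $W\ot Z^*$). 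Hence every simple summand $Y'$ of $W\ot Z^*$ that "sees" $W$ — in particular $Y$ itself — lies in $\mathcal{B}$ (as $Y\in\O(\mathcal{B})$ by choice) and also in $\mathcal{A}$ (as $W\ot Z^*\in\mathcal{A}$). So $Y\in\O(\mathcal{A}\cap\mathcal{B})$, and then $W$, being a summand of $Y\ot Z$ with $Y\in\mathcal{A}\cap\mathcal{B}$ and $Z\in\D$, lies in $(\mathcal{A}\cap\mathcal{B})\vee\D$. This proves $\mathcal{A}\cap(\mathcal{B}\vee\D)\subset(\mathcal{A}\cap\mathcal{B})\vee\D$, and the two inclusions give \eqref{modular law}.

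The only delicate point is the step "$Y$ is a summand of $W\ot Z^*$, and $Y\in\mathcal{B}$, and $W\ot Z^*\in\mathcal{A}$, therefore $Y\in\mathcal{A}\cap\mathcal{B}$": one must be careful that $Y$ was \emph{chosen} in $\O(\mathcal{B})$ at the start (it is not derived from being a summand of $W\ot Z^*$), so $Y\in\mathcal{B}$ is free, and $Y\in\mathcal{A}$ follows since $Y$ is a summand of the object $W\ot Z^*$ which lies in the fusion subcategory $\mathcal{A}$. This is where commutativity \eqref{weakcom} is essential — without it the simples of $\mathcal{B}\vee\D$ need not be controlled by $Y\ot Z$ with $Y\in\mathcal{B}$, $Z\in\D$. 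I expect the main obstacle to be purely bookkeeping: making sure all the rigidity/adjunction manipulations ($W\subset Y\ot Z \Leftrightarrow Y\subset W\ot Z^*$ up to the relevant $\Hom$ being nonzero) are stated cleanly, and that closure of fusion subcategories under duals and summands is invoked correctly. No dimension count or $S$-matrix input is needed; this is a soft lattice-theoretic argument once the simple-object description of the join is in hand.
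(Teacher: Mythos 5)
Your proof is correct and is essentially the paper's own argument: both use hypothesis \eqref{weakcom} to describe the simples of $\mathcal{B}\vee\D$ as summands of $B\ot D$ with $B\in\O(\mathcal{B})$, $D\in\O(\D)$, and then the adjunction step ($W\subset B\ot D$ implies $B\subset W\ot D^*$, which lies in $\mathcal{A}$ since $\D\subset\mathcal{A}$) to conclude $B\in\mathcal{A}\cap\mathcal{B}$. No substantive difference from the paper's proof.
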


\begin{proof}
The inclusion $\mathcal{A} \cap (\mathcal{B} \vee \D)\supset (\mathcal{A} \cap \mathcal{B}) \vee \D$
is obvious. Let us prove that 
$\mathcal{A} \cap (\mathcal{B} \vee \D)\subset (\mathcal{A} \cap \mathcal{B}) \vee \D$.
Property \eqref{weakcom} implies that a simple object $X\in\C$ is in $\mathcal{B} \vee \D$ if
and only if there exist simple objects $B\in\mathcal{B}$ and $D\in \D$ such that
$X$ is contained in  $B \ot D$. Now let $X$ be a simple object of 
$\mathcal{A} \cap (\mathcal{B} \vee \D)$ and let $B$ and $D$ be as above. Then 
$B$ is contained in $X \ot D^*$. But $X\in\mathcal{A}$ and $D\in \D\subset\mathcal{A}$,
so $B\in \mathcal{A}$. Therefore $B\in\mathcal{A}\cap\mathcal{B}$, 
$B\ot D\in(\mathcal{A} \cap \mathcal{B})\vee \D$, and finally 
$X\in(\mathcal{A} \cap \mathcal{B})\vee \D$.
\end{proof}

\begin{remark}
\begin{enumerate}
\item[(i)]It is easy to see that for fusion categories \eqref{weakcom} is equivalent to the commutativity
of the Grothendieck ring $K_0(\C)$.
\item[(ii)] When $\C =\Rep(G)$ is the representation category of a finite group $G$,
Lemma~\ref{Dedekind} gives a well-known property of the lattice of
normal subgroups of~$G$.
\end{enumerate}
\end{remark}

\subsection{Maximal Tannakian subcategories and the core}  \label{maxTan}
\subsubsection{Formulation of the theorem}      \label{formtheor}
Let $\E$ be a Tannakian subcategory of a braided
fusion category $\C$ and let $F: \E \to \Vec$ be a fiber functor.
By \S\ref{deeqfus}-\ref{deeqbr}, in this situation we have
the fiber category $\E' \boxtimes_\E \Vec$, which is a 
braided fusion category equipped with an action of
the group $G_F := \Aut (F)$, i.e., one has a monoidal functor 
$\underline{G_F}\to \underline{\Aut}^{br}(\E' \boxtimes_\E \Vec)$.

In particular, we have a homomorphism
\begin{equation} \label{act}
G_F\to {\Aut}^{br}(\E' \boxtimes_\E \Vec),
\end{equation}
where ${\Aut}^{br}(\E' \boxtimes_\E \Vec)$ is the group of isomorphism classes of
objects of the category $\underline{\Aut}^{br}(\E' \boxtimes_\E \Vec)$.
Let $\Gamma_{\E,F}$ be the image of \eqref{act}.

We say that a Tannakian subcategory $\E \subset \C$ 
is {\em maximal} if it is not contained 
in any other Tannakian subcategory of  $\C$.

\begin{definition}
\label{core corresponding to E}
Let $\C ,\E$ be as above and suppose that the Tannakian category $\E$  is maximal.
The pair $(\E' \boxtimes_\E \Vec , \Gamma_{\E ,F})$ is called
the {\em core of $\C$
corresponding to $\E$} and denoted by $\mbox{Core}_{\E ,F}(\C)$.
\end{definition} 

In \S\ref{Proof of canmod} we will prove the following theorem.

\begin{theorem}
\label{canonical modularization} 
Let $\C$ be a  braided fusion category and
let $\E_1$, $\E_2$ be maximal Tannakian subcategories of
$\mathcal{C}$ with fiber functors $F_1:\E_1\to\Vec$ and $F_2:\E_2\to\Vec$. Then 
\begin{equation}  \label{coreeq}
\mbox{Core}_{\E_1 ,F_1}(\C) \simeq \mbox{Core}_{\E_2 ,F_2}(\C).
\end{equation}
\end{theorem}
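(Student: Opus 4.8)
The plan is to mimic the proof of Lemma~\ref{independence} from the warm-up, replacing ``isotropic Lie subcoalgebra'' by ``Tannakian subcategory'', ``$\perp$'' by ``centralizer'', ``$+$'' by ``$\vee$'', and ``$\cap$'' by ``$\cap$''. The heuristic content should be: given maximal Tannakian subcategories $\E_1,\E_2\subset\C$, show that both $\mbox{Core}_{\E_1,F_1}(\C)$ and $\mbox{Core}_{\E_2,F_2}(\C)$ are equivalent (as braided categories with their $\Gamma$-structure) to a single ``symmetric'' object built from $\E_1$ and $\E_2$ together, namely a de-equivariantization of $(\E_1\vee\E_2)'$ by the Tannakian category generated by $\E_1$ and $\E_2$. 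So the first reduction I would make is: it suffices to prove \eqref{coreeq} in the special case $\E_2\subset\E_1$ (i.e.\ one is contained in the other) --- wait, that is false since both are maximal --- so instead the first step is to reduce to the case where $\E_1$ and $\E_2$ are ``compatible'' enough that $\E_1\vee\E_2$ is again Tannakian. The key point (the analog of ``$\fa_2+(\fa_1\cap\fa_2^\perp)$ would be a bigger isotropic subcoalgebra'') is that for a maximal Tannakian $\E_1$, any Tannakian subcategory of $\C$ centralizing $\E_1$ that contains $\E_1$ must equal $\E_1$; I would first establish the categorical analogs of the inclusions \eqref{idty1}, \eqref{idty2}:
\begin{equation}   \label{core idty1}
\E_1\cap\E_2'\subset\E_2,
\end{equation}
\begin{equation}   \label{core idty2}
(\E_1'\cap\E_2')\vee\E_1=\E_1'.
\end{equation}
For \eqref{core idty1}: if $X$ is a simple object of $\E_1\cap\E_2'$ then the fusion subcategory generated by $\E_2$ and $X$ is symmetric (since $X$ centralizes $\E_2$ and $\E_2$ is symmetric and $\E_1$ is symmetric so $X$ centralizes itself), and it is Tannakian because $X$ has a pivotal structure with positive dimension inherited from $\E_1$ --- here I would need to invoke Lemma~\ref{tan-eq} together with the fact (Corollary~\ref{deligne} and the discussion of $\Rep(G,z)$) that a symmetric category all of whose simple objects have positive categorical dimension is Tannakian; then maximality of $\E_2$ forces $X\in\E_2$. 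For \eqref{core idty2}: by the modular law for the lattice of fusion subcategories (Lemma~\ref{Dedekind}, applicable since $U_\C$ is abelian --- Remark~\ref{automat} --- so $K_0(\C)$ is commutative and \eqref{weakcom} holds) one has $(\E_1'\cap\E_2')\vee\E_1=\E_1'\cap(\E_2'\vee\E_1)$, so it reduces to $\E_2'\vee\E_1\supset\E_1'$, equivalently $\E_2\cap\E_1'\subset\E_1$, which is \eqref{core idty1} with the roles swapped.

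Given \eqref{core idty1}--\eqref{core idty2}, the main computation is to identify the core with a ``common refinement''. Let $\E_{12}$ be the Tannakian subcategory of $\C$ generated by $\E_1$ and $\E_2$ --- one has to check this is Tannakian: $\E_1\vee\E_2$ is symmetric because $\E_2\subset\E_2'$ and, by \eqref{core idty1} applied with roles swapped, $\E_1\cap\E_2'\subset\E_2$ forces... actually the cleanest route is: $\E_1$ and $\E_2$ need not centralize each other in general, so $\E_1\vee\E_2$ need not be symmetric. Here is where I expect the real subtlety: in the Lie-coalgebra warm-up, $\fa_1$ and $\fa_2$ are both isotropic but $\fa_1+\fa_2$ need not be isotropic, and the proof of Lemma~\ref{independence} does \emph{not} use $\fa_1+\fa_2$; it uses only $\fa_1^\perp\cap\fa_2^\perp$ and $\fa_1\cap\fa_2$. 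So I would avoid $\E_1\vee\E_2$ entirely and work with $\E_1'\cap\E_2'$ and $\E_1\cap\E_2$. The target claim becomes: there is a braided equivalence
\begin{equation}  \label{targetcore}
\E_1'\boxtimes_{\E_1}\Vec\;\simeq\;(\E_1'\cap\E_2')\boxtimes_{\E_1\cap\E_2}\Vec,
\end{equation}
compatible with the $G_{F_1}$-actions, obtained as follows. Apply Lemma~\ref{3rd iso} / Proposition~\ref{diamond2isomorphism} with $\D=\E_1'$ (a braided fusion category over $\E_1$, since $\E_1\subset\E_1'$), the fusion subcategory $\D_1=\E_1'\cap\E_2'$, and note $\D_1\cap\E_1 = \E_1'\cap\E_2'\cap\E_1 = \E_1\cap\E_2$ by \eqref{core idty1} (which gives $\E_1\cap\E_2'\subset\E_2$, hence $\E_1\cap\E_2'=\E_1\cap\E_2\subset\E_2'$), while $\D_1\vee\E_1 = (\E_1'\cap\E_2')\vee\E_1 = \E_1'$ by \eqref{core idty2}. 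Proposition~\ref{diamond2isomorphism}(i)--(ii) then yields exactly the braided $G_{F_1}$-equivariant equivalence \eqref{targetcore}. By symmetry (swapping $1\leftrightarrow 2$), $\E_2'\boxtimes_{\E_2}\Vec$ is braided $G_{F_2}$-equivariantly equivalent to $(\E_1'\cap\E_2')\boxtimes_{\E_1\cap\E_2}\Vec$ as well. Since the fiber functor on $\E_1\cap\E_2$ may be taken to be the restriction of either $F_1$ or $F_2$ (all fiber functors on a Tannakian category are isomorphic, Theorem~\ref{Deligne1}(ii)), both cores are equivalent to the same braided fusion category, and this proves \eqref{coreeq} on the level of braided fusion categories.

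It remains to match the subgroups $\Gamma_{\E_i,F_i}$ under the equivalence, i.e.\ to show the equivalence \eqref{coreeq} carries $(\mbox{the pair})$ to $(\mbox{the pair})$. For this I would trace through where $\Gamma$ comes from: $\Gamma_{\E_1,F_1}$ is the image of $G_{F_1}=\Aut(F_1)$ in $\Aut^{br}(\E_1'\boxtimes_{\E_1}\Vec)$. Under the equivalence \eqref{targetcore}, which is $G_{F_1}$-equivariant for the action of $G_{F_1}$ on the right-hand side \emph{through its quotient} $G_{F_1|_{\E_1\cap\E_2}} = \Aut(F_1|_{\E_1\cap\E_2})$ --- here I need that the action of $G_{F_1}$ on $(\E_1'\cap\E_2')\boxtimes_{\E_1\cap\E_2}\Vec$ factors through $\Aut(F_1|_{\E_1\cap\E_2})$, which is Remark~\ref{notrem}(ii) / the construction in Proposition~\ref{diamond2isomorphism}. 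Since $\E_1\cap\E_2=\E_1\cap\E_2'$ is the \emph{same} subcategory whether approached from $\E_1$ or $\E_2$, and its automorphism group $\Aut(F|_{\E_1\cap\E_2})$ is independent of which fiber functor we chose, the image subgroups coincide. The one genuine check here --- and I flag it as the spot most likely to need care --- is that the canonical map $G_{F_1}\twoheadrightarrow\Aut(F_1|_{\E_1\cap\E_2})$ has the same image subgroup in $\Aut^{br}(\mathrm{core})$ as $G_{F_2}\twoheadrightarrow\Aut(F_2|_{\E_1\cap\E_2})$; this follows because both surjections are onto $\Aut(F|_{\E_1\cap\E_2})$ (each $G_{F_i}\to\Aut(F_i|_{\E_1\cap\E_2})$ is surjective, being the restriction map for a fiber functor on a Tannakian subcategory --- equivalently, surjectivity of $G\to G/N$ in group language), so in both cases $\Gamma$ is precisely the image of all of $\Aut(F|_{\E_1\cap\E_2})$. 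Hence the pairs agree and Theorem~\ref{canonical modularization} follows. The main obstacle, as indicated, is not \eqref{core idty1}--\eqref{core idty2} (routine, via Lemma~\ref{Dedekind} and a Tannakian-maximality argument) nor the application of Proposition~\ref{diamond2isomorphism} (a black box), but verifying that the \emph{$\Gamma$-structure} transports correctly --- i.e.\ bookkeeping the group actions and their quotients through the chain of equivalences --- which requires being careful that ``$\Gamma$ is the image of $G_F$'' and ``$\Gamma$ is the image of $\Aut(F|_{\E_1\cap\E_2})$'' really do define the same subgroup.
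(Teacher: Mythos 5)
Your proposal is correct and follows essentially the same route as the paper's own proof (which is phrased as the slightly stronger Theorem~\ref{generalization}): one shows $\E_1\cap\E_2'=\E_1'\cap\E_2=\E_1\cap\E_2$ by maximality, deduces $(\E_1'\cap\E_2')\vee\E_1=\E_1'$ via the modular law and double centralizers, and then applies Corollary~\ref{diamondcor}(ii) to identify each core, together with its $\Gamma$, with the pair $\bigl((\E_1'\cap\E_2')\boxtimes_{\E_1\cap\E_2}\Vec,\ \Gamma_{\E_1\cap\E_2,F}\bigr)$, matching the group data through the quotients $G_{F_i}\twoheadrightarrow\Aut(F|_{\E_1\cap\E_2})$. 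The only glib spot is the word ``equivalently'' relating $\E_1'\subset\E_2'\vee\E_1$ to $\E_2\cap\E_1'\subset\E_1$: for possibly degenerate $\C$ this implication is not formal, but it does follow from Corollary~\ref{Muger-2'}, Theorem~\ref{Muger2}(ii) (applicable since the subcategories involved contain $\C'$), and Lemma~\ref{Dedekind} --- which is exactly the computation the paper carries out.
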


The precise meaning of \eqref{coreeq} is as follows: let 
$\E_1' \boxtimes_{\E_1}\Vec$ and $\E_2' \boxtimes_{\E_2}\Vec$
be the fiber categories corresponding to the fiber functors
$F_1$ and $F_2$, then there exists a braided tensor equivalence 
$H:\E_1' \boxtimes_{\E_1}\Vec\xrightarrow{\sim}\E_2' \boxtimes_{\E_2}\Vec$
such that the corresponding group isomorphism
$H_* : \Aut^{br}(\E_1' \boxtimes_{\E_1}\Vec)
\xrightarrow{\sim} \Aut^{br}(\E_2' \boxtimes_{\E_2}\Vec)$
maps $\Gamma_{\E_1,F_1}$ onto  $\Gamma_{\E_2,F_2}$.

Theorem~\ref{canonical modularization} shows that the equivalence
class of $\mbox{Core}_{\E ,F}(\C)$
is, in fact, independent of the choice of a maximal Tannakian subcategory $\E\subset\C$. 
Thus, we can call it  ``the'' {\em core} of the category $\C$ 
(the reason for using quotation marks was explained in Remark \ref{whyquot}).
``The'' core of $\C$ will be denoted by $\mbox{Core}(\C)$.

\begin{remark} \label{added_by_Vitya_on_June23}
Let $\C$ be a braided fusion category and let $\E=\Rep(G)$ be a Tannakian 
subcategory in $\C$. Recall that the de-equivariantized category $\C_G$ has a structure
of braided $G$-crossed category, see \S \ref{orbi}. We know from Proposition
\ref{crosscentralizers}~(i) that the fiber category $\E'\boxtimes_\E \Vec$ identifies with 
the trivial component $(\C_G)_1$ of the grading on $\C_G$. Moreover, it is clear that
the action of $G$ on  $\E'\boxtimes_\E \Vec$ identifies with the $G$-action on $(\C_G)_1$ 
coming from the structure of braided $G$-crossed category. 
In particular, if the  Tannakian subcategory $\E =\Rep(G)\subset\C$ is maximal then
$\mbox{Core}_{\E ,F}(\C)$ can be expressed in terms of the braided $G$-crossed category $\C_G$.
\end{remark}

\subsubsection{Proof of Theorem \ref{canonical modularization}} \label{Proof of canmod}
We will prove the following statement, which is stronger than Theorem \ref{canonical modularization}.

\begin{theorem}   \label{generalization} 
Let $\C$ be a  braided fusion category and
let $\E_1,\E_2\subset\C$ be Tannakian subcategories equipped with fiber functors
$F_1:\E_1\to\Vec$ and $F_2:\E_2\to\Vec$. Suppose that $\E_1$ and $\E_2$ are
maximal among Tannakian subcategories of  $\E_1\vee\E_2$.
Then there exists a braided tensor equivalence 
\begin{equation}   \label{theequiv}
H:\E_1' \boxtimes_{\E_1}\Vec\xrightarrow{\sim}\E_2' \boxtimes_{\E_2}\Vec
\end{equation}
such that the the corresponding group isomorphism
\begin{equation}     \label{theiso}
H_* : \Aut^{br}(\E_1' \boxtimes_{\E_1}\Vec)
\xrightarrow{\sim} \Aut^{br}(\E_2' \boxtimes_{\E_2}\Vec)
\end{equation}
maps $\Gamma_{\E_1,F_1}$ onto  $\Gamma_{\E_2,F_2}$.
\end{theorem}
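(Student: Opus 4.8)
The plan is to mimic the proof of Lemma~\ref{independence} from the warm-up, replacing the intersections and orthogonal complements of Lie subcoalgebras by the meet and centralizer operations on fusion subcategories, and using the modularity of the lattice $L(\C)$ (Lemma~\ref{Dedekind}, which applies since a braided fusion category has commutative Grothendieck ring). First I would reduce to the situation where $\C = \E_1 \vee \E_2$, since everything happens inside this subcategory. Set $\F := \E_1 \cap \E_2$; this is a Tannakian subcategory (a fusion subcategory of a Tannakian category is Tannakian). The key categorical identities to establish, in analogy with \eqref{idty1} and \eqref{idty2}, are
\begin{equation} \label{plan-idty1}
\E_1 \cap \E_2' \subset \E_2,
\end{equation}
\begin{equation} \label{plan-idty2}
(\E_1' \cap \E_2') \vee \E_1 = \E_1' .
\end{equation}
For \eqref{plan-idty1}: if $\E_1 \cap \E_2'$ were not contained in $\E_2$, then $\E_2 \vee (\E_1 \cap \E_2')$ would be a Tannakian subcategory of $\E_1 \vee \E_2$ strictly larger than $\E_2$ — here I must check that $\E_2 \vee (\E_1 \cap \E_2')$ is genuinely Tannakian (it is symmetric because $\E_1 \cap \E_2'$ centralizes $\E_2$ and is itself symmetric, and a symmetric category generated by two Tannakian ones that centralize each other is Tannakian, e.g.\ via Lemma~\ref{tan-eq} applied to $\E_2 \bt (\E_1\cap\E_2')\to \E_2\vee(\E_1\cap\E_2')$, or by exhibiting a fiber functor), contradicting maximality of $\E_2$ in $\E_1\vee\E_2$. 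For \eqref{plan-idty2}: by the modular law \eqref{modular law}, $(\E_1' \cap \E_2') \vee \E_1 = \E_1' \cap (\E_2' \vee \E_1)$, so it suffices to prove $\E_2' \vee \E_1 \supset \E_1'$, i.e.\ $\E_2 \cap \E_1' \subset \E_1$, which is symmetric to \eqref{plan-idty1}. Here I need the centralizer identity $(\E_2' \vee \E_1)' = \E_2'' \cap \E_1' = (\E_2 \vee \C') \cap \E_1'$ via Corollary~\ref{Muger-2'}, and then one must pass back using Theorem~\ref{Muger2}(ii)-type reasoning; the cleanest route is probably to argue directly as in the Lie algebra case that $\E_2 \cap \E_1' \not\subset \E_1$ would enlarge $\E_1$.

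Granting \eqref{plan-idty1} and \eqref{plan-idty2} (and their $1\leftrightarrow 2$ mirrors), the same computation as in Lemma~\ref{independence} shows that the obvious braided functor
\[
(\E_1' \cap \E_2')\,\boxtimes_{\F}\,\Vec \longrightarrow \E_i' \boxtimes_{\E_i}\Vec
\]
is an equivalence for $i=1,2$: the kernel condition \eqref{plan-idty1} makes it faithful and the "spanning" condition \eqref{plan-idty2} makes it essentially surjective, at the level of the de-equivariantized categories. To make this precise I would use the machinery of \S\ref{catdiam} — specifically Lemma~\ref{3rd iso} (iterated de-equivariantization) and Proposition~\ref{diamond2isomorphism} (the "diamond" identity $\D_1 \bt_{\D_1 \cap \E}\Vec \simeq (\D_1 \vee \E)\bt_\E \Vec$) — which are exactly the categorical incarnations of the modular-law manipulations. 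Concretely: apply Proposition~\ref{diamond2isomorphism} with $\D = \E_1'$, the Tannakian subcategory $\E_1 \subset \D$, and $\D_1 = \E_1' \cap \E_2'$; one gets $(\E_1'\cap\E_2') \bt_{(\E_1'\cap\E_2')\cap \E_1}\Vec \simeq ((\E_1'\cap\E_2')\vee\E_1)\bt_{\E_1}\Vec$, and \eqref{plan-idty2} together with \eqref{plan-idty1} identifies the left side with $(\E_1'\cap\E_2')\bt_\F\Vec$ and the right side with $\E_1'\bt_{\E_1}\Vec$. The same with indices swapped gives the $i=2$ equivalence, and composing yields the desired $H$ in \eqref{theequiv}. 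Since Corollary~\ref{diamondcor} guarantees these equivalences are braided when the relevant subcategory centralizes (and $\E_1\cap\E_2'$, etc., do centralize by construction), $H$ is braided.

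It remains to check that $H_*$ carries $\Gamma_{\E_1,F_1}$ onto $\Gamma_{\E_2,F_2}$. Here I would use Remark~\ref{added_by_Vitya_on_June23}: $\Gamma_{\E_i,F_i}$ is the image of $G_{F_i} = \Aut(F_i)$ in the group of braided autoequivalences of the fiber category, and the $G_{F_i}$-action is the one coming from the coadjoint/conjugation action encoded in the braided $G$-crossed structure. Both $G_{F_1}$ and $G_{F_2}$ arise as quotients of $\Aut(F|_{\E_1\vee\E_2})$ for a compatible choice of fiber functor on $\E_1\vee\E_2$, or more robustly: the action of $G_{F_i}$ on $\E_i'\bt_{\E_i}\Vec$ restricts, through the equivalence with $(\E_1'\cap\E_2')\bt_\F\Vec$, to the action of the common quotient $\Aut(F|_\F)$ coming from the coadjoint action of $\Aut(F|_{\E_1\vee\E_2})$ on $(\E_1'\cap\E_2')\bt_\F\Vec$, with inner autoequivalences (those from objects of the category itself, cf.\ Lemma~\ref{alpha}) quotiented out. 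The point is that both images in $\Aut^{br}$ coincide with the image of the coadjoint action on the common model $(\E_1'\cap\E_2')\bt_\F\Vec$, so they are matched by $H_*$. I expect \textbf{this last step — checking the group $\Gamma$ is transported correctly — to be the main obstacle}, because it requires carefully tracking the $G$-equivariance of all the equivalences in \S\ref{catdiam} (they are stated as $G$-equivariant, but one must verify the induced maps on $\Aut^{br}$ genuinely identify the two images and not just abstractly isomorphic groups), and because passing to isomorphism classes of autoequivalences can collapse information, so one must be sure the "inner part" being discarded is the same on both sides — this is where the analogue of Remark~\ref{whyquot} (the non-functoriality noted there) must be handled with care.
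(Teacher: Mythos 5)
Your proposal is essentially the paper's own proof: your two lattice identities are the paper's \eqref{E0}--\eqref{E2}, established by the same maximality argument (the join of $\E_2$ with $\E_1\cap\E_2'$ is again Tannakian, e.g.\ via Lemma~\ref{tan-eq}), the same use of the modular law (Lemma~\ref{Dedekind}) together with Corollary~\ref{Muger-2'} and the double centralizer theorem (Theorem~\ref{Muger2}(ii)), and the same application of Proposition~\ref{diamond2isomorphism}/Corollary~\ref{diamondcor} identifying both fiber categories with the common model $(\E_1'\cap\E_2')\boxtimes_{\E_1\cap\E_2}\Vec$, with the subgroups $\Gamma_{\E_i,F_i}$ matched exactly as in the paper through the epimorphisms $G_{F_i}\twoheadrightarrow G_F:=\Aut(F|_{\E_1\cap\E_2})$ and the $G$-equivariance of that equivalence. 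Two incidental assertions in your write-up are false but never actually used: one cannot ``reduce to $\C=\E_1\vee\E_2$'', since the centralizers $\E_i'$ (hence the fiber categories in \eqref{theequiv}) are taken in all of $\C$ (take $\E_1=\E_2=\Vec$ to see $\E_i'=\C$); and $\E_1\vee\E_2$ is in general not Tannakian, so $G_{F_1},G_{F_2}$ are not quotients of any $\Aut(F|_{\E_1\vee\E_2})$, nor is anything ``quotiented by inner autoequivalences'' in the definition of $\Gamma_{\E,F}$ --- your ``more robust'' formulation, with both actions on the common model factoring through $\Aut(F|_{\E_1\cap\E_2})$, is the correct one and is precisely what the paper uses.
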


To construct the equivalence \eqref{theequiv}, we
choose a fiber functor $F:\E_1\cap \E_2\to\Vec$ and isomorphisms between
$F_1|_{\E_1\cap\E_2}:\E_1\cap \E_2\to\Vec$,
$F_2|_{\E_1\cap\E_2}:\E_1\cap \E_2\to\Vec$,
and $F$. Then we get epimorphisms $G_{F_i}:=\Aut (F_i)\twoheadrightarrow \Aut (F)=:G_{F}$,
$i=1,2$. The remaining part of the construction is canonical.
We will identify the pairs $(\E'_i\boxtimes_{\E_i}\Vec , \Gamma_{\E_i ,F_i})$, $i=1,2$,
with the pair
\begin{equation}  \label{E3}
((\E_1'\cap\E_2')\boxtimes_{\E_1\cap\E_2} \Vec , \Gamma_{\E_1\cap\E_2 ,F}) . 
\end{equation}

To identify \eqref{E3} with 
$(\E'_1\boxtimes_{\E_1}\Vec , \Gamma_{\E_1 ,F_1})$, we will show that
\begin{equation}  \label{E0}
(\E_1'\cap \E_2') \cap \E_1 = \E_1\cap \E_2 ,\quad (\E_1'\cap \E_2') \vee \E_1 = \E_1' .
\end{equation}
By Corollary~\ref{diamondcor}(ii), this will yield
a $\Gamma_{\E_1 ,F_1}$-equivariant equivalence
\[
(\E_1'\cap\E_2')\boxtimes_{\E_1\cap\E_2} \Vec\iso
\E_1'\boxtimes_{\E_1} \Vec
\] 
and therefore an equivalence between the pair 
$(\E'_1\boxtimes_{\E_1}\Vec , \Gamma_{\E_1 ,F_1})$ and the pair \eqref{E3}.

It remains to prove \eqref{E0}. Clearly $(\E_1'\cap \E_2')\cap\E_1=\E_1\cap\E'_2$ and by
Lemma~\ref{Dedekind}, $(\E_1'\cap \E_2') \vee \E_1 =  (\E_2'  \vee\E_1)\cap\E_1'$. Now use the following lemma.
\begin{lemma}
If $\E_1,\E_2\subset\C$ are maximal among Tannakian subcategories of  $\E_1\vee\E_2$
then
\begin{equation}  \label{E1}
\E_1'\cap \E_2=\E_1\cap \E_2' =\E_1\cap \E_2 ,
\end{equation}
\begin{equation}  \label{E2}
\E_1\vee \E_2' =\E_1'\vee \E_2 =\E_1'\vee \E_2' .
\end{equation}
\end{lemma}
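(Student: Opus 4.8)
The plan is to prove \eqref{E1} and \eqref{E2}, from which \eqref{E0} follows as already indicated in the text, and hence Theorem~\ref{generalization} (and Theorem~\ref{canonical modularization}). First I would record that the two statements are essentially ``dual'' to each other: once \eqref{E1} is known, apply the centralizer to both sides inside $\C$, using Theorem~\ref{Muger2}(ii) where applicable, to deduce \eqref{E2} — but some care is needed because $\C$ need not be non-degenerate, so I would actually work inside $\E_1\vee\E_2$ or pass to the center $\Z(\C)$, which is non-degenerate by Corollary~\ref{centerfactorizable}, so that $\D''=\D\vee\C'$ (Corollary~\ref{Muger-2'}) is available in a clean form.

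The heart of the matter is \eqref{E1}, i.e.\ the identity $\E_1'\cap\E_2=\E_1\cap\E_2$ (and symmetrically $\E_1\cap\E_2'=\E_1\cap\E_2$). The nontrivial inclusion is $\E_1'\cap\E_2\subset\E_1\cap\E_2$. The key observation is that $\E_1'\cap\E_2$ is a symmetric fusion subcategory: it lies in $\E_2$, which is Tannakian hence symmetric, so it is itself symmetric, and moreover every object of it centralizes $\E_1$. Consider $\E_1\vee(\E_1'\cap\E_2)$. Since $\E_1'\cap\E_2$ centralizes $\E_1$ and both are symmetric, I claim $\E_1\vee(\E_1'\cap\E_2)$ is again symmetric: indeed by \eqref{weakercom}-type reasoning its simple objects are summands of $X\ot Y$ with $X\in\O(\E_1)$, $Y\in\O(\E_1'\cap\E_2)$, and one checks $c_{Y,X}c_{X,Y}=\id$ because $X,Y$ centralize each other, $X$ centralizes $X'\in\E_1$, and $Y$ centralizes everything in $\E_1$ while $Y,Y'$ centralize each other as both lie in the symmetric category $\E_2$; assembling these via the hexagon axioms gives that $\E_1\vee(\E_1'\cap\E_2)$ is symmetric. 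Now I must upgrade ``symmetric'' to ``Tannakian'': here I would use that $\E_1\vee(\E_1'\cap\E_2)$ is generated by the Tannakian category $\E_1$ together with objects of $\E_2$, and invoke a gluing statement for Tannakian subcategories — the cleanest route is to observe that $\E_1\vee(\E_1'\cap\E_2)$ admits a surjective braided functor onto it from $\E_1\bt(\E_1'\cap\E_2)$ (both factors Tannakian, so their Deligne tensor product is $\Rep(G_1\times H)$, Tannakian), and apply Lemma~\ref{tan-eq} to conclude $\E_1\vee(\E_1'\cap\E_2)$ is Tannakian. This is a Tannakian subcategory of $\E_1\vee\E_2$ containing $\E_1$, so by maximality of $\E_1$ among Tannakian subcategories of $\E_1\vee\E_2$ it equals $\E_1$; hence $\E_1'\cap\E_2\subset\E_1$, giving $\E_1'\cap\E_2\subset\E_1\cap\E_2$ as required. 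The reverse inclusion and the symmetric statement $\E_1\cap\E_2'=\E_1\cap\E_2$ are the same argument with the roles of the indices swapped, and $\E_1\cap\E_2\subset\E_1'\cap\E_2$ holds because $\E_1$ is symmetric so $\E_1\subset\E_1'$.

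For \eqref{E2}, the identity $\E_1\vee\E_2'=\E_1'\vee\E_2'$ would follow if $\E_1\subset\E_1'\vee\E_2'$ — but $\E_1\subset\E_1'$ always, so this is immediate; symmetrically $\E_1'\vee\E_2\subset\E_1'\vee\E_2'$ and the reverse, so $\E_1'\vee\E_2=\E_1'\vee\E_2'$, and likewise $\E_1\vee\E_2'=\E_1'\vee\E_2'$. Wait — $\E_1\subset\E_1'$ gives $\E_1\vee\E_2'\subset\E_1'\vee\E_2'$ but not the reverse; for the reverse I need $\E_1'\subset\E_1\vee\E_2'$. This is where duality enters: take centralizers (inside the ambient non-degenerate category $\Z(\C)$, or directly using Corollary~\ref{Muger-2'}) of the just-proved identity $\E_1'\cap\E_2=\E_1\cap\E_2$ — more precisely of $\E_1'\cap\E_2'$, which by \eqref{E1} applied twice equals $\E_1\cap\E_2$; then $(\E_1'\cap\E_2')'=\E_1''\vee\E_2''$, and since $\E_i''=\E_i\vee\C'$ while $\E_i\supset$ nothing forcing $\C'\subset\E_i$, I would instead argue that within $\E_1\vee\E_2$ we have $\E_i\supset(\E_1\vee\E_2)'\cap\E_i$, which is automatic, and carefully track centralizers relative to $\E_1\vee\E_2$ using Theorem~\ref{Muger2}(i)–(ii). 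The main obstacle I anticipate is precisely this bookkeeping of centralizers in the possibly-degenerate category: getting $\E_i''=\E_i$ requires $\E_i\supset(\E_1\vee\E_2)'$, which may fail, so the honest fix is to prove \eqref{E2} as a genuine consequence of \eqref{E1} by a dimension count — combine \eqref{Dim D dim D'} (applied inside $\E_1\vee\E_2$) with \eqref{cat dim diamond eqn} of Corollary~\ref{diamond cat dims} to show $\dim(\E_1\vee\E_2')=\dim(\E_1'\vee\E_2')$ together with the inclusion $\E_1\vee\E_2'\subset\E_1'\vee\E_2'$, forcing equality. So the real work is: (1) the Tannakian-gluing argument for \eqref{E1}, which I expect to be the conceptual crux, and (2) the categorical-dimension computation for \eqref{E2}, which is routine given Corollaries~\ref{Muger-2'} and~\ref{diamond cat dims} and Theorem~\ref{Muger2}.
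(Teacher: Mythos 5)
Your treatment of \eqref{E1} is essentially the paper's own argument: the paper forms $(\E_1'\cap\E_2)\vee\E_1$, asserts it is Tannakian, and invokes maximality, and your verification of the assertion (the join is symmetric because $\E_1$, $\E_1'\cap\E_2$ centralize themselves and each other, and it is Tannakian because it is a surjective braided image of the Tannakian category $\E_1\bt(\E_1'\cap\E_2)$, by Lemma~\ref{tan-eq}) is exactly the intended justification.

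The gap is in \eqref{E2}: you abandon the centralizer route for the wrong reason, and the substitute does not close. The worry that ``$\E_i''=\E_i$ requires a hypothesis that may fail'' is a red herring, because the double centralizer theorem is to be applied not to $\E_i$ but to $\D:=\E_1\vee\E_2'$ and $\tilde\D:=\E_1'\vee\E_2'$, both of which contain $\E_2'\supset\C'$, so Theorem~\ref{Muger2}(ii) gives $\D''=\D$ and $\tilde\D''=\tilde\D$ unconditionally. It then suffices to check $\D'=\tilde\D'$: one has $\D'=\E_1'\cap\E_2''$ and $\tilde\D'=\E_1''\cap\E_2''$, and by Corollary~\ref{Muger-2'} and the modular law (Lemma~\ref{Dedekind}, applicable since $\C'\subset\E_1'$),
$\E_1'\cap\E_2''=\E_1'\cap(\E_2\vee\C')=(\E_1'\cap\E_2)\vee\C'=(\E_1\cap\E_2)\vee\C'\subset\E_1\vee\C'=\E_1''$,
while $\E_1''\subset\E_1'$ gives the reverse inclusion; hence $\D=\D''=\tilde\D''=\tilde\D$, and symmetrically for $\E_1'\vee\E_2$. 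This is the content of the paper's one-line deduction of \eqref{E2} from \eqref{E1} and Theorem~\ref{Muger2}(ii).

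Your replacement by a dimension count is genuinely incomplete. Using Corollary~\ref{diamond cat dims} and \eqref{E1}, the desired equality $\dim(\E_1\vee\E_2')=\dim(\E_1'\vee\E_2')$ is equivalent to $\dim\E_1\cdot\dim(\E_1'\cap\E_2')=\dim\E_1'\cdot\dim(\E_1\cap\E_2)$; feeding in \eqref{Dim D dim D'} for $\E_1$ and for $\E_1\vee\E_2$ (note $\E_1'\cap\E_2'=(\E_1\vee\E_2)'$) and the diamond formula once more, this reduces to $\dim\E_1\cdot\dim((\E_1\vee\E_2)\cap\C')=\dim\E_2\cdot\dim(\E_1\cap\C')$. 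One can extract $\dim\E_1=\dim\E_2$ from \eqref{Dim B,D}, the modular law and \eqref{E1}, but what remains is $\dim((\E_1\vee\E_2)\cap\C')=\dim(\E_1\cap\C')$, i.e.\ $(\E_1\vee\E_2)\cap\C'\subset\E_1$. This containment is true, but it is not a formal consequence of the dimension identities you cite: it is itself a maximality statement, and proving it directly requires in particular ruling out a transparent fermion (a copy of $s\Vec$ inside $\C'$) lying in $\E_1\vee\E_2$ but not in $\E_1$ --- an extra argument in the spirit of Lemma~\ref{Dimalem} that your sketch does not supply. So for \eqref{E2} you should revert to the centralizer argument above rather than the dimension count.
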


\begin{proof}
The fusion subcategory $(\E_1'\cap \E_2 )\vee\E_1$ is a Tannakian subcategory
of $\E_1\vee \E_2$ containing $\E_1$, so it equals $\E_1$. This means that 
$\E_1'\cap \E_2\subset\E_1$, i.e., $\E_1'\cap \E_2 =\E_1\cap \E_2$.
Similarly, $\E_1\cap \E_2' =\E_1\cap \E_2$. Finally, \eqref{E2} follows from \eqref{E1}
and the double centralizer theorem (Theorem \ref{Muger2}(ii)).
\end{proof}

So we have proved Theorem~\ref{generalization}. 

\begin{remark}    \label{remark_for_future}
Suppose that in the situation of Theorem~\ref{generalization} one has a fusion subcategory
$\tilde\C\subset\C$ containing $\E_1\vee\E_2$. Set $\tilde\E'_i:=\E'_i\cap\tilde\C$.
Let $\tilde\Gamma_{\E_i,F_i}$ denote the image of the homomorphism
$G_F\to {\Aut}^{br}(\tilde\E'_i \boxtimes_{\E_i} \Vec)$. It is easy to see that the equivalence 
\eqref{theequiv} constructed in the proof of Theorem~\ref{generalization} induces an equivalence
$\tilde H:\tilde\E_1' \boxtimes_{\E_1}\Vec\xrightarrow{\sim}\tilde\E_2' \boxtimes_{\E_2}\Vec$.
Moreover, the corresponding group isomorphism 
$\tilde H_* : \Aut^{br}(\tilde\E_1' \boxtimes_{\E_1}\Vec)
\xrightarrow{\sim} \Aut^{br}(\tilde\E_2' \boxtimes_{\E_2}\Vec)$
maps $\tilde\Gamma_{\E_1,F_1}$ onto  $\tilde\Gamma_{\E_2,F_2}$, and one has a canonical
commutative diagram  
\begin{equation}   \label{thesquare}
\xymatrix{  \Gamma_{\E_1,F_1}  \ar^{H_*}[rr] \ar@{>>}_{\partial_1}[d] & & \Gamma_{\E_2,F_2} \ar@{>>}^{\partial_2}[d] \\
\tilde\Gamma_{\E_1,F_1} \ar[rr]^{\tilde H_*} & & \tilde\Gamma_{\E_2,F_2}}
\end{equation}
This is because $\tilde\Gamma_{\E_i,F_i}$ equals the image of 
$\Gamma_{\E_i,F_i}\subset {\Aut}^{br}(\E'_i \boxtimes_{\E_i} \Vec,\tilde\E'_i \boxtimes_{\E_i} \Vec)$
under the homomorphsim ${\Aut}^{br}(\E'_i \boxtimes_{\E_i} \Vec,\tilde\E'_i \boxtimes_{\E_i} \Vec)\to
{\Aut}^{br}(\tilde\E'_i \boxtimes_{\E_i} \Vec)$. Here 
${\Aut}^{br}(\E'_i \boxtimes_{\E_i} \Vec,\tilde\E'_i \boxtimes_{\E_i} \Vec)\subset 
{\Aut}^{br}(\E'_i \boxtimes_{\E_i} \Vec )$ is the group of isomorphism classes of braided autoequivalences of 
$\E'_i \boxtimes_{\E_i}\Vec$ preserving 
$\tilde\E'_i \boxtimes_{\E_i} \Vec\subset\E'_i \boxtimes_{\E_i}\Vec$.
\end{remark}

\subsubsection{A generalization of Theorem~\ref{canonical modularization}}
\label{modified core}
Fix a subcategory $\C^0$ of a braided fusion category $\C$.  Assume that 
the subcategories $\E_1,\, \E_2$ in Theorem~\ref{canonical modularization} are maximal 
{\em among Tannakian subcategories of $\C^0$}. 
Let $\Gamma^0_{\E_1,F_1}\subset \Aut^{br}(\E_1'\bt_{\E_1} \Vec),\, 
\Gamma^0_{\E_2,F_2} \subset \Aut^{br}(\E_2'\bt_{\E_2} \Vec)$  denote the images 
of the groups $G_{F_1},\, G_{F_2}$.  Then
\begin{equation}
\label{modcoreeq}
(\E_1'\bt_{\E_1} \Vec,\, \Gamma^0_{\E_1,F_1}) \cong (\E_2'\bt_{\E_2} \Vec,\, \Gamma^0_{\E_2,F_2}),
\end{equation}
where the meaning of $\cong$ is the same as in \eqref{coreeq}. The proof given in \S\ref{Proof of canmod}
carries over without changes. Let 
\[H_*^0 : \Aut^{br}(\E_1'\bt_{\E_1} \Vec) \xrightarrow{\sim}
\Aut^{br}(\E_2'\bt_{\E_2} \Vec)
\]
denote the corresponding group isomorphism.

Furthermore, by Proposition~\ref{lattice isomorphism}(i) the subcategory $(\E_i'\cap \C^0)\bt_{\E_i} \Vec$
is stable under the action of $G_{F_i},\, i=1,2,$ and hence the restriction of the action of $G_{F_i}$ yields 
a group homomorphism $\partial_i: \Gamma^0_{\E_i,F_i} \to \Gamma_{\E_i,F_i},\, i=1,2,$ such that
the diagram
\begin{equation}
\label{needforrefcore}
\xymatrix{  \Gamma^0_{\E_1,F_1}  \ar^{H_*^0}[rr] \ar_{\partial_1}[d] & & \Gamma^0_{\E_2,F_2} \ar^{\partial_2}[d] \\
\Gamma_{\E_1,F_1} \ar[rr]^{H_*} & &  \Gamma_{\E_2,F_2}}
\end{equation}
is commutative.

\subsection{Properties of the core}
\label{propofcore}
\subsubsection{Nondegeneracy criterion}
\begin{proposition}  \label{nondegprop}
The core of a braided fusion category $\C$ is non-degenerate if and only if $\C'$
is Tannakian. 
\end{proposition}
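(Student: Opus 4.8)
The core of $\C$ is the pair $(\E'\boxtimes_\E\Vec,\,\Gamma_{\E,F})$ for a maximal Tannakian subcategory $\E=\Rep(G)\subset\C$, and by Theorem~\ref{canonical modularization} its equivalence class does not depend on $\E$; in particular its non-degeneracy does not depend on $\E$. So fix such an $\E$ and set $\C_G$ to be the de-equivariantization, which is a braided $G$-crossed fusion category with $(\C_G)_1\simeq\E'\boxtimes_\E\Vec$ (Remark~\ref{added_by_Vitya_on_June23}). The plan is to combine the maximality of $\E$ with Proposition~\ref{crosscentralizers}. First I would translate the problem: by Proposition~\ref{crosscentralizers}(iii), the centralizer of $\C_1:=(\C_G)_1$ inside $\C_1$ satisfies $(\C_1')^G=\D'\vee\E$ where $\D=\C$ here (i.e.\ $(\C_1')^G=\C'\vee\E=\E''$ by Corollary~\ref{Muger-2'}, using $\E\subset\E'$). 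Thus the core $\C_1$ is non-degenerate (i.e.\ $\C_1'=\Vec$, by Theorem~\ref{Muger1}) if and only if $\E''=\E$ as a subcategory of $\C^G$, equivalently $\C'\vee\E=\E$, equivalently $\C'\subset\E$.

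The next step is to show that the condition ``$\C'\subset\E$ for a/the maximal Tannakian subcategory $\E$'' is equivalent to ``$\C'$ is Tannakian.'' If $\C'\subset\E$ then, since $\C'$ is a symmetric fusion category (it is contained in $\E$, which is symmetric, and a fusion subcategory of a symmetric category is symmetric) and Tannakianness is inherited by symmetric fusion subcategories of a Tannakian category, $\C'$ is Tannakian. Conversely, if $\C'$ is Tannakian, then $\C'$ is itself a Tannakian subcategory of $\C$; I claim every maximal Tannakian subcategory $\E$ contains $\C'$. Indeed, $\C'$ centralizes everything, so in particular $\C'\subset\E'$, hence $\E\vee\C'\subset\E\vee\E'=\E'$ and $\E\vee\C'$ is a fusion subcategory of $\C$. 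The point is that $\E\vee\C'$ is again symmetric (both $\E$ and $\C'$ are symmetric, $\E$ centralizes $\C'$ since $\C'\subset\E'$, so $\E\vee\C'\subset(\E\vee\C')'$ follows from the fact that each of $\E,\C'$ centralizes each of $\E,\C'$) and in fact Tannakian: here one needs that the join of two Tannakian subcategories that centralize each other and whose ``parities'' are compatible is Tannakian. Since $\E$ is maximal we conclude $\E\vee\C'=\E$, i.e.\ $\C'\subset\E$, and then the previous paragraph finishes.

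The one genuinely delicate point — and the place I expect the real work to be — is the assertion that $\E\vee\C'$ is Tannakian. Being symmetric is easy from the centralizer bookkeeping above; upgrading ``symmetric'' to ``Tannakian'' requires checking the positivity condition of Definition~\ref{tannaka}, or equivalently producing a fiber functor. The clean way is to use the $G$-crossed picture: by Proposition~\ref{crosscentralizers}(i) and Proposition~\ref{lattice isomorphism}, Tannakian subcategories of $\C^G$ containing $\E$ correspond to $G$-stable Tannakian subcategories of the core $\C_1$, and the maximality of $\E$ among Tannakian subcategories of $\C$ translates (via Lemma~\ref{maximality1}'s analogue, to be established in the paper) into the statement that $\C_1$ has no nontrivial $G$-stable Tannakian subcategory. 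So I would instead argue directly in $\C_1$: $\C_1'$ is symmetric, and I must show $\C_1'$ is Tannakian precisely when $\C'$ is Tannakian. If $\C'$ is Tannakian, then $\C'\subset\E'$, and since $\E$ is maximal Tannakian, the symmetric category $\E\vee\C'$ — which I must show is Tannakian — equals $\E$, giving $\C'\subset\E$, hence $\C_1'=(\E''\boxtimes_\E\Vec)$-data collapses to $\Vec$ by Corollary~\ref{newc}(i) applied to the inclusion $\E\subset\E''$. Conversely if the core is non-degenerate, $\C_1'=\Vec$, so $\E''=\E$ in $\C^G$, hence $\C'\subset\E$, hence $\C'$ is a symmetric — and, being a subcategory of the Tannakian $\E$, Tannakian — fusion category. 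The crux, therefore, reduces to the lemma: \emph{if $\E$ is a maximal Tannakian subcategory and $\C'$ is Tannakian, then $\E\vee\C'$ is Tannakian.} I expect to prove this using Deligne's theorem (Theorem~\ref{Deligne1}) together with the observation that $\C'\subset\E'$ forces the ``twist'' on objects of $\C'$ to be trivial on a common fiber functor, so that $\E$ and $\C'$ have compatible super-structures; assembling this into a single fiber functor on $\E\vee\C'$ is the technical heart of the argument.
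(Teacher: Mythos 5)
Your reduction of the statement to ``the core is non-degenerate $\iff$ $\C'\subset\E$'' is correct and is in substance the paper's own argument: the paper gets this in one step by applying Corollary~\ref{newc}(i) to $\D=\E'$ (non-degeneracy of $\E'\bt_\E\Vec$ is equivalent to $\E''=\E$) together with Corollary~\ref{Muger-2'} ($\E''=\E\vee\C'$), whereas you route the same computation through Proposition~\ref{crosscentralizers}(iii) and the equivariantization lattice isomorphism; the two are interchangeable. Your forward direction ($\C'\subset\E$ forces $\C'$ to be Tannakian, being a fusion subcategory of $\Rep(G)$) is also fine.

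The one real gap in your write-up is the converse, ``$\C'$ Tannakian $\Rightarrow$ $\C'\subset\E$ for the \emph{fixed} maximal Tannakian $\E$'', which you make rest on the claim that $\E\vee\C'$ is Tannakian and then explicitly defer as ``the technical heart''. Two remarks. First, you do not need that claim at all: you already invoked Theorem~\ref{canonical modularization}, so non-degeneracy of the core does not depend on the choice of maximal Tannakian subcategory; if $\C'$ is Tannakian, just choose a maximal Tannakian $\E$ containing $\C'$ (possible since the lattice of fusion subcategories is finite), and then $\E''=\E\vee\C'=\E$ gives non-degeneracy. This is evidently the reading behind the paper's terse ``i.e.''. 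Second, if you want to keep your route, the deferred lemma is true and short: $\E\vee\C'$ is symmetric by your centralizer bookkeeping, so by Theorem~\ref{Deligne1} it admits a super fiber functor $F$; the restrictions of $F$ to the Tannakian subcategories $\E$ and $\C'$ take values in $\Vec\subset s\Vec$ by Definition~\ref{tannaka}(iii), and every simple object of $\E\vee\C'$ is a summand of some $X\ot Y$ with $X\in\E$, $Y\in\C'$, so $F$ lands in $\Vec$ and $\E\vee\C'$ is Tannakian by Definition~\ref{tannaka}(iii). (Equivalently: for the positive spherical structure the twist satisfies $\theta_{X\ot Y}=(\theta_X\ot\theta_Y)c_{Y,X}c_{X,Y}=\theta_X\ot\theta_Y=\id$, since $Y$ centralizes $X$ and both twists are trivial.) With either fix your proof is complete and agrees with the paper's.
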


\begin{proof}
By definition, $\mbox{Core}(\C)=\E' \boxtimes_\E \Vec$, where
$\E\subset\C$ is a maximal Tannakian subcategory. By Corollary \ref{newc}(i), 
$\E' \boxtimes_\E \Vec$ is non-degenerate if and only if $\E''=\E$.
By Corollary~\ref{Muger-2'}, $\E''=\E$ if and only if $\E\supset\C'$,
i.e., if and only if $\C'$ is Tannakian.
\end{proof}

\begin{corollary}     \label{core of non-deg}
The core of a non-degenerate braided fusion category is non-degenerate. \hfill\qedsymbol
\end{corollary}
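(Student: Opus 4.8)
The claim to prove is Corollary~\ref{core of non-deg}: the core of a non-degenerate braided fusion category is non-degenerate. This is stated as a corollary of Proposition~\ref{nondegprop}, so the plan is essentially a one-line deduction — I need only supply the observation that links ``$\C$ is non-degenerate'' to ``$\C'$ is Tannakian.''

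\medskip

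\textit{The plan is as follows.} By Proposition~\ref{nondegprop}, $\mbox{Core}(\C)$ is non-degenerate if and only if M\"uger's center $\C'$ is a Tannakian category. So it suffices to verify that when $\C$ is non-degenerate, $\C'$ is Tannakian. But this is immediate: by the equivalence (i)$\Leftrightarrow$(ii) of Proposition~\ref{Muger1prime} (equivalently, by Theorem~\ref{Muger1}), non-degeneracy of $\C$ means precisely $\C'=\Vec$. The category $\Vec$ is Tannakian — indeed $\Vec\simeq\Rep(\{1\})$, satisfying condition (iv) of Definition~\ref{tannaka} — so $\C'$ is Tannakian and Proposition~\ref{nondegprop} applies.

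\medskip

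\textit{There is no real obstacle here;} the corollary is a trivial specialization. The only thing worth being slightly careful about is that $\Vec$ genuinely counts as a Tannakian subcategory in the sense of Definition~\ref{tannaka} — it does, trivially, as the representation category of the trivial group (or, if one prefers, it obviously admits the identity fiber functor). Thus:

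\begin{proof}
If $\C$ is non-degenerate then $\C'=\Vec$ by Proposition~\ref{Muger1prime}. Since $\Vec\simeq\Rep(\{1\})$ is Tannakian, the hypothesis of Proposition~\ref{nondegprop} is satisfied, and therefore $\mbox{Core}(\C)$ is non-degenerate.
\end{proof}
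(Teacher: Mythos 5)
Your proof is correct and is exactly the argument the paper intends: the corollary is stated with a \qedsymbol because it follows immediately from Proposition~\ref{nondegprop} once one notes that non-degeneracy of $\C$ gives $\C'=\Vec$ (Proposition~\ref{Muger1prime}), and $\Vec$ is Tannakian. No gaps; your only addition is spelling out this one-line deduction explicitly.
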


\subsubsection{The core is weakly anisotropic}
\begin{definition} \label{def anisotropic}
A braided fusion category $\D$ is said to be {\em anisotropic\,} if it has no Tannakian 
subcategories different from $\Vec$. We say that $\D$ is {\em weakly anisotropic\,}
if it has no Tannakian subcategories different from $\Vec$ and stable under
all braided autoequivalences of $\D$.
\end{definition}

\begin{remark} \label{not-much}
In \S\ref{5weak&strong} we will see that the class of weakly anisotropic braided fusion categories
is bigger but {\em not much bigger\,} than the class of anisotropic ones.
\end{remark}

Let $\C$ be a braided fusion category and $\E\subset\C$ a Tannakian subcategory with a
fiber functor $F: \E \to \Vec$. Recall that $G_F:=\Aut F$ acts on the fiber category
$\E'\boxtimes_\E \Vec$, the image of $G_F$ in the group $\Aut^{br}(\E'\boxtimes_\E \Vec )$
is denoted by $\Gamma_{\E ,F}$, and if $\E$ is maximal then the pair 
$(\E'\boxtimes_\E \Vec , \Gamma_{\E ,F})$ is called the core of $\E$.
The next statement is similar to Lemma \ref{maximality1}. 

\begin{proposition}   \label{maximality2}
A Tannakian subcategory $\E \subset \C$ is maximal if and only if 
$\E'\boxtimes_\E \Vec$ has no $\Gamma_{\E ,F}$-stable Tannakian subcategories
different from $\Vec$. \hfill\qedsymbol
\end{proposition}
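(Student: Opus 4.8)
The plan is to reduce Proposition~\ref{maximality2} to a statement about Tannakian subcategories of $\C$ containing $\E$, using the correspondence established in Proposition~\ref{lattice isomorphism} and Corollary~\ref{newc}. Write $\E=\Rep(G)$ via the fiber functor $F$, so $G=G_F$, and recall that $\E'$ is a braided fusion category over $\E$, hence $\E'\bt_\E\Vec = (\E')_G$ is a braided fusion category with $G$-action, and $\E'=(\E'\bt_\E\Vec)^G$. By Proposition~\ref{lattice isomorphism}(i) the $G$-stable fusion subcategories $\tilde\C$ of $\E'\bt_\E\Vec$ correspond bijectively to fusion subcategories $\tilde\D$ of $\E'$ containing $\E$, via $\tilde\D=\tilde\C^G$, $\tilde\C=\tilde\D\bt_\E\Vec$. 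Moreover $G$-stable means exactly $\Gamma_{\E,F}$-stable here, since $\Gamma_{\E,F}$ is by definition the image of $G$ in $\Aut^{br}(\E'\bt_\E\Vec)$ and a subcategory is stable under the $G$-action iff it is stable under this image. By Corollary~\ref{newc}(iii), under this correspondence the $G$-stable Tannakian subcategories of $\E'\bt_\E\Vec$ match the Tannakian subcategories of $\E'$ containing $\E$.

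Next I would observe that a Tannakian subcategory $\tilde\D\subset\E'$ containing $\E$ is the same thing as a Tannakian subcategory $\tilde\D$ of $\C$ containing $\E$: indeed, any Tannakian (in particular symmetric) subcategory $\tilde\D\subset\C$ with $\tilde\D\supset\E$ automatically satisfies $\tilde\D\subset\tilde\D'\subset\E'$ because $\tilde\D$ symmetric gives $\tilde\D\subset\tilde\D'$ and $\tilde\D\supset\E$ gives $\tilde\D'\subset\E'$. So the $\Gamma_{\E,F}$-stable Tannakian subcategories of $\E'\bt_\E\Vec$ are in bijection with Tannakian subcategories of $\C$ containing $\E$, and the trivial subcategory $\Vec$ of $\E'\bt_\E\Vec$ corresponds to $\E$ itself (since $\E\bt_\E\Vec=\Vec$). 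Therefore ``$\E'\bt_\E\Vec$ has no $\Gamma_{\E,F}$-stable Tannakian subcategory different from $\Vec$'' is equivalent to ``$\E$ is not properly contained in any Tannakian subcategory of $\C$'', which is precisely the definition of $\E$ being maximal. This gives both directions at once.

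I do not expect a serious obstacle here; the statement is essentially a formal consequence of the dictionary already built up in Section~\ref{isotsub}, and the proof runs parallel to that of Lemma~\ref{maximality1}. The one point requiring a little care is the identification of ``$\Gamma_{\E,F}$-stable'' with ``$G$-stable'': one must note that a fusion subcategory of $\E'\bt_\E\Vec$ is preserved by a braided autoequivalence $g$ (for $g$ running over the image of $G$) iff it is preserved by the corresponding autoequivalence in the chosen monoidal functor $\underline{G}\to\underline{\Aut}^{br}(\E'\bt_\E\Vec)$, which is immediate since preservation of a subcategory depends only on the isomorphism class of the autoequivalence. With that remark in place, the chain of bijections from the previous paragraph closes the argument, so the proof consists of assembling Proposition~\ref{lattice isomorphism}(i), Corollary~\ref{newc}(iii), Corollary~\ref{Muger-2'} (for the inclusion $\tilde\D\subset\E'$), and the observation $\E\bt_\E\Vec=\Vec$.
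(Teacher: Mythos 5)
Your argument is correct and is exactly the paper's proof: the paper's own justification is the single line ``Use Corollary~\ref{newc}(iii)'', and your proposal simply spells out that reduction (Tannakian subcategories of $\C$ containing $\E$ lie in $\E'$, correspond under Proposition~\ref{lattice isomorphism}(i) and Corollary~\ref{newc}(iii) to $G$-stable Tannakian subcategories of $\E'\bt_\E\Vec$, with $\E$ itself going to $\Vec$, and $G$-stable = $\Gamma_{\E,F}$-stable since stability only depends on isomorphism classes of autoequivalences). The only cosmetic remark is that the inclusion $\tilde\D\subset\E'$ follows from the order-reversing property of centralizers as you yourself argue, so the appeal to Corollary~\ref{Muger-2'} at the end is unnecessary.
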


\begin{proof}
Use Corollary~\ref{newc}(iii).
\end{proof}

\begin{corollary}  \label{coreweaklyanis}
The core of any braided fusion category is weakly anisotropic as a braided fusion category. \hfill\qedsymbol
\end{corollary}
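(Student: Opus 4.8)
The statement to prove is Corollary~\ref{coreweaklyanis}: the core of any braided fusion category is weakly anisotropic. I would observe that this is an immediate consequence of Proposition~\ref{maximality2}, which has just been stated (with proof via Corollary~\ref{newc}(iii)). Indeed, by definition the core is the pair $(\E'\boxtimes_\E \Vec,\, \Gamma_{\E,F})$ for a \emph{maximal} Tannakian subcategory $\E\subset\C$. Weak anisotropy of the braided fusion category $\E'\boxtimes_\E \Vec$ means exactly that it has no Tannakian subcategories other than $\Vec$ that are stable under all braided autoequivalences. So the only thing to check is the passage from ``stable under $\Gamma_{\E,F}$'' (which is what maximality of $\E$ buys us via Proposition~\ref{maximality2}) to ``stable under all braided autoequivalences.''

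\textbf{Key steps, in order.} First, invoke Proposition~\ref{maximality2}: since $\E$ is maximal, the fiber category $\E'\boxtimes_\E \Vec$ has no $\Gamma_{\E,F}$-stable Tannakian subcategory different from $\Vec$. Second, note that any Tannakian subcategory $\T\subset\E'\boxtimes_\E \Vec$ that is stable under \emph{all} braided autoequivalences is in particular stable under the subgroup $\Gamma_{\E,F}\subset\Aut^{br}(\E'\boxtimes_\E \Vec)$, because $\Gamma_{\E,F}$ is by construction the image of $G_F$ in the group of isomorphism classes of braided autoequivalences. Third, combine: such a $\T$ is $\Gamma_{\E,F}$-stable, hence by the first step $\T=\Vec$. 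This shows $\E'\boxtimes_\E\Vec$ is weakly anisotropic, which is precisely the assertion. That is the whole argument — a two-line deduction — so the ``proof'' is really just a pointer, and indeed the excerpt already marks it with \texttt{\textbackslash hfill\textbackslash qedsymbol}, signalling the author intends no more.

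\textbf{The main obstacle.} There is essentially no obstacle here; the content has been front-loaded into Proposition~\ref{maximality2} and Corollary~\ref{newc}(iii). If I wanted to be scrupulous, the one point deserving a sentence is the logical direction: weak anisotropy requires stability under \emph{all} braided autoequivalences, and $\Gamma_{\E,F}$ is only a \emph{sub}group of those, so the implication ``all-stable $\Rightarrow$ $\Gamma_{\E,F}$-stable'' goes the easy way — a subgroup imposes a weaker constraint, so fewer subcategories are $\Gamma_{\E,F}$-stable than all-stable, and having shown the only $\Gamma_{\E,F}$-stable Tannakian subcategory is $\Vec$, a fortiori the only all-stable one is $\Vec$. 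Writing it out:

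\begin{proof}
Let $\E\subset\C$ be a maximal Tannakian subcategory with fiber functor $F$, so that, by Definition~\ref{core corresponding to E}, the core of $\C$ is the pair $(\E'\boxtimes_\E\Vec,\,\Gamma_{\E,F})$. Let $\T\subset\E'\boxtimes_\E\Vec$ be a Tannakian subcategory stable under all braided autoequivalences of $\E'\boxtimes_\E\Vec$. Since $\Gamma_{\E,F}$ is the image of $G_F$ in the group $\Aut^{br}(\E'\boxtimes_\E\Vec)$ of isomorphism classes of braided autoequivalences, $\T$ is in particular $\Gamma_{\E,F}$-stable. By Proposition~\ref{maximality2}, maximality of $\E$ forces $\T=\Vec$. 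Hence $\E'\boxtimes_\E\Vec$ is weakly anisotropic as a braided fusion category.
\end{proof}
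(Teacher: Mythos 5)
Your proof is correct and is exactly the argument the paper intends: the corollary is stated with a qed symbol precisely because it follows from Proposition~\ref{maximality2} together with the observation that stability under all braided autoequivalences implies stability under the subgroup $\Gamma_{\E,F}\subset\Aut^{br}(\E'\boxtimes_\E\Vec)$. Nothing to add.
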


\begin{example}  \label{Is-ex}
Let $\mI$ be an Ising fusion category (see Appendix \ref{Is}). Then the core of the center
$\Z (\mI )$ is {\em not anisotropic,\,} see Lemma \S\ref{Is-core}. 
\end{example}

\begin{remark} \label{iteratedcores}
Let $\C$ be a braided fusion category and let $\mbox{Core} (\C)$ denote its core viewed just as a braided fusion category (we forget the subgroup of $\Aut^{br}(\mbox{Core}  (\C))$. 
If $\mbox{Core} (\C)$ is not anisotropic (as in Example \ref{Is-ex}) then 
$\mbox{Core} (\mbox{Core} (\C))\not\simeq \mbox{Core} (\C)$. However Theorem~\ref{5wapt0} and
Remark~\ref{5waprem1}(ii) below show that if $\C$ is weakly anisotropic then 
$\mbox{Core} (\mbox{Core} (\C))\simeq \mbox{Core} (\C)$. Combining this with 
Corollary~\ref{coreweaklyanis} we see that
if $\C$ is {\em any\,} braided fusion category then
$\mbox{Core} (\mbox{Core} (\mbox{Core} (\C)))\simeq \mbox{Core} (\mbox{Core} (\C))$. 
\end{remark}

\subsection{Why the notion of weak anisotropy is reasonable.}   \label{whyreasonable}
\subsubsection{Relation between anisotropy and weak anisotropy} \label{5weak&strong}
\begin{definition} \label{5def2anisotropic}
A pre-metric group $(G,q)$ is said to be  {\em weakly anisotropic\,}
if it has no nonzero isotropic subgroups stable under $\Aut (G,q)$.
\end{definition}

The pointed braided category $\C (G,q)$ corresponding to a pre-metric group $(G,q)$
is (weakly) anisotropic if and only if $(G,q)$ is. 

\begin{example}
Let $G$ be a finite-dimensional vector space over $\BF_p$ and let $Q:G\to\BF_p$ be
{\em any\,} non-degenerate quadratic form. Define $q:G\to k^{\times}$ by $q(x):=\zeta^{Q(x)}$,
where $\zeta\in k$ is a primitive $p$-th root of 1. Then it is easy to prove that $(G,q)$ is weakly
anisotropic. We prove this in Appendix A (see Lemmas \ref{wap-lm1} and \ref{wap-lm4}). 
In fact, if $|G|\ne 4,9$ then the representation of $\Aut (G,q)$ in $G$ is irreducible, which clearly
implies that $(G,q)$ is weakly anisotropic. 
\end{example}

\begin{theorem}  \label{5wapt0}
A braided fusion category $\D$ is weakly anisotropic if and only if it can be represented as
$\C (V\oplus V^*,q)\bt\D_1$, where $\D_1$ is anisotropic,  $V$ is a direct sum of finitely many cyclic groups of prime orders, $q:V\oplus V^*\to k^{\times}$ is the standard quadratic form, and 
$\C (V\oplus V^*,q)$ is the pointed braided category corresponding to the metric group
$(V\oplus V^*,q)$.
\end{theorem}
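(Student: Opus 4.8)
The statement asserts a structural decomposition of weakly anisotropic braided fusion categories. The ``if'' direction is the easy one: if $\D=\C(V\oplus V^*,q)\bt\D_1$ with $\D_1$ anisotropic and $(V\oplus V^*,q)$ the standard hyperbolic metric group on a direct sum of cyclic groups of prime order, I must check that $\D$ has no nonzero Tannakian subcategory stable under all braided autoequivalences. The key point is that $\Aut^{br}(\D)$ contains enough autoequivalences --- in particular all braided autoequivalences of the $\C(V\oplus V^*,q)$ factor, which by Proposition~\ref{braided metric} correspond to $\Aut(V\oplus V^*,q)$ --- so any stable Tannakian subcategory $\T\subset\D$ must decompose compatibly and its image in $\C(V\oplus V^*,q)$ must be an isotropic subgroup stable under $\Aut(V\oplus V^*,q)$; the cited Lemmas~\ref{wap-lm1} and \ref{wap-lm4} say the standard form on $V\oplus V^*$ is weakly anisotropic, so that image is trivial, and then $\T\subset\D_1$ forces $\T=\Vec$ by anisotropy of $\D_1$. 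I would need a small lemma that a Tannakian (more generally symmetric) subcategory of a Deligne tensor product $\A\bt\B$ with $\A$ pointed splits as a tensor product of its ``projections''; this should follow from the fact that simple objects of $\A\bt\B$ are $X\bt Y$ and from analyzing when $c_{Y,X}c_{X,Y}=\id$.

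\textbf{The ``only if'' direction} is the substantive one. Suppose $\D$ is weakly anisotropic. First I would produce the pointed hyperbolic factor. Consider the maximal pointed subcategory $\D_{pt}\subset\D$ and inside it the subcategory corresponding to the radical of the associated quadratic form, i.e.\ the ``isotropic part'' of $\D_{pt}$. Using the pairing machinery of \S\ref{useful pairing} and Proposition~\ref{bigrading}, and the homomorphism $\alpha:\O(\D_{pt})\to\Aut_\otimes(\id_\D)$, I want to extract a sub-metric-group $(W,q_W)\subset\D_{pt}$ that is non-degenerate, split (hyperbolic), and whose ambient braided autoequivalence group is as large as possible --- large enough that weak anisotropy of $\D$ forces $W$ itself to have no $\Aut$-stable isotropic subgroups, hence (after peeling off the maximal such hyperbolic piece) the ``leftover'' has no isotropic directions at all. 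The prime-order-cyclic condition on $V$ comes from the fact that in a weakly anisotropic pre-metric group the $\Aut$-action on the torsion is very constrained: an isotropic cyclic subgroup of non-prime order would contain a smaller $\Aut$-stable isotropic subgroup (its unique subgroup of prime order is characteristic), contradicting weak anisotropy. So the hyperbolic part is forced to be $\C(V\oplus V^*,q)$ with $V$ a sum of $\BZ/p\BZ$'s.

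\textbf{Splitting off the hyperbolic factor.} Once I have a non-degenerate $\K:=\C(V\oplus V^*,q)\subset\D$, Theorem~\ref{Muger3}(i) gives a braided equivalence $\D\simeq\K\bt\K'$ where $\K'$ is the centralizer. It remains to show $\D_1:=\K'$ is anisotropic. Here I use that $\D$ is weakly anisotropic: any Tannakian subcategory of $\D_1$ is also a Tannakian subcategory of $\D$, and I must show it is stable under all braided autoequivalences of $\D$ --- or rather, argue the contrapositive, that a nontrivial Tannakian $\T\subset\D_1$ would, together with an appropriately chosen hyperbolic pairing, produce an $\Aut^{br}(\D)$-stable nontrivial Tannakian subcategory of $\D$, contradicting weak anisotropy. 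The cleanest route is probably: if $\D_1$ had a nontrivial Tannakian subcategory $\T$, then by maximality of the hyperbolic piece $\T$ could be enlarged or combined with part of $\K$ to form a canonical (hence autoequivalence-stable) Tannakian subcategory; more concretely, the maximal Tannakian subcategory of $\D$ is itself characteristic, so if $\D$ is weakly anisotropic its maximal Tannakian subcategory is $\Vec$, meaning $\D$ is already anisotropic --- wait, that cannot be right since we want a nontrivial $V$.

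\textbf{Main obstacle.} The delicate issue, and where I expect the real work to lie, is exactly reconciling ``weakly anisotropic'' with the presence of a nontrivial hyperbolic $\C(V\oplus V^*,q)$: a priori $\C(V\oplus V^*,q)$ itself contains the Lagrangian (hence Tannakian) subcategory $\C(V,0)=\Rep(V^*)$, which is \emph{not} $\Aut$-stable precisely because $\Aut(V\oplus V^*,q)$ acts transitively enough on Lagrangians --- this is the content of the cited appendix lemmas and is the crux. So the ``only if'' argument must be organized as: (1) let $\E$ be a maximal Tannakian subcategory of $\D$ with fiber functor $F$; weak anisotropy is exactly the statement (via Proposition~\ref{maximality2} applied with $\C=\D$, noting $\Vec$ is the maximal such subcategory iff $\E'\bt_\E\Vec$ has no $\Gamma_{\E,F}$-stable Tannakian subcategory) --- actually I'd instead directly analyze $\D_{pt}$ and its quadratic form $q$, write $q$ via Remark~\ref{easyrems}(iii) and the appendix on metric groups as an orthogonal sum of a non-degenerate part and a part killed by the bicharacter, and bootstrap from the pointed (group-theoretic) case to the general case using Theorem~\ref{Muger3} to peel off non-degenerate pieces one at a time. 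The bookkeeping of which subcategories remain $\Aut^{br}(\D)$-stable after each peeling is the main obstacle; everything else (the metric-group combinatorics, the prime-order constraint) reduces to the appendix lemmas and standard facts about quadratic forms on finite abelian groups.
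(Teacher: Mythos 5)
There is a genuine gap, and in fact two. First, a point of context: the paper does not prove Theorem~\ref{5wapt0} at all --- it explicitly defers the proof to a subsequent article and only establishes the pointed case, which by Lemma~\ref{wap-lem} reduces to Theorem~\ref{wap-th} in Appendix~A (proved there via the criterion (iii): for every isotropic subgroup $A$ there is an isotropic $B$ pairing non-degenerately with it, plus the structural Lemmas~\ref{wap-lm2} and \ref{wap-lm5}). So your proposal cannot be measured against a proof in this paper; it has to stand on its own, and it does not.

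In the ``if'' direction your key reduction is broken. The ``small lemma'' you invoke --- that a Tannakian (or symmetric) subcategory of $\A\bt\B$ with $\A$ pointed splits as a product of its projections, so that its image in $\C(V\oplus V^*,q)$ is an isotropic subgroup --- is false as stated. Take $V=\BZ/2\BZ$, $\D_1=s\Vec$ (which is anisotropic), $g=(1,1)\in V\oplus V^*$ with $q(g)=-1$, and $\delta$ the odd line in $s\Vec$; then the subcategory generated by $g\bt\delta$ is Tannakian (it is $\Rep(\BZ/2\BZ)$, since the self-braiding of $g\bt\delta$ is $q(g)\cdot(-1)=1$), it does not split, and its support in the pointed factor is not isotropic. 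Such ``diagonal'' Tannakian subcategories are exactly what the theorem's hypothesis on $\D_1$ and the stability requirement must rule out, and your argument gives no mechanism by which stability under $\Aut^{br}(\D)$ forces either splitting or isotropy of the support; Lemmas~\ref{wap-lm1} and \ref{wap-lm4} only dispose of the subcategories living entirely inside $\C(V\oplus V^*,q)$. In the ``only if'' direction you never actually produce the hyperbolic factor or prove that the centralizer complement (via Theorem~\ref{Muger3}) is anisotropic rather than merely free of \emph{stable} Tannakian subcategories --- this is the crux, since a priori $\D$ could contain non-pointed Tannakian subcategories $\Rep(G)$ not visible in $\D_{pt}$. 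Your own text flags the contradiction (``the maximal Tannakian subcategory of $\D$ is itself characteristic \ldots wait, that cannot be right''): maximal Tannakian subcategories are not unique or canonical, precisely because $\Aut(V\oplus V^*,q)$ moves Lagrangians around, so no argument of the form ``maximal hence stable'' can work. The correct ingredients you do have --- the prime-order observation (a cyclic isotropic subgroup of composite order contains a characteristic isotropic subgroup, cf.\ Lemma~\ref{wap-lm2}), the use of Theorem~\ref{Muger3} to split off a non-degenerate pointed piece, and the appendix results on the hyperbolic form --- are necessary but nowhere assembled into an argument; as written this is a plan with its central steps missing or false.
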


The proof will be given in a subsequent article. If $\D$ is pointed Theorem~\ref{5wapt0}
amounts to a theorem on weakly anisotropic pre-metric groups, which will be proved 
in \S\ref{wapg} 
(see Lemma \ref{wap-lem} and Theorem \ref{wap-th}).

\begin{rems}     \label{5waprem1}
\begin{enumerate}
\item[(i)] The tensor product decomposition mentioned in Theorem~\ref{5wapt0} is usually
not unique.
\item[(ii)] In the situation of Theorem \ref{5wapt0} the core of $\D$ equals $(\D_1 ,\{ 1\})$, where
$\{ 1\}$ denotes the trivial subgroup of $\Aut^{br} (\D_1)$. Therefore the braided equivalence
class of $\D_1$ is uniquely determined by $\D$. The isomorphism class of $V$ is also
uniquely determined by $\D$ (because it is uniquely determined by $|V|$ and 
$|V|^2=\FPdim (\D )/\FPdim (\D_1)$).
\item[(iii)]  Combining Theorem~\ref{5wapt0} with the classification of anisotropic pre-metric groups
from \S\ref{anisotropic-odd}, \S\ref{2groups}, and \S\ref{degensubsec} of Appendix A one gets a 
{\em complete classification\,} of pointed weakly anisotropic  braided fusion categories.
\end{enumerate}
\end{rems}

\subsubsection{The 3 types of weakly anisotropic braided fusion categories}  \label{5the3types}
The goal of this subsection is to define and motivate the adjective ``ordinary", which appears in
Theorem \ref{5ordmaintheor} below.

\begin{lemma}  \label{5l1}
Let $\C$ be a weekly anisotropic braided fusion category and $\C_1\subset\C$
a fusion subcategory stable under all braided equivalences of $\C$. Then
$\C_1$ is weakly anisotropic. \hfill\qedsymbol
\end{lemma}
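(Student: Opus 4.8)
The statement to be proved is Lemma~\ref{5l1}: if $\C$ is weakly anisotropic and $\C_1\subset\C$ is a fusion subcategory stable under all braided autoequivalences of $\C$, then $\C_1$ is weakly anisotropic. Recall (Definition~\ref{def anisotropic}) that ``$\C_1$ is weakly anisotropic'' means $\C_1$ has no Tannakian subcategory $\E\ne\Vec$ that is stable under all braided autoequivalences \emph{of $\C_1$}. So the plan is to argue by contrapositive: assume $\E\subset\C_1$ is a Tannakian subcategory, $\E\ne\Vec$, stable under every braided autoequivalence of $\C_1$, and produce from it a Tannakian subcategory of $\C$ different from $\Vec$ and stable under every braided autoequivalence of $\C$, contradicting the weak anisotropy of $\C$.

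First I would observe that $\E$ is automatically a Tannakian subcategory of $\C$ (a fusion subcategory of $\C_1\subset\C$ which is Tannakian in $\C_1$ is Tannakian in $\C$, since the defining properties of Tannakian — Definition~\ref{tannaka}, e.g.\ the existence of a fiber functor $\E\to\Vec$ — are intrinsic to $\E$). It remains to check that $\E$ is stable under every braided autoequivalence $\Phi$ of $\C$. The key point is that any such $\Phi$ restricts to a braided autoequivalence of $\C_1$: indeed, by hypothesis $\C_1$ is stable under all braided autoequivalences of $\C$, so $\Phi(\C_1)=\C_1$ and $\Phi|_{\C_1}\colon\C_1\to\C_1$ is a braided autoequivalence. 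Then, since $\E$ is stable under \emph{all} braided autoequivalences of $\C_1$, in particular it is stable under $\Phi|_{\C_1}$, hence $\Phi(\E)=\E$. Thus $\E$ is a nontrivial Tannakian subcategory of $\C$ stable under all braided autoequivalences of $\C$, contradicting the assumption that $\C$ is weakly anisotropic.

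The argument is essentially formal — it is a ``naturality/functoriality'' observation rather than a computation — so I do not expect a genuine obstacle. The one point that requires a sentence of care is the claim that a braided autoequivalence of $\C$ stabilizing $\C_1$ restricts to a \emph{braided} autoequivalence of $\C_1$; this is immediate because the braiding and tensor structure on $\C_1$ are inherited from $\C$, and $\Phi$ respects both. A second point worth spelling out is that ``Tannakian subcategory of $\C_1$'' entails ``Tannakian subcategory of $\C$'': this is clear from Definition~\ref{tannaka} since the conditions there (positivity of the canonical spherical structure, or existence of a fiber functor) depend only on $\E$ itself and not on the ambient category. With these two remarks in place the proof is a two-line contrapositive, which is presumably why the authors leave it with a \qedsymbol.
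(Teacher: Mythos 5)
Your proof is correct, and it is exactly the formal contrapositive argument the authors have in mind — the paper states the lemma with a \qedsymbol{} and no proof, treating it as immediate. The two points you flag (that a braided autoequivalence of $\C$ stabilizing $\C_1$ restricts to a braided autoequivalence of $\C_1$, and that being Tannakian is intrinsic to $\E$) are indeed the only things to check, and you handle them correctly.
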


\begin{lemma}  \label{5l2}
A weakly anisotropic symmetric fusion category $\C$ is braided equivalent to
$\Vec$ or to $s\Vec$.
\end{lemma}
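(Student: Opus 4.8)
\textbf{Proof plan for Lemma~\ref{5l2}.}
The plan is to use Deligne's structure theorem for symmetric fusion categories (Theorem~\ref{Deligne1}). Let $\C$ be a weakly anisotropic symmetric fusion category; pick a super fiber functor $F:\C\to s\Vec$, which exists by Theorem~\ref{Deligne1}(i). By Theorem~\ref{Deligne1}(iii) the associated functor $\tilde F:\C\to\Rep(G,z)$ is a braided equivalence, where $G=G_F$ and $z=z_F$. So it suffices to show that $\Rep(G,z)$ is weakly anisotropic only when $\Rep(G,z)\simeq\Vec$ (i.e.\ $G=\{1\}$) or $\Rep(G,z)\simeq s\Vec$ (i.e.\ $G=\langle z\rangle\simeq\BZ/2\BZ$).

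First I would treat the Tannakian case. If $z=1$ then $\C=\Rep(G)$ is Tannakian, so it contains itself as a Tannakian subcategory; weak anisotropy then forces $\Rep(G)$ to have no braided-autoequivalence-stable Tannakian subcategory other than $\Vec$. But $\Rep(G)$ itself is stable under all its braided autoequivalences, so $\Rep(G)=\Vec$, i.e.\ $G=\{1\}$. Now suppose $z\ne 1$. Consider the maximal Tannakian subcategory $\C_1\subset\C$, which by the discussion preceding Corollary~\ref{deligne} equals $\Rep(G/\langle z\rangle)\subset\Rep(G)$; concretely it is the full subcategory of $X$ with $\theta_X=\id_X$, equivalently (identifying with super-representations) the subcategory of $z$-even objects. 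This $\C_1$ is canonically attached to $\C$ — it is the trivial component of the $\mu_2$-grading of $\C$ defined by the canonical twist $\theta$ (see the paragraph before Corollary~\ref{deligne}), and $\theta$ is canonical — hence $\C_1$ is stable under every braided autoequivalence of $\C$. So weak anisotropy of $\C$ gives $\C_1=\Vec$, i.e.\ $\Rep(G/\langle z\rangle)=\Vec$, i.e.\ $G=\langle z\rangle\simeq\BZ/2\BZ$, and then $\C=\Rep(\BZ/2\BZ,z)=s\Vec$ by the Example in \S\ref{Deligne}.

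The one point requiring a little care — and the only real obstacle — is verifying that $\C_1$ is stable under all braided autoequivalences of $\C$. This follows because the defining twist $\theta$ of a symmetric fusion category (the one whose ribbon structure is the identity, equivalently the twist of the unique positive spherical structure by Corollary~\ref{symint}, Proposition~\ref{FPint3}, Remark~\ref{nophi}) is canonical: any braided autoequivalence $\Phi$ of $\C$ carries $\theta$ to $\theta$ since it commutes with the braiding and with the spherical/pivotal data that $\theta$ is built from, hence carries the trivial component $\C_1=\{X:\theta_X=\id_X\}$ to itself. Once this is in place the argument above closes, using $\FPdim$ bookkeeping (Corollary~\ref{deligne}(ii)) only as a sanity check rather than as an essential ingredient.
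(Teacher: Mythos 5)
Your proof is correct and follows essentially the same route as the paper: apply Deligne's theorem to write $\C=\Rep(G,z)$, observe that the subcategory of objects on which $z$ acts trivially (the maximal Tannakian subcategory, $\Rep(G/\langle z\rangle)$) is Tannakian and stable under all braided autoequivalences of $\C$, and conclude from weak anisotropy that it equals $\Vec$, i.e.\ $G=\langle z\rangle$. The paper handles $z=1$ and $z\ne 1$ uniformly and simply asserts this stability, while you split into cases and justify stability via the canonical twist (one could equally say: braided autoequivalences permute Tannakian subcategories, hence fix the maximal one) — a purely cosmetic difference.
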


\begin{proof}
By Deligne's theorem (see \S\ref{Deligne}), $\C =\Rep(G,z)$ for some finite group $G$ and
some central element $z\in G$ of order $1$ or $2$. Representations of $G$ on which $z$
acts trivially form a Tannakian subcategory $\widetilde{\C}\subset\C$ stable under all braided
auto-equivalences of $\C$. So $\widetilde{\C}=\Vec$, i.e., $G$ is generated by $z$.
\end{proof}

\begin{proposition}  \label{5modified}
Let $\C$ be a weakly anisotropic braided fusion category. Then
\begin{enumerate}
\item[(i)] $\C'\subset\C_{pt}\cap (\C_{pt})'$;
\item[(ii)] $\C_{pt}\cap (\C_{pt})'$ is braided equivalent to $\Vec$ or $s\Vec$.
\end{enumerate}
\end{proposition}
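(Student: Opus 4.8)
\textbf{Proof proposal for Proposition \ref{5modified}.}

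The plan is to exploit the general principle, already used repeatedly in this section, that a braided fusion category $\C$ containing a Tannakian subcategory $\E$ is controlled by the braided $G$-crossed category $\C_G$; but here it is cleaner to argue directly with centralizers. For (i), I first observe that $\C'$ is always a symmetric fusion category (it centralizes itself, being a fusion subcategory with $\C'\subset(\C')'$), hence by Deligne's theorem $\C' = \Rep(G,z)$ for some finite $G$ and central $z$ of order dividing $2$. The subcategory $\Rep(G/\langle z\rangle)\subset\C'$ of objects on which $z$ acts trivially is the maximal Tannakian subcategory of $\C'$; crucially it is canonically attached to $\C$ (it is the maximal Tannakian subcategory of $\C'$, and $\C'$ itself is canonical), so it is stable under every braided autoequivalence of $\C$. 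Weak anisotropy of $\C$ then forces $\Rep(G/\langle z\rangle)=\Vec$, i.e.\ $G=\langle z\rangle$, so $\C'$ is $\Vec$ or $s\Vec$. In either case $\C'$ is pointed, so $\C'\subset\C_{pt}$; and since $\C'\subset(\C')'$ and $\C'\subset\C_{pt}$ implies $(\C_{pt})'\subset(\C')'$, we get $\C'\subset(\C_{pt})'$ as well, hence $\C'\subset\C_{pt}\cap(\C_{pt})'$. This proves (i) and, as a byproduct, reduces (ii) to a statement about the pointed subcategory.

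For (ii), set $\M:=\C_{pt}\cap(\C_{pt})'$. First note that $\M$ is symmetric: an object of $\M$ lies in $\C_{pt}$ and centralizes all of $\C_{pt}\supset\M$, so $\M\subset\M'$. Next I want to show $\M$ is stable under every braided autoequivalence $\alpha$ of $\C$. This is clear for $\C_{pt}$ (autoequivalences preserve invertibility) and hence for its centralizer $(\C_{pt})'$, since $\alpha$ commutes with the formation of centralizers; therefore $\alpha(\M)=\M$. Now Lemma \ref{5l1} applies: $\M$ is a fusion subcategory of the weakly anisotropic $\C$, stable under all braided autoequivalences of $\C$, so $\M$ is weakly anisotropic. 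But $\M$ is a symmetric category, so Lemma \ref{5l2} gives that $\M$ is braided equivalent to $\Vec$ or to $s\Vec$, which is exactly (ii).

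The one point requiring care — the main (minor) obstacle — is the assertion that a braided autoequivalence of $\C$ commutes with taking centralizers, i.e.\ $\alpha(\D')=\alpha(\D)'$ for any fusion subcategory $\D\subset\C$. This is immediate from the definition of centralizer (Definition \ref{def centralizer}): $\alpha$ is a braided equivalence, so $c_{\alpha(Y),\alpha(X)}c_{\alpha(X),\alpha(Y)}=\alpha(c_{Y,X}c_{X,Y})$, whence $X$ centralizes $Y$ iff $\alpha(X)$ centralizes $\alpha(Y)$; applying this with $\D=\C_{pt}$ (which is visibly $\alpha$-stable) yields $\alpha((\C_{pt})')=(\C_{pt})'$. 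I should also double-check the little lattice fact that $\C'\subset\C_{pt}$ and $\C'$ symmetric together give $\C'\subset(\C_{pt})'$: indeed every object of $\C'$ centralizes all of $\C$ by definition of $\C'$, so a fortiori it centralizes $\C_{pt}$, giving $\C'\subset(\C_{pt})'$ directly without even invoking $\C'\subset\C_{pt}$. With these observations the proof is entirely formal, relying only on Deligne's structure theorem, Lemmas \ref{5l1} and \ref{5l2}, and the elementary behaviour of centralizers under equivalences.
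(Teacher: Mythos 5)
Your proof is correct and follows essentially the same route as the paper: the paper observes that both $\C'$ and $\C_{pt}\cap(\C_{pt})'$ are symmetric fusion subcategories stable under all braided autoequivalences of $\C$ and then invokes Lemmas \ref{5l1} and \ref{5l2}, which is exactly what you do for (ii), while for (i) you merely inline the Deligne-theorem argument that constitutes the proof of Lemma \ref{5l2} before concluding $\C'\subset\C_{pt}\cap(\C_{pt})'$. (One small remark: your intermediate deduction ``$\C'\subset\C_{pt}$ implies $(\C_{pt})'\subset(\C')'$, hence $\C'\subset(\C_{pt})'$'' is a non sequitur as stated, but your final direct argument --- every object of $\C'$ centralizes all of $\C$, in particular $\C_{pt}$ --- is the right one and renders it harmless.)
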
  

\begin{proof}
Both $\C_{pt}\cap (\C_{pt})'$  and $\C'$ are symmetric fusion subcategories
of $\C$ stable under all braided equivalences of $\C$. Now use Lemmas \ref{5l1} and \ref{5l2}.
\end{proof}

\begin{corollary}  \label{53typescor}
A weakly anisotropic braided fusion category has one of the following types:
\begin{equation}  \label{5typeO}
\C_{pt}\cap (\C_{pt})'=\C'=\Vec ;
\end{equation}
\begin{equation}  \label{5typeS}
\C_{pt}\cap (\C_{pt})'=\C'=s\Vec ;
\end{equation}
\begin{equation}  \label{5typeI}
\C'=\Vec ,\quad \C_{pt}\cap (\C_{pt})'=s\Vec .
\end{equation}
\end{corollary}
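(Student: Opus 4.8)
\textbf{Plan for the proof of Corollary~\ref{53typescor}.} The statement is an immediate case analysis based on Proposition~\ref{5modified}. First I would record the two facts supplied by that proposition: (i) $\C'\subset\C_{pt}\cap(\C_{pt})'$, and (ii) $\C_{pt}\cap(\C_{pt})'$ is braided equivalent either to $\Vec$ or to $s\Vec$. Combining these, there are \emph{a priori} only three possibilities for the pair $\bigl(\C',\,\C_{pt}\cap(\C_{pt})'\bigr)$: both equal to $\Vec$; both equal to $s\Vec$; or $\C'=\Vec$ while $\C_{pt}\cap(\C_{pt})'=s\Vec$. The fourth combinatorial option, $\C'=s\Vec$ and $\C_{pt}\cap(\C_{pt})'=\Vec$, is excluded by the containment in (i), since $s\Vec\not\subset\Vec$.

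So the only thing that needs a word of justification is that in the first two cases the two categories really do coincide. If $\C_{pt}\cap(\C_{pt})'=\Vec$ then by (i) $\C'\subset\Vec$, hence $\C'=\Vec$, giving \eqref{5typeO}. If $\C_{pt}\cap(\C_{pt})'=s\Vec$, then by (i) $\C'$ is a braided fusion subcategory of $s\Vec$; since $s\Vec$ has exactly two fusion subcategories, namely $\Vec$ and $s\Vec$, we get either $\C'=\Vec$, which is case \eqref{5typeI}, or $\C'=s\Vec$, which is case \eqref{5typeS}. This exhausts all possibilities.

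I do not anticipate any genuine obstacle here: once Proposition~\ref{5modified} is in hand, the corollary is a purely formal dichotomy argument. The only point worth stating carefully is that a symmetric fusion category equivalent to $s\Vec$ has precisely the two fusion subcategories $\Vec$ and $s\Vec$ (its simple objects form the group $\BZ/2\BZ$, whose only subgroups are trivial and the whole group), so no intermediate value of $\C'$ can occur. The proof can therefore be written in a couple of sentences invoking Proposition~\ref{5modified}(i)--(ii).

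\begin{proof}
By Proposition~\ref{5modified}(ii) the symmetric category $\C_{pt}\cap(\C_{pt})'$ is braided equivalent to $\Vec$ or to $s\Vec$, so it has exactly one or two fusion subcategories; in either case its only fusion subcategories are $\Vec$ and itself. By Proposition~\ref{5modified}(i) the symmetric category $\C'$ is a fusion subcategory of $\C_{pt}\cap(\C_{pt})'$, hence $\C'=\Vec$ or $\C'=\C_{pt}\cap(\C_{pt})'$. If $\C_{pt}\cap(\C_{pt})'=\Vec$ this forces $\C'=\Vec$, which is \eqref{5typeO}. If $\C_{pt}\cap(\C_{pt})'=s\Vec$ then either $\C'=s\Vec$, which is \eqref{5typeS}, or $\C'=\Vec$, which is \eqref{5typeI}.
\end{proof}
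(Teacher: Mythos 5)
Your proposal is correct and follows exactly the route the paper intends: the corollary is stated as an immediate consequence of Proposition~\ref{5modified}, and your case analysis (with the observation that $s\Vec$ has only the fusion subcategories $\Vec$ and $s\Vec$, ruling out the fourth combination via the containment $\C'\subset\C_{pt}\cap(\C_{pt})'$) is precisely the intended argument.
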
  

\begin{definition}  \label{53typesdef}
A weakly anisotropic braided fusion category $\C$ is said to be {\em ordinary\,} 
(or  {\em of ordinary type\,}) if \eqref{5typeO} holds. We say that $\C$ has {\em super\,} type
if \eqref{5typeS} holds. We say that $\C$ has {\em Ising\,} type if  \eqref{5typeI} holds. 
\end{definition}

\begin{remark}     \label{5various}
An Ising braided category (see Appendix \ref{Is}) has Ising type.
\end{remark}

\subsubsection{The canonical decomposition}    \label{50ordsubsec}
\begin{definition}   \label{50simplicitydef}
A braided fusion category $\C$ is said to be {\em simple\,} if it is not pointed, 
not symmetric, and has no fusion subcategories except $\C$ and $\Vec$. 
\end{definition}

\begin{theorem}     \label{5ordmaintheor}
The following properties of a braided fusion category $\C$ are equivalent:
\begin{enumerate}
\item[(a)] $\C$ is weakly anisotropic and ordinary;
\item[(b)] there exists a decomposition
\begin{equation}     \label{50decomposition}
\C =\C_0\bt\C_1\bt\ldots\bt\C_n
\end{equation}
in which $\C_1$, \ldots, $\C_n\subset\C$ are simple fusion subcategories
and $\C_0$ is pointed, weakly anisotropic, and non-degenerate.
\end{enumerate}
In this case a decomposition \eqref{50decomposition} is unique.
$\C$ is anisotropic if and only if $\C_0$ is.
\end{theorem}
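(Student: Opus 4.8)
\textbf{Proof plan for Theorem~\ref{5ordmaintheor}.}
The plan is to prove $(b)\Rightarrow(a)$ first, since it is the easy direction, and then $(a)\Rightarrow(b)$ by an induction on $\dim(\C)$ (or on $|\O(\C)|$) using the universal grading and the theory of centralizers developed in \S\ref{Sect3}. For $(b)\Rightarrow(a)$: if $\C=\C_0\bt\C_1\bt\cdots\bt\C_n$ with the $\C_i$ as stated, then $\C'=\C_0'\bt\C_1'\bt\cdots\bt\C_n'$; each $\C_i'$ for $i\ge 1$ is either $\Vec$ or $\C_i$ (the only fusion subcategories of $\C_i$), but $\C_i'=\C_i$ would force $\C_i$ symmetric, contradicting simplicity, so $\C_i'=\Vec$; and $\C_0'=\Vec$ by non-degeneracy. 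Hence $\C'=\Vec$. A similar computation with $\C_{pt}$: for a simple $\C_i$ one has $(\C_i)_{pt}=\Vec$ (a nontrivial pointed subcategory would violate simplicity since $\C_i$ is not pointed), so $\C_{pt}=(\C_0)_{pt}=\C_0$, and thus $\C_{pt}\cap(\C_{pt})'=\C_0\cap\C_0'=\Vec$ by non-degeneracy of $\C_0$; this puts $\C$ in type \eqref{5typeO}. Weak anisotropy: any Tannakian subcategory $\T\subset\C$ is symmetric, hence contained in $\C_{pt}\cap(\C_{pt})'$? — not quite, so instead one argues that a Tannakian subcategory stable under all braided autoequivalences decomposes compatibly with the $\bt$-decomposition (autoequivalences permute the simple factors, and the identity of $\C$ preserves each $\C_i$ up to the permutation action), reducing to the claim that a $\bt$-indecomposable stable Tannakian subcategory must be $\Vec$; for the factors $\C_i$, $i\ge1$, simplicity gives no nontrivial Tannakian subcategory at all, and for $\C_0$ weak anisotropy of the pre-metric group is assumed. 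The anisotropy statement follows because the only possible nontrivial Tannakian subcategory of $\C$ lives in $\C_0$.

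The heart of the argument is $(a)\Rightarrow(b)$. I would proceed by strong induction on $\dim(\C)$. If $\C$ is pointed we are done with $n=0$ (a weakly anisotropic, and by \eqref{5typeO} non-degenerate, pointed category), so assume $\C$ is not pointed. Since $\C$ is ordinary, $\C'=\Vec$, so $\C$ is non-degenerate, and by Corollary~\ref{adpt} we have $(\C_{ad})'=\C_{pt}$ and $(\C_{pt})'=\C_{ad}$, with the pairing \eqref{thepairing} between $\O(\C_{pt})$ and $U_\C$ perfect (Lemma~\ref{alpha}(iii)). Because $\C$ is ordinary, $\C_{pt}\cap\C_{pt}'=\C_{pt}\cap\C_{ad}=\Vec$, so by Corollary~\ref{a friend of Muger1} the subcategory $\C_{pt}$ is a \emph{non-degenerate} braided fusion category; moreover it is weakly anisotropic by Lemma~\ref{5l1} (it is stable under all braided autoequivalences, being characterized intrinsically). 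Now Theorem~\ref{Muger3}(i) applies with $\K=\C_{pt}$: the natural braided functor $\C_{pt}\bt(\C_{pt})'\to\C$ is an equivalence, i.e. $\C\simeq\C_{pt}\bt\C_{ad}$, and by Theorem~\ref{Muger3}(ii) the factor $\C_{ad}$ is again non-degenerate. Set $\C_0:=\C_{pt}$; it remains to decompose $\C_{ad}$ into simple factors. Note $(\C_{ad})_{pt}=\Vec$: indeed an invertible object of $\C_{ad}$ would be an invertible object of $\C$ lying in $(\C_{pt})'$, but $\C_{pt}\cap(\C_{pt})'=\Vec$. So $\C_{ad}$ has no nontrivial pointed subcategory; it is also non-degenerate hence has no nontrivial symmetric subcategory, and being a direct factor of a weakly anisotropic category stable under braided autoequivalences it is weakly anisotropic, hence anisotropic (no nontrivial Tannakian subcategory, since none can be symmetric-and-nontrivial here).

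Thus the theorem reduces to: \emph{a non-degenerate braided fusion category $\D$ with $\D_{pt}=\Vec$ and no nontrivial symmetric subcategory is a $\bt$-product of simple braided fusion categories.} I would prove this by induction on $\dim(\D)$: if $\D$ itself is simple we are done; otherwise there is a fusion subcategory $\Vec\subsetneq\K\subsetneq\D$, and since $\D_{pt}=\Vec$ we have $\K_{pt}=\Vec$, while $\K\cap\K'\subset\D$ is symmetric hence (being a subcategory of the anisotropic $\D$) equals $\Vec$; so by Corollary~\ref{a friend of Muger1} $\K$ is non-degenerate, Theorem~\ref{Muger3} gives $\D\simeq\K\bt\K'$ with both factors non-degenerate, both inherit $(-)_{pt}=\Vec$ and anisotropy, and induction applies. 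Finally, \emph{uniqueness} of \eqref{50decomposition}: the factor $\C_0$ is intrinsic, $\C_0=\C_{pt}$ (one checks $\C_{pt}=(\C_0)_{pt}\bt(\C_1)_{pt}\bt\cdots=(\C_0)_{pt}=\C_0$); and the set of simple factors $\{\C_1,\dots,\C_n\}$ is determined because in a non-degenerate category with $\C_{pt}\cap\C_{pt}'=\Vec$, a Krull--Schmidt-type statement for $\bt$-decompositions into simple factors holds — this is where I expect the main obstacle. One must show that if $\C_1\bt\cdots\bt\C_n\simeq\D_1\bt\cdots\bt\D_m$ with all factors simple (in the sense of Definition~\ref{50simplicitydef}) then $n=m$ and the factors agree up to permutation; the natural approach is to use that each $\C_i$ is a minimal nontrivial ``non-degenerate direct summand'' and to invoke the centralizer calculus (a simple factor $\C_i$ sits inside the product as a subcategory whose centralizer is the product of the remaining factors, and minimality pins it down), but making this rigorous — especially ruling out ``diagonal'' subcategories — requires care and is the step I would spend the most effort on. The anisotropy claim in the last sentence is then immediate: $\C$ is anisotropic iff it has no nontrivial Tannakian subcategory iff (since the $\C_i$, $i\ge1$, have none and symmetric subcategories of the non-degenerate $\C$ are trivial) $\C_0$ has none, i.e. $\C_0$ is anisotropic.
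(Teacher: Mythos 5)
The paper itself defers the proof of Theorem~\ref{5ordmaintheor} to a subsequent article, so there is nothing to compare your argument against; judged on its own terms, your reduction has a sound skeleton (ordinariness gives $\C_{pt}\cap\C_{ad}=\Vec$, hence $\C_{pt}$ non-degenerate and $\C\simeq\C_{pt}\bt\C_{ad}$ by Corollary~\ref{adpt} and Theorem~\ref{Muger3}, and your inductive splitting of a non-degenerate category with trivial pointed part and no nontrivial symmetric subcategories into simple factors via Corollary~\ref{a friend of Muger1} and Theorem~\ref{Muger3} is correct). But the step on which everything hinges is wrong as justified: you claim $\C_{ad}$ is ``non-degenerate hence has no nontrivial symmetric subcategory''. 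Non-degeneracy only says the M\"uger center is trivial; non-degenerate categories abound with nontrivial symmetric subcategories --- e.g.\ $\Z(\Rep(G))$ is non-degenerate, contains the Tannakian subcategory $\Rep(G)$, and for $G=A_5$ even has trivial pointed part, so adding ``$(\C_{ad})_{pt}=\Vec$'' does not rescue the inference. Weak anisotropy of $\C_{ad}$ (which you correctly get from Lemma~\ref{5l1}) only excludes Tannakian subcategories \emph{stable under all braided autoequivalences}; upgrading this to ``$\C_{ad}$ has no Tannakian subcategories at all'' is precisely the hard content of the theorem (it is what forces all Tannakian subcategories of $\C$ into the hyperbolic pointed part, and is where the deferred structure theory behind Theorem~\ref{5wapt0} enters). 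The same false principle reappears in your last sentence (``symmetric subcategories of the non-degenerate $\C$ are trivial''), so the anisotropy claim is also not established as written.

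Two further points are only sketched but, unlike the above, are fixable with the tools in the paper. For the weak-anisotropy half of (b)$\Rightarrow$(a) and for the final anisotropy statement, you do not need autoequivalences to ``permute the factors'': show directly that any symmetric subcategory $\T\subset\C_0\bt\C_1\bt\ldots\bt\C_n$ lies in $\C_0$ --- if a simple $X_0\bt X_1\bt\ldots\bt X_n\in\T$ has $X_i\neq\be$ for some $i\geq 1$, then $X_i$ is non-invertible (since $(\C_i)_{pt}=\Vec$), so tensoring with the dual and projecting puts a nontrivial simple of $\C_i$, hence all of $\C_i$ by simplicity, inside $\T$, contradicting symmetry of $\T$; then apply weak anisotropy of $\C_0$ to $G$-stable $\T$ via extension of autoequivalences by the identity. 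The same projection argument disposes of your worry about ``diagonal'' subcategories in the uniqueness step: it shows the minimal nontrivial fusion subcategories of $\C_{ad}=\C_1\bt\ldots\bt\C_n$ are exactly $\C_1,\ldots,\C_n$, so the simple factors are intrinsic, and $\C_0=\C_{pt}$ is intrinsic as you note. So the uniqueness and (b)$\Rightarrow$(a) gaps are routine to close, whereas the assertion that $\C_{ad}$ is anisotropic is a genuine missing idea, not a bookkeeping omission.
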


The proof will be given in a subsequent article.

\begin{rems}   \label{5ordsituation}
\begin{enumerate}
\item[(i)] Theorem~\ref{5ordmaintheor} and Remark~\ref{5waprem1}(iii)
reduce classifying ordinary weakly anisotropic braided fusion categories to classifying
simple ones. Many examples of simple braided fusion categories can be constructed using
quantum groups at roots of unity.
\item[(ii)] Theorem \ref{5ordmaintheor} has an analog for  weakly anisotropic
braided fusion categories of super type and hopefully, for those of Ising type.
\end{enumerate}
\end{rems}

\section{Gauss sums}
\label{sectiongauss}
This section is devoted to an exposition of some invariants of braided fusion categories
which generalize the classical notion of Gauss sums. The Gauss sums attached to
pre-metric groups are reviewed in \S \ref{gauss2}. In \S \ref{gauss} we recall the basic
definitions and properties of Gauss sums and central charge for pre-modular categories
following mainly \cite{BK}. In the case of pointed pre-modular categories these Gauss sums
specialize to the classical Gauss sums from \S \ref{gauss2}.

 In \S \ref{TheGauss} we prove Theorem \ref{tau after mod} which relates the
Gauss sums of a modular category with the Gauss sums of its fiber category. This is
one of the main technical results in this section.  

The value of Gauss sums depends on two structures on a fusion category: the braided structure
and the spherical structure. In \S \ref{elementxc} we restrict ourselves to the case of weakly integral
braided fusion categories and define Gauss-Frobenius-Perron sums for them. These
invariants are closely related with Gauss sums from \S \ref{gauss} but their advantage is
that no spherical structure is needed for definition. Gauss-Frobenius-Perron sums
will be used in the proof of Lemma~\ref{Is-core1} and especially in the study of nilpotent braided fusion categories.

\subsection{Classical Gauss sums and isotropic subgroups}\label{gauss2}
The {\em Gauss sums} of a pre-metric group $(G, q)$ are defined by the classical formula
\begin{equation}\label{goodold}
\tau^\pm(G,\, q) =\sum_{a\in G}\, q(a)^{\pm 1}.
\end{equation}
Let us recall the basic properties of Gauss sums. Clearly 
$$\tau^-(G,\, q)=\overline{\tau^+(G,\, q)}.$$
It is also clear that if $(G,\, q)$ is the
orthogonal direct sum of $(G_1,\, q_1)$ and $(G_2,\, q_2)$ then 
$\tau^\pm(G,\, q)=\tau^\pm(G_1,\, q_1)\tau^\pm(G_2,\, q_2)$.

Let $(G, q)$ be a pre-metric group. A subgroup $H\subset G$ is said to be 
 {\em isotropic} if $q|_H =1$. In this case $H\subset H^\perp$, where $H^\perp$ is the
 orthogonal complement of $H$ with respect to the bicharacter $b:G\times G\to k^{\times}$  
 associated to $q$. Moreover, the restriction of $q$ to $H^\perp$ is the pullback of a
 quadratic form $\tilde q:H^\perp/H\to k^{\times}$.
 
 \begin{proposition} \label{subquot} 
 In this situation $\tau^\pm(G,\, q)=\tau^\pm(H^\perp/H,\, \tilde q)\cdot |H|$.
\end{proposition}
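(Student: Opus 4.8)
The plan is to reduce the Gauss sum $\tau^\pm(G,q)$ to a sum over the subgroup $H^\perp$, and then to collapse that into a sum over $H^\perp/H$ times a factor of $|H|$. First I would observe that for the Gauss sum $\tau^+(G,q) = \sum_{a\in G} q(a)$, the contribution of each coset of $H^\perp$ in $G$ vanishes unless that coset is $H^\perp$ itself. Concretely, fix $g\in G$ and consider $\sum_{x\in H^\perp} q(gx)$. Expanding via the defining property of a quadratic form, $q(gx) = q(g)q(x)b(g,x)$, where $b$ is the associated bicharacter; since $q(x)=1$ for $x\in H^\perp$? — no, that is only true on $H$, so I must be more careful. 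The correct grouping is: partition $G$ into cosets of $H$, not $H^\perp$. For a coset $gH$ we have $\sum_{h\in H} q(gh) = \sum_{h\in H} q(g)q(h)b(g,h) = q(g)\sum_{h\in H} b(g,h)$, using $q|_H=1$. The inner character sum $\sum_{h\in H} b(g,h)$ equals $|H|$ if $g\in H^\perp$ and $0$ otherwise, by orthogonality of characters on the finite abelian group $H$ (here one uses that $h\mapsto b(g,h)$ is a homomorphism $H\to k^\times$, which is trivial precisely when $g\in H^\perp$, and $k$ contains enough roots of unity for the standard orthogonality relation to hold).

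Second, this shows $\tau^+(G,q) = |H|\cdot\sum_{[g]\in H^\perp/H} q(g)$, where the sum is over coset representatives — but I must check this is well-defined, i.e. that $q(g)$ depends only on the coset $gH$ when $g\in H^\perp$. Indeed for $g\in H^\perp$ and $h\in H$, $q(gh)=q(g)q(h)b(g,h)=q(g)\cdot 1\cdot 1 = q(g)$. This is exactly the statement recalled in the excerpt that $q|_{H^\perp}$ is the pullback of $\tilde q:H^\perp/H\to k^\times$, so $\sum_{[g]\in H^\perp/H}q(g) = \sum_{y\in H^\perp/H}\tilde q(y) = \tau^+(H^\perp/H,\tilde q)$. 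Combining, $\tau^+(G,q) = \tau^+(H^\perp/H,\tilde q)\cdot|H|$.

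Third, the $\tau^-$ case follows either by the same computation with $q$ replaced by $q^{-1}$ (noting that $q^{-1}$ is again a quadratic form, with associated bicharacter $b^{-1}$, that $H$ is still isotropic for it, that $H^\perp$ is unchanged since $b$ and $b^{-1}$ have the same orthogonal complements, and that the induced form on $H^\perp/H$ is $\tilde q^{-1}$), or simply by applying complex conjugation to the $\tau^+$ identity and using $\tau^-=\overline{\tau^+}$ together with the fact that the same relation holds for $(H^\perp/H,\tilde q)$ and that $|H|$ is a positive integer.

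The only mild obstacle is bookkeeping around the two character-orthogonality steps: making sure the pairing $h\mapsto b(g,h)$ is genuinely a character of $H$ (immediate from bimultiplicativity of $b$), that its triviality is equivalent to $g\in H^\perp$ (the definition of $H^\perp$), and that the orthogonality relation $\sum_{h\in H}\chi(h) = |H|\,[\chi=1]$ is available — which it is, since $k$ is algebraically closed of characteristic $0$ and hence contains all roots of unity. Everything else is a direct substitution using the identity $q(gh)=q(g)q(h)b(g,h)$. I would present the argument for $\tau^+$ in full and dispatch $\tau^-$ in one sentence.
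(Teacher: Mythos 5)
Your proposal is correct and is essentially the paper's own proof: the paper also computes $\sum_{h\in H}q(gh)=q(g)\sum_{h\in H}b(g,h)$ over cosets of $H$ and invokes character orthogonality to see this vanishes unless $g\in H^\perp$, where it equals $q(g)\cdot|H|$. Your extra remarks on well-definedness of $\tilde q$ and the $\tau^-$ case are routine and consistent with the paper's (more terse) argument.
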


 \begin{proof}
 Let $g\in G$. Then $\sum\limits_{h\in H}q(gh)=q(g)\sum\limits_{h\in H}b(g,h)$ equals 0 if 
 $g\notin H^\perp$ and $q(g)\cdot |H|$ if $g\in H^\perp$.
 \end{proof}

An isotropic subgroup $H$ of a pre-metric group $G$ is said to be {\em Lagrangian} if $H^\perp = H$.

\begin{corollary}   \label{absval} 
 Let $(G,\, q)$ be a pre-metric group and $H\subset G$ a Lagrangian subgroup.
 Then $\tau^\pm(G,\, q)=|H|$. \hfill\qedsymbol
 \end{corollary}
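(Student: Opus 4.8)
The plan is to derive Corollary~\ref{absval} as a one-line consequence of Proposition~\ref{subquot}. First I would take $H\subset G$ to be Lagrangian, so by definition $H^\perp=H$. Then the quotient $H^\perp/H$ is the trivial group $\{1\}$, and the induced quadratic form $\tilde q:H^\perp/H\to k^\times$ is the trivial form on the trivial group, so $\tau^\pm(H^\perp/H,\tilde q)=\sum_{a\in\{1\}}\tilde q(a)^{\pm1}=1$.

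Plugging this into the formula of Proposition~\ref{subquot}, namely $\tau^\pm(G,q)=\tau^\pm(H^\perp/H,\tilde q)\cdot|H|$, I get $\tau^\pm(G,q)=1\cdot|H|=|H|$, which is exactly the claim. There is essentially no obstacle here: the only thing to check is that a Lagrangian subgroup is in particular isotropic (so that Proposition~\ref{subquot} applies), but this is immediate from the definition, since $H\subset H^\perp$ already holds once $q|_H=1$, and the condition $H^\perp=H$ presupposes $q|_H=1$. So the whole proof is the display $\tau^\pm(G,q)=\tau^\pm(\{1\},\tilde q)\cdot|H|=|H|$, together with the one-sentence observation that $H$ Lagrangian forces $H^\perp/H=\{1\}$.

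Indeed the corollary is already marked with \qedsymbol in the excerpt, so no separate proof environment is expected; if one were desired, it would read: \emph{Apply Proposition~\ref{subquot} with the isotropic subgroup $H$. Since $H$ is Lagrangian, $H^\perp=H$, so $H^\perp/H$ is trivial and $\tau^\pm(H^\perp/H,\tilde q)=1$; hence $\tau^\pm(G,q)=|H|$.}
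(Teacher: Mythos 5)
Your proof is correct and is exactly the intended argument: the paper states the corollary with a \qedsymbol immediately after Proposition~\ref{subquot}, since for a Lagrangian subgroup (which is isotropic by definition) one has $H^\perp/H=\{1\}$, so the proposition gives $\tau^\pm(G,q)=1\cdot|H|$.
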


\begin{corollary}   \label{hyp} 
 Let $(G,\, q)$ be a metric group. Then $\tau^+(G,\, q)\tau^-(G,\, q)=|G|$. 
\end{corollary}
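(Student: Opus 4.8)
The statement to prove is Corollary~\ref{hyp}: for a metric group $(G,q)$ one has $\tau^+(G,q)\tau^-(G,q)=|G|$. The plan is to deduce this from the preceding material rather than from a direct computation. First I would recall that $\tau^-(G,q)=\overline{\tau^+(G,q)}$, which was established just before Proposition~\ref{subquot}; hence $\tau^+(G,q)\tau^-(G,q)=|\tau^+(G,q)|^2$, and the claim becomes $|\tau^+(G,q)|^2=|G|$. This reduces the problem to a standard orthogonality-of-characters argument.

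The key computation is the following. Expand
\begin{equation}
\tau^+(G,q)\tau^-(G,q)=\sum_{a,b\in G}\frac{q(a)}{q(b)}.
\end{equation}
Substituting $a=bc$ and using that $b$ is the bicharacter associated to $q$, i.e. $q(bc)=q(b)q(c)b(b,c)$, one gets
\begin{equation}
\sum_{b,c\in G}\frac{q(b)q(c)b(b,c)}{q(b)}=\sum_{c\in G}q(c)\sum_{b\in G}b(b,c).
\end{equation}
Now I would invoke non-degeneracy of $(G,q)$ as a metric group: by definition the homomorphism $G\to G^*=\Hom(G,k^\times)$ sending $c\mapsto b(-,c)$ is an isomorphism, so for $c\neq 1$ the character $b(-,c)$ is a nontrivial character of the finite group $G$, whence $\sum_{b\in G}b(b,c)=0$; and for $c=1$ the inner sum is $|G|$. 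Therefore the whole expression collapses to $q(1)\cdot|G|=|G|$, since $q(1)=1$ (this follows from $q(g^n)=q(g)^{n^2}$ in Remark~\ref{easyrems}(i), taking $n=0$, or simply from $q$ being a quadratic form). This completes the proof.

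There is essentially no main obstacle here; the only thing to be careful about is making sure the character-orthogonality step is applied to the correct group. An alternative, even shorter route would be to apply Corollary~\ref{absval} after passing to the metric group $(G\oplus G, q\oplus q^{-1})$, whose diagonal-type Lagrangian subgroup (the graph of the identity, which is isotropic precisely because the two forms are inverse) has order $|G|$; this gives $\tau^+(G,q)\tau^+(G,q^{-1})=|G|$, and $\tau^+(G,q^{-1})=\tau^-(G,q)$ by definition. I would present the direct character-sum argument as the primary proof since it is self-contained and uses only the definition of non-degeneracy recalled in \S\ref{metra}, but I would mention the Lagrangian-subgroup argument as a remark because it fits the heuristic theme of the paper (a metric group is "hyperbolic after doubling").
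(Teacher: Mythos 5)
Your proposal is correct, but your primary argument is not the one the paper uses: the paper's proof of Corollary~\ref{hyp} is exactly the two-line argument you relegate to a closing remark, namely form the orthogonal direct sum $(G,q)\oplus(G,q^{-1})$, observe that the diagonally embedded $G$ is a Lagrangian subgroup, and apply Corollary~\ref{absval} together with the multiplicativity $\tau^\pm(G_1\oplus G_2)=\tau^\pm(G_1)\tau^\pm(G_2)$ and $\tau^+(G,q^{-1})=\tau^-(G,q)$. Your direct computation is a perfectly sound alternative: expanding $\tau^+\tau^-$, substituting $a=bc$, and using $q(bc)=q(b)q(c)\,b(b,c)$ reduces the double sum to $\sum_{c}q(c)\sum_{x}b(x,c)$, and non-degeneracy (the defining property of a metric group) makes $b(-,c)$ a nontrivial character for $c\neq 1$, so character orthogonality kills every term except $c=1$, giving $|G|$. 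What each route buys: your computation is self-contained, using only the definition of the bicharacter and orthogonality of characters, and it makes the role of non-degeneracy completely explicit; the paper's route is shorter given the machinery already established (Proposition~\ref{subquot} and Corollary~\ref{absval}) and matches the structural theme that a metric group becomes hyperbolic after doubling. Two small points of hygiene in your write-up: the verification that the diagonal is actually Lagrangian (equal to its own perpendicular, not merely isotropic) again uses non-degeneracy of $b$ and deserves a clause if you present that route; and you overload the letter $b$ as both the associated bicharacter and a summation index in $\sum_{b\in G}b(b,c)$, which you should rename to avoid confusion.
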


\begin{proof}
Consider the orthogonal direct sum of $(G,\, q)$ and $(G,\, q^{-1})$. The diagonally embedded
$G\subset G\times G=G\oplus G$ is a Lagrangian subgroup. Now apply Corol\-lary~\ref{absval}.
 \end{proof}

A pre-metric group is said to be {\em hyperbolic\,} if it has a Lagrangian subgroup. 
If $G$ is a metric group and $H\subset G$ is a Lagrangian subgroup then $|H|=|G|^{1/2}$, so
the order of a hyperbolic metric group is a square.

Corollary \ref{hyp} implies that if a pre-metric group $(G,\, q)$ is hyperbolic then 
$\tau^\pm(G,\, q)$ is a positive integer. The following partially converse
statement is well known. 

\begin{proposition}  \label{basis} 
Let $(G,\, q)$ be a metric group such that $\tau^+(G,\, q)$ is a positive
integer and $|G|$ is a power of a prime $p$. Then $(G,\, q)$ is hyperbolic.
\end{proposition}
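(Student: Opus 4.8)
The plan is to prove the contrapositive-friendly statement by induction on $|G|$, using Proposition~\ref{subquot} to descend to a smaller metric group whenever we can find a nonzero isotropic subgroup, and handling the base case where $(G,q)$ is anisotropic. First I would recall the classification of quadratic forms on finite abelian $p$-groups (the relevant facts are collected in Appendix~\ref{app on metric groups}); in particular every metric $p$-group is an orthogonal direct sum of homogeneous pieces $(\BZ/p^k\BZ)^r$ with a given form, and anisotropic metric $p$-groups are severely restricted --- for odd $p$ they have order dividing $p^2$ and come from the two ``norm form'' types, and for $p=2$ they are among a short explicit list (e.g.\ the forms on $\BZ/2\BZ$, $\BZ/4\BZ$, and $(\BZ/2\BZ)^2$ with the anisotropic form). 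The key reduction step is: if $H\subset G$ is a nonzero isotropic subgroup, then by Proposition~\ref{subquot}, $\tau^+(G,q)=|H|\cdot\tau^+(H^\perp/H,\tilde q)$, so $\tau^+(H^\perp/H,\tilde q)$ is again a positive integer; since $H^\perp/H$ is a metric $p$-group of strictly smaller order, by induction it is hyperbolic, say with Lagrangian subgroup $\bar L\subset H^\perp/H$; pulling $\bar L$ back to $L\subset H^\perp\subset G$ gives an isotropic subgroup with $L^\perp=L$, i.e.\ a Lagrangian subgroup of $G$.

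So the whole problem reduces to the base case: a metric $p$-group $(G,q)$ with no nonzero isotropic subgroup (equivalently, $(G,q)$ anisotropic) and $\tau^+(G,q)\in\BZ_{>0}$ --- I must show this forces $G=0$. Here I would invoke the explicit classification: run through the finitely many anisotropic metric $p$-groups and compute their Gauss sums directly. For odd $p$, on $\BZ/p\BZ$ with $q(x)=\zeta^{ax^2}$ one gets the classical quadratic Gauss sum, whose absolute value is $\sqrt p$ and which is never a (positive) integer; on the rank-$2$ anisotropic form over $\BZ/p\BZ$ one gets $\pm p$ times a sign/root-of-unity factor that is $\pm1$ exactly when the form is hyperbolic, contradicting anisotropy --- more precisely, the rank-$2$ anisotropic form has $\tau^+=-\left(\frac{-1}{p}\right)p$ or an $i$-multiple of $p$ depending on $p\bmod 4$, and one checks case by case it is never a positive integer unless the form is actually hyperbolic. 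For $p=2$ the anisotropic forms on $\BZ/2\BZ$, $\BZ/4\BZ$, $(\BZ/2\BZ)^2$ have Gauss sums like $1\pm i$, $\pm\sqrt2\,\zeta_8$, $-2$ (the last being the anisotropic even form $x^2+xy+y^2$ evaluated with an $8$th root of unity), none of which is a positive integer; again the only cases giving a positive integer are the hyperbolic ones. Collecting these computations shows no nontrivial anisotropic metric $p$-group has positive-integer Gauss sum, completing the induction.

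The main obstacle I anticipate is the base-case bookkeeping: one needs the precise list of anisotropic metric $p$-groups and a clean way to see that ``$\tau^+$ a positive integer'' singles out exactly the hyperbolic forms among the low-rank ones. For odd $p$ this is essentially the theory of quadratic Gauss sums (the sum over $\BZ/p\BZ$ has absolute value $\sqrt p$, hence is not in $\BZ$, and for rank $2$ one reduces to products of such), so the real content is organizing the $p=2$ case, where the forms are more numerous and the eighth roots of unity make the arithmetic slightly fiddly. An alternative to the explicit computation, which I would keep in reserve, is to use the signature/central-charge formula: for a metric group $(G,q)$, $\tau^+(G,q)=|G|^{1/2}e^{2\pi i\sigma/8}$ for an integer $\sigma=\sigma(G,q)$ (the Gauss--Milgram signature), so $\tau^+\in\BZ_{>0}$ forces $8\mid\sigma$ and $|G|$ a square; then one argues (again using the classification, but now only modulo $8$ on the signature) that an anisotropic metric $p$-group with $8\mid\sigma$ must be trivial. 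Either route ultimately rests on Appendix~\ref{app on metric groups}, which is why I would make sure those structural lemmas are in place before writing out the details.
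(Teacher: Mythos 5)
Your plan is correct and would give a complete proof, but it is organized differently from the paper's argument. The paper first squares the hypothesis (if $\tau^+(G,q)\in\BZ$ then $|G|=\tau^+\tau^-=(\tau^+)^2$, by Corollary~\ref{hyp}), so that $|G|=p^{2n}$, and then proves the sharper Proposition~\ref{A6}: for \emph{every} metric group of order $p^{2n}$ one has $\tau^+(G,q)=p^n\epsilon$ with $\epsilon=\pm1$ for odd $p$ and $\epsilon^8=1$ for $p=2$, and $G$ has a Lagrangian subgroup whenever $\epsilon=1$. For odd $p$ the paper's argument is essentially yours: descend to an anisotropic quotient via Proposition~\ref{subquot} and use the classification of anisotropic metric $p$-groups (Propositions~\ref{dimtwo} and~\ref{A5}), whose only nontrivial member of even order has $\tau^+=-p$. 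For $p=2$, however, the paper avoids the anisotropic classification altogether: using Lemma~\ref{A7} and Corollary~\ref{Acor} it finds $L\subset G$ with $L\subset L^\perp$ and $(L^\perp:L)\le 2$, notes that this index is a square so $L^\perp=L$, computes $\tau^+(G,q)=2^n q(g_0)$ with $q(g_0)$ an $8$-th root of unity, and constructs a Lagrangian subgroup explicitly when $q(g_0)=1$. Your route runs the induction directly on the proposition and settles the base case by checking all anisotropic metric $p$-groups; this is uniform in $p$ but at $p=2$ it needs the full list of Proposition~\ref{A13} (which is proved later in the appendix, but independently of Proposition~\ref{basis}, so there is no circularity). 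What the paper's route buys is the stronger quantitative statement (a Gauss--Milgram-type evaluation of $\tau^+$, close to your ``reserve'' argument) and independence from the $2$-group classification; what yours buys is a single clean induction with no structural case split on the parity of~$p$.

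A few details in your sketch need repair, none fatal: (1) in the descent step you should say why $\tau^+(H^\perp/H,\tilde q)=\tau^+(G,q)/|H|$ is an integer --- it is a positive rational number which is also an algebraic integer (a sum of roots of unity), hence a positive integer, so the inductive hypothesis applies; (2) for odd $p$ the binary anisotropic form has $\tau^+=-p$ in all cases (Proposition~\ref{dimtwo}(ii)), not the values in your parenthetical; (3) your $p=2$ base-case list is incomplete: besides $A_i$ (Gauss sum $1\pm i$), the $\BZ/4\BZ$ form (Gauss sum $2\xi$ with $\xi$ a primitive $8$-th root of unity, of absolute value $2$, not $\sqrt2\,\zeta_8$) and the form of Gauss sum $-2$ on $(\BZ/2\BZ)^2$, the classification of Proposition~\ref{A13} also contains the order-$4$ group $A_i\oplus A_i$ (Gauss sum $2i$) and the order-$8$ groups $M_\xi\oplus A_i$ (Gauss sum $2\xi(1\pm i)$, of absolute value $2\sqrt2$). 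All of these still fail to be positive integers, so your base case survives once the full list is run through.
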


A proof of the proposition is given in \S\ref{A-2} of Appendix A.

\begin{rems}
\begin{enumerate}
\item[(i)] The following example explains why in Proposition \ref{basis} $G$ is
required to be a $p$-group. Take two different primes $p_1$ and $p_2$. 
There exist metric groups $(G_1,q_1)$ and  $(G_2,q_2)$ such that $|G_i|$ is a power of 
$p_i$ and $\tau^+(G_i,\, q_i)$ is a {\em negative\,} integer (see Appendix A, Proposition \ref{dimtwo}).
Let $(G,\, q)$ be the orthogonal direct sum of $(G_1,\, q_1)$ and $(G_2,\, q_2)$.
Then $\tau^+(G,\, q)$ is a positive integer, but since $G_1$ and $G_2$ have no
Lagrangian subgroups neither does~$G$.

\item[(ii)]  Proposition \ref{basis} remains valid for pre-metric $p$-groups. 
This generalization easily follows from Proposition \ref{basis} itself.
\end{enumerate}
\end{rems}
 
\subsection{Gauss sums and central charge in pre-modular categories}\label{gauss}

Let $\C$ be a pre-modular category. The
{\em Gauss sums} of  $\C$ are defined by
\begin{equation}
\label{Gauss sums}
\tau^{\pm}(\C) = \sum_{X\in \O(\C)}\, \theta_X^{\pm 1}  d(X)^2.
\end{equation}

\begin{example}   \label{mod&class}
Let $\C$ be the pointed pre-modular category $\C(G,q)$ from \S\ref{thetaq}. Then
$\O(\C)=G$, $d(X)=1$ for all $X\in\O(\C)$, and $\theta_X=q(X)$ by \eqref{tw}.
So $\tau^{\pm}(\C)$ equals the classical Gauss sum $\tau^\pm(G,\, q)$ defined by \eqref{goodold}.

If $\C$ is the general pointed pre-modular category $\C(G,q,\chi)$ from \S\ref{genpointed} then 
$\tau^{\pm}(\C)=\tau^\pm(G,\, \theta )$, where $\theta (g)=q(g)\chi (g)$.
\end{example}

Below we summarize some basic properties of twists and Gauss sums
(see e.g., \cite[Section 3.1]{BK} for proofs).

\begin{proposition} \label{tauprop}
(i) For any pre-modular categories $\C_1$ and $\C_2$ 
\begin{equation}
\label{multiplicativity of tau}
\tau^\pm(\C_1 \bt \C_2)  = \tau^\pm(\C_1) \tau^\pm(\C_2).
\end{equation}

(ii) Each $\theta_X, \, X\in \O(\C),$ is a root of unity; in particular $\tau^{\pm}(\C)\in k_{cyc}$.

(iii) For a modular category $\C$ we have 
\begin{equation}
\label{mod tau =dimC}
\tau^+(\C)\tau^-(\C) =\dim(\C)\; \; \mbox{and}\; \; \tau^-(\C)=\overline{\tau^+(\C)}.
\end{equation}

(iv) For a modular category $\C$ the element $\frac{\tau^+(\C)}{\tau^-(\C)}\in k$ is a root of unity.
\end{proposition}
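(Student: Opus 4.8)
\textbf{Proof proposal for Proposition~\ref{tauprop}.}

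The plan is to treat the four items in sequence, reducing (iii) and (iv) to standard facts about the $S$- and $T$-matrices of a modular category, and deriving (i), (ii) from direct manipulation of the definition \eqref{Gauss sums}.

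For (i), I would simply observe that $\O(\C_1\bt\C_2)=\O(\C_1)\times\O(\C_2)$, that the twist is multiplicative under $\bt$ (since the braiding is componentwise, \eqref{balancing ax} for the product factors as a product of the two balancing axioms), and that $d$ is a ring homomorphism on $K_0$, so $d(X_1\bt X_2)=d(X_1)d(X_2)$. Plugging these into \eqref{Gauss sums} and factoring the double sum gives \eqref{multiplicativity of tau}. For (ii), I would recall the balancing axiom \eqref{balancing ax}: specializing $Y=X^*$ and using $\theta_{X^*}=\theta_X$ (the ribbon condition \eqref{ribbon}) together with the fact that $X\ot X^*$ contains $\be$ with $\theta_{\be}=1$, one gets a polynomial relation forcing $\theta_X$ to be a root of unity for each simple $X$ (this is the argument of \cite[Section 3.1]{BK}; alternatively it follows from the integrality of the $T$-matrix and Vafa's theorem). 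Consequently each summand $\theta_X^{\pm1}d(X)^2$ lies in $\kcyc$ (the dimensions $d(X)$ are cyclotomic algebraic integers by \S\ref{squared}), hence so does the finite sum $\tau^{\pm}(\C)$.

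For (iii), the statement $\tau^-(\C)=\overline{\tau^+(\C)}$ is immediate once one knows $\theta_X^{-1}=\overline{\theta_X}$ (true since $\theta_X$ is a root of unity, by (ii)) and $\overline{d(X)^2}=d(X)^2$, which holds because in a spherical (hence modular) category $d(X)=d_-(X)=\overline{d_+(X)}$ so $d(X)\in\kRcyc$. The identity $\tau^+(\C)\tau^-(\C)=\dim(\C)$ is the content of the standard modular relation $ST^{-1}S=\tau^+(\C)\,T S T\cdot(\text{scalar})$, or more concretely follows from expanding $\sum_{X,Y}\theta_X\theta_Y^{-1}d(X)^2d(Y)^2$ by inserting \eqref{sij} and using the Verlinde-type orthogonality \eqref{ss= Nds} valid when $S$ is invertible; this is precisely the computation in \cite[Section 3.1]{BK}, which I would cite rather than reproduce. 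For (iv), from (iii) we have $\tau^+(\C)\tau^-(\C)=\dim(\C)\in\kRcyc$ and $\tau^-(\C)=\overline{\tau^+(\C)}$, so $\frac{\tau^+(\C)}{\tau^-(\C)}=\frac{\tau^+(\C)^2}{\dim(\C)}$ has absolute value $1$ under every embedding $\kcyc\to\BC$; combined with the fact that it is an algebraic integer divided by a fixed algebraic integer and lies in a fixed cyclotomic field, a Kronecker-type argument shows it is a root of unity. Alternatively, and more cleanly, $\frac{\tau^+(\C)}{\tau^-(\C)}$ equals the central charge $e^{2\pi i c/8}$ up to the known relation with the $T$-matrix eigenvalues, and one quotes \cite[Section 3.1]{BK}.

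The main obstacle is item (iii): the identity $\tau^+(\C)\tau^-(\C)=\dim(\C)$ genuinely uses the non-degeneracy of the $S$-matrix through the Verlinde formula \eqref{ss= Nds}, and care is needed that our conventions (we follow \cite{Mu2} rather than \cite{BK} for the $S$-matrix, and we have a possibly non-spherical-looking setup that has been rigidified to a ribbon structure) match those under which the cited computation is carried out. I expect the cleanest route is to not reprove anything here but to verify once that the pre-modular structure in the sense of \S\ref{premod} (braided fusion category with ribbon structure) is exactly the ribbon category setting of \cite[Ch.~3]{BK}, after which (i)--(iv) are quotable; items (i), (ii) I would spell out in a couple of lines since they are elementary, and for (iii), (iv) I would give the reference with a one-sentence indication of the argument.
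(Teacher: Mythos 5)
Your proposal matches the paper's treatment: the paper gives no independent argument for Proposition~\ref{tauprop}, simply citing \cite[Section 3.1]{BK} for proofs and crediting (ii) and (iv) to Anderson--Moore \cite{AM} and Vafa \cite{V}, which is exactly the route you settle on (with (i) spelled out elementarily, as you do). The only caution is that your one-line sketch for (ii) --- specializing the balancing axiom at $Y=X^*$ --- does not by itself force $\theta_X$ to be a root of unity (that genuinely is the Anderson--Moore/Vafa theorem), but since you defer to the same references in the end, the approach is essentially identical.
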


Statements (ii) and (iv)  were
obtained in the works \cite{AM} by  Anderson and Moore and  \cite{V} by Vafa.

Recall that $\dim(\C)$ is a totally positive element of the maximal cyclotomic subfield
$\kcyc\subset k$. If we fix an embedding $\kcyc\hookrightarrow\mathbb C$ we can talk about the {\em multiplicative central charge} 
\begin{equation}
\label{central charge}
\xi(\C) = \frac{\tau^+(\C)}{\sqrt{\dim(\C)}},
\end{equation}
where $\sqrt{\dim(\C)}$ is the {\em positive\,} square root. One has
$\xi(\C)^2=\tau^+(\C)/\tau^-(\C)$, so 
$\xi(\C)$ is a root of unity by Proposition \ref{tauprop}(iv).

\begin{rems}   \label{chargerems}
\begin{enumerate}
\item[(i)] If $\dim (\C)$ is a square of an integer
then $\xi(\C)$ does not depend on the embedding $\kcyc\hookrightarrow\mathbb C$. 
In particular, this happens if $\FPdim (\C)$ is a square of an integer because 
in this case $\dim (\C )= \FPdim (\C)$ by Proposition \ref{FPint1}(i).
\item[(ii)]  Bakalov and Kirillov \cite{BK} use a different convention: according to
 \cite[Definition 5.7.9]{BK} and \cite[Theorem 5.7.11]{BK}, their multiplicative central charge
  equals $\xi(\C)^2=\tau^+(\C)/\tau^-(\C)$. As explained in loc. cit., the quantity
 $\tau^+(\C)/\tau^-(\C)$ measures the ``anomaly" of the modular functor corresponding to $\C$.
\item[(iii)] If $k=\mathbb C$ many authors use the ``Virasoro central charge"
$c\in{\mathbb Q}/8{\mathbb Z}$, which is related to the multiplicative central charge $\xi$
by $\xi=e^{\pi ic/4}$.
\end{enumerate}
\end{rems}

\begin{example}\label{charge1}
The center $\Z(\C)$ of any spherical fusion category $\C$  is a modular category
of dimension $\dim (\C)^2$ (see \S\ref{center}). According to \cite[Theorem 1.2]{Mu4},
$\tau^+(\Z(\C))=\tau^-(\Z(\C))=\dim(\C)$. Since $\dim(\C)\in\kRcyc$ is totally positive
(see \S\ref{squared}) this implies that the multiplicative central charge of $\Z(\C)$ 
corresponding to any embedding $\kcyc\hookrightarrow\mathbb C$ equals $1$.
\end{example}

\subsection{The Gauss sums of a fiber category and invariance of the central charge}
\label{TheGauss}

If $G$ is a pre-metric group
and $H\subset G$ is an isotropic subgroup then 
there is a simple relation between the Gauss sum
of $G$ and that  of $H^{\perp}/H$ (see Proposition \ref{subquot}).
The goal of this subsection is to prove a similar relation
for Gauss sums of pre-modular categories 
(see Theorem~\ref{tau after mod} below). This relation 
will be crucial in the sequel.
Recall that the Gauss sums and central charge 
of a pre-modular category were introduced in \S\ref{gauss}.

The next Lemma is known, see \cite[Lemma 3.1.5]{BK}. We include its proof
for the reader's convenience.
\begin{lemma}
\label{december lemma}
Let $\C$ be a pre-modular category. Then for any $Y\in \O(\C)$ we have:
\begin{equation}
\label{december formula}
\sum_{X\in \O(\C)}\, \theta_X d(X) s_{XY} =
d(Y) \theta_Y^{-1} \tau^+(\C). 
\end{equation}
\end{lemma}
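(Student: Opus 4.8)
\textbf{Proof plan for Lemma~\ref{december lemma}.}
The plan is to expand the left-hand side using the $S$-matrix formula \eqref{sij}, which for a pre-modular category reads $s_{XY} = \theta_X^{-1}\theta_Y^{-1}\sum_{Z\in \O(\C)} N_{XY}^Z \theta_Z d(Z)$. Substituting this into $\sum_{X\in\O(\C)}\theta_X d(X) s_{XY}$, the factors $\theta_X$ and $\theta_X^{-1}$ cancel, and I would be left with
\[
\theta_Y^{-1}\sum_{X\in\O(\C)}\sum_{Z\in\O(\C)} d(X)\, N_{XY}^Z\, \theta_Z\, d(Z).
\]
First I would interchange the order of summation, grouping by $Z$, so the inner sum becomes $\sum_{X\in\O(\C)} N_{XY}^Z d(X)$. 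Since $d = d_+$ is a ring homomorphism $K_0(\C)\to\kcyc$ (see \S\ref{pivotal}), we have $\sum_{X\in\O(\C)} N_{XY}^Z d(X) = \sum_X N_{XY}^Z d(X)$, and I recognize $\sum_{Z} N_{XY}^Z Z = X\ot Y$ in $K_0(\C)$ — but here the roles are a bit different, so the cleaner route is to note that $N_{XY}^Z = N_{Z Y^*}^X$ (or use $N_{XY}^Z = \dim\Hom(Z, X\ot Y) = \dim\Hom(X^*\ot Z, Y) = N_{X^* Z}^Y$, hence $= N_{Z Y^*}^X$ by Frobenius reciprocity and duality symmetry). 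Then $\sum_X N_{Z Y^*}^X d(X) = d(Z\ot Y^*) = d(Z) d(Y^*) = d(Z) d(Y)$ since $d(Y^*) = d(Y)$ for a spherical structure (see \S\ref{pivotal}).

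Carrying this out, the expression becomes
\[
\theta_Y^{-1}\sum_{Z\in\O(\C)} \theta_Z\, d(Z)\cdot d(Z) d(Y) = \theta_Y^{-1} d(Y) \sum_{Z\in\O(\C)} \theta_Z\, d(Z)^2 = \theta_Y^{-1} d(Y)\, \tau^+(\C),
\]
using the definition \eqref{Gauss sums} of $\tau^+(\C)$. This is exactly \eqref{december formula}.

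The main obstacle — and it is a minor one — is bookkeeping with the multiplicity symbols and making sure the duality identity $N_{XY}^Z = N_{ZY^*}^X$ is applied in the correct variable so that the summation over $X$ genuinely reconstitutes a tensor product whose dimension factors via the homomorphism property of $d$. An alternative that avoids the multiplicity manipulation altogether is to apply $d\cdot\Tr(-)$, or rather to note that the computation in \S\ref{Qtrace} already records $s_{XY}$ in the needed form, and simply to sum $\theta_X d(X) s_{XY}$ directly against it; either way the key inputs are \eqref{sij}, the homomorphism property of $d$, the relation $d(Y^*)=d(Y)$ for spherical structures, and the definition of $\tau^+$. No step requires modularity (non-degeneracy of $S$), which is why this holds for all pre-modular $\C$.
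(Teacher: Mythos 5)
Your proof is correct and follows essentially the same route as the paper: substitute \eqref{sij} so the $\theta_X$ factors cancel, interchange the sums, use $N_{XY}^Z = N_{ZY^*}^X$ so that $\sum_X N_{ZY^*}^X d(X) = d(Z\ot Y^*) = d(Z)d(Y)$, and recognize $\tau^+(\C)$. The paper's proof is exactly this computation, so nothing further is needed.
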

\begin{proof}
Using formula~\eqref{sij} we compute
\begin{eqnarray*}
\sum_{X\in \O(\C)}\, \theta_X d(X) s_{XY} 
&=& \theta_Y^{-1} \sum_{X\in \O(\C)}\, d(X) \sum_{Z\in \O(\C)}\, N_{XY}^Z \theta_Z d(Z) \\
&=& \theta_Y^{-1}  \sum_{Z\in \O(\C)}\,  
\left( \sum_{X\in \O(\C)}\, N_{ZY^*}^X d(X) \right) \theta_Z d(Z) \\
&=& \theta_Y^{-1}  d(Y) \sum_{Z\in \O(\C)}\,  \theta_Z d(Z)^2 = d(Y) \theta_Y^{-1} \tau^+(\C),
\end{eqnarray*}
as required.
\end{proof}

In the next Proposition we extend the multiplicative property 
\eqref{multiplicativity of tau} of Gauss sums.

\begin{proposition}
\label{stronger  multiplicativity of tau}
Let $\D$ be a fusion subcategory of a pre-modular category $\C$ and let $\D'$
be its centralizer in $\C$. Then 
\begin{equation}
\label{gen mult}
\tau^\pm(\C) \tau^\mp(\D)  = \dim(\D) \tau^\pm(\D'). 
\end{equation}
\end{proposition}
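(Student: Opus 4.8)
The plan is to prove \eqref{gen mult} by a direct computation with the $S$-matrix, closely following the proof of Proposition~\ref{subquot} and using the extended Verlinde-type relations already available for pre-modular categories. First I would write out $\tau^+(\C)$ in the form
\begin{equation}
\tau^+(\C)=\sum_{Y\in\O(\C)}\theta_Y d(Y)^2,
\end{equation}
and then exploit Lemma~\ref{december lemma} applied not to $\C$ itself but in a way that isolates the contribution of the subcategory $\D$. Concretely, I would start from the sum
\begin{equation}
\Sigma:=\sum_{X\in\O(\D)}\sum_{Y\in\O(\C)}\theta_X^{-1}\theta_Y\, d(X)^2\, d(Y)^2\, \tilde s_{XY},
\end{equation}
and evaluate it in two ways. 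Summing over $Y$ first, I would apply the non-spherical Verlinde relation \eqref{sij} (in the pre-modular setting, where $\tilde s_{XY}=s_{XY}/(d(X)d(Y))$) together with Lemma~\ref{december lemma} to collapse the $Y$-sum into $\tau^+(\C)$; this should yield $\Sigma=\tau^+(\C)\tau^-(\D)$, using that $\sum_{X\in\O(\D)}\theta_X^{-1}d(X)^2=\tau^-(\D)$. Summing over $X$ first, I would use the Altsch\"uler--Brugui\`eres criterion in the form \eqref{O(D')}--\eqref{not O(D')}: for fixed $Y$, the inner sum $\sum_{X\in\O(\D)}|X|^2\tilde s_{XY}$ vanishes unless $Y\in\O(\D')$, in which case $\tilde s_{XY}=1$ for all $X\in\O(\D)$ and the sum equals $\dim(\D)$. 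This reduces the $X$-sum to a sum over $Y\in\O(\D')$ of $\theta_Y d(Y)^2$, i.e. to $\dim(\D)\,\tau^+(\D')$ — but here one must be careful that $|X|^2=d(X)^2$ only after checking the spherical normalization, which is fine since $\C$ is pre-modular and \eqref{dim3} holds.

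The key steps, in order, are: (1) fix the pre-modular (spherical) structure so that $|X|^2=d(X)^2$ and $\tilde s_{XY}=s_{XY}/(d(X)d(Y))$; (2) form the double sum $\Sigma$ above; (3) evaluate $\Sigma$ by summing over $Y$ first, invoking \eqref{sij} and Lemma~\ref{december lemma}, to get $\tau^+(\C)\tau^-(\D)$; (4) evaluate $\Sigma$ by summing over $X$ first, invoking Proposition~\ref{AB criterion} via \eqref{O(D')}--\eqref{not O(D')}, to get $\dim(\D)\tau^+(\D')$; (5) equate the two expressions to obtain $\tau^+(\C)\tau^-(\D)=\dim(\D)\tau^+(\D')$, and apply complex conjugation (using $\tau^-(\C)=\overline{\tau^+(\C)}$, $\theta_X$ a root of unity, $d(X)\in\kRcyc$) to get the companion identity with signs reversed.

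The main obstacle I anticipate is bookkeeping in step (3): the relation \eqref{sij} involves $\theta_X^{-1}\theta_Y^{-1}$ and a sum $\sum_Z N_{XY}^Z\theta_Z d(Z)$, so extracting exactly $\tau^+(\C)$ requires the manipulation in the proof of Lemma~\ref{december lemma} — namely rewriting $\sum_X N_{XY}^Z d(X)=d(Y)\delta$-type identities via $N_{XY}^Z=N_{ZY^*}^X$ and $\sum_X N_{ZY^*}^X d(X)=d(Z)d(Y^*)=d(Z)d(Y)$ (using $d(Y^*)=d(Y)$ for the spherical structure). One has to check the twists line up: the factor $\theta_Y$ in $\Sigma$ combines with the $\theta_Y^{-1}$ coming out of \eqref{sij} to cancel, leaving a clean $\sum_Z\theta_Z d(Z)^2=\tau^+(\C)$. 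A secondary subtlety is making sure the argument does not secretly require $S$ to be invertible — it does not, since \eqref{sij} and Proposition~\ref{AB criterion} hold for all pre-modular $\C$ — so \eqref{gen mult} is valid without a modularity hypothesis, and specializing $\D=\C$ (so $\D'=\C'$) recovers the relation between $\tau^\pm(\C)$ and $\tau^\mp(\C')$, while $\D=\Vec$ gives the trivial identity.
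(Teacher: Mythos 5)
Your overall strategy—evaluate a coupled double sum in two ways, using Lemma~\ref{december lemma} for the sum over $\O(\C)$ and Proposition~\ref{AB criterion} for the sum over $\O(\D)$, then conjugate to get the other sign—is exactly the paper's proof, and your side remarks (no modularity needed, \eqref{sij} enters only through Lemma~\ref{december lemma}) are correct. However, the double sum you actually write down is mis-weighted, and with it both of your evaluations fail. Indeed,
\begin{equation*}
\Sigma=\sum_{X\in\O(\D)}\sum_{Y\in\O(\C)}\theta_X^{-1}\theta_Y\,d(X)^2 d(Y)^2\,\tilde{s}_{XY}
=\sum_{X\in\O(\D)}\theta_X^{-1}d(X)\sum_{Y\in\O(\C)}\theta_Y\, d(Y)\,s_{XY},
\end{equation*}
and by Lemma~\ref{december lemma} (together with the symmetry of the $S$-matrix) the inner sum equals $d(X)\theta_X^{-1}\tau^+(\C)$, so summing over $Y$ first gives $\tau^+(\C)\sum_{X\in\O(\D)}\theta_X^{-2}d(X)^2$, which is not $\tau^+(\C)\tau^-(\D)$ in general. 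Likewise, summing over $X$ first, the inner sum is $\sum_{X\in\O(\D)}\theta_X^{-1}|X|^2\,\tilde{s}_{XY}$, to which \eqref{O(D')}--\eqref{not O(D')} do not apply: the Altsch\"uler--Brugui\`eres orthogonality needs the weight $|X|^2$, and your step (4) silently drops the factor $\theta_X^{-1}$ (even for $Y\in\O(\D')$ the sum would be $\tau^-(\D)$, not $\dim(\D)$, and for $Y\notin\O(\D')$ it need not vanish). So as written, steps (3) and (4) compute two different quantities, neither of which is the claimed side of \eqref{gen mult}.

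The repair is simply to delete the $\theta_X^{-1}$: take
\begin{equation*}
\Sigma=\sum_{X\in\O(\D)}\sum_{Y\in\O(\C)}\theta_Y\,d(X)\,d(Y)\,s_{XY}.
\end{equation*}
Summing over $Y$ first, Lemma~\ref{december lemma} gives $\tau^+(\C)\sum_{X\in\O(\D)}\theta_X^{-1}d(X)^2=\tau^+(\C)\tau^-(\D)$; summing over $X$ first, the inner sum is $d(Y)\sum_{X\in\O(\D)}|X|^2\tilde{s}_{XY}$, which by Proposition~\ref{AB criterion} equals $d(Y)\dim(\D)$ for $Y\in\O(\D')$ and $0$ otherwise, so the total is $\dim(\D)\tau^+(\D')$. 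This corrected computation is precisely the argument in the paper, where one multiplies \eqref{december formula} by $d(Y)$ and sums over $\O(\D)$; so once the weight is fixed your proof coincides with the paper's rather than giving an alternative route.
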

\begin{proof}
Multiplying formula \eqref{december formula} in Lemma~\ref{december lemma}
by $d(Y)$ and taking sum over $Y\in \O(\D)$ we have:
\begin{eqnarray*}
\tau^+(\C) \tau^-(\D) 
&=& \sum_{Y\in \O(\D)}\, \sum_{X\in \O(\C)}\, \theta_X d(Y) d(X) s_{XY} \\
&=& \sum_{X\in \O(\C)}\, \left( \sum_{Y\in \O(\D)}\,  s_{XY} d(Y) \right) \theta_X d(X) \\
&=& \sum_{X\in \O(\D')}\, \theta_X d(X)^2 \dim(\D)  =  \dim(\D) \tau^+(\D'),
\end{eqnarray*}
where in  the third equality we used Proposition~\ref{AB criterion}. 
Applying the complex  conjugation we obtain the second equality.
\end{proof}

\begin{remark}
Our proof of Proposition~\ref{stronger  multiplicativity of tau}
is similar to the proof of Proposition~\ref{subquot}. In the case that $\C$ is pointed
the two proofs are identical.
\end{remark}

\begin{definition}    \label{isotrdef}
A fusion subcategory $\D$ of a pre-modular category $\C$ is said to be
{\em isotropic} if $\theta_X = \id_X$ for all $X\in \D$.
\end{definition}

\begin{remark}
By \eqref{balancing ax}, {\em any isotropic subcategory 
is symmetric.} On the other hand, if $\D$ is symmetric then $\theta$ is a tensor automorphism of
$\id_{\D}$ and $\theta^2=\id$, so {\em any symmetric subcategory of odd dimension
is isotropic.}
\end{remark}

\begin{corollary}
\label{orto-isotropic}
Let $\C$ be a modular category and let $\E$ be an isotropic
subcategory of $\C$.  
Then $\tau^\pm(\E') =\tau^\pm(\C)$.
\end{corollary}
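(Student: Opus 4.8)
The statement to be proved is Corollary~\ref{orto-isotropic}: for a modular category $\C$ and an isotropic subcategory $\E\subset\C$ one has $\tau^\pm(\E')=\tau^\pm(\C)$. The strategy is to combine the multiplicativity formula \eqref{gen mult} of Proposition~\ref{stronger multiplicativity of tau} with the basic properties of Gauss sums collected in Proposition~\ref{tauprop}, exactly as Corollary~\ref{hyp} follows from Corollary~\ref{absval} in the classical setting. First I would apply \eqref{gen mult} with $\D=\E$. Since $\E$ is isotropic, $\theta_X=\id_X$ for all $X\in\O(\E)$, so by the Convention of \S\ref{premod} we have $\theta_X=1\in k^\times$, and hence $\tau^\pm(\E)=\sum_{X\in\O(\E)}\theta_X^{\pm1}d(X)^2=\sum_{X\in\O(\E)}d(X)^2=\dim(\E)$ (using that $\E$ inherits a spherical structure, so $|X|^2=d(X)^2$ by \eqref{dim3}). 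In particular $\tau^+(\E)=\tau^-(\E)=\dim(\E)$.

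Substituting into \eqref{gen mult}, the equation $\tau^\pm(\C)\,\tau^\mp(\E)=\dim(\E)\,\tau^\pm(\E')$ becomes $\tau^\pm(\C)\cdot\dim(\E)=\dim(\E)\cdot\tau^\pm(\E')$. The last step is to cancel the factor $\dim(\E)$, which is legitimate because the categorical dimension of any fusion category is a nonzero element of $k$ (indeed a totally positive algebraic integer, see \S\ref{squared}, or \cite{ENO}); in particular $\dim(\E)$ is invertible in $k$. This yields $\tau^\pm(\C)=\tau^\pm(\E')$, as claimed.

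There is essentially no obstacle here: the content is entirely in Proposition~\ref{stronger multiplicativity of tau}, which has already been established, and the only thing to verify carefully is the identity $\tau^\pm(\E)=\dim(\E)$ for an isotropic subcategory. That identity is immediate once one notes that ``isotropic'' forces the twist to be trivial on $\E$ and that $\E$, being isotropic hence symmetric (and in any case a fusion subcategory of a spherical category), carries the restricted spherical structure so that $d(X)^2=|X|^2$. Thus the proof is a two-line deduction, and the only care required is to make sure the nonvanishing of $\dim(\E)$ is invoked when dividing. (One could alternatively phrase this as: apply \eqref{gen mult} and use $\tau^+(\E)\tau^-(\E)=\dim(\E)^2$ together with $\tau^+(\E)=\tau^-(\E)$, but the direct computation $\tau^\pm(\E)=\dim(\E)$ is cleaner.)
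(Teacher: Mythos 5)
Your proof is correct and is essentially the paper's own argument: the paper also takes $\D=\E$ in \eqref{gen mult} and notes $\tau^\pm(\E)=\dim(\E)$ because $\E$ is isotropic, then cancels the nonzero factor $\dim(\E)$. Your additional care in justifying $\tau^\pm(\E)=\dim(\E)$ via $\theta_X=1$ and $d(X)^2=|X|^2$ is exactly what the paper leaves implicit.
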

\begin{proof}
Take $\D =\E$ in \eqref{gen mult} and observe that
$\tau^\pm(\E)=\dim(\E)$ since $\E$ is isotropic.
\end{proof}

Now let $\C$ be a pre-modular category and let $\E\subset\C$ be 
isotropic and Tannakian; in other words, $\E$ is Tannakian and the
spherical structure on $\C$ is the standard one. Let $\E'\boxtimes_{\E}\Vec$ 
be the fiber category, i.e., the category of $A$-modules in $\E'$,
where $A$ is the regular algebra in $\E$. As explained in \S\ref{deeqfus}-\ref{Thetensorfunc}, 
$\E'\boxtimes_{\E}\Vec$ is a braided fusion category, and the ``free module"
functor $F: \E' \to \E'\boxtimes_{\E}\Vec$ defined by $F(X):=A\otimes X$ 
is braided. Moreover, according to \cite{Br, Mu1, KiO},
$\E'\boxtimes_{\E}\Vec$ {\em has a unique spherical structure such that
$F$ is spherical.}
This means that the twists $\phi_Y$, $Y\in \O(\E'\boxtimes_{\E}\Vec)$, agree with the twists 
$\theta_X, X\in\O(\C)$, i.e.,
$\phi_Y =\theta_X \mbox{ whenever } \Hom_\C(X, Y)\neq 0$.
Here we abuse notation and view $Y$ as an object of $\C$ via the forgetful functor.

Thus $\E'\boxtimes_{\E}\Vec$ is a pre-modular category.
By Corollary \ref{newc}(i), if $\C$ is modular then so is $\E'\boxtimes_{\E}\Vec$. In this case
$\E'\boxtimes_{\E}\Vec$ is what Brugui\`{e}res \cite{Br} and  M\"uger \cite{Mu2}
call the {\em modularization\,} of~$\E'$. 

Let $d$ and $\bar{d}$ denote the dimension functions in $\C$ and $\E'\boxtimes_{\E}\Vec$,
respectively. For any object $X$ in $\E'\boxtimes_{\E}\Vec$ one has
\begin{equation}
\label{dims}
\bar{d}(X) = \frac{d(X)}{d(A)}
\end{equation}
and, hence,
\begin{equation}
\label{dims1}
\dim(\E'\boxtimes_{\E}\Vec)=\frac{\dim(\C)}{\dim(\E)},
\end{equation}
cf.\ \cite[Theorem 3.5]{KiO}, \cite[Proposition 3.7]{Br} (note that $d(A)=\dim(\E)\neq 0$).

\begin{theorem}
\label{tau after mod}
Let $\C$ be a pre-modular category and let $\E$ be an isotropic Tannakian
subcategory of $\C$.  
Then $\tau^\pm (\C)  = \tau^\pm (\E'\boxtimes_{\E}\Vec)\cdot\dim(\E)$.
\end{theorem}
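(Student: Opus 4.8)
The statement to be proved is
\[
\tau^\pm(\C) = \tau^\pm(\E'\boxtimes_{\E}\Vec)\cdot\dim(\E)
\]
for a pre-modular category $\C$ and an isotropic Tannakian subcategory $\E\subset\C$. The natural approach is to compare this identity, term by term, with Proposition~\ref{stronger multiplicativity of tau}, which asserts that $\tau^\pm(\C)\,\tau^\mp(\D)=\dim(\D)\,\tau^\pm(\D')$ for any fusion subcategory $\D$. Taking $\D=\E$ and using that $\E$ is isotropic, so that $\theta_X=\id_X$ for all $X\in\E$ and hence $\tau^\mp(\E)=\sum_{X\in\O(\E)}d(X)^2=\dim(\E)$, we obtain immediately
\[
\tau^\pm(\C)\cdot\dim(\E) = \dim(\E)\cdot\tau^\pm(\E').
\]
Cancelling $\dim(\E)\neq0$ gives $\tau^\pm(\C)=\tau^\pm(\E')$; this is exactly Corollary~\ref{orto-isotropic} in the modular case, but the same computation works for arbitrary pre-modular $\C$ since Proposition~\ref{stronger multiplicativity of tau} does not assume modularity. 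So the content of the theorem reduces to the assertion
\[
\tau^\pm(\E') = \tau^\pm(\E'\boxtimes_{\E}\Vec)\cdot\dim(\E).
\]

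\textbf{Comparing $\tau^\pm(\E')$ with $\tau^\pm(\E'\boxtimes_{\E}\Vec)$.} This is where the de-equivariantization enters. Write $G:=\Aut(F)$, so $\E=\Rep(G)$ and $\E'\boxtimes_{\E}\Vec$ is the fiber category, a pre-modular category whose spherical structure is characterized by the property that the free-module functor $\mathrm{Free}: \E'\to\E'\boxtimes_{\E}\Vec$, $X\mapsto A\ot X$, is spherical. Recall from \S\ref{deeqfus}–\ref{deeqbr} that the de-equivariantization is a category with $G$-action and that $\E'=(\E'\boxtimes_{\E}\Vec)^G$ as a braided fusion category; since $\mathrm{Free}$ is spherical and equals the forgetful functor of the equivariantization after the identification $F_{\C}:\E'\iso((\E'\boxtimes_{\E}\Vec)^G)_G$, the spherical structure on $\E'$ is the one pulled back along $\mathrm{Free}$. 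The plan is: first, use the analysis of simple objects of an equivariantization from the proof of Proposition~\ref{equidimprop}. For a simple $Y$ of $\K:=\E'\boxtimes_{\E}\Vec$ with stabilizer $H\subset G$ of its isomorphism class, the simple objects $X_V$ of $\E'=\K^G$ lying over $Y$ are parameterized by $V\in\mathrm{Irr}^1(\hat H)$, with $d(X_V)=(G:H)\cdot\dim(V)\cdot \bar d(Y)$ where $\bar d$ is the dimension function of $\K$; this uses $d(A)=\dim(\E)=|G|$ and \eqref{dims}. Second, observe that the twist is constant along the fibers of the forgetful functor: $\theta_{X_V}=\phi_Y$ for all such $V$, since the twist of an equivariant object is the twist of its underlying object in $\K$ (this is built into the construction of the spherical structure on $\K$ via the agreement $\phi_Y=\theta_X$ whenever $\Hom(X,Y)\ne0$). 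Combining these,
\[
\tau^\pm(\E') = \sum_{Y\in\O(\K)}\ \sum_{V\in\mathrm{Irr}^1(\hat H_Y)} \phi_Y^{\pm1}\, d(X_V)^2
= \sum_{Y\in\O(\K)} \phi_Y^{\pm1}\,(G:H_Y)^2\,\bar d(Y)^2 \sum_{V\in\mathrm{Irr}^1(\hat H_Y)}\dim(V)^2.
\]

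\textbf{Finishing.} Now invoke the identity $\sum_{V\in\mathrm{Irr}^1(\hat H)}\dim(V)^2=|H|$, already used in the proof of Proposition~\ref{equidimprop}. Then the $Y$-th summand becomes $\phi_Y^{\pm1}\,(G:H_Y)^2\,|H_Y|\,\bar d(Y)^2 = \phi_Y^{\pm1}\,|G|\,(G:H_Y)\,\bar d(Y)^2$. But $(G:H_Y)$ is precisely the number of elements in the $G$-orbit of the isomorphism class of $Y$, so summing over a set of orbit representatives and then over orbits collapses to a single sum over $\O(\K)$ with each term counted once after absorbing the orbit size. More cleanly: reindex the sum over $\O(\E')$ directly, grouping simple objects of $\E'$ by the $G$-orbit of their image in $\O(\K)$; the total contribution of one orbit $\mathcal{O}$ with representative $Y$ is $|G|\cdot\phi_Y^{\pm1}\bar d(Y)^2$, and since $\phi$ and $\bar d$ are $G$-invariant this equals $|G|\sum_{Y'\in\mathcal{O}}\frac{1}{|\mathcal O|}\phi_{Y'}^{\pm1}\bar d(Y')^2$... — the bookkeeping is most transparent if one simply writes $\tau^\pm(\E')=\sum_{X\in\O(\E')}\theta_X^{\pm1}d(X)^2$ and pushes the sum forward along the forgetful functor, using that each $Y\in\O(\K)$ receives total weight $|G|\cdot\bar d(Y)^2$ from its preimages (this is exactly the computation $\sum_{V}(G:H)^2\dim(V)^2 = (G:H)^2|H| = |G|(G:H)$, with the extra factor $(G:H)$ accounting for the orbit). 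Thus $\tau^\pm(\E') = |G|\sum_{Y\in\O(\K)}\phi_Y^{\pm1}\bar d(Y)^2 = \dim(\E)\cdot\tau^\pm(\K)$, which is what we wanted. I expect the main obstacle to be making the orbit-counting bookkeeping in this last step precise without circularity — specifically, checking that when one sums $d(X_V)^2$ over \emph{all} simple $X_V\in\O(\E')$ lying over a fixed $G$-orbit in $\O(\K)$ (not just over one representative $Y$), the factors of $(G:H)$ and $|H|$ combine correctly to give exactly $|G|\bar d(Y)^2$ per orbit; this is routine but must be written carefully, and one should double-check that $\mathrm{Irr}^1(\hat H)$ and the central-extension data behave uniformly along a $G$-orbit.
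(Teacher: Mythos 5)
Your proposal is correct in substance, but the key step is carried out by a genuinely different route than the paper's. Both arguments share the same first reduction: applying $\tau^\pm(\C)\,\tau^\mp(\D)=\dim(\D)\,\tau^\pm(\D')$ with $\D=\E$ isotropic to get $\tau^\pm(\C)=\tau^\pm(\E')$ (and you are right that this works for pre-modular $\C$, which is how the paper uses it). For the remaining identity $\tau^\pm(\E')=\dim(\E)\cdot\tau^\pm(\E'\boxtimes_{\E}\Vec)$, the paper does not use the equivariantization picture at all: it writes $\dim(\E)\,\tau^\pm(\E'\boxtimes_{\E}\Vec)=\sum_Y\phi_Y^{\pm1}\,d(Y)\,\bar d(Y)$ using $\bar d=d/d(A)$, expands $d(Y)=\sum_X\dim\Hom_{\E'}(X,Y)\,d(X)$, uses the twist compatibility $\phi_Y=\theta_X$ whenever $\Hom_{\E'}(X,Y)\neq0$, and then collapses the inner sum by the adjunction $\Hom_{\E'}(X,Y)=\Hom_{\E'\boxtimes_\E\Vec}(A\ot X,Y)$ to $\bar d(A\ot X)=d(X)$, giving $\tau^\pm(\E')$ directly; no orbits, stabilizers, or projective representations appear. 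Your route instead identifies $\E'\simeq(\E'\boxtimes_\E\Vec)^G$ and redoes the orbit--stabilizer analysis of simple objects from the proof of the dimension formula for equivariantizations, using $\sum_{V\in\mathrm{Irr}^1(\hat H)}\dim(V)^2=|H|$, the $G$-invariance of $\bar d$ and $\phi$, and twist-constancy along fibers (which, as you say, follows from the same compatibility $\phi_Y=\theta_X$ via adjunction). What the paper's computation buys is exactly the bookkeeping you flag as the main obstacle: Frobenius reciprocity replaces all orbit counting. Two small cautions on your write-up: your first display over-counts if the outer sum runs over all $Y\in\O(\K)$ rather than over orbit representatives, and the intermediate claim that one orbit contributes $|G|\cdot\phi_Y^{\pm1}\bar d(Y)^2$ should read $|G|\,(G:H_Y)\,\phi_Y^{\pm1}\bar d(Y)^2$; your final push-forward formulation (each $Y\in\O(\K)$ receiving total weight $|G|\,\bar d(Y)^2$) is the correct statement and does close the argument, since per orbit one gets $(G:H)^2|H|\,\bar d(Y)^2=|G|(G:H)\,\bar d(Y)^2$, matching $|G|$ times the sum of $\bar d^2$ over that orbit. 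Also, the spherical structure on the fiber category is determined by requiring the free-module functor to be spherical (one pushes forward along it, rather than ``pulls back''), but this is only a matter of phrasing.
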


\begin{remark}
The theorem implies that the multiplicative central charges of $\C$ and $\E'\boxtimes_{\E}\Vec$ 
corresponding to any embedding $\kcyc\hookrightarrow\mathbb C$ are equal.
\end{remark}
\begin{proof}
We compute the Gauss sums of  $\E'\boxtimes_{\E}\Vec$: 
\begin{eqnarray*}
\lefteqn{ \dim(\E) \tau^\pm(\E'\boxtimes_{\E}\Vec) = d(A) \sum_{Y\in\O(\E'\boxtimes_{\E}\Vec)}\, \phi^{\pm 1}_Y \bar{d}(Y)^2 =  }\\
&=&  \sum_{Y\in\O(\E'\boxtimes_{\E}\Vec)}\, \phi^{\pm 1}_Y  d(Y) \bar{d}(Y) \\
&=& \sum_{Y\in\O(\E'\boxtimes_{\E}\Vec)}\, \phi^{\pm 1}_Y  \left( \sum_{X\in\O(\E')}\, \dim_k\Hom_{\E'}(X,\, Y) d(X) 
    \right) \bar{d}(Y) \\
&=&  \sum_{X\in\O(\E')}\, \theta^{\pm 1}_X d(X)  \left( \sum_{Y\in\O(\E'\boxtimes_{\E}\Vec)}\, 
      \dim_k\Hom_{\E'}(X,\, Y) \bar{d}(Y) \right) \\
&=&  \sum_{X\in\O(\E')}\, \theta^{\pm 1}_X d(X)  \left( \sum_{Y\in\O(\E'\boxtimes_{\E}\Vec)}\, 
      \dim_k\Hom_{\E'\boxtimes_{\E}\Vec}(A \ot X,\, Y) \bar{d}(Y) \right) \\
&=&  \sum_{X\in\O(\E')}\, \theta^{\pm 1}_X d(X)  \bar{d}(F(X)) = \tau^\pm(\E') = \tau^\pm(\C),
\end{eqnarray*}
where we used the relation \eqref{dims}, the standard adjunction for $F$, 
the fact that $F$ is compatible with the spherical structures, and  Corollary~\ref{orto-isotropic}.
\end{proof}

\begin{corollary}
\label{zariad}
Let $\E \subset \C$ be an isotropic Tannakian subcategory of a
pre-modular category $\C$.  Then
\begin{equation}
\xi(\E'\boxtimes_{\E}\Vec) = \xi(\C),
\end{equation}
i.e., the central charge does not change after passing to the fiber category.
\end{corollary}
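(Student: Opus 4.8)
The plan is to derive Corollary~\ref{zariad} directly from Theorem~\ref{tau after mod} together with the formula \eqref{dims1} for the dimension of the fiber category. Recall that by definition the multiplicative central charge of a modular category $\C$ is $\xi(\C)=\tau^+(\C)/\sqrt{\dim(\C)}$, where the square root is the positive one with respect to a fixed embedding $\kcyc\hookrightarrow\mathbb C$. So the claim $\xi(\E'\boxtimes_{\E}\Vec)=\xi(\C)$ is the identity
\[
\frac{\tau^+(\E'\boxtimes_{\E}\Vec)}{\sqrt{\dim(\E'\boxtimes_{\E}\Vec)}}
=\frac{\tau^+(\C)}{\sqrt{\dim(\C)}}.
\]

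First I would note that $\E'\boxtimes_{\E}\Vec$ is again modular: since $\C$ is modular we have $\C'=\Vec$, hence $\E''=\E$, i.e.\ $\E$ is its own double centralizer, and then by Corollary~\ref{newc}(i) the fiber category $\E'\boxtimes_{\E}\Vec$ is non-degenerate; being pre-modular and non-degenerate it is modular. (Actually for the statement of Corollary~\ref{zariad} one may simply take $\C$ to be modular, which is the setting in which $\xi$ was defined in \S\ref{gauss}.) Thus both $\xi(\C)$ and $\xi(\E'\boxtimes_{\E}\Vec)$ are defined, and both $\dim(\C)$ and $\dim(\E'\boxtimes_{\E}\Vec)$ are totally positive elements of $\kRcyc$, so their positive square roots make sense after fixing the embedding.

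Next I would plug in the two key inputs. By Theorem~\ref{tau after mod}, $\tau^+(\C)=\tau^+(\E'\boxtimes_{\E}\Vec)\cdot\dim(\E)$. By \eqref{dims1}, $\dim(\E'\boxtimes_{\E}\Vec)=\dim(\C)/\dim(\E)$, so $\sqrt{\dim(\C)}=\sqrt{\dim(\E)}\cdot\sqrt{\dim(\E'\boxtimes_{\E}\Vec)}$ once one checks the square roots multiply correctly — this is fine because $\dim(\E)$ is a totally positive algebraic integer (it is the dimension of a Tannakian, hence pseudounitary, fusion category, with $\dim(\E)=\FPdim(\E)=|G|$ where $\E=\Rep(G)$), so its positive square root is an honest positive real under the chosen embedding, and the positive square root of a product of totally positive elements is the product of the positive square roots. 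Combining,
\[
\xi(\C)=\frac{\tau^+(\C)}{\sqrt{\dim(\C)}}
=\frac{\tau^+(\E'\boxtimes_{\E}\Vec)\cdot\dim(\E)}{\sqrt{\dim(\E)}\cdot\sqrt{\dim(\E'\boxtimes_{\E}\Vec)}}
=\frac{\tau^+(\E'\boxtimes_{\E}\Vec)\cdot\sqrt{\dim(\E)}}{\sqrt{\dim(\E'\boxtimes_{\E}\Vec)}}\cdot\frac{1}{\sqrt{\dim(\E)}}\cdot\sqrt{\dim(\E)}
=\xi(\E'\boxtimes_{\E}\Vec),
\]
where the factor $\dim(\E)/\sqrt{\dim(\E)}=\sqrt{\dim(\E)}$ cancels. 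Since this is essentially a one-line manipulation, there is no serious obstacle; the only point requiring a moment's care is the compatibility of positive square roots with the multiplicativity $\dim(\C)=\dim(\E)\dim(\E'\boxtimes_{\E}\Vec)$ under the fixed embedding $\kcyc\hookrightarrow\mathbb C$, which follows from total positivity of all three dimensions as just indicated.
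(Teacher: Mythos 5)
Your overall strategy --- feed Theorem~\ref{tau after mod} into the definition of $\xi$ together with a dimension formula for the fiber category --- is the paper's strategy, but the dimension formula you use is the wrong one, and your final chain of equalities silently drops a factor of $\sqrt{\dim(\E)}$. If $\dim(\C)=\dim(\E)\cdot\dim(\E'\boxtimes_{\E}\Vec)$, as you assume, then combined with $\tau^+(\C)=\tau^+(\E'\boxtimes_{\E}\Vec)\cdot\dim(\E)$ you get
\[
\xi(\C)=\frac{\tau^+(\E'\boxtimes_{\E}\Vec)\,\dim(\E)}{\sqrt{\dim(\E)}\,\sqrt{\dim(\E'\boxtimes_{\E}\Vec)}}
=\xi(\E'\boxtimes_{\E}\Vec)\cdot\sqrt{\dim(\E)},
\]
not $\xi(\E'\boxtimes_{\E}\Vec)$: the leftover $\dim(\E)/\sqrt{\dim(\E)}=\sqrt{\dim(\E)}$ has nothing to cancel against, so the last step of your display is simply false. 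The root of the problem is taking \eqref{dims1} at face value. De-equivariantization divides the global dimension by the order of the group (Proposition~\ref{equidimprop}), so what actually follows from \eqref{dims} is $\dim(\E'\boxtimes_{\E}\Vec)=\dim(\E')/\dim(\E)$; the ``$\dim(\C)$'' in \eqref{dims1} is a slip (in the Lagrangian case $\E'=\E$ the fiber category is $\Vec$, of dimension $1$, while $\dim(\C)/\dim(\E)=\dim(\E)>1$), and indeed the paper's own proof of Corollary~\ref{zariad} cites \eqref{dims1} \emph{together with} Theorem~\ref{Muger2}(i), which would be pointless if \eqref{dims1} already involved $\dim(\C)$.

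The correct route, which is the one the paper takes: in the non-degenerate (modular) setting --- which, as you rightly observe, is where $\xi$ is defined, and which also guarantees $\E\cap\C'=\Vec$ --- equation \eqref{Dim D dim D'} gives $\dim(\E)\dim(\E')=\dim(\C)$, hence
\[
\dim(\E'\boxtimes_{\E}\Vec)=\frac{\dim(\E')}{\dim(\E)}=\frac{\dim(\C)}{\dim(\E)^2},
\qquad
\sqrt{\dim(\C)}=\dim(\E)\,\sqrt{\dim(\E'\boxtimes_{\E}\Vec)}.
\]
Now the factor $\dim(\E)$ supplied by Theorem~\ref{tau after mod} cancels exactly against the $\dim(\E)$ in the square root, and $\xi(\C)=\xi(\E'\boxtimes_{\E}\Vec)$ follows. (A quick sanity check that your version cannot be repaired by cleverer bookkeeping: for modular categories both central charges are roots of unity, so they cannot differ by the real factor $\sqrt{\dim(\E)}>1$ unless $\E=\Vec$.) Your remarks about total positivity and the compatibility of positive square roots with products are fine and carry over unchanged to the corrected computation.
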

\begin{proof}
Note that $\dim(\E'~\boxtimes_{\E}~\Vec) =
\dim(\C)/\dim(\E)^2$ by  equation \eqref{dims1} 
and Theorem~\ref{Muger2}(i) and combine this with
Theorem \ref{tau after mod}.
\end{proof}
 
\subsection{Some invariants of weakly integral non-degenerate braided categories}\label{elementxc}
Let $\C$ be a weakly integral non-degenerate braided fusion category. 
We will associate to it an element $\x_{\C}\in\q$ and the ``Gauss-Frobenius-Perron
sums" $\T^{\pm}(\C )\in k^{\times}$. 

\subsubsection{The element $\x_{\C}\in\q$}
As usual, let $\O (\C )$ denote the set of isomorphism classes of the simple objects of $\C$.
Let $\Gamma\subset\q$ be the image of the grading $\deg :\O (\C )\to\q$ from Proposition \ref{FPint1}(iii).
Let $\C^{int}\subset C$ be the trivial component of this grading, i.e., the maximal
integral subcategory of $\C$. Set $\A:=(\C^{int})'$. By definition, $\C^{int}\supset\C_{ad}$, so
$\A\subset\C_{pt}$ by Corollary~\ref{adpt}. Since $\C_{pt}\subset\C^{int}$ we see that 
$\A\subset\A'$, i.e., $\A$ is symmetric. For every simple $a\in\A$ the braiding
$c_{a,a}:a\ot a\iso a\ot a$ acts as multiplication by some $\chi (a)\in k^{\times}$, and
since $\A$ is symmetric the function
\begin{equation} \label{thecharacter}
\chi :\O (\A )\to k^{\times}
\end{equation}
is a character with values $\pm 1$.

On the other hand, the perfect pairing \eqref{thepairing}
provides an isomorphism $\alpha: \O(\C_{pt})\iso\Hom (U_{\C},k^{\times})$, 
and $\O(\A )\subset\O(\C_{pt})$ is the preimage of 
$\Hom (\Gamma ,k^{\times})\subset\Hom (U_{\C},k^{\times})$. So the character
\eqref{thecharacter} defines an element  of $\Gamma\subset\q$, which
will be denoted by $\x_{\C}$. In other words, $\x_{\C}\in\Gamma$ is defined by the equality 
\begin{equation} \label{defequality}
\langle a, \x_{\C}\rangle =\chi (a) \;\mbox{ for all }\; a\in\O (\A ),
\end{equation}
where $\langle , \rangle$ is the pairing between $\O (\A )$ and $\Gamma$ induced by 
the pairing \eqref{thepairing}. Note that since
$\Gamma\subset\q$ we can consider $\x_{\C}$ as an element of $\q$. 
Clearly
\begin{equation} 
\x_{\C_1\boxtimes \C_2}=\x_{\C_1}\cdot\x_{\C_2}.
\end{equation}

\begin{example}  \label{xIs}
Let $\C$ be an Ising braided category (see Appendix \ref{Is}). Then $\x_{\C}\ne 1$.
To see this, recall that $\C^{int}=\C_{pt}=\C_{ad}$ (see \S\ref{fusionIsing}). 
So the subcategory $\A:=(\C^{int})'$ equals $(\C_{ad})'=\C_{pt}=\C_{ad}$.
By Lemma \ref{adsuper}, $\C_{ad}\simeq s\Vec$, so the character \eqref{thecharacter} 
is nontrivial. Therefore the element $\x_{\C}\in\Gamma\subset\q$ is not equal to $1$.
In fact, $\Gamma$ has only one element not equal to $1$, namely, the image of
$2\in\BQ^{\times}_{>0}$.
\end{example}

\begin{proposition}  \label{xcenter}
If $\C$ is the center of a fusion category then $\x_{\C}=1$.
\end{proposition}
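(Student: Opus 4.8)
The plan is to reduce the statement to showing that the pointed symmetric category $\A=(\C^{int})'$ considered just before the proposition is actually \emph{Tannakian}. Indeed, since $\A$ is pointed and symmetric, the character $\chi$ of $\O(\A)$ is nothing but the quadratic form attached to the pointed braided category $\A$ (cf.\ \S\ref{pbcpmg}), and a pointed braided category is Tannakian precisely when its quadratic form is trivial (see the example following Definition~\ref{tannaka}); thus $\A$ is Tannakian iff $\chi\equiv 1$. On the other hand, by the defining equality \eqref{defequality} together with the non-degeneracy of the pairing \eqref{thepairing}, one has $\x_\C=1$ iff $\chi\equiv 1$. So it is enough to produce a Tannakian subcategory of $\C$ that contains $\A$.

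Write $\C=\Z(\B)$. By Proposition~\ref{centerweaklyintegral}, $\B$ is weakly integral. The first point I would establish is that $\B_{ad}$ is integral: by Proposition~\ref{un-grad}(ii) it is the trivial component of the universal grading of $\B$, and since the $\q$-grading of Proposition~\ref{FPint1}(iii) factors through the universal grading, its trivial component $\B^{int}$ (Remark~\ref{intrem}(a)) contains $\B_{ad}$; hence $\B_{ad}\subset\B^{int}$ is integral, and therefore so is $\Z(\B_{ad})$ by Remark~\ref{dimFX}(ii). Next, applying \cite[Proposition~2.9(ii)]{ENO3} to the faithful grading of $\B$ by $U_\B$ — exactly as in the proof of Corollary~\ref{dimgradcat} — one gets a Tannakian subcategory $\E\cong\Rep(U_\B)$ of $\C=\Z(\B)$, embedded via a fiber functor, together with a braided equivalence $\E'\bt_\E\Vec\cong\Z(\B_{ad})$. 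Since $\Z(\B_{ad})$ is integral, so is $\E'\bt_\E\Vec$, and hence $\E'=(\E'\bt_\E\Vec)^{U_\B}$ is integral by Corollary~\ref{equiintegral}. Equivalently, $\E'\subset\C^{int}$.

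Finally, $\C=\Z(\B)$ is non-degenerate by Corollary~\ref{centerfactorizable}, so $\C'=\Vec$ and the double centralizer theorem (Theorem~\ref{Muger2}(ii)) gives $\E''=\E$. Taking centralizers in the inclusion $\E'\subset\C^{int}$ yields $\A=(\C^{int})'\subset(\E')'=\E''=\E$. Since any braided fusion subcategory of $\Rep(U_\B)$ is of the form $\Rep(U_\B/N)$, it follows that $\A$ is Tannakian, whence $\chi\equiv 1$ and $\x_\C=1$.

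The only non-formal ingredient is the appeal to \cite[Proposition~2.9(ii)]{ENO3}, which manufactures a Tannakian subcategory of $\Z(\B)$ whose de-equivariantized centralizer is explicitly identified — and, crucially, turns out to be integral once one knows $\B_{ad}$ is integral; everything else is bookkeeping with centralizers, gradings, and the equivariantization/de-equivariantization dictionary. The step to be careful about is that the Tannakian subcategory produced there is embedded through a genuine fiber functor, so that de-equivariantization and Corollary~\ref{equiintegral} legitimately apply; this is part of that statement.
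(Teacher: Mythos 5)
Your proof is correct, but it follows a genuinely different route from the paper's. The paper argues directly and explicitly: writing $\C=\Z(\M)$, it produces, for each character $\delta$ of the $\q$-valued grading of $\M$ from Proposition~\ref{FPint1}(iii), the invertible object $c_\delta\in\Z(\M)$ whose underlying object is $\be$ and whose half-braiding acts on a simple $m\in\M$ by $\delta(m)$; it then asserts that $\A=(\C^{int})'$ is generated by these $c_\delta$ and notes that the self-braiding of each $c_\delta$ is trivial, so $\chi\equiv1$ and hence $\x_\C=1$. You instead prove the a priori stronger fact that $\A$ is Tannakian, by locating it inside the canonical Tannakian subcategory $\E\simeq\Rep(U_\B)\subset\Z(\B)$ attached to the universal grading: integrality of $\B_{ad}$ gives integrality of $\Z(\B_{ad})\cong\E'\bt_\E\Vec$, hence of $\E'$ by Corollary~\ref{equiintegral}, so $\E'\subset\C^{int}$, and the double centralizer theorem (applicable since $\Z(\B)$ is non-degenerate) flips this to $\A\subset\E''=\E$. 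All the individual steps check out: the universal grading is faithful, $\E$ does lie in the M\"uger center of $\E'$ so the equivariantization dictionary applies, a fusion subcategory of a Tannakian category is again Tannakian, and the point you flag about the fiber functor is harmless since all fiber functors on a Tannakian category are isomorphic (Theorem~\ref{Deligne1}(ii)). The trade-off is clear: the paper's argument is a short explicit computation whose only unjustified step is the ``easy to see'' identification of $\A$ with the subcategory generated by the $c_\delta$'s, whereas your argument avoids that identification entirely at the cost of importing \cite[Proposition 2.9(ii)]{ENO3} (already used by the paper in Corollary~\ref{dimgradcat}) together with the equivariantization machinery, and in return gives the extra structural information that $\A$ sits inside $\Rep(U_\B)$.
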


\begin{proof}
Suppose that $\C$ is the center of a fusion category $\M$. By Proposition \ref{centerweaklyintegral},
$\M$ is weakly integral.
Therefore Proposition \ref{FPint1}(iii) applies to $\M$, and we get a grading
$\deg :\O (\M )\to\q$. For each grading $\delta :\O (\M )\to k^{\times}$ which can be factored as
$\O (\M )\to \q\to k^{\times}$ we have the following object $c_{\delta}\in \Z (\M )=\C$: 
as an object of $\M$, $c_{\delta}$ equals $\be$, and for each simple $m\in\M$ the isomorphism
$m\ot\be\iso\be\ot m$ equals $\delta (m)$. It is easy to see that the fusion category
$\A:=(\C^{int})'$ is generated by the $c_{\delta}$'s. On the other hand, the braiding
$c_{\delta}\ot c_{\delta}\iso c_{\delta}\ot c_{\delta}$ equals $1$ for each $\delta$.
\end{proof}

Now let us assume that $\C$ contains a Tannakian subcategory $\E$. The fiber category
$\tilde \C=\E'\boxtimes_\E \Vec$ is also weakly integral. Indeed, $\FPdim(\tilde \C)$ is an
algebraic integer (see \S \ref{fpd}), and Corollary \ref{subwi} and  Proposition \ref{equidimprop} 
imply that  $\FPdim(\tilde \C)\in \BQ$. Hence the element
$x_{\tilde \C}\in \q$ is defined.

\begin{proposition} \label{xtildec}
We have $x_\C=x_{\tilde \C}$.
\end{proposition}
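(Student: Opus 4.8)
The statement asserts the invariance of the element $x_\C \in \q$ under the de-equivariantization by a Tannakian subcategory $\E \subset \C$. The natural strategy is to reduce to the description of $x_\C$ in terms of the canonical symmetric subcategory $\A := (\C^{int})'$ and its character $\chi : \O(\A) \to \{\pm 1\}$, and then to track how both $\A$ and $\chi$ behave under passing from $\C$ to $\tilde\C = \E' \boxtimes_\E \Vec$. The key structural fact is Remark~\ref{added_by_Vitya_on_June23} (and Proposition~\ref{crosscentralizers}(i)): $\tilde\C$ is the trivial component $(\C_G)_1$ of the braided $G$-crossed category $\C_G$, where $G = \Aut(F)$. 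More usefully here, Proposition~\ref{lattice isomorphism}(iii) tells us that centralizers are preserved under the correspondence between $G$-stable fusion subcategories of $(\C_G)_1$ and fusion subcategories of $\E'$ containing $\E$; and Proposition~\ref{equidimprop} controls Frobenius-Perron dimensions, hence integrality.

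\textbf{First steps.} First I would reduce to the case $\E = \Rep(G)$ maximal Tannakian, or at least handle the general case by the same bookkeeping; actually no maximality is needed, so I would simply fix a fiber functor $F : \E \to \Vec$ and work with $\D := \E' \subset \C$, which is a braided fusion category over $\E$, and $\tilde\C = \D \boxtimes_\E \Vec = \D_G$. Step one: identify $\widetilde\C^{\,int}$, the maximal integral subcategory of $\tilde\C$. Using Proposition~\ref{equidimprop}, $\FPdim$ of any fusion subcategory $\tilde\B \subset \tilde\C$ equals $\frac{1}{|G|}\FPdim(\tilde\B^G)$ where $\tilde\B^G$ is the corresponding fusion subcategory of $\D$ containing $\E$; and more precisely by formula~\eqref{proj} the simple objects of $\tilde\C$ over a $G$-orbit of simples of $\D$ have FP-dimensions rescaled by $(G:H)\dim(V)$. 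Since $\Rep(G) \subset \D$ and $\E$ is integral (Corollary~\ref{symint}), one checks that $\tilde\C$ is integral iff $\D$ is integral, and in general the maximal integral subcategory $\widetilde\C^{\,int}$ corresponds under the lattice isomorphism to $\D^{int}$, the maximal integral subcategory of $\D = \E'$ — here I would need the compatibility of the FP-grading $\deg : \O(\C) \to \q$ with de-equivariantization, which should follow from Proposition~\ref{FPint1}(iii) together with Proposition~\ref{equidimprop}. Step two: since $\C^{int} \supset \C_{pt} \supset \A = (\C^{int})'$ and similarly downstairs, and since $\D^{int} = \C^{int} \cap \D = \C^{int} \cap \E'$ (as $\E \subset \C^{int}$), compute $\A_{\tilde\C} := (\widetilde\C^{\,int})'$ inside $\tilde\C$. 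By Proposition~\ref{lattice isomorphism}(iii) the centralizer in $\tilde\C$ of $\widetilde\C^{\,int} = (\D^{int})_G$ is $(\D^{int \,\prime_\D})_G$ where $\prime_\D$ is the centralizer taken in $\D = \E'$; and $\D^{int \,\prime_\D} = (\C^{int} \cap \E')' \cap \E' = \ldots$, which should work out to $(\A \vee \E) \cap \E' = \A \vee \E$ using Theorem~\ref{Muger2} and the fact that $\A \subset \C_{pt} \subset \E'$. So $\A_{\tilde\C} = (\A \vee \E)_G = (\A \vee \E) \boxtimes_\E \Vec$, and by Proposition~\ref{diamond2isomorphism}(ii) this is braided equivalent to $\A \boxtimes_{\A \cap \E} \Vec$.

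\textbf{Comparing the characters.} The remaining and most delicate step is to compare the defining character $\chi_\C : \O(\A) \to \{\pm 1\}$, $a \mapsto c_{a,a}$, with $\chi_{\tilde\C} : \O(\A_{\tilde\C}) \to \{\pm 1\}$, and to see that they define the same element of $\q$ via the respective pairings $\langle\ ,\ \rangle$ of \eqref{thepairing}. Under the braided functor $\A \to \A_{\tilde\C} = \A \boxtimes_{\A\cap\E}\Vec$ (the free-module functor, which is braided), the self-braiding $c_{a,a}$ of a simple object is preserved — so $\chi_{\tilde\C} \circ (\text{free}) = \chi_\C$ on the image. The subtlety is that the pairing $\langle\ ,\ \rangle$ between $\O(\A)$ and $\Gamma \subset \q$ is defined via the universal grading group $U_\C$ and the homomorphism $\alpha$ of \S\ref{useful pairing}, and one must check these pairings match up under de-equivariantization. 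I expect this to be the main obstacle: it requires relating $U_\C$, $U_{\tilde\C}$ and the grading groups, using that $\Gamma$ (the image of $\deg$) is the same for $\C$ and $\tilde\C$ (both sit in $\q$, and de-equivariantization only divides out the trivial, integral part $\E$ — cf.\ Proposition~\ref{equidimprop} and that $\E$ is integral, so $\E \subset \C^{int}$ contributes nothing to $\Gamma$). Concretely I would argue: $\Gamma_{\tilde\C} = \Gamma_\C$ as subgroups of $\q$ because the FP-dimensions of simples of $\tilde\C$ differ from those of the corresponding simples of $\D \subset \C$ by integer factors $(G:H)\dim(V)$ (formula~\eqref{proj}), which are squares times squarefree integers already accounted for, and $\E$'s simples all have $\deg = 1$; and the pairing $\langle a, \gamma\rangle$ for $a \in \O(\A)$, $\gamma = \deg(X)$ with $X \in \O(\C)$ equals $c_{X,a}c_{a,X}$, a quantity manifestly preserved by the braided free-module functor. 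Assembling: $x_{\tilde\C}$ is the element of $\Gamma_{\tilde\C} = \Gamma_\C = \Gamma$ pairing with $a \in \O(\A_{\tilde\C})$ to $\chi_{\tilde\C}(a)$; pulling back along the braided functor $\A \to \A_{\tilde\C}$ (which is essentially surjective onto simples up to the $\A \cap \E$ part, on which $\chi$ is trivial since $\E$ is Tannakian) and using the compatibility of the pairings, this is exactly the element pairing with $a \in \O(\A)$ to $\chi_\C(a)$, i.e.\ $x_\C$. Hence $x_\C = x_{\tilde\C}$. The one genuinely technical lemma I would isolate and prove carefully is the compatibility of the pairing~\eqref{thepairing} with the free-module functor $\C_1 \to \C_1 \boxtimes_{\C_1 \cap \E} \Vec$ for $\C_1 = \A$, together with $\Gamma_{\tilde\C} = \Gamma_\C$; everything else is formal manipulation with the results already established in \S\ref{Sect3} and \S\ref{isotsub}.
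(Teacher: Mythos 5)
Your core mechanism is the same as the paper's, just wrapped in extra machinery. The paper's proof is short: since $x_{\tilde\C}\in\Gamma_{\tilde\C}$, pick a simple $\tilde X\in\O(\tilde\C)$ with $\deg\tilde X=x_{\tilde\C}$, lift it to a simple $X\in\O(\E')$ with $\tilde X$ a summand of $F(X)$ (where $F:\E'\to\tilde\C$ is the free-module functor), and observe that $X$ witnesses the defining property of $x_\C$: the image of $\FPdim(X)^2$ in $\q$ is $x_{\tilde\C}$ by the integrality of the ratio of Frobenius--Perron dimensions (formula \eqref{proj}), and $c_{X,a}c_{a,X}=c_{\tilde X,F(a)}c_{F(a),\tilde X}=\tilde\chi(F(a))=\chi(a)$ for $a\in\O(\A)$ because $F$ is braided, $F(\A)\subseteq\tilde\A$, and $\chi=\tilde\chi\circ F$. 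This is exactly the ``technical lemma'' you isolate at the end (pairing $=$ double braiding, preserved by $F$, plus degree preservation). Note the paper only needs the inclusions $\A\subseteq\E'$ and $F(\A)\subseteq\tilde\A$; your computation of $\A_{\tilde\C}=(\tilde\C^{int})'$ as $(\A\vee\E)\boxtimes_\E\Vec\simeq\A\boxtimes_{\A\cap\E}\Vec$ and the surjectivity of $\O(\A)\to\O(\A_{\tilde\C})$ are correct (using non-degeneracy of $\C$, which is part of the standing hypotheses of \S\ref{elementxc}), but they are not needed: one never has to identify $\tilde\A$, only to pair $x_{\tilde\C}$ against objects coming from $\O(\A)$.

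Two of your auxiliary assertions are false as stated, though both are inessential once removed. First, $\C_{pt}\subseteq\E'$ fails in general: it is equivalent to $\E\subseteq\C_{ad}$ (Corollary \ref{adpt}), and fails e.g. for $\C=\Z(\Vec_{\BZ/2\BZ})$ with $\E$ its canonical Lagrangian subcategory, where $\E'=\E\ne\C=\C_{pt}$. The inclusion you actually need is $\A\subseteq\E'$, which holds for a different reason: $\E$ is integral (Corollary \ref{symint}), so $\E\subseteq\C^{int}$ and hence $\A=(\C^{int})'\subseteq\E'$. Second, $\Gamma_{\tilde\C}=\Gamma_\C$ is false in general: your argument via \eqref{proj} only shows $\Gamma_{\tilde\C}=\deg(\O(\E'))\subseteq\Gamma_\C$, and the inclusion can be strict --- take $\C=\mI_\zeta\boxtimes\mI_{\zeta'}$ and $\E$ generated by $\delta\boxtimes\delta$, where $\E'$ is the maximal integral subcategory and the core is pointed, so $\Gamma_{\tilde\C}$ is trivial while $\Gamma_\C$ has order $2$. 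Fortunately only the inclusion is used: it guarantees $x_{\tilde\C}\in\Gamma_\C$, and then the perfect pairing $\O(\A)\times\Gamma_\C\to k^\times$ together with $\langle a,x_{\tilde\C}\rangle=\chi(a)$ forces $x_{\tilde\C}=x_\C$. With these two corrections your argument closes and is, at its heart, the paper's proof.
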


\begin{proof} 
Let $\A:= (\C^{int})'\subset\C$. Let $\chi : \O (\A)\to k^\times$ have the same meaning as in 
\eqref{thecharacter}. Similarly, one has $\tilde \A:=(\tilde \C^{int})'$ and  
$\tilde\chi : \O (\tilde \A)\to k^\times$.
Let $F: \E'\to \tilde \C$ be the canonical braided functor. 
By Corollary \ref{symint}, $\E \subset \C^{int}$, hence $\A \subset \E'$. 
Since $F( \C^{int})=\tilde \C^{int}$ one has $F(\A)\subset \tilde \A$. Clearly 
$\chi (a)=\tilde \chi (F(a))$. 

By the definition of $x_{\tilde \C}$, there
exists an object $\tilde X\in \O(\tilde \C)$ such that the image of $\FPdim(\tilde X)^2$ in $\q$ equals
$x_{\tilde \C}$ and the square of the braiding $c_{\tilde X,a}c_{a,\tilde X}$ equals $\tilde \chi (a)$ for
any $a\in \O(\tilde \A)$. Let $X\in \O(\C)$ be a simple object such that $F(X)$ contains $\tilde X$
as a direct summand. Then the image of $\FPdim(X)^2$ in $\q$ equals $x_{\tilde \C}$ and the square of the braiding $c_{X,a}c_{a,X}$ equals $\tilde \chi (F(a))=\chi(a)$ for any $a\in \O(\A)$. By definition,
this means that $x_{\tilde \C}=x_\C$.
\end{proof}

\subsubsection{A formula for the Gauss sum of $\C$}   \label{Gaussformula}
Choose a spherical structure $s$ on $\C$ (it exists by Corollary \ref{FPint3}).
Then $(\C ,s)$ is a modular category, so one can consider the Gauss sums 
\[
\tau^{\pm}(\C ,s):= \sum_{X\in \O(\C)}\, \theta_{X,s}^{\pm 1}  d_s(X)^2,
\]
where $\theta_{X,s}$ is the twist corresponding to $s$ and $d_s(X)$ is the quantum dimension.

For $\x\in\q$ define $\O_{\x}(\C)\subset\O (\C )$ to be the preimage of $\x$ 
with respect to $\deg :\O (\C )\to\q$.

\begin{proposition}  \label{onlyone}
If $s$ is chosen so that $d_s(X)=1$ for all invertible $X\in\C$ then
$\tau^{\pm}(\C ,s)= \sum\limits_{X\in \O_{\x_{\C}}(\C)}\, \theta_{X,s}^{\pm 1}  d_s(X)^2$.
\end{proposition}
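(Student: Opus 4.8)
The plan is to show that the contributions to $\tau^\pm(\C,s)$ coming from graded components $\O_{\x}(\C)$ with $\x\neq\x_\C$ all vanish. The natural tool is the non-spherical/spherical Verlinde-type identity. More precisely, first I would fix an invertible object $a$ in the symmetric subcategory $\A:=(\C^{int})'$, and observe that since $\C$ is modular, multiplication by $a$ permutes $\O(\C)$ and acts on each graded piece $\O_\x(\C)$; I would track how the twist and dimension transform. Using the pairing \eqref{thepairing} and the definition \eqref{defequality}, for a simple $X\in\O_\x(\C)$ the monodromy $c_{a,X}c_{X,a}$ equals $\langle a,\x\rangle\cdot\id_{a\ot X}$, and since $a\in\A$ is in the symmetric subcategory $\chi(a)=\langle a,\x_\C\rangle=\pm1$ while $\langle a,\x\rangle$ is a root of unity. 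The object $a\ot X$ again lies in $\O_\x(\C)$ (the degree is multiplicative and $a$ has trivial degree because $a$ is invertible hence in $\C^{int}$, so $\deg a = 1$), so $a$ acts on the set $\O_\x(\C)$.

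Next I would compute how $\theta_{X,s}$ and $d_s(X)$ change under $X\mapsto a\ot X$. From the balancing axiom \eqref{balancing ax}, $\theta_{a\ot X}=(\theta_a\ot\theta_X)c_{X,a}c_{a,X}=\theta_a\theta_X\langle a,\x\rangle$ for $X\in\O_\x(\C)$, where I use $d_s(a)=1$ and Proposition~\ref{traceofbraiding}-type reasoning to identify $\theta_a$; in fact $\theta_a=\chi(a)$ since $c_{a,a}$ acts by $\chi(a)$ and $d_s(a)=1$ (this is formula \eqref{tw} applied to the pointed subcategory generated by $a$, using that $s$ was chosen with $d_s=1$ on invertibles). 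Also $d_s(a\ot X)=d_s(a)d_s(X)=d_s(X)$ since $d_s$ is a ring homomorphism. Therefore
\[
\theta_{a\ot X}^{\pm1}d_s(a\ot X)^2=\chi(a)^{\pm1}\langle a,\x\rangle^{\pm1}\theta_X^{\pm1}d_s(X)^2.
\]
Summing over $X\in\O_\x(\C)$ and noting the left side is a reindexing of the same sum (since $X\mapsto a\ot X$ is a bijection of $\O_\x(\C)$), I get
\[
\Bigl(\sum_{X\in\O_\x(\C)}\theta_X^{\pm1}d_s(X)^2\Bigr)\bigl(1-\chi(a)^{\pm1}\langle a,\x\rangle^{\pm1}\bigr)=0.
\]
So if there exists $a\in\O(\A)$ with $\chi(a)\neq\langle a,\x\rangle$, the partial Gauss sum over $\O_\x(\C)$ vanishes.

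Finally I would argue that for $\x\neq\x_\C$ such an $a$ exists. By the defining property \eqref{defequality} of $\x_\C$, the character $a\mapsto\chi(a)$ on $\O(\A)$ equals the character $a\mapsto\langle a,\x_\C\rangle$. The pairing $\langle\,,\rangle:\O(\C_{pt})\times U_\C\to k^\times$ is perfect by Lemma~\ref{alpha}(iii) (since $\C$ is non-degenerate), and restricting, $\O(\A)$ pairs perfectly with $\Gamma\subset\q$ (as noted in the construction of $\x_\C$: $\O(\A)$ is exactly the preimage of $\Hom(\Gamma,k^\times)$). Hence the map $\Gamma\to\Hom(\O(\A),k^\times)$, $\x\mapsto(a\mapsto\langle a,\x\rangle)$, is injective, so $\x\neq\x_\C$ forces $\langle\,-,\x\rangle\neq\langle\,-,\x_\C\rangle=\chi$ as characters of $\O(\A)$, i.e., some $a\in\O(\A)$ has $\langle a,\x\rangle\neq\chi(a)$. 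Then the partial sum over $\O_\x(\C)$ is zero, and summing over all $\x\in\Gamma$ gives $\tau^\pm(\C,s)=\sum_{X\in\O_{\x_\C}(\C)}\theta_{X,s}^{\pm1}d_s(X)^2$, as claimed. The main obstacle I anticipate is bookkeeping: making sure that $\deg a=1$ for $a$ invertible (so that $a$ genuinely preserves each $\O_\x(\C)$), that $\theta_a=\chi(a)$ with the chosen $s$, and that the identification of $\O(\A)$ with the annihilator-type subgroup of $\O(\C_{pt})$ is exactly as used in the definition of $\x_\C$ — all of these are implicit in the setup of \S\ref{elementxc} but must be invoked carefully.
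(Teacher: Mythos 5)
Your proof is correct and follows essentially the same route as the paper: tensoring with invertible objects $a\in\O(\A)$, using the balancing axiom and the pairing \eqref{thepairing} to see that the partial sum over $\O_{\x}(\C)$ picks up the factor $\chi(a)^{\pm1}\langle a,\x\rangle^{\pm1}$, and invoking non-degeneracy of the pairing between $\O(\A)$ and $\Gamma$ to find $a$ making this factor nontrivial when $\x\ne\x_\C$. In fact you spell out the reindexing and the existence of such an $a$ more explicitly than the paper's own (terser) argument, which is fine.
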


\begin{proof}
It suffices to show that
\begin{equation}   \label{vanishing}
\sum\limits_{X\in \O_{\x}(\C)}\, \theta_{X,s}^{\pm 1}  d_s(X)^2=0 \;\mbox{ for }\; \x\ne\x_{\C}.
\end{equation}
By \eqref{balancing ax}, for each simple $a\in\A$ and $X\in\C$ one has 
$\theta_{a\ot X}/(\theta_a\theta_X)=c_{a,X}c_{X,a}$, where $c$ is the braiding.
By the definition of the pairing \eqref{thepairing}, for each simple $a\in\A$ one has 
$c_{a,X}c_{X,a}=\langle a, \deg X \rangle$, so 
$\theta_{a\ot X}=\theta_a\cdot\theta_X\cdot\langle a, \deg X \rangle$.
Since $\A$ is a pointed pre-modular category in which the quantum dimensions
of all simple objects equal $1$ the twists $\theta_a$ are given by formula \eqref{tw}.
In other words, $\theta_a=\chi (a)=\chi (a)^{-1}$, where $\chi$ is the character \eqref{thecharacter}.
By \eqref{defequality}, we get $\theta_a=\langle a,\x_{\C}^{-1} \rangle$ and therefore
$\theta_{a\ot X}=\theta_X\cdot\langle a,\x_{\C}^{-1} \cdot\deg X \rangle$. Combining
this with the equality  $d_s(a\ot X)^2=d_s(X)^2$ we get~\eqref{vanishing}.
\end{proof}

\subsubsection{The Gauss-Frobenius-Perron sums $\T^{\pm}(\C )$}
We keep the notation of \S\ref{Gaussformula}.
Represent $\x_{\C}\in\q$ as the class of a square-free positive number $n_{\C}\in\BZ$. 
Let $n_{\C}^{-1/2}\in\mathbb R$ denote the positive square root of $n_{\C}^{-1}$.
By definition, $\O_{\x_{\C}}(\C)$ consists of those $X\in \O (\C )$ for which the number
$n_{\C}^{-1/2}\FPdim (X)\in\mathbb R$ is an in\-te\-ger. Consider these integers as elements
of $k$ and define the {\em Gauss-Frobenius-Perron sums $\T^{\pm}(\C )$\,} by
\begin{equation}  \label{GFP}
\T^{\pm}(\C )= \sum\limits_{X\in \O_{\x_{\C}}(\C)}\, 
(n_{\C}^{-1/2}\FPdim (X))\cdot\theta_{X,s}^{\pm 1}  d_s(X),
\end{equation}  
where $s$ is any spherical structure on $\C$. 

\begin{rems}  
\begin{enumerate}
\item[(i)] It is easy to see that the product $\theta_{X,s}^{\pm 1}  d_s(X)$ does not depend
on $s$, so $\T^{\pm}(\C )$ is well-defined. In fact, $\theta_{X,s}^{-1}  d_s(X)=\Tr(u_X)$ and 
$\theta_{X,s}  d_s(X)=\Tr((u_X^*)^{-1})$ where $u_X$ is defined by \eqref{Drinfeld iso}.
\item[(ii)] Choose an embedding $\varphi :\kRcyc\to\mathbb{R}$. By
Corollary \ref{FPint3}, $\C$ has a unique $\varphi$-positive spherical structure $s_{\varphi}$.
One has $d_{s_{\varphi}}(X)=\varphi^{-1}(\FPdim (X))$. Combining this with
Proposition \ref{onlyone} we see that 
\begin{equation}  \label{GGFP}
\T^{\pm}(\C )=\varphi^{-1}(n_{\C}^{-1/2})\cdot\tau^{\pm}(\C ,s_{\varphi}).
\end{equation}
In particular, if $\C$ is integral then $\T^{\pm}(\C )$ equals the Gauss sum $\tau^{\pm}$ 
of $\C$ equipped with the positive spherical structure.
\item[(iii)] Since $\tau^{\pm}(\C ,s_{\varphi})\in k_{\cyc}^{\times}$ and $n_{\C}^{1/2}$ is cyclotomic
formula \eqref{GGFP} implies that $\T^{\pm}(\C )\in k_{\cyc}^{\times}$. It also implies that
$\T^-(\C )=\overline{\T^+(\C )}$. Moreover, 
$\T^{\pm}(\C )$ is an algebraic integer; this follows from \eqref{GFP} because the
numbers  $n_{\C}^{-1/2}\FPdim (X)$, $d_s(X)$, and $\theta_{X,s}^{\pm 1}$ are algebraic integers.
\end{enumerate}
\end{rems}  

\begin{proposition}  \label{propGFG}
Let $\C$ be a weakly integral non-degenerate braided category.
\begin{enumerate}
\item[(i)] $\T^-(\C)=\overline{\T^+(\C)}$.
\item[(ii)] If $\C=\C_1\bt\C_2$ then $\T^{\pm}(\C )=a\T^{\pm}(\C _1)\cdot\T^{\pm}(\C _2)$ for some positive $a\in\BQ$.
\item[(iii)] If $\C$ is the center of a fusion category  then $\T^{\pm}(\C )$ is a positive integer.
\item[(iv)] if $\E\subset\C$ is a Tannakian subcategory then
$\T^{\pm}(\C )=\T^{\pm}(\E'\bt_{\E}\Vec )\cdot\dim(\E)$.
\end{enumerate}
\end{proposition}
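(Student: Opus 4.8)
\textit{Proof plan.} The plan is to reduce all four assertions to formula \eqref{GGFP}, which says that, after fixing an embedding $\varphi:\kRcyc\to\mathbb R$, one has $\T^{\pm}(\C)=\varphi^{-1}(n_{\C}^{-1/2})\cdot\tau^{\pm}(\C,s_{\varphi})$, where $s_{\varphi}$ is the $\varphi$-positive spherical structure on $\C$ (it exists by Corollary~\ref{FPint3}, and $(\C,s_{\varphi})$ is modular since $\C$ is non-degenerate). Part (i) is then immediate and is in fact the computation already indicated in the remark preceding the proposition: $\sqrt{n_{\C}}$ is a totally real cyclotomic number, so $\varphi^{-1}(n_{\C}^{-1/2})$ lies in $\kRcyc$ and is fixed by complex conjugation, while $\overline{\tau^{+}(\C,s_{\varphi})}=\tau^{-}(\C,s_{\varphi})$ by Proposition~\ref{tauprop}(iii); hence $\overline{\T^{+}(\C)}=\T^{-}(\C)$.

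For part (ii) (where $\C_{1},\C_{2}$ are automatically weakly integral and non-degenerate, being a tensor factor and centralizer of a tensor factor of $\C$), I would use that the external tensor product of $s_{\varphi}$ on $\C_{1}$ and on $\C_{2}$ is $\varphi$-positive on $\C_{1}\bt\C_{2}$, hence equals $s_{\varphi}$ there by the uniqueness in Corollary~\ref{FPint3}; then $\tau^{\pm}(\C,s_{\varphi})=\tau^{\pm}(\C_{1},s_{\varphi})\tau^{\pm}(\C_{2},s_{\varphi})$ by Proposition~\ref{tauprop}(i). Combining with \eqref{GGFP} for $\C$, $\C_{1}$, $\C_{2}$ gives $\T^{\pm}(\C)=a\,\T^{\pm}(\C_{1})\T^{\pm}(\C_{2})$ with $a=\varphi^{-1}(n_{\C}^{-1/2}n_{\C_{1}}^{1/2}n_{\C_{2}}^{1/2})$; since $\x_{\C}=\x_{\C_{1}}\x_{\C_{2}}$ the ratio $n_{\C_{1}}n_{\C_{2}}/n_{\C}$ is the square of a positive rational, so $a$ is a positive rational. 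For part (iii), the key input is Proposition~\ref{xcenter}: if $\C=\Z(\M)$ then $\x_{\C}=1$, so $n_{\C}=1$ and \eqref{GGFP} reduces to $\T^{\pm}(\C)=\tau^{\pm}(\C,s_{\varphi})$. By Proposition~\ref{centerweaklyintegral} the category $\M$ is weakly integral, so by Corollary~\ref{FPint3} it has a $\varphi$-positive spherical structure with $d=\FPdim$, giving $\dim(\M)=\FPdim(\M)\in\BZ_{>0}$; by Remark~\ref{dimFX}(i) the spherical structure this induces on $\Z(\M)$ is again $\varphi$-positive, hence equals $s_{\varphi}$. Then Example~\ref{charge1} yields $\tau^{\pm}(\Z(\M),s_{\varphi})=\dim(\M)$, so $\T^{\pm}(\C)=\FPdim(\M)$, a positive integer.

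Part (iv) is the most delicate step and, I expect, the main obstacle, since it requires carefully matching up spherical structures on $\C$ and on $\tilde\C:=\E'\bt_{\E}\Vec$. The plan is as follows. Since $\C$ is non-degenerate, $\C'=\Vec\subset\E$, so $\E''=\E\vee\C'=\E$ by Corollary~\ref{Muger-2'}, whence $\tilde\C$ is non-degenerate by Corollary~\ref{newc}(i); it is weakly integral (as noted before Proposition~\ref{xtildec}), so $\T^{\pm}(\tilde\C)$ is defined and satisfies \eqref{GGFP}. Next, $s_{\varphi}$ restricts on $\E$ to the $\varphi$-positive spherical structure of the Tannakian category $\E$, which is the canonical one, with trivial twist (Definition~\ref{tannaka}(i)); hence $\E$ is isotropic in $(\C,s_{\varphi})$ and Theorem~\ref{tau after mod} applies, giving $\tau^{\pm}(\C,s_{\varphi})=\tau^{\pm}(\tilde\C,\bar s_{\varphi})\cdot\dim(\E)$, where $\bar s_{\varphi}$ is the spherical structure on $\tilde\C$ making the free-module functor $\E'\to\tilde\C$ spherical. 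I would then check $\bar s_{\varphi}=s_{\varphi,\tilde\C}$: by \eqref{dims}, $\bar d_{\bar s_{\varphi}}(Y)=d_{s_{\varphi}}(Y)/d_{s_{\varphi}}(A)$ (computed in $\E'$), and applying $\varphi$ — using $d_{s_{\varphi}}=\FPdim$ on $\E'$, $\varphi(d_{s_{\varphi}}(A))=\dim(\E)=\FPdim(\E)$, and the fact that the forgetful functor on $A$-modules scales $\FPdim$ by $\FPdim(A)$ — gives $\varphi(\bar d_{\bar s_{\varphi}}(Y))=\FPdim_{\tilde\C}(Y)>0$, so $\bar s_{\varphi}$ is $\varphi$-positive and hence equals $s_{\varphi,\tilde\C}$ by uniqueness. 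Finally, Proposition~\ref{xtildec} gives $\x_{\tilde\C}=\x_{\C}$, so $n_{\tilde\C}=n_{\C}$, and feeding everything into \eqref{GGFP} for $\C$ and for $\tilde\C$ yields $\T^{\pm}(\C)=\varphi^{-1}(n_{\C}^{-1/2})\tau^{\pm}(\tilde\C,s_{\varphi,\tilde\C})\dim(\E)=\T^{\pm}(\tilde\C)\cdot\dim(\E)$. The only routine verifications left to fill in are the behavior of $\FPdim$ under the forgetful functor for $A$-modules and the identification of $s_{\varphi}|_{\E}$ with the canonical spherical structure of the Tannakian category $\E$.
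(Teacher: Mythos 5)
Your proof is correct and follows essentially the same route as the paper, which deduces (i)--(ii) from \eqref{GGFP} together with Proposition~\ref{tauprop}, proves (iii) exactly as you do via Propositions~\ref{xcenter}, \ref{centerweaklyintegral} and Example~\ref{charge1}, and proves (iv) from \eqref{GGFP}, Theorem~\ref{tau after mod} and Proposition~\ref{xtildec}. Your extra verification in (iv) that the spherical structure induced on $\E'\bt_\E\Vec$ is the $\varphi$-positive one is precisely the detail the paper leaves implicit, and you carry it out correctly.
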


\begin{proof}
Statements (i) and (ii)  follow from \eqref{GGFP}, \eqref{mod tau =dimC},  and 
\eqref{multiplicativity of tau}. Let us prove (iii). If $\C$ is the center of a fusion category $\M$ then
$\M$ is weakly integral, see Proposition \ref{centerweaklyintegral}. 
Choose an embedding $\varphi :\kRcyc\to\mathbb{R}$ and equip $\M$ with the
$\varphi$-positive spherical structure. The corresponding spherical structure on $\C$
is the $\varphi$-positive spherical structure $s_{\varphi}$, so $\tau^{\pm}(\C ,s_{\varphi})=\dim\M$
(see Example \ref{charge1}). Since $\M$ is weakly integral $\dim\M$ is a positive integer.
Now use \eqref{GGFP} and note that $n_{\C}=1$ by Proposition \ref{xcenter}.
To prove (iv), use \eqref{GGFP}, Theorem \ref{tau after mod}, and 
Proposition \ref{xtildec}.
\end{proof}

\begin{example}  \label{TIs}
By \S\ref{braidIs}, Ising braided categories are labeled by elements $\zeta\in k$ satisfying
$\zeta^8=-1$; we denote them by $\mI_{\zeta}$. We will show that
\begin{equation}  \label{T_of_Is}
\T^{\pm}(\mI_{\zeta} )=\zeta^{\mp 1}(\zeta^2+\zeta^{-2}). 
\end{equation}  
Let us deduce \eqref{T_of_Is} from \eqref{GFP}. By Example \ref{xIs}, if
$\C =\mI_{\zeta}$ then $n_{\C}=2$. By Proposition \ref{B3},
the set $\O_{\x_{\C}}(\C):=\{Y\in\O (\C )|\FPdim (Y)/\sqrt 2\in\BZ\}$ has a single element
$X$, and $\FPdim (X)=\sqrt 2$. Therefore $\T^{\pm}(\C) =\theta_{X,s}^{\pm 1}  d_s(X)$,
where $s$ is any spherical structure on $\C$. By \S\ref{sphIs}, these structures are labeled by
$\epsilon\in\{ 1,-1\}$. If $s$ corresponds to $\epsilon$ then $\theta_{X,s}=\epsilon\zeta^{-1}$
(see Proposition \ref{B10})
and $d_s(X)=\epsilon (\zeta^2+\zeta^{-2})$
(see \S\ref{sphIs} and formula \eqref{16eq}), so we get \eqref{T_of_Is}.
\end{example}

\appendix
\section{Pre-metric groups and their Gauss sums}
\label{app on metric groups}

In this appendix we recall basic results on quadratic forms on finite
abelian groups. With a possible exception of Theorem~\ref{wap-th},
all of them are well known (see, e.g., \cite{Wa,We,MH,BLLV,L}), but we are unable to
attribute them to concrete authors.
However, we would like to mention that the description
of anisotropic metric groups given below is a particular case of C.T.C.Wall's classification
of {\it all\,} metric groups in \cite[\S6]{Wa}.

\medskip

We will use the definitions and notation from \S\S \ref{Quadra}-\ref{metra} and \S \ref{gauss2}
with the following exception: the operation in pre-metric groups will be denoted by $+$. So a
function $q: G\to k^\times$ is a quadratic form if $q(-g)=q(g)$ and the function
$b(g_1,g_2)=\frac{q(g_1+g_2)}{q(g_1)q(g_2)}$ is a bicharacter. These properties imply that
$q(ng)=q(g)^{n^2},\; n\in \BZ$. The following fact is standard.

\begin{lemma}\label{bili}
 If $|G|$ is odd then any quadratic form $q:G\to k^\times$ can be written as
$q(g)=\beta(g,g)$, where $\beta: G\times G\to k^\times$ is a bicharacter. $\square$
\end{lemma}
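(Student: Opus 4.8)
\textbf{Proof plan for Lemma~\ref{bili}.}
The plan is to reduce to the case of a cyclic group and then solve the problem explicitly there. Since $|G|$ is odd, the integer $2$ is invertible modulo the exponent of $G$; write $G$ as a finite direct sum of cyclic groups $G=\bigoplus_i \BZ/n_i\BZ$ with each $n_i$ odd, and for each $i$ choose an integer $m_i$ with $2m_i\equiv 1 \pmod{n_i}$. Recall that $b(g_1,g_2)=\frac{q(g_1+g_2)}{q(g_1)q(g_2)}$ is by hypothesis a bicharacter, and that $q(ng)=q(g)^{n^2}$ for all $n\in\BZ$; in particular $q$ is determined by $b$ together with its values on generators, since $q(g_1+g_2)=q(g_1)q(g_2)b(g_1,g_2)$.

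First I would treat $G=\BZ/n\BZ$ with $n$ odd. Let $x$ be a generator and set $\zeta:=q(x)$. The identity $q(kx)=\zeta^{k^2}$ shows $q$ is the ``diagonal restriction'' of the function $(k,l)\mapsto \zeta^{kl}$ provided the latter is well defined, i.e.\ provided $\zeta^{n}=1$; but $\zeta^{n^2}=q(nx)=q(0)=1$ and $\gcd(n,n^2)$ issues aside, one gets $\zeta^{n}=1$ because $q(nx)=q(x)^{n^2}=1$ forces $\zeta$ to be an $n^2$-th root of unity, and a cleaner route is: choose $m$ with $2m\equiv 1\pmod n$, put $\eta:=\zeta^{m}$, and define $\beta(kx,lx):=\eta^{\,2kl}=\zeta^{2mkl}=\zeta^{kl}$, which is well defined since $\zeta^{kl}$ depends only on $k,l$ mod $n$ once we know $\zeta^{n}=1$. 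To see $\zeta^n=1$: $b(x,x)=q(2x)/q(x)^2=\zeta^{4}/\zeta^2=\zeta^{2}$ is a character of $\BZ/n\BZ$ in each variable, so $b(nx,x)=\id$, i.e.\ $\zeta^{2n}=1$, and since $n$ is odd $\zeta^{n}=1$. Then $\beta$ is visibly bimultiplicative and $\beta(g,g)=\zeta^{k^2}=q(g)$ for $g=kx$.

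For the general case I would combine the cyclic construction across the direct summands using the bicharacter $b$ to glue: define $\beta$ on $G\times G$ by $\beta(g,h):=\Big(\prod_i \beta_i(g_i,h_i)\Big)\cdot\prod_{i<j} b(g_i,h_j)$, where $g=\sum_i g_i$, $h=\sum_i h_i$ are the decompositions into components, $\beta_i$ is the bicharacter built above on the $i$-th cyclic summand, and the cross terms $b(g_i,h_j)$ repair the off-diagonal discrepancy. Then $\beta$ is bimultiplicative (each factor is), and $\beta(g,g)=\prod_i q(g_i)\cdot\prod_{i<j}b(g_i,g_j)=q(\sum_i g_i)=q(g)$ by the iterated application of $q(g_1+g_2)=q(g_1)q(g_2)b(g_1,g_2)$. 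The main obstacle is purely bookkeeping: making sure the well-definedness of $\beta_i$ (the step $\zeta^n=1$) and the compatibility of the cross-term correction with bimultiplicativity are stated cleanly; conceptually there is nothing deep, the only real input is that $2$ is invertible mod $|G|$. I would write this up in the two stages above, keeping the general case to a few lines once the cyclic case is in hand.
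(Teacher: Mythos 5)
Your argument is essentially correct, but note first that the paper offers no proof at all: Lemma~\ref{bili} is stated as a standard fact with a $\square$, so there is nothing to compare against except the folklore argument. Your reduction to cyclic summands and the gluing $\beta(g,h)=\prod_i\beta_i(g_i,h_i)\cdot\prod_{i<j}b(g_i,h_j)$ do work: each factor is bimultiplicative, and the diagonal value recovers $q$ by iterating $q(g_1+g_2)=q(g_1)q(g_2)b(g_1,g_2)$, exactly as you say.

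One step is mis-stated, though it is easily repaired. In the cyclic case you conclude $\zeta^n=1$ from ``$\zeta^{2n}=1$ and $n$ odd,'' which is not a valid implication on its own (take $\zeta=-1$, $n=3$). What saves you is the other fact you record in the same breath, $\zeta^{n^2}=q(nx)=1$: the order of $\zeta$ divides $\gcd(2n,n^2)=n$ since $n$ is odd, hence $\zeta^n=1$. Please state the conclusion this way; as written the ``cleaner route'' is broken and the first route is abandoned mid-sentence.

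Finally, your own observation that invertibility of $2$ is the only real input suggests the much shorter standard proof, which avoids the cyclic decomposition and the cross-term bookkeeping entirely: let $N$ be the exponent of $G$, choose $m$ with $2m\equiv 1 \pmod N$, and set $\beta(g,h):=b(g,h)^m$. Then $\beta$ is a bicharacter and $\beta(g,g)=b(g,g)^m=q(g)^{2m}=q(g)$, using the same small lemma that $q(g)^N=1$ (which follows as above from $b(g,g)^N=1$ and $q(g)^{N^2}=1$, since $\gcd(2N,N^2)=N$ for odd $N$). Your version proves the same thing with more structure theory than is needed; the one-line version is what ``standard'' refers to here.
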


\subsection{Anisotropic metric groups} \label{anisotropic-odd}
A quadratic form $q: G\to k^\times$ is said to be
{\em anisotropic} if $q(g)\ne 1$ for $g\in G\setminus \{ 0\}$. In this case we also say that the
pre-metric group $(G,q)$ is anisotropic. Remark~\ref{easyrems}(iii) implies that if $|G|$ is odd then 
an anisotropic quadratic form $q: G\to k^\times$ is non-degenerate. On the other hand, the quadratic form
$q: \BZ/2\BZ \to k^\times$ defined by $q(n)=(-1)^n$ is anisotropic but degenerate.

In this subsection and in \S\ref{2groups} we classify anisotropic {\em metric\,} groups.
Degenerate anisotropic pre-metric groups
will be classified in \S\ref{degensubsec}.

\begin{proposition}\label{serre}
 (i) The norm map $N_{\BF_{p^2}/\BF_p}: \BF_{p^2}\to \BF_p$ is an
anisotropic non-degenerate quadratic form.

(ii) Let $V$ be a vector space over $\BF_p$ and $q: V\to \BF_p$ an anisotropic quadratic form.
Then $\dim V\le 2$. If $\dim V=2$ then $(V,q)$ is isomorphic to $(\BF_{p^2}, N_{\BF_{p^2}/\BF_p})$.
\end{proposition}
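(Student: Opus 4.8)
\textbf{Proof proposal for Proposition~\ref{serre}.}

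\emph{Part (i).} The plan is to recall the standard facts about the norm map. The norm $N := N_{\BF_{p^2}/\BF_p}$ is given by $N(x) = x^{1+p} = x \cdot x^p$, and it is a surjective homomorphism of multiplicative groups $\BF_{p^2}^\times \to \BF_p^\times$ with kernel of order $p+1$. Regarding $\BF_{p^2}$ as a $2$-dimensional $\BF_p$-vector space $V$, I would first check that $N$ is a quadratic form on $V$ in the sense of \S\ref{Quadra}: it satisfies $N(-x) = N(x)$ since $(-1)^{1+p} = 1$ (as $p$ is odd, $1+p$ is even), and the associated function $b(x,y) = N(x+y) N(x)^{-1} N(y)^{-1}$ works out, after expanding $N(x+y) = (x+y)(x^p+y^p) = N(x) + N(y) + (xy^p + x^py)$, to $b(x,y) = $ the additive character applied to the trace-like expression $xy^p + x^py = \mathrm{Tr}_{\BF_{p^2}/\BF_p}(xy^p)$; here I should be careful that ``quadratic form with values in $k^\times$'' means I am implicitly composing the $\BF_p$-valued norm with a fixed injective character $\psi: \BF_p \hookrightarrow k^\times$ (this is exactly the passage used in Example before \S\ref{anisotropic-odd} and in Remark~\ref{warning}). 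Anisotropy is then immediate: $N(x) = 0$ only for $x = 0$. Non-degeneracy follows because the bicharacter corresponds to the pairing $(x,y) \mapsto \mathrm{Tr}_{\BF_{p^2}/\BF_p}(xy^p)$, which is non-degenerate since the trace form on a separable field extension is non-degenerate and $y \mapsto y^p$ is a bijection.

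\emph{Part (ii).} Here the plan is a dimension-counting / induction argument. Suppose $q: V \to \BF_p$ is anisotropic; again I read $q$ as $\BF_p$-valued, using Lemma~\ref{bili} to write $q(v) = \beta(v,v)$ for a symmetric bilinear form $\beta$ (valid since $|V| = p^{\dim V}$ is odd). Anisotropy of $q$ forces $\beta$ to be non-degenerate (if $v$ were in the radical, then $q(v) = \beta(v,v) = 0$). The first step is to show $\dim V \le 2$: if $\dim V \ge 3$, I invoke the classical fact that a non-degenerate quadratic form over a finite field in $\ge 3$ variables is isotropic (Chevalley--Warning, or the elementary counting argument that two quadratics in enough variables have a common zero). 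So $\dim V \le 2$, and since the $\dim V \le 1$ cases are trivial, assume $\dim V = 2$. The second step is to identify $(V,q)$ with $(\BF_{p^2}, N)$: pick any $v_0 \ne 0$; then $q(v_0) = c$ for some $c \in \BF_p^\times$. I would use the classification of non-degenerate binary quadratic forms over $\BF_p$ up to isomorphism — there are exactly two, distinguished by whether the discriminant is a square, equivalently whether the form is isotropic. Since our form is anisotropic it must be the unique anisotropic one, and by part (i) the norm form $N$ is also anisotropic and binary, hence $(V,q) \cong (\BF_{p^2}, N)$.

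\emph{Main obstacle.} The genuinely substantive input is the fact that a non-degenerate quadratic form in $\ge 3$ variables over a finite field is isotropic, together with the uniqueness of the anisotropic binary form; both are standard but I would want to either cite a reference (e.g., \cite{Se} or any standard text on quadratic forms over finite fields) or include the short counting proof for $n = 3$: the number of solutions of $\beta(v,v) = 0$ in $\BF_p^3$ is $\ge p^2 > 1$. A secondary bookkeeping point, which is more a matter of conventions than of real difficulty, is keeping straight the distinction between $\BF_p$-valued and $k^\times$-valued quadratic forms — the isomorphism in (ii) is an isomorphism of the underlying $\BF_p$-valued forms, which then automatically gives an isomorphism of pre-metric groups after composing with $\psi$. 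Everything else is routine verification.
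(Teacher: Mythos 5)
The paper does not actually prove Proposition~\ref{serre}: immediately after the statement it says only that this is well known, with a pointer to \cite[Ch.~IV, \S 1.7]{Se}. So there is no internal argument to compare with; your sketch is essentially the standard proof that the cited reference gives, and for odd $p$ it is correct: anisotropy of the norm form is immediate since $\BF_{p^2}$ is a field, non-degeneracy follows because the associated bilinear form is $(x,y)\mapsto xy^p+x^py=\mathrm{Tr}_{\BF_{p^2}/\BF_p}(xy^p)$ and the trace pairing of a separable extension is non-degenerate, Chevalley--Warning gives $\dim V\le 2$, and the two-class classification of non-degenerate binary forms (together with the observation that the hyperbolic plane and the norm form lie in different classes) identifies any anisotropic binary form with the norm form.

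The genuine gap is $p=2$. The proposition is stated for an arbitrary prime, and the paper really uses it at $p=2$: Proposition~\ref{dimtwo} is proved for every prime by invoking Proposition~\ref{serre}, and its $p=2$ case is what produces the metric group of Examples~\ref{anis4}(iii). Your part (ii) breaks down there at two points: Lemma~\ref{bili} requires $|V|$ odd (over $\BF_2$ the anisotropic form $x^2+xy+y^2$ is not of the shape $\beta(v,v)$ for any symmetric bilinear $\beta$, since $\beta(v,v)$ is additive in characteristic $2$), and the classification of non-degenerate binary forms by the discriminant is a characteristic $\ne 2$ statement. The repair is cheap and worth making explicit. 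For the bound $\dim V\le 2$, apply Chevalley--Warning directly to $q$: in any characteristic $q$ is given by a polynomial of degree $\le 2$, because $q\bigl(\sum_i x_ie_i\bigr)=\sum_i x_i^2\,q(e_i)+\sum_{i<j}x_ix_j\,b(e_i,e_j)$, so no $\beta$ and no non-degeneracy hypothesis are needed; if $\dim V\ge 3$ the zero set of $q$ has size divisible by $p$ and contains $0$, hence contains a nonzero vector. For $\dim V=2$ and $p=2$, anisotropy forces $q(e_1)=q(e_2)=q(e_1+e_2)=1$, i.e.\ $q(x,y)=x^2+xy+y^2=N_{\BF_4/\BF_2}$, a three-element check. (To be fair, the paper's citation shares this blind spot, since Serre's Chapter IV works in characteristic $\ne 2$.) Finally, two cosmetic points: the parenthetical ``as $p$ is odd'' in part (i) is unnecessary, since $N(-x)=N(x)$ for every $p$; and ``discriminant a square $\Leftrightarrow$ isotropic'' is convention-dependent (it is correct for the discriminant $b^2-4ac$, while for the Gram determinant the criterion is $d\equiv -1$ modulo squares), but your actual argument does not depend on which convention is used.
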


This is well known (e.g., see \cite[Ch IV, \S 1.7]{Se}).

\begin{proposition} \label{dimtwo}
 (i) Let $p$ be a prime and $(G,q)$ an anisotropic pre-metric group such that
$q(g)^p=1$ for all $g\in G$. Then $G$ is a vector space over $\BF_p$ of dimension at most 2.

(ii) For each prime $p$ there exists a unique up to isomorphism pair $(G,q)$, where $G$ is
a 2-dimensional vector space over $\BF_p$ and $q$ is an anisotropic quadratic form on $G$
with values in $\mu_p:=\{ x\in k^\times | x^p=1\}$. Namely, one can take $G=\BF_{p^2}$,
$q(x)=\zeta^{N_{\BF_{p^2}/\BF_p}(x)}$, where $\zeta \in k$ is a primitive $p$-th root of 1.
Such a pair $(G,q)$ is a metric group with $\tau^+(G,q)=-p$.
\end{proposition}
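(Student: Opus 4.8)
\textbf{Proof proposal for Proposition~\ref{dimtwo}.}

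\emph{Part (i).} The plan is to reduce to Proposition~\ref{serre}(ii). Since $q(g)^p = 1$ for all $g \in G$, the quadratic form $q$ takes values in $\mu_p = \{x \in k^\times \mid x^p = 1\}$, a cyclic group of order $p$. Now $G$ cannot contain an element of order $p^2$: if $g$ had order $p^2$, then $q(pg) = q(g)^{p^2} = 1$ with $pg \neq 0$, contradicting anisotropy. Hence $G$ is an elementary abelian $p$-group, i.e., an $\BF_p$-vector space. Since $G$ has odd order in the case $p$ odd, Lemma~\ref{bili} would let us write $q$ via a bilinear form; but more uniformly I would fix an isomorphism $\mu_p \cong \BF_p$ (choosing a primitive root $\zeta$ and sending $\zeta^j \mapsto j$) and transport $q$ to a function $Q: G \to \BF_p$. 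The identity $q(ng) = q(g)^{n^2}$ together with bimultiplicativity of $b$ shows $Q$ is a quadratic form on the $\BF_p$-vector space $G$ in the usual sense, and it is anisotropic because $q$ is. For $p$ odd this is immediate; for $p = 2$ one should note $Q(g) = b_Q(g,g)$ need not hold, but Proposition~\ref{serre}(ii) is stated for general $p$ and applies regardless. Thus $\dim_{\BF_p} G \le 2$ by Proposition~\ref{serre}(ii).

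\emph{Part (ii): existence.} Take $G = \BF_{p^2}$ and $q(x) = \zeta^{N(x)}$ where $N = N_{\BF_{p^2}/\BF_p}$ and $\zeta$ is a primitive $p$-th root of unity. By Proposition~\ref{serre}(i), $N$ is an anisotropic non-degenerate quadratic form valued in $\BF_p$, so $q$ is an anisotropic quadratic form valued in $\mu_p$, and non-degeneracy of $N$ transports to non-degeneracy of the associated bicharacter of $q$; hence $(G,q)$ is a metric group.

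\emph{Part (ii): uniqueness and the Gauss sum.} For uniqueness, given any $2$-dimensional $\BF_p$-space $G$ with anisotropic $q: G \to \mu_p$, transport as in part (i) to get an anisotropic quadratic form $Q: G \to \BF_p$; by Proposition~\ref{serre}(ii) the pair $(G, Q)$ is isomorphic to $(\BF_{p^2}, N)$, and this isomorphism of $\BF_p$-quadratic-spaces is exactly an isomorphism of pre-metric groups $(G,q) \cong (\BF_{p^2}, q_0)$ once we apply $\zeta^{(-)}$ throughout. Finally, to compute $\tau^+(G,q) = \sum_{x \in \BF_{p^2}} \zeta^{N(x)}$, I would partition $\BF_{p^2}$ by the value of the norm: the fiber $N^{-1}(0)$ is $\{0\}$ (since $N$ is anisotropic), contributing $1$, while for each $c \in \BF_p^\times$ the fiber $N^{-1}(c)$ has exactly $p+1$ elements (the norm map $\BF_{p^2}^\times \to \BF_p^\times$ is surjective with kernel of order $p+1$). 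Hence
\begin{equation*}
\tau^+(G,q) = 1 + (p+1)\sum_{c \in \BF_p^\times} \zeta^c = 1 + (p+1)(-1) = -p,
\end{equation*}
using $\sum_{c \in \BF_p} \zeta^c = 0$.

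The only mild subtlety — and the step I would be most careful about — is the $p=2$ case in part (i), where ``quadratic form'' in the $\BF_p$-vector-space sense is not captured by a symmetric bilinear form; but since we invoke Proposition~\ref{serre}(ii) as a black box (which already handles all $p$), and since the $p=2$ anisotropic binary form is the norm form of $\BF_4/\BF_2$, no extra work is needed. Everything else is a routine transport of structure plus the standard norm-map fiber count.
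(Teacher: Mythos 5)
Your proposal is correct and takes essentially the same route as the paper: prove $pG=0$, transport $q$ to an $\BF_p$-valued quadratic form so that Proposition~\ref{serre} applies (the paper leaves this transport implicit), and obtain $\tau^+(G,q)=-p$ by the same norm-fiber count. One phrasing point: apply your computation $q(pg)=q(g)^{p^2}=1$ to \emph{every} $g\in G$ with $pg\neq 0$, not only to elements of order $p^2$; this gives $pG=0$ directly (as in the paper) and in particular also rules out torsion prime to $p$, which ruling out order-$p^2$ elements alone does not.
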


\begin{proof} (i) If $g\in G$ then $q(pg)=q(g)^{p^2}=1$. Since $q$ is anisotropic $pG=0$, i.e.,
$G$ is a vector space over $\BF_p$. Now use Proposition \ref{serre}.

(ii) Use Proposition \ref{serre} and the formula
$$\sum_{x\in \BF_{p^2}}\zeta^{N_{\BF_{p^2}/\BF_p}(x)}=-p.$$
To prove this formula note that for each $a\in \BF_p^\times$ there are $p+1$ elements
$x\in \BF_{p^2}$ with $N_{\BF_{p^2}/\BF_p}(x)=a$, so
$$\sum_{x\in \BF_{p^2}}\zeta^{N_{\BF_{p^2}/\BF_p}(x)}=1+(p+1)\sum_{a\in \BF_p^\times}
\zeta^a=1-(p+1)=-p.$$
\end{proof}

\begin{lemma}   \label{pG} 
Let $p$ be an odd prime.
\begin{enumerate} 
\item[(i)]  If $n\ge 2$ and $(G,q)$ is a pre-metric group such that $p^nG=0$ then the subgroup 
$p^{n-1}G\subset G$ is isotropic.
\item[(ii)]  If $G$ is an abelian $p$-group and the quadratic form $q: G\to k^\times$ is anisotropic then
$pG=0$.
\item[(iii)]  If $pG=0$ and $q: G\to k^\times$ is a quadratic form then $q(g)^p=1$ for all $g\in G$.
\end{enumerate} 
\end{lemma}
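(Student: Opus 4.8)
\textbf{Proof plan for Lemma \ref{pG}.} The three statements are elementary consequences of the identities $q(ng)=q(g)^{n^2}$ and the bimultiplicativity of $b(g,h)=q(g+h)/(q(g)q(h))$, together with the observation that $p$ is odd. I would prove them in the order (iii), (ii), (i), since each later part uses the earlier ones (or at least the same circle of ideas).

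\textbf{Step 1: statement (iii).} Suppose $pG=0$ and let $g\in G$. From $q(ng)=q(g)^{n^2}$ with $n=p$ we get $q(g)^{p^2}=q(pg)=q(0)=1$. So the order of $q(g)$ in $k^\times$ divides $p^2$. To upgrade this to ``divides $p$'', I would use bimultiplicativity: write $q(g)^{p}$ in terms of $b$. Concretely, expanding $q((1+1+\cdots+1)g)$ one finds $q(pg)=q(g)^{p}\cdot\prod_{i<j}b(g,g)=q(g)^{p}\,b(g,g)^{\binom p2}$, hence $q(g)^{p}=b(g,g)^{-\binom p2}$. Since $p$ is odd, $\binom p2=p(p-1)/2$ is divisible by $p$, and $b(g,g)^{p}=b(pg,g)=b(0,g)=1$; therefore $q(g)^{p}=b(g,g)^{-p(p-1)/2}=\bigl(b(g,g)^{p}\bigr)^{-(p-1)/2}=1$. (Alternatively one can invoke Lemma \ref{bili}: for $|G|$ odd write $q(g)=\beta(g,g)$ with $\beta$ a bicharacter, and then $q(g)^{p}=\beta(pg,g)=1$ directly. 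This is cleaner and I would probably use it, restricting first to the $p$-part of $G$.)

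\textbf{Step 2: statement (ii).} Assume $G$ is an abelian $p$-group and $q$ is anisotropic. If $pG\neq 0$, pick $h\in G$ with $h\neq 0$ but $h\in pG$, say $h=pg$. Then $q(h)=q(pg)=q(g)^{p^2}$, and I claim $q(h)=1$, contradicting anisotropy. Indeed $q(g)^{p^2}=\bigl(q(g)^{p}\bigr)^{p}=1$ by Step 1 applied to the subgroup generated by $g$ inside the $p$-torsion — more carefully, $q(g)$ has order dividing $p^{2k}$ where $p^k$ is the exponent of $\langle g\rangle$, but in fact $q(pg)=q(g)^{p^2}$ and iterating, $q(p^{k}g)=q(g)^{p^{2k}}$; since $p^kg$ ranges over all of $\langle g\rangle$ as needed, and using that $q$ restricted to any subgroup is again a quadratic form, one reduces to: any $h\in pG$ satisfies $q(h)=1$. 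The slick route: the restriction of $q$ to $G[p]$ (the $p$-torsion) satisfies $q(x)^p=1$ by Step 1 (noting $G[p]$ is an $\mathbb F_p$-space), and one shows every element of $pG$ lies in the image of a power map that kills $q$. I expect the cleanest argument is: for $h=pg$, $q(h)=q(g)^{p^2}$; apply Step 1's identity $q(g)^p = b(g,g)^{-\binom p2}$, raise to the $p$-th power, and use $b(g,g)^{p^2}=1$ to conclude $q(h)=q(g)^{p^2}=1$.

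\textbf{Step 3: statement (i).} Assume $n\ge 2$ and $p^nG=0$; I must show $q|_{p^{n-1}G}=1$. Any element of $p^{n-1}G$ is $p^{n-1}g$ for some $g\in G$, and $q(p^{n-1}g)=q(g)^{p^{2(n-1)}}$. Since $n\ge 2$, the exponent $2(n-1)\ge n$, so $p^{2(n-1)}$ is a multiple of $p^{n}$... but that only tells me $q(p^{n-1}g)$ is trivial if $q(g)^{p^n}=1$, which is not automatic (the order of $q(g)$ in $k^\times$ could in principle exceed $p^n$). The right statement to use is: the order of $q(g)$ divides $p^{2}\cdot(\text{order of }g)$, hence divides $p^{2}\cdot p^{n}=p^{n+2}$; and then $q(p^{n-1}g)=q(g)^{p^{2n-2}}$ is trivial once $2n-2\ge n+2$, i.e. $n\ge 4$. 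For $n=2,3$ this crude bound fails, so I would instead again pass through $b$: $q(p^{n-1}g)=q(g)^{p^{2n-2}}$ and separately $q(g)^{p^{n}}=b(g,g)^{-\binom{p^n}{2}}$ by the same expansion as in Step 1 with $p^n$ in place of $p$. Since $p$ is odd, $\binom{p^n}{2}=p^n(p^n-1)/2$ is divisible by $p^n$, and $b(g,g)^{p^n}=b(p^ng,g)=1$, so $q(g)^{p^n}=1$. Therefore $q(p^{n-1}g)=q(g)^{p^{2n-2}}=\bigl(q(g)^{p^n}\bigr)^{p^{n-2}}=1$ (using $2n-2\ge n$ since $n\ge 2$). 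This proves $p^{n-1}G$ is isotropic.

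\textbf{Main obstacle.} There is no deep obstacle; the only subtlety is the one flagged above — one must resist the temptation to bound the order of $q(g)$ naively by the order of $g$, and instead extract the sharp fact ``$q(g)^{p^m}=1$ whenever $p^m g = 0$ and $p$ is odd'' via the binomial-coefficient computation with $b$ (or equivalently via Lemma \ref{bili} after localizing at $p$). Once that lemma is isolated and proved, parts (i), (ii), (iii) all follow in a line or two. I would therefore structure the write-up by first proving the auxiliary claim ``\emph{if $p$ is odd, $q:G\to k^\times$ a quadratic form, and $p^mg=0$, then $q(g)^{p^m}=1$}'', and then deducing the three parts.
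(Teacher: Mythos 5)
Your Steps 1 and 3 (parts (iii) and (i)) are correct: the expansion $q(mg)=q(g)^m\,b(g,g)^{\binom m2}$ together with $b(g,g)^{p^m}=b(p^m g,g)=1$ gives the key fact $q(g)^{p^m}=1$ whenever $p^m g=0$ and $p$ is odd, and (iii) and (i) follow exactly as you write. This is a sound, slightly more computational substitute for the paper's one-line appeal to Lemma \ref{bili} (write $q(g)=\beta(g,g)$ for a bicharacter $\beta$, possible since $|G|$ is odd), which yields the same auxiliary fact.

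Step 2, however, contains a genuine error. Your intermediate claim — that for an \emph{arbitrary} quadratic form on an abelian $p$-group every $h\in pG$ satisfies $q(h)=1$ — is false, and all three arguments you sketch for it break at the same point. Counterexample: $G=\BZ/p^3\BZ$, $q(x)=\zeta^{x^2}$ with $\zeta$ a primitive $p^3$-th root of unity; then $h=p\in pG$ but $q(h)=\zeta^{p^2}\neq 1$. Concretely, the identity $q(g)^p=b(g,g)^{-\binom p2}$ from Step 1 was derived from $q(pg)=1$, i.e.\ it requires $pg=0$, which is precisely not the case for the $g$ you consider (you chose $g$ with $pg=h\neq 0$); likewise $b(g,g)^{p^2}=b(p^2g,g)$ need not equal $1$ unless $p^2g=0$ (in the example it is $\zeta^{2p^2}\neq 1$). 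Statement (ii) is of course true, but anisotropy has to be applied at a different element: if $pG\neq 0$, pick $g$ of order $p^m$ with $m\ge 2$; then $p^{m-1}g\neq 0$ and, by your auxiliary claim, $q(p^{m-1}g)=q(g)^{p^{2m-2}}=\bigl(q(g)^{p^m}\bigr)^{p^{m-2}}=1$, contradicting anisotropy. In other words, (ii) should be deduced from (i) (applied to $\langle g\rangle$), which is exactly how the paper argues; your chosen order (iii), (ii), (i) forces you to prove (ii) before the tool that makes it work, and the shortcut through $h=pg$ does not close. With the order (iii), (i), (ii) — or with your ``auxiliary claim first'' plan from the last paragraph, provided the deduction of (ii) uses $p^{m-1}g$ rather than $pg$ — the proof is complete.
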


\begin{proof} 
(i) and (iii) follow from Lemma \ref{bili}. Statement (ii) follows from (i).
\end{proof}

\begin{remark} If $p=2$ then statements (ii) and (iii) are false; (i) is false if  $p=2$ and $n=2$.
Counterexamples: (i-ii) $G=\BZ/4\BZ, 
\; q(n)=\zeta^{n^2},$ where $\zeta$ is a primitive $8$-th root of 1; (iii) $G=\BZ/2\BZ, \; q(n)=i^{n^2},$
where $i^2=-1$.
\end{remark}

Now we get a complete list of anisotropic metric $p$-groups for $p\ne 2$. Fix a primitive $p$-th
root of unity $\zeta \in k$.

\begin{proposition}\label{A5}
 If $p\ne 2$ there are 3 isomorphism classes of nonzero anisotropic metric
$p$-groups $(G,q)$, namely:

(i) $G=\BF_p, \; q(a)=\zeta^{a^2}$,

(ii) $G=\BF_p, \; q(a)=\zeta^{ca^2}$, where $c\in \BF_p^\times$ is a quadratic nonresidue,

(iii) $G=\BF_{p^2}, \; q(a)=\zeta^{N_{\BF_{p^2}/\BF_p}(a)}$.
\end{proposition}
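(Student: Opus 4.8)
\textbf{Proof proposal for Proposition~\ref{A5}.}
The plan is to use Lemma~\ref{pG} to reduce everything to $\BF_p$-vector spaces and then invoke Proposition~\ref{serre} for the classification. First I would observe that if $(G,q)$ is a nonzero anisotropic metric $p$-group with $p$ odd, then by Lemma~\ref{pG}(ii) we have $pG=0$, so $G$ is a vector space over $\BF_p$; by Lemma~\ref{pG}(iii), $q$ takes values in $\mu_p$, so by Lemma~\ref{bili} (or rather the refinement in Proposition~\ref{serre}) we may write $q=\zeta^{Q}$ for a genuine $\BF_p$-valued quadratic form $Q$ on $G$, which is again anisotropic. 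Proposition~\ref{serre}(ii) then forces $\dim_{\BF_p}G\in\{1,2\}$, and in the two-dimensional case $(G,Q)\cong(\BF_{p^2},N_{\BF_{p^2}/\BF_p})$, giving case (iii) and accounting for its non-degeneracy via Proposition~\ref{dimtwo}(ii).

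Next I would handle the one-dimensional case $G=\BF_p$. Here $Q(a)=ca^2$ for some $c\in\BF_p^\times$ (anisotropy just says $c\neq0$), and the associated bilinear form $b_Q(a,a')=2caa'$ is non-degenerate because $p\neq2$, so $(G,q)$ is automatically a metric group. It remains to see that the isomorphism class of $(\BF_p,\zeta^{ca^2})$ depends only on whether $c$ is a square in $\BF_p^\times$: an automorphism of $G=\BF_p$ is multiplication by some $t\in\BF_p^\times$, which sends $Q$ to $t^2c\cdot a^2$, so two such forms are isomorphic iff the $c$'s differ by a square. Since $\BF_p^\times/(\BF_p^\times)^2$ has order $2$, this yields exactly the two classes (i) and (ii), and they are genuinely distinct (e.g. one can separate them by the Gauss sum $\tau^+$, which is a classical fact that $\sum_{a\in\BF_p}\zeta^{a^2}$ and $\sum_{a\in\BF_p}\zeta^{ca^2}$ differ for $c$ a nonresidue).

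Finally I would check there is no overlap between the three classes: (iii) has order $p^2$ while (i) and (ii) have order $p$, and (i) and (ii) are distinguished as above. This exhausts the possibilities by Proposition~\ref{serre}(ii), completing the classification. I do not expect any serious obstacle here — the only mildly delicate point is making sure the passage from $q$ with values in $\mu_p$ to an $\BF_p$-valued quadratic form $Q$ is legitimate and compatible with the notion of isomorphism, but this is immediate once one fixes the primitive root $\zeta$ as in the statement, and it is exactly the content already used in Proposition~\ref{dimtwo}(ii). The real work has been front-loaded into Lemma~\ref{pG} and Proposition~\ref{serre}, which we may assume.
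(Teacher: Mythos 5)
Your proposal is correct and follows essentially the same route as the paper, whose proof is the one-line instruction to combine Lemma~\ref{pG}(ii)--(iii) (forcing $pG=0$ and $q$ to take values in $\mu_p$) with Proposition~\ref{dimtwo} (hence Proposition~\ref{serre}) for the dimension bound and the two-dimensional case. The only addition on your side is spelling out the one-dimensional classification by square classes in $\BF_p^\times$ (with the Gauss-sum check, which is redundant given your automorphism argument) — a step the paper treats as immediate.
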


\begin{proof} Combine Lemma \ref{pG}(ii-iii) and Proposition \ref{dimtwo}.
\end{proof}

A complete list of anisotropic metric $2$-groups will be given in \S \ref{future}.

\subsection{Proof of Proposition \ref{basis}} \label{A-2}
We will prove the following statement.

\begin{proposition}\label{A6}
 Let $p$ be a prime and $(G,q)$ a metric group of order $p^{2n},\; n\in \BN$.
Then $\tau^+(G,q)=p^n\epsilon$, where $\epsilon =\pm 1$ if $p\ne 2$ and $\epsilon^8=1$ if $p=2$.
If $\epsilon =1$ then $G$ has a Lagrangian subgroup.
\end{proposition}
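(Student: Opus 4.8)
\textbf{Proof proposal for Proposition \ref{A6}.}

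The plan is to proceed by induction on $n$, using Proposition \ref{dimtwo}(ii), Proposition \ref{A5}, the orthogonality/multiplicativity of Gauss sums, and Proposition \ref{subquot}. For $n=0$ the group is trivial and $\tau^+=1$, so the base case is clear. For the inductive step, suppose $(G,q)$ is a metric group of order $p^{2n}$ with $n\ge 1$. The first step is to produce a nonzero isotropic subgroup $H\subset G$. If $G$ has an element $g\ne 0$ with $q(g)=1$, take $H=\langle g\rangle$ — but one must be careful, since for $p=2$ this $\langle g\rangle$ need not be isotropic (the restriction of $q$ could be the nontrivial order-two character, as in the $s\Vec$ example). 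So the cleaner approach: either $q$ is anisotropic on $G$, in which case I invoke the classification results directly, or it is not, in which case I extract a cyclic isotropic subgroup by the following argument. If some $g$ of order $p$ has $q(g)=1$, then $\langle g\rangle$ is isotropic and we are done with this step. The only obstruction is when every order-$p$ element $g$ has $q(g)\ne 1$; I would argue that in that case $q$ restricted to the subgroup $G[p]$ of $p$-torsion is anisotropic, hence (by Proposition \ref{dimtwo}(i) for $p=2$, or Lemma \ref{pG} for $p$ odd) $G[p]$ has order at most $p^2$; combined with $pG=0$ failing in general one still has to handle the mixed case. I expect the bookkeeping here — reducing to the anisotropic case — to be the main obstacle, but it is routine once organized by the $p$-primary structure.

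Once a nonzero isotropic subgroup $H$ is found, the second step applies Proposition \ref{subquot}: $\tau^+(G,q)=|H|\cdot\tau^+(H^\perp/H,\tilde q)$. Since $(G,q)$ is a metric group, $H^\perp/H$ is again a metric group (standard: $|H^\perp|=|G|/|H|$ and $\tilde q$ is non-degenerate on $H^\perp/H$), of order $p^{2n}/|H|^2$. Taking $H$ of order $p$ (pass to a subgroup of order $p$ inside a cyclic isotropic subgroup), $H^\perp/H$ has order $p^{2n-2}$, so by the inductive hypothesis $\tau^+(H^\perp/H,\tilde q)=p^{n-1}\epsilon'$ with $\epsilon'$ of the asserted form, and therefore $\tau^+(G,q)=p\cdot p^{n-1}\epsilon'=p^n\epsilon'$, which gives the formula with $\epsilon=\epsilon'$. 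Moreover, by induction $H^\perp/H$ has a Lagrangian subgroup $\bar L$ whenever $\epsilon'=1$; pulling $\bar L$ back to $H^\perp$ gives a subgroup $L$ with $H\subset L\subset H^\perp$, $L/H=\bar L$, and one checks $L$ is isotropic (it lies in $L^\perp$ because $L\subset H^\perp$ and $\tilde q|_{\bar L}=1$) with $|L|=|H|\cdot|\bar L|=p\cdot p^{n-1}=p^n=|G|^{1/2}$, hence $L$ is Lagrangian in $G$. So the Lagrangian assertion propagates through the induction.

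It remains to treat the case where $q$ is \emph{anisotropic} on $G$, which is where we cannot find an isotropic $H$. For $p$ odd, Proposition \ref{A5} together with Lemma \ref{pG} shows $G$ is a sum of copies of the three listed anisotropic pieces, each of $\BF_p$-dimension $1$ or $2$; since $|G|=p^{2n}$ is an even power of $p$, a parity/counting argument (each one-dimensional piece contributes an odd exponent, so they must come in pairs, and a pair of one-dimensional forms is isometric to a two-dimensional form) reduces $G$ to an orthogonal sum of $n$ anisotropic two-dimensional metric groups. By Proposition \ref{dimtwo}(ii) each such factor has $\tau^+=-p$, so by multiplicativity of Gauss sums $\tau^+(G,q)=(-p)^n=p^n(-1)^n$, giving $\epsilon=(-1)^n=\pm1$ as required (and no Lagrangian subgroup exists, consistent with $\epsilon$ possibly being $-1$). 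For $p=2$ I would invoke the classification of anisotropic metric $2$-groups referenced in \S\ref{2groups} (the list in \S\ref{future}); each anisotropic $2$-group factor has order $2$, $4$, or $8$ with Gauss sum an eighth root of unity times the appropriate power of $\sqrt 2$, and multiplicativity yields $\tau^+(G,q)=2^n\epsilon$ with $\epsilon^8=1$. The delicate point for $p=2$ is confirming that the order constraint $|G|=2^{2n}$ together with the allowed anisotropic factors forces the $\sqrt2$-powers to combine to exactly $2^n$; this is a finite check against the classification list and I expect it to go through without difficulty.
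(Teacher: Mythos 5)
Your skeleton (peel off isotropic subgroups via Proposition \ref{subquot} by induction, then settle the anisotropic case by classification) is viable, and for odd $p$ it is essentially the paper's own reduction; but two points in the write-up need repair. First, the worry about whether $\langle g\rangle$ is isotropic when $q(g)=1$, and the ensuing ``mixed case bookkeeping,'' is a non-issue: since $q(mg)=q(g)^{m^2}$ (Remark \ref{easyrems}(i)), $q(g)=1$ forces $q|_{\langle g\rangle}=1$ for every $p$, including $p=2$ (the $s\Vec$ example has $q(g)=-1$, so it is irrelevant). Hence either $q$ is anisotropic or $G$ contains an isotropic subgroup of order $p$, and your induction runs with no residual case. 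Second, and more seriously, your anisotropic step for odd $p$ misreads Proposition \ref{A5}: it says a nonzero anisotropic metric $p$-group \emph{is} one of the three listed groups, so has order $p$ or $p^2$ — not that a general anisotropic group is an orthogonal sum of such pieces. Orthogonal sums of anisotropic forms are usually not anisotropic (for $p\equiv 1 \bmod 4$ the form $\zeta^{x^2+y^2}$ is hyperbolic), and there are no anisotropic metric $p$-groups of order $p^{2n}$ with $n\ge 2$ at all; so the ``parity/counting'' decomposition into $n$ two-dimensional pieces is false. The correct statement makes your step easier: the anisotropic quotient reached by your induction has square order, hence is trivial or the norm form, so its Gauss sum is $1$ or $-p$, giving $\epsilon=\pm1$, with the Lagrangian exactly when the quotient is trivial. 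The same correction applies for $p=2$: by Proposition \ref{A13}(i) the anisotropic quotient has order at most $8$, hence (being of square order) is trivial or some $M_\xi$ with $\xi^8=1$, $\xi\ne 1$, with Gauss sum $1$ or $2\xi$; there is no product of ``factors'' and no $\sqrt2$-bookkeeping left to check.

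With those corrections your argument is complete, and for $p=2$ it is genuinely different from the paper's. You invoke the classification of anisotropic metric $2$-groups; this is not circular (Proposition \ref{A13} rests only on Corollary \ref{Acor} and Lemma \ref{uniformdescr}, not on Proposition \ref{A6}), but it imports material the paper develops only in the next subsection. The paper instead argues directly: by Lemma \ref{A7} and Corollary \ref{Acor} there is a subgroup $L$ with $L\subset L^\perp$ and $(L^\perp:L)\le 2$ — isotropic for the bicharacter $b$, not for $q$ — and since $(L^\perp:L)=|G|/|L|^2$ is a square, $L=L^\perp$ and $|L|=2^n$. Then $q|_L$ is a character $\chi$, and summing $q$ over cosets of $L$ gives $\tau^+(G,q)=2^n q(g_0)$ for any $g_0$ lying over the element $u\in G/L\cong L^*$ corresponding to $\chi$, with $q(g_0)^8=1$ because $2g_0\in L$; the same data produces the Lagrangian explicitly when $q(g_0)=1$. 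The paper's route thus avoids the anisotropic classification altogether and makes the appearance of genuine eighth roots of unity transparent; your route is shorter once the classification is available, but must quote it.
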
 

Proposition \ref{basis} follows from Proposition \ref{A6} because if $\tau^+(G,q)\in \BZ$ then 
$|G|=\tau^+(G,q)\tau^-(G,q)=(\tau^+(G,q))^2$ is the square of an integer.

Let us prove Proposition \ref{A6} for $p\ne 2$. By Proposition \ref{subquot}, it suffices to show that if
$(G,q)$ is an {\em anisotropic} metric group of order $p^{2n}$ then $\tau^+(G,q)=\pm p^n$ and
$\tau^+(G,q)=p^n$ only if $n=0$. This follows from Propositions \ref{dimtwo} and \ref{A5}.

To treat the case $p=2$ we need the following lemma.

\begin{lemma}   \label{A7}
Let $G$ be an abelian 2-group. If there exists a symmetric bicharacter 
$b :G\times G\to k^{\times}$ such that the quadratic form  $b(x,x)$ is anisotropic then $|G|\le 2$.
\end{lemma}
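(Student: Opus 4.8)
\textbf{Proof proposal for Lemma~\ref{A7}.} The plan is to reduce the statement to the classification of anisotropic quadratic forms over $\BF_2$ that is available via the structure theory of finite abelian $2$-groups. First I would write $G$ as a direct sum of cyclic groups $\BZ/2^{a_i}\BZ$. The key point is that if some $a_i \ge 2$, then the element of order $2$ in that cyclic summand, say $g = 2^{a_i-1}e_i$, satisfies $b(g,g) = b(e_i,e_i)^{2^{2(a_i-1)}} = 1$ since $2(a_i-1)\ge a_i$ forces $b(e_i,e_i)^{2^{2(a_i-1)}}$ to be a $2^{a_i}$-th power of a $2^{a_i}$-th root of unity — so $g$ is a nonzero isotropic element, contradicting anisotropy. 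Hence every $a_i = 1$, i.e. $G$ is a vector space over $\BF_2$ and the quadratic form $Q(x) := b(x,x)$ takes values in $\mu_2 = \{\pm 1\}$.

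Next I would analyze the $\BF_2$-bilinear form $\beta$ on $V := G$ underlying $b$. Writing $b(x,y) = (-1)^{\beta(x,y)}$ with $\beta: V\times V \to \BF_2$ symmetric bilinear, note that $Q(x+y) = Q(x)Q(y)(-1)^{\beta(x,y)}$, so the map $x \mapsto \beta(x,x)$ is both additive (being the "diagonal" of a symmetric $\BF_2$-bilinear form, hence $\BF_2$-linear) and — since $Q$ is a genuine quadratic form in the sense of the paper and $Q(x)^{\text{odd}} = Q(x)$ — equals $\log_{-1} Q(x)^{?}$; more precisely $\beta(x,x)$ is the linear functional recording whether $Q(x)$ "comes from" a bilinear expression. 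The crucial observation is that $Q$ anisotropic means $Q(x) = -1$ for all $x\ne 0$, so in particular $Q$ is constant on $V\setminus\{0\}$; counting, $\tau^+(V,Q) = 1 - (|V|-1) = 2 - |V|$. Combined with the fact that $Q$ is the diagonal of the symmetric bilinear form $b$ (so $\tau^+$ can be computed by diagonalizing $\beta$ over $\BF_2$), one finds $\tau^+(V,Q) \in \{\pm 2^{m}, \pm 2^m\sqrt{\pm 2}, \dots\}$-type constraints from Gauss-sum multiplicativity over an orthogonal decomposition of $(V,\beta)$ into at-most-$2$-dimensional pieces. Matching $2 - |V|$ against these possibilities leaves only $|V| \le 2$.

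The cleanest route, which I would actually carry out, avoids Gauss sums: decompose the symmetric $\BF_2$-bilinear space $(V,\beta)$. If $\beta$ is alternating on some $2$-dimensional subspace $\langle u,w\rangle$ with $\beta(u,w)=1$, $\beta(u,u)=\beta(w,w)=0$, then $u,w$ are isotropic unless $Q(u)$ or $Q(w)$ equals $-1$; but then $Q(u+w) = Q(u)Q(w)(-1)^{\beta(u,w)} = Q(u)Q(w)\cdot(-1)$, and one checks that among $\{u,w,u+w\}$ at least one has $Q$-value $+1$, contradiction. If instead $\beta(u,u)=1$ for some $u$, split off $\langle u\rangle$ and its $\beta$-orthogonal complement; iterating and tracking $Q$-values shows $V\setminus\{0\}$ cannot be entirely $Q$-valued at $-1$ once $\dim V \ge 2$, except possibly in $\dim V = 1$.

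The main obstacle I anticipate is bookkeeping in the second paragraph: one must be careful that $Q$ is a quadratic form in the paper's sense (so $Q(-x)=Q(x)$ is automatic over $\BF_2$, but the bimultiplicativity of $b$ is the real input) and that the reduction "$a_i\ge 2 \Rightarrow$ isotropic element" uses $b$ being a \emph{bicharacter} on all of $G$, not just that $Q$ restricts nicely — i.e. one genuinely needs the hypothesis that the bicharacter exists on $G\times G$, which is exactly what is given. Everything else is a finite case-check on spaces of dimension $\le 2$, which I would state but not belabor.
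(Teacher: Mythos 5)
Your first reduction is correct and is the same as the paper's: if some cyclic summand of $G$ has order $2^{a}$ with $a\ge 2$, the order-two element $g=2^{a-1}e$ satisfies $b(g,g)=b(e,e)^{2^{2(a-1)}}=1$, so anisotropy forces $2G=0$. The second half, however, contains a genuine error. Once $2G=0$ one has $b(x,y)^2=b(2x,y)=1$, so for $Q(x):=b(x,x)$ the cross terms cancel: $Q(x+y)=Q(x)\,Q(y)\,b(x,y)^2=Q(x)Q(y)$. Your identity $Q(x+y)=Q(x)Q(y)(-1)^{\beta(x,y)}$ is therefore false --- the polarization of $Q$ is trivial, not $\beta$. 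This is not a cosmetic slip: if $Q$ could be an arbitrary quadratic refinement of a nondegenerate $\BF_2$-bilinear form $\beta$, Lemma~\ref{A7} would simply be wrong, because the anisotropic binary form $q(x,y)=(-1)^{x^2+xy+y^2}$ of Example~\ref{anis4}(iii) (equivalently Proposition~\ref{dimtwo} for $p=2$) would be a counterexample of order $4$. Accordingly, your ``cleanest route'' fails even on its own terms: on an alternating plane with $Q(u)=Q(w)=-1$ your formula gives $Q(u+w)=-1$ as well, so the claim that among $\{u,w,u+w\}$ at least one has $Q$-value $+1$ is false (that configuration is exactly the anisotropic plane). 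The Gauss-sum sketch in between is too imprecise to count as a proof and rests on the same misidentification of the bilinear form attached to $Q$.

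The repair is the fact you state parenthetically but never exploit: since $x\mapsto\beta(x,x)$ is additive, $Q$ is a homomorphism $G\to\{\pm 1\}$ (this is just $Q(x+y)=Q(x)Q(y)$), and anisotropy says this character has trivial kernel, whence $|G|\le 2$. This two-line ending is precisely the paper's proof after its identical reduction to $2G=0$; the whole content of Lemma~\ref{A7} (compare Remark~\ref{easyrems}(ii)) is that the diagonal of a bicharacter on an elementary abelian $2$-group is necessarily a character, and so can never be one of the genuinely quadratic anisotropic forms your case analysis was implicitly allowing.
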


\begin{proof}   
Let $x\in G$ and let $n\in\BN$ be the minimal number such that $2^nx=0$. If $n>1$ then
$b (2^{n-1}x,2^{n-1}x)=0$ contrary to the assumption on $b$. Thus $2G=0$. Therefore
the map $x\mapsto b (x,x)$ is a character with values $\pm 1$ and trivial kernel. So $|G|\le 2$.
\end{proof}

\begin{corollary} \label{Acor}
Let $G$ be an abelian $2$-group equipped with a symmetric bicharacter $b: G\times G\to k^\times$.
Then there exists a subgroup $L\subset G$ such that $L\subset L^\perp$ and $(L^\perp :L)\le 2$.
\end{corollary}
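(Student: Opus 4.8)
The goal is to find a subgroup $L\subset G$ with $L\subset L^\perp$ and $(L^\perp:L)\le 2$, where $G$ is a finite abelian $2$-group equipped with a symmetric bicharacter $b$ (not assumed non-degenerate). The natural approach is to take $L$ to be a \emph{maximal} isotropic subgroup of $G$, i.e. a maximal subgroup on which $b$ restricts trivially, and to show the index $(L^\perp:L)$ cannot exceed $2$. The point is that $b$ induces a symmetric bicharacter $\bar b$ on the quotient group $\bar G:=L^\perp/L$: indeed $L\subset L^\perp$ by isotropy, and $b$ descends because for $x\in L^\perp$ and $\ell\in L$ one has $b(x,\ell)=1$. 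So the first step is to set up $\bar b$ on $\bar G$ and observe $\bar G$ is again a finite abelian $2$-group.

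\textbf{Key steps.} First I would verify that the associated quadratic form $\bar q(\bar x):=\bar b(\bar x,\bar x)$ on $\bar G$ is anisotropic. Suppose $\bar q(\bar x)=1$ for some $\bar x\in\bar G$, lift to $x\in L^\perp$; then the subgroup $L+\langle x\rangle$ of $G$ is isotropic (it is contained in $L^\perp$, and $b(x,x)=\bar b(\bar x,\bar x)=1$, $b(x,\ell)=1$, $b(\ell,\ell')=1$), hence by maximality of $L$ it equals $L$, forcing $\bar x=0$. So $\bar q$ is anisotropic. Second, apply Lemma~\ref{A7} to $\bar G$ with the bicharacter $\bar b$: since $\bar q$ is anisotropic we conclude $|\bar G|\le 2$, that is $(L^\perp:L)\le 2$. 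This gives exactly the conclusion. I should double-check that the case $G=\{0\}$ (or $L=L^\perp$) is covered: then $L=0$ is maximal isotropic and $(L^\perp:L)=1\le 2$, fine. Also $L$ itself exists because $G$ is finite, so a maximal isotropic subgroup exists (the collection of isotropic subgroups is nonempty, containing $0$, and finite).

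\textbf{Main obstacle.} There is essentially no serious obstacle here: the statement is a clean corollary of Lemma~\ref{A7}, and the only genuine content is the reduction to the anisotropic situation via a maximal isotropic subgroup. The one point requiring care is the descent of $b$ to $L^\perp/L$ and the verification that the induced quadratic form is anisotropic — this uses the maximality of $L$ in a way parallel to Lemma~\ref{maximality1} and the arguments in \S\ref{warmup}, and it is worth spelling out the lift/maximality argument explicitly since it is the crux. Once that is in place, Lemma~\ref{A7} finishes the proof immediately. (This corollary is in turn what feeds the $p=2$ case of Proposition~\ref{A6}, via Proposition~\ref{subquot} applied to the isotropic subgroup $L$.)
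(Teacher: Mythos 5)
Your proof is correct and follows essentially the same route as the paper: both reduce the statement to Lemma~\ref{A7} via isotropic subgroups. The only difference is organizational — the paper's proof observes that when $|G|>2$ Lemma~\ref{A7} produces a nonzero isotropic subgroup $L_0$ and leaves the passage to $L_0^\perp/L_0$ (the implicit induction) to the reader, whereas you take a maximal isotropic $L$ at the outset, check the induced form on $L^\perp/L$ is anisotropic, and apply Lemma~\ref{A7} once, thereby spelling out the very step the paper compresses.
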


\begin{proof} 
It suffices to show that if $|G|>2$ then there is a nonzero subgroup $L_0\subset G$ such that
$L_0\subset L_0^{\perp}$. By Lemma \ref{A7}, $b(x,x)=0$ for some nonzero $x\in G$.
Take $L_0$ to be the subgroup generated by $x$.
\end{proof}

Now let us prove Proposition \ref{A6} for a metric group $(G,q)$ of order $2^{2n}$. 
Applying Corollary \ref{Acor} to the bicharacter
$b: G\times G\to k^\times$ corresponding to $q$ we get a subgroup $L\subset G$ such that
$L\subset L^\perp$ and $(L^\perp : L)\le 2$. But $(L^\perp : L)=\frac{|G|}{|L|^2}=\frac{2^{2n}}{|L|^2}$
is a square, so $L^\perp =L$ and $|L|=2^n$. 

Let $\chi : L\to k^\times$ be the restriction of $q: G\to k^\times$. Then $\chi$ is a character with values
$\pm 1$. Let $u\in G/L$ be the preimage of $\chi \in L^*$ under the canonical isomorphism
$G/L \stackrel{\sim}{\to} L^*$. Then $2u=0$.

For any $g\in G$ the sum 
$$\sum_{h\in L}q(g+h)=q(g)\sum_{h\in L}b(g,h)\chi(h)$$
equals $2^nq(g)$ if the image of $g$ in $G/L$ equals $u$; otherwise the sum equals 0. So
$$ \tau^+(G,q)=2^nq(g_0)$$
for any $g_0\in G$ whose image in $G/L$ equals $u$. Since $2u=0$ we have $2g_0\in L$, so
$q(g_0)^4=q(2g_0)=\chi(g_0)=\pm 1$. Therefore $q(g_0)$ is an $8$-th root of 1. Finally, if
$q(g_0)=1$ then $G$ has a Lagrangian subgroup $L'$: namely, if $u=0$ take $L'=L$ and
if $u\ne 0$ take $L'$ to be the subgroup generated by $g_0$ and 
$\Ker (L\stackrel{\chi}{\longrightarrow} \{ 1,-1\})$.

\subsection{Anisotropic metric $2$-groups}    \label{2groups} 
We begin with some examples.

\subsubsection{Metric groups of order 2}  \label{anis2}
For each $i\in k$ such that $i^2=-1$ let
$A_i$ denote the group $\BZ/2\BZ$ equipped with the quadratic form $n\mapsto i^{n^2}$.
Clearly $A_i$ is an anisotropic metric group and $\tau^+(A_i)=1+i$. It is easy to see that
there are no other metric groups of order 2.

\subsubsection{Some metric groups of order $4$}
\begin{examples}  \label{anis4}
\begin{enumerate}
\item[(i)] For each $i\in k$ such that $i^2=-1$ the orthogonal direct sum $A_i\oplus A_i$
is an anisotropic metric group of order $4$ and $\tau^+(A_i\oplus A_i)=2i$.
\item[(ii)] Let $\xi\in k$ be a primitive $8$-th root of unity. Then $\BZ/4\BZ$ equipped with the quadratic form $n\mapsto\xi^{n^2}$ is an anisotropic metric group whose Gauss sum $\tau^+$
equals $2\xi$.
\item[(iii)] Let $C$ denote the metric group from Proposition \ref{dimtwo} (ii)
for $p=2$; expli\-citly, $C$ is the group $(\BZ/2\BZ)^2$ equipped with the quadratic form
$q(x,y)=(-1)^{x^2+xy+y^2}$. Then $C$ is an anisotropic metric group of order $4$ and
$\tau^+(C)=-2$.
\end{enumerate} 
\end{examples}

In \S\ref{future} we will see that Examples \ref{anis4} give all anisotropic metric groups of order~$4$.
Now we will give a {\em uniform\,} description of the metric groups from Examples~\ref{anis4}(i-iii).
Note that each of them contains an element of order 2 on which the quadratic form takes
value $-1$.
Let $T$ denote the set of isomorphism classes of triples $(G,q,u)$ in which $(G,q)$
is a metric group of order $4$ and $u\in G$ is an element of order $2$ with $q(u)=-1$. 
Here is a description of $T$.

\begin{lemma}  \label{uniformdescr}
\begin{enumerate}
\item[(i)] For any $(G,q,u)\in T$ the values of $q$ on the two elements of $G\setminus\{ 0,u\}$
are 8-th roots of unity equal to each other.
\item[(ii)] Associating this 8-th root of unity to $(G,q,u)\in T$ one gets a bijection
\[
T\iso\mu_8:=\{ \xi\in k|\xi^8=1\}.
\]
If $(G,q,u)$ corresponds to $\xi$ then $\tau^+(G,q)=2\xi$.
\item[(iii)] Let $\xi\in\mu_8$. Set $n(\xi ):=0$ if $\xi^4=1$ and $n(\xi ):=1$ if
$\xi^4=-1$. Then the metric group $M_{\xi}$ corresponding to $\xi$ has the
following description: as an abelian group, it has generators $u,v$ with the defining
relations $2u=0$, $2v(\xi )u$, and the quadratic form $q:M_{\xi}\to k^{\times}$
is given by 
\[
q(u)=-1, \; q(v)=q(u+v)=\xi, \; q(0)=1.
\]
\end{enumerate} 
\end{lemma}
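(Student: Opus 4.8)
The statement is Lemma~\ref{uniformdescr}, which gives a uniform description of metric groups of order $4$ equipped with a distinguished order-$2$ element on which the form takes value $-1$. The plan is to analyze the possible structures of such a triple $(G,q,u)$ directly, using the elementary theory of quadratic forms from \S\ref{Quadra} and the basic properties $q(-g)=q(g)$, $q(ng)=q(g)^{n^2}$, together with the formula $b(g,h)=q(g+h)/(q(g)q(h))$ for the associated bicharacter. A group of order $4$ is either $\BZ/4\BZ$ or $(\BZ/2\BZ)^2$, and in both cases we pick $u$ of order $2$ and a complementary element $v$ (a generator in the cyclic case, or a second independent element in the Klein case), so that $G$ is generated by $u$ and $v$ with the relation $2v=0$ or $2v=u$. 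The point is to show that non-degeneracy of $q$ forces the two elements of $G\setminus\{0,u\}$ to carry equal values, that these values are $8$-th roots of unity, and that conversely every $\xi\in\mu_8$ arises from exactly one such triple.

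\textbf{Key steps, in order.} First I would prove (i): write the elements of $G\setminus\{0,u\}$ as $v$ and $u+v$; since $q(u)=-1$, the bicharacter value $b(u,v)=q(u+v)/(q(u)q(v))=-q(u+v)/q(v)$ must be $\pm 1$ (it is a character value of an element of order $2$), and since $q$ is non-degenerate and $u\neq 0$ we must have $b(u,v)\neq 1$ for at least one $v$; but actually one checks $b(u,\cdot)$ is the nontrivial character distinguishing $\{0,u\}$ from $\{v,u+v\}$, forcing $b(u,v)=-1$, i.e. $q(u+v)=q(v)$. Call this common value $\xi$. To see $\xi\in\mu_8$: in the Klein case $2v=0$ gives $q(v)^4=q(2v)=q(0)=1$, so $\xi^4=1$; in the cyclic case $2v=u$ gives $q(v)^4=q(2v)=q(u)=-1$, so $\xi^8=1$ and $\xi^4=-1$. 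Thus $\xi\in\mu_8$ always, and the two cases are distinguished precisely by $n(\xi)$ as in (iii). Second, for (ii), I would check the Gauss sum: $\tau^+(G,q)=q(0)+q(u)+q(v)+q(u+v)=1+(-1)+\xi+\xi=2\xi$. Third, I would establish the bijection $T\iso\mu_8$: injectivity follows because $\xi$ together with the value $n(\xi)$ (read off from $\xi^4$) determines whether $G$ is cyclic or Klein and then determines $q$ completely on all four elements; surjectivity follows by exhibiting, for each $\xi$, the group $M_\xi$ of (iii) and verifying that the prescribed $q$ is indeed a well-defined quadratic form (the bicharacter $b$ computed from it is bimultiplicative — a finite check on the $\le 16$ pairs, reduced by symmetry) and is non-degenerate (its associated homomorphism $G\to G^*$ is injective, again a short check). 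Fourth, I would observe that (iii) is just the explicit normal form extracted along the way, so once (i) and (ii) are in hand (iii) requires only recording the presentation and the values, and matching $M_\xi$ for $\xi\in\{\pm i,\ \text{primitive }8\text{th root},\ -1\}$ with Examples~\ref{anis4}(i)--(iii) to confirm consistency with $\tau^+$.

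\textbf{Main obstacle.} None of the steps is deep; the only place demanding care is the surjectivity half of (ii), namely verifying that the formulas in (iii) genuinely define a quadratic form — that is, that $b(g,h):=q(g+h)/(q(g)q(h))$ is bimultiplicative — and that it is non-degenerate, in both the cyclic and Klein cases simultaneously and uniformly in $\xi$. This is a finite but slightly fiddly verification, complicated by the fact that in the cyclic case $u=2v$ so one must be consistent about expressing everything in terms of the single generator $v$ and using $q(nv)=\xi^{n^2}$ with $q(u)=q(2v)=\xi^4=-1$. I expect this bookkeeping — keeping the two group structures straight while proving bimultiplicativity and injectivity of $G\to G^*$ — to be the bulk of the work, but it is entirely routine. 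A secondary minor point is justifying in (i) that $b(u,v)$ cannot equal $1$: this uses that $q$ is non-degenerate so $u\notin\ker b$, hence $b(u,\cdot)$ is the unique nontrivial character of $G$ killing the index-$2$ subgroup $\{0,u\}$, which immediately gives $b(u,v)=-1$ for the remaining coset.
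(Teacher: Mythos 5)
Your proposal is correct and follows essentially the same route as the paper's proof: non-degeneracy forces $b(u,v)=-1$ (since $b(u,\cdot)^2=1$ and $b(u,u)=1$), hence $q(u+v)=q(v)=:\xi$; the relation $2v\in\{0,u\}$ gives $\xi^4=(-1)^{n}$, so $\xi\in\mu_8$ and $n=n(\xi)$; and the Gauss sum is summed termwise as $1-1+2\xi=2\xi$. The only differences are cosmetic: the paper derives $\xi^8=1$ uniformly from $b(v,v)^4=b(4v,v)=1$ instead of your two-case computation, and it leaves implicit the existence (surjectivity) verification for $M_\xi$ that you plan to carry out explicitly.
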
  

\begin{proof}
Let $b:G\times G\to k^{\times}$ be the bicharacter corresponding to $q$.
For any $v\in G$ one has $4v=0$, so $q(v)^8=b(v,v)^4=b(4v,v)=1$.

Now suppose that $v\ne 0,u$. Since $b(u,u)=q(u)^2=1$ and $2v\in\{ 0,u\}$ we get
$b(u,v)^2=1$. As $b$ is non-degenerate $b(u,v)\ne 1$ (otherwise $u\in\Ker b$).
So $b(u,v)=-1$. Therefore $q(u+v)=q(u)q(v)b(u,v)=q(v)$.
Clearly $2vu$ for some $n\in\BZ/2\BZ$. If $q(v)=\xi$ then $\xi^4=q(u)^{n^2}=
(-1)^{n^2}=(-1)^n$, so $n(\xi )$. 

Finally, $\tau^+(G,q)=1-1+2\xi=2\xi$.
\end{proof}

\begin{rems}  \label{AutMxi}
\begin{enumerate}
\item[(i)] $M_{\xi}$ is anisotropic if and only if $\xi\ne 1$.
\item[(ii)] If $\xi$ is a primitive 8-th root of unity then $M_{\xi}$ is the metric group from
Example \ref{anis4}(ii). If $\xi =i$, $i^2=-1$, then $M_{\xi}$ is the metric group $A_i\oplus A_i$
from Example \ref{anis4}(i). If $\xi =-1$ then $M_{\xi}$ is the metric group from
Example \ref{anis4}(iii). Finally, if $\xi =1$ then $M_{\xi}$ is isomorphic to $(\BZ/2\BZ)^2$
equipped with the hyperbolic form $(x,y)\mapsto (-1)^{xy}$. 
\item[(iii)] Let $\Aut (M_{\xi},u)$ denote the stabilizer of $u$ in $\Aut (M_{\xi})$.
Clearly $|\Aut (M_{\xi},u)|=2$ and the unique nontrivial element of 
$\Aut (M_{\xi},u)$ permutes $v$ and $v+u$. This automorphism can also be written as
$x\mapsto x+m(x)u$, where $m(x)\in\BZ/2\BZ$ is such that $(-1)^{m(x)}=b(x,u)$.
Here $b:M_{\xi}\times M_{\xi}\to k^{\times}$ is the bicharacter corresponding to 
the quadratic form on $M_{\xi}$.
\item[(iv)]  If $\xi\ne -1$ then $\Aut (M_{\xi},u)=\Aut (M_{\xi})$.
If $\xi = -1$ then $\Aut (M_{\xi})=GL(2,\BF_2)\ne\Aut (M_{\xi},u)$.
\end{enumerate}
\end{rems}

\subsubsection{Classification of anisotropic metric $2$-groups}  \label{future}
\begin{proposition}    \label{A13}
\begin{enumerate}
\item[(i)] Every anisotropic metric $2$-group has order at most 8.
\item[(ii)] The anisotropic metric groups of order $2$ are the groups $A_i$
from \S\ref{anis2} corresponding to elements $i\in k$ with $i^2=-1$.
\item[(iii)]  The anisotropic metric groups of order $4$ are the metric groups $M_{\xi}$
from Lemma \ref{uniformdescr}(iii) corresponding to elements $\xi\in k$ such that $\xi^8=1$, $\xi\ne 1$.
\item[(iv)]  Every anisotropic metric group of order $8$ is isomorphic to an orthogonal
direct sum $M_{\xi}\oplus A_i$, where $\xi,i\in k$, $i^2=-1$, $\xi^8=1$, $\xi\ne 1,-i$.
\item[(v)]  $M_{\xi}\oplus A_i\simeq M_{i\xi}\oplus A_{-i}$. 
If $M_{\xi}\oplus A_i\simeq M_{\xi'}\oplus A_i$ then $\xi =\xi'$.
\end{enumerate}
\end{proposition}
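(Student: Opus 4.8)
\textbf{Plan of proof for Proposition \ref{A13}.}
The plan is to reduce everything to the structural results already available, namely: the classification of anisotropic metric $p$-groups for odd $p$ (Proposition \ref{A5}); Corollary \ref{Acor} (every abelian $2$-group with a symmetric bicharacter contains $L$ with $L\subset L^\perp$ and $(L^\perp:L)\le 2$); the structure of metric groups of order $2$ (\S\ref{anis2}) and the uniform description of the groups $M_\xi$ and the set $T$ of triples $(G,q,u)$ in Lemma \ref{uniformdescr}; and the Gauss-sum multiplicativity/subquotient formulas from \S\ref{gauss2}. I would also use the elementary decomposition of a finite abelian $2$-group into cyclic factors together with Lemma \ref{pG}-type observations that an anisotropic $q$ kills $4G$ (if $8v=0$ then $b(4v,v)=q(v)^8=1$, and if $v\ne 0$ had order $8$ then $q(2v)=q(v)^4$ is a value of the restricted form on a subgroup of exponent $2$, forcing analysis that ultimately contradicts anisotropy in rank $\ge 2$ — the precise bookkeeping here is the technical core).

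First I would prove the exponent and rank bounds underlying (i). For an anisotropic metric $2$-group $(G,q)$ with bicharacter $b$: if some $v$ has order $8$, then $\langle v\rangle$ is isotropic-free but $q|_{\langle v\rangle}$ is a quadratic form on $\BZ/8$; one checks directly that the only anisotropic such forms live on $\BZ/2$ or $\BZ/4$, so in fact $G$ has exponent at most $4$. Next, by Corollary \ref{Acor} there is $L\subset G$ with $L\subset L^\perp$ and $(L^\perp:L)\le 2$; since anisotropy forces $L\subset L^\perp$ to imply $L=0$, we get $(L^\perp:0)=|G|\le 2$ only when $L=L^\perp=0$, i.e.\ actually I must argue more carefully: anisotropy means $G$ has no nonzero isotropic subgroup, but $L$ in Corollary \ref{Acor} need not be isotropic for $q$ (only $b(x,x)$-isotropic), so instead I use Lemma \ref{A7}: $b(x,x)=q(x)^2$ is anisotropic forces $|G|\le 2$ only for the form $b(x,x)$; combining with the structure of $q$ modulo this and the exponent bound, a short case analysis on the cyclic decomposition yields $|G|\le 8$. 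This is the step I expect to be the main obstacle — getting a clean, non-circular argument for the bound $|G|\le 8$ rather than waving at ``Wall's classification''.

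Then (ii) is immediate from \S\ref{anis2}. For (iii), an anisotropic metric group of order $4$ and exponent $\le 4$ contains an element $u$ of order $2$, and since the form on $\langle u\rangle$ is nondegenerate-restricted we must have $q(u)=-1$ (otherwise $\langle u\rangle$ is isotropic); hence $(G,q,u)\in T$, and Lemma \ref{uniformdescr}(ii) together with Remark \ref{AutMxi}(i) identifies $G\cong M_\xi$ with $\xi\ne 1$. Conversely each such $M_\xi$ is anisotropic and metric. For (iv): given anisotropic $(G,q)$ of order $8$, pick $u$ of order $2$ with $q(u)=-1$; the orthogonal complement argument (the subspace of exponent $2$ and its $b$-orthogonal) splits off a copy of $A_i$ for a suitable $i$ with $i^2=-1$, leaving an orthogonal $M_\xi$ of order $4$; anisotropy of the sum rules out $\xi=1$ (that $M_\xi$ is hyperbolic) and rules out the ``$A_{-i}$'' collision $\xi=-i$, whence $\xi\ne 1,-i$, and conversely $\tau^+$ being nonzero shows these are genuinely anisotropic. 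Finally (v) is a direct computation: exhibit an explicit isomorphism $M_\xi\oplus A_i\xrightarrow{\sim}M_{i\xi}\oplus A_{-i}$ on generators (swapping which order-$2$ element one calls $u$), and for the injectivity statement compare Gauss sums, $\tau^+(M_\xi\oplus A_i)=2\xi(1+i)$, which determines $\xi$ once $i$ is fixed.

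I would organize the write-up as: (a) exponent bound; (b) order bound via Lemma \ref{A7}/Corollary \ref{Acor}; (c) orders $2,4$ by direct identification with $A_i$, $M_\xi$; (d) order $8$ by splitting off $A_i$; (e) the isomorphism and uniqueness in (v) by generators-and-relations plus Gauss sums. Throughout, the only genuinely delicate point is ensuring the ``splitting off a hyperbolic or $A_i$ summand'' steps are valid for possibly non-split extensions; I would handle this by working with the canonical filtration $G\supset 2G\supset 4G=0$ and the induced form on $G/2G$, using that over $\BF_2$ the relevant forms are classified by Proposition \ref{serre} and Lemma \ref{bili}-type facts, then lifting.
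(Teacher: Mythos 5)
The central gap is in (i): you never actually obtain the bound $|G|\le 8$, and both routes you sketch fail. The subgroup $L$ produced by Corollary \ref{Acor} is isotropic only for the bicharacter $b$, not for $q$, so anisotropy of $q$ does not force $L=0$; and Lemma \ref{A7} cannot be invoked for the form $x\mapsto b(x,x)=q(x)^2$ on all of $G$, because you have no grounds to assert that this form is anisotropic (it takes the value $1$ on any element with $q=\pm 1$, e.g.\ on the element $u$ with $q(u)=-1$ present in every $M_\xi$), so it yields nothing; you yourself flag the step as open. The missing observation, which is how the paper argues, is elementary: since $L\subset L^{\perp}$, $b$ is trivial on $L\times L$, hence $q(x)^2=b(x,x)=1$ and $q(x+y)=q(x)q(y)$ for $x,y\in L$, i.e.\ $q|_L$ is a homomorphism $L\to\{\pm 1\}$; its kernel is a $q$-isotropic subgroup, hence trivial by anisotropy, so $|L|\le 2$ and $|G|=|L|\cdot|L^{\perp}|\le 2|L|^2\le 8$. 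No exponent bound or cyclic-decomposition case analysis is needed.

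There are also flawed steps in (iii) and (iv). The inference ``$q(u)=-1$, otherwise $\langle u\rangle$ is isotropic'' is wrong: an order-$2$ element may have $q(u)=\pm i$ (as in $A_{\pm i}$) without $\langle u\rangle$ being isotropic. Conversely, an element with $q(u)=-1$ has $b(u,u)=1$, so $\langle u\rangle$ is \emph{not} an orthogonal direct summand and is not a copy of $A_i$; so your plan for (iv) of splitting off $A_i$ starting from such a $u$ does not work as stated. What (iv) needs is an order-$2$ element $w$ with $b(w,w)\ne 1$, i.e.\ $q(w)=\pm i$, and its existence is exactly what the $L$-argument supplies: writing $L=\langle u\rangle$ (so $q(u)=-1$ by the argument above) and taking $w\in L^{\perp}\setminus L$, one has $2w\in L$ and $q(2w)=b(w,2w)=1$, so $2w=0$; if $q(w)$ were $-1$ then $q(w+u)=q(w)q(u)b(w,u)=1$, contradicting anisotropy; hence $q(w)=\pm i$, $\langle w\rangle\simeq A_{\pm i}$ splits off orthogonally, and $\langle w\rangle^{\perp}\simeq M_\xi$ by (iii). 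Also, ``$\tau^+\ne 0$ shows anisotropy'' is not a valid argument (hyperbolic metric groups have nonzero Gauss sum); one should just check directly that for $\xi\ne 1,-i$ no product $q(x)q(y)$ with $(x,y)\ne 0$ equals $1$. Parts (ii) and (v) of your plan are fine and coincide with the paper's proof (explicit isomorphism plus comparison of Gauss sums).
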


\begin{proof}
(i) Let $(G,q)$ be a metric 2-group and $b:G\times G\to k^{\times}$ the bicharacter corresponding
to $q$. By Corollary \ref{Acor}, there is a subgroup $L\subset G$ such that $L\subset L^{\perp}$ and
$2\cdot |L|^2\ge |G|$. The restriction of $q$ to $L$ is a character with values $\pm 1$. Its kernel
is trivial because $q$ is anisotropic. So $|L|\le 2$ and therefore $G\le 2\cdot |L|^2\le 8$.

(ii) is clear.

(iii) If $|G|=4$ then $|L|=2$. If $u\in L$, $u\ne 0$ then $q(u)^2=b(u,u)=1$, so $q(u)=-1$.
Therefore $(G,q)$ is one of the groups $M_{\xi}$ from Lemma \ref{uniformdescr}. Recall that
$M_{\xi}$ is anisotropic if and only if $\xi\ne 1$.

(iv) If $|G|=8$ then $|L|=2$ and $|L^{\perp}|=4$. Let $v\in L$, $v\notin L^{\perp}$. Since 
$|L^{\perp}/L|=2$ one has $2v\in L$. So $q(2v)=b(2v,v)^2=1$, and since $q$ is anisotropic $2v=0$.
Let $N\subset G$ be the subgroup generated by $v$. The restriction of $b$ to $N$ is non-degenerate,
so $N$ and $N^{\perp}$ are metric groups and $G=N\oplus N^{\perp}$. By (ii) and (iii),
$N\simeq A_i$ and $N^{\perp}\simeq M_{\xi}$ for some $i$ and $\xi$. Clearly
$M_{\xi}\oplus A_i$ is anisotropic if and only if $q(x)\notin \{ 1,-i \}$ for $x\in M_{\xi}\setminus\{ 0\}$, i.e.,
if and only if $\xi\ne 1,-i$.

(v) If $M_{\xi}\oplus A_i\simeq M_{\xi'}\oplus A_i$ then $\tau^+(M_{\xi})=\tau^+(M_{\xi'})$ and therefore 
$\xi =\xi'$. To construct an isomorphism $M_{\xi}\oplus A_i\iso M_{i\xi}\oplus A_{-i}\,$, note that 
$M_{\xi}\oplus A_i$ has an element $w$ such that $q(w)=-i$ (namely, $w$ is the sum of the
element $u\in M_{\xi}$ and the nonzero element of $A_i$).
\end{proof}

\subsection{Slightly degenerate pre-metric groups} \label{slightsubsec}
Let $(G,q)$ be a pre-metric group. Let $b:G\times G\to k^{\times}$ be the
bicharacter corresponding to $q$. The restriction of $q$ to $\Ker b$ is a character.

\begin{definition}  \label{slightdef}
A pre-metric group $(G,q)$ is said to be {\em slightly degenerate\,} if $|\Ker b|=2$
and the value of $q$ at the nontrivial element of $\Ker b$ equals $-1$.
\end{definition} 
\begin{example} \label{slightex}
$G=\BZ/2\BZ , \quad q(n)=(-1)^n$. (The role of this example is clear from Corollary \ref{slightcor}
below.)
\end{example}

\begin{lemma}  \label{slightlem}
If a pre-metric group $(G,q)$ is slightly degenerate then the exact sequence of groups
$0\to\Ker b\to \G\to G/\Ker b\to 0$ splits.
\end{lemma}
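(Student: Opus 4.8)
The statement to prove is: if $(G,q)$ is slightly degenerate, then the extension $0\to\Ker b\to\G\to G/\Ker b\to 0$ splits, where $\G$ presumably denotes $G$ itself written as an extension of $\bar G:=G/\Ker b$ by the order-two group $\Ker b$. Write $z$ for the nontrivial element of $\Ker b$, so $2z=0$ and $q(z)=-1$. The plan is to produce a subgroup $H\subset G$ with $H\cap\Ker b=0$ and $H\to\bar G$ surjective; equivalently, to split the surjection $G\twoheadrightarrow\bar G$.

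First I would reduce to the $2$-primary part. Decompose $G=G_2\oplus G_{odd}$ into its $2$-part and odd part; since $z$ has order $2$, $z\in G_2$ and $\Ker b\subset G_2$, while the projection $G\to\bar G$ restricted to $G_{odd}$ is already an isomorphism onto $\bar G_{odd}$. So it suffices to split $G_2\twoheadrightarrow\bar G_2$, and I may as well assume $G$ is a finite abelian $2$-group. Now the key observation is that $z$ is \emph{not} divisible by $2$ in $G$: if $z=2w$ for some $w\in G$, then $q(z)=q(2w)=q(w)^4$, which is a fourth power in $k^\times$, hence cannot equal $-1$; contradiction. (Here I use the identity $q(nw)=q(w)^{n^2}$ recorded at the start of Appendix A.)

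The main step is then a standard fact about finite abelian $2$-groups: if $z\in G$ has order $2$ and is not contained in $2G$, then $\langle z\rangle$ is a direct summand of $G$. I would prove this by choosing a basis adapted to $z$ — concretely, extend $z$ to a minimal generating set: write $G$ as a direct sum of cyclic groups $\bigoplus_i\BZ/2^{a_i}\BZ\cdot e_i$; since $z\notin 2G$, the image of $z$ in $G/2G$ is nonzero, so after a change of generators we may assume $z=e_{i_0}$ for some $i_0$ with $a_{i_0}=1$ (the order of $z$ being $2$ forces the corresponding cyclic factor to be $\BZ/2\BZ$). Then $H:=\bigoplus_{i\ne i_0}\BZ/2^{a_i}\BZ\cdot e_i$ is a complement to $\langle z\rangle=\Ker b$ in $G_2$, and $H\oplus G_{odd}$ is the desired splitting of $G\twoheadrightarrow\bar G$.

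The only mild subtlety — and the part I would write out with some care — is the passage ``$z\notin 2G$ and $\mathrm{ord}(z)=2$ imply $\langle z\rangle$ is a direct summand.'' This is where one must be careful that the minimal generating set can genuinely be chosen to include $z$; the clean way is to invoke that $G/2G$ is an $\BF_2$-vector space, pick a basis of $G/2G$ whose first element is the image $\bar z$ of $z$, lift it to generators $z=g_1,g_2,\dots,g_r$ of $G$, and then note that the subgroup generated by $g_2,\dots,g_r$ has order $|G|/2$ (by counting, since these $r$ elements generate and $g_1$ has order $2$) and meets $\langle z\rangle$ trivially (again by the $G/2G$ argument, as $\bar g_2,\dots,\bar g_r$ are independent from $\bar z$). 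That gives the complement directly without having to normalize the orders of the cyclic factors. I expect this counting/independence argument to be the crux; everything else is bookkeeping.
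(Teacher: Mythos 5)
Your overall strategy is the same as the paper's: reduce the splitting to showing that the nontrivial element $z\in\Ker b$ is not divisible by $2$ in $G$, and then use the standard fact that an order-two element outside $2G$ generates a direct summand (the paper leaves this fact implicit; your $G/2G$ basis argument for it is fine, as is the reduction to the $2$-primary part). However, the crucial step --- showing $z\notin 2G$ --- rests on a false claim. You argue that if $z=2w$ then $q(z)=q(2w)=q(w)^4$ is a fourth power in $k^\times$ and ``hence cannot equal $-1$.'' Since $k$ is algebraically closed of characteristic $0$, every element of $k^\times$ is a fourth power; concretely $\zeta^4=-1$ for any primitive $8$-th root of unity $\zeta$. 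The phenomenon really occurs for quadratic forms: on $\BZ/4\BZ$ with $q(n)=\zeta^{n^2}$ one has $q(2)=q(1)^4=-1$, so an element with $q$-value $-1$ can perfectly well lie in $2G$. Your argument never uses the hypothesis that $z$ lies in the kernel of $b$, and without that hypothesis the assertion you are proving at this step is simply not true; this is a genuine gap, not a cosmetic one.

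The correct argument (and the one in the paper) uses the degeneracy: if $2w=z\in\Ker b$, then
\begin{equation*}
q(z)=q(2w)=q(w)^4=b(w,w)^2=b(2w,w)=b(z,w)=1,
\end{equation*}
since $b(w,w)=q(2w)/q(w)^2=q(w)^2$ and $b(z,-)\equiv 1$; this contradicts $q(z)=-1$, so $z\notin 2G$. With this substitution your proof goes through; the remaining bookkeeping in your proposal is correct.
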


\begin{proof} 
Since $\Ker b\simeq\BZ/2\BZ$ it suffices to show that $\Ker b\not\subset 2G$.
If $x\in G$ and $2x\in\Ker b$ then 
$q(2x)=q(x)^4=b(2x,x)=1$. By Definition \ref{slightdef}, $q(y)\ne 1$ if $y\in\Ker b$, $y\ne 0$.
So $2x=0$.
\end{proof} 

Let $\A$ be the following category: its objects are slightly degenerate pre-metric groups
and an $\A$-morphism from $(G_1,q_1)\in\A$ to $(G_2,q_2)\in\A$ is a morphism
of pre-metric groups which maps $\Ker b_1$ to $\Ker b_2$.

We will construct an equivalence between  $\A$ and the following category $\B$:
the objects of $\B$ are pairs $(H,\beta )$, where $H$ is a finite abelian group and 
$\beta :H\times H\to k^{\times}$ is a non-degenerate symmetric pairing; a morphism
$(H,\beta )\to (H',\beta' )$ is a homomorphism $\phi: H\to H'$ such that
$\beta' (\phi (h_1),\phi (h_2))=\beta (h_1,h_2)$ for all $h_1,h_2\in H$.

Let $(G,q)$ be a pre-metric group and $b:G\times G\to k^{\times}$ the
bicharacter corresponding to $q$. The group $\bar G:=G/\Ker b$ is equipped with
the non-degenerate symmetric pairing $\bar b:\bar G\times\bar G\to k^{\times}$ induced by $b$.
Thus we get a functor $F:\A\to\B$. 

\begin{proposition}   \label{slightprop1}
The functor $F:\A\to\B$ is an equivalence.
\end{proposition}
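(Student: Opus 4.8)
\textbf{Proof proposal for Proposition~\ref{slightprop1}.}

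The plan is to exhibit an explicit quasi-inverse functor $G:\B\to\A$ and check that $F\circ G$ and $G\circ F$ are naturally isomorphic to the respective identity functors. Given $(H,\beta)\in\B$, I would build a slightly degenerate pre-metric group as follows. First split off the $2$-torsion: write $H=H_{odd}\oplus H_2$ where $|H_{odd}|$ is odd and $H_2$ is a $2$-group. On $H_{odd}$ the symmetric pairing $\beta$ comes from a quadratic form by Lemma~\ref{bili} (indeed $q(h):=\beta'(h,h)$ where $\beta'$ is the unique ``square root'' bicharacter with $(\beta')^2 = \beta$ on the odd part, obtained by raising to the power $(|H_{odd}|+1)/2$). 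The subtlety is entirely on $H_2$: there a non-degenerate symmetric $\beta$ need not be of the form $\beta(h,h)$, so I introduce an auxiliary $\BZ/2\BZ$. Explicitly, set $G := H\oplus(\BZ/2\BZ)$ with $\Ker b$ to be the summand $\BZ/2\BZ$, and define $q$ on $G$ so that: $q$ restricted to the new $\BZ/2\BZ$ sends its generator to $-1$; the associated bicharacter $b$ on $G$ restricts to $\beta$ on $H$ and is trivial in the $\BZ/2\BZ$ directions; and $q|_{H_2}$ is chosen to be \emph{some} quadratic form refining $\beta|_{H_2}$ with values in $k^\times$. The point is that such a $q$ exists — this is exactly the content of Remark~\ref{easyrems}(ii)/(iii) type arguments together with the classification of symmetric bicharacters on $2$-groups — and that different choices of $q|_{H_2}$ differ by a character $H_2\to\mu_2$, i.e. by composing with an automorphism of $G$ that is the identity modulo $\Ker b$. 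That last observation is what will make $G$ well-defined on objects up to canonical isomorphism in $\A$ and functorial on morphisms.

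Concretely, for the $2$-group part I would reduce to cyclic summands. A non-degenerate symmetric pairing on $\BZ/2^m\BZ$ is, up to isomorphism, $\beta(x,y)=\zeta^{cxy}$ with $\zeta$ a primitive $2^m$-th root of unity and $c$ odd; this lifts to the quadratic form $q(x)=\zeta'^{cx^2}$ for a suitable $2^{m+1}$-th root $\zeta'$ of unity when $m\geq 2$, and for $m=1$ one uses $q(x)=i^{cx^2}$ — but then $q(x)^2=\beta(x,x)$ need not equal the prescribed $\beta$ on the diagonal unless one is careful, which is precisely why the extra $\BZ/2\BZ$ with $q=-1$ on it is needed: it absorbs the discrepancy. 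Having fixed these local models, one assembles $G$ as an orthogonal direct sum over the invariant-factor decomposition of $H$ together with one copy of $\BZ/2\BZ$, and checks directly from Definition~\ref{slightdef} that $(G,q)\in\A$ and that $\Ker b$ is the designated $\BZ/2\BZ$. This defines $G(H,\beta)$; on morphisms one lifts $\phi:H\to H'$ to $G(H,\beta)\to G(H',\beta')$ using Lemma~\ref{slightlem} to identify $G$ with $\Ker b\oplus \bar G$ as a group (the splitting), and the compatibility $\beta'(\phi h_1,\phi h_2)=\beta(h_1,h_2)$ forces the lift to preserve $q$ up to the usual $\mu_2$-ambiguity that is killed by naturality.

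For the natural isomorphisms: $F(G(H,\beta))=(G(H,\beta)/\Ker b,\ \bar b)$, and by construction $G(H,\beta)/\Ker b\cong H$ with $\bar b$ corresponding to $\beta$, so $F\circ G\cong \mathrm{id}_\B$ canonically. Conversely, given $(G,q)\in\A$ with $\bar G=G/\Ker b$, Lemma~\ref{slightlem} gives a (non-canonical) splitting $G\cong\Ker b\oplus\bar G$; under this splitting $q$ decomposes into the character $-1$ on $\Ker b$ and a quadratic form on $\bar G$ refining $\bar b$, which is one of the allowed choices in the definition of $G(F(G,q))$. Hence $G(F(G,q))\cong (G,q)$ in $\A$; the independence of this isomorphism from the chosen splitting (so that it is natural) again comes down to the fact that two such identifications differ by an element of $\Hom(\bar G,\mu_2)$ acting as an $\A$-automorphism inducing the identity on $\bar G$, and $\A$-morphisms are required to respect $\Ker b$, so the ambiguity is harmless at the level of the groupoid of isomorphisms. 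I expect the main obstacle to be the $2$-primary bookkeeping — specifically, showing cleanly that \emph{every} non-degenerate symmetric pairing on a finite abelian $2$-group admits a quadratic refinement into $k^\times$ after adding a single copy of $(\BZ/2\BZ, (-1)^n)$, and that the space of such refinements is a torsor over $\Hom(\cdot,\mu_2)$; everything else (odd part via Lemma~\ref{bili}, functoriality, the two natural isomorphisms) is routine once that structural fact is in hand.
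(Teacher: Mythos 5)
Your overall strategy (orthogonally adjoin $(\BZ/2\BZ,(-1)^n)$ to a quadratic refinement $q_0$ of $\beta$, then check the two composites) can be completed, but as written there is a genuine gap exactly at the point you yourself flag as the ``main obstacle,'' and your sketched route to it does not work. First, since you demand that the bicharacter of $q$ on $G=H\oplus\BZ/2\BZ$ restrict to $\beta$ on $H$ and be trivial in the $\BZ/2\BZ$ directions, the restriction $q|_H$ is automatically a quadratic refinement of $\beta$ on $H$ itself; so the auxiliary $\BZ/2\BZ$ cannot ``absorb'' any discrepancy in the refinement problem --- its only role is to create the kernel on which $q$ takes the value $-1$. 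Hence you really do need the statement: every non-degenerate symmetric bicharacter on a finite abelian $2$-group admits a quadratic refinement with values in $k^\times$. Your proposed proof of this, by assembling an orthogonal direct sum over the invariant-factor decomposition, fails: a symmetric pairing on a $2$-group need not be an orthogonal sum of pairings on cyclic subgroups. For the hyperbolic pairing $\beta\bigl((x_1,x_2),(y_1,y_2)\bigr)=(-1)^{x_1y_2+x_2y_1}$ on $(\BZ/2\BZ)^2$ one has $\beta(v,v)=1$ for every $v$, so no cyclic subgroup carries a non-degenerate restriction, and no orthogonal cyclic decomposition exists. (A refinement still exists --- e.g.\ $q(x_1,x_2)=(-1)^{x_1x_2}$ here, and in general one can fix any cyclic decomposition with generators $e_i$, refine each diagonal restriction, and insert cross terms $\prod_{i<j}\beta(e_i,e_j)^{x_ix_j}$ --- but no such argument appears in your proposal.) Beware also of circularity: in the paper the existence and $\Hom(\cdot,\mu_2)$-torsor structure of refinements is Proposition~\ref{slightprop2}, which is \emph{deduced from} Proposition~\ref{slightprop1} together with Lemma~\ref{slightlem}, so it cannot be quoted as input here. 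Finally, ``the $\mu_2$-ambiguity is killed by naturality'' is not yet an argument: to have a functor you must choose one refinement $q_0$ per object and define the lift of a morphism $\phi$ explicitly, e.g.\ $(h,n)\mapsto(\phi(h),n+m_\phi(h))$ with $(-1)^{m_\phi(h)}=q_0'(\phi(h))/q_0(h)$, and then verify compatibility with composition (this does work, but it has to be written down).

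For comparison, the paper sidesteps every one of these issues by exhibiting a choice-free quasi-inverse: it sends $(H,\beta)$ to $\widetilde{H}:=\{(h,a)\in H\times k^\times\mid a^2=\beta(h,h)\}$ with the twisted operation $(h_1,a_1)+(h_2,a_2)=(h_1+h_2,\,a_1a_2\beta(h_1,h_2))$ and $q(h,a):=a$. Then $F(\widetilde H,q)=(H,\beta)$ on the nose, the assignment is functorial with no choices, and on the other side $g\mapsto(\bar g,q(g))$ is a canonical isomorphism $(G,q)\cong\widetilde{\bar G}$ (injective by slight degeneracy, surjective by counting). There is no odd/even splitting, no diagonalization, and no $2$-primary bookkeeping; the existence of refinements and of splittings, which your argument needs as input, comes out afterwards as Lemma~\ref{slightlem} and Proposition~\ref{slightprop2}. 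If you wish to keep your construction, supply the refinement-existence proof via the cross-term formula and the explicit morphism-level lift; otherwise the canonical $\widetilde H$ is both shorter and automatically natural.
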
 

\begin{proof}   
The inverse functor takes $(H,\beta )\in\B$ to the following pre-metric group $(\widetilde{H}, q)$.
Define $\widetilde{H}$ to be the set of pairs $(h,a)$ with $h\in H$, $a\in k^{\times}$, $a^2=b(h,h)$.
Define the group operation on $\widetilde{H}$ by 
$(h_1,a_1)+(h_2,a_2)=(h_1+h_2,a_1a_2\beta (h_1,h_2))$.
Finally, set $q(h,a):=a$.
\end{proof} 

\begin{proposition}   \label{slightprop2}
Let $H$ be a finite abelian group with a non-degenerate symmetric pairing
$\beta :H\times H\to k^{\times}$ and let $\widetilde{H}$ be the corresponding
slightly degenerate pre-metric group (see the proof of Proposition \ref{slightprop1}). Then
\begin{enumerate}
\item[(i)] 
the epimorphism $\pi :\widetilde{H}\to H$ admits a splitting;

\item[(ii)]  
splittings of $\pi$ bijectively correspond to quadratic forms
$\bar q:H\to k^{\times}$ such that the bicharacter $H\times H\to k^{\times}$ 
associated to $\bar q$ equals $\bar b$; namely, the splitting $H\to\widetilde{H}$
corresponding to $\bar q$ is defined by $h\mapsto (h,\bar q(h))$.
\end{enumerate}
\end{proposition}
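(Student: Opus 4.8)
The plan is to unwind the explicit model of $\widetilde H$ from the proof of Proposition~\ref{slightprop1} and reduce the statement to bookkeeping together with Lemma~\ref{slightlem}. Recall that $\widetilde H$ consists of the pairs $(h,a)$ with $h\in H$, $a\in k^{\times}$, $a^{2}=\beta (h,h)$, with addition $(h_1,a_1)+(h_2,a_2)=(h_1+h_2,a_1a_2\beta (h_1,h_2))$, quadratic form $q(h,a)=a$, and projection $\pi (h,a)=h$. First I would record that the bicharacter $b$ associated to $q$ on $\widetilde H$ satisfies $b\bigl((h_1,a_1),(h_2,a_2)\bigr)=\beta (h_1,h_2)$, so that $\Ker b=\Ker\pi =\{(0,1),(0,-1)\}$ because $\beta$ is non-degenerate, and that $q(0,-1)=-1$. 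Hence $(\widetilde H,q)$ is itself a slightly degenerate pre-metric group in the sense of Definition~\ref{slightdef}, and the induced non-degenerate pairing on $\widetilde H/\Ker b$ is exactly $\bar b=\beta$. Part (i) then follows at once from Lemma~\ref{slightlem} applied to $(\widetilde H,q)$: it asserts precisely that the extension $0\to\Ker\pi\to\widetilde H\to H\to 0$ splits as a sequence of abelian groups.

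For part (ii) I would parametrise sections. An element of $\widetilde H$ lying over $h\in H$ is exactly a pair $(h,a)$ with $a\in k^{\times}$ and $a^{2}=\beta (h,h)$, so a set-theoretic section of $\pi$ is the same as a function $f:H\to k^{\times}$ with $f(h)^{2}=\beta (h,h)$, the section being $h\mapsto (h,f(h))$. Such a section is a group homomorphism if and only if
\[
f(h_1+h_2)=f(h_1)\,f(h_2)\,\beta (h_1,h_2)\qquad\text{for all }h_1,h_2\in H .
\]
I would then check that the functions $f$ satisfying this identity are precisely the quadratic forms $\bar q:H\to k^{\times}$ whose associated bicharacter equals $\bar b=\beta$. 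Given such an $f$, the displayed identity says that $(g,h)\mapsto f(g+h)/\bigl(f(g)f(h)\bigr)=\beta (g,h)$ is bimultiplicative; taking $h_1=h_2=0$ gives $f(0)=1$, and then $1=f(0)=f(h)f(-h)\beta (h,-h)=f(h)f(-h)\beta (h,h)^{-1}$ combined with $f(h)^{2}=\beta (h,h)$ yields $f(-h)=f(h)$, so $f$ is a quadratic form refining $\beta$. Conversely, if $\bar q$ is a quadratic form with associated bicharacter $\beta$, then comparing $\bar q(2h)=\bar q(h)^{4}$ (Remark~\ref{easyrems}(i)) with $\bar q(2h)=\bar q(h)^{2}\beta (h,h)$ gives $\bar q(h)^{2}=\beta (h,h)$, so $h\mapsto (h,\bar q(h))$ lands in $\widetilde H$ and is a homomorphic section. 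These two constructions are mutually inverse, which gives the asserted bijection and the explicit formula.

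The proof involves no serious obstacle; the only delicate point is the elementary verification in part (ii) that the ``homomorphic section'' conditions on $f$ are jointly equivalent to ``$f$ is a quadratic form refining $\bar b$'', in particular extracting $f(-h)=f(h)$ and, in the converse direction, $f(h)^{2}=\beta (h,h)$ from the remaining relations. One could alternatively prove (i) directly by exhibiting a quadratic refinement of $\beta$ by hand, but routing it through Lemma~\ref{slightlem} is cleaner and avoids repeating that argument.
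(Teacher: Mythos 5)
Your proof is correct and follows exactly the route the paper intends: its entire proof reads ``Checking (ii) is straightforward, and (i) follows from Lemma~\ref{slightlem}'', and your argument is precisely that — (i) by verifying $(\widetilde H,q)$ is slightly degenerate with $\Ker b=\Ker\pi$ and invoking Lemma~\ref{slightlem}, and (ii) by the straightforward identification of homomorphic sections $h\mapsto (h,f(h))$ with quadratic refinements of $\beta=\bar b$. The explicit verifications you supply (e.g.\ $f(-h)=f(h)$ and, conversely, $\bar q(h)^2=\beta(h,h)$ via Remark~\ref{easyrems}(i)) are the details the paper leaves to the reader, and they check out.
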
 

\begin{proof}   
Checking (ii) is straightforward, and (i) follows from Lemma \ref{slightlem}.
\end{proof}

\begin{corollary}   \label{slightcor}
Every slightly degenerate pre-metric group can be decomposed as an orthogonal
direct sum of a metric group and the pre-metric group from Example~\ref{slightex}.
\hfill\qedsymbol
\end{corollary}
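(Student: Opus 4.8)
\textbf{Proof proposal for Corollary~\ref{slightcor}.}

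The plan is to combine Propositions~\ref{slightprop1} and~\ref{slightprop2}, reducing everything to a statement about non-degenerate symmetric pairings on finite abelian groups. First I would let $(G,q)$ be a slightly degenerate pre-metric group with associated bicharacter $b$, and set $H:=\bar G=G/\Ker b$ with its induced non-degenerate symmetric pairing $\bar b$. By Proposition~\ref{slightprop1} the functor $F:\A\to\B$ is an equivalence, so $(G,q)$ is isomorphic (in $\A$) to the canonical pre-metric group $\widetilde H$ built from $(H,\bar b)$ in the proof of Proposition~\ref{slightprop1}. Thus it suffices to prove the decomposition for $\widetilde H$.

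Next I would apply Proposition~\ref{slightprop2}(i): the projection $\pi:\widetilde H\to H$ admits a splitting $s:H\to\widetilde H$. I claim that a splitting produces the desired orthogonal decomposition. Indeed, the image $s(H)\subset\widetilde H$ is a subgroup meeting $\Ker b$ trivially and of the right order, so $\widetilde H=s(H)\oplus\Ker b$ as abelian groups. To see the sum is orthogonal, note $\Ker b$ is by definition orthogonal to everything under $b$, so in particular $b(s(h),z)=1$ for $z\in\Ker b$; hence $q$ restricted to $s(H)\oplus\Ker b$ splits as a product $q|_{s(H)}\cdot q|_{\Ker b}$. By Proposition~\ref{slightprop2}(ii) the restriction of $q$ to $s(H)$, transported to $H$ via $s$, is a quadratic form $\bar q$ whose associated bicharacter is $\bar b$, which is non-degenerate; so $(s(H),q|_{s(H)})$ is a metric group. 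On the other hand, by Definition~\ref{slightdef}, $\Ker b\cong\BZ/2\BZ$ and $q$ takes value $-1$ on its nontrivial element, i.e.\ $(\Ker b, q|_{\Ker b})$ is exactly the pre-metric group of Example~\ref{slightex}. This gives the asserted orthogonal direct sum decomposition.

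The only genuine point requiring care — the ``main obstacle'', though it is really just bookkeeping — is verifying that the subgroup decomposition $\widetilde H = s(H)\oplus\Ker b$ is genuinely \emph{orthogonal} with respect to $q$ (not merely a direct sum of groups), so that the Gauss-sum-style multiplicativity and the identification of the two summands as a metric group and Example~\ref{slightex} both make sense. This reduces to the defining property of $\Ker b$ together with the fact that $q$ is multiplicative up to $b$; once one writes $q(s(h)+z)=q(s(h))q(z)b(s(h),z)=q(s(h))q(z)$ for $z\in\Ker b$, everything falls out. Alternatively, one can bypass splittings entirely and argue directly from Lemma~\ref{slightlem} applied to $(G,q)$ itself: it furnishes a section of $G\to G/\Ker b$, whose image is a metric subgroup complementary and orthogonal to $\Ker b$; I would mention this as the cleaner route and present it as the actual proof.
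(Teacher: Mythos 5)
Your proposal is correct and follows essentially the route the paper intends: the corollary is stated as an immediate consequence of Propositions~\ref{slightprop1} and~\ref{slightprop2}, and your argument simply spells out that a splitting of $\pi$ yields an orthogonal decomposition into a metric group $(H,\bar q)$ and $(\Ker b,q|_{\Ker b})$, which is Example~\ref{slightex}. Your remark that one can instead split $G\to G/\Ker b$ directly via Lemma~\ref{slightlem} is also fine, but it is not genuinely different, since Proposition~\ref{slightprop2}(i) is itself deduced from that lemma.
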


\subsection{Degenerate anisotropic pre-metric groups} \label{degensubsec}
\begin{lemma} \label{degenlem1}
\begin{enumerate}
\item[(i)] 
A degenerate anisotropic pre-metric group is slightly degenerate.

\item[(ii)]  
Let $(G,q)$ be a slightly degenerate pre-metric group and $b:G\times G\to k^{\times}$ the
bicharacter corresponding to $q$. Let $\bar G:=G/\Ker b$ and $\bar b:\bar G\times\bar G\to k^{\times}$ 
the bicharacter induced by $b$. Then $q$ is anisotropic if and only if the quadratic form
$x\mapsto \bar b(x,x)$, $x\in\bar G$, is anisotropic.
\end{enumerate}
\end{lemma}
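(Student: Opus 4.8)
The plan is to deduce both parts from two elementary facts: the identity $b(g,g)=q(g)^{2}$ for every $g\in G$, which follows from $b(g,g)=q(2g)/q(g)^{2}$ together with $q(2g)=q(g)^{4}$ (Remark~\ref{easyrems}(i)); and the content of Remark~\ref{easyrems}(iii), that the restriction of a quadratic form to the kernel of its associated bicharacter is a homomorphism taking values in $\{1,-1\}$.

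For (i), I would argue as follows. Let $(G,q)$ be degenerate and anisotropic, set $K:=\Ker b\neq 0$, and consider $q|_{K}\colon K\to k^{\times}$. By Remark~\ref{easyrems}(iii) this is a homomorphism with $q(h)^{2}=1$, so it lands in $\{1,-1\}$, and anisotropy forces $q(h)\neq 1$ for $h\neq 0$; hence $q|_{K}$ is injective, so $|K|\leq 2$. Since $K\neq 0$ we get $|K|=2$ and $q$ equals $-1$ at the nontrivial element of $K$, which is exactly the assertion that $(G,q)$ is slightly degenerate (Definition~\ref{slightdef}).

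For (ii), write $\bar g$ for the image of $g$ in $\bar G=G/K$, where now $K=\Ker b$ has order $2$ and is generated by an element $z$ with $q(z)=-1$; then $\bar b(\bar g,\bar g)=b(g,g)=q(g)^{2}$. If $x\mapsto\bar b(x,x)$ is anisotropic and $q(g)=1$, then $\bar b(\bar g,\bar g)=1$, so $\bar g=0$, i.e.\ $g\in K$; since $q$ is injective on $K$ this gives $g=0$, so $q$ is anisotropic. Conversely, if $q$ is anisotropic and $\bar b(\bar g,\bar g)=q(g)^{2}=1$, then $q(g)=\pm1$: when $q(g)=1$ anisotropy gives $g=0$; when $q(g)=-1$ I use $b(g,z)=1$ (as $z\in\Ker b$) to get $q(g+z)=q(g)q(z)b(g,z)=1$, hence $g+z=0$, hence $g=z$ and $\bar g=0$. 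Either way $\bar g=0$, so $x\mapsto\bar b(x,x)$ is anisotropic. (Alternatively, (ii) follows from Corollary~\ref{slightcor} by splitting off the Example~\ref{slightex} summand, on which the form is anisotropic, and using $b_{M}(m,m)=q_{M}(m)^{2}$ on the metric summand $M$.) Everything here is routine; the only point that genuinely requires the slight-degeneracy hypothesis --- rather than mere degeneracy or the structure of $b$ --- is the case $q(g)=-1$ in the converse of (ii), where the value $-1$ of $q$ at the nontrivial element of $\Ker b$ is precisely what lets one conclude $\bar g=0$.
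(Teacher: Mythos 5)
Your proof is correct and follows essentially the same route as the paper: part (i) via the observation that $q$ restricted to $\Ker b$ is a $\pm1$-valued character with trivial kernel, and part (ii) via the identities $b(x,x)=q(x)^{2}$ and $q(x+u)=-q(x)$ for the nontrivial element $u\in\Ker b$ (the paper states the first identity with an apparent typo). Your write-up merely spells out the case analysis that the paper leaves implicit.
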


\begin{proof}   
The restriction of $q$ to $\Ker b$ is a character with trivial kernel, so we get (i). Statement (ii)
follows from the equalities $b(x,x)^2=q(x)$ and $q(x+u)=-q(x)$, where $x\in G$ and $u$ is the
nontrivial element of $\Ker b$.
\end{proof} 

\begin{proposition}     \label{degenprop}
\begin{enumerate}
\item[(i)] 
A degenerate anisotropic pre-metric group is an orthogonal direct sum of a degenerate
anisotropic pre-metric 2-group and an anisotropic metric group of odd order.
\item[(ii)]  
There are two degenerate anisotropic pre-metric 2-groups $(G,q)$, namely the one from 
Example~\ref{slightex} and the following one:
\begin{equation} \label{degeneq}
G=(\BZ/2\BZ)^2 , \quad q(m,n)=i^m(-1)^n.
\end{equation}
Here $i\in k$ is a fixed square root of $-1$.
\end{enumerate}
\end{proposition}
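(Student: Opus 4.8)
\textbf{Plan of the proof of Proposition~\ref{degenprop}.}
The plan is to reduce everything to the results already proved about slightly degenerate and anisotropic metric groups. First I would invoke Lemma~\ref{degenlem1}(i): a degenerate anisotropic pre-metric group $(G,q)$ is automatically slightly degenerate, so by Corollary~\ref{slightcor} it decomposes as an orthogonal direct sum $(G,q)=(M,q_M)\oplus (N_0,q_0)$, where $(M,q_M)$ is a metric group and $(N_0,q_0)$ is the pre-metric group $\BZ/2\BZ$ with $q_0(n)=(-1)^n$ from Example~\ref{slightex}. An orthogonal direct summand of an anisotropic form is anisotropic, so $(M,q_M)$ is an anisotropic metric group. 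Now decompose the finite abelian group $M$ as $M=M_2\oplus M_{\mathrm{odd}}$, its $2$-primary part and its odd part; since $q_M$ restricted to each is again anisotropic (orthogonality of the decomposition with respect to the associated bicharacter is automatic because the orders are coprime), we get that $M_{\mathrm{odd}}$ carries an anisotropic metric structure of odd order and $M_2$ an anisotropic metric $2$-group. Grouping $(M_2,q_{M_2})\oplus (N_0,q_0)$ as a degenerate anisotropic pre-metric $2$-group $G_2$ and setting $G_{\mathrm{odd}}:=M_{\mathrm{odd}}$ gives the decomposition asserted in (i); I should remark that $G_2$ is genuinely degenerate because it contains $N_0$ in the kernel of its bicharacter, and it is anisotropic as an orthogonal sum of anisotropic pieces.

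For part (ii), I would again use Corollary~\ref{slightcor}: a degenerate anisotropic pre-metric $2$-group $(G,q)$ equals $(M_2,q_{M_2})\oplus (\BZ/2\BZ, (-1)^n)$ with $(M_2,q_{M_2})$ an anisotropic metric $2$-group. By Proposition~\ref{A13}(i--iii), the only anisotropic metric $2$-groups of order $\le 2$ are $\{0\}$ and the groups $A_i=(\BZ/2\BZ, n\mapsto i^{n^2})$, $i^2=-1$; so if $|M_2|\le 2$ we obtain either $(\BZ/2\BZ,(-1)^n)$ itself (the $M_2=\{0\}$ case, which is Example~\ref{slightex}) or $A_i\oplus (\BZ/2\BZ,(-1)^n)$. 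A direct computation identifies the latter with the group $G=(\BZ/2\BZ)^2$ equipped with $q(m,n)=i^m(-1)^n$ of \eqref{degeneq}; one just checks $q(-g)=q(g)$ and that $b(g,g')=q(g+g')/(q(g)q(g'))$ is a bicharacter, that the element $(1,0)$ is not in the kernel (so the form on the $A_i$-part is non-degenerate) while $(0,1)$ spans the kernel with $q(0,1)=-1$. The only remaining point is to rule out $|M_2|\ge 4$. Here I would argue: if $(G,q)$ is a degenerate anisotropic pre-metric $2$-group then by Lemma~\ref{degenlem1}(ii) the quadratic form $x\mapsto\bar b(x,x)$ on $\bar G=G/\Ker b$ is anisotropic, and $\bar b$ is a non-degenerate symmetric bicharacter; by Lemma~\ref{A7} this forces $|\bar G|\le 2$, hence $|G|=2|\bar G|\le 4$, so $|M_2|\le 2$.

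The main obstacle, such as it is, is bookkeeping rather than depth: making sure the various orthogonal decompositions are legitimate (that summands with coprime orders are automatically orthogonal, and that a $2$-primary/odd splitting of a metric group is an orthogonal splitting), and confirming that the isomorphism $A_i\oplus(\BZ/2\BZ,(-1)^n)\cong \bigl((\BZ/2\BZ)^2, i^m(-1)^n\bigr)$ is literally the one written in \eqref{degeneq} rather than some twist of it. Both are short verifications. I do not anticipate needing anything beyond Corollary~\ref{slightcor}, Lemma~\ref{degenlem1}, Lemma~\ref{A7}, and Proposition~\ref{A13}(i--iii), all of which are available.
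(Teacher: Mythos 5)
Your proposal is correct and follows essentially the paper's own argument: part (i) is the evident primary-decomposition remark (which the paper simply calls clear), and for part (ii) the paper's proof likewise rests on Lemma~\ref{degenlem1} and Lemma~\ref{A7}, and it even records the Corollary~\ref{slightcor}-based variant that you use to split off the $\BZ/2\BZ$ summand. Two small touch-ups: your justification that $M_2\oplus N_0$ ``is anisotropic as an orthogonal sum of anisotropic pieces'' is not a valid general principle (for instance $A_i\oplus A_{-i}$ contains the isotropic element $(1,1)$) --- the correct and immediate reason is that it is an orthogonal summand of the anisotropic group $(G,q)$, so the restriction of $q$ to it is anisotropic; and to see that the cases $M_2=A_i$ and $M_2=A_{-i}$ give a single isomorphism class (so that \eqref{degeneq} really is the only order-$4$ example), one should note the identity $(-i)^m(-1)^n=i^m(-1)^{m+n}$, i.e., the paper's Remark~\ref{i-minus-i}.
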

\begin{remark}    \label{i-minus-i}
The equality $(-i)^m(-1)^n=i^m(-1)^{m+n}$ shows that replacing $i$ by $-i$ in \eqref{degeneq}
one gets an isomorphic pre-metric group.
\end{remark}

\begin{proof}   
(i) is clear and (ii) follows from Proposition~\ref{slightprop1}, Lemma~\ref{degenlem1}, and 
Lem\-ma~\ref{A7}.
(Alternatively, one can use Corollary \ref{slightcor} and the following corollary of 
Proposition \ref{A13}: if $(H,q)$ is
an anisotropic metric 2-group such that $q$ does not take the value $-1$ then
$|H|\le 2$.)
\end{proof} 

\subsection{Weakly anisotropic pre-metric groups} \label{wapg}
According to Definition \ref{5def2anisotropic}, a pre-metric group $(G,q)$ is said to be 
{\em weakly anisotropic\,} if it has no nonzero isotropic subgroups stable under $\Aut (G,q)$.

\subsubsection{Formulation of the theorem} \label{wap-form}
\begin{lemma}   \label{wap-lem}
A pre-metric group $(G,q)$ is weakly anisotropic if and only if all its $p$-components are.
\hfill\qedsymbol
\end{lemma}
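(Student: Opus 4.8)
The plan is to exploit the canonical decomposition of $(G,q)$ into its $p$-components. First I would check that these components are mutually orthogonal: if $g\in G_p$ and $h\in G_\ell$ with $p\neq\ell$, then, writing $b$ for the bicharacter associated to $q$, the element $b(g,h)$ satisfies $b(g,h)^{p^k}=b(p^kg,h)=1$ for $p^kg=0$ and likewise $b(g,h)^{\ell^m}=1$, so $b(g,h)=1$. Hence $G_p\perp G_\ell$ for $p\neq\ell$, and $q=\bigoplus_p q_p$ with $q_p:=q|_{G_p}$ is an orthogonal direct sum. Since each $G_p$ is a characteristic subgroup of $G$, every $\phi\in\Aut(G,q)$ preserves each $G_p$; conversely a family $(\phi_p)$ with $\phi_p\in\Aut(G_p,q_p)$ assembles into an element of $\Aut(G,q)$. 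Thus $\Aut(G,q)=\prod_p\Aut(G_p,q_p)$.

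Next I would record two elementary consequences. For a subgroup $H\subseteq G$ set $H_p:=H\cap G_p$, so that $H=\bigoplus_p H_p$. By orthogonality, $q\bigl(\sum_p h_p\bigr)=\prod_p q(h_p)$ for $h_p\in H_p$, and since each $H_p\subseteq H$, the subgroup $H$ is isotropic if and only if each $H_p$ is isotropic. Moreover, because $\Aut(G,q)=\prod_p\Aut(G_p,q_p)$ and the factor $\Aut(G_p,q_p)$ acts as the identity on $G_\ell$ for $\ell\neq p$, the subgroup $H$ is stable under $\Aut(G,q)$ if and only if each $H_p$ is stable under $\Aut(G_p,q_p)$.

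Granting these, both implications of the lemma are immediate. If every $p$-component is weakly anisotropic and $H\subseteq G$ is a nonzero isotropic subgroup stable under $\Aut(G,q)$, then some $H_p$ is nonzero, isotropic, and stable under $\Aut(G_p,q_p)$, contradicting weak anisotropy of $(G_p,q_p)$. Conversely, if $(G,q)$ is weakly anisotropic and some $H_p\subseteq G_p$ is a nonzero isotropic subgroup stable under $\Aut(G_p,q_p)$, then $H_p$, regarded as a subgroup of $G$, is isotropic and (by the previous paragraph) stable under $\Aut(G,q)$, again a contradiction. This proves the equivalence.

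There is essentially no serious obstacle here; the only point requiring genuine care is the verification that the primary components are mutually orthogonal, which is what makes both $q$ split as an orthogonal sum and $\Aut(G,q)$ split as a product. Once that is in place the remainder is bookkeeping with the decomposition $H=\bigoplus_p H_p$.
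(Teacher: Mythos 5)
Your argument is correct, and it is exactly the verification the paper has in mind: the lemma is stated with an immediate \qedsymbol because the orthogonality of the primary components, the resulting splitting $\Aut(G,q)=\prod_p\Aut(G_p,q_p)$, and the decomposition $H=\bigoplus_p H_p$ of any subgroup are regarded as routine. Your write-up supplies precisely these standard details, so it matches the paper's (omitted) proof.
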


Now fix a prime $p$. Let $H$ denote $\BF_p^2$ equipped with the hyperbolic quadratic form
$(x,y)\mapsto\zeta^{xy}$, where $\zeta\in k$ is a fixed primitive $p$-th root of 1.
As usual, let $b:G\times G\to k^{\times}$ denote the symmetric pairing associated to $q$.

\begin{theorem}   \label{wap-th}
The following properties of a pre-metric $p$-group $(G,q)$ are equivalent:
\begin{enumerate}
\item[(i)] $(G,q)$ is weakly anisotropic;
\item[(ii)] $(G,q)$ is an orthogonal direct sum of several copies of $H$ and an anisotropic pre-metric
$p$-group;
\item[(iii)] for every isotropic subgroup $A\subset G$ there exists an isotropic subgroup $B\subset G$ such that the pairing $A\times B\to k^{\times}$ induced by $b:G\times G\to k^{\times}$ is
non-degenerate.
\end{enumerate}
If $p\ne 2$ these properties are equivalent to the following one:
\begin{enumerate}
\item[(iv)] $pG=0$ and $q$ is non-degenerate.
\end{enumerate}
\end{theorem}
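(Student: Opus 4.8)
The plan is to establish the cycle of implications (iv) $\Rightarrow$ (ii) $\Rightarrow$ (iii) $\Rightarrow$ (i) $\Rightarrow$ (iv) for $p\ne 2$, and (ii) $\Rightarrow$ (iii) $\Rightarrow$ (i) $\Rightarrow$ (ii) for $p=2$, using the classification results of \S\ref{anisotropic-odd}--\S\ref{2groups} as black boxes. First I would dispose of the easy directions. For (iv) $\Rightarrow$ (ii) when $p\ne 2$: if $pG=0$ and $q$ is non-degenerate, then $G$ is an $\BF_p$-vector space with a non-degenerate quadratic form (in the sense of Lemma~\ref{bili}, $q(g)=\zeta^{Q(g)}$ for a non-degenerate $\BF_p$-valued quadratic form $Q$), and the standard structure theory of quadratic spaces over $\BF_p$ writes $Q$ as an orthogonal sum of hyperbolic planes and an anisotropic part of dimension $\le 2$ (Proposition~\ref{serre}); translating back via $\zeta^{(\cdot)}$ gives (ii). For (ii) $\Rightarrow$ (iii): it suffices to check (iii) for $H$ itself and for an anisotropic summand (where the only isotropic subgroup is $0$, so (iii) is vacuous), then combine orthogonal summands — given an isotropic $A\subset G=\bigoplus H_i \oplus G_0$, project to each hyperbolic plane and build $B$ plane by plane; the bookkeeping here is the mildly tedious linear-algebra step but presents no conceptual difficulty. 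For (iii) $\Rightarrow$ (i): if $A\subset G$ is a nonzero isotropic subgroup stable under $\Aut(G,q)$, pick the isotropic $B$ supplied by (iii); then reflections / transvections built from $B$ (which lie in $\Aut(G,q)$) move $A$, contradicting stability — so no such $A$ exists.

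The real content is (i) $\Rightarrow$ (ii) (equivalently (i) $\Rightarrow$ (iv) when $p$ is odd), and this is where I expect the main obstacle to lie. Here one must show: a weakly anisotropic pre-metric $p$-group has no ``unnecessary'' isotropic part beyond hyperbolic planes. I would argue by contraposition: suppose $(G,q)$ is \emph{not} of the form in (ii). Then using the $p$-group structure (Lemma~\ref{pG} for odd $p$, forcing $pG=0$ once the anisotropic kernel is split off; Corollary~\ref{Acor} and the analysis in \S\ref{2groups} for $p=2$) one locates a nonzero isotropic subgroup $A_0$ that is ``intrinsic'' — e.g. contained in $\bigcap_{\sigma\in\Aut(G,q)}\sigma(A)$ for a suitable starting $A$, or the isotropic part of the radical. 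The key subclaim is that one can choose $A_0$ to be \emph{characteristic}, i.e. $\Aut(G,q)$-stable: for this I would take $A_0$ to be generated by all isotropic elements of some canonical subgroup (such as the socle $G[p]\cap q^{-1}(1)$, or the radical $\Ker b$ when it is nontrivial), which is automatically $\Aut(G,q)$-stable, and show it is nonzero precisely when $(G,q)$ fails (ii). The delicate point is handling the case $pG=0$, $q$ non-degenerate but with anisotropic radical — i.e. ruling out that an anisotropic summand can itself force weak anisotropy to fail — and, for $p=2$, dealing with the extra $8$-th-root-of-unity phenomena (the groups $A_i$, $M_\xi$ of \S\ref{anis2}--\ref{future}) and with $|G|=4,9$ where the $\Aut$-representation may be reducible (the Example before Theorem~\ref{5wapt0} flagged exactly these sizes).

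Finally, for $p\ne 2$ I would close the loop with (ii) $\Rightarrow$ (iv): a copy of $H$ has $pH=0$ and is non-degenerate (hyperbolic), an odd-order anisotropic pre-metric group is non-degenerate with $pG=0$ by Lemma~\ref{pG}(ii)(iii) and Remark~\ref{easyrems}(iii), and orthogonal direct sums preserve both properties; hence (iv) holds. For $p=2$ one omits (iv) and simply notes (i) $\Leftrightarrow$ (ii) $\Leftrightarrow$ (iii) from the cycle above. I would also record Lemma~\ref{wap-lem} (reduction to $p$-components) at the outset so that the whole argument may be run one prime at a time; its proof is immediate since $\Aut(G,q)=\prod_p \Aut(G_p,q_p)$ and isotropic subgroups decompose accordingly. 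The main obstacle remains the characteristic-isotropic-subgroup construction in (i) $\Rightarrow$ (ii); everything else is orthogonal-decomposition bookkeeping plus citations to the appendix's classification.
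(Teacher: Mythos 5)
Your plan stands or falls on the ``key subclaim'' for the hard direction, and as stated it fails: neither of your candidate constructions produces the required nonzero $\Aut(G,q)$-stable \emph{isotropic} subgroup. The subgroup generated by all isotropic elements of the socle is in general not isotropic at all (already in a single hyperbolic plane $H$ the isotropic elements generate all of $H$), so it cannot witness the failure of weak anisotropy; and for $p=2$ the radical $\Ker b$ need not be isotropic either -- the pre-metric group $\BZ/2\BZ$ with $q(n)=(-1)^n$, and more generally the slightly degenerate groups of \S\ref{slightsubsec}, are anisotropic (hence weakly anisotropic) with $\Ker b\ne 0$, so ``$\Ker b\ne 0$'' is not the obstruction to (ii) at $p=2$. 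The paper's resolution is different: the canonical isotropic subgroup it kills using weak anisotropy is $\{y\in\Ker b'\mid q(y)=1\}$, where $b'$ is the restriction of $b$ to $G'=\{x\in G\mid 2x=0,\ q(x)^2=1\}$ (this is a subgroup because $q|_{\Ker b'}$ is a character); its vanishing is condition (b) of Lemma~\ref{wap-lm5}, which in turn forces all isotropic elements to be $2$-torsion (condition (a)). Moreover the paper never proves (i)$\Rightarrow$(ii) directly: from (a) and (b) it gets (iii) by the $\BF_p$-linear-algebra Lemma~\ref{wap-lm4} (which must allow a slightly degenerate radical), and only then splits off the hyperbolic part via Lemmas~\ref{wap-lm2}--\ref{wap-lm3}. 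Even if you repair the characteristic subgroup, your contrapositive route to (ii) still needs exactly this splitting machinery, so nothing is saved by aiming at (ii) instead of (iii).

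Two of your ``easy'' steps are also glossed in a way that hides the same issues. For (ii)$\Rightarrow$(iii) at $p=2$, an isotropic subgroup of $H^n\oplus G_0$ can have nonzero components in the anisotropic summand $G_0$ (e.g.\ $((1,1),u)$ with $u\in G_0$ of order $2$, $q_0(u)=-1$), and its projections to the planes need not be isotropic; the correct argument first shows isotropic elements are $2$-torsion and then works inside $G'$ allowing the slightly degenerate case -- i.e.\ again Lemmas~\ref{wap-lm4}--\ref{wap-lm5}, which is why the paper routes (i) and (ii) through the common conditions (a),(b). For (iii)$\Rightarrow$(i), ``reflections/transvections built from $B$'' need an actual construction valid for finite abelian $2$-groups; the paper instead uses the explicit automorphism swapping $A$ and $B$ and fixing $(A\oplus B)^{\perp}$ (Lemma~\ref{wap-lm1}), and even there the isomorphism $f:A\iso B$ must be chosen so that $(a,a')\mapsto b(a,f(a'))$ is symmetric. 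Your odd-$p$ outline ((i)$\Rightarrow$(iv) via $\Ker b$ and $p^{n-1}G$, Lemma~\ref{pG}, then quadratic-form theory over $\BF_p$) does match the paper and is fine; the genuine gap is the $p=2$ characteristic-isotropic-subgroup step together with the slightly degenerate linear algebra it feeds into.
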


\begin{remark}     \label{wap-rem1}
Combining Lemma~\ref{wap-lem} and Theorem~\ref{wap-th} with the classification of anisotropic 
pre-metric groups from \S\ref{anisotropic-odd}, \S\ref{2groups}, and \S\ref{degensubsec} one gets a 
{\em complete classification\,} of pointed weakly anisotropic  pre-metric groups.
\end{remark}

\subsubsection{Proof of Theorem \ref{wap-th}} \label{wap-proof}
\begin{lemma}   \label{wap-lm1}
(iii)$\Rightarrow$(i).
\end{lemma}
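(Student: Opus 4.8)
The statement is the implication (iii)$\Rightarrow$(i) in Theorem~\ref{wap-th}: if a pre-metric $p$-group $(G,q)$ has the property that every isotropic subgroup $A\subset G$ admits an isotropic "partner" $B\subset G$ with $b|_{A\times B}$ non-degenerate, then $(G,q)$ is weakly anisotropic, i.e.\ it has no nonzero $\Aut(G,q)$-stable isotropic subgroup. The plan is to argue by contraposition: assume $A\subset G$ is a \emph{nonzero} isotropic subgroup stable under $\Aut(G,q)$, and use (iii) to produce an automorphism of $(G,q)$ that moves $A$, contradicting stability.

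\textbf{Key steps.} First I would invoke (iii) to get an isotropic $B\subset G$ such that the induced pairing $A\times B\to k^{\times}$ is non-degenerate; in particular $A\cap B=0$ (an element of $A\cap B$ would pair trivially with all of $B$ since both are isotropic and the pairing restricted to the isotropic subspace $A+B$ vanishes on $A$-parts... actually more simply: the non-degeneracy forces $|A|=|B|$ and $A\cap B$ lies in the kernel of the pairing on both sides, hence is $0$). Next, the restriction of $b$ to $A\oplus B$ is a non-degenerate pairing making $A\oplus B$ an orthogonal direct summand of $G$ with respect to $b$: one checks $(A\oplus B)$ and $(A\oplus B)^{\perp}$ intersect trivially and span, by the standard argument that a subspace on which a bilinear form is non-degenerate splits off. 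Since on $A\oplus B$ we have $q(a+b')=q(a)q(b')b(a,b')=b(a,b')$ for $a\in A,\ b'\in B$ (both $q|_A$ and $q|_B$ trivial), the pre-metric group $(A\oplus B, q|_{A\oplus B})$ is a hyperbolic metric group of the shape $\bigoplus H$ — in particular it carries "exchange-type" automorphisms. Concretely, after choosing dual bases of $A$ and $B$ it decomposes as an orthogonal sum of copies of $H=\BF_p^2$ with form $(x,y)\mapsto \zeta^{xy}$, and $H$ admits the automorphism $(x,y)\mapsto(x,x+y)$ (a transvection) which preserves $q$ but does not preserve the isotropic line $\{y=0\}$ — or, if $\dim A\ge 1$, the swap automorphism interchanging a copy of $A$-line with the corresponding $B$-line. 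Extending such an automorphism by the identity on $(A\oplus B)^{\perp}$ gives $\varphi\in\Aut(G,q)$ with $\varphi(A)\ne A$. This contradicts the $\Aut(G,q)$-stability of $A$, so no such $A$ exists and $(G,q)$ is weakly anisotropic.

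\textbf{Main obstacle.} The delicate point is verifying that $A\oplus B$ splits off as an orthogonal summand \emph{as a pre-metric group} (not merely that $b$ restricted to it is non-degenerate): one must check that the quadratic form, and not just the associated bicharacter, decomposes, and handle the possibility that $b$ itself is degenerate on $G$ (the radical of $b$ can be a $\BZ/2\BZ$ on which $q=-1$ when $p=2$). For $p$ odd this is clean because $q$ is determined by $b$ via $q(g)=b(g,g)^{(p+1)/2}$-type formulas (Lemma~\ref{bili}), so $b$-orthogonal decompositions are automatically $q$-orthogonal. For $p=2$ one needs the radical of $b$ to be contained in $(A\oplus B)^{\perp}$, which holds because isotropic subgroups are $b$-isotropic and the radical pairs trivially with everything; then the decomposition $G=(A\oplus B)\oplus(A\oplus B)^{\perp}$ respects $q$ since $q$ restricted to $(A\oplus B)$ is already pinned down by $b$ there (both summands isotropic). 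Once the orthogonal splitting is in hand, constructing the required automorphism of the hyperbolic part is routine.
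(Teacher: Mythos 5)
Your overall strategy --- show $A\cap B=0$, split $G=(A\oplus B)\oplus(A\oplus B)^{\perp}$ using the non-degeneracy of $b$ on $A\oplus B$, and exhibit an automorphism of $(G,q)$ that is the identity on $(A\oplus B)^{\perp}$ but moves $A$ --- is exactly the paper's argument, and the ``swap'' version of your construction is the one the paper uses. However, two of your concrete claims need repair. First, the transvection $(x,y)\mapsto(x,x+y)$ of $H=\BF_p^2$ does \emph{not} preserve $q(x,y)=\zeta^{xy}$: one gets $q(x,x+y)=\zeta^{x^2+xy}$, and in fact it sends the isotropic line $\{y=0\}$ to the non-isotropic line $\{(x,x)\,:\,x\in\BF_p\}$, so it is not in $\Aut(H,q)$ at all; only the swap-type maps $(x,y)\mapsto(\lambda y,\lambda^{-1}x)$ move that line while preserving $q$. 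Second, your decomposition of $A\oplus B$ into copies of $H=\BF_p^2$ tacitly assumes $A$ is elementary abelian, which is not known at this stage: an isotropic subgroup of a pre-metric $p$-group can a priori contain elements of order $p^k$ with $k\ge 2$ (that hypothesis (iii) forces $pA=0$ is precisely the content of Lemma~\ref{wap-lm2}, which you neither prove nor cite).

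Fortunately the elementary-abelian reduction is unnecessary. Since the pairing $A\times B\to k^{\times}$ is perfect, one can choose an isomorphism $f:A\iso B$ for which $(a,a')\mapsto b(a,f(a'))$ is symmetric (e.g.\ take decompositions of $A$ and $B$ into cyclic summands dual to one another), and then define $\varphi$ to be $f$ on $A$, $f^{-1}$ on $B$, and the identity on $(A\oplus B)^{\perp}$. Writing $x=a+b'+c$ one has $q(x)=q(c)\,b(a,b')$ and $q(\varphi(x))=q(c)\,b(f^{-1}(b'),f(a))$, so the symmetry of $b(a,f(a'))$ is exactly what makes $\varphi$ preserve $q$; for an arbitrary isomorphism $f$ this can fail, so the choice matters (the paper's one-line proof also glosses over this point). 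Then $\varphi(A)=B\neq A$ whenever $A\neq 0$, giving weak anisotropy. Finally, your worry about the radical of $b$ when $p=2$ is moot: once $b$ is non-degenerate on $A\oplus B$, the decomposition $G=(A\oplus B)\oplus(A\oplus B)^{\perp}$ holds and $q$ is automatically multiplicative across it, because $b$ is trivial between the two summands.
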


\begin{proof}
Let $A$ and $B$ have the same meaning as in (iii). Clearly $A\cap B$ is contained in the kernel of
the pairing $A\times B\to k^{\times}$, so $A\cap B=0$. Choose an isomorphism $f:A\iso B$
and let $\varphi :G\to G$ be the automorphism such that $\varphi |_{A}=f$, $\varphi |_{B}=f^{-1}$,
and $\varphi (x)=x$ if $x$ is orthogonal to $A\oplus B$. Then $\varphi$ preserves $q$ and
$\varphi (A)\ne A$ unless $A=0$.
\end{proof}

\begin{lemma}   \label{wap-lm2}
Property (iii) implies that all isotropic elements of $G$ are annihilated by $p$.
\end{lemma}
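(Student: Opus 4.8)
The plan is to show that if $x\in G$ is isotropic (i.e. $q(x)=1$) and $px\ne 0$, then property (iii) fails, which by Lemma~\ref{wap-lm1} means (i) and (ii) also fail. So I will argue contrapositively: assume $(G,q)$ satisfies (iii) but contains an isotropic element $x$ with $px\ne 0$, and derive a contradiction. Let $A$ be the cyclic subgroup generated by $x$. The first observation I need is that $A$ is isotropic: since $p\ne 2$, Lemma~\ref{bili} gives a bicharacter $\beta$ with $q(g)=\beta(g,g)$, and then $q(nx)=\beta(nx,nx)=\beta(x,x)^{n^2}=q(x)^{n^2}=1$ for all $n\in\BZ$. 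So $A$ is a nonzero isotropic subgroup, and (iii) provides an isotropic subgroup $B$ such that the pairing $A\times B\to k^{\times}$ induced by $b$ is non-degenerate.

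The key step is to extract a contradiction from the existence of such a $B$. Non-degeneracy of the pairing $A\times B\to k^{\times}$ forces $B$ to surject onto $\Hom(A,k^{\times})=A^*$, which is cyclic of order equal to $|A|=$ the order of $x$; in particular $|A|$ divides $|B|$ and $B$ has an element $y$ whose image in $A^*$ generates $A^*$, so $b(x,y)$ has order exactly equal to the order of $x$. Now I compute $q$ on the subgroup generated by $x$ and $y$. Since both $x$ and $y$ are isotropic, $q(mx+ny)=b(x,y)^{mn}$ (using that $q$ is quadratic and $q(x)=q(y)=1$; the cross terms $b(mx,ny)=b(x,y)^{mn}$). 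The restriction of $q$ to $\langle x,y\rangle$ is thus hyperbolic, and in particular $b(x,y)$ being of order equal to the order of $x$ is consistent — no contradiction yet. So the subtlety is that (iii) alone does not immediately forbid isotropic elements of order $p^2$; I must use the \emph{anisotropic} part more carefully, or rather use that (iii) holds for \emph{every} isotropic subgroup, including larger ones.

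The refined approach: suppose $x$ is isotropic with $px\ne 0$, so $x$ has order $p^k$ with $k\ge 2$. Consider the element $x':=p^{k-2}x$, which has order $p^2$ and is still isotropic (by the argument above). Replacing $x$ by $x'$, I may assume $x$ has order exactly $p^2$. Apply (iii) to $A=\langle x\rangle$ to get $B$ isotropic with $A\times B\to k^{\times}$ non-degenerate; pick $y\in B$ with $b(x,y)$ of order $p^2$. Then $p y$ satisfies $b(x,py)=b(x,y)^p$, which has order $p$, so $py\notin\langle x\rangle^{\perp}$ in general — but here I want to look at the isotropic subgroup $A_1:=\langle px\rangle$, which has order $p$. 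I claim $A_1$ is stable under $\Aut(G,q)$ when combined with the structure forced by (iii): actually, the cleaner route is to iterate the construction and produce an automorphism of $(G,q)$ that moves $\langle px\rangle$, contradicting... no. Let me state the mechanism I expect to be correct and which I will fill in: the point is that $p^{k-1}x$ generates an isotropic subgroup of order $p$ contained in $pG$, and one shows directly (as in the $p\ne 2$ arguments of \S\ref{anisotropic-odd}, e.g. Lemma~\ref{pG}) that such an element lies in the radical-like part; combined with non-degeneracy of $q$ on the hyperbolic summands forced by repeated application of (iii), we reach a contradiction with (iii) applied to $\langle p^{k-1}x\rangle$ together with the fact that its perpendicular cannot contain a complementary isotropic line. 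The main obstacle, and where I expect to spend real effort, is precisely this last step: turning the qualitative statement ``isotropic elements of order $p^2$ behave badly'' into a genuine contradiction with (iii), which will require a careful analysis of the pairing $b$ restricted to $\langle x\rangle^{\perp}$ and an inductive decomposition of $G$ into an orthogonal sum of small pieces — essentially running the classification of \S\ref{2groups} and \S\ref{anisotropic-odd} in parallel with property (iii).
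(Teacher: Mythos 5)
Your proposal does not close; the gap is exactly at the step you flag at the end. After correctly abandoning the application of (iii) to the full cyclic group $\langle x\rangle$ (which, as you note, yields no contradiction by itself), you arrive at the right object --- the order-$p$ isotropic subgroup $A_1=\langle p^{k-1}x\rangle$ --- but then you propose to ``run the classification of \S\ref{anisotropic-odd} and \S\ref{2groups} in parallel with (iii)'', analyze $b$ on $\langle x\rangle^{\perp}$, and decompose $G$ inductively; none of this is carried out, and none of it is needed. The missing observation is a one-liner, and it is the paper's entire proof: apply (iii) to $A_1$ itself. Since $A_1\cong\BZ/p\BZ$, non-degeneracy of the pairing $A_1\times B\to k^{\times}$ forces $B$ to embed into $A_1^*\cong\BZ/p\BZ$, so $|B|=p$ and in particular $pB=0$. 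But then for every $y\in B$ one has $b(x,y)^p=b(x,py)=1$, hence $b(p^{k-1}x,y)=b(x,y)^{p^{k-1}}=1$ because $k\ge 2$; thus the pairing $A_1\times B$ is identically trivial, contradicting its non-degeneracy (as $A_1\ne 0$). So no automorphisms, no orthogonal decompositions, and no classification results enter at all.

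A secondary problem: your very first step invokes $p\ne 2$ through Lemma~\ref{bili} in order to see that multiples of an isotropic element are isotropic. This is both unnecessary and harmful. It is unnecessary because $q(nx)=q(x)^{n^2}$ holds for every quadratic form (Remark~\ref{easyrems}(i), restated at the beginning of Appendix~\ref{app on metric groups}), so $q(p^{k-1}x)=1$ needs no bicharacter --- and in fact that single value is all the argument uses. It is harmful because Lemma~\ref{wap-lm2} must hold for $p=2$ as well: it feeds into Lemma~\ref{wap-lm3} ((iii)$\Rightarrow$(ii)), which Theorem~\ref{wap-th} needs for all primes, including $p=2$. As written, your argument would not cover that case even if the main gap were filled.
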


\begin{proof}
Suppose that $x\in G$, $q(x)=1$, $p^mx=0$, $p^{m-1}x\ne 0$, $m>1$.
Let $A\subset G$ be the subgroup generated by $p^{m-1}x$ and let $B$ be as in (iii).
Then $|B|=p$, so $B$ is orthogonal to $p^{m-1}x$, which contradicts the non-degeneracy
of the pairing $A\times B\to k^{\times}$.
\end{proof}

\begin{lemma}   \label{wap-lm3}
(iii)$\Rightarrow$(ii).
\end{lemma}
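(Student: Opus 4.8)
The plan is to show that if $(G,q)$ satisfies property (iii), then it admits an orthogonal direct sum decomposition $(G,q)\cong H^{\oplus r}\oplus (G_0,q_0)$ with $(G_0,q_0)$ anisotropic, proceeding by induction on $|G|$. The base case $G=0$ is trivial, and if $(G,q)$ is already anisotropic there is nothing to prove (take $r=0$). So assume $q$ is not anisotropic, i.e.\ there exists a nonzero isotropic element $x\in G$. By Lemma~\ref{wap-lm2}, $px=0$, so the subgroup $A:=\langle x\rangle$ has order $p$. Applying property (iii) to $A$ yields an isotropic subgroup $B\subset G$ such that the induced pairing $A\times B\to k^{\times}$ is non-degenerate; since $|A|=p$ this forces $|B|=p$, say $B=\langle y\rangle$ with $q(y)=1$ and $b(x,y)$ a primitive $p$-th root of unity.

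Next I would argue that the subgroup $N:=A\oplus B$ is non-degenerate as a pre-metric group, i.e.\ the restriction of $b$ to $N\times N$ has trivial kernel. Indeed $A\cap B=0$ (as in the proof of Lemma~\ref{wap-lm1}), and since $b(x,x)=q(x)^2=1$, $b(y,y)=q(y)^2=1$, and $b(x,y)$ has order $p$, the Gram matrix of $b|_N$ in the basis $x,y$ is $\begin{pmatrix} 1 & b(x,y)\\ b(x,y) & 1\end{pmatrix}$ in multiplicative notation, which is non-degenerate. Moreover $(N, q|_N)$ is isomorphic to $H$: rescaling $y$ by a scalar in $\BF_p$ we may arrange $b(x,y)=\zeta$, and then the quadratic form on $N$ is $q(ax+by)=q(x)^{a^2}q(y)^{b^2}b(x,y)^{ab}=\zeta^{ab}$, which is exactly the hyperbolic form defining $H$. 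Since $b|_N$ is non-degenerate, standard theory of pre-metric groups gives an orthogonal decomposition $G=N\oplus N^{\perp}$, with $q$ restricting to $H$ on $N$ and to some quadratic form $q'$ on $N^{\perp}$.

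It then remains to verify that $(N^{\perp}, q')$ again satisfies property (iii), so that the induction hypothesis applies and furnishes $(N^{\perp},q')\cong H^{\oplus(r-1)}\oplus(G_0,q_0)$, whence $(G,q)\cong H^{\oplus r}\oplus(G_0,q_0)$ as desired. For this, let $A'\subset N^{\perp}$ be an isotropic subgroup. Viewing $A'$ as an isotropic subgroup of $G$ and applying (iii) in $G$, we get an isotropic $B''\subset G$ with $A'\times B''\to k^{\times}$ non-degenerate; the issue is that $B''$ need not lie in $N^{\perp}$. I would fix this by projecting: write the decomposition $G=N\oplus N^{\perp}$ with projection $\pi:G\to N^{\perp}$, and take $B':=\pi(B'')$ — but one must check $B'$ is isotropic and that $A'\times B'\to k^{\times}$ stays non-degenerate. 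Since $A'\perp N$, for $a\in A'$ and $b''\in B''$ we have $b(a,b'')=b(a,\pi(b''))$, so the pairing $A'\times B''\to k^{\times}$ factors through $A'\times B'$, giving non-degeneracy of the latter (after possibly shrinking $B'$ to a complement of the kernel, which remains isotropic as a subgroup of an isotropic group's image — here I would instead directly choose $B''$ inside a fixed isotropic complement, or invoke property (iii) relative to $N^{\perp}$ via the characterization that $b|_{N^{\perp}}$ is non-degenerate). The main obstacle I anticipate is precisely this last point: ensuring that the witnessing isotropic subgroup can be taken inside $N^{\perp}$, i.e.\ that property (iii) is inherited by non-degenerate orthogonal summands; I expect this to follow cleanly once one observes that $b|_{N^\perp}$ is non-degenerate and that an isotropic subgroup of $N^\perp$ together with its $b$-pairing is unaffected by passing between $G$ and $N^\perp$, so $\pi(B'')$ meets the requirement. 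With (iii)$\Rightarrow$(ii) established, the equivalences (i)$\Leftrightarrow$(ii)$\Leftrightarrow$(iii) (and, for $p\neq 2$, (iv)) follow by combining this lemma with Lemma~\ref{wap-lm1}, the easy implication (ii)$\Rightarrow$(iii) (verified on each summand: $H$ satisfies (iii) by taking $B$ to be the other hyperbolic line, an anisotropic summand vacuously since its only isotropic subgroup is $0$), and Proposition~\ref{A5} together with Lemma~\ref{pG} in the odd case.
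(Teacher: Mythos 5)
Your reduction to splitting off a single hyperbolic plane $N=A\oplus B\cong H$ is correct as far as it goes, but the step you flag as "the main obstacle" is a genuine gap, not a routine verification, and the projection device you propose does not close it. The induction needs property (iii) for $(N^\perp,q|_{N^\perp})$ as a pre-metric group in its own right, and $\pi(B'')$ need not be isotropic: writing $b''=n+m$ with $n\in N$, $m=\pi(b'')\in N^\perp$, orthogonality gives $q(b'')=q(n)q(m)$, hence $q(\pi(b''))=q(n)^{-1}$, which has no reason to be $1$. Concretely, in $G=H\oplus H$ with generators $e_1,e_2$ and $e_3,e_4$ (so $q(ae_1+be_2)=\zeta^{ab}$, etc.) and $N$ the first copy, take $A'=\langle e_3\rangle$ and $B''=\langle e_1+e_2-e_3+e_4\rangle$: then $B''$ is isotropic and pairs non-degenerately with $A'$ (via $b(e_3,\,e_1+e_2-e_3+e_4)=\zeta$), so it is a legitimate witness for (iii) in $G$, yet $\pi(B'')=\langle -e_3+e_4\rangle$ carries $q=\zeta^{-1}\neq 1$. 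Your fallback phrases ("choose $B''$ inside a fixed isotropic complement", "invoke (iii) relative to $N^\perp$") are restatements of what has to be proved, not arguments; only the non-degeneracy of the pairing on the $A'$-side survives the projection for free.

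The paper sidesteps the inheritance problem entirely by applying (iii) only once, to a \emph{maximal} isotropic subgroup $A\subset G$: with $B$ the witness from (iii), the form $b$ is non-degenerate on $A\oplus B$, so $G=(A\oplus B)\oplus(A\oplus B)^\perp$; the complement is anisotropic because a nonzero isotropic $z\in(A\oplus B)^\perp$ would make $A+\langle z\rangle$ a strictly larger isotropic subgroup (note $A\cap(A\oplus B)^\perp=0$); and $A\oplus B\simeq H^n$ because Lemma~\ref{wap-lm2} forces $pA=pB=0$, so $A$ and $B$ are $\BF_p$-spaces in perfect duality and $q(a+b)=b(a,b)$ is the standard hyperbolic form. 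In other words, your argument becomes correct (and needs no induction) once you take $A$ maximal at the outset: maximality does the work that your unproven inheritance step was supposed to do. If you insist on peeling off one plane at a time, you must actually prove that (iii) descends to $N^\perp$, which is not easier than the statement of the lemma itself.
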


\begin{proof}
Let $A\subset G$ be the {\em maximal\,} isotropic subgroup. Let $B$ be as in (iii). Then 
$G=(A\oplus B)\oplus (A\oplus B)^{\perp}$ and $(A\oplus B)^{\perp}$ is anisotropic. 
Lemma~\ref{wap-lm2} implies that $(A\oplus B)\simeq H^n$ for some $n\ge 0$,
\end{proof}

\begin{lemma}   \label{wap-lm4}
Suppose that $pG=0$ and $q(x)^p=1$ for all $x\in G$. If $(G,q)$ is either non-degenerate or slightly
degenerate then (iii) holds.
\end{lemma}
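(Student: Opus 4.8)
The statement to prove is Lemma~\ref{wap-lm4}: if $pG=0$, $q(x)^p=1$ for all $x\in G$, and $(G,q)$ is non-degenerate or slightly degenerate, then property (iii) of Theorem~\ref{wap-th} holds, i.e., for every isotropic subgroup $A\subset G$ there is an isotropic subgroup $B\subset G$ such that the induced pairing $A\times B\to k^{\times}$ is non-degenerate. The key observation is that under the hypothesis $pG=0$ we may work entirely in the category of $\BF_p$-vector spaces: $G$ is a vector space over $\BF_p$, the symmetric pairing $b:G\times G\to k^{\times}$ associated to $q$ takes values in $\mu_p\cong\BF_p$ (using $q(x)^p=1$ together with $b(x,x)=q(x)^2$ when $p$ is odd, and a separate small argument when $p=2$), so $b$ corresponds to a symmetric $\BF_p$-bilinear form $\langle\,,\,\rangle$ on $G$, and the function $x\mapsto Q(x)$ defined by $q(x)=\zeta^{Q(x)}$ is a quadratic form in the classical sense refining $\langle\,,\,\rangle$ when $p$ is odd (for $p=2$ one works directly with $q$ as a $\BF_2$-quadratic form in the sense of \S\ref{2groups}).

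\textbf{Main steps.} First I would reduce to the non-degenerate case: if $(G,q)$ is slightly degenerate, write $\Ker b=\{0,u\}$ with $q(u)=-1$; by Corollary~\ref{slightcor} decompose $(G,q)=(G_0,q_0)\perp(\BZ/2\BZ,(-1)^n)$ with $(G_0,q_0)$ a metric group, and observe that since $q(u)=-1\ne1$ the element $u$ is \emph{not} isotropic, so any isotropic subgroup $A\subset G$ lands in $G_0$; thus it suffices to find $B\subset G_0$. This reduces everything to a non-degenerate $\BF_p$-quadratic space $(G_0,q_0)$ with $pG_0=0$. Second, in the non-degenerate case: given an isotropic subspace $A\subset G$, I would pass to $A^{\perp}/A$, which carries an induced non-degenerate quadratic form, and invoke Witt-type extension — concretely, pick a complement and build an isotropic subspace $B$ dual to $A$. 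The cleanest route: choose a basis $a_1,\dots,a_r$ of $A$; since $b$ is non-degenerate there exist $c_1,\dots,c_r\in G$ with $b(a_i,c_j)=\zeta^{\delta_{ij}}$; then modify the $c_i$'s by elements of $A$ and of $\mathrm{span}(c_1,\dots,c_r)$ to kill the quadratic form on their span. When $p$ is odd this is a direct completion-of-the-square computation (replace $c_i$ by $c_i+\sum_j\lambda_{ij}a_j$ to make $Q(c_i)=0$ and $\langle c_i,c_j\rangle=0$ for $i\ne j$, using that $A$ is totally isotropic and that $2$ is invertible mod $p$). When $p=2$ one uses instead the standard structure theory of $\BF_2$-quadratic forms (every non-degenerate such form is an orthogonal sum of hyperbolic planes and at most one anisotropic summand of dimension $\le2$, per \S\ref{2groups}), inside which any totally isotropic subspace extends to a hyperbolic-type pairing with a dual totally isotropic subspace. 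Finally I would translate the resulting $\BF_p$-subspace $B$ back to a subgroup of $G$ and check it is isotropic for $q$ (not merely for $b$): here in the odd case $q(x)=\zeta^{Q(x)}$ so $B$ isotropic for $Q$ means $q|_B=1$; in the $p=2$ case one arranges $q|_B=1$ directly during the construction.

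\textbf{Expected obstacle.} The main nuisance is the prime $2$: the passage from $q$ to a genuine bilinear form breaks down ($b(x,x)=q(x)^2=1$ always, so $b$ carries no diagonal information), and "isotropic for $b$" is strictly weaker than "isotropic for $q$." So the bulk of the work is the $p=2$ subcase, where I must produce $B$ with $q|_B=1$, not just $b|_B=1$, which forces me to use the fine classification of anisotropic metric $2$-groups (Proposition~\ref{A13}) and the Witt-cancellation structure of $\BF_2$-quadratic spaces rather than a one-line diagonalization. A secondary subtlety is making sure, in the slightly degenerate case, that discarding the $\BZ/2\BZ$ summand is legitimate — this rests precisely on $q(u)=-1$, i.e., on the definition of "slightly degenerate," and must be stated explicitly. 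Everything else is routine linear algebra over $\BF_p$.
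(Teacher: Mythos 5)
Your treatment of the non-degenerate case is fine and matches the paper, which simply observes that $G$ is an $\BF_p$-vector space, $q(x)=\zeta^{Q(x)}$ for a quadratic form $Q:G\to\BF_p$, and then quotes the standard Witt-type extension fact from linear algebra; your completion-of-squares argument for odd $p$ and the appeal to the structure of $\BF_2$-quadratic spaces for $p=2$ are reasonable ways to spell that out. The problem is in your reduction of the slightly degenerate case. You fix the decomposition $(G,q)=(G_0,q_0)\perp(\BZ/2\BZ,(-1)^n)$ of Corollary~\ref{slightcor} once and for all and claim that, because $q(u)=-1$, every isotropic subgroup $A\subset G$ is contained in $G_0$. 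That inference is false: $q(u)=-1$ only tells you $u\notin A$, not that $A$ avoids the coset $G_0+u$. Concretely, take $G_0=(\BZ/2\BZ)^2$ with the hyperbolic form $q_0(x,y)=(-1)^{xy}$ and $g_0=(1,1)$, so $q_0(g_0)=-1$; then $x=g_0+u$ satisfies $q(x)=q_0(g_0)q(u)=1$, so $A=\{0,x\}$ is an isotropic subgroup of $G$ not contained in $G_0$. So as written your reduction step does not cover all isotropic $A$.

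The repair is to choose the metric subgroup \emph{after} $A$, which is exactly what the paper does: since $q(u)=-1$ one has $u\notin A$, hence $A\cap\Ker b=0$, so one may pick a subgroup $G_1\subset G$ of index $p$ (here $p=2$) with $A\subset G_1$ and $u\notin G_1$. Then $G_1\cap\Ker b=0$, so $G_1$ maps isomorphically onto $G/\Ker b$ and $b|_{G_1}$ is non-degenerate, i.e.\ $(G_1,q|_{G_1})$ is a metric group containing $A$ (equivalently, $G_1$ is an orthogonal complement to $\langle u\rangle$ adapted to $A$). Applying the non-degenerate case inside $G_1$ produces an isotropic $B\subset G_1\subset G$ pairing non-degenerately with $A$, which is what (iii) requires. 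With this one-line change your argument agrees with the paper's; without it, the slightly degenerate case is genuinely incomplete.
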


\begin{proof}
$G$ is a vector space over $\BF_p$ and $q(x)=\zeta^{Q(x)}$, where $\zeta\in k$ is a primitive $p$-th
root of 1 and $Q:G\to\BF_p$ is a quadratic form. So at least in the non-degenerate case, the lemma
is a standard fact from linear algebra. The slightly degenerate case (which can occur only if $p=2$)
can be reduced to the non-degenerate one because any isotropic subgroup $A\subset G$ is contained
in some metric subgroup $G_1\subset G$ (note that since $\Ker b$ is anisotropic $A\cap\Ker b=0$,
then take $G_1\subset G$ to be any subgroup of index $p$ containing $A$ and not containing $\Ker b$).
\end{proof}

By Lemma \ref{wap-lm4}, condition (iv) of Theorem \ref{wap-th} implies (iii). So to prove 
Theorem \ref{wap-th} for $p\ne 2$ it remains to show that in this case (i)$\Rightarrow$(iv) and
(ii)$\Rightarrow$(iv). If $p\ne 2$ then
$\Ker b$ is isotropic, so a weakly anisotropic pre-metric $p$-group is non-degenerate.
Lemma \ref{pG}(i) implies that if $p\ne 2$ then a weakly anisotropic pre-metric $p$-group is annihilated by $p$. So (i)$\Rightarrow$(iv) and (ii)$\Rightarrow$(iv).

In the case $p=2$ we still have to prove that (i)$\Rightarrow$(iii) and (ii)$\Rightarrow$(iii).
By Lemma~\ref{wap-lm4}, it suffices to prove the next lemma.

\begin{lemma}   \label{wap-lm5}
Let $p=2$. Either condition (i) or (ii) implies that
\begin{enumerate}
\item[(a)] all isotropic elements of $G$ are annihilated by $2$;
\item[(b)] the pre-metric group $G':=\{ x\in G|2x=0, q(x)^2=1\}$ is either non-degenerate or slightly degenerate.
\end{enumerate}
\end{lemma}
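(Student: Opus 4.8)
\textbf{Proof plan for Lemma \ref{wap-lm5}.}
The plan is to handle conditions (i) and (ii) in turn, and to deduce (a) and (b) from each. For condition (ii) everything is essentially by inspection: write $(G,q)=H^n\oplus (G_0,q_0)$ with $(G_0,q_0)$ anisotropic. Since $H$ satisfies $2H=0$ and all values of $q|_H$ lie in $\mu_2$, an isotropic element $x=(y,z)$ with $y\in H^n$, $z\in G_0$ forces $q_0(z)=q(y)^{-1}\in\mu_2$; but $q_0$ is anisotropic, and by Lemma \ref{degenlem1}(i) the anisotropic $2$-group $(G_0,q_0)$ is either metric or slightly degenerate, so the only element of $G_0$ on which $q_0$ takes a value in $\mu_2$ is $0$ (in the slightly degenerate case, the nontrivial element of $\Ker b_0$ has $q_0=-1\in\mu_2$ — so one must be slightly more careful: $z$ is then $0$ or that element, and in either case $2z=0$). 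Hence $2x=0$, giving (a). For (b): $G'=H^n\oplus G_0'$ where $G_0'=\{z\in G_0\mid 2z=0,\ q_0(z)^2=1\}$; since $H^n$ is nondegenerate it suffices to check $G_0'$ is nondegenerate or slightly degenerate, and by Proposition \ref{A13} (specifically the corollary noted in the proof of Proposition \ref{degenprop}(ii): an anisotropic metric $2$-group on which $q$ avoids $-1$ has order $\le 2$) together with Corollary \ref{slightcor}, $G_0'$ is at most $A_i$ or the group of Example \ref{slightex}, both of which qualify.

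For condition (i) the plan is to argue by contradiction, producing an $\Aut(G,q)$-stable nonzero isotropic subgroup whenever (a) or (b) fails. Suppose first that (a) fails, i.e.\ there is an isotropic $x$ with $2x\ne 0$; choose $x$ with $2^m x=0$, $2^{m-1}x\ne 0$, $m\ge 2$ minimal over all such counterexamples. Then $y:=2^{m-1}x$ is a nonzero element with $2y=0$ and $q(y)=q(x)^{2^{2(m-1)}}=1$, so $y$ is isotropic of order $2$. The idea is that the set of all isotropic elements of order $2$ generates a subgroup that is automatically $\Aut(G,q)$-stable; one checks it is isotropic using $2$-divisibility of the other counterexamples or a direct computation with $b$. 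Getting this subgroup to be genuinely isotropic (not merely generated by isotropic elements) is where one uses minimality of $m$ and Lemma \ref{bili}-type identities to control $b$ on the $2^{m-1}$-torsion. If instead (b) fails, then $\Ker b'$ for the pre-metric group $G'$ has order $>2$, or the value of $q$ at its nontrivial element (when $|\Ker b'|=2$) is not $-1$; in the latter case that element is isotropic and spans an $\Aut$-stable isotropic subgroup, and in the former case $\Ker b'$ contains a subgroup on which $q$ is trivial (a character with values $\pm1$ on a group of order $\ge 4$ has nontrivial kernel), again $\Aut(G,q)$-stable.

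I would organize the write-up so that the two main sub-claims are: first, that the subgroup $G_{\mathrm{is}}\subset G$ generated by all isotropic elements $x$ with $2x=0$ is itself isotropic — this is the technical heart, proved by showing $b(x_1,x_2)=1$ for isotropic order-two $x_1,x_2$, which follows since $q(x_1+x_2)\in\{q(x_1)q(x_2)b(x_1,x_2)\}=\{b(x_1,x_2)\}$ while $q(x_1+x_2)^2=b(x_1+x_2,x_1+x_2)=b(x_1,x_1)b(x_2,x_2)b(x_1,x_2)^2=b(x_1,x_2)^2$, so $q(x_1+x_2)=\pm b(x_1,x_2)$... — and here the argument needs the extra input that $x_1+x_2$ is again isotropic or at least controlled, which is exactly what weak anisotropy must be used to exclude. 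The \textbf{main obstacle} is precisely this: showing that the presence of a ``bad'' isotropic element (of order $>2$, or an order-$2$ one violating the $\Ker b'$ condition) always yields an $\Aut(G,q)$-invariant nonzero isotropic subgroup, without accidentally producing a subgroup that fails to be isotropic. I expect to resolve it by passing to the canonical subquotients $G[2]\cap q^{-1}(\mu_2)$ and $\Ker b$, which are manifestly characteristic, and analyzing the restricted form there via the structure theory of Appendix A (Proposition \ref{A13}, Corollary \ref{slightcor}), reducing to the elementary $\BF_2$-linear-algebra statement that a nonzero quadratic space over $\BF_2$ which is not anisotropic and not ``anisotropic $\oplus$ slightly degenerate'' has a characteristic isotropic line.
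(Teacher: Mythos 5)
The serious gap is in your argument that condition (i) implies (a). The subgroup generated by \emph{all} isotropic elements of order $2$ is in general not isotropic: already in the hyperbolic plane $H=\BF_2^2$ with $q(x,y)=(-1)^{xy}$ the two isotropic vectors $(1,0)$ and $(0,1)$ generate the whole group, and $q(1,1)=-1$; since $H$ is itself weakly anisotropic, this configuration genuinely occurs under hypothesis (i). For the same reason the ``elementary $\BF_2$-linear-algebra statement'' you propose to reduce to is false: the hyperbolic plane is a nonzero quadratic space over $\BF_2$ which is neither anisotropic nor anisotropic plus slightly degenerate, yet its two isotropic lines are swapped by an isometry, so it has no characteristic isotropic line. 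Thus the route you sketch for (i)$\Rightarrow$(a) would fail, and ``minimality of $m$'' does not rescue it. The observation you are missing (and which the paper uses) is that (a) follows formally from (b): if $x$ is isotropic with $2^mx=0$, $2^{m-1}x\ne 0$, $m\ge 2$, set $y:=2^{m-1}x$; then $2y=0$, $q(y)=q(x)^{4^{m-1}}=1$, and for every $z\in G'$ one has $b(y,z)=b(x,2^{m-1}z)=1$ because $2z=0$ and $m\ge 2$. Hence $y$ is a nonzero element of $\Ker b'$ with $q(y)=1$, i.e.\ it lies in the characteristic isotropic subgroup $\{w\in\Ker b'\mid q(w)=1\}$ --- exactly the subgroup which your (i)$\Rightarrow$(b) argument (which is correct, and coincides with the paper's) shows to be zero. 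With this one step the weakly anisotropic case closes up; the paper's proof is precisely ``(i) or (ii) implies (b), and (b) implies (a)''.

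Your handling of condition (ii), while reaching correct conclusions, rests on false intermediate claims. In an anisotropic pre-metric $2$-group the elements with $q$-value in $\mu_2$ need not be only $0$ and the distinguished element of $\Ker b_0$: the order-$4$ anisotropic metric group of Example \ref{anis4}(iii) has $q=-1$ on all three nonzero elements, and the same happens inside the order-$8$ anisotropic groups. Likewise $G_0'$ need not be ``at most $A_i$ or the group of Example \ref{slightex}'': for $G_0$ equal to that order-$4$ group one gets $G_0'=G_0$, of order $4$ (and $A_i$ can never occur, since $q$ takes only the values $\pm 1$ on $G_0'$); moreover the corollary you quote concerns groups on which $q$ avoids $-1$, the opposite of the situation at hand. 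The repairs are easy: if $q_0(z)^2=1$ then $q_0(2z)=q_0(z)^4=1$, so $2z=0$ by anisotropy (which gives (a) under (ii) directly, or again via (b)); and $(G_0',q_0|_{G_0'})$ is an anisotropic pre-metric group, so Lemma \ref{degenlem1}(i) --- the one-line argument the paper actually invokes --- shows it is non-degenerate or slightly degenerate, whence so is $G'=H^n\oplus G_0'$.
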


\begin{proof}
By Lemma~\ref{degenlem1}(i), condition (ii) implies (b). Let us prove that (i)$\Rightarrow$(b).
Let $G'$ be as in (b). Let $b':G'\times G'\to k^{\times}$
be the restriction of $b:G\times G\to k^{\times}$. We have to show that
\begin{equation}   \label{wap-eq}
\{ y\in\Ker b' \,|\,q(y)=1\} =0.
\end{equation}
This follows from (i) because the l.h.s. of \eqref{wap-eq} is an isotropic subgroup stable under 
$\Aut (G,q)$.

Finally, let us prove that  (b)$\Rightarrow$(a).
Suppose that $x\in G$, $q(x)=1$, $2^mx=0$, $2^{m-1}x\ne 0$, $m>1$. Then
$q(2^{m-1}x)=1$ and $2^{m-1}x\in\Ker b'$, which contra\-dicts~\eqref{wap-eq}.
\end{proof}

\subsection{The Witt group of metric groups}
\subsubsection{Definition of the Witt group} \label{DefWitt}
Let $(G,q)$ be a metric group. Then for each
isotropic subgroup $H\subset G$ the quotient $H^\perp /H$ has a structure of metric group.
In this situation we will say that $H^\perp /H$ is an {\em m-subquotient\,} of $(G,q)$
(here ``m" stands for ``metric" and ``middle").

Clearly $H$ is maximal among isotropic subgroups if and only if the metric group $H^\perp /H$
is anisotropic. We will say that $H^\perp /H$ is ``the" anisotropic metric group corresponding
to $(G,q)$. This terminology is justified by the next lemma.

\begin{lemma}\label{A8} If $H_1, H_2\subset G$ are maximal isotropic subgroups then the 
metric groups $H_1^\perp /H_1$ and $H_2^\perp /H_2$ are isomorphic.
\end{lemma}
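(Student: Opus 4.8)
\textbf{Plan of proof for Lemma \ref{A8}.}
The strategy mirrors the proof of Lemma \ref{independence} (the Lie-coalgebra version): I will show that \emph{both} $H_1^\perp/H_1$ and $H_2^\perp/H_2$ are isomorphic to the metric group
\begin{equation*}
(H_1^\perp\cap H_2^\perp)/(H_1\cap H_2),
\end{equation*}
which is symmetric in the roles of $H_1$ and $H_2$, so the lemma follows. First I should check that this quotient really is a metric group: $H_1\cap H_2$ is isotropic (being a subgroup of the isotropic $H_1$), and one computes that $(H_1\cap H_2)^\perp = H_1^\perp + H_2^\perp$ by the standard identity $(A\cap B)^\perp = A^\perp+B^\perp$ for subgroups of a finite abelian group with a non-degenerate pairing; hence $(H_1^\perp\cap H_2^\perp)/(H_1\cap H_2)$ is an m-subquotient of $(G,q)$ in the sense of \S\ref{DefWitt}, and $q$ descends to it because $H_1^\perp\cap H_2^\perp$ is isotropic modulo $H_1\cap H_2$ — indeed it lies in $H_1^\perp$ so $q$ on it is pulled back from $H_1^\perp/H_1$, etc.

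The core of the argument is to produce the isomorphism $(H_1^\perp\cap H_2^\perp)/(H_1\cap H_2)\xrightarrow{\sim} H_1^\perp/H_1$. The obvious map is the one induced by the inclusion $H_1^\perp\cap H_2^\perp\hookrightarrow H_1^\perp$; it is automatically a morphism of pre-metric groups (it preserves $q$ since $q$ on $H_1^\perp/H_1$ restricts to $q$ on the subquotient). To see it is an isomorphism I must verify the two conditions
\begin{equation*}
H_1\cap H_2^\perp\subset H_2,\qquad (H_1^\perp\cap H_2^\perp)+H_1 = H_1^\perp,
\end{equation*}
which are exactly the analogues of \eqref{idty1}--\eqref{idty2}. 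The first, $H_1\cap H_2^\perp\subset H_2$, holds because otherwise $H_2 + (H_1\cap H_2^\perp)$ would be a strictly larger isotropic subgroup than $H_2$ (it is isotropic: $H_2$ is isotropic, $H_1\cap H_2^\perp$ is isotropic being inside $H_1$, and the two pieces pair trivially since one of them lies in $H_2^\perp$), contradicting maximality of $H_2$. For the second, I use the modular law for the lattice of subgroups of a finite abelian group: $(H_1^\perp\cap H_2^\perp)+H_1 = H_1^\perp\cap(H_2^\perp+H_1)$ since $H_1\subset H_1^\perp$; so it suffices to show $H_2^\perp + H_1\supset H_1^\perp$, equivalently (taking perps) $H_1^{\perp\perp}\cap H_2^{\perp\perp}\subset (H_2^\perp+H_1)^\perp$, i.e.\ $H_1\cap H_2\subset H_1^\perp\cap H_2$, which is clear; more directly, $H_2^\perp+H_1\supset H_1^\perp \iff H_2\cap H_1^\perp\subset H_1$, and this last inclusion is the mirror image of the first condition (with the roles of $H_1,H_2$ swapped), hence holds by maximality of $H_1$. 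The same computation with $1$ and $2$ interchanged gives $(H_1^\perp\cap H_2^\perp)/(H_1\cap H_2)\cong H_2^\perp/H_2$, completing the proof.

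\textbf{Expected main obstacle.} There is no serious obstacle; the only points requiring care are the bookkeeping with orthogonal complements in a finite abelian group — in particular invoking $(A\cap B)^\perp=A^\perp+B^\perp$ and $A^{\perp\perp}=A$ for subgroups of a metric group (these are the finite-abelian-group analogues of the linear-algebra facts used implicitly in Lemma \ref{independence}, valid because the pairing $b$ identifies $G$ with $G^*$) — and checking at each stage that the subgroups in question really are isotropic so that $q$ descends. One should also note that, exactly as in Remark \ref{whyquot}, the isomorphism constructed is canonical only after the choices are made, and the family of isomorphisms $\{H_i^\perp/H_i \xrightarrow{\sim} H_j^\perp/H_j\}$ need not be coherent; but the lemma as stated asserts only the existence of \emph{an} isomorphism, so this does not affect the proof.
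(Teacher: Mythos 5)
Your argument is correct, but it takes a different route from the one the paper uses for Lemma~\ref{A8}: you transplant the paper's proof of the Lie-coalgebra analogue (Lemma~\ref{independence}), identifying both $H_i^\perp/H_i$ with the symmetric middle object $(H_1^\perp\cap H_2^\perp)/(H_1\cap H_2)$, the two key facts $H_1\cap H_2^\perp\subset H_2$ and $(H_1^\perp\cap H_2^\perp)+H_1=H_1^\perp$ being supplied by maximality and by the modular law together with the duality identities $A^{\perp\perp}=A$ and $(A\cap B)^\perp=A^\perp+B^\perp$ (valid since $b$ identifies $G$ with its dual). The paper's own proof of Lemma~\ref{A8} is shorter and more structural: it first reduces to the case $H_1\cap H_2=0$ by replacing $G$ with $(H_1\cap H_2)^\perp/(H_1\cap H_2)$; maximality, used in the equivalent form that $H_i^\perp/H_i$ is anisotropic, then gives $H_2\cap H_1^\perp=0=H_1\cap H_2^\perp$, so the pairing $H_1\times H_2\to k^\times$ is non-degenerate, $C:=H_1\oplus H_2$ is a metric subgroup, $G=C\oplus C^\perp$, and both $H_i^\perp/H_i$ are identified with $C^\perp$. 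The two arguments are close relatives --- after the paper's reduction your middle object becomes $H_1^\perp\cap H_2^\perp=(H_1+H_2)^\perp=C^\perp$ --- but yours avoids the reduction and produces an explicit isomorphism in one stroke at the cost of the lattice/duality bookkeeping, while the paper's buys the cleaner picture of $G$ splitting off the hyperbolic piece $H_1\oplus H_2$. One small inaccuracy in your preliminary framing: $(H_1^\perp\cap H_2^\perp)/(H_1\cap H_2)$ is not itself an m-subquotient in the sense of \S\ref{DefWitt} (it is only a subgroup of the m-subquotient $(H_1\cap H_2)^\perp/(H_1\cap H_2)$, since in general $H_1^\perp\cap H_2^\perp\neq H_1^\perp+H_2^\perp$), and its non-degeneracy is not evident a priori; this is harmless, because non-degeneracy follows once the isomorphism with the metric group $H_1^\perp/H_1$ is established, but you should not assert it beforehand.
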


The proof given below provides a concrete isomorphism 
$a_{12}: H_1^\perp /H_1\stackrel{\sim}{\to} H_2^\perp /H_2$. But given a maximal isotropic subgroup
$H_3\subset G$ the diagram

\begin{equation*}
\label{triangle}
\xymatrix{H_1^\perp /H_1\ar[rr]^{a_{12}} \ar[dr]^{a_{13}
}&&H_2^\perp /H_2
\ar[dl]_{a_{23}}\\ &H_3^\perp /H_3&}
\end{equation*}
does not commute, in general. This is why we talk about ``the" anisotropic metric group corresponding
to $(G,q)$ (with ``the" in quotation marks).

\begin{proof} We can assume that $H_1\cap H_2=0$ (otherwise replace $G$ by 
$(H_1\cap H_2)^\perp /H_1\cap H_2$). Since $H_1^\perp /H_1$ is anisotropic and $H_2$ is
isotropic $H_2\cap H_1^\perp \subset H_2\cap H_1=0$. Similarly, $H_1\cap H_2^\perp =0$.
So the pairing $b: H_1\times H_2\to k^\times$ is non-degenerate. Therefore the restriction of $q$
to $C:=H_1\oplus H_2$ is non-degenerate. So $G=C\oplus C^\perp$. Clearly $H_1^\perp /H_1
\simeq C^\perp \simeq H_2^\perp /H_2$.
\end{proof}

Say that two metric groups are {\em equivalent\,} if ``the" corresponding anisotropic metric groups are
isomorphic. 

\begin{lemma}\label{equivequiv} 
Two metric groups are equivalent if and only if they have isomorphic m-subquotients.
\hfill\qedsymbol
\end{lemma}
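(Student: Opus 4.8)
The statement to prove is Lemma~\ref{equivequiv}: two metric groups are equivalent (i.e.\ have isomorphic associated anisotropic metric groups) if and only if they have isomorphic m-subquotients. The ``only if'' direction is the substantive half, and the ``if'' direction is essentially a formality once one unwinds the definitions. I would organize the argument around a single elementary observation: \emph{an m-subquotient of an m-subquotient is again an m-subquotient}. More precisely, if $(G,q)$ is a metric group, $H\subset G$ is isotropic, and $\bar K\subset H^\perp/H$ is isotropic in the quotient metric group, then $\bar K$ is of the form $K/H$ for a unique subgroup $K$ with $H\subset K\subset H^\perp$, and a short check shows $K$ is isotropic in $G$ with $K^\perp/K\cong \bar K^{\perp}/\bar K$ (taken inside $H^\perp/H$). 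This is the transitivity lemma that makes the whole notion coherent; I would state and prove it first, as a one-paragraph calculation using only that $q|_K$ is a character with trivial kernel on isotropic pieces and that $\perp$ behaves well under passage to subquotients.

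\textbf{Proof of ``only if''.} Suppose $(G_1,q_1)$ and $(G_2,q_2)$ are equivalent, so their maximal-isotropic subquotients $A_1,A_2$ are isomorphic anisotropic metric groups. Let $(Q,q)$ be an arbitrary m-subquotient of $(G_1,q_1)$; I must produce an isomorphic m-subquotient of $(G_2,q_2)$. By the transitivity lemma applied inside $Q$, passing to a maximal isotropic subgroup of $Q$ shows that ``the'' anisotropic metric group attached to $Q$ is itself an m-subquotient of $Q$, hence an m-subquotient of $(G_1,q_1)$; and by Lemma~\ref{A8} it is isomorphic to $A_1$. Thus every m-subquotient of $(G_1,q_1)$ contains (as a further m-subquotient) a copy of $A_1\cong A_2$. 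Symmetrically, every m-subquotient of $(G_2,q_2)$ contains a copy of $A_2$. So to finish I only need: given an anisotropic metric group $A$, among the m-subquotients of a fixed metric group $(G_2,q_2)$ there is one isomorphic to $A$ provided $A_2\cong A$ — but that is immediate, take $A=A_2=H_2^\perp/H_2$ for a maximal isotropic $H_2\subset G_2$. Hence $A_1\cong A_2$ is simultaneously an m-subquotient of both $(G_1,q_1)$ and $(G_2,q_2)$, proving they have isomorphic m-subquotients.

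\textbf{Proof of ``if''.} Conversely, suppose $(G_1,q_1)$ and $(G_2,q_2)$ have a common m-subquotient $(Q,q)$, say $Q=H_i^\perp/H_i$ with $H_i\subset G_i$ isotropic, $i=1,2$. Choose a maximal isotropic subgroup $\bar H\subset Q$; by the transitivity lemma $\bar H$ lifts to isotropic subgroups $K_i\subset G_i$ containing $H_i$, with $K_i^\perp/K_i\cong \bar H^\perp/\bar H$, and the latter is anisotropic by maximality of $\bar H$. Since each $K_i$ is then maximal isotropic in $G_i$ (an isotropic subgroup strictly larger than $K_i$ would, upon passing to $K_i^\perp/K_i$, give a nonzero isotropic subgroup of an anisotropic group), Lemma~\ref{A8} identifies $K_i^\perp/K_i$ with ``the'' anisotropic metric group $A_i$ of $(G_i,q_i)$. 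Therefore $A_1\cong \bar H^\perp/\bar H\cong A_2$, i.e.\ $(G_1,q_1)$ and $(G_2,q_2)$ are equivalent.

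\textbf{Main obstacle.} The only place requiring genuine care is the transitivity lemma for m-subquotients: one must verify that for $H\subset K\subset H^\perp$ with $K$ isotropic, the orthogonal complement of $K/H$ inside $H^\perp/H$ is exactly $K^\perp\cap H^\perp)/H = K^\perp/H$ (using $K\supset H$ forces $K^\perp\subset H^\perp$), and that the induced quadratic forms match. This is a routine but slightly fiddly diagram chase with $\perp$ and quotients, entirely analogous to the corresponding statements for Lie subcoalgebras in Lemma~\ref{independence}; no new idea beyond bookkeeping is needed, and everything else in the proof is formal manipulation of the word ``equivalent''.
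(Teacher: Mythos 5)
Your proof is correct and is essentially the argument the paper has in mind: the paper states Lemma~\ref{equivequiv} without proof, taking for granted exactly the two ingredients you supply, namely the transitivity of m-subquotients (an isotropic subgroup of $H^\perp/H$ is $K/H$ with $H\subset K\subset H^\perp$, $K$ isotropic in $G$, and $(K/H)^\perp/(K/H)\cong K^\perp/K$ as metric groups) combined with Lemma~\ref{A8}. One small tightening: in the ``only if'' direction you announce the stronger goal of matching an \emph{arbitrary} m-subquotient of $G_1$ with one of $G_2$ — that stronger statement is false in general (e.g.\ a hyperbolic group of order $4$ versus the trivial group) and is not needed; since the lemma only asks for a single isomorphic pair, it suffices to observe, as your final sentence does, that $A_1\cong A_2$ is itself an m-subquotient of both groups.
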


Let $W$ denote the set of equivalence classes of metric groups. If $G_1, G_2, G_1', G_2'$ are metric
groups and $G_1\sim G_1',\; G_2\sim G_2'$ then the orthogonal direct sum $G_1\oplus G_2$
is equivalent to $G_1'\oplus G_2'$ (this follows from Lemma~\ref{equivequiv}). Thus $W$ is a commutative monoid. In fact, it is a group
because the orthogonal direct sum $(G,q)\oplus (G,q^{-1})$ contains the diagonally embedded
$G$ as a Lagrangian subgroup. For each prime $p$ let $W_p\subset W$ be the set of classes
of metric $p$-groups. Each $W_p$ is a subgroup, and $W=\bigoplus\limits_p W_p$.  

\begin{definition} $W$ is called the {\em Witt group of metric groups}. $W_p$ is called {\em
the Witt group of metric $p$-groups}.
\end{definition}
 
\begin{remark} $W$ identifies with the set of isomorphism classes of anisotropic metric groups.
The sum of the classes of anisotropic metric groups $G_1$ and $G_2$ is the class of ``the"
anisotropic metric group corresponding to the orthogonal direct sum $G_1\oplus G_2$.
\end{remark}

Let us formulate the universal property of $W$. The universal property of $W_p$ is formulated
and proved in a similar way.

\begin{lemma}\label{A9} Suppose we have an abelian group $\Pi$ and a rule that associates to
an isomorphism class of metric groups $G$ an element $[G]\in \Pi$ so that $[G_1\oplus G_2]=
[G_1]+[G_2]$ and if $G$ is hyperbolic then $[G]=0$. Then there exists a (unique) morphism
$W\to \Pi$ such that the class in $W$ of a metric group $G$ is mapped to $[G]$.
\end{lemma}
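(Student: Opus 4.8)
\textbf{Proof plan for Lemma \ref{A9}.} The statement is the universal property of the Witt monoid-turned-group, asserting that any ``additive, hyperbolic-killing'' invariant of metric groups factors uniquely through $W$. Since $W$ is built precisely from isomorphism classes of metric groups modulo the orthogonal sum and the relation ``hyperbolic $=0$'', the claim is essentially a matter of unwinding the construction; the only real content is checking that the factorization is well-defined, i.e., that equivalence of metric groups forces equality of $[G]$.

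First I would recall that by construction $W$ is a quotient of the free abelian group (or rather the Grothendieck group of the commutative monoid) on isomorphism classes of metric groups, where two metric groups are identified exactly when they have isomorphic m-subquotients (Lemma \ref{equivequiv}), equivalently when ``the'' corresponding anisotropic metric groups are isomorphic (Lemma \ref{A8} / Lemma \ref{equivequiv}). The rule $G\mapsto [G]\in\Pi$ extends uniquely to a monoid homomorphism from the monoid of isomorphism classes of metric groups (under $\oplus$) to $\Pi$, and then, since $\Pi$ is a group, to a homomorphism from the Grothendieck group of that monoid to $\Pi$. So the only thing to verify is that this homomorphism kills the defining relations of $W$, i.e., that if $G_1\sim G_2$ then $[G_1]=[G_2]$.

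The key step is the following: if $G$ is a metric group and $H\subset G$ is an isotropic subgroup, then $[G]=[H^\perp/H]$. Granting this, two equivalent metric groups $G_1,G_2$ have isomorphic anisotropic parts $G_1^{an}\cong G_2^{an}$, and applying the identity twice gives $[G_1]=[G_1^{an}]=[G_2^{an}]=[G_2]$, so the factorization is well-defined; uniqueness is immediate because the classes of metric groups generate $W$. To prove $[G]=[H^\perp/H]$, I would argue that the orthogonal direct sum $G\oplus(H^\perp/H, q^{-1})$ is hyperbolic: indeed, writing $\bar G:=H^\perp/H$ with its induced form $\bar q$, one checks that the subgroup
\[
L:=\{(x,\,\bar x)\in G\oplus\bar G \mid x\in H^\perp,\ \bar x = x\bmod H\}
\]
is isotropic of the right order, namely $|L|=|H^\perp|$, and that $L=L^\perp$ in $G\oplus\bar G$; this uses $\dim$-counting (the order of a Lagrangian subgroup is the square root of the order of the metric group) together with $|H^\perp|\cdot|H|=|G|$ and $|\bar G|=|H^\perp|/|H|$, so $|L|^2 = |H^\perp|^2 = |G|\cdot|\bar G| = |G\oplus\bar G|$. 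Hence $[G]+[\bar G, q^{-1}]=0$ in $\Pi$. Applying this also with $H=0$ to the metric group $\bar G$ shows $[\bar G,q]+[\bar G,q^{-1}]=0$ (the diagonal in $\bar G\oplus\bar G$ with opposite forms being Lagrangian, exactly as in the proof of Corollary \ref{hyp}), so $[\bar G,q^{-1}]=-[\bar G,q]$ and therefore $[G]=[\bar G,q]=[H^\perp/H]$.

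The main obstacle, such as it is, will be the bookkeeping in verifying that $L$ above is genuinely Lagrangian: one must confirm $L$ is isotropic directly from the definition of $\bar q$ as the form induced by $q$ on $H^\perp/H$, and confirm $L^\perp\subseteq L$ rather than just $|L|=|G\oplus\bar G|^{1/2}$ (though in fact $L\subseteq L^\perp$ plus the order count suffices, since in a metric group an isotropic subgroup of order equal to the square root of the ambient order is automatically Lagrangian). This is the same computation that underlies Lemma \ref{equivequiv} and the fact, used throughout \S\ref{DefWitt}, that $W$ is a group; once it is in hand the rest of the argument is formal. I would end by noting the uniqueness of $W\to\Pi$: any homomorphism with the stated property agrees with ours on the classes of all metric groups, which generate $W$.
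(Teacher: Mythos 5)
Your proposal is correct and follows essentially the same route as the paper: both reduce the lemma to the key identity $[G]=[H^\perp/H]$ for an isotropic subgroup $H$, proved by exhibiting a diagonally embedded copy of $H^\perp$ (resp.\ of $G$ or $\bar G$) as a Lagrangian in an orthogonal direct sum with an inverse-form partner, and then cancelling in $\Pi$. The only difference is cosmetic—you place the inverse form on the quotient $H^\perp/H$ while the paper places it on $G$—so there is nothing to change.
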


\begin{proof} It suffices to show that if $H\subset G$ is an isotropic subgroup then $[H^\perp /H]=[G]$.
Set $G^-:=(G,q^{-1})$, where $q$ is the original quadratic form on $G$. Then the orthogonal
direct sums $G^-\oplus (H^\perp /H)$ and $G^-\oplus G$ are hyperbolic (because the
diagonally embedded subgroups $H^\perp \hookrightarrow G^-\oplus (H^\perp /H)$ and 
$G\hookrightarrow G^-\oplus G$ are Lagrangian). So $[G^-]+[H^\perp /H]=0=[G^-]+[G]$. 
Therefore $[H^\perp /H]=[G]$.
\end{proof}

\subsubsection{The Gauss sum morphism} \label{WittGauss}
Let $\BQ^{>0}:=\{ x\in \BQ^\times | x>0\}$.
By Proposition \ref{subquot}, the image of $\tau^\pm (G,q)$ in $k_{cyc}^\times /\BQ^{>0}$ depends
only on ``the" anisotropic metric group corresponding to $(G,q)$, so we have well defined maps
$\tau^\pm : W\to k_{cyc}^\times /\BQ^{>0}$. They are homomorphisms. Since $\tau^-$ is
complex-conjugate to $\tau^+$ it suffices to study $\tau^+$. The following statement is a
reformulation of Proposition \ref{basis}.

\begin{proposition}\label{A10} For each prime $p$ the homomorphism 
$\tau^+ : W_p\to k_{cyc}^\times /\BQ^{>0}$ is injective. $\square$
\end{proposition}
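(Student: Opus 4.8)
\textbf{Proof plan for Proposition \ref{A10}.} The statement to prove is that for each prime $p$ the Gauss sum homomorphism $\tau^+ : W_p\to k_{cyc}^\times /\BQ^{>0}$ is injective. Since $W_p$ is a group, it suffices to show that the kernel is trivial, i.e. that if $(G,q)$ is an anisotropic metric $p$-group with $\tau^+(G,q)\in\BQ^{>0}$ then $G=0$. (Recall that $W_p$ identifies with the set of isomorphism classes of anisotropic metric $p$-groups, and the class in $W_p$ is $0$ precisely when the group is $0$.) So the plan is: take such a $(G,q)$, and use the structural classification of anisotropic metric $p$-groups from \S\ref{anisotropic-odd} and \S\ref{2groups} together with the explicit Gauss sum values recorded there to force $G=0$.

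First I would reduce to the case where $\tau^+(G,q)$ is a \emph{positive rational number}: since $\tau^+$ takes values in $k_{cyc}^\times$ (as $(G,q)$ is metric, $\tau^+(G,q)\tau^-(G,q)=|G|$ by Corollary~\ref{hyp}), the hypothesis that its class in $k_{cyc}^\times/\BQ^{>0}$ is trivial means $\tau^+(G,q)\in\BQ^{>0}$, hence $\tau^+(G,q)\in\BZ_{>0}$ since $|G|=\tau^+(G,q)^2$ forces $\tau^+(G,q)$ to be a square root of an integer and it is already rational. This is exactly the hypothesis of Proposition~\ref{basis}, so Proposition~\ref{A6} applies (in fact it directly gives what we need): for $p$ odd, Proposition~\ref{A5} lists the three nonzero anisotropic metric $p$-groups, whose Gauss sums $\tau^+$ are computed (via Proposition~\ref{dimtwo} and direct Gauss sum evaluation) to be $\pm\sqrt p$ times a root of unity in the rank-one cases and $-p$ in the rank-two case — in no case a positive rational unless $G=0$. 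For $p=2$, Proposition~\ref{A13} lists all anisotropic metric $2$-groups (orders $1,2,4,8$), with $\tau^+$-values $1+i$, $2\xi$ ($\xi^8=1,\ \xi\neq 1$), and $2\xi(1+i)$-type products in the order-$8$ case; one checks in each case that the class in $k_{cyc}^\times/\BQ^{>0}$ is nontrivial. So the only class mapping to $0$ is that of the zero group.

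Concretely I would organize the write-up as follows. Step 1: note $\tau^+$ is well-defined on $W$ and $W_p$ by Proposition~\ref{subquot} (already done in \S\ref{WittGauss}), so we only need kernel-triviality. Step 2: let $(G,q)$ be anisotropic metric with $p$-power order and $\tau^+(G,q)\in\BQ^{>0}$; apply Proposition~\ref{A6} to conclude $\tau^+(G,q)=p^n\epsilon$ with $\epsilon$ a root of unity ($\epsilon=\pm1$ for $p$ odd, $\epsilon^8=1$ for $p=2$), and since the value is a positive rational, $\epsilon$ must be a positive rational root of unity, hence $\epsilon=1$. Step 3: by the last assertion of Proposition~\ref{A6}, $\epsilon=1$ implies $G$ has a Lagrangian subgroup $L$; but $(G,q)$ is anisotropic, and the restriction of $q$ to any isotropic subgroup is a character with trivial kernel, so $|L|\le$ (order of the kernel) $=1$, forcing $L=0$ and then $G=L^\perp=L=0$ (using $L^\perp=L$). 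Hence the class of $(G,q)$ in $W_p$ is trivial, proving injectivity.

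The main obstacle is not conceptual but bookkeeping: making sure that the ``$\epsilon$ a root of unity and a positive rational implies $\epsilon=1$'' step is airtight, and that the Lagrangian-forces-triviality argument for anisotropic groups is stated cleanly (in the $p=2$ case one must be slightly careful since isotropic subgroups of a metric $2$-group can a priori have order $2$, but then $q$ restricted to such a subgroup would be a nontrivial character with values $\pm1$, contradicting anisotropy — so again the only isotropic subgroup is $0$). Since all the heavy lifting — the classification of anisotropic metric $p$-groups and the evaluation of their Gauss sums — is already done in Propositions~\ref{A5}, \ref{dimtwo}, \ref{A6}, and \ref{A13}, the proof is essentially a short deduction, which matches the ``$\square$'' the authors attach to the statement.
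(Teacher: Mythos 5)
Your argument is correct and coincides with the paper's: Proposition~\ref{A10} is stated there as a reformulation of Proposition~\ref{basis}, which in turn is deduced from Proposition~\ref{A6} exactly as in your Steps 1--3 (a positive rational Gauss sum is a positive integer, so $\epsilon=1$, giving a Lagrangian subgroup, and anisotropy of the chosen representative forces $G=0$). The only quibble is your closing parenthetical about $p=2$: a Lagrangian subgroup is $q$-isotropic by the paper's definition, so anisotropy excludes any nonzero one directly and no extra care (nor the ``nontrivial $\pm1$-character'' argument, which as stated would not contradict anisotropy anyway) is needed there.
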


\begin{corollary}\label{A11} If $(G_1,q_1)$ and $(G_2,q_2)$ are anisotropic
metric $p$-groups and
$\tau^+(G_1,q_1)=\tau^+(G_2,q_2)$ then $(G_1,q_1)\simeq (G_2,q_2)$. $\square$ 
\end{corollary}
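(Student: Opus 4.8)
The statement to prove is Corollary~\ref{A11}: two anisotropic metric $p$-groups with equal Gauss sums $\tau^+$ are isomorphic. The plan is to deduce this directly from Proposition~\ref{A10} together with the general machinery of the Witt group $W_p$ developed in \S\ref{DefWitt}. The key observation is that the Witt group $W_p$ has been identified (via the remark after Definition~\ref{DefWitt}, or more precisely via Lemma~\ref{A8} and Lemma~\ref{equivequiv}) with the set of isomorphism classes of anisotropic metric $p$-groups: each class in $W_p$ contains exactly one anisotropic representative up to isomorphism. So the map sending an anisotropic metric $p$-group to its class in $W_p$ is a bijection onto $W_p$.

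\textbf{Main step.} Given anisotropic metric $p$-groups $(G_1,q_1)$ and $(G_2,q_2)$ with $\tau^+(G_1,q_1)=\tau^+(G_2,q_2)$, first pass to their classes $[G_1],[G_2]\in W_p$. Since $(G_i,q_i)$ is already anisotropic, it is ``the'' anisotropic metric group corresponding to itself, so the homomorphism $\tau^+: W_p\to k_{cyc}^\times/\BQ^{>0}$ (from \S\ref{WittGauss}) sends $[G_i]$ to the image of $\tau^+(G_i,q_i)$ in $k_{cyc}^\times/\BQ^{>0}$. By hypothesis these images coincide, hence $\tau^+([G_1])=\tau^+([G_2])$ in $k_{cyc}^\times/\BQ^{>0}$. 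By Proposition~\ref{A10}, the homomorphism $\tau^+: W_p\to k_{cyc}^\times/\BQ^{>0}$ is injective, so $[G_1]=[G_2]$ in $W_p$. Finally, since $[G_1]=[G_2]$ and both $(G_1,q_1)$ and $(G_2,q_2)$ are anisotropic representatives of this common Witt class, they are isomorphic as metric groups (each Witt class has a unique anisotropic representative up to isomorphism, by Lemma~\ref{A8} applied to the orthogonal sum $(G_1,q_1)\oplus(G_2,q_2^{-1})$, which is hyperbolic precisely because $[G_1]=[G_2]$ forces $[G_1\oplus G_2^-]=0$).

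\textbf{Expected obstacle.} There is essentially no obstacle here: the corollary is a formal consequence of the injectivity statement (Proposition~\ref{A10}) and the already-established dictionary between $W_p$ and anisotropic metric $p$-groups. The only point requiring a word of care is making the last implication (``$[G_1]=[G_2]$ in $W_p$ $\Rightarrow$ $(G_1,q_1)\simeq(G_2,q_2)$'') precise, i.e. unwinding the definition of equivalence of metric groups via m-subquotients (Lemma~\ref{equivequiv}) and using that anisotropy means the group is its own anisotropic m-subquotient. This is routine given Lemmas~\ref{A8} and~\ref{equivequiv}.
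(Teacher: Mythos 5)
Your proof is correct and is essentially the paper's own argument: the paper states Corollary~\ref{A11} as an immediate consequence of Proposition~\ref{A10} together with the identification of $W_p$ with isomorphism classes of anisotropic metric $p$-groups, which is exactly the reduction you carry out. The only extra content in your write-up is the (routine) unwinding of why equal Witt classes force anisotropic representatives to be isomorphic, which the paper leaves implicit via Lemma~\ref{A8} and Lemma~\ref{equivequiv}.
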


To describe $\tau^+(W_p)$, we need some notation.  Let $\mu_n\subset k^\times$ denote 
the group of $n$-th roots of 1.
Let $C_p\subset k^{\times}$ be the following subgroup:
if $p\equiv 1\mod{4}$ then $C_p$ is generated by $-1$ and $\sqrt{p}$; if $p\equiv 3\mod{4}$
then $C_p$ is generated by $-1$ and $\sqrt{-p}$; finally, $C_2$ is generated by $\mu_8$ and
$\sqrt{2}$ (or by $\mu_8$ and $1+i$). Set $p^\BZ :=\{ p^n | n\in \BZ \}$.
Clearly $C_p/p^\BZ \subset k_{cyc}^\times /\BQ^{>0}$.

\begin{proposition}    \label{A12} 
$\tau^+(W_p)=C_p/p^\BZ$.
\end{proposition}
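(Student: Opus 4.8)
The statement to prove is $\tau^+(W_p)=C_p/p^{\BZ}$, i.e. that the image of the Gauss sum homomorphism $\tau^+\colon W_p\to k_{\cyc}^\times/\BQ^{>0}$ is exactly the subgroup $C_p/p^{\BZ}$. Since $W_p$ is generated (as an abelian group) by the classes of anisotropic metric $p$-groups, and by Corollary~\ref{A11} each such class is determined by its Gauss sum, it suffices to compute $\tau^+$ on the anisotropic metric $p$-groups classified in \S\ref{anisotropic-odd} and \S\ref{2groups} and check that the resulting subgroup of $k_{\cyc}^\times/\BQ^{>0}$ equals $C_p/p^{\BZ}$. So the proof naturally splits into the odd case and the case $p=2$, and in each case into an ``inclusion $\subseteq$'' part (computing $\tau^+$ of generators) and an ``inclusion $\supseteq$'' part (exhibiting enough generators to fill out $C_p/p^{\BZ}$).

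\emph{Odd $p$.} First I would recall from Proposition~\ref{A5} that the nonzero anisotropic metric $p$-groups are $(\BF_p,\zeta^{a^2})$, $(\BF_p,\zeta^{ca^2})$ with $c$ a quadratic nonresidue, and $(\BF_{p^2},\zeta^{N(a)})$, the last having $\tau^+=-p$ by Proposition~\ref{dimtwo}(ii), hence trivial in $k_{\cyc}^\times/\BQ^{>0}$ since $-p$ and $p$ differ by the sign... wait, more precisely $-p \equiv -1 \pmod{\BQ^{>0}}$, which is the class of $-1\in C_p$. For the one-dimensional forms, $\tau^+(\BF_p,\zeta^{ca^2})=\sum_{a\in\BF_p}\zeta^{ca^2}$ is (up to the Legendre symbol $\left(\tfrac{c}{p}\right)$) the classical quadratic Gauss sum $g=\sum_a\zeta^{a^2}$, which satisfies $g^2=\left(\tfrac{-1}{p}\right)p=\pm p$; so $g=\sqrt{p}$ if $p\equiv 1\bmod 4$ and $g=\sqrt{-p}$ if $p\equiv 3\bmod 4$. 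Thus the images of these two generators in $k_{\cyc}^\times/\BQ^{>0}$ are exactly the classes of $\sqrt{p}$ (resp.\ $\sqrt{-p}$) and $-\sqrt{p}$ (resp.\ $-\sqrt{-p}$), which together generate $C_p/p^{\BZ}$ by the definition of $C_p$. This proves both inclusions simultaneously for odd $p$. I'd cite the standard sign evaluation of quadratic Gauss sums, or at least the formula $g^2=\left(\tfrac{-1}{p}\right)p$ and the fact that changing $c$ to a nonresidue multiplies $g$ by $-1$ (which is all that is needed modulo $\BQ^{>0}$).

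\emph{$p=2$.} Here I would use Proposition~\ref{A13}: anisotropic metric $2$-groups of order $2$ are the $A_i$ with $\tau^+(A_i)=1+i$, and those of order $4$ are the $M_\xi$ ($\xi^8=1$, $\xi\neq 1$) with $\tau^+(M_\xi)=2\xi$ by Lemma~\ref{uniformdescr}(ii); order $8$ ones are orthogonal sums $M_\xi\oplus A_i$, whose Gauss sum is the product and so lies in the subgroup generated by the previous values. In $k_{\cyc}^\times/\BQ^{>0}$: $\tau^+(M_\xi)=2\xi\equiv\xi$, so as $\xi$ ranges over $\mu_8$ these give the full image of $\mu_8$; and $\tau^+(A_i)=1+i$, whose square is $2i$, so $1+i\equiv\sqrt{2}\cdot(\text{a primitive 8th root})$ modulo $\BQ^{>0}$, i.e.\ the class of $1+i$ together with $\mu_8$ generates the class of $\sqrt 2$. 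Hence $\tau^+(W_2)$ contains the image of $\langle\mu_8,\sqrt 2\rangle=C_2$, and conversely every generator's Gauss sum already lies in $C_2/2^{\BZ}$. This gives both inclusions for $p=2$.

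\emph{Main obstacle.} The genuinely nontrivial input is the determination of the classical quadratic Gauss sum $g=\sum_{a\in\BF_p}\zeta^{a^2}$ up to sign, i.e.\ the statement $g^2=\left(\tfrac{-1}{p}\right)p$ (the precise sign of $g$ itself, due to Gauss, is not needed since we work modulo $\BQ^{>0}$, which only sees the square class). Everything else is bookkeeping: assembling the classification results already established in the appendix, reducing values modulo $\BQ^{>0}$, and matching against the explicit generators of $C_p$. I would state the Gauss-sum square formula with a reference (e.g.\ to a standard number theory text, or derive $g^2=\left(\tfrac{-1}{p}\right)p$ in two lines from $\sum_{a,b}\zeta^{a^2-b^2}$) and otherwise keep the argument short, since the surrounding lemmas (Propositions~\ref{A5}, \ref{dimtwo}, \ref{A13}, Corollary~\ref{A11}, and the universal property Lemma~\ref{A9}) do the heavy lifting.
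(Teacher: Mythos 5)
Your proposal is correct and follows essentially the same route as the paper: for $p=2$ the paper's proof is precisely your assembly of Proposition~\ref{A13}, Lemma~\ref{uniformdescr}(ii) and \S\ref{anis2}, and for odd $p$ both arguments reduce the statement to the value of the square of the one-dimensional Gauss sum, with the generators $-1$ and $\tau^+(\BF_p,\zeta^{x^2})$ matched against the definition of $C_p$. The only (cosmetic) difference is that where you import the classical evaluation $g^2=\left(\tfrac{-1}{p}\right)p$, the paper derives it internally by noting that $\tau^+(\BF_p,\zeta^{x^2})^2$ is the Gauss sum of the form $x^2+y^2$ on $\BF_p^2$, which is hyperbolic for $p\equiv 1 \pmod 4$ and anisotropic, hence of Gauss sum $-p$ by Proposition~\ref{dimtwo}(ii), for $p\equiv 3\pmod 4$.
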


\begin{proof} 
If $p=2$ use Proposition \ref{A13}, Lemma \ref{uniformdescr}(ii) and \S\ref{anis2}.

Now let $p\ne 2$. Define $q:\BF_p\to k^{\times}$ by $q(x)=\zeta^{x^2}$, where $\zeta$ is a primitive 
$p$-th root of $1$. By Propositions \ref{A5} and \ref{dimtwo}(ii), $\tau^+(W_p)$ is generated by $-1$ and 
$\tau^+(\BF_p,q)$. Moreover, $\tau^+(\BF_p,q)^2$ equals the Gauss sum of the quadratic form
on $\BF_p^2$ defined by $q(x,y)=\zeta^{x^2+y^2}$. If $p\equiv 1\mod{4}$ this form is hyperbolic,
so $\tau^+(\BF_p,q)^2=p$. If $p\equiv 3\mod{4}$ the form is anisotropic,
so $\tau^+(\BF_p,q)^2=-p$ by Proposition \ref{dimtwo}. 
\end{proof}

\begin{rems} 
\begin{enumerate}
\item[(i)] Propositions \ref{A10} and \ref{A12} describe the group $W_p$. Another approach
to $W_p$ is developed in \cite{L} and \cite[Appendice A]{BLLV}. It is based on the exact
sequence 
$$0\to WQ(\BZ_p)\to WQ(\BQ_p)\to W_p\to 0$$
from \cite[Th\'eor\`eme A.25]{BLLV} and the isomorphism $WQ(\BZ_p)\stackrel{\sim}{\to}WQ(\BF_p)$
from \cite[Corollaire 2.4]{L}. Here $WQ(A)$ is the Witt group defined by using finitely generated
projective $A$-modules equipped with non-degenerate $A$-valued quadratic forms.
\item[(ii)] By Propositions \ref{A10} and \ref{A12}, for any $p$ one has $8W_p=0$. 
Here is a sketch of an alternative proof of this fact. The group $W_p$ is a module of the
Witt ring of symmetric bilinear forms over $\BZ_p$, so it suffices to show that in this ring
$8=0$. This is because for any prime $p$ the symmetric bilinear from $\sum\limits_{i=1}^8x_iy_i$ over
$\BQ_p$ has a Lagrangian subspace.
\item[(iii)] We defined $C_p$ by considering separately the cases $p\equiv 1\mod{4}$, 
$p\equiv 3\mod{4}$, and $p=2$. Here is a uniform description of $C_p$. Let $k_{p\mbox{-}cyc}\subset 
k_{cyc}$ be the subfield generated by $p^n$-th roots of unity, $n=1,2,$\ldots . Let 
$E_p\subset k_{p\mbox{-}cyc}$ be the subfield of elements fixed by 
$(\BZ_p^\times)^2\subset \BZ_p^\times =Gal(k_{p\mbox{-}cyc}/k)$. Then
$C_p/p^\BZ \subset E_p^\times /\BQ^{>0}$ is the subgroup of $2$-torsion. Note that the
inclusion $\tau^+(W_p)\subset C_p$ follows from Remark (ii) above and the
inclusion $\tau^+(W_p)\subset E_p^\times/p^\BZ$, which is clear from the definition of 
Gauss sum. 
\end{enumerate}
\end{rems}

\section{Ising categories} \label{Is}
In this Appendix we collect some well known facts about the so-called {\em Ising categories}.

\begin{definition} \label{isingdef}
We say that a fusion category $\mI$ is an {\em Ising} category if $\FPdim(\mI)=4$
but $\mI$ is not pointed. An {\em Ising braided category\,} is a braided fusion category which is 
Ising in the above sense. An {\em Ising modular category\,} is a modular category which is 
Ising as a fusion category.
\end{definition} 

\begin{remark} The modular category that appears in the description of the 2-dimensional
Ising model at criticality via Conformal Field Theory is an example of an Ising modular category. 
\end{remark}

\subsection{Ising fusion categories}  \label{fusionIsing}
\begin{proposition}  \label{B3}
An Ising category $\mI$ contains precisely 3 isomorphism classes of simple
objects: the unit object $\be$,  an invertible object $\delta$ not isomorphic to
$\be$,  and a non-invertible object $X$. One has
\begin{equation} \label{isingK0}
\delta \otimes \delta \simeq \be ;\; \delta \otimes X\simeq X\otimes \delta \simeq X;\; 
X\otimes X\simeq \be \oplus \delta , \end{equation}
$$\FPdim(\delta)=1, \; \; \FPdim(X)=\sqrt{2}.$$
\end{proposition}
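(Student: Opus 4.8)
The plan is to exploit the Frobenius--Perron dimension function and the constraint $\FPdim(\mI) = 4$. First I would recall that $\FPdim$ is a ring homomorphism $K_0(\mI) \to \mathbb{R}$ with $\FPdim(Y) \geq 1$ for all nonzero $Y$, and that the $\FPdim$ of a simple object is an algebraic integer (by \S\ref{fpd} and the cited corollaries of \cite{ENO}). Writing $\O(\mI) = \{\be = Y_0, Y_1, \dots, Y_m\}$ with $d_i := \FPdim(Y_i)$, the defining identity $\FPdim(\mI) = \sum_i d_i^2 = 4$ together with $d_0 = 1$ forces $\sum_{i\geq 1} d_i^2 = 3$ with each $d_i \geq 1$. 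Since $\mI$ is not pointed, at least one $d_i > 1$.

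The next step is the numerics. If all simple objects had dimension $1$ then $\mI$ would be pointed, contrary to hypothesis; so some $d_i > 1$. A simple object of $\FPdim$ equal to $2$ would already contribute $4 = \FPdim(\mI)$, leaving no room even for $\be$; so every $d_i < 2$, i.e.\ $1 < d_i < 2$ whenever $d_i \neq 1$. The only way to write $3$ as a sum of squares of numbers each either $=1$ or strictly between $1$ and $2$ is $3 = 1 + 2 = 1^2 + (\sqrt{2})^2$ — here I would note that a single term would need a dimension $\sqrt{3} \notin (1,2)$... actually $\sqrt 3 \in (1,2)$, so I must rule this out separately: the set of $\FPdim$'s is closed under the fusion rules, and an object of dimension $\sqrt{3}$ would give $X \otimes X^*$ of dimension $3$, which decomposed into simples of dimension in $\{1\} \cup (1,2)$ and summing (of squares) to $3$ inside a category of total dimension $4$ is impossible since it would exhaust everything except forcing contradictions with the unit; alternatively, and more cleanly, I would invoke that $\FPdim(X)^2$ divides $\FPdim(\mI)$ is not available, so instead I use that $X \otimes X^*$ contains $\be$, hence $\FPdim(X\otimes X^*) = \FPdim(X)^2 \geq 1 + (\text{something})$, and combine with the classification of possibilities. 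The cleanest route: the multiset of squared dimensions summing to $3$ with entries in $\{1\}\cup(1,4)$ and total category dimension $4$, together with $X\otimes X^*$ being a subobject-closed combination, leaves exactly $\{1, 2\}$, i.e.\ one invertible object $\delta$ and one object $X$ with $\FPdim(X) = \sqrt 2$.

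Then I would establish the fusion rules. The invertible simples form a subgroup of order $|\{Y : \FPdim(Y)=1\}| = 2$, so $\delta \otimes \delta \simeq \be$. Since $\FPdim(\delta \otimes X) = \sqrt 2$ and $\delta \otimes X$ is simple (tensoring with an invertible object is an autoequivalence), it must be $X$; likewise $X \otimes \delta \simeq X$. Finally $\FPdim(X \otimes X^*) = 2$, and $X \otimes X^* \simeq X^* \otimes X$ contains $\be$ with multiplicity $\dim\Hom(X\otimes X^*, \be) = \dim\Hom(X,X) = 1$; the remaining dimension $1$ must be an invertible object, and it cannot be $\be$ again (multiplicity is $1$), so it is $\delta$, giving $X \otimes X^* \simeq \be \oplus \delta$. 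Since $X^* $ has dimension $\sqrt 2$ it equals $X$ or $\delta \otimes X \simeq X$, so $X^* \simeq X$ and $X \otimes X \simeq \be \oplus \delta$, completing \eqref{isingK0}.

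I expect the only real obstacle to be the elementary-but-fiddly step of ruling out a simple object of dimension $\sqrt 3$ (and more generally any irrational dimension $\neq \sqrt 2$) purely from $\FPdim(\mI) = 4$ and closure under fusion; the rest is bookkeeping. The safest way to handle it is to argue that the subcategory generated by a simple $X$ with $\FPdim(X) = d$ has $\FPdim$ at least $1 + d^2$ if $d > 1$ (from $\be, X$ being distinct, with $X$ non-invertible) and that $X \otimes X^*$ forces further simples unless $d^2$ is an integer; since $1 + d^2 \leq 4$ gives $d^2 \leq 3$ and $d^2 \in \{2, 3\}$ are the only non-pointed options, one eliminates $d^2 = 3$ because then $\FPdim(\mI) = 1 + 3 = 4$ would leave $X \otimes X^*$ (dimension $3$) with no room to decompose into available simples other than $\be$ and $X$ itself, contradicting $\dim\Hom(X\otimes X^*, X) = \dim \Hom(X^*, X^*) \neq 0$ only if... — in any case this is a finite check which I would spell out carefully in the final write-up.
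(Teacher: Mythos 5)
Your overall strategy is fine, and since the paper itself gives no argument here (it simply cites \cite[Proposition 8.32]{ENO}), a self-contained Frobenius--Perron count like yours is a legitimate substitute. The final bookkeeping is also correct: once you know the simple objects are $\be,\delta,X$ with $\FPdim(\delta)=1$, $\FPdim(X)=\sqrt2$, the invertibles form a group of order $2$, $\delta\ot X$ is simple of dimension $\sqrt2$ hence $\simeq X$, and $X\ot X^*\simeq\be\oplus\delta$ with $X^*\simeq X$, which is \eqref{isingK0}.

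The genuine gap is exactly at the step you flag as "fiddly", and as written it does not close. The claim that the only way to write $3$ as a sum of squared dimensions, each equal to $1$ or strictly between $1$ and $4$, is $1+2$ is false as a purely numerical statement: nothing you say excludes two non-invertible simples $Y_1,Y_2$ with $\FPdim(Y_i)^2\in(1,2)$ and $\FPdim(Y_1)^2+\FPdim(Y_2)^2=3$. The missing ingredient is a lemma you gesture at but never prove: for a non-invertible simple $Y$ one has $\FPdim(Y)^2\ge 2$, because $\dim\Hom(Y\ot Y^*,\be)=\dim\Hom(Y,Y)=1$, so the complement of $\be$ in $Y\ot Y^*$ is a \emph{nonzero} object of Frobenius--Perron dimension $\FPdim(Y)^2-1$, which therefore is $\ge 1$. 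With this, the only admissible multisets of squared dimensions are $\{1,1,1\}$ (pointed, excluded), $\{1,2\}$, and $\{3\}$. The $\{3\}$ case you also leave unfinished, and the Hom-space identity you start to invoke is garbled; the clean argument is that then $\O(\mI)=\{\be,X\}$ forces $X^*\simeq X$ and, since $\be$ occurs in $X\ot X$ with multiplicity one, $X\ot X\simeq \be\oplus(\mbox{$a$ copies of }X)$ for some $a\in\BZ_{\ge0}$, giving $3=1+a\sqrt3$, which is impossible. Both repairs are short, but without them the crucial exclusion step of your proof is not established.
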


\begin{proof} See e.g. \cite[Proposition 8.32]{ENO}. 
\end{proof}

It follows that the object $X$ is self-dual. Choose an isomorphism $\phi : X\iso X^*$ (it is defined
uniquely up to a scaling). Then the composition 
\begin{equation}  \label{defoflambda}
\be \stackrel{\coev_X}{\longrightarrow}X\otimes X^*\stackrel{\phi \otimes \phi^{-1}}{\longrightarrow}
X^*\otimes X\stackrel{\ev_X}{\longrightarrow}\be
\end{equation}
is independent of the choice of $\phi$. Let us denote this composition by 
$\lambda (\mI)\in \Hom(\be,\be)=k$. 

\begin{lemma} We have $\lambda (\mI)^2=2$.
\end{lemma}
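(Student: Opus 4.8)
The plan is to compute $\lambda(\mI)^2$ directly from the fusion rule $X\otimes X\simeq\be\oplus\delta$ and the defining formula \eqref{defoflambda}. First I would note that the scalar $\lambda(\mI)$ is, up to the standard dictionary between traces and evaluation/coevaluation maps, essentially $d_\pm(X)$ computed with respect to the isomorphism $\psi_X$ built from $\phi$; more precisely, \eqref{defoflambda} exhibits $\lambda(\mI)$ as $\Tr_-(\id_X)$ for the natural isomorphism $X\xrightarrow{\phi}X^*\xrightarrow{\sim}X^{**}$, where the second map uses the canonical identification of $X$ with its double dual composed appropriately. The cleanest route, though, is to avoid the choice of $\psi$ altogether and instead use that $\lambda(\mI)^2=d_+(X)\,d_-(X)=|X|^2$ is M\"uger's squared norm, which by \S\ref{squared} is independent of all choices.

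The key computation is then: by Proposition \ref{FPint1}(ii), since $\mI$ is weakly integral (indeed $\FPdim(\mI)=4\in\mathbb Z$), we have $|X|^2=\FPdim(X)^2$, and $\FPdim(X)=\sqrt2$ by Proposition \ref{B3}, so $|X|^2=2$. It remains to identify $\lambda(\mI)^2$ with $|X|^2$. For this I would unwind \eqref{defoflambda}: writing $\psi_X:X\to X^{**}$ for the isomorphism obtained from $\phi:X\iso X^*$ and the canonical $(X^*)^*\simeq X^{**}$, one checks that the composition \eqref{defoflambda} equals $\Tr_+(\id_X)=d_+(X)$ for this $\psi_X$, while running the analogous computation with coevaluation and evaluation on the other side (or with $\phi^{-1}$ in the other slots) gives $\Tr_-(\id_X)=d_-(X)$. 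Because $X$ is self-dual and $\phi$ is unique up to scaling, rescaling $\phi$ by $c$ multiplies \eqref{defoflambda} by $1$ (the factors $\phi\otimes\phi^{-1}$ cancel), which is exactly why $\lambda(\mI)$ is well-defined; and rescaling shifts $d_+(X)$ and $d_-(X)$ oppositely, so $d_+(X)d_-(X)$ is the rescaling-invariant quantity. A short diagram chase shows $d_+(X)d_-(X)=\lambda(\mI)^2$, hence $\lambda(\mI)^2=|X|^2=2$.

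Alternatively, and perhaps more elementarily, I would argue as follows: the morphism \eqref{defoflambda} can be computed by expanding $\coev_X$ in terms of a basis of $\Hom(\be,X\otimes X^*)\cong\Hom(X,X)=k$, composing, and recognizing the result as the categorical dimension; the fusion rule $X\otimes X\simeq\be\oplus\delta$ forces the relevant pairing $\Hom(\be,X\otimes X^*)\times\Hom(X^*\otimes X,\be)\to k$ to be perfect, and the normalization built into $\ev_X,\coev_X$ (rigidity axioms) combined with $|X|^2=\FPdim(X)^2=2$ pins the value. Either way the one genuine point to verify is that \eqref{defoflambda} computes $|X|^2$ rather than some other invariant.

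\textbf{Main obstacle.} The only subtlety is bookkeeping: matching the concrete composition \eqref{defoflambda} (with the specific placement of $\phi$ and $\phi^{-1}$) against the definitions of $\Tr_\pm$ and $d_\pm(X)$ from \S\ref{squared}, and confirming that the product $\lambda(\mI)^2$ — not $\lambda(\mI)$ itself — is what equals the squared norm. Once the identification $\lambda(\mI)^2=|X|^2$ is in hand, the equality $|X|^2=\FPdim(X)^2=2$ is immediate from Proposition \ref{FPint1}(ii) and Proposition \ref{B3}, so the proof is essentially a two-line diagram chase followed by a citation.
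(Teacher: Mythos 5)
Your proposal is correct and takes essentially the same route as the paper: one first gets $|X|^2=\FPdim(X)^2=2$ from weak integrality (Propositions \ref{FPint1}(ii) and \ref{B3}), and then identifies $\lambda(\mI)^2$ with $|X|^2=d_+(X)d_-(X)$ by computing $d_\pm(X)$ with respect to the isomorphism $\psi_X=(\phi^*)^{-1}\phi$ built from $\phi:X\iso X^*$, exactly as in the paper (where the diagram chase you defer is carried out in Lemma \ref{sphlemma}).
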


\begin{proof} 
Combining Propo\-si\-tions~\ref{FPint1}(ii) and \ref{B3} we see that $|X|^2=2$,
where $|X|^2$ is the squared norm (see \S\ref{squared}). On the other hand,
let us compute $|X|^2$ using the isomorphism $\psi_X: X\to X^{**}$ defined by
$\psi_X:=(\phi^*)^{-1}\phi$ (here $\phi : X\iso X^*$ is any isomorphism). The corresponding
numbers $d_{\pm}(X)$ (see \S\ref{squared}) equal $\lambda$, so 
$|X|^2=d_+(X)d_-(X)=\lambda^2$ by  formula~\eqref{dim1}.
\end{proof}
 
\begin{proposition} \label{sqrt2}
(i) For any $\lambda \in k$ such that $\lambda^2=2$
there exists an Ising category $\mI$ with $\lambda (\mI)=\lambda$.

(ii) The number $\lambda (\mI)$ determines an Ising category $\mI$ uniquely up
to an equivalence of fusion categories.
\end{proposition}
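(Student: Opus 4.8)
\textbf{Proof plan for Proposition~\ref{sqrt2}.} The strategy is to realize Ising categories explicitly by skeletal data and then analyze when two such data give equivalent categories. First I would fix the fusion ring \eqref{isingK0} with simple objects $\be,\delta,X$, so that the only structure to pin down is the associativity constraint, i.e.\ a choice of $F$-matrices. By \eqref{isingK0} most of the multiplicity spaces are $0$- or $1$-dimensional, and the only nontrivial pentagon relations come from the associators involving the object $X\ot X\ot X\ot X$ and $X\ot X\ot X$. After using the gauge freedom (rescaling the coevaluation/evaluation morphisms and the $1$-dimensional multiplicity spaces) one reduces the data to essentially one scalar, which can be normalized to be precisely $\lambda(\mI)$ in the sense of \eqref{defoflambda}; the pentagon axiom then forces $\lambda^2 = 2$, consistent with the lemma above. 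This is a finite, classical computation (it appears, e.g., in the treatment of the Tambara--Yamagami categories $TY(\BZ/2\BZ,\chi,\tau)$ with $\tau = \pm 1/\sqrt2$), so I would cite it rather than reproduce it: the $F$-symbols are $F^{X}_{X,X,X} = \tfrac{1}{\lambda}\begin{pmatrix} 1 & 1 \\ 1 & -1\end{pmatrix}$ and $F^{\delta}_{X,\delta,X} = F^{X}_{\delta,X,\delta} = -1$, all others trivial, where $\lambda^2 = 2$.

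For part (i), given $\lambda$ with $\lambda^2 = 2$ I would simply exhibit the category: take the skeletal $k$-linear category with simple objects $\be,\delta,X$, tensor product \eqref{isingK0}, and the associativity constraint given by the $F$-symbols above; verifying the pentagon identity is the routine computation just mentioned, and rigidity is automatic for any such fusion ring (one checks $X^* \simeq X$, $\delta^* \simeq \delta$ directly, or invokes Corollary~\ref{fusion sub} together with semisimplicity). One then checks $\FPdim(\mI) = 1 + 1 + 2 = 4$ and that $\mI$ is not pointed since $\FPdim(X) = \sqrt2 \notin \BZ$, so $\mI$ is Ising; and tracing through the definition \eqref{defoflambda} with the chosen $\phi$ gives $\lambda(\mI) = \lambda$.

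For part (ii), suppose $\mI_1,\mI_2$ are Ising categories with $\lambda(\mI_1) = \lambda(\mI_2)$. By Proposition~\ref{B3} any tensor equivalence must match $\be \mapsto \be$, $\delta \mapsto \delta$, $X \mapsto X$ (these are the unique objects of their respective Frobenius--Perron dimensions), so on the level of fusion rings there is a canonical identification. It then remains to show that after a suitable gauge transformation the $F$-symbols of $\mI_1$ and $\mI_2$ coincide. Here the key point is that $\lambda(\mI)$, being defined by \eqref{defoflambda} in a gauge-independent way, is exactly the obstruction: once $\lambda(\mI_1) = \lambda(\mI_2)$, the residual $F$-symbol scalars in the two categories differ by a coboundary, and the corresponding gauge transformation defines the tensor structure on the identity-on-objects functor $\mI_1 \to \mI_2$. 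The main obstacle is the bookkeeping in this last step --- carefully setting up the gauge group acting on the solutions of the pentagon equation and checking that $\lambda$ is a complete invariant of the orbit --- but this is a standard (if tedious) cocycle computation for the Tambara--Yamagami fusion rule, and I would present it compactly or defer to \cite{TY} and \cite[\S8]{ENO}. Alternatively, one can bypass the explicit $F$-symbol analysis: since $X \ot X \simeq \be \oplus \delta$ and $\delta$ generates a copy of $\Vect_{\BZ/2\BZ}$, the category $\mI$ is a $\BZ/2\BZ$-extension of $\Vect_{\BZ/2\BZ}$, and such extensions with the Ising fusion rule are classified by the data $(\chi,\tau)$ of \cite{TY}; the nondegenerate symmetric bicharacter $\chi$ on $\BZ/2\BZ$ is unique, and $\tau = 1/\lambda$ up to the sign that is itself gauge, giving uniqueness.
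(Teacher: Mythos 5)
Your argument is essentially the paper's: the proof of Proposition~\ref{sqrt2} in the text consists precisely of citing \cite[Theorem 3.2]{TY} (with the explicit skeletal model, associator matrix $(-1)^{ab}\lambda^{-1}$ on $(X\ot X)\ot X$, recorded in Remark~\ref{tyf}(ii)), and your $F$-symbol/pentagon/gauge analysis is just the content of that theorem specialized to the fusion rule \eqref{isingK0}, so deferring the coboundary bookkeeping to \cite{TY} and \cite{ENO} is exactly what the paper does. One correction to your closing alternative route: the sign of $\tau$ in the Tambara--Yamagami datum $(\chi,\tau)$ is \emph{not} a gauge freedom --- by \cite[Theorem 3.2]{TY} the two values $\tau=\pm 1/\sqrt{2}$ yield inequivalent categories, and this sign is precisely the invariant, via $\lambda(\mI)=\tau^{-1}$; if it were gauge, $\lambda(\mI)$ would not be well defined as an invariant and there would be a unique Ising category, contrary to parts (i) and (ii) taken together.
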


This proposition is a particular case of \cite[Theorem 3.2]{TY} (in \cite{TY} 
multiplication tables more general than \eqref{isingK0} are studied).

\begin{rems} \label{tyf}
\begin{enumerate}
\item[(i)] It is not hard to show that any tensor autoequivalence of an Ising category is isomorphic to the identity functor.
\item[(ii)] Here is an explicit construction of an Ising category $\mI$ with $\lambda (\mI)=\lambda$,
where $\lambda$ is a fixed square root of 2 (this is a special case of \cite[Definition 3.1]{TY}).
The simple objects of $\mI$ are $X$ and $\delta_a$, $a\in\BZ/2\BZ$. The tensor product of simple
objects is defined by
\[
\delta_a\ot\delta_b=\delta_{a+b},\quad \delta_a\ot X=X=X\ot\delta_a,\quad X\ot X=\delta_0\oplus\delta_1\, .
\]
The associativity isomorphism $(\delta_a\ot X)\ot \delta_b\iso \delta_a\ot (X\ot \delta_b)$ equals
$(-1)^{ab}\cdot\id_X$\,. The associativity isomorphism $(X\ot \delta_a)\ot X\iso X\ot (\delta_a\ot X)$
is the morphism $\bigoplus\limits_{b\in\BZ/2\BZ}\delta_b\to\bigoplus\limits_{b\in\BZ/2\BZ}\delta_b$
with components $(-1)^{ab}$. 
The associativity isomorphism $(X\ot X)\ot X\iso X\ot (X\ot X)$ is the morphism
$$\bigoplus\limits_{a\in\BZ/2\BZ}(\delta_a\ot X)\to\bigoplus\limits_{b\in\BZ/2\BZ}(X\ot \delta_b)$$
with matrix elements $(-1)^{ab}\cdot\lambda^{-1}$, $a,b\in\BZ/2\BZ$ (recall that $\delta_a\ot X=X=
X\ot \delta_b$).
The remaining 
associativity isomorphisms 
\[
(\delta_a\ot\delta_b)\ot\delta_c\iso\delta_a\ot (\delta_b\ot\delta_c),
\]
\[
(\delta_a\ot\delta_b)\ot X\iso\delta_a\ot (\delta_b\ot\ X), \quad
(X\ot\delta_a)\ot\delta_b\iso X\ot (\delta_a\ot\delta_b),
\]
\[
(\delta_a\ot X)\ot X\iso\delta_a\ot (X\ot X), \quad
(X\ot X)\ot\delta_a\iso X\ot (X\ot\delta_a)
\]
equal the identities.
\item[(iii)] According to the previous remark, the maximal pointed subcategory $\mI_{pt}\subset \mI$
is equivalent to $\Vec_{\BZ/2\BZ}$. Here is an {\em a priori\,} proof.
Since the fusion category $\mI_{pt}$ is pointed it is equivalent to $\Vec_G^\omega$, see 
\S\ref{terminology}. It is clear that $G\simeq \BZ/2\BZ$. We claim that the class of $\omega$
in $H^3(G,k^\times)\simeq \BZ/2\BZ$ is trivial. To see this, note that 
the functor $\Hom(X,?\otimes X): \mI_{ad}\to \Vec$ has a structure of tensor functor.
On the other hand, a tensor functor  $\Vec_G^\omega \to \Vec$ yields a representation of
the 3-cocycle $\omega$ as a coboundary.

\item[(iv)] Keep the notation of Remark (ii). Identify $X^*$ with $X$ so that 
$\coev_X:\delta_0\to X\ot X^*$ identifies with the embedding 
$\delta_0\hookrightarrow\delta_0\oplus \delta_1=X\ot X$. Then 
$\ev_X:X^*\ot X\to\delta_0$ identifies with $\lambda\cdot\pi$, where 
$\pi : X\ot X=\delta_0\oplus\delta_1\twoheadrightarrow\delta_0$ is the projection and
$\lambda$ has the same meaning as in Remark (ii). This follows from the description of
the associativity isomorphism $(X\ot X)\ot X\iso X\ot (X\ot X)$ in Remark (ii) or from
the fact that the composition \eqref{defoflambda} equals $\lambda$.
\end{enumerate}
\end{rems}

Let $\mI$ be an Ising fusion category. 
It is clear that the adjoint category $\mI_{ad}$ is generated by objects $\be$ and $\delta$. Hence 
$\mI_{ad}$ is pointed and $\mI$ is nilpotent of nilpotency class 2, see  \cite{GN}. 
(The nilpotency of $\mI$ is also clear since  it is a $p$-category for $p=2$ 
and $p$-categories are nilpotent by \cite[Theorem 8.28]{ENO}). 
In \S\ref{gradsubsec} we defined the notion of universal grading $\deg :\O(\C)\to U_\mI$. 
In our situation it has the following explicit description:
\begin{equation}  \label{theunivgr}
U_\mI =\BZ/2\BZ ,\quad  \deg(\be)=\deg(\delta)=0\in \BZ/2\BZ ,\; \deg(X)=1\in \BZ/2\BZ .
\end{equation}
By Proposition \ref{un-grad} (iii), 
$\Aut_\otimes (\id_\mI)=\Hom (U_\mI ,k^{\times})=\BZ/2\BZ$.

\subsection{Spherical structures on Ising categories}  \label{sphIs}
It follows from Corollary \ref{FPint3} that an Ising category $\mI$ has a spherical structure. Moreover, 
since $U_\mI =\BZ/2\BZ$ it is clear from \S \ref{pivotal} that $\mI$ has precisely two such
structures. For any of them $d(X)^2=\FPdim(X)^2=2$. Since $X\otimes X\simeq \be \oplus \delta$ this implies that $d(\delta)=1$. Therefore a spherical structure on $\mI$ is completely determined by the sign 
$\epsilon (\mI)=\pm 1$ in the equality $d(X)=\epsilon (\mI)\lambda (\mI)$. 

\begin{lemma}   \label{sphlemma}
Equip $\mI$ with the spherical structure such that $\epsilon (\mI)=1$. Then the corresponding
isomorphism $X\iso X^{**}$ equals $(\phi^*)^{-1}\phi$, where $\phi :X\to X^*$ is any isomorphism.
\end{lemma}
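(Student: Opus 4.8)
\textbf{Proof proposal for Lemma~\ref{sphlemma}.}
The plan is to reduce the claim to the corresponding statement about the non-canonical dimensions $d_\pm$ computed via a specific choice of $\psi$. First I would fix an isomorphism $\phi: X \iso X^*$ (it exists because $X$ is self-dual by Proposition~\ref{B3}, and is unique up to scaling). Set $\psi_X := (\phi^*)^{-1}\phi : X \to X^{**}$; on the invertible objects $\be,\delta$ there is a unique natural choice of isomorphism to the double dual, so together these data determine a natural (not a priori tensor) isomorphism $\psi$ between $\id_{\mI}$ and $(-)^{**}$. The key computation, already carried out in the proof of the preceding lemma (the one showing $\lambda(\mI)^2 = 2$), is that for this particular $\psi$ one has $d_+(X) = d_-(X) = \lambda(\mI)$, and clearly $d_\pm(\delta) = 1$, $d_\pm(\be)=1$. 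In particular $d_+(Y) = d_-(Y)$ for every simple $Y$, so $\psi$ \emph{is} a spherical structure once we check it is a tensor isomorphism.

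The main point to verify is therefore that $\psi = (\phi^*)^{-1}\phi$ is a \emph{tensor} isomorphism, i.e.\ a pivotal structure. By the discussion in \S\ref{pivotal}, the set of pivotal structures on $\mI$ is a torsor over $\Aut_\otimes(\id_{\mI}) = \Hom(U_{\mI}, k^\times) = \BZ/2\BZ$ (using \eqref{theunivgr}), and $\mI$ does admit a pivotal (even spherical) structure by Corollary~\ref{FPint3}. So it suffices to exhibit \emph{some} pivotal structure $\psi'$ and observe that $\psi$ differs from it by an element of $\Aut_\otimes(\id_{\mI})$: indeed any natural automorphism $\alpha$ of $\id_{\mI}$ with $\psi = \psi' \circ \alpha$ is automatically a tensor automorphism here, because the obstruction to $\psi$ being tensor lives in the group of natural automorphisms of $\id_\mI$ modulo tensor automorphisms, and one checks directly that rescaling $\phi$ multiplies $\psi_X$ by a scalar square while leaving $\psi_\be,\psi_\delta$ fixed — a modification by an element of $\Hom(U_\mI,\mu_2) = \Aut_\otimes(\id_\mI)$. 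More concretely, I would argue that for invertible $Y$ the two possible isomorphisms $Y \to Y^{**}$ both occur in the (two) pivotal structures, and that the value of a pivotal structure on $X$ is forced by its values on $\be,\delta$ and the fusion rule $X\ot X \simeq \be\oplus\delta$ up to the sign $\epsilon(\mI)$; hence the pivotal structure with $\epsilon(\mI)=1$ must have its $X$-component equal to a scalar multiple of $(\phi^*)^{-1}\phi$, and since $d_+(X)=\lambda(\mI)$ for $(\phi^*)^{-1}\phi$ (not $-\lambda(\mI)$), the scalar is $1$.

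The hardest part is the bookkeeping in the previous paragraph: making precise that the $X$-component of a pivotal structure is determined, given the $\be$- and $\delta$-components, up to exactly the sign $\epsilon(\mI)$. For this I would use the explicit model in Remark~\ref{tyf}(ii)--(iv): with $\phi$ normalized as in Remark~\ref{tyf}(iv) so that $\ev_X = \lambda\cdot\pi$ and $\coev_X$ is the canonical embedding, one computes $\Tr_+(\id_X)$ for $\psi_X = (\phi^*)^{-1}\phi$ directly from \eqref{2 traces +} and the associativity constraints listed in Remark~\ref{tyf}(ii), getting $\lambda(\mI)$; simultaneously the hexagon/pentagon relations in that model force the compatibility of $(\phi^*)^{-1}\phi$ with tensor products of the three simple objects, so $\psi$ is literally a pivotal structure. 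Then $d_+(X) = \lambda(\mI)$ means $\epsilon(\mI) = +1$ for this structure, which is exactly the assertion. Alternatively — and this is the cleaner route I would actually write up — I would avoid the explicit model and instead invoke the torsor structure: both Ising spherical structures are of the form $\psi$ or $\psi\circ\sigma$ where $\sigma$ is the nontrivial element of $\Aut_\otimes(\id_\mI)$ acting by $-1$ on $X$ and trivially on $\be,\delta$; since $d_+(X)$ equals $\lambda(\mI)$ for $\psi$ and $-\lambda(\mI)$ for $\psi\circ\sigma$, and $\epsilon(\mI)$ is by definition the sign in $d(X) = \epsilon(\mI)\lambda(\mI)$, the structure with $\epsilon(\mI)=1$ is $\psi = (\phi^*)^{-1}\phi$, as claimed.
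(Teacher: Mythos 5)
The usable core of your argument is the same as the paper's, but it is buried under an unnecessary and partially incorrect detour. What the lemma actually requires is very little: the spherical structure with $\epsilon(\mI)=1$ is already given (\S\ref{sphIs}), its component at $X$ lives in the one-dimensional space $\Hom(X,X^{**})$, hence equals $c\cdot(\phi^*)^{-1}\phi$ for some scalar $c$; since $d_+$ scales linearly in the choice of $\psi_X$, and since $d_+(X)$ computed from $(\phi^*)^{-1}\phi$ equals $\lambda(\mI)$, the equality $d(X)=\epsilon(\mI)\lambda(\mI)=\lambda(\mI)$ forces $c=1$. This scalar comparison is exactly the paper's proof; the only computational input is $d_+(X)=\lambda(\mI)$ for $\psi_X=(\phi^*)^{-1}\phi$, which the paper verifies by a commutative diagram matching \eqref{2 traces +} against the defining composition \eqref{defoflambda} (rather than merely citing the assertion in the proof of the preceding lemma, as you do). You do state this comparison in the middle of your second paragraph, and that alone proves the lemma.

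By contrast, the step you single out as ``the main point to verify'' --- that $(\phi^*)^{-1}\phi$ extends to a pivotal (indeed spherical) structure --- is not needed, and your justifications for it do not stand as written. Concretely: (a) rescaling $\phi\mapsto t\phi$ leaves $(\phi^*)^{-1}\phi$ unchanged (the factors $t$ and $t^{-1}$ cancel), which is precisely why the statement may say ``any isomorphism $\phi$''; it does not multiply $\psi_X$ by a square, so the inference that the discrepancy lies in $\Hom(U_\mI,\mu_2)$ is a non sequitur. (b) The claim that any natural automorphism $\alpha$ with $\psi=\psi'\circ\alpha$ is ``automatically tensor'' is false in general: $\Aut(\id_\mI)\cong(k^\times)^3$ while $\Aut_\otimes(\id_\mI)\cong\BZ/2\BZ$; to see $\alpha$ is tensor you would first have to pin down its values on $\be$ and $\delta$ (which can be done by the same dimension comparison, but you do not do it). (c) Your ``cleaner route'' at the end presupposes that $(\phi^*)^{-1}\phi$ is one of the two spherical structures, i.e.\ it assumes what the detour was supposed to establish, so as stated it is circular. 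The explicit-model verification via Remark~\ref{tyf} would close these gaps, but it is substantially more work than the paper's two-line scalar argument and should be dropped in favor of it.
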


\begin{proof} 
Let $\psi_X:=(\phi^*)^{-1}\phi$. It suffices to check that the dimension
$d_+(X)$ corresponding to $\psi_X$ (see \S\ref{squared}) equals $\lambda(\mI)$.
This follows from the commutative diagram
\begin{equation*}
\xymatrix{
\be \ar[r]^(.4){\coev_X}\ar[d]^{\id}&X\ot X^*\ar[rr]^{(\phi^*)^{-1}\phi \ot \id_{X^*}}\ar[d]^{\id}&&
X^{**}\ot X^*\ar[r]^(.65){\ev_{X^*}}\ar[d]^{\phi^*\ot \phi^{-1}}&\be \ar[d]^{\id}\\
\be \ar[r]^(.4){\coev_X}&X\ot X^*\ar[rr]^{\phi \ot \phi^{-1}}&&X^*\ot X\ar[r]^(.65){\ev_X}&\be }
\end{equation*}
whose rows compute $d_+(X)$ and $\lambda(\mI)$, see formulas
\eqref{2 traces +} and \eqref{defoflambda}.
\end{proof}

\subsection{Ising braided categories}  \label{braidIs}
The following result is well known, see e.g. \cite[Appendix D]{MS}.

\begin{proposition} \label{existenceprop}
There exists an Ising braided category.\qed
\end{proposition}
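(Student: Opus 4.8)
The statement to prove is Proposition~\ref{existenceprop}: there exists an Ising braided category. By Proposition~\ref{sqrt2}, an Ising fusion category is determined up to tensor equivalence by a choice of $\lambda\in k$ with $\lambda^2 = 2$, and we have an explicit model $\mI$ with simple objects $X$ and $\delta_a$, $a\in\BZ/2\BZ$, together with explicit associativity constraints (Remark~\ref{tyf}(ii)). So the plan is to take this explicit model and exhibit braiding isomorphisms $c_{U,V}: U\ot V\iso V\ot U$ satisfying the hexagon axioms. First I would reduce to specifying $c$ on the simple objects: since $\mI$ is semisimple, a natural family of isomorphisms is determined by scalars $c_{\delta_a,\delta_b}\in k^\times$, $c_{\delta_a,X},\, c_{X,\delta_a}\in k^\times$, and an automorphism $c_{X,X}$ of $X\ot X = \delta_0\oplus\delta_1$, which amounts to a pair of scalars $(u,v)$ acting on the $\delta_0$- and $\delta_1$-summands respectively. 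Thus the unknowns are a handful of elements of $k^\times$ and the hexagon axioms become a finite system of polynomial (in fact monomial) equations in these unknowns and $\lambda$.

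Next I would write out the hexagon equations case by case according to which of the objects in the triple $(U,V,W)$ is $X$ versus a $\delta_a$, using the explicit associators from Remark~\ref{tyf}(ii). When all three objects are pointed, the relevant data is just a $2$-cocycle-type condition on $\BZ/2\BZ$, and since (Remark~\ref{tyf}(iii)) the pointed part $\mI_{pt}\simeq\Vec_{\BZ/2\BZ}$ has trivial associator, one finds $c_{\delta_a,\delta_b} = (-1)^{\epsilon ab}$ for either choice of sign $\epsilon$; I would pick one. When exactly one object is $X$, the hexagons relate $c_{\delta_a,X}$ and $c_{X,\delta_a}$ to the signs in the associators $(\delta_a\ot X)\ot\delta_b\iso\delta_a\ot(X\ot\delta_b)$ etc.; these force compatibility conditions that are satisfied by taking $c_{\delta_a,X} = c_{X,\delta_a}$ to be an appropriate power of a fixed root. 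The genuinely restrictive equations come from triples with two or three copies of $X$: here the associator $(X\ot X)\ot X\iso X\ot(X\ot X)$ with matrix entries $(-1)^{ab}\lambda^{-1}$ enters, and the hexagon axioms become quadratic relations among $u$, $v$, $c_{X,\delta_a}$, and $\lambda$. Solving them, I expect to recover the classical fact that the braiding is parametrized by a choice of $\zeta\in k$ with $\zeta^8 = -1$ (as asserted in \S\ref{braidIs} and Example~\ref{TIs}), with $u,v$ expressed through $\zeta$ and $\lambda = \zeta^2+\zeta^{-2}$ up to sign; any one such $\zeta$ then produces an honest braided structure, proving existence.

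The main obstacle will be the bookkeeping in the two-and-three-$X$ hexagons: one must carefully track the direct-sum decomposition $X\ot X = \delta_0\oplus\delta_1$ through the associators, keep the sign conventions $(-1)^{ab}$ straight, and verify that the candidate scalars indeed satisfy \emph{both} hexagon identities (not just one), which is where an inconsistent naive guess would fail. A clean way to organize this — and perhaps to avoid the rawest computation — is to invoke Appendix~\ref{likbez} / Proposition~\ref{braided metric} to handle the pointed sub-data $\C(\BZ/2\BZ, q)$ cleanly, and then treat the extension by $X$ as the remaining finite check; alternatively one can cite that $\mI$ is a near-group / Tambara--Yamagami category and that braidings on $TY(\BZ/2\BZ,\chi,\tau)$ were classified by Siehler, but since the paper wants a self-contained appendix I would do the direct verification. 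As a sanity check at the end I would confirm that the resulting twist $\theta_X$ equals $\epsilon\zeta^{-1}$ and $d_s(X) = \epsilon(\zeta^2+\zeta^{-2})$, matching the values quoted in \S\ref{sphIs} and Example~\ref{TIs}, which simultaneously confirms the braiding is correct and sets up the later computations.
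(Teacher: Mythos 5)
Your approach is sound but genuinely different from what the paper does: the paper treats existence as a known fact and simply cites \cite[Appendix D]{MS}, offering in Remark~\ref{quantumIs} the alternative construction from $U_q sl(2)$ at a primitive $8$-th root of unity, and only records the explicit braiding scalars later, in Remark~\ref{chbraiding}(i), with a citation to Siehler \cite{Sie}. What you propose is essentially to redo Siehler's computation: fix the Tambara--Yamagami model of Remark~\ref{tyf}(ii) and solve the hexagon equations for the scalars $c_{\delta_a,\delta_b}$, $c_{\delta_a,X}$, $c_{X,\delta_a}$ and the pair $(u,v)$ acting on $X\ot X=\delta_0\oplus\delta_1$. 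That is a finite, purely mechanical check and it does succeed, yielding exactly the four braidings per choice of $\lambda$, parametrized by $\zeta$ with $\zeta^8=-1$ and $\lambda=\zeta^2+\zeta^{-2}$ (compare Corollary~\ref{four} and Remark~\ref{chbraiding}(i), where $c_{XX}=\zeta\cdot\id_{\delta_0}\oplus\zeta^{-3}\cdot\id_{\delta_1}$ and $c_{\delta_aX}=c_{X\delta_a}=\zeta^{4a}\cdot\id_X$). The trade-off is clear: your route is self-contained and produces the explicit model the paper later needs anyway, at the cost of the bookkeeping you acknowledge; the paper's route is a one-line citation (or the quantum-group construction), which suffices because Corollary~\ref{existencecor} then transports a braiding to every Ising category via the Galois action on $k$.

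One caution on your sketch: the braiding on the pointed part is not a free sign choice to be made first and forgotten. The two-$X$ hexagons force $c_{\delta,\delta}=-\id$ (equivalently, $\mI_{ad}\simeq s\Vec$ as in Lemma~\ref{adsuper}); if you start with the trivial quadratic form on $\BZ/2\BZ$ the remaining equations become inconsistent, so you must either let the system determine this sign or argue it a priori. Likewise $c_{\delta_1,X}$ comes out to be a primitive fourth root of unity ($\zeta^4$), not $\pm1$, so keep that unknown genuinely free until the two- and three-$X$ hexagons are imposed. With these points kept in mind, your verification plan closes the statement.
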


\begin{remark}\label{quantumIs} One can construct examples of braided Ising categories (or even
modular ones) using the quantum group $U_qsl(2)$ with $q$ being a primitive 8-th root
of unity,  e.g. one can use \cite[Exercise 3.3.24(iii)]{BK} with $k=2$.
\end{remark}

\begin{corollary} \label{existencecor}
Each Ising category admits a structure of braided category.
\end{corollary}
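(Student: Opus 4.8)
\textbf{Proof proposal for Corollary \ref{existencecor}.} The statement to prove is that every Ising fusion category admits a braided structure. The plan is to combine the existence statement (Proposition \ref{existenceprop}), which gives \emph{one} braided Ising category $\mathcal{I}_0$, with the fusion-theoretic rigidity of Ising categories, namely Proposition \ref{sqrt2}(ii): the invariant $\lambda(\mathcal{I})\in k$ (a square root of $2$) classifies Ising categories up to tensor equivalence. So it suffices to produce, for each $\lambda$ with $\lambda^2 = 2$, a braided Ising category with that invariant.

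The key observation is that there are exactly two square roots of $2$ in $k$, say $\lambda$ and $-\lambda$, and that replacing a braided structure $c$ by $\tilde c_{X,Y} := c_{Y,X}^{-1}$ produces another braided structure (this is used repeatedly in the paper, e.g.\ in \S\ref{centre} and in the definition of $\mathcal{C}^{\mathrm{op}}$). However, this operation does not change the underlying fusion category and hence does not change $\lambda$. Instead, I would argue as follows. Let $\mathcal{I}_0$ be the braided Ising category furnished by Proposition \ref{existenceprop}, with invariant $\lambda(\mathcal{I}_0) = \lambda$ for some fixed square root $\lambda$ of $2$. To get an Ising \emph{braided} category with invariant $-\lambda$, consider $\mathcal{I}_0 \boxtimes \mathcal{C}(\mathbb{Z}/2\mathbb{Z}, q)$, where $q$ is a suitable quadratic form — but this has Frobenius--Perron dimension $8$, not $4$, so that does not directly work either. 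The cleaner route: observe that $\mathcal{I}_0$ carries two spherical structures differing by the sign $\epsilon$, and more to the point, I should look for a braided autoequivalence or a re-coordinatization that flips $\lambda$. In fact the simplest correct argument is: Proposition \ref{sqrt2}(i) already asserts that for \emph{each} $\lambda$ with $\lambda^2 = 2$ there is an Ising category $\mathcal{I}_\lambda$; pick any Ising category $\mathcal{I}$; by Proposition \ref{sqrt2}(ii) it is tensor equivalent to $\mathcal{I}_{\lambda(\mathcal{I})}$; so it is enough to know that each $\mathcal{I}_\lambda$ admits \emph{some} braided structure.

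Thus the whole matter reduces to exhibiting a braided structure on $\mathcal{I}_\lambda$ for both values of $\lambda$. For this I would invoke Remark \ref{quantumIs}: the quantum group $U_q \mathfrak{sl}(2)$ at a primitive $8$-th root of unity $q$ yields a modular (in particular braided) Ising category, and as $q$ ranges over the primitive $8$-th roots of unity one obtains categories realizing both possible values of $\lambda$ — because the quantum dimension of the spin-$1/2$ object is $q + q^{-1}$, which takes the two values $\pm\sqrt 2$ depending on the choice of $q$. Concretely: take $q$ and $-q$ (both primitive $8$-th roots of $1$); the corresponding categories have $d(X) = q + q^{-1}$ and $d(X) = -q - q^{-1}$ respectively, hence $\lambda$ and $-\lambda$ up to the spherical sign; since $\lambda(\mathcal{I})$ depends only on the fusion category and not on the braiding or spherical structure, these two fusion categories are the two inequivalent Ising categories, each now equipped with a braiding.

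The main obstacle — really the only subtle point — is verifying that the two quantum-group constructions (or, alternatively, the two spherical/braiding choices on a single category) genuinely realize \emph{both} square roots of $2$, rather than the same one twice; equivalently, that $\lambda(\mathcal{I})$ is not forced to a single value across all Ising categories. This is settled by the explicit Tambara--Yamagami model in Remark \ref{tyf}(ii): the associativity constraint there has matrix elements $(-1)^{ab}\lambda^{-1}$ with $\lambda$ a free choice among square roots of $2$, so both Ising categories exist as fusion categories, and a short check (or appeal to the quantum group examples, which visibly come in pairs indexed by $q \mapsto -q$) shows each carries a braiding. Once that is in hand, Proposition \ref{sqrt2}(ii) finishes the proof immediately: an arbitrary Ising category $\mathcal{I}$ is tensor equivalent to one of these two braided models, and transporting the braiding along the equivalence makes $\mathcal{I}$ braided.
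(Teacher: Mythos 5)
Your reduction is fine: by Proposition \ref{sqrt2}(ii) it suffices to exhibit, for \emph{each} square root $\lambda$ of $2$, some braided Ising category whose underlying fusion category has invariant $\lambda$. But the step you yourself single out as the only subtle point is exactly where the argument breaks, and the justification you offer does not close it. The Tambara--Yamagami model of Remark \ref{tyf}(ii) only shows that both \emph{fusion} categories exist (that is Proposition \ref{sqrt2}(i)); it says nothing about which of them carry braidings. And the quantum-group observation does not do the job either: what is ``visible'' when you pass from $q$ to $-q$ is that the quantum dimension $d(X)=q+q^{-1}$ changes sign, but $d(X)=\epsilon(\mI)\lambda(\mI)$ depends on the spherical structure as well as on the fusion category, so a sign flip of $d(X)$ is perfectly compatible with the two categories being tensor equivalent (same $\lambda$, opposite $\epsilon$). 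To make your route work you would have to actually compute $\lambda$ (equivalently, the associativity data or the sign $\epsilon$) for the quantum-group categories at the two roots of unity --- precisely the kind of fact the paper only records without proof in Remark \ref{modularising} ($\epsilon=-1$ for the $U_q\mathfrak{sl}(2)$ examples). As written, your proof asserts the needed inequivalence rather than proving it.

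The paper closes this gap with a one-line Galois argument that you never invoke: since $k$ is algebraically closed of characteristic $0$, there is a field automorphism $\sigma$ of $k$ with $\sigma(\sqrt2)=-\sqrt2$; applying $\sigma$ to all structure constants of the braided Ising category furnished by Proposition \ref{existenceprop} yields a braided Ising category with the opposite value of $\lambda$ (because $\lambda(\mI^\sigma)=\sigma(\lambda(\mI))$), and then Proposition \ref{sqrt2}(ii) transports a braiding to an arbitrary Ising category. This same Galois-twist observation is also the cleanest way to justify your quantum-group claim (the categories at $q$ and at $\sigma(q)$ are $\sigma$-conjugate), but at that point the quantum-group detour is unnecessary. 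So either add the explicit computation of $\lambda$ for the two quantum-group categories, or replace that part of your argument by the field-automorphism conjugation.
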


\begin{proof} This follows from Proposition~\ref{existenceprop} because by Proposition \ref{sqrt2}, all Ising categories are conjugate under the action of the group of automorphisms of $k$.
\end{proof}

\begin{lemma} \label{adsuper}
Let $\mI$ be an Ising braided category. Then $\mI_{ad}$ is braided equivalent to $s\Vec$.
\end{lemma}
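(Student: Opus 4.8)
\textbf{Proof plan for Lemma \ref{adsuper}.}
The plan is to identify $\mI_{ad}$ explicitly and then pin down its braiding. First I would recall from \S\ref{fusionIsing} that $\mI_{ad}$ is the pointed subcategory generated by $\be$ and $\delta$, so as a fusion category $\mI_{ad}\simeq\Vec_{\BZ/2\BZ}$ (Remark \ref{tyf}(iii)). Hence $\mI_{ad}$, with the braiding inherited from $\mI$, is a pointed braided fusion category on the group $\BZ/2\BZ$, so by Proposition \ref{braided metric} it corresponds to a pre-metric group $(\BZ/2\BZ,q)$ where $q(\delta)=c_{\delta,\delta}\in\Aut(\delta\ot\delta)=k^{\times}$. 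There are only two quadratic forms on $\BZ/2\BZ$ with values in $k^{\times}$: the trivial one (giving $\mI_{ad}\simeq\Vec$, i.e.\ $\Rep(\BZ/2\BZ)$ with trivial braiding) and the one with $q(\delta)=-1$ (giving $\mI_{ad}\simeq s\Vec$). So the whole lemma reduces to the single computation $c_{\delta,\delta}=-\id$.

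To rule out $c_{\delta,\delta}=+\id$, the key point is that if $c_{\delta,\delta}=\id$ then $\delta$ centralizes itself, hence $\mI_{ad}=\langle\be,\delta\rangle$ would be symmetric and in fact Tannakian of odd... no — rather, it would be $\Rep(\BZ/2\BZ)$, so $\delta$ would centralize all of $\mI_{ad}$. I would then show this forces $\delta$ to centralize $X$ as well, contradicting that $\mI$ is not pointed. Concretely: since $\delta\ot X\simeq X$, Lemma \ref{Dimalem} tells us that if $\delta$ generated a copy of $s\Vec$ inside $\mI'$ we would get a contradiction; but the cleanest route is the reverse. Consider the double braiding $c_{X,\delta}c_{\delta,X}\in\Aut(X\ot\delta)=\Aut(X)=k^{\times}$, a scalar $\mu$. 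By Lemma \ref{projcentlem1}(i) applied to $\delta\ot\delta$, the scalar for $c_{X,\delta\ot\delta}c_{\delta\ot\delta,X}$ is $\mu^2$; but $\delta\ot\delta\simeq\be$ centralizes everything, so $\mu^2=1$, i.e.\ $\mu=\pm1$. If $\mu=1$ then $\delta$ centralizes $X$, and since $\delta$ also centralizes $\be$ and $\delta$ (the latter being the hypothesis $c_{\delta,\delta}=\id$), $\delta\in\mI'$, so $\{\be,\delta\}$ generate a Tannakian subcategory $\Rep(\BZ/2\BZ)\subseteq\mI'$. Then $\mI'$ contains $\Rep(\BZ/2\BZ)$ and by Proposition \ref{crosscentralizers} (or directly by de-equivariantizing) one sees $\mI$ is the equivariantization of a smaller braided category; tracking Frobenius–Perron dimensions via Proposition \ref{equidimprop} gives $\FPdim(\mI\bt_{\E}\Vec)=2$, forcing $\mI\bt_{\E}\Vec$ pointed, and by Corollary \ref{newc}-type reasoning $\mI$ itself pointed — contradiction. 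So $\mu=-1$.

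Finally, having $\mu=c_{X,\delta}c_{\delta,X}=-1$, I would deduce $c_{\delta,\delta}=-1$ as well. This is where the hexagon axiom does the work: using $X\ot X\simeq\be\oplus\delta$ and the functoriality/hexagon identities relating $c_{\delta,X\ot X}$ to $c_{\delta,X}$ and to $c_{\delta,\be}\oplus c_{\delta,\delta}$, one extracts $c_{\delta,\delta}$ from the known scalars $c_{\delta,X}c_{X,\delta}=-1$ and $c_{\delta,\be}=\id$. Concretely, $c_{\delta,\delta}$ is the scalar by which $\delta$ acts on the $\delta$-component, and the hexagon for the triple $(\delta,X,X)$ or the self-consistency of the pre-metric group structure (the associated bicharacter $b$ must satisfy $b(\delta,\delta)=\mu=-1$, hence $q(\delta)^2/q(\delta)q(\delta)$... rather $b(\delta,\delta)=\mu$ and $b(\delta,\delta)=q(\delta\delta)q(\delta)^{-2}=q(\delta)^{-2}$, so $q(\delta)^2=-1$? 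No: for $\BZ/2\BZ$, $b(\delta,\delta)=q(\delta)^{-2}=1$ always, so $\mu$ must equal $1$??). The hard part will be getting the signs exactly right here: the subtlety is that for $\BZ/2\BZ$ the bicharacter is automatically trivial, so $\mu=c_{\delta,X}c_{X,\delta}$ cannot be read off from $\mI_{ad}$ alone — it genuinely depends on the larger category $\mI$. I expect the cleanest rigorous argument is actually to bypass the sign-chasing: note $\mI_{ad}$ is symmetric (being $\BZ/2\BZ$-pointed with trivial associated bicharacter, by Example in \S\ref{Deligne}), so $\mI_{ad}$ is either $\Vec$ or $s\Vec$; it is $\Vec$ iff $q=1$ iff it is Tannakian iff (by Lemma \ref{tan-eq} and the fact that $\mI_{ad}$ admits no fiber functor when $c_{\delta,\delta}=-1$) — and $\mI_{ad}\not\simeq\Vec$ precisely because $\mI$ is not pointed: if $c_{\delta,\delta}=1$ then $\mI_{ad}\subseteq\mI_{ad}'$ and one can run the de-equivariantization argument of the previous paragraph to contradict non-pointedness. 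So the main obstacle is packaging the ``$c_{\delta,\delta}=1\Rightarrow\mI$ pointed'' implication cleanly, for which I would lean on Proposition \ref{crosscentralizers}(ii) and Proposition \ref{equidimprop} rather than on a bare hexagon computation.
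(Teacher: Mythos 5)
There is a genuine gap at the very first reduction. You claim there are only two quadratic forms on $\BZ/2\BZ$, but there are four: the identity $q(g^n)=q(g)^{n^2}$ only forces $q(\delta)^4=1$, so $q(\delta)\in\{\pm1,\pm i\}$ (this is exactly the paper's Remark \ref{easyrems}(ii), and the ``semion'' categories $A_{\pm i}$ of \S\ref{anis2} realize $q(\delta)=\pm i$). Hence the nontrivial content of the lemma is precisely to rule out the two non-symmetric braidings on $\Vec_{\BZ/2\BZ}$, i.e.\ to prove that $\mI_{ad}$ is \emph{symmetric}; your later assertion that $\mI_{ad}$ is symmetric ``being $\BZ/2\BZ$-pointed with trivial associated bicharacter'' is circular, since $b(\delta,\delta)=q(\delta)^{-2}$ is trivial exactly when $q(\delta)=\pm1$, which is what has to be shown. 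The paper's proof supplies this step: $\delta\ot X\simeq X$ means $\delta$ projectively centralizes $X$, so by Proposition \ref{projective} $\delta$ centralizes $X\ot X^*\simeq\be\oplus\delta$, and in particular the double braiding of $\delta$ with itself is the identity. You actually have the needed tool in your own argument: instead of applying Lemma \ref{projcentlem1}(i) only to $\delta\ot\delta$, apply multiplicativity of the double braiding to $X\ot X\simeq\be\oplus\delta$, whose double braiding with $\delta$ is $\mu^2=1$; its $\delta$-component then forces $c_{\delta,\delta}^2=\id$. (Relatedly, your attempt to extract $c_{\delta,\delta}=-1$ from $\mu=-1$ by a hexagon chase cannot succeed, as you half-suspect: these are independent data, and in a genuine Ising category one has $\mu=-1$ while the double braiding of $\delta$ with itself is $+1$.)

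The second half (excluding the Tannakian case $q(\delta)=1$) is in spirit the paper's argument, but your closing step is wrong as stated: from $\FPdim(\mI\bt_{\E}\Vec)=2$ you conclude the fiber category is pointed and then that ``$\mI$ itself is pointed by Corollary \ref{newc}-type reasoning.'' Corollary \ref{newc} says nothing of the sort, and equivariantization does not preserve pointedness (e.g.\ $\Vec^G=\Rep(G)$ is not pointed for nonabelian $G$). The correct contradiction, and the one the paper uses, goes through integrality: a fusion category of Frobenius--Perron dimension $2$ is integral, so $\mI\simeq\C^{\BZ/2\BZ}$ would be integral by Corollary \ref{equiintegral}, contradicting $\FPdim(X)=\sqrt2$. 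Note also that the de-equivariantization should be performed with respect to the central functor $\mI_{ad}\hookrightarrow\mI\to\Z(\mI)$ coming from the braiding (Theorem \ref{princprinc}), which requires only that $\mI_{ad}$ be Tannakian, not that $\delta\in\mI'$; this makes your preliminary case split on $\mu$ unnecessary.
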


\begin{proof} Since $\delta \ot X\simeq X$ it is clear that $\delta$ projectively centralizes 
$X$, see Definition \ref{projcentdef1}. So by Proposition \ref{projective}, $\delta$ centralizes 
$X\ot X^*\simeq \be \oplus \delta$. The subcategory $\mI_{ad}$ is generated by $\delta$, hence 
it is symmetric. Thus by Theorem \ref{Deligne1}, either $\mI_{ad}\simeq \Rep(\BZ/2\BZ)$ or
$\mI_{ad}\simeq s\Vec$. 

The composition $\mI_{ad}\hookrightarrow\mI\hookrightarrow\Z (\mI )$ defines on $\mI$ a structure
of fusion category over $\mI_{ad}$ (see Definition~\ref{overdef}). So
if $\mI_{ad}\simeq \Rep(\BZ/2\BZ)$ then by Theorem~\ref{princprinc}, we have an equivalence
of fusion categories $\mI \simeq \C^{\BZ/2\BZ}$ for some fusion category $\C$. 
By Proposition \ref{equidimprop}, $\FPdim (\C)=2$, i.e., 
\[
\sum\limits_{X\in\O (\C)}\FPdim (X)^2=2.
\]
Now the properties of the Frobenius-Perron dimension (see \S\ref{fpd}) imply that
$\FPdim (X)=1$ for all $X\in\O (\C)$. So $\C$ is integral, which contradicts
Corollary~\ref{equiintegral}. Hence $\mI_{ad}\simeq s\Vec$.
\end{proof}

\begin{corollary} \label{isingnondeg}
 Any Ising braided category $\mI$ is non-degenerate in the 
sense of Definition \ref{defnondeg}.
\end{corollary}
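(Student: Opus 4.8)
The plan is to derive Corollary~\ref{isingnondeg} from the preceding results, in particular Lemma~\ref{adsuper}, Proposition~\ref{s-adpt}, and the double-centralizer machinery of Theorem~\ref{Muger2}. Let $\mI$ be an Ising braided category. First I would record that $\mI' \subseteq \mI_{ad}$: by Proposition~\ref{un-grad}(ii) the subcategory $\mI_{ad}$ is the trivial component of the universal grading, and by the explicit description \eqref{theunivgr} we have $\mI_{ad}$ generated by $\be$ and $\delta$, hence $\mI' \subseteq \mI_{pt} = \mI_{ad}$. Indeed, since $X$ does not centralize itself (as $c_{X,X}c_{X,X}$ is not a scalar equal to $1$ — one can see this from the fact that $\mI$ is nontrivial and $X$ generates $\mI$ over $\mI_{ad}$, or more cleanly from Lemma~\ref{adsuper} below), the only candidates for simple objects of $\mI'$ are $\be$ and $\delta$.

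The heart of the argument is then to compute $\mI'$ exactly. By Lemma~\ref{adsuper}, $\mI_{ad}$ is braided equivalent to $s\Vec$; in particular $\delta$ generates a copy of $s\Vec$ inside $\mI$, so $c_{\delta,\delta} = -\id$ and $\delta \notin \mI'$. Therefore $\mI' = \Vec$, which by Theorem~\ref{Muger1} (or Proposition~\ref{Muger1prime}) is precisely the statement that $\mI$ is non-degenerate. As an alternative route avoiding the direct computation $c_{\delta,\delta}=-\id$, one could argue by dimension count: by Lemma~\ref{adsuper} we have $\mI_{ad} \simeq s\Vec$, which is {\em not} Tannakian, so $\mI'$, being a symmetric subcategory contained in $\mI_{ad}$, is either $\Vec$ or $s\Vec = \mI_{ad}$ itself. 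If $\mI' = \mI_{ad}$ then by \eqref{Dim D dim D'} of Theorem~\ref{Muger2}(i), $\dim(\mI_{ad})\dim(\mI_{ad}) = \dim(\mI)\dim(\mI_{ad}\cap\mI')$, i.e.\ $\dim(\mI_{ad}) = \dim(\mI)$; but $\dim(\mI) = \FPdim(\mI) = 4$ by Proposition~\ref{FPint1}(i) and Remark~\ref{intrem}(c), while $\dim(\mI_{ad}) = \dim(s\Vec) = 2$, a contradiction. Hence $\mI' = \Vec$.

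I expect the main obstacle to be essentially bookkeeping rather than a genuine difficulty: one must make sure that the classification of fusion subcategories of $\mI$ is used correctly (only $\Vec$, $\mI_{ad}$, $\mI$ are full fusion subcategories, by \eqref{isingK0} and Proposition~\ref{un-grad}(ii)), and that the symmetry of $\mI'$ together with $\mI' \subseteq \mI_{ad} \simeq s\Vec$ forces $\mI'$ to be one of only two options. Given Lemma~\ref{adsuper}, which already does the hard work of identifying $\mI_{ad}$ as $s\Vec$ rather than $\Rep(\BZ/2\BZ)$, the corollary is immediate, so in the write-up I would keep the proof to two or three lines: cite Lemma~\ref{adsuper} to get $\delta \notin \mI'$, conclude $\mI' = \Vec$, and invoke Theorem~\ref{Muger1}.
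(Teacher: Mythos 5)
There is a genuine gap at the crucial step, namely the claim that $\mI_{ad}\simeq s\Vec$ gives ``$c_{\delta,\delta}=-\id$ and $\delta\notin\mI'$.'' Membership in the centralizer $\mI'$ is governed by the \emph{double} braiding \eqref{monodromy-drinf}: $\delta\in\mI'$ means $c_{Y,\delta}c_{\delta,Y}=\id$ for all $Y\in\mI$. For $Y=\delta$ one has $c_{\delta,\delta}c_{\delta,\delta}=(-\id)^2=\id$, so the sign of $c_{\delta,\delta}$ places no obstruction at all; indeed in $s\Vec$ itself the odd line has self-braiding $-\id$ and nevertheless lies in the M\"uger center, since $s\Vec$ is symmetric. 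The only thing that can prevent $\delta\in\mI'$ is the monodromy of $\delta$ with $X$, i.e.\ one must show $c_{X,\delta}c_{\delta,X}\neq\id_{\delta\ot X}$, and your argument never addresses this. This is exactly where the paper's proof does real work: since $\delta\ot X\simeq X$ by \eqref{isingK0}, Lemma~\ref{Dimalem} (M\"uger's lemma: an object generating a copy of $s\Vec$ \emph{inside $\C'$} cannot fix any simple object under tensoring) combined with Lemma~\ref{adsuper} rules out $\delta\in\mI'$; then $X\notin\mI'$ because $X\ot X\supset\delta$, and $\mI'=\Vec$ follows. Your classification of the fusion subcategories of $\mI$ and the final appeal to Theorem~\ref{Muger1}/Proposition~\ref{Muger1prime} are fine, but they only become available once $\delta\notin\mI'$ is actually established.

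The ``alternative route'' by dimension count does not repair this. If one assumes $\mI'=\mI_{ad}$, then by the very definition of $\mI'$ every object of $\mI_{ad}$ centralizes all of $\mI$, so $(\mI_{ad})'=\mI$, not $\mI_{ad}$; your application of \eqref{Dim D dim D'} with $\D=\mI_{ad}$ tacitly replaces $\dim((\mI_{ad})')$ by $\dim(\mI_{ad})$, which presupposes the statement being proved. Carried out correctly the formula reads $\dim(\mI_{ad})\dim(\mI)=\dim(\mI)\dim(\mI_{ad})$, i.e.\ $2\cdot 4=4\cdot 2$, and yields no contradiction. So both branches of your proposal reduce to the unproved assertion that $\delta$ fails to centralize $X$; to close the gap you need either Lemma~\ref{Dimalem} as in the paper, or an independent computation of the monodromy $c_{X,\delta}c_{\delta,X}=-\id$ (as in Remark~\ref{chbraiding}(i), which relies on explicit braiding formulas, not merely on $\mI_{ad}\simeq s\Vec$).
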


\begin{proof} 
By \eqref{isingK0}, $\delta\ot X\simeq X$. Combining this with Lemmas~\ref{adsuper} and
\ref{Dimalem} we see that $\delta\notin\mI'$. Since $X\ot X$ contains $\delta$ this implies that
$X\notin\mI'$. So $\mI'=\Vec$. By Proposition \ref{Muger1prime}, this means that
$\mI$ is non-degenerate.
\end{proof}

\begin{corollary} \label{four}
\begin{enumerate}
\item[(i)] An Ising fusion category $\mI$ admits precisely 4 braided structures.
\item[(ii)] Let $c_{UV}$, $U,V\in\mI$ be a braiding. Then  the three other braidings on $\mI$
are
\begin{equation}  \label{thethreebraidings}
\bar c_{UV}:=c_{VU}^{-1}, \; c_{UV}':=(-1)^{\deg U\cdot\deg V}c_{UV},\;
\bar c_{UV}':=(-1)^{\deg U\cdot\deg V}c_{VU}^{-1}, 
\end{equation}
where $\deg$ is the universal grading \eqref{theunivgr}.
\end{enumerate}
\end{corollary}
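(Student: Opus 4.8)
The statement is Corollary~\ref{four}: an Ising fusion category $\mI$ admits exactly $4$ braided structures, and if $c$ is one of them the other three are the ones displayed in \eqref{thethreebraidings}. The strategy is to first check that the four listed families of isomorphisms are indeed braidings, then to show they are pairwise distinct, and finally to show there are no others. The hardest part will be the counting bound (no others), which I would reduce to solving the hexagon equations explicitly on the three simple objects.

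\textbf{Step 1: the four candidates are braidings.} Given a braiding $c$ on $\mI$, it is standard (and stated in \S\ref{centre} and \S\ref{premod}) that $\bar c_{UV}:=c_{VU}^{-1}$ is again a braiding. For $c'_{UV}:=(-1)^{\deg U\cdot\deg V}c_{UV}$, observe that the sign $(-1)^{\deg U\cdot\deg V}$ is exactly the double braiding $c_{V,U}c_{U,V}$ coming from the symmetric subcategory $\mI_{ad}\simeq s\Vec$ — more precisely, $(U,V)\mapsto(-1)^{\deg U\deg V}$ is the bicharacter on $U_\mI=\BZ/2\BZ$ attached to the nontrivial element, and multiplying a braiding by such a bicharacter (valued in $\Aut$ of the identity functor, which by Proposition~\ref{un-grad}(iii) is $\Hom(U_\mI,k^\times)=\BZ/2\BZ$) again satisfies the hexagons. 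I would verify the hexagon axioms for $c'$ directly from the hexagons for $c$ using multiplicativity of $\deg$: since $\deg(U\ot W)=\deg U+\deg W$, the extra signs on the two sides of each hexagon match. Then $\bar c'$ is a braiding as the ``reverse'' of $c'$.

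\textbf{Step 2: the four are distinct.} By Corollary~\ref{isingnondeg} every Ising braided category is non-degenerate, so its M\"uger center is $\Vec$; in particular $c\ne\bar c$ (otherwise $\mI$ would be symmetric). To separate $c$ from $c'$ and $\bar c$ from $\bar c'$, note that $c=c'$ would force $(-1)^{\deg U\deg V}=1$ for all $U,V$, which fails at $U=V=X$. To separate $c$ from $\bar c'$, use the twist $\theta_X\in k^\times$: by Proposition~\ref{traceofbraiding} and the computations in Appendix~\ref{Is} (Proposition~\ref{B10}), the braidings $c$ and $c'$ give twists $\theta$ and $-\theta$ on the subcategory of degree $1$, while $\bar c,\bar c'$ give $\theta^{-1}$ and $-\theta^{-1}$; since $\theta_X$ is a primitive $16$-th root of unity in the Ising case (one has $\theta_X=\pm\zeta^{-1}$ with $\zeta^8=-1$, see \S\ref{braidIs} and Example~\ref{TIs}), the four values $\{\theta_X,-\theta_X,\theta_X^{-1},-\theta_X^{-1}\}$ are pairwise distinct, so the four braidings are pairwise distinct.

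\textbf{Step 3: there are no others.} This is the main obstacle. Here I would argue by counting solutions of the hexagon equations. Fix the fusion data and associativity constraints of $\mI$ as in Remark~\ref{tyf}(ii). A braiding is determined by the scalars $c_{\delta,\delta}\in\Aut(\delta\ot\delta)=k^\times$, $c_{\delta,X},c_{X,\delta}\in\Aut(X)=k^\times$, and $c_{X,X}\in\Aut(X\ot X)=\Aut(\delta_0\oplus\delta_1)$, subject to functoriality (which pins down $c$ on non-simple objects) and the two hexagons. Naturality forces $c_{\delta,X}$ and $c_{X,\delta}$ to be scalars and relates them; $c_{X,X}$ is diagonal in the decomposition $X\ot X=\delta_0\oplus\delta_1$ with entries $(a_0,a_1)$. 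Plugging into the hexagon diagrams \eqref{G-hex1}-type identities (using the explicit associators with the $\lambda^{-1}$ matrix element from Remark~\ref{tyf}(ii)) yields a finite system of polynomial equations: $c_{\delta,\delta}^2=1$, relations expressing $a_0,a_1$ in terms of $c_{\delta,X}$, and a quadratic constraint forcing $c_{\delta,X}^2$ to be an $8$-th root of $-1$ up to the sign ambiguity. One finds the solution set has exactly $4$ points. Since Step~1 already exhibits $4$ distinct solutions, these must be all of them. Concretely, I would phrase this as: the number of braidings on $\mI$ equals the number of braided structures, which by the classification in \S\ref{braidIs} (Ising braided categories correspond to $\zeta$ with $\zeta^8=-1$, and each underlying fusion category supports those with a fixed $\lambda(\mI)=\pm(\zeta^2+\zeta^{-2})$, giving $4$ choices of $\zeta$ for each sign pattern tied to $\lambda$) is $4$; combining with Steps~1--2 identifies them with the list \eqref{thethreebraidings}. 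The delicate point is bookkeeping the associativity constraints carefully enough that the hexagon system is solved without sign errors; I expect to lean on the already-tabulated facts in Appendix~\ref{Is} rather than redo the $U_q sl(2)$ computation.
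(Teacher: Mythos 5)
The weight of your argument rests on Step 3, and that is precisely where it is not actually carried out. Your primary route is to write down the hexagon equations for the unknowns $c_{\delta,\delta},c_{\delta,X},c_{X,\delta}$ and the diagonal entries of $c_{X,X}$ and to assert that "one finds the solution set has exactly $4$ points"; no such computation is given, and you yourself flag that the sign bookkeeping with the associator of Remark~\ref{tyf}(ii) is the delicate part. Your fallback — deducing the count from "the classification in \S\ref{braidIs}" — is circular inside this paper: Corollary~\ref{braidedIs2} is obtained by combining Proposition~\ref{sqrt2} with Remark~\ref{4-remark}, and Remark~\ref{4-remark} is itself derived from Corollary~\ref{four} (it computes the $\zeta$-values of the three braidings \eqref{thethreebraidings} and only then concludes the bijection with the roots of $\zeta^2+\zeta^{-2}=\lambda$). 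Even granting Proposition~\ref{braidedising} (which is independent of Corollary~\ref{four}), associating $\zeta$ to a braiding only gives at most $4$ \emph{values}; to get the upper bound of $4$ braidings you would still need to show that a braiding on $\mI$ is determined by the scalar by which $c_{X,X}$ acts on $\Hom(\be,X\ot X)$, and that is again the hexagon analysis you have not done. Steps 1 and 2 are essentially fine (for Step 2 it is cleaner to separate the four braidings by the spherical-structure-independent scalar $\zeta$ rather than by twists, which require choosing a spherical structure), but they only give the lower bound.

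For comparison, the paper gets the upper bound with no computation at all: by \S\ref{centre} a braiding on $\mI$ is the same as a (isomorphism class of a) section of the forgetful functor $\Z(\mI)\to\mI$; choosing one braiding $c$ and using Corollary~\ref{isingnondeg} together with Proposition~\ref{Muger1prime}(iii) identifies $\Z(\mI)$ with $\mI\bt\mI^{\oP}$ so that the forgetful functor becomes the tensor product functor; a section is then determined by the image of $X$, and the only simple objects of $\mI\bt\mI^{\oP}$ that can map to $X$ are $X\bt\be$, $\be\bt X$, $X\bt\delta$, $\delta\bt X$. This yields exactly $4$ braidings and identifies them with the list \eqref{thethreebraidings} in one stroke. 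If you want to keep your computational route, you must either complete the explicit solution of the hexagon system (in the spirit of Siehler's classification, cited externally) or replace Step 3 by an argument of the paper's type; as written, the counting bound is unproved.
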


\begin{proof} (i) By \S \ref{centre}, a braiding on $\mI$ is the same as a section of the forgetful functor
$F:\Z(\mI)\to \mI$. Here ``section" means an isomorphism class of tensor functors
$\Phi :\mI\to\Z (\mI )$ such that $F\Phi\simeq \id$ (or equivalently, a fusion subcategory 
$\D\subset\Z (\mI )$ such that $F|_{\D}:\D\to\mI$ is an equivalence).

Choose some braided structure $c$ on $\mI$ (it exists by Corollary~\ref{existencecor}).
By Corollary~\ref{isingnondeg} and Proposition~\ref{Muger1prime}, this choice
identifies $\Z(\mI)$ with $\mI \boxtimes \mI^{\oP}$ so that $F:\Z(\mI)\to \mI$ identifies
with the tensor product functor $\mI \boxtimes \mI^{\oP}\to\mI$.
A section $\mI \to \Z(\mI)=\mI \boxtimes \mI^{\oP}$ is
uniquely determined by the image of $X$, and we have 4 possibilities for this image:
$X\bt \be$,  $\be \bt X$, $X\bt \delta$, $\delta \bt X$. They correspond, respectively, to the
original braiding $c$ and the braidings \eqref{thethreebraidings}
\end{proof}

The proof of the following proposition 
is contained in \S \ref{difficult_proofs}.

\begin{proposition}\label{braidedising}
Let $\mI$ be a braided Ising category. Assume that
the braiding $c_{XX}$ acts on the one dimensional vector space 
 $\Hom (\be ,X\otimes X)$ as multiplication by a number $\zeta \in k$. Then $c_{XX}$ acts
 on  $\Hom (\delta ,X\otimes X)$ as multiplication by  $\zeta^{-3}$ and
 \begin{equation}  \label{16eq}
\zeta^2+\zeta^{-2}=\lambda,
\end{equation}
where $\lambda=\lambda ({\mI })\in k$ is the square root of 2 corresponding to $\mI$
(see \S\ref{fusionIsing}).
\end{proposition}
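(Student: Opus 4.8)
The plan is to work inside the braided category $\mI$ using the explicit multiplication table \eqref{isingK0} and the crucial constraint that $\mI$ is \emph{nilpotent of class $2$} with $\mI_{ad}$ generated by $\delta$. First I would record the two pieces of numerical data the proposition talks about: the scalar $\zeta$ by which $c_{X,X}$ acts on the line $\Hom(\be, X\ot X)$, and the scalar $\eta$ by which $c_{X,X}$ acts on the line $\Hom(\delta, X\ot X)$. (Here I am using that $X\ot X\simeq\be\oplus\delta$ has two one-dimensional simple summands, so $c_{X,X}$, being a natural endomorphism of $X\ot X$ commuting with the idempotent decomposition, is forced to be a diagonal operator $\operatorname{diag}(\zeta,\eta)$.) The goal is then two scalar identities: $\eta=\zeta^{-3}$ and $\zeta^2+\zeta^{-2}=\lambda$.

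The first identity I would get from the interaction of $c_{X,X}$ with $\delta$. Since $\delta\ot X\simeq X$, the object $\delta$ projectively centralizes $X$; in fact, by Lemma~\ref{adsuper}, $\mI_{ad}\simeq s\Vec$, so $c_{\delta,X}c_{X,\delta}=-\id_{\delta\ot X}$, and also $c_{\delta,\delta}$ is the nontrivial sign $-1$ of $s\Vec$. Now I would apply the hexagon axioms to the triple $(X,X,X)$ (or equivalently compute $c_{X, X\ot X}$ two ways) and project onto the various simple summands of $X\ot X\ot X\simeq X\oplus X$: the hexagon relating $c_{X,X\ot X}$ to $c_{X,X}$ on each $X$-summand, combined with the known values of $c_{\delta,X}c_{X,\delta}=-1$ on the $\delta$-part of the internal $X\ot X$, yields a relation among $\zeta$, $\eta$, and these signs. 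Carefully bookkeeping the two copies of $X$ inside $X\ot X\ot X$ (one coming from the $\be$-summand of the inner $X\ot X$, one from the $\delta$-summand) should give exactly $\eta=\zeta^{-3}$: the factor $\zeta^{-3}$ is $\zeta^{-1}$ (one ``undone'' braiding) times the sign contributions.

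The second identity, $\zeta^2+\zeta^{-2}=\lambda$, is the one I expect to be the main obstacle, because it is where the associativity constraint — specifically the number $\lambda(\mI)$, which lives in the associator $(X\ot X)\ot X\iso X\ot(X\ot X)$ via Remark~\ref{tyf}(ii),(iv) — must be fed in. The approach I would take is to compute the \emph{trace} (in the sense of \S\ref{squared}, using the isomorphism $\psi_X=(\phi^*)^{-1}\phi$ as in Lemma~\ref{sphlemma}, so that $d_\pm(X)=\lambda$) of the operator $c_{X,X}$, or rather of $c_{X,X}$ composed with the canonical flip data. On one hand $\Tr(c_{X,X})$ decomposes over the summands $\be,\delta$ of $X\ot X$ as $\zeta\cdot d(\be)+\eta\cdot\frac{d(\delta)}{?}$-type expression — more precisely, using that $d(X)=\lambda$, $d(\delta)=1$, one gets $\Tr_\pm$ of the $\be$-component $=\zeta\cdot(\text{ratio of dims})$ and similarly for $\delta$, producing $\lambda^{-1}\zeta + \lambda^{-1}\eta\cdot(\pm 1)$ after inserting the squared-norm normalization. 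On the other hand, by the Remark following Proposition~\ref{traceofbraiding} (applied with the pre-modular structure of \S\ref{sphIs}), $\Tr(c_{X,X})=\theta_X d(X)=\theta_X\lambda$, where $\theta_X$ is the twist; and the twist can be computed from $c_{X,X}$ on the $\be$-line. Equating the two expressions, substituting $\eta=\zeta^{-3}$ from the first step, and clearing $\lambda$ (using $\lambda^2=2$) should collapse to $\zeta^2+\zeta^{-2}=\lambda$. The delicate point throughout is getting the normalizations and signs in the trace formula exactly right — in particular tracking how the $s\Vec$ sign on $\delta$ enters $d_-$ versus $d_+$ — so I would double-check the final scalar identity against the explicit quantum-group model $U_q sl(2)$ at a primitive $8$th root of unity mentioned in Remark~\ref{quantumIs}, where $\zeta$ is a known power of $q$ and $\lambda=q^2+q^{-2}$, to make sure the conventions match.
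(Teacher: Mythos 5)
Your second step contains the genuine gap. Write $c_{X,X}=\zeta\cdot\id$ on the summand $\be$ and $\eta\cdot\id$ on the summand $\delta$ of $X\ot X$. Everything that the interaction with $\delta$ can give you — $\mI_{ad}\simeq s\Vec$, $c_{\delta,\delta}=-1$, the monodromy $c_{\delta,X}c_{X,\delta}=-\id$, or equivalently the balancing relation \eqref{balancing ax} with $\theta_\be=1$, $\theta_\delta=-1$ — amounts to the single identity $\eta^2=-\zeta^2$ (this is exactly the relation $\zeta^2=-\xi^2$ that the paper extracts at this stage). That determines $\eta$ only up to sign and puts no constraint whatsoever on $\zeta$ itself, whereas the claim $\eta=\zeta^{-3}$ is strictly stronger: combined with $\eta^2=-\zeta^2$ it forces $\zeta^8=-1$. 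So $\eta=\zeta^{-3}$ cannot follow from "hexagon bookkeeping plus the sign $c_{\delta,X}c_{X,\delta}=-1$" without the associator entering: the only hexagon that constrains $\zeta$ beyond its interaction with $\delta$ is the one for the triple $(X,X,X)$, and there the associator $(X\ot X)\ot X\iso X\ot(X\ot X)$, whose matrix is $(-1)^{ab}\lambda^{-1}$ by Remark~\ref{tyf}(ii), appears unavoidably (this is Siehler's computation [S]). Your guiding heuristic is also false: $\eta/\zeta^{-1}=\zeta^{-2}$ is a primitive eighth root of unity, not a product of signs. Thus the proposed division of labor — first relation free of $\lambda$, second relation where $\lambda$ is "fed in" — is unworkable as stated; either you solve the $(X,X,X)$ hexagon honestly (where $\lambda$ already enters in determining $\eta$), or you settle, as the paper does, for $\eta^2=-\zeta^2$ at this stage and let the trace identity plus $\lambda^2=2$ pin down $\eta=\zeta^{-3}$ afterwards.

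Your third step is, in outline, exactly the paper's argument: Proposition~\ref{traceofbraiding} gives $\Tr(c_{X,X}^{-1})=\theta_X^{-1}d(X)$ (the paper uses the inverse; your $\Tr(c_{X,X})=\theta_X d(X)$ is the equivalent companion formula), and equating it with the componentwise trace yields \eqref{16eq}. But two of the points you wave at are the actual content. First, the componentwise trace is simply $\zeta\,d(\be)+\eta\,d(\delta)=\zeta+\eta$ (for either spherical structure $d(\delta)=1$); the expression "$\lambda^{-1}\zeta+\lambda^{-1}\eta\cdot(\pm1)$" you write is wrong — there is no dimension ratio and no $s\Vec$ sign there. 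Second, the input $\theta_X=\zeta^{-1}$, which you describe as "computed from $c_{X,X}$ on the $\be$-line", is precisely Lemma~\ref{prelimB10}: one must compute the Drinfeld morphism $u_X$ of \eqref{Drinfeld iso} in the skeletal model, using Lemma~\ref{sphlemma} and the normalizations of Remark~\ref{tyf}(ii),(iv) (the $\lambda^{-1}$ in the associator cancels against $\ev_X=\lambda\cdot\pi$), to get $u_X=\zeta\cdot\id_X$ and hence $\theta_X=\zeta^{-1}$. With $\eta^2=-\zeta^2$ from the first paragraph, the identity $\zeta^{-1}+\eta^{-1}=\theta_X^{-1}d(X)=\zeta\lambda$ and $\lambda^2=2$ then give both $\eta=\zeta^{-3}$ and $\zeta^2+\zeta^{-2}=\lambda$; that is the paper's route, and it is the natural repair of your plan.
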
 

\begin{remark}   \label{4-remark}
Let $\mI$ be an Ising fusion category and $\lambda\in k$ the corresponding square root of 2.
Let $\zeta$ be the solution to the equation \eqref{16eq} corresponding to a braiding $c$ on $\mI$
(see Proposition~\ref{braidedising}).
Then the solutions to the equation \eqref{16eq} corresponding to the three other braidings 
\eqref{thethreebraidings} are equal, respectively, to $\zeta^{-1}$, $-\zeta$, and $-\zeta^{-1}$. 
Thus we get a bijection between braided structures on $\mI$ and the solutions to
the equation \eqref{16eq}.
\end{remark}

Proposition \ref{braidedising} associates to each Ising braided category $\mI$ a solution 
$\zeta =\zeta (\mI)$ to the equation
\[
(\zeta^2+\zeta^{-2})^2=2, \quad \zeta\in k,
\]
i.e., an 8-th root of $-1$ in $k$. Combining Proposition \ref{sqrt2} and Remark~\ref{4-remark} 
one gets the following statement.

\begin{corollary}\label{braidedIs2}
The map $\mI\mapsto\zeta (\mI)$ is a bijection between the set of braided equivalence classes
 of Ising braided categories and the set of 8-th roots of $-1$ in $k$.  \hfill\qedsymbol
\end{corollary}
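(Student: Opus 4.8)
\textbf{Proof plan for Corollary~\ref{braidedIs2}.}
The plan is to combine the classification of Ising \emph{fusion} categories (Proposition~\ref{sqrt2}), the count of braided structures on a fixed Ising fusion category (Corollary~\ref{four}), and the numerical invariant $\zeta(\mI)$ produced by Proposition~\ref{braidedising}, together with the bookkeeping in Remark~\ref{4-remark}. First I would fix a primitive notion: for an Ising braided category $\mI$, the square $\lambda(\mI)\in k$ of $2$ is determined by the underlying fusion category, and $\zeta(\mI)$ is an $8$-th root of $-1$ satisfying $(\zeta^2+\zeta^{-2})^2=2$ by \eqref{16eq}; conversely, given an $8$-th root $\zeta$ of $-1$, the number $\lambda:=\zeta^2+\zeta^{-2}$ is a square root of $2$ and every square root of $2$ arises this way (each $\lambda$ with $\lambda^2=2$ has exactly four $8$-th roots $\zeta$ of $-1$ with $\zeta^2+\zeta^{-2}=\lambda$, obtained from one another by $\zeta\mapsto\zeta^{-1},-\zeta,-\zeta^{-1}$). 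So the set of $8$-th roots of $-1$ fibers over the two-element set $\{\lambda:\lambda^2=2\}$ with fibers of size $4$.

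Next I would show the map $\mI\mapsto\zeta(\mI)$ is \emph{surjective}. By Proposition~\ref{existenceprop} (or Remark~\ref{quantumIs}) there exists at least one Ising braided category $\mI_0$; let $\zeta_0:=\zeta(\mI_0)$ and $\lambda_0:=\lambda(\mI_0)$. Using Corollary~\ref{four}, the fusion category underlying $\mI_0$ carries exactly four braidings, namely $c$ and the three braidings $\bar c,\,c',\,\bar c'$ from \eqref{thethreebraidings}; by Remark~\ref{4-remark} their invariants are $\zeta_0,\zeta_0^{-1},-\zeta_0,-\zeta_0^{-1}$, which are precisely the four $8$-th roots of $-1$ lying over $\lambda_0$. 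To reach the roots over the other square root $-\lambda_0$ of $2$: by Proposition~\ref{sqrt2}(i) there is an Ising fusion category $\mI_1$ with $\lambda(\mI_1)=-\lambda_0$, and by Corollary~\ref{existencecor} it admits a braided structure; then $\zeta(\mI_1)$ is one of the four roots over $-\lambda_0$, and again Corollary~\ref{four} plus Remark~\ref{4-remark} produce the other three from the other three braidings on $\mI_1$. Hence every $8$-th root of $-1$ is hit.

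Finally I would prove \emph{injectivity}. Suppose $\mI$ and $\mI'$ are Ising braided categories with $\zeta(\mI)=\zeta(\mI')=\zeta$. Then $\lambda(\mI)=\zeta^2+\zeta^{-2}=\lambda(\mI')$, so by Proposition~\ref{sqrt2}(ii) there is an equivalence of fusion categories $\mI\simeq\mI'$; transporting the braiding of $\mI'$ along this equivalence makes $\mI$ into a braided category, so now $\mI$ carries (at most) two braided structures whose $\zeta$-invariant equals $\zeta$. But by Corollary~\ref{four} a fixed Ising fusion category has exactly four braidings, and by Remark~\ref{4-remark} these have four \emph{distinct} invariants $\zeta,\zeta^{-1},-\zeta,-\zeta^{-1}$ (distinctness because $\zeta$ is a primitive $16$-th root of unity, so $\zeta\ne\pm\zeta^{-1}$ and $\zeta\ne-\zeta$); hence the braiding on $\mI$ with invariant $\zeta$ is unique up to braided equivalence, giving $\mI\simeq\mI'$ as braided categories.

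The main obstacle here is not any single hard computation but making sure the combinatorial bookkeeping across three separate classification statements is airtight: specifically, that Remark~\ref{4-remark}'s claim about how $\zeta$ transforms under the three alternative braidings is used consistently, and that the map from braided structures on a fixed fusion category to solutions of \eqref{16eq} really is a bijection (four braidings, four distinct roots). Once that is pinned down, surjectivity follows from existence (Proposition~\ref{existenceprop}) plus realizing both values of $\lambda$ (Proposition~\ref{sqrt2}(i), Corollary~\ref{existencecor}), and injectivity follows from Proposition~\ref{sqrt2}(ii) plus the distinctness of the four invariants.
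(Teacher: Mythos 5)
Your proof is correct and follows essentially the same route as the paper, which obtains the corollary precisely by combining Proposition~\ref{sqrt2} with Remark~\ref{4-remark} (the latter already packaging Corollary~\ref{four} and Proposition~\ref{braidedising} into the bijection between the four braidings on a fixed Ising fusion category and the four solutions of \eqref{16eq}). Your unpacking of the surjectivity via Proposition~\ref{existenceprop}, Corollary~\ref{existencecor} and Proposition~\ref{sqrt2}(i), and of the injectivity via Proposition~\ref{sqrt2}(ii) plus the distinctness of the four invariants, is exactly the intended bookkeeping.
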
 

The Ising braided category corresponding to $\zeta\in k$, $\zeta^8=-1$, will be denoted by~
$\mI_{\zeta}$.

\begin{rems}   \label{chbraiding}
(i) Following \cite[\S 5]{Sie}, we give here the explicit formulas for the braiding on $\mI_\zeta$ using 
the notation from Remark \ref{tyf} (ii). The braiding $c_{\delta_a\delta_b}$ equals 
$(-1)^{ab}\id_{\delta_{a+b}}$ and
the braiding $c_{XX}$ equals $\zeta\cdot\id_{\delta_0}\oplus\zeta^{-3}\cdot\id_{\delta_1}$
(this follows from Lemma~\ref{adsuper} and Proposition~\ref{braidedising}). Finally, 
according to \cite[\S 5]{Sie}, the braidings $c_{\delta_aX}$ and $c_{X\delta_a}$ equal $\zeta^{4a}\cdot\id_X$.

(ii) For any choice of braided structure on $\mI$ and any $Y\in\mI$ the square of the braiding 
$Y\ot \delta\iso\delta\ot Y\iso Y\ot \delta$ equals $(-1)^{\deg Y}$. This is clear from Remark~(i)
above. On the other hand, this follows from Corollary \ref{isingnondeg} and Lemma \ref{alpha}.
\end{rems}

\subsection{Proof of Proposition \ref{braidedising}}  \label{difficult_proofs}
\begin{lemma}  \label{prelimB10}
Let $\mI$ and $\zeta$ have the same meaning as in Proposition \ref{braidedising}. 
Equip $\mI$ with the spherical structure corresponding to $\epsilon=1$ (see \S\ref{sphIs}).
Then $\theta_\be =1$, $\theta_\delta =-1$ and $\theta_X=\zeta^{-1}$.
\end{lemma}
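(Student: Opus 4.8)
The plan is to compute the three twists using the balancing axiom \eqref{balancing ax} together with the trace formula \eqref{traceofc} from Proposition~\ref{traceofbraiding}. First I would handle $\theta_\be$: this is forced to be $1$ since $\theta$ is a tensor automorphism of the identity functor and the twist of the unit object is always trivial (apply \eqref{balancing ax} with $X=Y=\be$). Next I would pin down $\theta_\delta$. By \eqref{balancing ax} applied to $X=Y=\delta$ we have $\theta_{\delta\ot\delta}=(\theta_\delta\ot\theta_\delta)c_{\delta,\delta}c_{\delta,\delta}$; since $\delta\ot\delta\simeq\be$ the left side is $1$, so $\theta_\delta^2 = (c_{\delta,\delta}^2)^{-1}$. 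By Remark~\ref{chbraiding}(i) (or directly from the fusion rules and Remark~\ref{chbraiding}(ii)), $c_{\delta,\delta}c_{\delta,\delta}=(-1)^{\deg\delta\cdot\deg\delta}=1$ since $\deg\delta=0$; alternatively $\delta$ is in $\mI_{ad}\simeq s\Vec$ by Lemma~\ref{adsuper}, whose nontrivial object has twist $-1$. Either way $\theta_\delta=\pm1$, and the value $-1$ is selected because $s\Vec$ (not $\Rep(\BZ/2\BZ)$) is the relevant symmetric category: the object $\delta$ has $\theta_\delta=-1$.

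For $\theta_X$ I would use the trace formula \eqref{traceofc}, which for the chosen spherical structure ($\epsilon=1$, so $d(X)=\lambda(\mI)$ by \S\ref{sphIs}) reads $\theta_X^{-1}\lambda(\mI)=\Tr(c_{X,X}^{-1})$. The right-hand side is computed by decomposing $\id_{X\ot X}$ into the two idempotents projecting onto $\be$ and $\delta$ inside $X\ot X\simeq\be\oplus\delta$. On the $\be$-summand $c_{X,X}$ acts by $\zeta$ (hypothesis of Proposition~\ref{braidedising}), hence $c_{X,X}^{-1}$ acts by $\zeta^{-1}$; the spherical trace of this idempotent contribution is $\zeta^{-1}d(\be)=\zeta^{-1}$. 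On the $\delta$-summand $c_{X,X}$ acts by $\zeta^{-3}$ (this is the first assertion of Proposition~\ref{braidedising}, which I would establish en route — see below), hence $c_{X,X}^{-1}$ acts by $\zeta^{3}$, and its contribution to the trace is $\zeta^{3}d(\delta)=\zeta^{3}$. Thus $\Tr(c_{X,X}^{-1})=\zeta^{-1}+\zeta^{3}=\zeta^{-1}(1+\zeta^{4})$. On the other hand $\zeta^2+\zeta^{-2}=\lambda(\mI)$ by \eqref{16eq}, so $\lambda(\mI)=\zeta^{-1}(\zeta^{3}+\zeta^{-1})=\zeta^{-1}(1+\zeta^{4})\cdot$ (after clearing: $\zeta(\zeta^2+\zeta^{-2})=\zeta^3+\zeta^{-1}$), giving $\theta_X^{-1}\lambda(\mI)=\lambda(\mI)\cdot\zeta$, i.e. $\theta_X=\zeta^{-1}$.

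The one genuinely non-formal input is the claim that $c_{X,X}$ acts on $\Hom(\delta,X\ot X)$ by $\zeta^{-3}$. I would obtain this from the hexagon axioms: applying \eqref{balancing ax} to $X\ot X\simeq\be\oplus\delta$ gives $\theta_\be=\theta_\delta$ related to $\theta_X^2(c_{X,X}c_{X,X})$ restricted to each summand, which combined with $\theta_\be=1$, $\theta_\delta=-1$ and the fact that $c_{X,X}$ on the $\be$-part equals $\zeta$ forces the $\delta$-part to be $\zeta^{-3}$ once one knows $\theta_X=\zeta^{-1}$ — so there is a small circularity to untangle. The clean way is to first derive the ``quadratic'' consequence of \eqref{balancing ax} for $X\ot X$ purely in terms of the two eigenvalues $\mu_\be=\zeta$ and $\mu_\delta$ of $c_{X,X}$ and the as-yet-unknown $\theta_X$: one gets $\theta_X^2\mu_\be^2 = \theta_\be=1$ and $\theta_X^2\mu_\delta^2=\theta_\delta=-1$, whence $\mu_\delta^2/\mu_\be^2=-1$, i.e. $\mu_\delta^2=-\zeta^2$; then \eqref{16eq} and a short compatibility check (using $\lambda^2=2$) selects $\mu_\delta=\zeta^{-3}$ among the candidates $\pm\zeta^{-3}$. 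This interplay between the two hexagons and the normalization $\lambda^2=2$ is the main obstacle; everything else is bookkeeping with \eqref{balancing ax}, \eqref{traceofc}, and the fusion rules \eqref{isingK0}.
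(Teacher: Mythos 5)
Your treatment of $\theta_\be$ and $\theta_\delta$ is fine and is essentially the paper's own argument: $\delta$ generates $\mI_{ad}\simeq s\Vec$ (Lemma \ref{adsuper}), and since by \S\ref{sphIs} the chosen spherical structure restricts to the positive one on $\mI_{ad}$ (i.e.\ $d(\delta)=1$ --- a point you should make explicit, because the braided equivalence with $s\Vec$ alone does not fix $\theta_\delta$; with the super-dimension spherical structure the twist would be $+1$), formula \eqref{tw} gives $\theta_\delta=-1$. The problem is $\theta_X$. Your computation invokes \eqref{16eq} and the value $\zeta^{-3}$ of $c_{X,X}$ on $\Hom(\delta,X\ot X)$, i.e.\ precisely the conclusions of Proposition \ref{braidedising}; but in the paper these are deduced \emph{from} the present lemma (\S\ref{difficult_proofs}), so they cannot be assumed here, and your proposed way of breaking the circularity does not close the gap.

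Concretely, the only inputs you allow yourself are \eqref{balancing ax} applied to $X\ot X\simeq\be\oplus\delta$, the trace formula \eqref{traceofc}, $d(\delta)=1$, $d(X)=\lambda$, and $\lambda^2=2$. Writing $\mu$ for the eigenvalue of $c_{X,X}$ on $\Hom(\delta,X\ot X)$, these give $\theta_X^2\zeta^2=1$, $\theta_X^2\mu^2=-1$, and $\theta_X^{-1}\lambda=\zeta^{-1}+\mu^{-1}$; squaring the last and using the first two does force $\mu=\zeta^{-3}$ and $\zeta^8=-1$ (so that part of your plan works, and your candidates are not ``$\pm\zeta^{-3}$'' but $\zeta^{-3}$ outright), yet the sign of $\theta_X$ remains undetermined: the assignments $\theta_X=\zeta^{-1}$, $\lambda=\zeta^2+\zeta^{-2}$ and $\theta_X=-\zeta^{-1}$, $\lambda=-(\zeta^2+\zeta^{-2})$ both satisfy every one of these identities (as well as the companion identity $\theta_X\lambda=\Tr(c_{X,X})=\zeta+\mu$). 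Since $\lambda$ is a fixed invariant of the fusion category while its relation to the braiding eigenvalue $\zeta$ is exactly the content of \eqref{16eq}, no manipulation of these numerical relations can select the sign; ruling out the mirror possibility \emph{is} \eqref{16eq}. What is needed is a computation that ties $\zeta$ to the duality/associator normalization defining $\lambda$. The paper does this by evaluating the Drinfeld morphism $u_X$ of \eqref{Drinfeld iso} in the explicit model of Remarks \ref{tyf}(ii),(iv): with $\phi$ chosen as in Remark \ref{tyf}(iv), Lemma \ref{sphlemma} identifies $\psi_X$ with $\id_X$, and a direct check using only the $\be$-eigenvalue $\zeta$, the associator entries $\pm\lambda^{-1}$ and $\ev_X=\lambda\pi$ yields $u_X=\zeta\cdot\id_X$, hence $\theta_X^{-1}=\zeta$ with the sign pinned down (and without ever needing $\mu$). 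Your proof needs this step or some substitute for it; once it is in place, your trace-formula bookkeeping then recovers $\mu=\zeta^{-3}$ and \eqref{16eq}, which is exactly how \S\ref{difficult_proofs} proceeds.
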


\begin{proof}
By Lemma \ref{adsuper}, $\mI_{ad}$ is a pointed braided category corresponding
to the pre-metric group $(\BZ/2\BZ,q),\; q(n)=(-1)^n$. By \S \ref{sphIs}, the spherical structure on
$\mI_{ad}$ is positive (i.e., $d(\delta)=1$), so the twists of the objects of $\mI_{ad}$ are given
by formula~\eqref{tw}. 

Formula~\eqref{deltatheta} says that the isomorphism $u_X:X\iso X^{**}$ defined by 
$\eqref{Drinfeld iso}$ equals $\theta_X^{-1}\cdot\psi_X$, where $\psi_X:X\iso X^{**}$ comes from the spherical structure. By Lemma~\ref{sphlemma}, 
if we identify $X^*$ with $X$ using some $\phi :X\iso X^*$ 
and identify $X^{**}$ with $X^*$ accordingly then $\psi_X$ identifies with $\id_X$; on the other hand,
$u_X$ identifies with the composition
$$X\xrightarrow{\id_X\ot \coev_X}X\ot (X\ot X)\iso (X\ot X)\ot X\xrightarrow{c_{XX}\ot \id_X}
(X\ot X)\ot X \xrightarrow{\ev_X\ot \id_X}X,$$
where $\coev_X:\be\to X\ot X$ and $\ev_X:X\ot X\to\be$ depend on the choice of $\phi$.
Now choose $\phi$ as in Remark \ref{tyf}(iv). Using Remarks \ref{tyf}(ii, iv) and the definition 
of $\zeta$ given in Proposition \ref{braidedising} one checks that $u_X: X\iso X$ equals 
$\zeta\cdot\id_X$. So $\theta_X^{-1}=\zeta$.
\end{proof}
Now let us prove Proposition \ref{braidedising}.
Equip $\mI$ with the spherical structure corresponding to $\epsilon=1$ (see \S\ref{sphIs}). Assume that 
$c_{XX}=\zeta\cdot\id_{\delta_0}+\xi\cdot\id_{\delta_1}$. Then  $c_{XX}^2=\zeta^2 \cdot\id_{\delta_0}+
\xi^2\cdot \id_{\delta_1}$. 
On the other hand, \eqref{balancing ax} says that 
$c_{XX}^2=\frac{\theta_\be}{\theta_X^2}\cdot\id_{\delta_0}+
\frac{\theta_\delta}{\theta_X^2} \cdot\id_{\delta_1}$. Therefore
\begin{equation}  \label{squares_opposite}
\zeta^2=-\xi^2. 
\end{equation}

Next let us compute $\Tr(c_{XX}^{-1})$ in two different ways. Clearly 
$\Tr(c_{XX}^{-1})=\zeta^{-1} +\xi^{-1}\cdot d(\delta )$. By \S\ref{sphIs}, $d(\delta )=1$, so
$\Tr(c_{XX}^{-1})=\zeta^{-1} +\xi^{-1}$. On the other hand, formula \eqref{traceofc} says that
$\Tr(c_{XX}^{-1})=\theta_X^{-1}d(X)$.
Recall from \S\ref{sphIs} and Lemma \ref{prelimB10} 
that $d(X)=\lambda$ and $\theta_X=\zeta^{-1}$. Thus we have
\begin{equation}\label{16eqprime}
\zeta^{-1} +\xi^{-1}=\zeta \lambda
\end{equation}
Since $\lambda^2=2$ (see \S \ref{sphIs}) we get $(\zeta^{-1} +\xi^{-1})^2=2\zeta^2$.
Combining this with \eqref{squares_opposite} we get  $\xi =\zeta^{-3}$.
Now \eqref{16eq} is an immediate consequence of \eqref{16eqprime}.

\subsection{Ising modular categories} \label{modularising}
Recall that a structure of modular category on a fusion category is the same as a
non-degenerate braided structure combined with a spherical structure
(see \S\ref{premod}).
By Corollary \ref{isingnondeg},
for Ising categories nondegeneracy is automatic. Now combining the results formulated
in \S\S\ref{fusionIsing}-\ref{braidIs} we get a bijection $\mI\mapsto (\zeta (\mI),\epsilon (\mI))$ 
between the set of modular equivalence classes of Ising modular categories and the
set of pairs $(\zeta, \epsilon )$, where $\zeta$ is an 8-th roots of $-1$ in $k$ and
$\epsilon =\pm 1$. The Ising modular category corresponding to $(\zeta, \epsilon )$
will be denoted by~$\mI_{\zeta ,\epsilon}$.

\begin{remark} One can show that for modular Ising categories obtained from
$U_qsl(2)$ (see Remark \ref{quantumIs}) we have $\epsilon =-1$.
\end{remark}

\begin{proposition} \label{B10}
The twists in $\mI_{\zeta,\epsilon}$ are as follows:
\begin{equation}  \label{Isingtwists}
\theta_\be =1,\; \theta_\delta =-1,\; \theta_X=\epsilon \zeta^{-1}.
\end{equation}
\end{proposition}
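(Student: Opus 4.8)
The plan is to compute the twists $\theta_{\be}$, $\theta_{\delta}$, and $\theta_X$ directly from the structural results already assembled in this appendix. The values $\theta_{\be}=1$ and $\theta_{\delta}=-1$ require no choice of spherical structure and follow immediately from the fact that $\mI_{ad}$ is braided equivalent to $s\Vec$ (Lemma~\ref{adsuper}): indeed, $\theta_{\be}=1$ is forced by the balancing axiom \eqref{balancing ax} applied with $X=Y=\be$, and $\theta_{\delta}$ is the twist of the nontrivial simple object of $s\Vec\simeq\C(\BZ/2\BZ,q)$ with $q(n)=(-1)^n$, which equals $q(1)=-1$ by formula \eqref{tw}. (Alternatively, one may cite Remark~\ref{chbraiding}(ii), which says the square of the braiding $\delta\ot\delta\iso\delta\ot\delta$ equals $(-1)^{\deg\delta}=1$, combined with \eqref{balancing ax} to pin down $\theta_{\delta}^2$, and then observe $\theta_{\delta}=\pm1$ with the sign fixed by the spherical structure on $\mI_{ad}$.) Note these two values match \eqref{Isingtwists} and do not involve $\epsilon$.

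For the third value the plan is to track the dependence of $\theta_X$ on the spherical structure. By \S\ref{sphIs}, the two spherical structures on $\mI$ are distinguished by the sign $\epsilon(\mI)=\pm1$ appearing in $d(X)=\epsilon(\mI)\lambda(\mI)$, and passing from one spherical structure to the other composes the pivotal structure with the tensor automorphism of $\id_{\mI}$ corresponding to the nontrivial element of $\Hom(U_{\mI},\mu_2)=\BZ/2\BZ$ (using \eqref{theunivgr}). By \S\ref{pivotal}, this multiplies $d(X)$ by $(-1)^{\deg X}=-1$ and, via \eqref{deltatheta}, multiplies $\theta_X$ by the same factor. Since Lemma~\ref{prelimB10} already establishes that for the spherical structure with $\epsilon=1$ one has $\theta_X=\zeta^{-1}$ (where $\zeta=\zeta(\mI)$), it follows that for the spherical structure with $\epsilon=-1$ one has $\theta_X=-\zeta^{-1}$, i.e., $\theta_X=\epsilon\zeta^{-1}$ in both cases. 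This is exactly the asserted formula.

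The only genuine content beyond bookkeeping is Lemma~\ref{prelimB10}, which is stated just above and whose proof (computing $u_X$ explicitly in the Tambara--Yamagami model of Remark~\ref{tyf}(ii,iv) and reading off $\theta_X^{-1}=\zeta$) I would simply invoke; with it in hand, the proposition is a one-line consequence of the change-of-spherical-structure rule from \S\ref{pivotal}. The step I expect to be the ``main obstacle'' is purely expository: making sure the sign conventions in $d(X)=\epsilon\lambda$, in the identification $\Aut_{\otimes}(\id_{\mI})\cong\BZ/2\BZ$, and in formula \eqref{deltatheta} all line up so that the factor is genuinely $(-1)^{\deg X}$ and not, say, inverted; once that is checked the argument closes. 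I would present the proof as: (1) deduce $\theta_{\be}=1$, $\theta_{\delta}=-1$ from Lemma~\ref{adsuper} and \eqref{tw}; (2) recall Lemma~\ref{prelimB10} for $\epsilon=1$; (3) apply the change-of-spherical-structure formula of \S\ref{pivotal} together with \eqref{deltatheta} and \eqref{theunivgr} to obtain the case $\epsilon=-1$, completing \eqref{Isingtwists}.
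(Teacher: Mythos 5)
Your proof is correct and follows essentially the same route as the paper: the paper likewise notes that $\theta_\be$ and $\theta_\delta$ are independent of $\epsilon$ (because, as in \S\ref{sphIs}, either spherical structure on $\mI$ restricts to the positive one on $\mI_{ad}$ --- the one small point you should state explicitly before invoking \eqref{tw}) and then obtains $\theta_X=\epsilon\zeta^{-1}$ from Lemma~\ref{prelimB10} by observing that both $\theta_X$ and $\epsilon$ change sign when the spherical structure is changed. Your step (3) merely spells out this sign-change bookkeeping via \S\ref{pivotal}, \eqref{deltatheta} and \eqref{theunivgr}, and your step (1) reproves the $\mI_{ad}$ part of Lemma~\ref{prelimB10} instead of citing it; both are fine.
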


\begin{proof} By \S \ref{sphIs} a spherical structure on $\mI$ restricts to the positive spherical
structure on $\mI_{ad}$. Thus $\theta_\be$ and $\theta_\delta$ do not depend on choice
of $\epsilon$ and the result follows from Lemma \ref{prelimB10}.

Both $\theta_X$ and $\epsilon$ change their signs if one changes the spherical structure.
So the formula for $\theta_X$ follows from Lemma~\ref{prelimB10}.
\end{proof}

\begin{corollary}
The $S$-matrix of $\mI_{\zeta,\epsilon}$ equals
$$\left( \begin{array}{ccc}1&1&\epsilon (\zeta^2+\zeta^{-2})\\ 1&1&-\epsilon (\zeta^2+\zeta^{-2})\\
\epsilon (\zeta^2+\zeta^{-2})&-\epsilon (\zeta^2+\zeta^{-2})&0\end{array}\right) $$
Here we assume that the simple objects of
$\mI_{\zeta,\epsilon}$ are ordered as follows: $\be ,\delta, X$. 
\end{corollary}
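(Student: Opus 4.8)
The plan is to compute the $S$-matrix of $\mI_{\zeta,\epsilon}$ directly from the formula \eqref{def S matrix}, namely $s_{UV}=\Tr(c_{V,U}c_{U,V})$, using the three structures already pinned down: the fusion rules \eqref{isingK0}, the dimensions $d(\be)=d(\delta)=1$, $d(X)=\epsilon(\zeta^2+\zeta^{-2})$ (from \S\ref{sphIs} and formula \eqref{16eq} of Proposition~\ref{braidedising}), and the twists \eqref{Isingtwists} of Proposition~\ref{B10}. First I would handle the entries involving only $\be$ and $\delta$: since $\be$ is the unit, $c_{\be,V}c_{V,\be}=\id$, so $s_{\be V}=s_{V\be}=d(V)$, which gives the first row and first column except for the corner $s_{\delta\delta}$. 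For $s_{\delta\delta}$, Remark~\ref{chbraiding}(ii) (or Lemma~\ref{alpha} together with Corollary~\ref{isingnondeg}) says the square of the braiding on $\delta\ot\delta$ equals $(-1)^{\deg\delta}=(-1)^0=1$, so $s_{\delta\delta}=d(\delta)^2=1$.

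Next I would do the entries $s_{\delta X}=s_{X\delta}$. Here $\delta\ot X\simeq X$ is simple, so $c_{X,\delta}c_{\delta,X}$ acts on $X$ as a scalar; by Remark~\ref{chbraiding}(ii) with $Y=X$ this scalar is $(-1)^{\deg X}=-1$. Hence $s_{\delta X}=\Tr(c_{X,\delta}c_{\delta,X})=-1\cdot d(X)=-\epsilon(\zeta^2+\zeta^{-2})$, and by symmetry of the $S$-matrix ($s_{XY}=s_{YX}$, noted in \S\ref{Qtrace}) the same value appears for $s_{X\delta}$. The only remaining entry is $s_{XX}$. For this I would apply formula \eqref{sij}, $s_{XY}=\theta_X^{-1}\theta_Y^{-1}\sum_Z N_{XY}^Z\theta_Z d(Z)$, with $X=Y$: using $X\ot X\simeq\be\oplus\delta$ this gives $s_{XX}=\theta_X^{-2}(\theta_\be d(\be)+\theta_\delta d(\delta))=\theta_X^{-2}(1\cdot 1+(-1)\cdot 1)=0$. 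Assembling these values into the matrix, with the ordering $\be,\delta,X$, yields exactly the asserted matrix. (As a consistency check, one sees $s_{XX}=0$ is forced by the twist values alone, independently of $\zeta$ and $\epsilon$, which matches the stated answer.)

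I expect no serious obstacle here: every ingredient is already established in Appendix~\ref{Is}, and the computation is a short bookkeeping exercise. The one place requiring a little care is making sure the square-of-braiding scalars on $\delta\ot\delta$ and $\delta\ot X$ are correctly read off — I would cross-check them against the explicit formulas in Remark~\ref{chbraiding}(i), where $c_{\delta_a\delta_b}=(-1)^{ab}\id$ gives $c_{\delta_1,\delta_1}^2=(-1)^2\cdot\text{(sign)}$, and $c_{\delta_a X}=\zeta^{4a}\id_X$ gives $(c_{X,\delta}c_{\delta,X})$ acting as $\zeta^{8}=\zeta^8=-1$ since $\zeta^8=-1$ — consistent with $(-1)^{\deg X}=-1$. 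Another minor point is to confirm $d(X)=\epsilon(\zeta^2+\zeta^{-2})$ rather than $\epsilon\lambda$ with a sign ambiguity; this is exactly the content of $\epsilon(\mI)$ in \S\ref{sphIs} combined with \eqref{16eq}. With these checks in place the proposition follows immediately.
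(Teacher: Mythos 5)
Your computation is correct: all six independent entries come out as stated, and the value $d(X)=\epsilon(\zeta^2+\zeta^{-2})$, the twists \eqref{Isingtwists}, and the fusion rules \eqref{isingK0} are exactly the right inputs. The paper's own proof is even shorter in route: it computes \emph{every} entry from the balancing formula \eqref{sij} alone, using only $\theta_\be=1$, $\theta_\delta=-1$ and the dimensions, and observes that $\theta_X$ (and hence any explicit braiding information) is never needed, since $\theta_X^{-1}$ either cancels against the single term $\theta_Xd(X)$ in $s_{\be X}$, $s_{\delta X}$ or multiplies the vanishing sum $\theta_\be d(\be)+\theta_\delta d(\delta)=0$ in $s_{XX}$. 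You instead evaluate the rows involving $\be$ and $\delta$ directly from the definition $s_{UV}=\Tr(c_{V,U}c_{U,V})$, pulling the squared-braiding scalars from Remark~\ref{chbraiding}(ii) (which rests on Corollary~\ref{isingnondeg} and Lemma~\ref{alpha}, or on the explicit model in Remark~\ref{chbraiding}(i)), and reserve \eqref{sij} for $s_{XX}$. Both arguments are sound; the trade-off is that your route gives a concrete cross-check against the explicit braiding of $\mI_\zeta$, while the paper's uniform use of \eqref{sij} avoids any appeal to non-degeneracy or explicit braiding formulas and makes transparent why the answer is independent of $\theta_X$.
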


\begin{proof} This is immediate from \eqref{sij} and the formula $\theta_\delta =-1$
(note that $\theta_X$ is not needed for this calculation).
\end{proof}

\begin{rems} \label{IsGauss}
\begin{enumerate}
\item[(i)] 
By \S\ref{sphIs}, $d(1)=d(\delta )=1$ and $d(X)=\epsilon\lambda=\epsilon (\zeta^2+\zeta^{-2})$,
so $d(X)^2=2$.
\item[(ii)] 
Combining Remark (i) with \eqref{Gauss sums} and \eqref{Isingtwists} one finds the 
Gauss sum of~$\mI_{\zeta,\epsilon}$\,:
\begin{equation}  
\tau^+(\mI_{\zeta,\epsilon})=2\epsilon \zeta^{-1}.
\end{equation}
In particular, $\tau^+(\mI_{\zeta,\epsilon})/2$ is an 8-th root of -1, i.e., 
a primitive 16-th root of unity.
\end{enumerate}
\end{rems}

\subsection{The tensor product of two Ising braided categories}  
Consider
two Ising braided categories $\mI_\zeta$ and $\mI_{\zeta'}$. We will describe 
the maximal Tannakian and maximal symmetric subcategory of 
$\mI_\zeta \boxtimes \mI_{\zeta'}$. Then we will describe the core
of $\mI_\zeta \boxtimes \mI_{\zeta'}$ 
(see Definition \ref{core corresponding to E} for the definition of core).

\begin{lemma} \label{Is-sym1}
\begin{enumerate}
\item[(i)] The adjoint subcategory
$(\mI_\zeta \boxtimes \mI_{\zeta'})_{ad}=(\mI_\zeta)_{ad}\boxtimes (\mI_{\zeta'})_{ad}$ 
is symmetric and has Frobenius-Perron dimension 4.
It is the unique maximal symmetric fusion subcategory of $\Z (\mI)$.
\item[(ii)] The fusion subcategory $\E\subset \mI_\zeta \boxtimes \mI_{\zeta'}$ generated by 
$\delta\boxtimes\delta$ is the unique maximal Tannakian
subcategory of $\mI_\zeta \boxtimes \mI_{\zeta'}$.
\item[(iii)] The centralizer $\E'$ equals the maximal integral subcategory 
$(\mI_\zeta \boxtimes \mI_{\zeta'})_{int}$
of $\mI_\zeta \boxtimes \mI_{\zeta'}$.
\end{enumerate}
\end{lemma}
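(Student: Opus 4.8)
The plan is to analyze $\mI_\zeta\bt\mI_{\zeta'}$ explicitly using the structure results already established for Ising categories. By Proposition~\ref{B3}, each $\mI_\zeta$ has simple objects $\be,\delta,X$ with $\delta^2=\be$, $\delta\ot X\simeq X$, $X^2\simeq\be\oplus\delta$, and $\FPdim(X)=\sqrt 2$. So $\mI_\zeta\bt\mI_{\zeta'}$ has nine simple objects $A\bt B$ with $A,B\in\{\be,\delta,X\}$, four of which are invertible (the $A\bt B$ with $A,B\in\{\be,\delta\}$, forming a group $\simeq(\BZ/2\BZ)^2$) and the rest having Frobenius-Perron dimension $\sqrt 2$ or $2$. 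Thus $(\mI_\zeta\bt\mI_{\zeta'})_{pt}$ is the pointed subcategory on $\{\be\bt\be,\delta\bt\be,\be\bt\delta,\delta\bt\delta\}$, and the maximal integral subcategory $(\mI_\zeta\bt\mI_{\zeta'})_{int}$ is generated by those four invertibles together with $X\bt X$ (which has integral dimension $2$).

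For part (i): since $(\mI_\zeta)_{ad}$ is generated by $\delta$ and is braided equivalent to $s\Vec$ by Lemma~\ref{adsuper}, we have $(\mI_\zeta\bt\mI_{\zeta'})_{ad}=(\mI_\zeta)_{ad}\bt(\mI_{\zeta'})_{ad}\simeq s\Vec\bt s\Vec$, which is symmetric of Frobenius-Perron dimension $4$. For maximality among symmetric subcategories: by Corollary~\ref{symint} any symmetric subcategory is integral, hence (using Proposition~\ref{un-grad}(ii) and the universal grading of the Ising factors via \eqref{theunivgr}) contained in $(\mI_\zeta\bt\mI_{\zeta'})_{pt}\vee$ the $X\bt X$-part; one checks directly which invertibles are symmetric. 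The object $\delta\bt\be$ is not symmetric in $\mI_\zeta\bt\mI_{\zeta'}$ because $\delta\notin\mI_\zeta'=\Vec$ by Corollary~\ref{isingnondeg}; similarly $\be\bt\delta$ is not symmetric. But $\delta\bt\delta$ is: using Remark~\ref{chbraiding}(ii), the square of the braiding on $\delta\bt\delta$ with any simple object $A\bt B$ is $(-1)^{\deg A}(-1)^{\deg B}$, and one verifies this monodromy is trivial precisely on the subcategory generated by $\delta\bt\delta$ and the objects of even total degree, forcing $\delta\bt\delta$ to centralize $\delta\bt\delta$; combined with $X\bt X$ not being invertible, the maximal symmetric subcategory is exactly $(\mI_\zeta)_{ad}\bt(\mI_{\zeta'})_{ad}$.

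For part (ii): $\E:=\langle\delta\bt\delta\rangle$ is a pointed braided category on $\BZ/2\BZ$; by the computation above its self-monodromy is trivial, so it is symmetric, and the value of the quadratic form $q(\delta\bt\delta)=c_{\delta\bt\delta,\delta\bt\delta}=\theta_{\delta\bt\delta}=\theta_\delta\theta_\delta\cdot(\text{monodromy})=(-1)(-1)=1$ using \eqref{balancing ax} and Proposition~\ref{B10} — wait, one must be careful: $\theta_{\delta\ot\delta}=(\theta_\delta\ot\theta_\delta)c_{\delta,\delta}c_{\delta,\delta}$, and $c_{\delta\bt\delta,\delta\bt\delta}$ itself equals $q$; the cleanest route is to note $\Rep(\BZ/2\BZ)$ versus $s\Vec$ is detected by whether the braiding $c_{\delta\bt\delta,\delta\bt\delta}$ is $+1$ or $-1$, and by Remark~\ref{chbraiding}(i) each $c_{\delta,\delta}=(-1)^{1\cdot 1}=-1$ in each factor, so $c_{\delta\bt\delta,\delta\bt\delta}=(-1)(-1)=+1$, whence $\E\simeq\Rep(\BZ/2\BZ)$ is Tannakian. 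Then $\E$ is maximal among Tannakian subcategories: any larger Tannakian subcategory would be symmetric of Frobenius-Perron dimension $\ge 2$, contained in the maximal symmetric subcategory $s\Vec\bt s\Vec$ from (i), whose only Tannakian subcategories different from $\Vec$ are $\E$ itself (one checks $\delta\bt\be,\be\bt\delta$ generate copies of $s\Vec$, and the diagonal $\delta\bt\delta$ generates $\Rep(\BZ/2\BZ)$; no subcategory of $s\Vec\bt s\Vec$ of dimension $4$ is Tannakian, as $s\Vec\bt s\Vec$ is not Tannakian). Uniqueness of the maximal one follows since any two Tannakian subcategories here are comparable inside $s\Vec\bt s\Vec$.

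For part (iii): by Theorem~\ref{Muger2-FPdim}, $\FPdim(\E)\FPdim(\E')=\FPdim(\mI_\zeta\bt\mI_{\zeta'})\FPdim(\E\cap(\mI_\zeta\bt\mI_{\zeta'})')=16\cdot 1$ since $\mI_\zeta\bt\mI_{\zeta'}$ is non-degenerate (Corollary~\ref{isingnondeg} and \eqref{dimZ}-type reasoning, or directly: $\mI_\zeta$ and $\mI_{\zeta'}$ are non-degenerate so their product is), hence $\FPdim(\E')=8$. On the other hand $(\mI_\zeta\bt\mI_{\zeta'})_{int}$ has Frobenius-Perron dimension $4+4\cdot\FPdim(X\bt X)^2/\!\!\ldots$ — more simply, it contains the four invertibles plus $X\bt X$, giving $4\cdot 1+2^2=8$. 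So it suffices to show $(\mI_\zeta\bt\mI_{\zeta'})_{int}\subseteq\E'$, i.e., that each of $\be\bt\be,\delta\bt\be,\be\bt\delta,\delta\bt\delta,X\bt X$ centralizes $\delta\bt\delta$. For invertibles this is the monodromy computation $(-1)^{\deg+\deg}=1$ on even-degree objects. For $X\bt X$: $\deg(X\bt X)=(1,1)$ in $\BZ/2\BZ\times\BZ/2\BZ$, so again total degree contribution gives trivial monodromy with $\delta\bt\delta$ by Remark~\ref{chbraiding}(ii), applied factorwise. Equality of Frobenius-Perron dimensions then forces $\E'=(\mI_\zeta\bt\mI_{\zeta'})_{int}$.

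I expect the main obstacle to be part (i)'s maximality claim: ruling out a symmetric subcategory of Frobenius-Perron dimension larger than $4$ requires knowing that the only simple objects with a chance of lying in a symmetric subcategory are the five integral ones, and that among these the non-invertible $X\bt X$ cannot centralize all of $(\mI_\zeta)_{ad}\bt(\mI_{\zeta'})_{ad}$ — this last point uses Lemma~\ref{Dimalem} (the object $\delta\bt\delta\in s\Vec$-type subcategory cannot fix $X\bt X$ under tensoring, yet $\delta\bt\be$ does fix $X\bt X$), requiring a careful bookkeeping of which invertibles act freely. Everything else reduces to the explicit braiding formulas in Remark~\ref{chbraiding} and dimension counts via Theorem~\ref{Muger2-FPdim}.
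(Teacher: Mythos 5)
Parts (ii) and (iii) of your argument are essentially sound (and (iii), via the dimension count from Theorem~\ref{Muger2-FPdim} plus the inclusion $(\mI_\zeta\bt\mI_{\zeta'})_{int}\subseteq\E'$ from Remark~\ref{chbraiding}(ii), is a fine variant of the paper's one-line appeal to that remark). The genuine gap is exactly where you suspected it: the maximality claim in (i), on which (ii) also depends. Your intermediate reasoning there is not correct as written. The assertion that ``$\delta\bt\be$ is not symmetric because $\delta\notin\mI_\zeta'=\Vec$'' confuses lying in a symmetric fusion subcategory with lying in the M\"uger center of the ambient category: $\delta\bt\be$ generates a copy of $s\Vec$ and sits inside the symmetric subcategory $(\mI_\zeta)_{ad}\bt(\mI_{\zeta'})_{ad}$, which contradicts your own first sentence of (i). The only point that actually needs proof is that no symmetric subcategory $\D$ contains $X\bt X$, and your sketch of this in the last paragraph misuses Lemma~\ref{Dimalem}: it is $\delta\bt\be$ (not $\delta\bt\delta$) that generates an $s\Vec$, and $\delta\bt\delta$ generates the Tannakian subcategory and \emph{does} satisfy $(\delta\bt\delta)\ot(X\bt X)\simeq X\bt X$. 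Also, ``$X\bt X$ does not centralize all of the adjoint'' does not by itself exclude $X\bt X$ from a symmetric subcategory unless you add the fusion-rule observation that any fusion subcategory containing $X\bt X$ contains $(X\bt X)\ot(X\bt X)^*$ and hence all four invertibles, in particular $\delta\bt\be$.

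Either of two short arguments closes the gap. The paper's route: a symmetric $\D$ satisfies $\D\subseteq\D'$, and since $\mI_\zeta\bt\mI_{\zeta'}$ is non-degenerate (its M\"uger center is $\mI_\zeta'\bt\mI_{\zeta'}'=\Vec$ by Corollary~\ref{isingnondeg}), Theorem~\ref{Muger2-FPdim} gives $\FPdim(\D)^2\le\FPdim(\D)\FPdim(\D')=16$, so $\FPdim(\D)\le 4$; but $X\bt X\in\D$ would force $\FPdim(\D)\ge 1+\FPdim(X\bt X)^2=5$, so $X\bt X\notin\D$, and every other integral simple is invertible and lies in the adjoint. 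Alternatively, your intended route done correctly: if a symmetric $\D$ contained $X\bt X$, then $\D\ni\delta\bt\be$ by the fusion rules, and $\delta\bt\be$ generates $s\Vec$ while $(\delta\bt\be)\ot(X\bt X)\simeq X\bt X$; applying Lemma~\ref{Dimalem} inside $\D$ (whose M\"uger center is all of $\D$) gives a contradiction, or equivalently the monodromy of $\delta\bt\be$ with $X\bt X$ equals $-1$ by Remark~\ref{chbraiding}(ii), contradicting symmetry. With either repair, your proof of (i) and hence the rest of the lemma goes through.
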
 

\begin{proof}
The first part of (i) follows from Lemma \ref{adsuper}. Let us prove that any
symmetric fusion subcategory $\D\subset \mI_\zeta \boxtimes \mI_{\zeta'}$ 
is contained in $(\mI_\zeta \boxtimes \mI_{\zeta'})_{ad}$.
By Corollary \ref{symint}, $\D$ is integral, so $\D\subset (\mI_\zeta \boxtimes \mI_{\zeta'})_{int}$.
By Theorem \ref{Muger2-FPdim}, $\FPdim (\D )\le 4$. 
Since $X\boxtimes X\in (\mI_\zeta \boxtimes \mI_{\zeta'})_{int}$ 
has Frobenius-Perron dimension 2 we see that
$X\boxtimes X\notin\D$. But all simple objects of $(\mI_\zeta \boxtimes \mI_{\zeta'})_{int}$ 
except $X\boxtimes X$
are contained in $(\mI_\zeta \boxtimes \mI_{\zeta'})_{ad}$, 
so $\D\subset (\mI_\zeta \boxtimes \mI_{\zeta'})_{ad}$.

We have proved (i). Statement (ii) follows from (i) and Lemma \ref{adsuper}.
Statement~(iii) follows from 
Remark~\ref{chbraiding}(ii).
\end{proof}

For each eighth root of unity $\xi$ we defined in Lemma \ref{uniformdescr}
a certain metric group $M_{\xi}$ of order $4$. Recall that $M_{\xi}$ has a distinguished
element $u$ of order $2$ and that $q(u)=-1$, $q(x)=\xi$ for $x\in M_{\xi}\setminus\{ 0,u\}$.
Now we will show that the core of $\mI_\zeta \boxtimes \mI_{\zeta'}$ is the
pointed braided category corresponding to a certain $M_{\xi}$.

\begin{lemma} \label{Is-core1}
Let $(\C ,\Gamma )$ be the core of $\mI_\zeta \boxtimes \mI_{\zeta'}$. Then

(i) the braided category $\C$ is pointed;

(ii) the metric group corresponding to $\C$ is isomorphic to $M_{\xi}$, where
\begin{equation}      \label{xiformula}
\xi =f(\zeta)f(\zeta')/2, \quad f(\zeta ):=\zeta^{- 1}(\zeta^2+\zeta^{-2})
\end{equation}
 
(iii) the subgroup $\Gamma\subset\Aut^{br}(\C )=\Aut (M_{\xi})$ equals $\Aut (M_{\xi},u)$
(i.e., the stabilizer of $u$).
\end{lemma}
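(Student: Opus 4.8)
\textbf{Proof plan for Lemma~\ref{Is-core1}.}
The plan is to compute everything explicitly using the already-established structure of $\mI_\zeta \boxtimes \mI_{\zeta'}$. By Lemma~\ref{Is-sym1}(ii) the maximal Tannakian subcategory is $\E =\langle \delta\boxtimes\delta\rangle\simeq\Rep(\BZ/2\BZ)$, and by Lemma~\ref{Is-sym1}(iii) its centralizer is $\E'=(\mI_\zeta \boxtimes \mI_{\zeta'})_{int}$, whose simple objects are $\be\boxtimes\be$, $\delta\boxtimes\be$, $\be\boxtimes\delta$, $\delta\boxtimes\delta$, and $X\boxtimes X$ (of Frobenius--Perron dimension~$2$). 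First I would form the de-equivariantization $\C=\E'\boxtimes_\E\Vec$, i.e.\ the category of $A$-modules in $\E'$ where $A=\be\boxtimes\be\,\oplus\,\delta\boxtimes\delta$ is the regular algebra of $\E$. The free module $A\otimes(\be\boxtimes\be)$ is the unit; $A\otimes(\delta\boxtimes\be)\simeq A\otimes(\be\boxtimes\delta)$ is an invertible object $g$; and $A\otimes(X\boxtimes X)$ splits as a sum of two invertible objects $v,\,v+g$ (each of Frobenius--Perron dimension~$1$, since $\FPdim(X\boxtimes X)=2=\FPdim A$). This gives $|\O(\C)|=4$, proving (i), and identifies the underlying group: $g$ has order~$2$, and $v\otimes v$ lies over $(X\boxtimes X)\otimes(X\boxtimes X)=(\be\oplus\delta)\boxtimes(\be\oplus\delta)$, so $2v\in\{0,g\}$; the group is thus $M_\xi$ for some eighth root of unity $\xi$ (in the notation of Lemma~\ref{uniformdescr}), with distinguished element $u=g$.

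To pin down $\xi$ I would compute the quadratic form $q$ on $\C$, i.e.\ the self-braiding $\theta$ on simple objects of $\C$. Since the de-equivariantization functor $\E'\to\C$ is braided, the braiding of $\C$ on objects lying over an object $Y\in\E'$ is read off from the braiding $c_{Y,Y}$ in $\mI_\zeta \boxtimes \mI_{\zeta'}$. For $g$, lying over $\delta\boxtimes\be$: by Lemma~\ref{adsuper} and the description of the braiding on $(\mI_\zeta)_{ad}\simeq s\Vec$, one has $c_{\delta,\delta}=-\id$, so $q(g)=-1$, consistent with $u=g$. For $v$, lying over $X\boxtimes X$: the self-braiding $c_{X\boxtimes X,\,X\boxtimes X}=c_{X,X}\boxtimes c_{X,X}$ acts on $\Hom(\be\boxtimes\be,(X\boxtimes X)^{\otimes2})$ as $\zeta\cdot\zeta'$ by Proposition~\ref{braidedising}. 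However the value of $q(v)$ is the \emph{scalar} eigenvalue of the braiding on the invertible summand $v\subset A\otimes(X\boxtimes X)$ inside $\C$, and the normalization by $\bar d$ (equivalently the Frobenius--Perron normalization used to define $q$ via $\theta_v d(v)=\Tr(u_v)$, cf.\ the remarks in \S\ref{elementxc}) inserts a factor involving $d(X\boxtimes X)=d(X)d(X)$; tracking this carefully, and using $\theta_X d(X)=\epsilon^{?}\cdots$ together with $f(\zeta)=\zeta^{-1}(\zeta^2+\zeta^{-2})=\theta_X^{-1}d_s(X)$ computed in Example~\ref{TIs} and \S\ref{sphIs}, yields $q(v)=f(\zeta)f(\zeta')/2=\xi$. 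Since $f(\zeta)^2=\zeta^{-2}(\zeta^2+\zeta^{-2})^2=\zeta^{-2}\cdot 2$ and likewise for $\zeta'$, we get $\xi^2=f(\zeta)^2f(\zeta')^2/4=(2\zeta^{-2})(2\zeta'^{-2})/4=\zeta^{-2}\zeta'^{-2}$, an eighth root of unity, so the formula makes sense; comparing $q(v)$ with the tabulated values $q(x)=\xi$ for $x\in M_\xi\setminus\{0,u\}$ in Lemma~\ref{uniformdescr} gives (ii).

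For (iii), I would use Proposition~\ref{maximality2} (or directly the construction of $\Gamma=\Gamma_{\E,F}$): $\Gamma$ is the image in $\Aut^{br}(\C)=\Aut(M_\xi)$ of $G=\Aut(F)=\BZ/2\BZ$ acting on $\C=\E'\boxtimes_\E\Vec$ by the residual $G$-action coming from de-equivariantization. This action fixes the unit and fixes $g$ (which comes from the $G$-invariant object $\delta\boxtimes\be$), but the nontrivial element of $G$ must exchange the two summands $v$ and $v+g$ of $A\otimes(X\boxtimes X)$ — indeed if it fixed $v$ then $v$ would descend to a simple object of $\E'$ of Frobenius--Perron dimension~$1$, contradicting that $X\boxtimes X$ is the only simple over it and has dimension~$2$. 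Hence $\Gamma$ is the order-$2$ subgroup of $\Aut(M_\xi)$ swapping $v$ and $v+u$, which by Remark~\ref{AutMxi}(iii) is exactly $\Aut(M_\xi,u)$. The one place needing care — and the main obstacle — is the precise bookkeeping of normalization factors in the computation of $q(v)$ in step two: getting the scalar $\xi$ right (as opposed to off by a sign or a factor of~$2$) requires a clean application of the $\bar d$-formula \eqref{dims} together with the spherical-independence of $\theta_X^{\pm1}d_s(X)$; everything else is routine given Lemmas~\ref{Is-sym1}, \ref{adsuper}, \ref{uniformdescr} and Proposition~\ref{braidedising}.
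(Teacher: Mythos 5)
Parts (i) and (iii) of your plan are essentially sound and close in spirit to the paper's argument: for (iii) you use, as the paper does, that $u$ is $\Gamma$-invariant and that the $\BZ/2\BZ$-action on $\O(\C)$ cannot be trivial because $\E'=\C^{\BZ/2\BZ}$ is not pointed (you should say explicitly that passing from ``the isomorphism class of $v$ is fixed'' to ``$v$ admits an equivariant structure'' uses $H^2(\BZ/2\BZ,k^\times)=0$). For (i) the paper is slicker — $\E'$ integral $\Rightarrow$ $\C$ integral of $\FPdim=4$ $\Rightarrow$ not Ising $\Rightarrow$ pointed — while your claim that $A\ot(X\boxtimes X)$ splits into two invertibles is asserted rather than proved; it is fixable by a short counting argument (the image has $\FPdim=2$ in $\C$, and a simple constituent of dimension $2$ or $\sqrt2$ would violate $\FPdim(\C)=4$), but as written it is a gap in the justification of pointedness.

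The genuine gap is in (ii), which is the actual content of the lemma. You never compute $\xi$: the decisive step is left as ``tracking this carefully \ldots yields $q(v)=f(\zeta)f(\zeta')/2$,'' and you yourself identify this bookkeeping as the main obstacle. The issue is not routine: the eigenvalue $\zeta\zeta'$ of $c_{X\boxtimes X,X\boxtimes X}$ on $\Hom(\be\boxtimes\be,(X\boxtimes X)^{\ot2})$ that you start from is in general \emph{not} equal to $\xi$ (one has $\xi=\pm\zeta^{-1}\zeta'^{-1}$), so all of the content sits in exactly the normalization you do not carry out; moreover your one concrete anchor, ``$f(\zeta)=\theta_X^{-1}d_s(X)$,'' is wrong — by Example~\ref{TIs} and \S\ref{sphIs} one has $\theta_{X,s}d_s(X)=\zeta^{-1}(\zeta^2+\zeta^{-2})=f(\zeta)$, whereas $\theta_{X,s}^{-1}d_s(X)=\zeta(\zeta^2+\zeta^{-2})$. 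A correct completion along your lines would go: the fiber category along the isotropic Tannakian $\E$ is pre-modular with twists inherited from $\mI_\zeta\boxtimes\mI_{\zeta'}$ (\S\ref{TheGauss}), so $\theta_v=\theta_X\theta_{X'}$ and $\bar d(v)=d(X)d(X')/d(A)=d(X)d(X')/2$, which can equal $-1$; hence the induced spherical structure need not be positive, and by the dictionary of \S\ref{genpointed} ($d(g)=\chi(g)$, $\theta_g=q(g)\chi(g)$) one gets $q(v)=\theta_v\bar d(v)=\bigl(\theta_Xd(X)\bigr)\bigl(\theta_{X'}d(X')\bigr)/2=f(\zeta)f(\zeta')/2$. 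The paper avoids this computation altogether: it pins down $\xi$ from $\tau^+(M_\xi)=2\xi$ (Lemma~\ref{uniformdescr}) together with the Gauss--Frobenius--Perron sum invariants, namely Proposition~\ref{propGFG}(ii),(iv) and formula \eqref{T_of_Is}. Either route works, but as it stands your proposal does not prove \eqref{xiformula}.
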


Note that by Remarks \ref{AutMxi}(iii-iv),  $|\Aut (M_{\xi},u)|=2$ and if $\xi\ne -1$ then 
$\Aut (M_{\xi},u)=\Aut (M_{\xi})$.

\begin{proof} 
Let $\E\subset \mI_\zeta \boxtimes \mI_{\zeta'}$ 
be the Tannakian subcategory from Lemma \ref{Is-sym1}(ii).
Choosing a fiber functor $\E\to\Vec$ we identify $\E$ with $\Rep (\BZ/2\BZ )$.
By definition, $\C$ is the de-equivariantized category $\E'\boxtimes_{\E}\Vec$
(so  $\BZ/2\BZ$ acts on $\C$) and $\Gamma$ is the image of the homomorphism
$\BZ/2\BZ\to \Aut^{br}(\C )=\Aut (G,q)$. The category of equivariant objects $\C^{\BZ/2\BZ}$
identifies with $\E'$.

By Lemma \ref{Is-sym1}(iii), $\E'$ is integral. So $\C =\E'\boxtimes_{\E}\Vec$ is integral.
In particular, $\C$ is not Ising. But $\FPdim (\C )=4$, so $\C$ has to be pointed.
Since the braided category $\mI_\zeta \boxtimes \mI_{\zeta'}$ is non-degenerate so is $\C$. Therefore
$\C =\C (G,q)$ for some metric group $(G,q)$ of order~$4$. 

Now consider the maximal symmetric fusion subcategory of $\mI_\zeta \boxtimes \mI_{\zeta'}$
(see Lemma \ref{Is-sym1}(i)). De-equivariantizing it one gets a 
braided subcategory of $\C (G,q)$ equivalent to the category of super-vector spaces.
So we get a $\Gamma$-invariant element $u\in G$ of order $2$ such that $q(u)=-1$.
Therefore $(G,q)$ identifies with a metric group of the form $M_{\xi}$
(see statements (ii) and (iii) of Lemma \ref{uniformdescr}). To find $\xi$, we use the
equality  $\tau^+(M_{\xi})=2\xi$ from Lemma \ref{uniformdescr}(ii). Combining it with
Proposition  \ref{propGFG} and formula \eqref{T_of_Is} we see that 
$2\xi =\tau^+(M_{\xi})=f(\zeta)f(\zeta')$. 

Since $u$ is $\Gamma$-invariant $\Gamma\subset\Aut (M_{\xi},u)$.
But $|\Aut (M_{\xi},u)|=2$, so to compute $\Gamma$ it remains to show that the 
action of $\BZ/2\BZ$ on $G=\O(\C)$ is nontrivial.
If it were trivial the category of equivariant objects $\C^{\BZ/2\BZ}$ would be pointed
(here we use  that $H^2(\BZ/2\BZ,k^\times)=0$). But in fact, 
 $\C^{\BZ/2\BZ}=\E'$ is not pointed.
\end{proof}

\subsection{The center of an Ising category}   \label{Is-center}
Consider the center $\Z (\mI)$ of an Ising fusion category $\mI$. Corollary
\ref{isingnondeg} and Proposition \ref{Muger1prime} imply that a choice of a
braiding on $\mI$ produces a braided equivalence
$\Z (\mI)\simeq\mI \boxtimes \mI^{\oP}$. By Remark \ref{4-remark}, 
$(\mI_\zeta)^{\oP}\simeq \mI_{\zeta^{-1}}$.
Thus Lemmas \ref{Is-sym1} and \ref{Is-core1} imply the following one.

\begin{lemma} \label{Is-core}
\begin{enumerate}
\item[(i)] $\Z (\mI)$ has a unique maximal symmetric fusion subcategory.
This subcategory has Frobenius-Perron dimension 4.
\item[(ii)] Let $(\C ,\Gamma )$ be the core of $\Z (\mI )$ (see Definition \ref{core corresponding to E}). Then $\C$ is the pointed braided category $\C (G,q)$ corresponding to the hyperbolic metric group
\begin{equation}  \label{thehyperb}
G=(\BZ/2\BZ )^2, \quad q(m,n):=(-1)^{mn}, \; m,n\in\BZ/2\BZ 
\end{equation}
and the subgroup $\Gamma\subset\Aut^{br}(\C )=\Aut (G,q)$ equals $\Aut (G,q)$. $\square$
\end{enumerate}
\end{lemma}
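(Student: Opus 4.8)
\textbf{Proof plan for Lemma \ref{Is-core}.}
The plan is to reduce everything to the results already established about $\mI_\zeta \boxtimes \mI_{\zeta'}$ by choosing a braiding on $\mI$ and invoking the identification $\Z (\mI)\simeq\mI \boxtimes \mI^{\oP}$. First I would observe that by Corollary~\ref{isingnondeg} the fusion category $\mI$ is non-degenerate for any braiding, so Proposition~\ref{Muger1prime}(iii) gives a braided equivalence $\Z (\mI)\iso \mI \boxtimes \mI^{\oP}$ once a braiding $c$ is fixed. By Corollary~\ref{braidedIs2} the braided category $\mI$ equipped with $c$ is $\mI_\zeta$ for some eighth root $\zeta$ of $-1$, and by Remark~\ref{4-remark} (passing to the reversed braiding sends the solution $\zeta$ of \eqref{16eq} to $\zeta^{-1}$) we have $\mI^{\oP}\simeq\mI_{\zeta^{-1}}$ as braided categories. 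Hence $\Z (\mI)\simeq\mI_\zeta\boxtimes\mI_{\zeta^{-1}}$, and the whole lemma follows from Lemma~\ref{Is-sym1} and Lemma~\ref{Is-core1} applied with $\zeta'=\zeta^{-1}$.

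For part (i): Lemma~\ref{Is-sym1}(i) (with $\zeta'=\zeta^{-1}$) says that $(\mI_\zeta\boxtimes\mI_{\zeta^{-1}})_{ad}=(\mI_\zeta)_{ad}\boxtimes(\mI_{\zeta^{-1}})_{ad}$ is the unique maximal symmetric fusion subcategory of the tensor product, and that it has Frobenius-Perron dimension $4$ (each factor is $s\Vec$ by Lemma~\ref{adsuper}). Transporting this back along the equivalence $\Z(\mI)\simeq\mI_\zeta\boxtimes\mI_{\zeta^{-1}}$ gives (i). One should note that the statement ``unique maximal symmetric subcategory'' is intrinsic to the braided category $\Z(\mI)$, so it does not depend on which braiding on $\mI$ we chose to build the equivalence.

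For part (ii): I would apply Lemma~\ref{Is-core1} with $\zeta'=\zeta^{-1}$. That lemma says the core of $\mI_\zeta\boxtimes\mI_{\zeta^{-1}}$ is $(\C(M_\xi,q),\Aut(M_\xi,u))$ with $\xi=f(\zeta)f(\zeta^{-1})/2$ and $f(\zeta)=\zeta^{-1}(\zeta^2+\zeta^{-2})$. The key computation is $f(\zeta)f(\zeta^{-1})=\zeta^{-1}(\zeta^2+\zeta^{-2})\cdot\zeta(\zeta^{-2}+\zeta^{2})=(\zeta^2+\zeta^{-2})^2=\lambda^2=2$ by \eqref{16eq}, so $\xi=1$. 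Then $M_1$ is, by Remark~\ref{AutMxi}(ii), isomorphic to $(\BZ/2\BZ)^2$ with the hyperbolic form $(m,n)\mapsto(-1)^{mn}$, which is exactly \eqref{thehyperb}; and since $\xi=1$ the element $u$ is the unique order-$2$ element $v$ with $q(v)=-1$, i.e.\ $u=(1,1)$, while $\Aut(M_1,u)$ — by Remark~\ref{AutMxi}(iv), noting $\xi=1\ne -1$ — is a priori only index-two in $\Aut(M_1)=GL(2,\BF_2)$. So the one genuine point to check is that $\Gamma=\Aut((\BZ/2\BZ)^2,q)$ and not the order-$2$ subgroup; I expect this to be the main obstacle. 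Here I would argue as in the proof of Lemma~\ref{Is-core1} but using that for the center both Ising factors contribute nontrivial $\BZ/2\BZ$-actions, or more directly: the core of $\Z(\mI)$ is independent of all choices by Theorem~\ref{canonical modularization}, and an automorphism-argument shows that the group of braided autoequivalences of $\Z(\mI)$ acting on the core must surject onto all of $\Aut(G,q)$ because $G=\O(\C)$ with $q(1,0)=q(0,1)=1$, $q(1,1)=-1$ forces $\Aut(G,q)$ to be the full stabilizer of the distinguished element, and the two-sided symmetry $\mI_\zeta\boxtimes\mI_{\zeta^{-1}}\simeq\mI_{\zeta^{-1}}\boxtimes\mI_{\zeta}$ (swap of factors) realizes the nontrivial element of $\Aut(M_1,u)$ while the $\BZ/2\BZ$-de-equivariantization action realizes it as well; combining these with the fact that $\xi=1$ so $\Aut(M_1,u)=\Aut(M_1)$ has order $2$ (the stabilizer of a point in $GL(2,\BF_2)$ acting on the three nonzero vectors) shows $\Gamma=\Aut(G,q)$. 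I would double-check the bookkeeping on orders: $|\Aut(M_1)|=|GL(2,\BF_2)|=6$, the stabilizer of the anisotropic-for-$q$ vector $u=(1,1)$ has order $2$, and this order-$2$ group is exactly $\Aut(M_1,u)$, consistent with the claim $\Gamma=\Aut(M_1,u)=\Aut(G,q)$ once one checks $\Aut(G,q)$ itself has order $2$ because a metric-form automorphism must fix the unique vector where $q=-1$.
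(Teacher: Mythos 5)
Your reduction is exactly the paper's: a choice of braiding gives $\Z(\mI)\simeq \mI_\zeta\boxtimes\mI_{\zeta^{-1}}$ (Corollary \ref{isingnondeg}, Proposition \ref{Muger1prime}, Remark \ref{4-remark}), part (i) then follows from Lemma \ref{Is-sym1}, and part (ii) from Lemma \ref{Is-core1} together with the key computation $f(\zeta)f(\zeta^{-1})=(\zeta^2+\zeta^{-2})^2=\lambda^2=2$, hence $\xi=1$, and Remark \ref{AutMxi}(ii) identifying $M_1$ with the hyperbolic group \eqref{thehyperb}. All of this is correct.

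The one thing to fix is that your ``main obstacle'' for $\Gamma$ comes from misreading Remark \ref{AutMxi}(iv): the equality $\Aut(M_\xi)=GL(2,\BF_2)$ holds only for $\xi=-1$ (where $q$ takes the value $-1$ on all three nonzero elements). For $\xi=1$ the group $\Aut(M_1)$, meaning automorphisms preserving the quadratic form, is already the order-two group $\Aut(M_1,u)$; that is, $\Aut(M_1,u)=\Aut(M_1)=\Aut(G,q)$, so Lemma \ref{Is-core1}(iii) gives $\Gamma=\Aut(G,q)$ with no further work. Consequently the auxiliary argument you sketch (realizing the nontrivial automorphism by swapping the two Ising factors, or by braided autoequivalences of $\Z(\mI)$) is unnecessary, and as stated it would not suffice anyway: $\Gamma$ is by definition the image of the order-two group $G_F$ coming from the de-equivariantization of the maximal Tannakian subcategory, not the image of $\Aut^{br}(\Z(\mI))$, so exhibiting a braided autoequivalence of $\Z(\mI)$ inducing the swap would not by itself place it in $\Gamma$. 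Fortunately your closing order count is the correct (and sufficient) argument: $|\Gamma|=|\Aut(M_1,u)|=2$ and $|\Aut(G,q)|=2$ because any automorphism preserving the hyperbolic $q$ must fix the unique element with $q=-1$; since $\Gamma\subset\Aut(G,q)$, equality follows, exactly as in the paper.
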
 

Note that the only nontrivial element of $\Aut (G,q)$ is the automorphism $(m,n)\mapsto (n,m)$.
It permutes the two Lagrangian subgroups of $(G,q)$. 

\section{The Altsch\"uler-Brugui\`eres characterization of centralizers}
\label{appensix Alt-Br}

Let $\C$ be a braided fusion category, and let $\D\subset \C$
be a fusion  subcategory. In this appendix  we recall and extend a characterization,
due to D.~Altsch\"uler and A.~Brugui\`eres \cite{AB},  of the centralizer $\D'$
in terms of the $S$-matrix of $\C$, (Proposition~\ref{AB criterion}). 

Let us recall some notation. Let $\C$ be a fusion category 
and let
\[
\mbox{ev}_X: X^*\ot X\to \be,\qquad \mbox{coev}_X: \be \to X\ot X^* 
\]
be the evaluation and coevaluation maps. Given a natural (but not necessarily
tensor) isomorphism
$\psi_X: X  \xrightarrow{~} X^{**}$ we define new morphisms
\begin{eqnarray*}
\mbox{ev}'_X &:=& \mbox{ev}_{X^*}(\psi_X \ot \id_{X^*}) : X\ot X^*\to \be,\\
\mbox{coev}'_X &:=&  (\id_{X^*}\ot \psi_X^{-1}) \mbox{coev}_{X^*}:   \be \to X^*\ot X.  
\end{eqnarray*}
For any morphism $f: X\to X$ we defined 
in \eqref{2 traces +} and \eqref{2 traces -} two traces, $\Tr_+(f)$ and $\Tr_-(f)$ by
\begin{eqnarray*}
\Tr_+(f) &=& \mbox{ev}'_X \circ (f \ot \id_{X^*}) \mbox{coev}_X, \\
\Tr_-(f) &=& \mbox{ev}_X \circ (\id_{X^*}\ot f) \mbox{coev}'_X.
\end{eqnarray*}
We also set $d_\pm(X) =\Tr_\pm(\id_X)$. Note that
\[
\sum_{X\in \O(\C)}\, d_+(X) d_-(X) =\dim(\C), 
\]
where the latter is the categorical dimension of $\C$, see \S\ref{squared}.

Lemma~\ref{handleslide} below was proved in  \cite{AB} under the assumption 
that $\psi$ is an isomorphism of tensor functors, i.e., a pivotal structure.
Our proof is a slight modification of that in \cite{AB}.
Let $\O(\C)$ denote the set of simple objects of $\C$.

\begin{lemma}
\label{handleslide}
Let $\C$ be a  fusion category (not necessarily pivotal) 
and let $\D \subset \C$ be a fusion subcategory.
Let $V$ be a fixed object of $\C$ and let $\beta_U: U\ot V \xrightarrow{\sim}
U\ot V,\,U\in \D$  be a natural family of isomorphisms. Define
\begin{equation}
H_U: =   \sum_{Y\in \O(\D)}\, d_+(Y) (\Tr_- \ot \id_{U\ot V})\beta_{Y\ot U}.
\end{equation}
Then $H_U =\id_U \ot H_{\be}$.
\end{lemma}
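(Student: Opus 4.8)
The idea is to reduce the claimed identity $H_U = \id_U \ot H_{\be}$ to a statement about how the partial trace $\Tr_-$ behaves with respect to tensoring on the left, combined with the naturality of the family $\beta$. First I would fix $U \in \D$ and a simple object $Y \in \O(\D)$. Since $\C$ is semisimple, any object $Y \ot U$ decomposes as a direct sum of simple objects $W \in \O(\D)$ with multiplicity spaces $\Hom(W, Y \ot U)$; choosing a basis of inclusions $\iota: W \hookrightarrow Y \ot U$ and dual projections $\pi: Y \ot U \to W$ with $\pi \iota = \id_W$, I can write $\beta_{Y \ot U}$ in "block form" via the idempotents $\iota \pi$. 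The naturality of $\beta$ with respect to the morphisms $\iota \ot \id_V: W \ot V \to Y \ot U \ot V$ and $\pi \ot \id_V$ lets me express $(\id_W \ot \ ?)(\beta_{W \ot U'})$-type terms in terms of $\beta_{Y \ot U}$, which is the mechanism that will let the sum over $Y$ "telescope" correctly.

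Next I would carry out the key computation: expand $(\Tr_- \ot \id_{U \ot V})\beta_{Y \ot U}$ using formulas \eqref{2 traces -} and the partial-trace variant $\Tr_- \ot \id$ defined in \S\ref{squared}. The trace $\Tr_-$ involves $\coev'_{Y^*}$, $\ev_Y$ and $\psi_Y$; the point is that $d_+(Y)$ appearing as a weight in the definition of $H_U$ is exactly $\Tr_+(\id_Y)$, and the combination $d_+(Y) \cdot \Tr_-(\text{—})$ produces the "weighted sum over $\O(\D)$" that, by a standard orthogonality/completeness argument for the fusion subcategory $\D$ (the same kind used in the proof of Lemma~\ref{non-spherical Verlinde}), collapses the $Y$-dependence. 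Concretely, I expect that after inserting the decomposition of $Y \ot U$ and using naturality of $\beta$ under $\iota \ot \id_V$, $\pi \ot \id_V$, one obtains
$$
H_U = \sum_{Y \in \O(\D)} \sum_{W} d_+(Y)\, \big(\text{contribution of } W\big),
$$
and the inner structure will be exactly $\id_U \ot (\text{stuff independent of } U)$, because the "$U$-strand" never gets entangled with the trace being taken — $\Tr_-$ is applied on the $Y$-strand only, and $U$ passes through as a spectator.

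The cleanest way to organize this, which I would adopt, is the graphical/string-diagram argument of Altsch\"uler–Brugui\`eres \cite{AB}: draw $H_U$ as a diagram with a closed $Y$-loop (coming from $\Tr_-$) encircling the $U$-strand, sum over $Y$ with weight $d_+(Y)$, and use the "killing" or "handle-slide" property — namely that $\sum_{Y \in \O(\D)} d_+(Y)\,(\text{loop around a strand colored by any } U \in \D)$ acts as a scalar (in fact as a projector onto the part of $U$ centralizing $\D$) times the identity, and crucially factors through the unit object when $U$ is pulled out. Since the paper only assumes $\psi$ natural rather than a pivotal structure, the one subtlety is that the usual "handle-slide" manipulations must be checked to go through without $\psi$ being tensor; this is precisely where I expect the main obstacle to lie, and I would handle it exactly as the paper indicates — by repeating the \cite{AB} argument and observing that every step only uses naturality of $\psi$ (the non-tensoriality of $\psi$ affects $d_\pm$ but not the validity of the slide identities, because each individual diagram move is a naturality square, not a monoidal-coherence statement). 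Once that check is in place, setting $U = \be$ on the right-hand side of the resulting identity gives $H_{\be}$, and the identity reads $H_U = \id_U \ot H_{\be}$, completing the proof.
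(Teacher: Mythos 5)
Your proposal correctly identifies the source (Altsch\"uler--Brugui\`eres) and the one genuine subtlety (that $\psi$ is only natural, not tensor), but as written it has a real gap at the central step, and the main tool you invoke is circular. You appeal to the ``killing''/projector property --- that $\sum_{Y\in\O(\D)} d_+(Y)\,(\text{loop around a }U\text{-strand})$ acts as a scalar and ``factors through the unit object'' --- to conclude $H_U=\id_U\ot H_\be$. But that property is a \emph{consequence} of this lemma (it is how Proposition~\ref{AB criterion} is derived), not an available input; moreover it is stated for double-braiding loops in a braided category, whereas here $\C$ is an arbitrary fusion category and $\beta$ is an arbitrary natural family of isomorphisms $U\ot V\to U\ot V$, so there is no braiding loop to slide in the first place. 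Relatedly, your assertion that ``the $U$-strand never gets entangled with the trace --- $U$ passes through as a spectator'' is false for a fixed $Y$: the morphism $\beta_{Y\ot U}$ genuinely mixes $Y$ and $U$, and $(\Tr_-\ot\id_{U\ot V})\beta_{Y\ot U}$ need not have the form $\id_U\ot(\cdot)$ at all. The factorization only appears \emph{after} summing over $Y$ with the weights $d_+(Y)$, and explaining why is exactly the content of the lemma.

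The missing idea is the balancing identity that the paper's proof establishes: for simple $X,Y\in\O(\D)$ one defines two morphisms $A_X^Y, B_X^Y: U\ot V\to U\ot V$ (one built from the isotypic projection of $U\ot X^*$ onto its $Y^*$-part together with $\beta_X$, the other from the isotypic projection of $Y\ot U$ onto its $X$-part together with $\beta_{Y\ot U}$) and proves $d_+(X)\,A_X^Y=d_+(Y)\,B_X^Y$. Summing this over $X,Y\in\O(\D)$ gives $\id_U\ot H_\be$ on one side and $H_U$ on the other. Proving the identity requires the dual-basis computation with respect to the pairing $(f,g)\mapsto\Tr_+(fg)$ on $\Hom(X,Y\ot U)$, the transfer of these bases to maps $Y^*\to U\ot X^*$, and the computation $\Tr_-(g^ig_j)=\frac{d_-(Y^*)}{d_+(Y)}\Tr_+(f^if_j)$, which is precisely where the ratio of dimensions $d_+(X)/d_+(Y)$ is produced and where one checks that only naturality of $\psi$ (not tensoriality) is used. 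Your decomposition-plus-naturality plan points in this direction but never carries out this computation, and ``I expect the sum telescopes'' is exactly the nontrivial claim; without the $A$--$B$ comparison (or an equivalent argument) the proof is incomplete.
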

\begin{proof}
For any object $Y$ in $\D$ and any $X\in \O(\D)$ let $\pi^{(X)}_Y :Y \to Y$
be the idempotent endomorphism of $Y$  corresponding to the projection 
on the maximal multiple of $X$ in $Y$.
Define auxiliary morphisms $A_X^Y,\, B_X^Y: U\ot V \to U\ot V$ by
\begin{eqnarray*} 
A_X^Y &:=& (\id_U \ot \mbox{ev}_X \ot \id_V )( \pi^{(Y^*)}_{U\ot X^*} \ot \beta_X) (\id_U \ot \mbox{coev}'_X \ot \id_V ),\\
B_X^Y &:=& (\mbox{ev}_Y \ot \id_{U\ot V}) (\id_{Y^*} \ot \beta_{Y\ot U}(\pi^{(X)}_{Y\ot U} \ot \id_V) ) (\mbox{coev}'_Y \ot \id_{U\ot V}).
\end{eqnarray*}
To prove the Lemma it suffices to check the identity
\begin{equation}
\label{AB identity}
d_+(X)  A_X^Y = d_+(Y) B_X^Y,\quad X,Y\in\O(\D),
\end{equation}
since then the result follows by taking the sum of both sides of
\eqref{AB identity} over $X,Y\in\O(\D)$.

If $X,Z$ are objects in $\C$ then there is a non-degenerate pairing
\begin{equation}
\label{XZ pairing}
\Hom(X,\,Z) \times \Hom(Z,\,X): (f,g)\mapsto \Tr_+(fg).
\end{equation}
For any basis $\{f_i\}_{i\in I}$ of  $\Hom(X,\,Z)$ let 
$\{f^i\}_{i\in I}$ be the basis of $\Hom(Z,\,X)$ dual to $\{f_i\}_{i\in I}$
with respect to the pairing \eqref{XZ pairing}.
In the case when $X$ is simple we have the following  straightforward identities:
\[
f^if_j = d_+(X)^{-1} \delta_{ij} \id_X,\qquad  \pi^{(X)}_Z = d_+(X) \sum_{i\in I}\, f_if^i.
\]
Let us take $Z= Y\ot U$ above. Using naturality of $\beta$, we obtain
\begin{eqnarray*}
\beta_{Y \ot U} (\pi^{(X)}_{Y\ot U} \ot \id_V)
&=& d_+(X) \sum_{i\in I}\, \beta_{Y \ot U} ( f_if^i \ot \id_V) \\
&=& d_+(X) \sum_{i\in I}\, ( f_i \ot \id_V)  \beta_X( f^i \ot \id_V).
\end{eqnarray*}
For every $i\in I$ consider morphisms
\begin{eqnarray*} 
g_i &:=&  (\mbox{ev}_Y \ot \id_{U\ot X^*})(\id_{Y^*} \ot f_i \ot \id_{X^*}  ) ( \id_{Y^*}  \ot \mbox{coev}_X): Y^* \to U \ot X^*, \\
g^i &:=&  (\id_{Y^*} \ot \mbox{ev}'_X) (\id_{Y^*} \ot f^i \ot \id_{X^*}  ) (\mbox{coev}'_Y \ot \id_{U\ot X^*}) : U \ot X^*\to Y^*.
\end{eqnarray*}
Using the axioms of evaluation and coevaluation we obtain
\begin{eqnarray*} 
(\id_U \ot \mbox{ev}_X) (g_i \ot \id_X) &=&  (\mbox{ev}_Y \ot \id_{U})(\id_{Y^*} \ot f_i), \\
(g^i \ot \id_X) (\id_U \ot \mbox{coev}'_X) &=&  (\id_{Y^*} \ot f^i) (\mbox{coev}'_Y \ot \id_{U}).
\end{eqnarray*}
Using definitions of $\Tr_\pm$ and properties of evaluation and coevaluation maps
one computes 
\[
\Tr_-(g^i g_j) = \frac{d_-(Y^*)}{d_+(Y)} \Tr_+(f^i f_j)= \frac{d_-(Y^*)}{d_+(Y)}  \delta_{ij}.
\]
Hence,  $g^ig_j = d_+(Y)^{-1}  \delta_{ij} \id_{Y^*}$ and, therefore,
$\sum_{i\in I}\, g_ig^i = d_+(Y)^{-1} \pi^{(Y^*)}_{U\ot X^*}$.
Using previousy obtained identities we compute
\begin{eqnarray*} 
B_X^Y 
&=&  d_+(X) (\mbox{ev}_Y \ot \id_{U\ot V}) (\id_Y \ot \sum_{i\in I}\, ( f_i \ot \id_V)  \beta_X( f^i \ot \id_V) ) (\mbox{coev}'_Y \ot \id_{U\ot V}) \\
&=&  d_+(X) (\id_U \ot \mbox{ev}_X \ot \id_V ) ( \sum_{i\in I}\, g_ig^i \ot \beta_X )  (\id_U \ot \mbox{coev}'_X \ot \id_V )\\
&=&  d_+(X) (\id_U \ot \mbox{ev}_X \ot \id_V ) (d(Y)^{-1} \pi^{(Y^*)}_{U\ot X^*} \ot \beta_X) (\id_U \ot \mbox{coev}'_X \ot \id_V ) \\
&=&  d_+(X) d_+(Y)^{-1} A_X^Y,
\end{eqnarray*}
and the result follows.
\end{proof}

\begin{remark}
Lemma~\ref{handleslide} can be thought of as a categorical analogue of the Haar
theorem. Indeed, for $\C=\Vec_G$, where $G$ is a finite group, it says that
the counting measure is invariant under translations.
\end{remark}

Let $\C$ be a braided fusion  category with braiding $c$. 
Let $\tilde{S}=\{\tilde s_{XY}\}_{X,Y\in \O(\C)}$ be the $\tilde{S}$-matrix
of $\C$ defined in \S\ref{tildeS matrix}.
Let $\D \subset \C$ be a fusion subcategory.
The next Proposition was proved by D.~Altsch\"uler and A.~Brugui\`eres 
in \cite{AB} for pre-modular categories (see \cite{Mu2} for the proof
in the special case of pseudo-unitary braided fusion categories). 

\begin{proposition}
\label{AB criterion}
Let $\C$ be a braided fusion category, let $V$ be a simple object in $\C$,
and let $\D \subset \C$ be a fusion subcategory.
We have the following dichotomy:
\begin{enumerate}
\item[(i)] $V\in \O(\D')$ if and only if  
     $\tilde{s}_{YV} = 1$ for all $Y \in \O(\D)$,
\item[(ii)] $V\not\in \O(\D')$ if and only if  
     $\sum_{Y\in \O(\D)}\, |Y|^2 \tilde{s}_{YV} =0$ and if and only if \\
$\sum_{Y\in \O(\D)}\, \tilde{s}_{VY} |Y|^2 =0$.
\end{enumerate}
\end{proposition}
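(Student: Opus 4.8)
The statement to prove is Proposition~\ref{AB criterion}, the dichotomy characterizing $\O(\D')$ in terms of the $\tilde S$-matrix. The key input is Lemma~\ref{handleslide} (the ``handle-slide'' identity), so the plan is to reduce everything to a clever application of that lemma. First I would fix a simple object $V\in\O(\C)$ and, for each $U\in\D$, consider the natural family of isomorphisms $\beta_U:=c_{V,U}c_{U,V}:U\ot V\to U\ot V$ (here $c$ is the braiding; by the hexagon axioms $\beta$ is a natural transformation of the functor $-\ot V$ restricted to $\D$). Applying Lemma~\ref{handleslide} to this $\beta$ gives $H_U=\id_U\ot H_{\be}$ for all $U\in\D$, where $H_U=\sum_{Y\in\O(\D)}d_+(Y)(\Tr_-\ot\id_{U\ot V})\beta_{Y\ot U}$. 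Taking $U=\be$ and unwinding the definition~\eqref{def tilde S} of $\tilde s_{YV}$, the quantity $H_{\be}$ is, up to a nonzero factor involving $d_+(V)$, equal to $\left(\sum_{Y\in\O(\D)}|Y|^2\tilde s_{YV}\right)\id_V$, because $(\Tr_-\ot\id_V)(c_{V,Y}c_{Y,V})=d_-(Y)\tilde s_{YV}\cdot\text{(something proportional to }\id_V)$ — more precisely one applies $\Tr_+$ in the $V$-slot to identify the scalar, using $d_-(Y)d_+(Y)=|Y|^2$.

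\textbf{Main steps.} The heart of the argument is the following dichotomy for the operator $H_U$ viewed as an endomorphism of the object $\oplus_{Y\in\O(\D)}Y\ot U\ot V$, or rather its effect. The cleaner route: set $P_U:=\sum_{Y\in\O(\D)}d_+(Y)(\Tr_-\ot\id)\beta_{Y\ot U}\in\End(U\ot V)$ and observe, using Lemma~\ref{handleslide}, that $P_U=\id_U\ot P_{\be}$ and $P_{\be}=\mu\cdot\id_V$ for the scalar $\mu=\sum_{Y}|Y|^2\tilde s_{YV}/d_+(V)$ (after normalization). Then I would prove the two-sided statement: \emph{either} $\beta_Y=\id_{Y\ot V}$ for all $Y\in\O(\D)$ (which is precisely $V\in\O(\D')$ by Definition~\ref{def centralizer}), \emph{or} $\mu=0$. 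To see this, suppose $V\notin\O(\D')$; then there is $Y_0\in\O(\D)$ with $\beta_{Y_0}\ne\id$. Using naturality and the hexagon axioms one shows that the collection $\{\beta_Y\}$ behaves multiplicatively under tensor product of the $Y$'s (this is essentially Lemma~\ref{projcentlem1}(i) applied to $V$), so $Y\mapsto$ (the scalar by which $c_{V,Y}c_{Y,V}$ acts, when it is scalar) defines a character on a quotient; more robustly, one feeds $\beta$ into Lemma~\ref{handleslide} and compares $H_{Y_0}=\id_{Y_0}\ot H_{\be}$ with a direct computation of $H_{Y_0}$ that picks up the extra factor coming from $\beta_{Y_0}\ne\id$, forcing $H_{\be}=0$, i.e.\ $\mu=0$. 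Conversely if $V\in\O(\D')$ then every $\beta_Y=\id$, every $\tilde s_{YV}=1$, and $\mu=\sum_Y|Y|^2/d_+(V)\ne 0$ since $\dim(\D)\ne0$ by~\cite{ENO}. This gives statement~(i) and the first equivalence in~(ii); the second equivalence in~(ii) follows by the symmetry $\tilde s_{XY}$ versus $\tilde s_{VY}$ — note $\tilde s_{YV}$ and $\tilde s_{VY}$ differ only by the swap of roles of the $-$ and $+$ traces, and the same handle-slide argument applied with $\beta_U:=c_{U,V}c_{V,U}$ (or with the reverse braiding on $V$) yields $\sum_Y\tilde s_{VY}|Y|^2=0$ under the same hypothesis $V\notin\O(\D')$.

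\textbf{Expected obstacle.} The routine parts are the trace bookkeeping: keeping straight which slot $\Tr_+$ versus $\Tr_-$ acts on, verifying that the $\psi$-dependence cancels (it does, exactly as in the remark following Definition~\ref{def tilde S}), and the normalization constants relating $P_{\be}$ to $\sum_Y|Y|^2\tilde s_{YV}$. The genuinely delicate step is the ``trichotomy into dichotomy'': showing that if \emph{some} $\beta_{Y_0}\ne\id$ then the \emph{whole sum} $H_{\be}$ vanishes rather than merely being altered. This is where one must use Lemma~\ref{handleslide} in an essential, slightly non-obvious way — the point is that $H_{Y_0}=\id_{Y_0}\ot H_{\be}$ is forced by the lemma, while evaluating $H_{Y_0}$ by first absorbing the nontrivial $\beta_{Y_0}$ shows $H_{Y_0}$ equals $H_{\be}$ composed with an operator that is \emph{not} the identity on the relevant object unless $H_{\be}=0$. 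Making this last comparison rigorous (rather than hand-wavy) is the main work; I would model it closely on the computation in \cite[Lemma 2.4]{Mu2} and \cite{AB}, adapting it to the non-pivotal setting by systematically replacing pivotal traces with the pair $\Tr_\pm$ throughout, exactly as the excerpt does elsewhere.
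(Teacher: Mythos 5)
Your proposal is correct and follows essentially the same route as the paper: take $\beta_U=c_{V,U}c_{U,V}$ in Lemma~\ref{handleslide}, identify $H_{\be}=\bigl(\sum_{Y\in\O(\D)}|Y|^2\tilde{s}_{YV}\bigr)\id_V$ (note the scalar is exactly this sum, with no division by $d_+(V)$ — harmless, as $d_+(V)\neq 0$), and compare with the direct hexagon computation $H_U=(\id_U\ot H_{\be})\,c_{V,U}c_{U,V}$ to obtain $\bigl(\sum_Y|Y|^2\tilde{s}_{YV}\bigr)(c_{V,U}c_{U,V}-\id_{U\ot V})=0$, which yields (i) and the first half of (ii) just as you describe. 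The only point handled differently: for the second identity in (ii) your suggested $\beta_U=c_{U,V}c_{V,U}$ is an endomorphism of $V\ot U$, so the lemma as stated does not literally apply; the paper instead applies the already-proved half to the reverse category $\C^{\rev}$, which achieves the same mirroring cleanly.
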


\begin{proof}
Note that conditions in the statement of the Proposition are independent from
the choice of a natural isomorphism $\psi_X: X \xrightarrow{\sim} X^{**}$.

Fix $V\in \O(\C)$ and set  $\beta_U =  c_{V,U}c_{U,V}$ in Lemma~\ref{handleslide}. 
Then
\begin{eqnarray*}
H_U
&=& \sum_{Y\in \O(\D)}\, d_+(Y) (\Tr_- \ot\id_{U \ot V}) (c_{V,Y\ot U} c_{Y\ot U,V})\\
&=& \sum_{Y\in \O(\D)}\, d_+(Y) (\Tr_- \ot\id_{U \ot V}) (\id_{Y} \ot c_{V, U}) (c_{V,Y}c_{Y,V} \ot \id_U)
    (\id_{Y} \ot c_{U, V})\\
&=& (\id_U \ot H_\be) c_{V, U} c_{U, V}.
\end{eqnarray*}
We have $H_\be =  \sum_{Y\in \O(\D)}\, d_+(Y) (\Tr_- \ot\id_{V}) (c_{V,Y} c_{Y,V}): V \to V$.
Applying $\Tr_+$  we  obtain $H_\be = \sum_{Y\in \O(\D)}\, |Y|^2 \tilde{s}_{YV} \,\id_V$. 
Combining this with Lemma~\ref{handleslide} we obtain
\begin{equation}
\label{AB dichotomy}
\left(   \sum_{Y\in \O(\D)}\,  |Y|^2 \tilde{s}_{YV}  \right) (c_{V,U} c_{U,V} -\id_{U\ot V}) =0,
\qquad \mbox{ for all } U \in \O(\D).
\end{equation}
Note that when  $\tilde{s}_{YV} = 1$ for all $Y\in \O(\D)$ the first factor in \eqref{AB dichotomy}
equals $\dim(\D)\neq 0$ and hence, $c_{V,U} c_{U,V} = \id_{U\ot V}$ for all $U \in \O(\D)$,
which proves (i). If the second factor is not zero for some $U\in \O(\D)$ then the first factor must vanish, 
and the first part of (ii) follows. Finally, the second part of (ii)  is obtained by 
replacing $\C$ by its reverse category $\C^\rev$ (i.e., the category with reversed tensor
product).
\end{proof}

\section{Pointed braided categories and pre-metric groups}  \label{likbez}
As explained in \S\ref{pbcpmg}, each pointed braided category gives rise to a
pre-metric group. We will prove that each pre-metric group $(G,q)$ comes from
some pointed braided category $\C$  (this is a part of Proposition 
\ref{braided metric}). The proof below is different from those
given in \cite[\S 3]{JS} and \cite[Proposition 2.5.1]{Q}.

\subsection{Easy case}  \label{easycase}
Suppose that the quadratic form $q:G\to k^{\times}$ can be represented as $q(g)=\beta (g,g)$
for some bicharacter $\beta :G\times G\to k^{\times}$ (the existence of $\beta$ is automatic if 
$|G|$ is odd but not in general). Define a braided fusion category 
$\C_{\beta }$ as follows:

\begin{enumerate}
\item[(i)] as a fusion category, $\C_{\beta }$ equals $\Vec_G$, i.e., the category of finite-dimensional
$G$-graded vector spaces;
\item[(ii)] if $X_1,X_2\in\Vec_G=\C_{\beta }$ and $x_i\in X_i$ is homogeneous of degree $g_i\in G$
then the brading $X_1\ot X_2\iso X_2\ot X_1$ takes $x_1\ot x_2$ to $\beta (g_1,g_2) x_2\ot x_1$.
\end{enumerate}
Then $\C_{\beta }$ is a pointed braided category, and the corresponding pre-metric group
equals $(G,q)$.

\subsection{General case}   \label{generalcase}
\begin{lemma}
There exist a finite abelian group $\widetilde{G}$ and an epimorphism $\pi :\widetilde{G}\to G$
such that the quadratic form $\widetilde{q}:=q\circ\pi$ on $\widetilde{G}$
can be represented as $\widetilde{q}(x)=\widetilde{\beta} (x,x)$
for some bicharacter $\widetilde{\beta} :\widetilde{G}\times \widetilde{G}\to k^{\times}$. 
\end{lemma}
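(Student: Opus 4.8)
The plan is to build $\widetilde{G}$ by adjoining a square root of $q$ to every element of $G$, in a way parallel to the construction of $\widetilde H$ in the proof of Proposition~\ref{slightprop1}. Concretely, I would set
\[
\widetilde{G} := \{\, (g,a) \in G\times k^\times \mid a^2 = q(g)\,\},
\]
with group operation $(g_1,a_1)+(g_2,a_2) := (g_1+g_2,\, a_1 a_2\, b(g_1,g_2))$, where $b$ is the bicharacter associated to $q$; the identity is $(0,1)$ (note $b(0,0)=1$ and $q(0)=1$), and the inverse of $(g,a)$ is $(-g, a^{-1})$ (using $q(-g)=q(g)$ and $b(-g,-g)=b(g,g)^{-1}\cdot$ a sign that works out since $b(g,g)=q(g)^2/q(g)^2\cdot\ldots$ — more carefully, $b(g,-g)=b(g,g)^{-1}$, and one checks $(g,a)+(-g,a^{-1}) = (0, b(g,-g)) = (0,1)$ because $b(g,-g) = q(0)/(q(g)q(-g))\cdot$... this is the routine verification I will carry out). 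Associativity follows from bimultiplicativity of $b$ together with its symmetry. Let $\pi:\widetilde G\to G$ be the projection $(g,a)\mapsto g$; it is a group homomorphism, and it is surjective because $k$ is algebraically closed, so $q(g)$ has a square root for every $g$. Finiteness of $\widetilde G$ holds because the fiber of $\pi$ over each $g$ has exactly two elements (the two square roots of $q(g)$, which are distinct since $q(g)\ne 0$), so $|\widetilde G| = 2|G|$.

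Next I would define $\widetilde\beta:\widetilde G\times\widetilde G\to k^\times$ by
\[
\widetilde\beta\big((g_1,a_1),(g_2,a_2)\big) := a_1 a_2 \cdot b(g_1,g_2)^{\,c},
\]
or more simply try $\widetilde\beta((g_1,a_1),(g_2,a_2)) := a_1 a_2$ and check whether this is bimultiplicative and whether $\widetilde\beta(x,x) = \widetilde q(x)$. For $x=(g,a)$ one has $\widetilde\beta(x,x) = a^2 = q(g) = q(\pi(x)) = \widetilde q(x)$, which is exactly what is wanted. Bimultiplicativity in the first variable amounts to checking
\[
\widetilde\beta\big((g_1,a_1)+(g_1',a_1'),\, (g_2,a_2)\big) = \widetilde\beta\big((g_1,a_1),(g_2,a_2)\big)\,\widetilde\beta\big((g_1',a_1'),(g_2,a_2)\big),
\]
i.e. $(a_1 a_1' b(g_1,g_1'))\, a_2 = (a_1 a_2)(a_1' a_2)$, which forces $b(g_1,g_1') = a_2$ — clearly false in general. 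So the naive choice fails, and I will instead take $\widetilde\beta((g_1,a_1),(g_2,a_2)) := a_1 a_2 \cdot s(g_1,g_2)$ for a suitable correction $s$; tracking the discrepancy, the correct requirement is that $s$ be a bicharacter on $G\times G$ (pulled back to $\widetilde G$) with $s(g,g)=1$ for all $g$, i.e. an alternating bicharacter, while simultaneously $\widetilde\beta$ must be bimultiplicative on all of $\widetilde G$. The map $a_1a_2$ already fails to be bimultiplicative by exactly the factor $b(g_1,g_1')$ appearing in the group law, so what is really needed is a choice of $a$-coordinate convention making the group law on $\widetilde G$ ``bicharacter-compatible''.

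The hard part will be reconciling the group law on $\widetilde G$ (which necessarily involves $b$, since $q$ itself is not multiplicative) with the demand that $\widetilde\beta$ be an honest bicharacter. The clean way around this, which I expect to be the route that works, is: choose a set-theoretic section $\sigma:G\to\widetilde G$ of $\pi$ and transport the problem to $G$ — but $G$ may have no bicharacter square root of $q$, which is the whole point. Instead I would pass to a free resolution: pick a surjection $\phi:F\to G$ from a finite free $\mathbb Z/N\mathbb Z$-module $F$ (for suitable $N$), and observe that $q\circ\phi$ on $F$ \emph{does} admit a bicharacter square root, because on a free module any quadratic form is $\beta(x,x)$ for a bicharacter $\beta$ (choose values of $\beta$ freely on a basis: set $\beta(e_i,e_i)$ to be a square root of $q(\phi(e_i))$ and $\beta(e_i,e_j)=b(\phi(e_i),\phi(e_j))$ for $i<j$, $\beta(e_i,e_j)=1$ for $i>j$, and verify via Remark~\ref{easyrems}(i) that this reproduces $q\circ\phi$). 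Then I take $\widetilde G := F$ and $\widetilde\beta := \beta$, discarding the ``square root of $q$'' model entirely. This is the cleanest approach and sidesteps the obstruction above. Finally I would note that combining this lemma with the easy case of \S\ref{easycase} finishes the essential surjectivity of $F$: the category $\C_{\widetilde\beta}$ is pointed braided with pre-metric group $(\widetilde G,\widetilde q)$, and passing to an appropriate quotient / equivariantization realizes $(G,q)$ — though stating that last reduction precisely is the job of the surrounding text, not of this lemma.
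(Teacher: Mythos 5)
Your final route (discard the ``square root of values'' model, pull $q$ back along a surjection from a free module, and represent the pulled-back form there) is the right idea and is essentially the paper's proof, which takes a presentation $G=\Gamma/H$ with $\Gamma$ free abelian, writes the pullback of $q$ as $\alpha(x,x)$ for a bicharacter $\alpha$ with root-of-unity values, and then descends $\alpha$ to a finite quotient $\Gamma/H'$ with $H'\subset H$ of finite index. But your execution has a genuine gap. The claim you lean on --- ``on a free module any quadratic form is $\beta(x,x)$ for a bicharacter'' --- is false for free $\BZ/N\BZ$-modules: the paper's own Remark~\ref{easyrems}(ii) ($G=\BZ/2\BZ$, $q(n)=i^{n^2}$, a free $\BZ/2\BZ$-module of rank one) is a counterexample, and if the claim were true the lemma would be pointless, since one could take $F=G$ whenever $G$ is itself free. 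What is true, and what you must actually prove, is that the \emph{pulled-back} form $q\circ\phi$ is representable provided $N$ is chosen large relative to the orders of the values of $q$; ``for suitable $N$'' gestures at this but the false general claim shows the real point has not been supplied. Concretely: $q(g)$ has order dividing $2\,\mathrm{ord}(g)$ (since $q(g)^2=b(g,g)$), so if you take $N$ divisible by $2\exp(G)$ then all the required values $q(\phi(e_i))$ and $b(\phi(e_i),\phi(e_j))$ are $N$-th roots of unity and the basis prescription does extend to a bicharacter on $F=(\BZ/N\BZ)^r$; for an arbitrary form on $F$ this step fails, which is exactly the obstruction the lemma is designed to circumvent.

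There is also a concrete error in the prescription itself: you set $\beta(e_i,e_i)$ to be a \emph{square root} of $q(\phi(e_i))$, but then $\beta(e_i,e_i)\neq \widetilde q(e_i)$ already for $x=e_i$, so $\beta(x,x)$ does not reproduce $q\circ\phi$. The diagonal entries must be $q(\phi(e_i))$ themselves (the ``square root'' intuition applies to the associated bicharacter $b$, of which $\beta$ is a half, not to the values of $q$). With that correction and $N$ as above, the upper-triangular choice $\beta(e_i,e_j)=b(\phi(e_i),\phi(e_j))$ for $i<j$, $\beta(e_i,e_j)=1$ for $i>j$, gives
\[
\beta(x,x)=\prod_i q(\phi(e_i))^{c_i^2}\prod_{i<j}b(\phi(e_i),\phi(e_j))^{c_ic_j}=q(\phi(x)),
\]
and the lemma follows; this repaired argument is a compressed version of the paper's (your single choice of $N$ playing the role of the paper's passage from $\Gamma$ to the finite-index quotient $\Gamma/H'$).
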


\begin{proof}
Represent $G$ as $\Gamma/H$, where $\Gamma$ is a free abelian group.
The pullback of $q$ to $\Gamma$ can be represented as $x\mapsto \alpha (x,x)$
for some bicharacter $\alpha :\Gamma\times\Gamma\to k^{\times}$ whose
values are roots of unity. There exists a subgroup $H'\subset H$ of finite index
such that $\alpha$ comes from a bicharacter 
$\widetilde{\beta} :(\Gamma/H')\times (\Gamma/H')\to k^{\times}$. Set $\widetilde{G}:=\Gamma/H'$.
\end{proof}

Choose some $\widetilde{G}$, $\pi$, and $\widetilde{\beta}$. Let $\C_{\widetilde{\beta }}$ be the pointed braided
category with $\O (\C_{\widetilde{\beta }})=\widetilde{G}$ constructed in \S\ref{easycase}.
Set $A:=\Ker\pi\subset \widetilde{G}$. Let $\E\subset\C_{\widetilde{\beta }}$ 
be the fusion subcategory such that $\O (\E )=A$.
Let $\C'_{\widetilde{\beta }}$ denote the centralizer of $\C_{\widetilde{\beta }}$
(see \S\ref{centra-drinf}).

\begin{lemma}
(i) $\E\subset\C'_{\widetilde{\beta }}$.

(ii) The symmetric fusion category $\E$ is Tannakian.
\end{lemma}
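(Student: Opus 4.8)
The statement to prove is the last lemma: that $\E\subset\C'_{\widetilde{\beta}}$ and that the symmetric fusion category $\E$ is Tannakian. The plan is to verify both parts by direct computation with the explicit braiding on $\C_{\widetilde{\beta}}$ constructed in \S\ref{easycase}, combined with the characterization of Tannakian categories in Definition~\ref{tannaka} and the description of symmetric pointed braided categories as $\Rep(\Gamma^*,z)$ (see the example after Corollary~\ref{symint}).

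First I would prove (i). Recall that $\O(\C_{\widetilde{\beta}})=\widetilde{G}$ and that for homogeneous elements $x_i$ of degree $g_i$ the double braiding $c_{X_2,X_1}c_{X_1,X_2}$ acts on $X_1\otimes X_2$ by the scalar $\widetilde{b}(g_1,g_2)$, where $\widetilde{b}$ is the bicharacter associated to $\widetilde{q}$, i.e.\ $\widetilde{b}(g_1,g_2)=\widetilde{\beta}(g_1,g_2)\widetilde{\beta}(g_2,g_1)$ (this is exactly \eqref{cc=b} applied to $\C(\widetilde{G},\widetilde{q})$). By Example~\ref{pointedcase}, the centralizer of the fusion subcategory with object set $A$ corresponds to $A^{\perp}$ with respect to $\widetilde{b}$. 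So (i) is the assertion $A\subset A^{\perp}$, i.e.\ $\widetilde{b}|_{A\times A}=1$. Now $\widetilde{q}=q\circ\pi$ and $A=\Ker\pi$, so for $a_1,a_2\in A$ one has $\widetilde{q}(a_i)=q(0)=1$ and $\widetilde{q}(a_1+a_2)=q(0)=1$, whence $\widetilde{b}(a_1,a_2)=\widetilde{q}(a_1+a_2)/(\widetilde{q}(a_1)\widetilde{q}(a_2))=1$. Thus $A$ is isotropic for $\widetilde{b}$, giving $\E\subset\C'_{\widetilde{\beta}}$, and in particular $\E$ is a symmetric fusion subcategory.

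Next I would prove (ii), that $\E$ is Tannakian. The point is that $\E=\C(A,\widetilde{q}|_A)$ is a pointed symmetric braided category, and by the example following Corollary~\ref{symint} such a category is equivalent to $\Rep(A^*,z)$ where $z=\widetilde{q}|_A$, viewed as a character $A\to k^{\times}$ with $z^2=1$. By Definition~\ref{tannaka}(iv), $\E$ is Tannakian precisely when this category is equivalent to $\Rep$ of a finite group with trivial $z$, i.e.\ when $\widetilde{q}|_A=1$. But this is immediate: for $a\in A=\Ker\pi$ we have $\widetilde{q}(a)=(q\circ\pi)(a)=q(0)=1$. So $\widetilde{q}|_A$ is the trivial character and $\E\simeq\Rep(A^*)$ is Tannakian. (Equivalently, one may check Definition~\ref{tannaka}(ii) directly: since $\widetilde{q}|_A\equiv 1$, the spherical structure of Example~\ref{point sph} on $\E$ has all twists equal to $1$, and the forgetful-type functor $\Vec_A\to\Vec$ is a braided tensor functor because the braiding on $\E$ is symmetric; hence $\E$ admits a fiber functor.)

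I do not expect a serious obstacle here: both parts reduce to the single elementary observation that $\widetilde{q}$ is trivial on $A=\Ker\pi$ because $\widetilde{q}=q\circ\pi$. The only mild care needed is bookkeeping: making sure the conventions match between the explicit braiding on $\C_{\widetilde{\beta}}$ in \S\ref{easycase}, the bicharacter $\widetilde{b}$ associated to $\widetilde{q}$, and the statements \eqref{cc=b}, Example~\ref{pointedcase}, and Definition~\ref{tannaka}; and noting that a symmetric fusion category all of whose objects are invertible with trivial quadratic form is Tannakian (which is the cited example after Corollary~\ref{symint}, or alternatively Deligne's Theorem~\ref{Deligne1}). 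Once (ii) is in hand, the sequel (not requested here) would de-equivariantize $\C_{\widetilde{\beta}}$ along $\E$ to produce the desired pointed braided category with pre-metric group $(G,q)$.
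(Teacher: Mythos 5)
Your part (ii) is correct and is exactly the paper's argument: since $\widetilde{q}=q\circ\pi$ and $A=\Ker\pi$, the form $\widetilde{q}|_A$ is trivial, so $\E$ is the pointed braided category of the pre-metric group $(A,1)$, which is Tannakian (the example after Definition~\ref{tannaka}). Part (i), however, as you wrote it proves a strictly weaker statement than the one claimed. In the paper's notation $\C'_{\widetilde{\beta}}$ is the centralizer of the \emph{whole} category $\C_{\widetilde{\beta}}$, i.e.\ its M\"uger center; so (i) asserts that every object of $\E$ centralizes every object of $\C_{\widetilde{\beta}}$, which in the dictionary of Example~\ref{pointedcase} amounts to $A\subset\Ker\widetilde{b}$, i.e.\ $\widetilde{b}(y,x)=1$ for all $y\in A$ and \emph{all} $x\in\widetilde{G}$. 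What you verified is only $\widetilde{b}|_{A\times A}=1$, i.e.\ $\E\subset\E'$ ($\E$ is symmetric). The stronger statement is what the construction actually needs: one de-equivariantizes all of $\C_{\widetilde{\beta}}$ along $\E$, and for the fiber category $\C_{\widetilde{\beta}}\bt_{\E}\Vec$ to inherit a braiding (Proposition~\ref{br-indeed_deeq}) one must have a braided category \emph{over} $\E$ in the sense of Definition~\ref{overdef}, i.e.\ $\E\subset\C'_{\widetilde{\beta}}$, not merely $\E$ symmetric.

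The gap closes by running your own computation with the second argument unrestricted, which is precisely the paper's proof: for $y\in A$ and arbitrary $x\in\widetilde{G}$,
$\widetilde{\beta}(x,y)\widetilde{\beta}(y,x)=\widetilde{q}(x+y)\,\widetilde{q}(x)^{-1}\widetilde{q}(y)^{-1}$, and since $\widetilde{q}=q\circ\pi$ with $\pi(y)=0$ this equals $q(\pi(x))\,q(\pi(x))^{-1}\cdot 1=1$. Hence $A$ lies in the kernel of $\widetilde{b}$ and $\E\subset\C'_{\widetilde{\beta}}$; with that correction, (ii) goes through as you wrote it.
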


\begin{proof}
(i) One has to show that ${\widetilde{\beta }}(x,y){\widetilde{\beta }}(y,x)=1$ for
$x\in\widetilde{G}$, $y\in A$. This is clear because
${\widetilde{\beta }}(x,y){\widetilde{\beta }}(y,x)=
\widetilde{q}(x+y)\widetilde{q}(x)^{-1}\widetilde{q}(y)^{-1}$ 
and $\widetilde{q}$ is the pullback of a quadratic form on $G$.

(ii) Use that $\widetilde{q}|_A=1$.
\end{proof}

Now choose a fiber functor $\E\to\Vec$. Then the fiber category
\begin{equation}  \label{thebrcat}
\C_{\widetilde{\beta }}\boxtimes_{\E}\Vec
\end{equation}
(see \S\ref{deeqfus}-\ref{deeqbr}) is a pointed braided category, 
and the corresponding pre-metric group equals $(G,q)$.

\begin{remark}  \label{whatnext}
The groupoid of fiber functors $F:\E\to\Vec$ canonically identifies with the groupoid
$\Gamma_{\widetilde{\beta }}$ formed by central extensions
\begin{equation}     \label{theextension}
0\longrightarrow k^{\times}\longrightarrow\widehat{A} {\buildrel{p}\over{\longrightarrow}}  
A\longrightarrow 0  
\end{equation}
such that
\begin{equation}     \label{commutator}
\widehat{a}_2\widehat{a}_1=\widetilde{\beta }(a_1,a_2) \widehat{a}_1\widehat{a}_2, 
\quad \widehat{a}_i\in\widehat{A}, \; a_i:=p(\widehat{a}_i)\in A
\end{equation}
(given $F:\E\to\Vec$ one defines $\widehat{A}$ to be the group of isomorphism classes of
pairs consisting of $a\in A$ and an isomorphism $k\iso F(\delta_a)$;
here $\delta_a\in\E\subset\Vec_{\widetilde{G}}$ is the vector space $k$ placed in degree $a$).
So the construction of the braided category \eqref{thebrcat} depends on the choice
of $\widetilde{G}$, $\pi$, $\widetilde{\beta}$, and an object of $\Gamma_{\widetilde{\beta }}$.
In the next subsection we describe an explicit construction of a pointed braided category $\C$,
which depends on the same choices. In fact, $\C$ canonically identifies with \eqref{thebrcat}.
\end{remark}

\subsection{Elementary reformulation}
Let $\widetilde{G}$, $\pi$, $\widetilde{\beta}$, and $A$ have the same meaning
as in \S\ref{generalcase}. Let $\Gamma_{\widetilde{\beta }}$ be the groupoid defined in 
Remark \ref{whatnext}. Note that $\Gamma_{\widetilde{\beta }}\ne\emptyset$
(because $\widetilde{\beta}(x,x)=1$ for $x\in A$) and all objects of 
$\Gamma_{\widetilde{\beta }}$ are isomorphic to each other 
(because $k^{\times}$ is divisible and therefore $\text{Ext} (A,k^{\times})=0$). 
Choose an object of $\Gamma_{\widetilde{\beta }}$, i.e.,
a central exten\-sion~\eqref{theextension}.

Now let $\C$ be the category of finite-dimensional $\widetilde{G}$-graded $\widehat{A}$-bimodules
$M=\bigoplus\limits_{x\in\widetilde{G}}M_x$ such that
\begin{enumerate}
\item[(i)] if $x\in\widetilde{G}$, $m\in M_x$, and  $\widehat{a}\in\widehat{A}$ 
maps to $a\in A$ then
 $\widehat{a}m\in M_{ax}$ and $m\widehat{a}=\widetilde{\beta}(a,x)\widehat{a}m$,
\item[(ii)] each $\lambda\in k^{\times}\subset\widehat{A}$ acts on $M$ as multiplication
by $\lambda$.
\end{enumerate}
Clearly $\C$ is a fusion category with respect to $\ot_{\widehat{A}}:\C\times\C\to\C$.
It has a braided structure such that if $M,M'\in\C$, $m\in M_x$, $m'\in M'_{x'}$ then the
braiding $M\ot_{\widehat{A}}M'\iso M'\ot_{\widehat{A}}M$ takes $m\ot m'$ to 
$\widetilde{\beta}(x,x') m'\ot m$. In fact, $\C$ is a pointed braided category, and the 
corresponding pre-metric group equals $(G,q)$.

\begin{rems}
\begin{enumerate}
\item[(i)] One can choose $\widetilde{G}$, $\pi$, and $\widetilde{\beta}$ so that
$\widetilde{\beta }|_{A\times A}=1$ (represent $G$ as $\BZ/n_1\BZ\oplus\ldots \BZ/n_r\BZ$ 
and take $\widetilde{G}:=\BZ/2n_1\BZ\oplus\ldots \BZ/2n_r\BZ$).  In this case one can take 
\eqref{theextension} to be the trivial extension.
\item[(ii)] F.~Quinn \cite{Q} writes explicit formulas for $G=\BZ/2^k\BZ$ by taking 
$\widetilde{G}=\BZ/2^{k+1}\BZ$ and using a certain map 
$\BZ/2^{k}\BZ\hookrightarrow\BZ/2^{k+1}\BZ$ such that the composition
$\BZ/2^{k+1}\BZ\twoheadrightarrow\BZ/2^{k}\BZ$ equals the identity (see \cite[\S 2.5]{Q}).
\end{enumerate}
\end{rems}

\section{Extensions of groups by braided gr-categories}    \label{grcatsect}
\subsection{Conventions}      \label{Conv}
In this section monoidal categories are {\em not assumed to be additive.} 
Unless stated otherwise, we assume that all categories are small (i.e., objects form a set rather
than a class).

If $G$ is a monoid (e.g., a group) then the discrete category corresponding to the set $G$ is monoidal.
We use the same symbol $G$ to denote this  monoidal category.

\subsection{Generalities on monoidal categories}        \label{Mongen}
\subsubsection{$\pi_0$ and $\pi_1$}   \label{homotgroups}
The set of isomorphism classes of objects of a category $\C$ will be denoted by
$\pi_0(\C )$. If $\C$  is a monoidal category then $\pi_0(\C )$ is a monoid.

The monoid of endomorphisms of the unit object of a monoidal category $\C$ will be denoted by 
$\pi_1(\C )$. It is well known that $\pi_1(\C )$ is commutative.

\subsubsection{The category of monoidal functors}  \label{catmon}
If $\C_1$ and $\C_2$ are monoidal categories then $\Func (\C_1 ,\C_2 )$ will denote the category of monoidal functors $\C_1\to\C_2$. In $\Func (\C_1 ,\C_2 )$ there is a distinguished object, namely,
{\em the trivial functor\,} (this is the monoidal functor $G:\C_1 \to\C_2$ that takes every object of $\C_1$ 
to $\be\in\C_2$, every morphism of $\C_1$ to $\id :\be\to\be$, and the isomorphism 
$G(X\ot Y)\iso G(X)\ot G(Y)$ equals $\id :\be\to\be$ for all $X,Y\in\C_1$.) A {\em trivialization} of a 
monoidal functor $F:\C_1 \to\C_2$ is an isomorphism between $F$ and the trivial monoidal functor.

\subsubsection{Gr-categories and the adjoint action} \label{Gr-subs}
\begin{definition}   \label{Gr-def}
A {\em gr-category\,} is a monoidal groupoid $\C$ such that $\pi_0(\C )$ is a group.
\end{definition}

The name ``gr-category" (or ``Gr-category") goes back to Ho\`ang Xu\^an S\'inh, a student of Grothendieck. Gr-categories are also known as ``categorical groups" or ``2-groups".

If $\C$ is a gr-category then for every $X\in\C$ we have a functor $\Ad_X :\C\to\C$,  
$\Ad_X (Y):=X\ot Y\ot X^{-1}$. This functor has a canonical monoidal structure. Moreover,
the functor $\Ad :\C\to\Func (\C,\C )$, $X\mapsto\Ad_X$, has a canonical monoidal structure. In other words, we get an action of $\C$ on itself by monoidal auto-equivalences.
It induces an action of $\pi_0(\C )$ on $\pi_1(\C )$.

\begin{remark}   \label{adbr}
There is a one-to-one correspondence between braided structures on  a gr-category $\C$
and trivializations of the adjoint action $\Ad :\C\to\Func (\C ,\C)$ (rewrite the braiding 
$c_{X,Y}:X\ot Y\iso Y\ot X$ as $\Ad_X (Y)\iso Y$).
\end{remark}

\subsubsection{Gradings}       \label{Grad}
Let $\C$ be a monoidal category. A monoidal functor $\partial :\C\to G$ is also called a 
{\em $G$-grading\,} of $\C$. If $\partial$ is surjective the grading is said to be {\em faithful\,}
and $\C$ is said to be {\em faithfully $G$-graded.}
We can think of a grading as a decomposition $\C=\bigsqcup\limits_{g\in G}\C_g$, 
$\C_g:=\partial^{-1}(g)$. The full subcategory $\C_1:=\partial^{-1}(1)$ is monoidal; it is called
the {\em trivial component\,} of the grading.

\subsubsection{Central subcategories}           \label{Central}
\begin{definition}           \label{Centdef}
Let $\C$ be a monoidal category, $\D\subset\C$ a full monoidal subcategory, and $i:\D\to\C$
the embedding. A {\em central structure\,} on $\D$ is a lift of $i$ to a monoidal functor
$\D\to\Z (\C )$, where $\Z (\C )$ is the center of $\C$. A {\em central subcategory\,} of $\C$
is a full subcategory $\D\subset\C$ with a central structure.
\end{definition}

\begin{rems}  \label{centrex}
\begin{enumerate}
\item[(i)] Explicitly, a central structure on $\D\subset\C$ is a functorial collection of isomorphisms 
\begin{equation}  \label{centreq}
c_{Y,X}:Y\ot X\iso X\ot Y, \quad X\in \D, \; Y\in\C
\end{equation}
satisfying the hexagon axioms.
\item[(ii)] A central structure on $\D\subset\C$ defines a braided structure on $\D$ (it is induced
by the one on $\Z (\C )$ via the fully faithful embedding $\D\hookrightarrow\Z (\C )$). 
If $\D =\C$ then a central structure on $\D$ is the same as a braided structure.
\item[(iii)] Let $\C$ be a gr-category.  Rewriting \eqref{centreq} as $Y\iso \Ad_X (Y)$ we see that a central structure on $\D\subset\C$ is the same as a
trivialization of the adjoint action $\Ad :\D\to\Func (\C ,\C)$.
\item[(iv)] Suppose that $\C$ is a gr-category and $\pi_0(\D )$ is normal in $\pi_0(\C )$.
Then we have the adjoint action $\Ad :\C\to\Func (\D ,\D)$. Rewriting \eqref{centreq} 
as $\Ad_Y (X)\iso X$ we see that a central structure on $\D\subset\C$ is the same as a
trivialization of this action.
\item[(v)] If $\D\subset\C$ is a central subcategory then $\pi_0(\D )$ is contained in the center of
$\pi_0(\C )$. If, in addition, $\C$ is a gr-category then the action of $\pi_0(\C )$ on 
$\pi_1(\D )=\pi_1(\C )$ is trivial by Remark (iv). (In fact, it is easy to check that 
the triviality of the action of $\pi_0(\C )$ on $\pi_1(\C )$ is equivalent to the existence of a central
structure on the full subcategory $\{\be\}\subset\C$).
\end{enumerate}
\end{rems}

\subsection{A lemma on gr-categories}   \label{subtledef}
Let $\C$ be a gr-category with a faithful grading $\C\to G$ and let $\C_1\subset\C$ be its trivial
component. For any monoidal category $\M$, we will describe $\Func (G,\M )$ as the category
of monoidal functors $\C\to\M$ equipped with a certain additional structure.

If $F\in\Func (\C,\M )$ let $T_{\C_1}(F)$ be the set of trivializations of the restriction 
$F|_{\C_1}:\C_1\to\M$  (see \S\ref{catmon}).  The gr-category $\C$ acts on the category 
$\Func (\C_1,\M )$, namely, $X\in\C$ takes $\Phi\in\Func (\C_1,\M )$ to 
$\Ad_{F(X)}\circ\Phi\circ\Ad_X^{-1}$. Moreover, $F|_{\C_1}$ has a canonical equivariant structure with 
respect to this action. So $\pi_0(\C)$ acts on $T_{\C_1}(F)$. 

\begin{lemma}  \label{trivaction}
The action of $\pi_0(\C)$ on $T_{\C}(F)$ is trivial. 
 \hfill\qedsymbol
\end{lemma}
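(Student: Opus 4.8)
The statement to prove is Lemma~\ref{trivaction}: for a gr-category $\C$ with a faithful grading $\partial:\C\to G$, trivial component $\C_1$, and a monoidal functor $F\in\Func(\C,\M)$, the natural action of $\pi_0(\C)$ on the set $T_{\C_1}(F)$ of trivializations of $F|_{\C_1}$ is trivial. The key observation is that, since $\partial$ is faithful, the group $\pi_0(\C_1)$ is \emph{normal} in $\pi_0(\C)$ (it is the kernel of $\pi_0(\partial):\pi_0(\C)\to G$), and moreover a trivialization $\varphi\in T_{\C_1}(F)$ is precisely a collection of isomorphisms $\varphi_X:F(X)\iso\be_\M$, functorial in $X\in\C_1$, compatible with tensor product. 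So a trivialization is, concretely, a compatible family trivializing each $F(X)$ for $X$ of trivial degree. I would first unwind the definition of the $\pi_0(\C)$-action given just above the lemma: $X\in\C$ carries $\varphi$ to the trivialization $\Ad_{F(X)}\circ\varphi\circ\Ad_X^{-1}$, using the canonical equivariant structure on $F|_{\C_1}$, and I would write this out explicitly on objects: if $Y\in\C_1$ then the transported trivialization of $F(X\ot Y\ot X^{-1})$ is obtained from $\varphi_Y$ by conjugating with the structure isomorphism of $F$ at $X$.

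\textbf{Main steps.} First I would reduce to the level of objects and morphisms: a trivialization is determined by its components $\varphi_Y:F(Y)\iso\be_\M$ for $Y\in\C_1$, and two trivializations agree iff all their components agree. Second, I would compute the component of $X\cdot\varphi$ at an arbitrary object $Z\in\C_1$; writing $Z=\Ad_X(Y)$ with $Y=X^{-1}\ot Z\ot X\in\C_1$ (here is where normality of $\pi_0(\C_1)$ is used), the transported component at $Z$ equals the composite of $\varphi_Y$ with the $F$-structure isomorphisms $F(X\ot Y\ot X^{-1})\iso F(X)\ot F(Y)\ot F(X)^{-1}$ and with conjugation by $F(X)$. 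Third — and this is the heart — I would use functoriality of $\varphi$ together with the hexagon/associativity coherence of the monoidal structure on $F$ to show that this composite is in fact equal to $\varphi_Z$ itself. Concretely, the isomorphism $Z\iso X\ot Y\ot X^{-1}$ in $\C_1$ induces, by naturality of $\varphi$, a commuting square relating $\varphi_Z$ and $\varphi_{X\ot Y\ot X^{-1}}$; and $\varphi$ being monoidal expresses $\varphi_{X\ot Y\ot X^{-1}}$ in terms of $\varphi_X$, $\varphi_Y$, $\varphi_{X^{-1}}$ — but since $X\notin\C_1$ in general one cannot invoke $\varphi_X$. Instead one must argue that the conjugation-by-$F(X)$ operation, combined with the $F$-structure maps, is exactly the map that naturality of $\varphi$ along $Z\iso\Ad_X(Y)$ forces, so that no information about $F(X)$ beyond its $F$-structure isomorphisms enters. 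The upshot is that $(X\cdot\varphi)_Z=\varphi_Z$ for all $Z\in\C_1$, hence $X\cdot\varphi=\varphi$; and this depends only on the isomorphism class of $X$, giving triviality of the $\pi_0(\C)$-action.

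\textbf{The main obstacle.} The genuine difficulty is bookkeeping the coherence isomorphisms correctly: one is conjugating a monoidal functor by an invertible object and claiming the result, when restricted to trivial-degree objects and evaluated against the given trivialization, is canonically identified with the original. The cleanest route is probably to avoid component-chasing altogether and instead argue structurally: $T_{\C_1}(F)$ is a torsor (if nonempty) over $\Func_{\otimes}(\pi_0(\C_1),\pi_1(\M))$, i.e.\ over monoidal functors from the trivial component's $\pi_0$ to $\pi_1(\M)=\Aut_\M(\be_\M)$, compatibly with the $\pi_0(\C_1)$-grading data; and the $\pi_0(\C)$-action on $T_{\C_1}(F)$ corresponds to the conjugation action of $\pi_0(\C)$ on this Hom-group, which factors through the action on $\pi_1(\M)$ — and that action is \emph{trivial} because $\pi_1(\M)$ is the automorphism group of the \emph{unit} object, on which conjugation by any object acts trivially (this is essentially the content of Remark~\ref{centrex}(v)). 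Identifying the $\pi_0(\C)$-action with this conjugation action is the one step that still requires the coherence computation, but it can be localized to a single naturality square rather than spread over the whole argument. I would present the proof in this structural form, relegating the coherence verification to a short explicit check.
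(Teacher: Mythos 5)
Your proposal proves the wrong statement, and the statement it proves is false. In the lemma, $T_{\C}(F)$ denotes the set of trivializations of $F$ \emph{itself} (the notation $T_{\D}(F)$ means trivializations of $F|_{\D}$, here with $\D=\C$), not of $F|_{\C_1}$. The claim is that $\pi_0(\C)$ acts trivially on trivializations of the whole functor $F:\C\to\M$; this is why the paper leaves it unproved: a trivialization $\varphi$ then has components $\varphi_X$, $\varphi_{X^{-1}}$ at $X$ itself, and monoidality of $\varphi$ at the triple $(X,Y,X^{-1})$ together with naturality shows that the conjugation by $F(X)$ occurring in $X\cdot\varphi$ is absorbed, so $(X\cdot\varphi)_Z=\varphi_Z$. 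The paper then \emph{applies} this to the gr-category $\C_1$ and the functor $F|_{\C_1}$ to conclude only that the subgroup $\pi_0(\C_1)$ acts trivially on $T_{\C_1}(F)$, whence a residual $G$-action on $T_{\C_1}(F)$; that residual action is genuinely nontrivial in general, as Lemma~\ref{subtlelem} (which distinguishes $T_{\C_1}(F)$ from $T_{\C_1}(F)^G$ via condition \eqref{E65}) requires. The lemma is likewise invoked in Lemma~\ref{allpreserved} for $F=\Ad:\C\to\Func(\C_1,\C_1)$, again a functor defined on all of $\C$.

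The statement you set out to prove, that all of $\pi_0(\C)$ acts trivially on $T_{\C_1}(F)$, fails. Counterexample: let $\C=\underline{S_3}$ (the discrete gr-category with $\pi_0=S_3$ and trivial $\pi_1$), graded by $G=\BZ/2\BZ$ via the sign homomorphism, so $\C_1=\underline{\BZ/3\BZ}$; let $\M$ have one object with automorphism group $\BZ/3\BZ$, and let $F$ be the trivial monoidal functor. Then $T_{\C_1}(F)=\Hom(\BZ/3\BZ,\BZ/3\BZ)$ and a transposition acts by precomposition with inversion, i.e.\ by $\alpha\mapsto -\alpha$. This pinpoints the gaps in both of your routes. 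In the component-chasing route, you yourself note that $\varphi_X$ is unavailable for $X\notin\C_1$; the hoped-for cancellation ``from naturality and coherence alone'' is exactly what fails, since the needed datum is precisely a trivialization of $F(X)$. In the structural route, the action of $\pi_0(\C)$ on $\Hom(\pi_0(\C_1),\pi_1(\M))$ does not factor through its (trivial) action on $\pi_1(\M)$: it also acts on the source $\pi_0(\C_1)$ by conjugation, which is nontrivial unless $\pi_0(\C_1)$ is central; moreover, triviality of an action on the structure group of a torsor would not by itself give triviality on the torsor, because of a possible translation part. With the correct reading (trivializations of $F$ on all of $\C$), your first, component-wise argument does go through, since $\varphi_X$ and $\varphi_{X^{-1}}$ are then available.
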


The lemma implies that the action of $\pi_0(\C_1)\subset\pi_0(\C)$ on  $T_{\C_1}(F)$ is trivial, 
so we get an action of $G$ on $T_{\C_1}(F)$.
Let $\Func_{\C_1}(\C,\M )$ (resp. $\Func_{\C_1}^G(\C,\M )$) 
be the category of pairs $(F,\alpha )$, where $F\in\Func (\C,\M )$ and $\alpha\in T_{\C_1}(F)$ 
(resp. $\alpha\in T_{\C_1}(F)^G$).

\begin{lemma}  \label{subtlelem}
The functor $\Func (G ,\M )\to \Func_{\C_1}(\C  ,\M )$ is fully faithful. Its essential image equals
$\Func_{\C_1}^G(\C,\M )$. \hfill\qedsymbol
\end{lemma}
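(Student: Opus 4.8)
\textbf{Plan of proof for Lemma~\ref{subtlelem}.}
The strategy is to compare both sides with the category $\Func_{\C_1}(\C,\M)$ of ``trivialized'' monoidal functors and to realize $\Func(G,\M)$ as the fixed-point subcategory for the induced $G$-action, exactly as the statement suggests. First I would unpack the data: a monoidal functor $G\to\M$ is a family of objects $m_g\in\M$, $g\in G$, with coherent isomorphisms $m_g\ot m_h\iso m_{gh}$; precomposing with the grading $\partial:\C\to G$ gives a functor $\Func(G,\M)\to\Func(\C,\M)$ which factors through $\Func_{\C_1}(\C,\M)$ because the composite $\C_1\to G\to\M$ carries a tautological trivialization (everything in $\C_1$ goes to $m_1=\be$). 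So the comparison functor $\Func(G,\M)\to\Func_{\C_1}(\C,\M)$ of the statement is well-defined, and since the tautological trivialization on $\C_1$ is manifestly $G$-invariant (the $G$-action on $T_{\C_1}(\partial^*F)$ permutes the graded components but fixes the one-point set of trivializations compatible with the grading), it lands in $\Func_{\C_1}^G(\C,\M)$.

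Next I would check full faithfulness. Given $(F_1,\alpha_1),(F_2,\alpha_2)$ coming from monoidal functors $\bar F_1,\bar F_2:G\to\M$, a morphism $(F_1,\alpha_1)\to(F_2,\alpha_2)$ in $\Func_{\C_1}(\C,\M)$ is a monoidal natural isomorphism $\eta:F_1\iso F_2$ intertwining $\alpha_1,\alpha_2$. The intertwining condition forces $\eta$ to be the identity on all of $\C_1$ (both sides of $\eta|_{\C_1}$ are identified via $\alpha_i$ with the trivial functor, and $\eta$ must respect this). Since $\partial$ is faithful (surjective), $\C$ is generated under tensor product and $\C_1$-translation by objects mapping to each $g\in G$; a monoidal natural transformation that is trivial on $\C_1$ and ``constant along $\C_g$'' is exactly the datum of a monoidal natural transformation $\bar F_1\iso\bar F_2$. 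Spelling out: $\eta_X$ for $X\in\C_g$ is determined, via the monoidal structure and naturality, by a single element of $\Hom_\M(\bar F_1(g),\bar F_2(g))$, and the coherence constraints on $\eta$ translate precisely into the monoidality constraints on the corresponding transformation $\bar F_1\iso\bar F_2$. This gives a bijection on Hom-sets, proving full faithfulness.

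For essential surjectivity onto $\Func_{\C_1}^G(\C,\M)$, start with $(F,\alpha)$ where $\alpha\in T_{\C_1}(F)^G$. Using $\alpha$ to rigidify $F|_{\C_1}$ to the trivial functor, and using Lemma~\ref{trivaction} together with the hypothesis that $\alpha$ is $G$-invariant, I would show that $F$ descends along $\partial:\C\to G$: the point is that for $X\in\C_g$ the object $F(X)$ becomes, after the rigidification, canonically independent (up to canonical isomorphism) of the choice of $X$ within $\C_g$, because any two such choices differ by tensoring with an object of $\C_1$ on which $F$ has been trivialized, and the $G$-invariance of $\alpha$ guarantees these identifications are compatible across the two-sided translation action. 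This produces $\bar F:G\to\M$ with $\partial^*\bar F\simeq F$ compatibly with $\alpha$. The main obstacle I anticipate is precisely this descent step: verifying that the collection of canonical isomorphisms $F(X)\iso F(X')$ for $X,X'\in\C_g$ satisfies the cocycle/coherence condition needed to assemble into a genuine monoidal functor on $G$ — this is where $G$-invariance of $\alpha$ (as opposed to mere $\pi_0(\C_1)$-invariance, which is automatic by Lemma~\ref{trivaction}) is used in an essential way, and where one must carefully track the two hexagon-type diagrams. Everything else reduces to bookkeeping with monoidal coherence, which I would not write out in full.
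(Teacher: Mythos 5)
Your route is essentially the paper's: both reinterpret a monoidal functor $G\to\M$ as a monoidal functor $F:\C\to\M$ equipped with identifications of $F$ along each graded component, and translate this descent datum into a trivialization $\alpha$ of $F|_{\C_1}$ that is multiplicative and invariant under conjugation by objects of $\C$. The verification you defer as the ``main obstacle'' is exactly the paper's short computation \eqref{E63}--\eqref{E65}: writing $\varphi_{c,c'}=\id_{F(c)}\ot\alpha_{c^{-1}\ot c'}$, the cocycle condition \eqref{E61} becomes $\alpha_{X\ot Y}=\alpha_X\ot\alpha_Y$ (i.e.\ $\alpha\in T_{\C_1}(F)$) and the tensor-compatibility \eqref{E62} becomes the $G$-invariance condition \eqref{E65}, so your plan goes through.
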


\begin{proof}
A monoidal functor $G\to\M$ is the same as a monoidal functor $F:\C\to\M$ with a collection
of isomorphisms $\varphi_{c,c'}:F(c)\iso F(c')$ defined for all objects $c,c'\in\C$ with equal images in
$G$ and satisfying the equations
\begin{equation}   \label{E61}
\varphi_{c',c''}\circ\varphi_{c,c'}=\varphi_{c,c''} \, ,
\end{equation}
\begin{equation}   \label{E62}
\varphi_{c_1,c_1'}\ot\varphi_{c_2,c_2'}=\varphi_{c_1\ot c_2,c_1'\ot c_2'} \, .
\end{equation}
Such a collection is uniquely determined by the isomorphisms 
$\alpha_X:=\varphi_{\be ,X}:\be\iso F(X)$, $X\in\C_1$. Namely, \eqref{E61} -- \eqref{E62} imply that
\begin{equation}   \label{E63}
\varphi_{c,c'}=\id_{F(c)}\ot\alpha_{c^{-1}\ot c'} \, .
\end{equation}
Substituting \eqref{E63} into  \eqref{E61} -- \eqref{E62}, we get the following equations for the isomorphisms $\alpha_X$:
\begin{equation}   \label{E64}
\alpha_{X\ot Y} =\alpha_X\ot\alpha_Y\, ,\quad X,Y\in\C_1\, ,
\end{equation}
\begin{equation}   \label{E65}
\alpha_{c\ot X\ot c^{-1}}=\id_{F(c)}\ot\alpha_X\ot\id_{F(c^{-1})}\, ,\quad c\in\C, X\in\C_1\, .
\end{equation}
Equation \eqref{E64} means that the isomorphisms $\alpha_X$, $X\in\C_1$, define an element
$\alpha\in T_{\C_1}(F)$. Equations \eqref{E64} -- \eqref{E65} mean that $\alpha\in T_{\C_1}(F)^G$.
\end{proof}

\begin{remark} 
$T_{\C_1}(F)$ (resp. $T_{\C_1}(F)^G$) can also be described as the set of lifts of the 
inclusion $i:\C_1\hookrightarrow\C$ to a monoidal functor $\C_1\to\Ker F:=$\{fiber of $F$ over $\be$\}
(resp. to a monoidal functor $\C_1\to\Ker F$ equivariant with respect to the adjoint action of $\C$ on
$\C_1$ and on $\Ker F$). 
\end{remark}

\subsection{The notion of extension by a braided gr-category}    \label{thenot}
Let $G$ be a group and $\B$ a braided gr-category equipped with a braided action 
$a:G\to\Func (\B ,\B )$.

\begin{definition}   \label{extdef}
An {\em extension of $G$ by $\B$\,} is a triple consisting of a
gr-category $\C$, a faithful grading $\partial:\C\twoheadrightarrow G$ with trivial component $\B$, and an isomorphism $f$ between the monoidal functor $\Ad :\C\to\Func (\B ,\B )$ and the composition 
$\C {\buildrel{\partial}\over{\longrightarrow}}G {\buildrel{a}\over{\longrightarrow}}\Func (\B ,\B )$ such 
that the restriction of $f$ to $\B\subset\C$ equals the trivialization of $\Ad :\B\to\Func (\B ,\B )$ corresponding to the braiding on $\B$ (the correspondence was defined in Remark~\ref{adbr}).
\end{definition}

\begin{rems}
\begin{enumerate}
\item[(i)] To simplify the notation, we understand the words ``grading with trivial component $\B$" in the
{\em strict\,} sense (i.e., the trivial component $\C_1\subset\C$ {\em equals\,} $\B$). The reader
may prefer to understand them in the ``weak" sense (a monoidal equivalence $\B\iso\C_1$ as
an additional datum).
\item[(ii)] If $\pi_1(\B)=0$ then $\B$ is an abelian group and Definition \ref{extdef} gives the usual
notion of extension of a group $G$ by a $G$-module.
\end{enumerate}
\end{rems}

Definition \ref{extdef} is justified by the next proposition, which is standard if $\pi_1(\B)=0$.

\begin{proposition}
Let $\C$ be a gr-category equipped with a faithful grading $\partial:\C\to G$. Suppose that
its trivial component $\B\subset\C$ is equipped with a braided structure. Then the following conditions
are equivalent:
\begin{enumerate}
\item[(i)] for every $X\in\C$ the monoidal functor $\Ad_X :\B\to\B$ is braided;
\item[(ii)] there exists a pair $(a,f)$ consisting of an action $a:G\to\Func (\B ,\B )$ and 
an isomorphism $f$ between the monoidal functor $\Ad :\C\to\Func (\B ,\B )$ and the composition 
$\C {\buildrel{\partial}\over{\longrightarrow}}G {\buildrel{a}\over{\longrightarrow}}\Func (\B ,\B )$ 
such that the restriction of $f$ to $\B\subset\C$ equals the trivialization of $\Ad :\B\to\Func (\B ,\B )$ defined by the braiding on~$\B$. 
\end{enumerate}
If they hold then the pair $(a,f)$ is unique up to unique isomorphism
and the action $a:G\to\Func (\B ,\B )$ is braided.
\end{proposition}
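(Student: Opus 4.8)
The statement is an equivalence between an intrinsic property of the grading (condition (i): each $\Ad_X$ is a \emph{braided} autoequivalence of $\B$) and the existence of the extension datum $(a,f)$ (condition (ii)). The plan is to first unwind the two conditions into statements about monoidal functors, then use the lemma on gr-categories from \S\ref{subtledef} to match them.

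First I would set up notation: let $\partial:\C\to G$ be the faithful grading and $\B=\C_1$ its trivial component, equipped with its given braided structure. The functor $\Ad:\C\to\Func(\B,\B)$ is a well-defined monoidal functor because $\B=\C_1$ is normal in $\C$ (every automorphism of a gr-category preserves the trivial component of a faithful grading, since that component is intrinsically the kernel of $\partial$). By Remark~\ref{adbr}, the braiding on $\B$ is the \emph{same datum} as a trivialization of $\Ad|_\B:\B\to\Func(\B,\B)$; call it $\tau$.

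\textbf{(ii) $\Rightarrow$ (i).} This direction is the easy one: if $(a,f)$ exists, then for $X\in\C_g$ the functor $\Ad_X$ is isomorphic (via $f$) to $a(g):\B\to\B$, and $a(g)$ is braided by hypothesis on the action $a:G\to\Func(\B,\B)$ (here $\Func$ is taken in the braided sense when the source is $G$ viewed as discrete and the target is a braided gr-category — but one should be careful: in condition (ii) as stated $a$ lands in $\Func(\B,\B)$ of \emph{monoidal} functors; the claim at the end is precisely that it is automatically braided, so this direction needs the last sentence of the proposition rather than assuming it). So I would instead argue: given $(a,f)$, the isomorphism $f$ identifies $\Ad_X$ with $a(\partial X)$ as a monoidal functor, and the compatibility of $f$ with $\tau$ over $\B$ forces each $\Ad_X$ to be braided — this is a diagram chase using the hexagon axioms and the fact that $\tau$ is a braided-ness witness.

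\textbf{(i) $\Rightarrow$ (ii), plus uniqueness.} This is the substantive part. Apply Lemma~\ref{subtlelem} with $\M=\Func(\B,\B)$ (the gr-category of monoidal autoequivalences of $\B$) and $F=\Ad:\C\to\M$. The restriction $\Ad|_\B$ is trivialized by $\tau$, so $\tau\in T_{\C_1}(\Ad)$. By Lemma~\ref{subtlelem}, to produce an action $a:G\to\M$ lifting $\Ad$ with the prescribed restriction over $\B$, it suffices to check that $\tau\in T_{\C_1}(\Ad)^G$, i.e.\ that $\tau$ is $G$-invariant under the $G$-action on $T_{\C_1}(\Ad)$ described before Lemma~\ref{subtlelem}. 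The main obstacle — and the one place a real computation is needed — is showing that condition (i) is exactly the statement that $\tau$ is $G$-invariant. The point is that the $G$-action on $T_{\C_1}(\Ad)$ twists a trivialization $\alpha$ of $\Ad|_\B$ by conjugating through $\Ad_X$ for $X\in\C$; a trivialization of $\Ad|_\B$ valued in $\Func(\B,\B)$ corresponds, after transposing, to a braided structure on $\B$ together with the requirement that $\Ad_X$ intertwines it, and $\tau$ is fixed by $X$ precisely when $\Ad_X$ respects the braiding encoded by $\tau$, which is condition (i). Unwinding this requires tracking how the adjoint action on $\Func(\B_1,\M)$ interacts with the bijection of Remark~\ref{adbr}; I expect this to be the bulk of the verification. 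Once $G$-invariance is established, Lemma~\ref{subtlelem} yields $(a,f)$ together with its uniqueness up to unique isomorphism (since $\Func(G,\M)\to\Func_{\C_1}(\C,\M)$ is fully faithful). Finally, that $a$ is automatically braided follows by evaluating $a=a|$ on $\C=\B$ in the relative situation, or directly: each $a(g)$ is isomorphic to some $\Ad_X$, which is braided by (i), and isomorphisms of monoidal functors between braided gr-categories that are braided on one are braided — so $a$ factors through $\Func^{br}(\B,\B)$. I would close by remarking (as the excerpt's final remark does) that conditions (a) and (c) of the crossed-category axioms, in the pointed case, similarly follow from (b) and (d), which is the special case $\B$ pointed with $\pi_1(\B)=k^\times$.
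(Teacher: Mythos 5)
Your proposal is correct and follows exactly the paper's route: the paper's entire proof is to apply Lemma~\ref{subtlelem} with $\M=\Func(\B,\B)$ and $F=\Ad$, identifying condition (ii) with the $G$-invariance of the trivialization $\tau$ of $\Ad|_{\B}$ given by the braiding (via Remark~\ref{adbr}), which is precisely condition (i); uniqueness comes from the full faithfulness in that lemma and braidedness of $a$ from $a(g)\simeq\Ad_X$. You merely make explicit the invariance computation that the paper compresses into ``the proposition follows,'' so there is nothing essentially different here.
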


The proposition says that if the equivalent conditions hold then one gets an extension in the
sense of Definition \ref{extdef} in an essentially unique way. This justifies Definition \ref{extdef}.

\begin{proof}
Applying Lemma \ref{subtlelem} to $\M=\Func (\B ,\B )$ we see that an action of $G$ on the monoidal
gr-category $\B$ is the same as an action of $\C$ on the monoidal
gr-category $\B$ whose restriction to $\C_1$ is equipped with a $G$-invariant trivialization. 
The proposition follows.
\end{proof}

\subsection{Central extensions and braided $G$-crossed gr-categories}           \label{BrCent}
\subsubsection{The notion of central extension}  
\begin{definition}   \label{centextdef}
A {\em central extension of a group $G$ by a braided gr-category $\B$\,} is an extension of $G$ by
$\B$ equipped with the trivial action $G\to\Func (\B ,\B )$.
\end{definition}

\begin{remark}   \label{centextrem}
If $\B$ is equipped with the trivial action of $G$ then the isomorphism $f$ from
Definition \ref{extdef} is a trivialization of $\Ad :\C\to\Func (\B ,\B )$, which is the same as
a central structure on $\B$ (see Remark \ref{centrex}(iv)). So Definition \ref{centextdef} says that {\em a 
central extension of $G$ by a braided gr-category $\B$ is a faithfully $G$-graded gr-category $\C$ with trivial component $\B$ and with a central structure on $\B$ extending the braided structure 
on~$\B$.} In other words, our notion of central extension is equivalent to the one introduced by
P.~Carrasco and  A.~M.~Cegarra \cite{CC}. 
\end{remark}   

\subsubsection{The notion of braided $G$-crossed category}           \label{Bra}
In \S\ref{brcrossed} this notion was defined in the setting of fusion categories. However,
it makes sense in the current setting of {\em abstract\,} monoidal categories: use 
Definition \ref{crossed} but understand the word ``grading" as a decomposition 
$\C=\bigsqcup_g\C_g$ rather than $\C=\bigoplus_g\C_g$. Thus a braided $G$-crossed category
is a monoidal category $\C$ equipped with a grading $\partial :\C\to G$, a $G$-action on $\C$, and
a collection of isomorphisms 
\begin{equation} \label{G-brai2}
c_{X,Y}: X\ot Y\iso g(Y)\ot X, \quad g\in G, \, X\in \C_g, \, Y\in \C 
\end{equation}
satisfying properties (a)--(d) from Definition \ref{crossed}. 

Note that isomorphisms \eqref{G-brai2} for $X,Y\in \C_1$ define a braided structure on $\C_1$.
Unless stated otherwise, we equip $\C_1$ with this braided structure. The monoidal
category  $\C_1$ equipped with the opposite braiding $\tilde c_{X,Y}:=c_{Y,X}^{-1}$ will be
denoted by~$\C_1^{\oP}$.

\begin{remark}     \label{centextre2}
Let $\C$ be a braided $G$-crossed category. Then by Remark \ref{centrex}(i), the isomorphisms 
\[
c_{X,Y}^{-1}:Y\ot X\iso X\ot Y, \quad X\in \C_1, \, Y\in \C 
\] 
define a central structure on the trivial component $\C_1\subset\C$. Note that the braided
structure on $\C_1$ corresponding to this central structure by Remark~\ref{centrex}(ii) is that 
of~$\C_1^{\oP}$.
\end{remark}

In the next subsection we explain how to construct examples of braided $G$-crossed gr-categories.
It can be skipped by the reader.

\subsubsection{Braided G-crossed gr-categories and crossed modules. Relation to topology}
\label{crcatcrmod}
\begin{definition}     \label{2Whitehead}
A {\em pre-crossed module} $(G,\, C)$ is a pair of groups $G$ and $C$
together with an action of $G$ on $C$, denoted $(g,\,c)\mapsto {}^g c$,
and a homomorphism $\partial: C\to G$ satisfying
\begin{equation} \label{crossed module1}
\partial({}^g c) = g \partial(c) g^{-1}, \quad c\in C, g\in G. 
\end{equation}
A {\em pre-crossed module} $(G,\, C)$ is said to be a {\em crossed module} if
\begin{equation}  \label{crossed module2}
{}^{\partial(c)} c' = cc'c^{-1}\quad  c,c'\in C, g\in G.
\end{equation}
\end{definition}

\begin{rems}    \label{2crmod from CG}
\begin{enumerate}
\item[(i)] Let $\C$ be a braided $G$-crossed gr-category. Let $C$ be the group of isomorphism 
classes of objects of $\C$. Then $(G,\, C)$ is a crossed module.
\item[(ii)] Let $\C$ be a group viewed as a category (see \S\ref{Conv}). Then a structure of
braided $G$-crossed category on $\C$ is the
same as a structure of crossed module on $(G,\, \C )$. 
\item[(iii)] The notion of crossed module is due to J.~H.~C.~Whitehead. We found the definition of
pre-crossed module in \cite{CE,BC}.
\end{enumerate}
\end{rems}

It is well known that for any group $G$ the following three types of data are essentially equivalent\footnote{To pass from (b) to (a), define $C$ to be the fiber of $F$ over $\be_{\A}$. To pass from (b) to
(c), define $X$ to be the classifying space of $\A$.}:
\begin{enumerate}
\item[(a)] a crossed module $C\to G$;
\item[(b)] a small gr-category $\A$ equipped with an essentially surjective monoidal functor $F:G\to\A$;
\item[(c)] a topological space $X$ such that $\pi_i(X)=0$ for $i\ne 1,2$ equipped with a continuous map
$BG\to X$  (here $BG$ is the classifying space of $G$).
\end{enumerate}

We expect that the following three types of data are also essentially equivalent:
\begin{enumerate}
\item[(a$'$)] a braided $G$-crossed gr-category $\C\to G$;
\item[(b$'$)] a small grouplike monoidal 2-category $\A$ equipped with an essentially surjective monoidal 2-functor $F:G\to\A$;
\item[(c$'$)] a topological space $X$ such that $\pi_i(X)=0$ for $i\ne 1,2,3$ equipped with a continuous map $BG\to X$.
\end{enumerate}

At least, one can construct
 a braided $G$-crossed gr-category $\C\to G$ given a topological space $X$ and a 
 continuous map $f:BG\to X$. Hint: an object of $\C_g$, $g\in G$, is a continuous map
 $\varphi :D^2\to BG$ such that $\varphi |_{\partial D^2}=f\circ l_g$; here $D^2$ is the standard
 disk and $l_g:\partial D^2\to X$ is the standard loop corresponding to $g$.

\subsubsection{Relation between central extensions and braided $G$-crossed gr-categories}           
\label{Brgr}
We will show that {\em a central extension of a group $G$ by a braided gr-category $\B$ is essentially the same as a braided $G$-crossed gr-category $\C$ with $\C_1=\B^{\oP}$ such that the grading 
$\C\to G$ is faithful.}

Here is a construction in one direction: if $\C$ is a braided $G$-crossed gr-category such that
the grading on $\C$ is faithful then by Remarks \ref{centextre2} and \ref{centextrem}, we get a central extension of $G$ by $\C_1^{\oP}$. Proposition \ref{BraidCen}(ii) below says that this construction
is invertible.

\begin{proposition}  \label{BraidCen}
\begin{enumerate}
\item[(i)] In the case of a faithfully $G$-graded gr-category properties (a) and (c) from the definition of
braided $G$-crossed category (Definition \ref{crossed}) follow from properties (b) and (d).
\item[(ii)] Let $\partial:\C\twoheadrightarrow G$ be a central extension of a group $G$
by a braided gr-category. Then the structure of central extension on $\C$ can be upgraded 
to a structure of braided $G$-crossed category on $\C$. Such upgrade is unique up to unique isomorphism.
\end{enumerate}
\end{proposition}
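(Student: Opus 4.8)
The plan is to prove the two statements together by unwinding the relationship between central structures on a trivial component and $G$-braiding isomorphisms, exactly as was done for pointed categories in Proposition~\ref{ptcr}. Recall (Remarks~\ref{centextre2} and~\ref{centextrem}) that a structure of central extension of $G$ by a braided gr-category on $\partial:\C\twoheadrightarrow G$ is the same thing as: a faithful grading with trivial component $\B$, together with a central structure on $\B\subset\C$ extending the given braided structure on $\B$ (equivalently, a functorial family of isomorphisms $\sigma_{Y,X}:Y\ot X\iso X\ot Y$ for $X\in\B=\C_1$, $Y\in\C$, satisfying the hexagon axioms, and restricting to $c^{-1}$ on $\B$). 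I would phrase the statement of (ii) as: such a family $\sigma$ extends uniquely (up to unique isomorphism) to a full $G$-braiding $c_{X,Y}:X\ot Y\iso g(Y)\ot X$ for $X\in\C_g$, $Y\in\C$, making $\C$ a braided $G$-crossed category with the given grading, and with $\C_1=\B^{\oP}$.

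First I would dispose of (i), the implication (b)\,\&\,(d)$\Rightarrow$(a)\,\&\,(c), in the faithfully graded setting. Property (a) asks that $g(\C_h)\subset\C_{ghg^{-1}}$. Since the grading is faithful and the $G$-action permutes the graded components (an action by tensor autoequivalences must respect the universal grading; here the grading \emph{is} faithful onto $G$), the action of $g$ sends $\C_h$ into some component $\C_{\phi_g(h)}$, and one reads off $\phi_g(h)=ghg^{-1}$ from the target of the $G$-braiding isomorphism $c_{X,Y}:X\ot Y\iso g(Y)\ot X$ for $X\in\C_g$, $Y\in\C_h$: the source has degree $gh$ and the factor $X$ on the right has degree $g$, so $g(Y)$ must have degree $ghg^{-1}$. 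Property (c), the compatibility $\gamma(c_{X,Y})=c_{\gamma(X),\gamma(Y)}$, follows because for invertible $X$ both sides are isomorphisms between the same pair of objects, and one checks they agree after composing with the unique-up-to-scalar data; more precisely, since $\C$ is a gr-category the relevant Hom-spaces are torsors under $\pi_1(\C)$, and the two hexagons (d) force the discrepancy cocycle to be trivial. (This is the same mechanism as in Proposition~\ref{ptcr} and the Remark following it.)

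For (ii) the core construction is: given the central structure $\sigma$ on $\B\subset\C$, I must produce, for each $g\in G$, a monoidal autoequivalence $g(-)$ of $\C$ and isomorphisms $c_{X,Y}:X\ot Y\iso g(Y)\ot X$ for $X\in\C_g$. Fix for each $g$ an invertible object $L_g\in\C_g$ (possible since $\pi_0(\C)\twoheadrightarrow G$ and $\C$ is a gr-category). Define $g(Y):=\Ad_{L_g}(Y)=L_g\ot Y\ot L_g^{-1}$; each $\Ad_{L_g}$ carries a canonical monoidal structure (\S\ref{Gr-subs}), and $g\mapsto\Ad_{L_g}$ assembles into an action of $G$ on $\C$ because $\C$ is faithfully graded, so different choices of $L_g$ differ by an object of $\B$ and the resulting autoequivalences are canonically isomorphic (here one uses that the action of $\pi_0(\B)$ on $\pi_1(\C)=\pi_1(\B)$ is trivial, which holds because $\B$ is \emph{braided} — cf.\ Remark~\ref{centrex}(v)). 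For a general $X\in\C_g$ write $X\cong B\ot L_g$ with $B\in\B$; then for $Y\in\C$ set $c_{X,Y}$ to be the composite $B\ot L_g\ot Y\xrightarrow{\id_B\ot(\text{conjugation iso})}B\ot (g(Y))\ot L_g\xrightarrow{\sigma^{-1}_{?}\ot\id}g(Y)\ot B\ot L_g$, using $\sigma$ to slide the $\B$-object $B$ past $g(Y)\in\C$ — note $g(Y)$ lies in $\C$, not necessarily in $\B$, so it is $\sigma$ (the central structure, defined precisely for "$\B$-object on one side, arbitrary object on the other") that is being invoked. One then verifies functoriality in $X$ and $Y$, independence of the decomposition $X\cong B\ot L_g$, and the hexagons (d) — these reduce to the hexagon axioms for $\sigma$ together with the monoidality of the $\Ad_{L_g}$ and the pentagon in $\C$. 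Finally, restricting $c_{X,Y}$ to $X,Y\in\B$ gives (by construction, since $\sigma|_\B=c^{-1}$) the braiding $\tilde c_{X,Y}=c_{Y,X}^{-1}$, i.e.\ $\C_1=\B^{\oP}$, and uniqueness up to unique isomorphism follows because specifying \eqref{G-brai2} for all $X\in\C_g$ is determined by specifying it for invertible $X$, which in turn is forced by the central structure and the hexagons, just as in the proof of Proposition~\ref{ptcr}.

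\textbf{Main obstacle.} The delicate point is the coherence bookkeeping: showing that the $c_{X,Y}$ defined via the chosen $L_g$'s are genuinely independent of all choices and satisfy the two crossed hexagons \eqref{G-hex1}--\eqref{G-hex2} on the nose. This is where the hypothesis that $\B$ is \emph{braided} (not merely a gr-category with $G$-action) is essential — it is exactly what trivializes the adjoint action on $\pi_1$ and lets the locally-defined slide isomorphisms glue into a globally coherent $G$-braiding. I expect this to be a somewhat lengthy but entirely formal diagram chase of the same flavor as the proofs of Lemma~\ref{from_relative_to_crossed} and Proposition~\ref{ptcr}; no new idea beyond careful use of the hexagons for $\sigma$ should be needed.
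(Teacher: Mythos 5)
Your degree-count deduction of property (a) is fine: given (b), the isomorphism $c_{X,Y}:X\ot Y\iso g(Y)\ot X$ with $X\in\C_g$, $Y\in\C_h$ forces $\deg g(Y)=ghg^{-1}$, and faithfulness supplies such an $X$. But for property (c) you only assert that "the hexagons force the discrepancy cocycle to be trivial," pointing to Proposition~\ref{ptcr}, where (a) and (c) were an easy check for one specific canonical construction. In the generality of Proposition~\ref{BraidCen}(i) this is exactly the nontrivial point, and multiplicativity of the discrepancy character in $\pi_1(\C)$ gives no anchor forcing it to vanish. What is actually needed is that every autoequivalence $\Ad_X$, $X\in\C$ (hence every $a(\gamma)$, which is isomorphic to some $\Ad_X$ via the isomorphism $\Ad\simeq a\circ\partial$ encoded by (b)+(d), cf.\ Lemma~\ref{weakmeaning}) preserves the central structure on $\C_1$. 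The paper isolates this as Lemma~\ref{allpreserved}: a central structure on $\C_1$ is a trivialization of $\Ad:\C\to\Func(\C_1,\C_1)$ defined on \emph{all} of $\C$, and such trivializations are automatically fixed by the $\pi_0(\C)$-action (Lemma~\ref{trivaction}). Without some version of this step your argument for (c) has a genuine gap.

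For (ii) your route is genuinely different from the paper's: you choose invertible $L_g\in\C_g$, set $g(-)=\Ad_{L_g}$, and define the crossed braiding by sliding $\B$-factors past arbitrary objects using the central structure $\sigma$. The paper makes no choices: it reformulates (b)+(d) as an isomorphism $\Ad\simeq a\circ\partial$ (Lemma~\ref{weakmeaning}) and applies Lemma~\ref{subtlelem} (monoidal functors out of $G$ are the same as monoidal functors out of $\C$ with a $G$-invariant trivialization on $\C_1$) with $\M=\Func(\C,\C)$, identifying "weak" crossed structures with central structures on $\C_1$ (Corollaries~\ref{izlish}, \ref{weaklemma}), the required invariance being automatic by Lemma~\ref{allpreserved}; uniqueness up to unique isomorphism is the fully faithful part of that correspondence, and (i) then follows by transport of structure along $a(\gamma)\simeq\Ad_X$. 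In your construction the corresponding load-bearing steps remain open: turning $g\mapsto\Ad_{L_g}$ into a monoidal functor $\underline{G}\to\underline{\Aut}(\C)$ requires coherence isomorphisms $\Ad_{L_g}\Ad_{L_h}\simeq\Ad_{L_{gh}}$ satisfying a cocycle condition, which does not follow from the triviality of the $\pi_0(\B)$-action on $\pi_1$ that you invoke but again needs $\sigma$ taken against arbitrary objects of $\C$ (the same mechanism as Lemma~\ref{allpreserved}); independence of the decompositions and the two crossed hexagons are deferred to an unspecified diagram chase; and your uniqueness argument ("determined by its values on invertible $X$") is vacuous here, since in a gr-category every object is invertible --- what must be shown is that the groupoid of upgrades compatible with the given central structure is contractible. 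The overall shape of your construction is reasonable and likely completable, but as written the proposal has gaps precisely where the paper's Lemmas~\ref{subtlelem}, \ref{trivaction}, \ref{allpreserved} do the work.
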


Here upgrading means defining an action of $G$ on $\C$ and a collection of isomorphisms 
\eqref{G-brai2} satisfying properties (a)--(d) from Definition \ref{crossed} so that
for $g=1$ the isomorphisms \eqref{G-brai2} come from the central structure.
The notion of isomorphism between two upgrades is clear because
actions of $G$ on $\C$ form a groupoid (cf. Remark \ref{upgrgroupoid}).

\subsubsection{Proof of Proposition \ref{BraidCen}}   \label{proofBraidCen}
Let $\C$ be a gr-category equipped with a faithful grading  $\partial:\C\twoheadrightarrow G$.
The following definition will become irrelevant as soon as we prove Proposition  \ref{BraidCen}(i).

\begin{definition}  \label{temptemp}
A structure of {\em weak braided $G$-crossed category\,} on $\C$ is a pair consisting of an
action $a:G\to\Func (\C ,\C )$ and a collection of isomorphisms \eqref{G-brai2} satisfying properties (b) 
and (d) from Definition \ref{crossed}.
\end{definition}

\begin{lemma}  \label{weakmeaning}
Such a collection is the same as an isomorphism between the monoidal functor
$\Ad :\C\to\Func (\C ,\C )$ and the composition 
$\C {\buildrel{\partial}\over{\longrightarrow}}G {\buildrel{a}\over{\longrightarrow}}\Func (\C ,\C )$.
\end{lemma}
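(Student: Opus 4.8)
\textbf{Proof proposal for Lemma \ref{weakmeaning}.}

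The plan is to unwind both sides into the same concrete combinatorial data and check they match axiom-by-axiom. First I would recall that a collection of isomorphisms $c_{X,Y}: X\ot Y \iso a_{\partial X}(Y)\ot X$, functorial in $X$ and $Y$ and satisfying (d) from Definition~\ref{crossed}, can be rewritten for fixed $X\in\C_g$ as a functorial isomorphism $Y \iso a_g(Y)$ of the functor $Y\mapsto X\ot Y\ot X^{-1} = \Ad_X(Y)$ with the functor $a_g = a_{\partial X}$. Concretely, define $f_X: \Ad_X \iso a_{\partial X}$ by sending $c_{X,Y}$, composed with the canonical isomorphism $\Ad_X(Y)\ot X \iso X\ot Y$ on the right (using rigidity of $\C$), to an isomorphism $\Ad_X(Y)\iso a_{\partial X}(Y)$; naturality of $c$ in $Y$ is exactly naturality of $f_X$. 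So I would first set up this translation carefully and note it is a bijection between (functorial-in-$Y$ collections) $\{c_{X,Y}\}_Y$ and isomorphisms $f_X:\Ad_X\iso a_{\partial X}$ of functors $\C\to\C$.

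Next I would show that the remaining conditions correspond. The functoriality of $c_{X,Y}$ in $X$ together with the hexagon-type diagram \eqref{G-hex1} (which is axiom (d), the first of the two hexagons) say precisely that $f_X$ is an isomorphism of \emph{monoidal} functors $\Ad_X\iso a_{\partial X}$ for each $X$, because \eqref{G-hex1} is the diagram expressing compatibility of $c_{X,-}$ with the tensor structure on the target $Y$. Then the second hexagon \eqref{G-hex2}, relating $c_{X_1\ot X_2, Z}$ to $c_{X_1, h(Z)}$ and $c_{X_2,Z}$, is exactly the statement that the assignment $X\mapsto f_X$ is compatible with the monoidal structure on $\Ad:\C\to\Func(\C,\C)$ on the source side and with $a\circ\partial$ on the target side — i.e. that the collection $\{f_X\}_X$ assembles into a single isomorphism of monoidal functors $\Ad \iso (a\circ\partial)$ from $\C$ to $\Func(\C,\C)$. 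Here I would invoke that $\Ad:\C\to\Func(\C,\C)$ has its canonical monoidal structure (see \S\ref{Gr-subs}) and that $a:G\to\Func(\C,\C)$ is a monoidal functor by hypothesis, so $a\circ\partial$ is monoidal as well.

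Finally I would verify there is nothing hidden: given an isomorphism $\Phi$ between the monoidal functors $\Ad$ and $a\circ\partial$, its components $\Phi_X:\Ad_X\iso a_{\partial X}$ recover the $c_{X,Y}$, the monoidal-isomorphism condition on $\Phi$ unpacks into \eqref{G-hex1} for each $X$ and \eqref{G-hex2} across pairs $X_1,X_2$, and these are the only conditions in Definition~\ref{temptemp} (properties (b) and (d) only). I expect the main obstacle to be bookkeeping: keeping track of the rigidity isomorphisms $\Ad_X(Y)\ot X\iso X\ot Y$ and the monoidal structure constraints so that the two hexagons \eqref{G-hex1}, \eqref{G-hex2} land \emph{exactly} on the two halves of the definition of a monoidal natural isomorphism, with the correct handling of the associativity and unit constraints suppressed in Definition~\ref{crossed}. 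Once the dictionary is pinned down the equivalence is formal, so I would present it as: (1) the translation $c\leftrightarrow f$, (2) (b)+\eqref{G-hex1} $\Leftrightarrow$ each $f_X$ monoidal, (3) \eqref{G-hex2} $\Leftrightarrow$ $\{f_X\}$ is a monoidal natural transformation, and conclude.
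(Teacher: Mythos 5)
Your proposal is correct and follows the same route as the paper's own (very terse) proof: rewrite each $c_{X,-}$ as an isomorphism of functors $\Ad_X\iso a(\partial(X))$, identify the commutativity of \eqref{G-hex1} with each such isomorphism being monoidal, and identify \eqref{G-hex2} with the whole collection being a monoidal isomorphism $\Ad\iso a\circ\partial$. The extra bookkeeping you flag (invertibility of $X$ for the translation, naturality in $X$ and $Y$ matching property (b)) is exactly what the paper leaves implicit, so there is no divergence of method.
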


\begin{proof}  
Rewrite \eqref{G-brai2} as an isomorphism of functors $\varphi_X :\Ad_X\iso a(\partial (X))$.
The commutativity of \eqref{G-hex1} means that $\varphi_X$ is an isomorphism of 
{\em monoidal\,} functors for each $X\in\C$, and the commutativity of \eqref{G-hex2} means that the 
isomorphism of functors $\varphi :\Ad\iso a\circ\partial$ defined by the collection $\{\varphi_X\}_{X\in\C}$
is monoidal.
\end{proof}

Combining Lemmas \ref{weakmeaning} and \ref{subtlelem} with Remark~\ref{centrex}(iii)
we get the following statement.

\begin{corollary}  \label{izlish}
A structure of weak braided $G$-crossed category on $\C$ is the same as a central structure
on $\C_1$ preserved by all monoidal auto-equivalences $\Ad_X$, $X\in\C$.
\end{corollary}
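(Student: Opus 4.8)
The statement to prove is Corollary~\ref{izlish}: a structure of weak braided $G$-crossed category on a faithfully $G$-graded gr-category $\C$ is the same as a central structure on $\C_1$ preserved by all monoidal auto-equivalences $\Ad_X$, $X\in\C$.

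The plan is to assemble this corollary from the three ingredients mentioned right before its statement, namely Lemma~\ref{weakmeaning}, Lemma~\ref{subtlelem}, and Remark~\ref{centrex}(iii). First I would recall from Definition~\ref{temptemp} that a weak braided $G$-crossed structure on $\C$ is a pair $(a, \{c_{X,Y}\})$ where $a : G \to \Func(\C,\C)$ is an action and the $c_{X,Y}$ are isomorphisms \eqref{G-brai2} satisfying (b) and (d). By Lemma~\ref{weakmeaning}, giving the collection $\{c_{X,Y}\}$ (for a fixed $a$) is the same as giving an isomorphism between the monoidal functor $\Ad : \C \to \Func(\C,\C)$ and the composite $a \circ \partial : \C \to G \to \Func(\C,\C)$. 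So a weak braided $G$-crossed structure on $\C$ is the same as a pair consisting of an action $a : G \to \Func(\C,\C)$ together with an isomorphism $\Ad \simeq a\circ\partial$ of monoidal functors $\C \to \Func(\C,\C)$.

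Next I would apply Lemma~\ref{subtlelem} with $\M := \Func(\C,\C)$. That lemma identifies $\Func(G,\M)$ with the category $\Func_{\C_1}^G(\C,\M)$ of pairs $(F,\alpha)$, where $F \in \Func(\C,\M)$ and $\alpha \in T_{\C_1}(F)^G$ is a $G$-invariant trivialization of $F|_{\C_1}$. Taking $F = \Ad : \C \to \Func(\C,\C)$, a datum consisting of an action $a : G \to \Func(\C,\C)$ (i.e.\ an object of $\Func(G,\M)$) together with an isomorphism $\Ad \simeq a\circ\partial$ — which is precisely an identification of $\Ad$ with the pullback along $\partial$ of that object of $\Func(G,\M)$ — is exactly the data of a lift of $\Ad$ to $\Func_{\C_1}^G(\C,\M)$, that is, a $G$-invariant trivialization of $\Ad|_{\C_1} : \C_1 \to \Func(\C,\C)$. (Here one uses that the restriction of $\Ad\circ$ pullback to $\C_1$ is the trivial functor, matching the definition of $T_{\C_1}$.) Finally, by Remark~\ref{centrex}(iii), a trivialization of the adjoint action $\Ad : \C_1 \to \Func(\C,\C)$ is the same as a central structure on $\C_1 \subset \C$; and the condition of $G$-invariance of this trivialization translates, via Lemma~\ref{subtlelem}'s description of the $G$-action on trivializations (as $X$ acting by $\Phi \mapsto \Ad_{F(X)}\circ\Phi\circ\Ad_X^{-1}$), into the statement that the central structure is preserved by every $\Ad_X$, $X\in\C$. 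Chaining these identifications gives the corollary.

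I expect the main obstacle to be bookkeeping rather than any real mathematical difficulty: one must check carefully that the $G$-action on $T_{\C_1}(\Ad)$ coming from Lemma~\ref{subtlelem} is precisely conjugation by the $\Ad_X$, so that "$G$-invariant trivialization of $\Ad|_{\C_1}$" really does mean "central structure preserved by all $\Ad_X$", and that the compatibility (b), (d) axioms line up exactly with the monoidality conditions invoked in Lemma~\ref{weakmeaning}. Since all of these verifications are already packaged into the cited lemmas and remark, the proof itself is a short two- or three-line chain of "the same as" identifications, and I would present it as such, citing Lemmas~\ref{weakmeaning} and \ref{subtlelem} and Remark~\ref{centrex}(iii) in turn — which is exactly what the one-sentence proof in the excerpt already does.
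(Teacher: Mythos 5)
Your argument is correct and coincides with the paper's own proof, which likewise just combines Lemma~\ref{weakmeaning}, Lemma~\ref{subtlelem} (applied with $\M=\Func(\C,\C)$ and $F=\Ad$), and Remark~\ref{centrex}(iii), with $G$-invariance of the trivialization translating into preservation of the central structure by the auto-equivalences $\Ad_X$. Nothing essential is missing; the bookkeeping you flag about the $G$-action on $T_{\C_1}(\Ad)$ being conjugation by the $\Ad_X$ is exactly what is already packaged in \S\ref{subtledef}.
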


\begin{lemma}  \label{allpreserved}
Each central structure on $\C_1$ is preserved by all  monoidal auto-equivalences $\Ad_X$, $X\in\C$.
\end{lemma}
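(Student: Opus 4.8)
The statement to prove is Lemma~\ref{allpreserved}: if $\C$ is a faithfully $G$-graded gr-category whose trivial component $\C_1$ carries a central structure, then every monoidal autoequivalence $\Ad_X$, $X\in\C$, preserves that central structure. First I would unpack what needs to be checked. A central structure on $\C_1\subset\C$ is, by Remark~\ref{centrex}(i), a functorial collection of isomorphisms $c_{Y,Z}:Y\ot Z\iso Z\ot Y$ for $Z\in\C_1$, $Y\in\C$, satisfying the hexagon axioms. Saying that $\Ad_X$ \emph{preserves} this structure means that for the monoidal autoequivalence $\Ad_X:\C\to\C$ (which restricts to a monoidal autoequivalence of $\C_1$ since $\C_1$ is normal, being the kernel of $\partial$), the transported isomorphisms $\Ad_X(c_{Y,Z})$ agree, under the monoidal structure constraints of $\Ad_X$, with $c_{\Ad_X(Y),\Ad_X(Z)}$.

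\textbf{Key steps.} The plan is to verify this directly from functoriality of the collection $c_{-,-}$. The central structure isomorphisms $c_{Y,Z}$ are natural in \emph{both} variables over all of $\C$ (for $Z\in\C_1$). The functor $\Ad_X$ is given on objects by $W\mapsto X\ot W\ot X^{-1}$; conjugating the isomorphism $c_{Y,Z}:Y\ot Z\to Z\ot Y$ by $X\ot(-)\ot X^{-1}$ produces an isomorphism $X\ot Y\ot Z\ot X^{-1}\iso X\ot Z\ot Y\ot X^{-1}$. On the other hand, $c_{\Ad_X(Y),\Ad_X(Z)}$ is an isomorphism $(X\ot Y\ot X^{-1})\ot(X\ot Z\ot X^{-1})\iso (X\ot Z\ot X^{-1})\ot(X\ot Y\ot X^{-1})$; after cancelling the interior $X^{-1}\ot X$ pairs using the monoidal/rigidity constraints, this is again an isomorphism between $X\ot Y\ot Z\ot X^{-1}$ and $X\ot Z\ot Y\ot X^{-1}$. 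I would then argue these two agree: one tensors $c_{Y,Z}$ on the left with $\id_X$ and on the right with $\id_{X^{-1}}$ (this is what $\Ad_X$ does to a morphism, modulo its tensor structure), while the other applies the central-structure isomorphism to the objects $X\ot Y\ot X^{-1}$ and $X\ot Z\ot X^{-1}$; by naturality of $c$ in its $\C$-variable $Y$ — applied to the morphisms $\id_X\ot(-)\ot\id_{X^{-1}}$ — together with the hexagon axiom (which controls how $c_{-,Z}$ interacts with tensor products in the first slot, i.e. how $c_{X\ot Y\ot X^{-1},\,Z}$ decomposes), the two coincide. The hexagon for $c$ in the \emph{first} variable is exactly what is needed to split $c_{X\ot Y\ot X^{-1},Z}$ into the composite involving $c_{X,Z}$, $c_{Y,Z}$ and $c_{X^{-1},Z}$, and since $Z\in\C_1$ the outer factors $c_{X,Z}$, $c_{X^{-1},Z}$ are mutually inverse (up to the associativity constraints), leaving precisely $\id_X\ot c_{Y,Z}\ot\id_{X^{-1}}$.

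\textbf{Main obstacle.} The conceptual content is almost trivial — it is just naturality plus the hexagon — but the honest difficulty is bookkeeping: writing down the monoidal (tensor) structure isomorphism on $\Ad_X$ explicitly and tracking all the associativity and unit constraints, as well as the rigidity isomorphisms used to cancel $X^{-1}\ot X$, so that the two composite isomorphisms are literally shown equal rather than merely ``the same up to canonical iso''. I would organize this by drawing the relevant coherence diagram once and invoking Mac~Lane-style coherence to suppress associators, reducing the claim to the single hexagon identity plus the zig-zag identities for duals. Since the paper works throughout with gr-categories and has already used precisely this kind of argument (e.g.\ in the proofs surrounding Remark~\ref{adbr} and in \S\ref{proofBraidCen}), I would keep the write-up short: state that the claim follows from the naturality of the central structure in its $\C$-variable applied to the morphisms $\id_X\ot f\ot\id_{X^{-1}}$, combined with the first-variable hexagon axiom and the fact that $c_{X,Z}$ and $c_{X^{-1},Z}$ are inverse to one another for $Z\in\C_1$, and leave the diagram chase to the reader — exactly in the style of the surrounding text.
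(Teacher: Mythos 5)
There is a genuine gap, and it sits exactly at the point you dismiss as ``bookkeeping''. You state correctly at the outset that preservation means comparing $\Ad_X(c_{Y,Z})$ with $c_{\Ad_X(Y),\,\Ad_X(Z)}$ via the tensor constraints of $\Ad_X$ --- \emph{both} slots get conjugated. But your computation then works with $c_{X\ot Y\ot X^{-1},\,Z}$, whose second slot is $Z$ rather than $\Ad_X(Z)=X\ot Z\ot X^{-1}$. The first-variable hexagon does decompose the morphism you actually need, but into $c_{X,\Ad_X(Z)}$, $c_{Y,\Ad_X(Z)}$, $c_{X^{-1},\Ad_X(Z)}$, i.e., into half-braidings of $\Ad_X(Z)$, and none of the tools you list relates the half-braiding of $\Ad_X(Z)$ to that of $Z$: the second hexagon only governs tensor products of objects of $\C_1$ (and $X\notin\C_1$), while naturality in the $\C$-variable never touches the $\C_1$-slot. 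The missing ingredient is naturality of the central structure in its $\C_1$-variable, applied to the isomorphism $\Ad_X(Z)\iso Z$ which the central structure itself supplies, namely $(\id_Z\ot \ev)\circ(c_{X,Z}\ot\id_{X^{-1}})$ with $\ev\colon X\ot X^{-1}\to\be$; only after transporting $c_{-,\Ad_X(Z)}$ to $c_{-,Z}$ along this isomorphism does the chase reduce, as you intend, to the first-variable hexagon, the relation between $c_{X,Z}$ and $c_{X^{-1},Z}$ extracted from $c_{X\ot X^{-1},Z}$ and the unit axiom, and the zig-zag identities. As written, the identity you sketch (splitting $c_{X\ot Y\ot X^{-1},Z}$ and cancelling the outer factors) is simply an instance of the hexagon plus naturality in the first variable --- true, but it computes $c_{\Ad_X(Y),Z}$, which is not the morphism that has to match $\Ad_X(c_{Y,Z})$. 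The gap is fixable, but it is a missing step, not a coherence nuisance.

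You should also be aware that the paper's own proof is entirely different and takes two lines: by Remark~\ref{centrex}(iv) (using that $\pi_0(\C_1)$ is normal, being the kernel of the grading) a central structure on $\C_1$ is the same as a trivialization of the monoidal functor $\Ad:\C\to\Func(\C_1,\C_1)$, and Lemma~\ref{trivaction}, applied with $\M=\Func(\C_1,\C_1)$, says that $\pi_0(\C)$ acts trivially on the set of such trivializations. Your direct argument, once repaired as above, essentially unwinds that lemma by hand; the abstract route packages the second-variable transport and the duality manipulations into the general statement about monoidal natural transformations to the trivial functor, which is why the paper can afford to be so brief here.
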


\begin{proof}
A central structure on $\C_1$ can be viewed as a trivialization of $\Ad :\C\to\Func (\C_1,\C_1)$.
Now apply Lemma \ref{trivaction} to $\M =\Func (\C_1,\C_1)$.
\end{proof}

Combining Corollary \ref{izlish} with Lemma \ref{allpreserved} we get the following
``weak" version of Proposition  \ref{BraidCen}(ii).
\begin{corollary}  \label{weaklemma}
A structure of weak braided $G$-crossed category on $\C$ is the same as 
a central structure on $\C_1$.
\end{corollary}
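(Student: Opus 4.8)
\textbf{Plan for the proof of Corollary \ref{weaklemma}.}
The statement is that a structure of weak braided $G$-crossed category on $\C$ is the same thing as a central structure on $\C_1$. This is an immediate synthesis of the two results just proved, so the plan is simply to chain them together cleanly. First I would invoke Corollary \ref{izlish}: a structure of weak braided $G$-crossed category on $\C$ is the same as a central structure on $\C_1$ that is \emph{preserved by all} the monoidal auto-equivalences $\Ad_X$, $X\in\C$. Then I would invoke Lemma \ref{allpreserved}: \emph{every} central structure on $\C_1$ is automatically preserved by all $\Ad_X$. Composing these two observations, the extra preservation condition in Corollary \ref{izlish} is vacuous, so a weak braided $G$-crossed structure on $\C$ is precisely a central structure on $\C_1$, with no further data.

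The only thing to be careful about is that the bijection be an equivalence of the \emph{correct} groupoids (or sets, if one prefers to ignore isomorphisms), i.e.\ that the identifications in Corollary \ref{izlish} and Lemma \ref{allpreserved} are compatible. In Corollary \ref{izlish} the weak braided $G$-crossed structure gets turned into a central structure on $\C_1$ via Lemma \ref{weakmeaning} (rewriting the isomorphisms \eqref{G-brai2} as a trivialization of the adjoint action) composed with Lemma \ref{subtlelem} and Remark \ref{centrex}(iii); the ``preserved by all $\Ad_X$'' clause is exactly the $G$-equivariance condition isolated in Lemma \ref{subtlelem}. Lemma \ref{allpreserved} says this equivariance is automatic, because a central structure on $\C_1$ is a trivialization of $\Ad:\C\to\Func(\C_1,\C_1)$ and Lemma \ref{trivaction} (applied with $\M=\Func(\C_1,\C_1)$) forces the $\pi_0(\C)$-action on the set of such trivializations to be trivial. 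So the two passages are mutually inverse by construction, and there is no real obstacle here: the content was already front-loaded into Corollary \ref{izlish}, Lemma \ref{allpreserved}, and ultimately Lemma \ref{trivaction}.

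\begin{proof}
By Corollary~\ref{izlish}, a structure of weak braided $G$-crossed category on $\C$ is the same as
a central structure on $\C_1$ preserved by all the monoidal auto-equivalences $\Ad_X$, $X\in\C$.
By Lemma~\ref{allpreserved}, every central structure on $\C_1$ is preserved by all of these auto-equivalences, so the last condition is automatic. Hence a structure of weak braided $G$-crossed category on $\C$ is the same as a central structure on $\C_1$.
\end{proof}

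\textbf{Plan for Proposition \ref{BraidCen}.}
For part (i), I would argue that in the presence of a faithful grading there is nothing to check: by Corollary \ref{weaklemma} a weak braided $G$-crossed structure (properties (b) and (d) only) on $\C$ is already the same as a central structure on $\C_1$, and one checks directly — exactly as in the proof of Proposition \ref{ptcr}, where the analogous point was dealt with — that the resulting collection of isomorphisms \eqref{G-brai2} automatically satisfies (a) and (c). Concretely, (a) follows because $g(\C_h)$ is computed through the action $a=\text{(conjugation descended through }\partial)$, which by \eqref{crossed module1}-type bookkeeping sends the $h$-component to the $ghg^{-1}$-component, and (c) follows because the isomorphism $\varphi:\Ad\iso a\circ\partial$ built in Lemma \ref{weakmeaning} is monoidal, hence $G$-equivariant in the required sense. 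For part (ii), given a central extension $\partial:\C\twoheadrightarrow G$ by a braided gr-category, its trivial component $\C_1$ comes equipped with a central structure (this is what ``central extension'' means, Remark \ref{centextrem}); applying Corollary \ref{weaklemma} promotes it to a weak braided $G$-crossed structure, and by part (i) this is a genuine braided $G$-crossed structure. Uniqueness up to unique isomorphism is inherited from the uniqueness in Corollary \ref{weaklemma} (which in turn rests on Lemma \ref{subtlelem} being an equivalence of categories and Lemma \ref{trivaction}). The only mild subtlety — and the step I would treat most carefully — is checking that the central structure produced on $\C_1$ by going around this loop is the one we started with, i.e.\ that the construction of \S\ref{Brgr} in the other direction really is inverse to it; but this is a formal consequence of Remarks \ref{centextre2} and \ref{centextrem} together with the fact that the braided structure on $\C_1$ determined by \eqref{G-brai2} for $g=1$ is, by construction, the one dictated by the central structure.
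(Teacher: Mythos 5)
Your proof is correct and is exactly the paper's argument: Corollary~\ref{weaklemma} is obtained there by combining Corollary~\ref{izlish} with Lemma~\ref{allpreserved}, so the preservation condition drops out just as you say. The extra commentary on compatibility of the identifications is fine but not needed beyond what those two results already provide.
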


It remains to prove statement (i) of Proposition \ref{BraidCen}. To this end, we will reformulate
properties (a) and (c) from Definition \ref{crossed}.

Let $\C$ be a weak braided $G$-crossed gr-category. Let $\gamma\in G$ and 
$\Ad_{\gamma}:G\iso G$ the automorphism $g\mapsto\gamma g\gamma^{-1}$. Then we can introduce
a new structure of weak braided $G$-crossed gr-category on $\C$ by replacing $\partial :\C\to G$
and $a:G\to\Func (\C ,\C )$ with $\tilde\partial:=\Ad_{\gamma}\circ\partial$ and 
$\tilde a:=a \circ\Ad_{\gamma}^{-1}$. Let $\null^{\gamma}\C$ denote  the weak braided 
$G$-crossed gr-category $(\C ,\tilde\partial,\tilde a)$. 
Properties (a) and (c) from Definition \ref{crossed} mean that
$a(\gamma ):\C\iso\null^{\gamma}\C$ {\em is an equivalence of weak braided  $G$-crossed gr-categories for each $\gamma\in G$.}

We have to show that this is always true if the grading on $\C$ is faithful. Choose 
$X\in\C_{\gamma}$, then $a(\gamma )\simeq\Ad_X$, so we have to show that 
$\Ad_X:\C\iso\null^{\gamma}\C$ is an equivalence of weak braided $G$-crossed categories.
This follows from Corollary \ref{weaklemma} and Lemma \ref{allpreserved}.

\subsubsection{The action of $G$ on the trivial component} 
Let $\C$ be a braided $G$-crossed gr-category.
The $G$-action on $\C$ preserves $\C_1$, so $G$ acts on $\C_1$.
Let $\Phi :\pi_0 (\C )\to\Func (\C_1,\C_1)$ be the composition $\pi_0 (\C )\to G\to\Func (\C_1,\C_1)$.

\begin{proposition}   \label{trivialization}
$\Phi$ admits a canonical trivialization $\alpha$. Namely, for every $g\in G$, $X\in\C_g$, $Y\in\C_1$
the isomorphism $\alpha_{X,Y}:Y\iso (\Phi (X))(Y)=g(Y)$ is characterized by the following property:
the composition of $c_{Y,X}:Y\ot X \iso X\ot Y$ and $c_{X,Y}:X\ot Y \iso g(Y)\ot X$ equals
 $\alpha_{X,Y}\ot\id_X$.
\end{proposition}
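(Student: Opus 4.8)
The plan is to verify that the formula for $\alpha_{X,Y}$ given in the statement is well-defined, functorial, and constitutes a monoidal trivialization of $\Phi$. First I would observe that for $X\in\C_g$ and $Y\in\C_1$ both composites $c_{Y,X}\colon Y\ot X\iso X\ot Y$ (which makes sense since $Y\in\C_1$, so $c_{Y,X}$ is an instance of \eqref{G-brai2} with the roles of the homogeneous object and the arbitrary object read off correctly — here $Y$ is the arbitrary object and $X$ is homogeneous) and $c_{X,Y}\colon X\ot Y\iso g(Y)\ot X$ are isomorphisms in $\C$; their composite is an isomorphism $Y\ot X\iso g(Y)\ot X$. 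Since $X$ is invertible in the gr-category $\C$, tensoring with $X^{-1}$ on the right gives a unique isomorphism $\alpha_{X,Y}\colon Y\iso g(Y)$ with $(\alpha_{X,Y}\ot\id_X)=c_{X,Y}\circ c_{Y,X}$. This defines $\alpha_{X,Y}$ for each choice of representative $X\in\C_g$; the first nontrivial point is \emph{independence of the choice of $X$}, for which I would take another $X'\in\C_g$, write $X'\cong X\ot Z$ with $Z\in\C_1$ (possible since $\partial$ is faithful, so $\pi_0$ of the fibre over $g$ is a torsor over $\pi_0(\C_1)$), and use the naturality of the $G$-braiding together with hexagon \eqref{G-hex2} to show the resulting $\alpha_{X',Y}$ agrees with $\alpha_{X,Y}$. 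Concretely this reduces to the braiding identity in $\C_1$ for $Z$ and $Y$, i.e. to $c_{Z,Y}$ cancelling against $c_{Y,Z}^{-1}$ after the de-equivariantization bookkeeping, which is exactly the kind of routine hexagon chase I would leave mostly implicit.

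Next I would check \emph{functoriality of $\alpha_{X,Y}$ in $Y$}: given $f\colon Y\to Y'$ in $\C_1$, the square relating $\alpha_{X,Y}$ and $\alpha_{X,Y'}$ via $f$ and $g(f)$ commutes because $c_{Y,X}$ and $c_{X,Y}$ are both natural in $Y$ (properties (b) of Definition~\ref{crossed}), and $g(f)\ot\id_X$ is obtained by applying the $G$-action to $f$. This makes $\alpha_X:=\{\alpha_{X,Y}\}_Y$ an isomorphism of functors $\id_{\C_1}\iso g$ for each $X$, and by the previous paragraph it depends only on $g=\partial(X)$; writing $\alpha_g$ for it, this is a collection of natural isomorphisms $\alpha_g\colon \id_{\C_1}\iso \Phi(g)$ indexed by $\pi_0(\C)$ (equivalently by $g\in G$). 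The remaining structure to verify is that $\alpha$ is \emph{monoidal}, i.e. compatible with the composition $\alpha_{g_1g_2}$ vs.\ $\alpha_{g_1}\ast g_1(\alpha_{g_2})$ and with the unit. For the multiplicativity I would take $X_1\in\C_{g_1}$, $X_2\in\C_{g_2}$, so $X_1\ot X_2\in\C_{g_1g_2}$, and unwind the defining equation for $\alpha_{X_1\ot X_2,Y}$ using the two hexagon axioms \eqref{G-hex1} and \eqref{G-hex2} to split $c_{X_1\ot X_2,Y}$ and $c_{Y,X_1\ot X_2}$ into the appropriate composites involving $c_{X_1,-}$, $c_{X_2,-}$, $g_1(\cdot)$; this is precisely the $G$-crossed analogue of the standard computation that a braiding on a monoidal category yields a lift of the identity through $\operatorname{Ad}$, and it matches the mechanism already used in \S\ref{equiv3tion} and in the proof of Proposition~\ref{ptcr}.

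The step I expect to be the main obstacle is the careful \emph{combinatorial verification that the two hexagon axioms together force the monoidal (multiplicativity) identity for $\alpha$} — keeping track of where the $G$-action $g_1(\cdot)$ is inserted, and in particular checking that the associativity constraints (suppressed in the diagrams \eqref{G-hex1}--\eqref{G-hex2}) can indeed be suppressed consistently throughout. Once this is pinned down, the unit compatibility ($\alpha_1=\id$) is immediate from the fact that for $g=1$ the isomorphisms \eqref{G-brai2} reduce to the braiding on $\C_1$ and $c_{Y,X}$, $c_{X,Y}$ become mutually inverse on $\C_1\times\C_1$ in the appropriate sense. I would then conclude that $\alpha$ is the asserted canonical trivialization, remarking that uniqueness of $\alpha$ with the stated characterizing property is built into the construction since each $\alpha_{X,Y}$ was \emph{defined} by that equation. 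A final sentence would note that this trivialization is exactly the datum of the braided functor $\C^G\to\Z(\C)$ restricted appropriately, tying back to Remark~\ref{my_remarks}(i), but that observation is not needed for the proof itself.
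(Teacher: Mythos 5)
There is a genuine error in your plan, and it sits at the step you treat as routine: the claim that $\alpha_{X,Y}$ ``depends only on $g=\partial(X)$'', so that the trivialization is ``indexed by $\pi_0(\C)$ (equivalently by $g\in G$)''. First, $\pi_0(\C)$ is not $G$: the grading is faithful (surjective), with kernel $\pi_0(\C_1)$. Second, and more seriously, the independence claim is false. If $X'\simeq X\ot Z$ with $Z\in\C_1$, then unwinding the defining equation (or just invoking the monoidality you verify later) gives $\alpha_{X\ot Z,Y}=g(\alpha_{Z,Y})\circ\alpha_{X,Y}$, and for $Z,Y\in\C_1$ the defining equation says $\alpha_{Z,Y}\ot\id_Z=c_{Z,Y}\circ c_{Y,Z}$ --- the \emph{double} braiding (monodromy) of $Z$ and $Y$ in $\C_1$, not $c_{Z,Y}\circ c_{Y,Z}^{-1}$. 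So nothing cancels; your reduction ``to $c_{Z,Y}$ cancelling against $c_{Y,Z}^{-1}$'' conflates the two. This is exactly the content of Remark~\ref{nontriviality} immediately after the proposition: $\alpha_{X,Y}=b(\bar X,\bar Y)$ is nontrivial already for $X,Y\in\C_1$, even though $\Phi(X)=\id_{\C_1}$ there. For the same reason your unit step is wrong: for $X,Y\in\C_1$ the morphisms $c_{X,Y}$ and $c_{Y,X}$ are \emph{not} mutually inverse (that would force $\C_1$ to be symmetric); what gives $\alpha_{\be}=\id$ is that braiding with the unit object is trivial. In fact, had your descent to $G$ worked, it would contradict Remark~\ref{nontrivialit2} and Corollary~\ref{weak-iso}: each autoequivalence $g(\cdot)$ is isomorphic to $\id_{\C_1}$, but the monoidal functor $G\to\Func(\C_1,\C_1)$ is in general \emph{not} trivializable --- only its pullback $\Phi$ along $\pi_0(\C)\twoheadrightarrow G$ is, which is precisely what the proposition asserts.

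The good news is that the descent step is not needed: $\Phi$ is a functor on the discrete monoidal category $\pi_0(\C)$, so all you must check is that $\alpha_{X,Y}$ depends only on the isomorphism class of $X$ (immediate from naturality of the crossed braiding), is natural in $Y$, and is monoidal over $\pi_0(\C)$ --- the multiplicativity $\alpha_{X_1\ot X_2,Y}=g_1(\alpha_{X_2,Y})\circ\alpha_{X_1,Y}$ following from the hexagons \eqref{G-hex1}--\eqref{G-hex2} together with property (c) of Definition~\ref{crossed}, plus the unit as above. With that correction your direct verification is a legitimate alternative route. The paper itself argues differently and more structurally: by (the analogue of) Lemma~\ref{weakmeaning}, the isomorphisms \eqref{G-brai2} with $Y\in\C_1$ give a monoidal isomorphism between $\Ad:\C\to\Func(\C_1,\C_1)$ and the composite $\C\to G\to\Func(\C_1,\C_1)$, while the central structure on $\C_1$ from Remark~\ref{centextre2} trivializes $\Ad:\C\to\Func(\C_1,\C_1)$ by Remark~\ref{centrex}(iv); composing the two yields the trivialization of $\Phi$ with no diagram chase at all.
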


\begin{remark}    \label{nontriviality}
If $X\in\C_1$ then $\Phi (X)=\id_{\C_1}$ and $\alpha_{X,Y}=b(X,Y)$, where 
$b:\pi_0(\C_1)\times\pi_0(\C_1)\to\pi_1(\C_1)$ is the symmetric pairing corresponding to the
braided structure on $\C_1$. So {\em in general, $\alpha_{X,Y}$ is nontrivial for $X,Y\in\C_1$.}
\end{remark}

\begin{proof}
The isomorphisms \eqref{G-brai2} for $Y\in\C_1$ define an isomorphism between the composition
\begin{equation}   \label{thecomposition}
\C\to G\to\Func (\C_1 ,\C_1 )
\end{equation}
and the adjoint action of $\C$ on $\C_1$. On the other hand, by Remark \ref{centrex}(iv), the
central structure on $\C_1$ defined in Remark \ref{centextre2} yields a trivialization of the
adjoint action of $\C$ on $\C_1$. Thus we get a trivialization of the composition 
\eqref{thecomposition}, which is the same as a trivialization of $\Phi$. The corresponding
isomorphism $Y\iso (\Phi (X))(Y)$ is the one described in the formulation of the proposition.
\end{proof}

\begin{corollary}    \label{weak-iso}
Let $\C$ be a braided $G$-crossed gr-category such that the grading on $\C$ is faithful.
Then the monoidal auto-equivalence of $\C_1$ corresponding to each element of $G$ is
isomorphic to $\id_{\C_1}$.
\end{corollary}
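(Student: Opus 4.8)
\textbf{Proposal of proof for Corollary \ref{weak-iso}.}
The plan is to read off the statement directly from Proposition \ref{trivialization}. Let $\C$ be a braided $G$-crossed gr-category with faithful grading $\partial:\C\twoheadrightarrow G$. Fix $g\in G$ and let $F_g:\C_1\to\C_1$ denote the monoidal auto-equivalence of $\C_1$ obtained by restricting the action of $g$ on $\C$ (it preserves $\C_1$ by property (a) of Definition \ref{crossed}, since $g\C_1 g^{-1}=\C_1$ in $G$). Because the grading is faithful there is an object $X\in\C_g$; we shall produce a monoidal isomorphism $F_g\simeq\id_{\C_1}$.

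First I would invoke Proposition \ref{trivialization}, which supplies a canonical trivialization $\alpha$ of the composite $\Phi:\pi_0(\C)\to G\to\Func(\C_1,\C_1)$. Unwinding the definition, $\alpha$ consists, for every object $X'\in\C$ lying over some $h\in G$ and every $Y\in\C_1$, of an isomorphism $\alpha_{X',Y}:Y\iso h(Y)=(\Phi(X'))(Y)$, natural in $Y$ and compatible with tensor products in the monoidal-functor sense; this is exactly the data of an isomorphism of monoidal functors between $\id_{\C_1}$ and $F_h$ (here $h=\partial(X')$). Applying this with $X'=X$, so $h=g$, gives the desired monoidal isomorphism $\alpha_{X,-}:\id_{\C_1}\xrightarrow{\sim}F_g$. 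Since $g\in G$ was arbitrary, the monoidal auto-equivalence of $\C_1$ attached to each element of $G$ is isomorphic to $\id_{\C_1}$.

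The only point that needs a line of justification is that $\alpha_{X,-}$ really is a morphism of \emph{monoidal} functors and that it depends only on $g=\partial(X)$ and not on the chosen lift $X$: both are built into the assertion of Proposition \ref{trivialization} that $\alpha$ is a trivialization of $\Phi$, i.e. an isomorphism of monoidal functors $\Phi\simeq(\text{trivial functor})$, and $\Phi$ factors through $G$. There is no real obstacle here; the content is entirely contained in Proposition \ref{trivialization}, and the corollary is the specialization of that trivialization to the objects lying over a fixed element of $G$, made meaningful by the faithfulness of the grading (which guarantees such objects exist).
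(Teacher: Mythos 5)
Your argument is correct and is exactly the paper's route: Corollary \ref{weak-iso} is deduced from Proposition \ref{trivialization} by choosing, for each $g\in G$, an object $X\in\C_g$ (which exists by faithfulness of the grading) and observing that $\alpha_{X,-}$ is an isomorphism of monoidal functors $\id_{\C_1}\iso\Phi(X)$, where $\Phi(X)$ is precisely the monoidal auto-equivalence of $\C_1$ attached to $g=\partial(X)$.

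One caveat: your parenthetical claim that $\alpha_{X,-}$ ``depends only on $g=\partial(X)$ and not on the chosen lift $X$'' is false, and it does not follow from the fact that $\Phi$ factors through $G$ --- the trivialization $\alpha$ is a trivialization of $\Phi$, which is defined on $\pi_0(\C)$, not on $G$, and it does not descend to $G$. Indeed, Remark \ref{nontriviality} shows that for $X\in\C_1$ one has $\alpha_{X,Y}=b(X,Y)$, which is in general nontrivial, so two lifts of the same $g$ differing by an object of $\C_1$ give genuinely different isomorphisms; this is exactly why, as Remark \ref{nontrivialit2} records, the action of $G$ on $\C_1$ need not be trivial even though each of the auto-equivalences is (non-canonically) isomorphic to $\id_{\C_1}$. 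Fortunately the corollary asserts only the existence of some monoidal isomorphism with $\id_{\C_1}$ for each $g$, so deleting that claim leaves your proof intact.
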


\begin{remark}    \label{nontrivialit2}
Let $\C$ be as in Corollary \ref{weak-iso}. Then the action of $G$ on $\C_1$ is not necessarily
trivial. It can be described as follows. By Corollary \ref{weak-iso}, the image of the functor
$G\to\Func (\C_1 ,\C_1 )$ is contained in the full subcategory of monoidal functors isomorphic to
$\id_{\C_1}$, which identifies with the category of $\Aut (\id_{\C_1})$-torsors. Remark \ref{nontriviality}
implies that the action of $G$ on $\C_1$ equals the composition 
\[
G{\buildrel{\alpha}\over{\longrightarrow}}\{\pi_0(\C_1)\mbox{-torsors\}}
{\buildrel{\beta}\over{\longrightarrow}}\{\Aut (\id_{\C_1})\mbox{-torsors\}},
\]
where $\alpha$ corresponds to the extension $0\to\pi_0(\C_1)\to\pi_0(\C)\to G\to 0$
(i.e., $\alpha (g)$ is the pre-image of $g$ in $\pi_0(\C)$) and $\beta$ is induced by the
homomorphism $\pi_0(\C_1)\to\Hom (\pi_0(\C_1), \pi_1(\C_1))=\Aut (\id_{\C_1})$
corresponding to the pairing $b:\pi_0(\C_1)\times\pi_0(\C_1)\to\pi_1(\C_1)$.
\end{remark}

\section{Complexified Grothendieck ring and fusion subcategories}  \label{fusionring}
In this Appendix we give a proof of the result mentioned
in \S \ref{geneneralities}: any fusion subcategory of a fusion category is rigid. 
Its idea was explained to us by P.~Etingof.

\subsection{Recollections on $*$-algebras}
A {\em $*$-algebra\,} over $\BC$ is a $\BC$-algebra $A$ with a map $*:A\to A$ such that
$(ab)^*=b^*a^*$, $a^{**}=a$, and $(\lambda a)^*=\bar \lambda a^*$ for $\lambda\in\BC$.  
A {\em positive trace\,} on $A$ is a linear functional $l: A\to \BC$ such that
$l(ab)=l(ba)$, $l(a^*)=\overline{l(a)}$, $l(aa^*)>0$ for $a\ne 0$.

\begin{example}    \label{*algexample}
Let $A=Mat(n,\BC)$. For $a\in Mat(n,\BC)$ let $a^*$ be the usual Hermitian 
conjugate matrix. Define $l: A\to \BC$ by $l(a):=p\Tr(a)$, $p>0$. Then $A$ is a $*$-algebra 
and $l$ is a positive trace.
\end{example}

Let $A_1$, \ldots, $A_r$ be $*$-algebras and $A=A_1\times\ldots A_r$.
Let $l_i:A_i\to\BC$ be a positive trace, $i=1,\ldots ,r$. Then the linear
functional $l:A\to\BC$ defined by $l(a_1,\ldots ,a_r)=l_1(a_1)+\ldots +l_r(a_r)$
is a positive trace on $A$. In this situation we write $(A,l)=\prod\limits_{i=1}^r(A_i,l_i)$.
The following proposition is classical.

\begin{proposition} Any finite-dimensional $*$-algebra with a positive trace is isomorphic 
to $\prod\limits_{i=1}^r(A_i,l_i)$, where each $(A_i,l_i)$ is as in Example \ref{*algexample}.
\end{proposition}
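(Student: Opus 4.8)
The plan is to prove this by a standard induction on dimension, using the positive trace to force semisimplicity and to split off idempotent pieces. First I would observe that the positivity of $l$ gives a positive-definite Hermitian inner product on $A$, namely $\langle a, b\rangle := l(ab^*)$; this is the central device. The first real step is to show that the Jacobson radical $J = J(A)$ is zero. Indeed, if $a \in J$ is nonzero, then $aa^*$ would need $l(aa^*) > 0$, but I would argue that $J$ is a nilpotent two-sided ideal closed under $*$ (the latter because $J$ can be characterized in a $*$-invariant way, or simply because $J^* $ is also a nilpotent ideal hence contained in $J$), and then for $a \in J$ the element $aa^*$ lies in $J$ and is nilpotent; writing $x = aa^*$ one has $x = x^*$, and $l(x^2) = l(xx^*) > 0$ if $x \neq 0$, but iterating $l(x^{2^k}) = l(x^{2^{k-1}} (x^{2^{k-1}})^*) > 0$ contradicts $x^n = 0$. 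Hence $aa^* = 0$, and then $\langle a, a\rangle = l(aa^*) = 0$ forces $a = 0$. So $J = 0$ and $A$ is semisimple.

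By the Artin–Wedderburn theorem, $A \cong \prod_{i=1}^r B_i$ with each $B_i \cong Mat(n_i, D_i)$ for a division algebra $D_i$ over $\BC$; since $\BC$ is algebraically closed, $D_i = \BC$, so $B_i \cong Mat(n_i,\BC)$. The next step is to check that this decomposition is compatible with $*$: the central primitive idempotents $e_1,\dots,e_r$ are permuted by $*$, but since $e_i^* e_i$ is a nonzero element whenever $e_i \neq 0$ (again by positivity, $l(e_i e_i^*) > 0$ so $e_i e_i^* \neq 0$), and $e_i^* $ is itself a central idempotent, one gets $e_i^* = e_i$; thus each $B_i = e_i A$ is a $*$-subalgebra, and $l$ restricts to a positive trace $l_i$ on $B_i$. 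This reduces everything to the case $A = Mat(n,\BC)$ with an arbitrary $*$-structure and an arbitrary positive trace.

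So the final step is the simple case. I would use the Skolem–Noether theorem: any $\BC$-algebra automorphism of $Mat(n,\BC)$ is inner, and the map $a \mapsto (a^*)^T$ (transpose of the $*$) — wait, more carefully, $a \mapsto \overline{a^*}$ composed appropriately — one checks that $a \mapsto (a^*)$ is a conjugate-linear anti-automorphism, so $a \mapsto \overline{(a^*)}$ (entrywise conjugate) is a linear anti-automorphism, hence by Skolem–Noether composed with transpose it is conjugation by some invertible $S$: $a^* = S^{-1} \bar a^T S = S^{-1} a^{\dagger} S$ where $\dagger$ is the usual Hermitian conjugate. The condition $a^{**} = a$ forces $S^\dagger S^{-1}$ to be central, i.e. $S^\dagger = \mu S$; taking $\dagger$ again gives $\mu \bar\mu = 1$ and a scaling argument lets one take $S$ Hermitian, and then positivity of $l$ on $S$-related elements forces $S$ positive-definite, so $S = T^\dagger T$; conjugating by $T$ identifies $*$ with the standard $\dagger$. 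Finally any trace on $Mat(n,\BC)$ is a scalar multiple $p\Tr$ of the ordinary trace, and $l(aa^\dagger) > 0$ forces $p > 0$. This matches Example~\ref{*algexample}, completing the proof. The main obstacle is the Skolem–Noether normalization in this last step — getting $S$ into the form $T^\dagger T$ cleanly — though it is entirely standard; everything else is routine given the inner-product trick.
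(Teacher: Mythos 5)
Your argument is correct, but it follows a genuinely different route from the paper's. The paper uses the inner product $\langle a,b\rangle = l(ab^*)$ only to note that the left regular representation $A\hookrightarrow \End A$ is an injective $*$-homomorphism when $\End A$ carries the adjoint operation of this Hermitian form (one checks $L_a^{\dagger}=L_{a^*}$ exactly as you would), and then it simply quotes the classical structure theory of $*$-closed subalgebras of operators on a finite-dimensional Hilbert space: such a subalgebra is a product of matrix algebras with their standard adjoints, after which $l$ restricts on each factor to a positive multiple of the matrix trace. You instead work intrinsically: you use positivity of $l$ to kill the Jacobson radical (your iteration $l(x^{2^k})>0$ for $x=aa^*$ self-adjoint in a nilpotent ideal is fine), invoke Artin--Wedderburn over $\BC$, show the central primitive idempotents are $*$-fixed by the same positivity trick, and then normalize the involution on a single matrix factor by Skolem--Noether, writing $a^*=S^{-1}a^{\dagger}S$ and reducing $S$ to a positive-definite Hermitian matrix $T^{\dagger}T$. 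The only place you are a little quick is the claim that positivity "forces $S$ positive-definite": after arranging $S$ Hermitian you should diagonalize $S$ and test $l(aa^*)>0$ on elementary matrices $a=E_{ij}$, which gives $p\,d_i/d_j>0$ for all $i,j$, hence $p>0$ and all eigenvalues of $S$ of one sign, so that after possibly replacing $S$ by $-S$ (it is only defined up to scalar) it is positive-definite; this is easily supplied. What each approach buys: the paper's argument is two lines long but outsources the real content to the operator-algebraic fact about $*$-subalgebras of $\End$ of a Hilbert space, while yours is longer but self-contained modulo the purely ring-theoretic inputs (Artin--Wedderburn, Skolem--Noether) and makes explicit where positivity of the trace is used at each stage.
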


\begin{proof} 
Let $\End A$ denote the algebra of linear operators $A\to A$. Equip $\End A$ with the
$*$-algebra structure corresponding to the Hermitian scalar product $(a,b)=l(ab^*)$ on $A$.
The left action of $A$ on itself defines an injective $*$-homomorphism $A\hookrightarrow\End A$.
Thus $A$ identifies with a $*$-subalgebra of the algebra of operators in a finite-dimensional
Hilbert space. So the $*$-algebra $A$ is isomorphic to a product of matrix algebras 
$A_1$, \ldots, $A_r$ equipped with their standard anti-involutions. For each $i\in\{ 1,\ldots ,r\}$
the restriction $l|_{A_i}$ is proportional to the usual matrix trace, and the coefficient is
positive because $l$ is.
\end{proof}

\begin{corollary} \label{idem} 
Let $A$ be a finite-dimensional $*$-algebra with a positive trace $l$.
If $e\in A$, $e^2=e$, $e\ne 0$ then $l(e)>0$. \qed
\end{corollary}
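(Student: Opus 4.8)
The statement to be proved is Corollary~\ref{idem}: if $A$ is a finite-dimensional $*$-algebra over $\BC$ equipped with a positive trace $l$, and $e\in A$ satisfies $e^2=e$ and $e\neq 0$, then $l(e)>0$. The plan is to reduce immediately to the structure theorem proved just above, which says that $(A,l)\cong\prod_{i=1}^r(A_i,l_i)$ with each $(A_i,l_i)$ as in Example~\ref{*algexample}, i.e.\ $A_i=\mathrm{Mat}(n_i,\BC)$ with $l_i(a)=p_i\,\Tr(a)$ for some real $p_i>0$. Under this isomorphism an idempotent $e$ corresponds to a tuple $(e_1,\dots,e_r)$ with each $e_i\in A_i$ an idempotent matrix, not all zero, and $l(e)=\sum_i p_i\Tr(e_i)$.

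The one genuine point to check is that for a nonzero idempotent matrix $f\in\mathrm{Mat}(n,\BC)$ one has $\Tr(f)>0$; more precisely $\Tr(f)$ is a nonnegative integer, equal to $\mathrm{rank}(f)$, and it is positive exactly when $f\neq 0$. First I would note that an idempotent matrix is diagonalizable with eigenvalues in $\{0,1\}$: indeed $f$ annihilates the polynomial $t^2-t=t(t-1)$, which has distinct roots, so $\BC^n$ decomposes as $\ker f\oplus\ker(f-\id)=\Image(f-\id)'\oplus\Image f$ and $f$ acts as $0$ on the first summand and as the identity on the second. Hence $\Tr(f)=\dim\Image f=\mathrm{rank}(f)$, which is a nonnegative integer, and it vanishes iff $f=0$. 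Since each $e_i$ is an idempotent matrix and at least one is nonzero (as $e\neq 0$), we get $\Tr(e_i)\geq 0$ for all $i$ and $\Tr(e_{i_0})>0$ for some $i_0$; combined with $p_i>0$ this gives $l(e)=\sum_i p_i\Tr(e_i)>0$.

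There is essentially no obstacle here: the corollary is a direct consequence of the preceding proposition together with the elementary linear-algebra fact about traces of idempotents. The only thing worth being careful about is that the isomorphism $(A,l)\cong\prod_i(A_i,l_i)$ is an isomorphism \emph{of pairs}, so that $l$ really is carried to $\sum_i l_i$ with each $l_i$ a positive multiple of the matrix trace; this is exactly what the proposition asserts, so I would simply invoke it. Thus the proof is two lines once the structure theorem is in hand, and I would present it in that compressed form, spelling out only the computation $\Tr(e)=\mathrm{rank}(e)\geq 0$ for a matrix idempotent.
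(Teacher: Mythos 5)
Your proof is correct and is exactly the argument the paper intends: the corollary is stated with an immediate \qed because it follows directly from the structure theorem $(A,l)\cong\prod_i(\mathrm{Mat}(n_i,\BC),p_i\Tr)$, and your observation that a nonzero idempotent matrix has trace equal to its rank, hence a positive integer, is the (implicit) missing step. Nothing to change.
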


\begin{lemma}\label{nilpota}
Let $A$ be a finite-dimensional $*$-algebra with a positive trace $l$.
If $a\in A$ and $l(a^n)=0$ for all $n\in \BN$ then $a$ is nilpotent.
\end{lemma}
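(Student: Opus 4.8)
The statement to prove is Lemma~\ref{nilpota}: if $A$ is a finite-dimensional $*$-algebra over $\BC$ with a positive trace $l$, and $a\in A$ satisfies $l(a^n)=0$ for all $n\in\BN$, then $a$ is nilpotent. The plan is to reduce to the structure theorem for such algebras, which says $(A,l)\cong\prod_{i=1}^r(A_i,l_i)$ with each $A_i=\mathrm{Mat}(n_i,\BC)$ and $l_i=p_i\Tr$ for some $p_i>0$. Under this isomorphism $a$ corresponds to a tuple $(a_1,\dots,a_r)$, and $a$ is nilpotent if and only if each $a_i$ is; meanwhile $l(a^n)=\sum_i p_i\Tr(a_i^n)$, so the hypothesis $l(a^n)=0$ for all $n$ does \emph{not} immediately kill each $\Tr(a_i^n)$ separately. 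So the first step is to handle the single matrix-algebra case, and the second step is to combine the blocks.

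First I would treat $A=\mathrm{Mat}(n,\BC)$, $l=p\Tr$. Here the hypothesis gives $\Tr(a^m)=0$ for all $m\ge 1$. Letting $\mu_1,\dots,\mu_n$ be the eigenvalues of $a$ (with multiplicity), the power sums $p_m=\sum_j\mu_j^m$ all vanish, so by Newton's identities all elementary symmetric functions of the $\mu_j$ vanish, hence the characteristic polynomial of $a$ is $t^n$, so $a^n=0$ by Cayley--Hamilton. Thus in the simple case the conclusion holds. For the general case, write $a=(a_1,\dots,a_r)$ as above. The hypothesis is $\sum_{i=1}^r p_i\Tr(a_i^m)=0$ for all $m\ge 1$. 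Collecting eigenvalues, this says $\sum_{\nu}c_\nu\,\nu^m=0$ for all $m\ge 1$, where $\nu$ ranges over the distinct complex numbers occurring as eigenvalues of some $a_i$ and $c_\nu$ is the sum of $p_i$ times the multiplicity of $\nu$ as an eigenvalue of $a_i$; in particular $c_\nu>0$ whenever $\nu$ actually occurs. A Vandermonde-type argument (the functions $m\mapsto\nu^m$ on any finite set of distinct nonzero $\nu$'s are linearly independent over $\BC$) forces $c_\nu=0$ for every $\nu\ne 0$. Since each $c_\nu>0$ when $\nu$ occurs, no nonzero complex number is an eigenvalue of any $a_i$; hence every $a_i$ is nilpotent, hence $a$ is nilpotent.

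The main obstacle is precisely the point that the positive trace on a product does not split into independent conditions on the factors, so one cannot argue factor-by-factor directly; the resolution is the linear-independence (Vandermonde) argument combined with the \emph{positivity} of the coefficients $p_i$, which is what converts ``the weighted sum of powers vanishes'' into ``each weight attached to a nonzero eigenvalue is zero.'' A clean way to package the Vandermonde step: if $\nu_1,\dots,\nu_s$ are the distinct nonzero values among all eigenvalues and $c_1,\dots,c_s$ the corresponding (positive) weights, then the vector $(c_1,\dots,c_s)$ lies in the kernel of the $s\times s$ matrix $(\nu_j^m)_{1\le m\le s,\,1\le j\le s}$, which is invertible (it is $\mathrm{diag}(\nu_j)$ times a Vandermonde matrix in the distinct $\nu_j$), so all $c_j=0$, contradicting positivity unless $s=0$. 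This finishes the proof; I would present it in this order, with the single-block case stated first as the conceptual heart and the multi-block bookkeeping second.
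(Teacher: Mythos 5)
Your proof is correct, but it takes a genuinely different route from the paper's. The paper argues by contradiction using Corollary~\ref{idem}: if $a$ is not nilpotent, then the (non-unital) subalgebra generated by $a$ contains a nonzero idempotent $e=\sum_{i=1}^N\lambda_i a^i$; the hypothesis forces $l(e)=0$, contradicting $l(e)>0$. You instead go back to the structure theorem directly, decompose $(A,l)$ into weighted matrix blocks, and run a power-sum/Vandermonde argument: the weighted power sums of all eigenvalues vanish, the weights attached to nonzero eigenvalues are strictly positive, and invertibility of the Vandermonde system forces those weights to vanish, so every eigenvalue is zero and each block of $a$ is nilpotent (your single-block Newton's-identities warm-up is subsumed by this). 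Both arguments ultimately rest on the same structure proposition and on positivity; the paper's version is shorter because it offloads the positivity input entirely onto Corollary~\ref{idem}, already proved, at the cost of invoking the standard (unstated) fact that a non-nilpotent element of a finite-dimensional algebra generates a nonzero idempotent as a polynomial in itself without constant term. Your version avoids that algebraic fact and makes the spectral mechanism explicit, at the cost of redoing the block bookkeeping that the corollary had already encapsulated; it is slightly longer but fully self-contained modulo the structure theorem.
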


\begin{proof} If $a$ is not nilpotent then there exists a nonzero idempotent $e$ of the form
$\sum\limits_{i=1}^N\lambda_ia^i$. Then $l(e)=0$, which contradicts Corollary \ref{idem}.
\end{proof}

 \begin{rems}
 \begin{enumerate}
\item[(i)] Corollary~\ref{idem} is not an immediate consequence of the inequality $l(ee^*)>0$ 
because we do not assume that $e^*=e$.
\item[(ii)] One can show that in Corollary~\ref{idem} the assumption $\dim A<\infty$ is essential.
\end{enumerate}
\end{rems}

\subsection{Fusion subcategories}
Let $\C$ be a fusion category and $A=K_0(\C)\ot_\BZ \BC$. We extend the involution 
$X\mapsto X^*$ to $A$ by anti-linearity. Let $l: A\to \BC$ be the linear functional such that 
$l([X])=\dim \Hom (\be,X)$ for any $X\in \C$. Then $(A,*)$ is a $*$-algebra and 
$l: A\to \BC$ is a positive trace.

\begin{lemma} \label{Xpower}
Let $\C$ be a fusion category. For any $X\in \O(\C)$ there exists a positive
integer $N$ such that $\Hom(\be, X^{\otimes N})\ne 0$.
\end{lemma}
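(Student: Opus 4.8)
The plan is to use the $*$-algebra machinery just set up. Work in $A = K_0(\C)\otimes_\BZ\BC$ with the anti-linear involution extending $X\mapsto X^*$ and the positive trace $l$ with $l([X]) = \dim\Hom(\be, X)$. The key observation is that $\Hom(\be, X^{\otimes N}) \neq 0$ is equivalent to $l([X]^N) \neq 0$, since $l([X]^N) = l([X^{\otimes N}]) = \dim\Hom(\be, X^{\otimes N}) \geq 0$. So it suffices to show that $[X] \in A$ is \emph{not} nilpotent: if $l([X]^N) = 0$ for all $N \geq 1$, then by Lemma~\ref{nilpota} the element $[X]$ would be nilpotent in $A$.

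First I would rule out nilpotency of $[X]$. The element $[X]$ has non-negative integer coordinates in the basis $\O(\C)$ of $A$, and it is nonzero (as $X$ is a nonzero object). Multiplication by $[X]$ is represented, in this basis, by a matrix with non-negative integer entries. If $[X]$ were nilpotent, then $[X]\cdot[X^*] = [X\otimes X^*]$ would also be nilpotent (the subalgebra generated by $[X]$ and $[X^*]$ is commutative since $K_0$ is... actually one must be slightly careful here — $K_0(\C)$ need not be commutative). The cleaner route: note $l(([X][X^*])^n) = l([X][X^*][X]\cdots) $, and more to the point, observe that $[X\otimes X^*]$ contains $[\be]$ as a summand (because $\dim\Hom(\be, X\otimes X^*) = \dim\Hom(X,X) \geq 1$), so $[X\otimes X^*] = [\be] + (\text{non-negative combination})$. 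Hence for the element $a := [X][X^*] = [X\otimes X^*]$ we have that $a^n$ contains $[\be]$ with positive coefficient for every $n$ (by induction, since all structure constants are non-negative and $a \succeq [\be]$), so $l(a^n) \geq 1 > 0$ for all $n$. By Lemma~\ref{nilpota}, $a$ is not nilpotent. But if $[X]$ were nilpotent, say $[X]^m = 0$, then $a^m = ([X][X^*])^m$ — here I need $[X]$ and $[X^*]$ to commute, or to argue differently.

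To avoid the commutativity issue, I would instead argue directly: suppose $[X]^N = 0$ in $A$. Then $[X^{\otimes N}] = 0$, which is impossible since $X^{\otimes N}$ is a genuine nonzero object of the semisimple category $\C$ and hence has a nonzero class in $K_0(\C)$. Wait — this shows $[X]^N \neq 0$ as an element, but I actually want $l([X]^N) \neq 0$, i.e.\ that $X^{\otimes N}$ contains $\be$. So the real content is: for the non-nilpotent element $[X] \in A$ (non-nilpotent because $\C$ semisimple forces $[X^{\otimes N}] \neq 0$), apply Lemma~\ref{nilpota} in contrapositive form to conclude that $l([X]^n) \neq 0$ for \emph{some} $n$. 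Then for that $n$, $\dim\Hom(\be, X^{\otimes n}) = l([X]^n) > 0$ (it is a non-negative integer that is nonzero), giving the desired $N = n$.

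\textbf{Main obstacle.} The delicate point is making sure the hypothesis of Lemma~\ref{nilpota} is correctly applied: the lemma says $l(a^n) = 0$ for \emph{all} $n$ implies $a$ nilpotent, so its contrapositive is ``$a$ not nilpotent implies $l(a^n) \neq 0$ for some $n$'' — which is exactly what I need, and I must verify $[X]$ is not nilpotent. That non-nilpotency follows immediately from the semisimplicity of $\C$: $X^{\otimes n} \neq 0$ for all $n$ (tensor product of nonzero objects in a fusion category is nonzero, as $\FPdim$ is multiplicative and positive on nonzero objects by \S\ref{fpd}), hence $[X]^n = [X^{\otimes n}] \neq 0$ in $K_0(\C) \subset A$. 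So $[X]$ is not nilpotent, Lemma~\ref{nilpota} applies, and we are done. The only thing to double-check is that $l$ is genuinely a positive trace on $A$ — that $l(ab) = l(ba)$, which holds because $\dim\Hom(\be, X\otimes Y) = \dim\Hom(Y^*, X) = \dim\Hom(\be, Y\otimes X)$ by rigidity of $\C$ itself (not the subcategory), and $l(aa^*) > 0$ for $a \neq 0$, which is the statement that a nonzero object $Z$ has $\dim\Hom(\be, Z\otimes Z^*) = \dim\End(Z) > 0$.
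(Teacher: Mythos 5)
Your final argument is correct and is essentially the paper's proof: the paper simply observes that $[X]\in K_0(\C)\subset A$ is (obviously) not nilpotent and then invokes Lemma~\ref{nilpota} in contrapositive form, exactly as you do, with your $\FPdim$ remark just spelling out why $[X]^n=[X^{\otimes n}]\ne 0$. The abandoned detour through $[X][X^*]$ and the commutativity worry are unnecessary; the clean version you settle on is the intended argument.
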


\begin{proof} It is obvious that $[X]\in K_0(\C)\subset K_0(\C)\ot_\BZ \BC$ is not nilpotent.
The result follows from Lemma \ref{nilpota}.
\end{proof}

\begin{corollary} \label{fusion sub}  
A fusion subcategory of a fusion category is rigid.
\end{corollary}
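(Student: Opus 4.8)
The statement to prove is Corollary~\ref{fusion sub}: a fusion subcategory $\D$ of a fusion category $\C$ is rigid. The plan is to reduce rigidity of $\D$ to the existence, for each simple $X\in\D$, of a positive integer $N$ with $\Hom_\C(\be,X^{\otimes N})\neq 0$; this is exactly Lemma~\ref{Xpower}, which has just been established using the $*$-algebra machinery. So the real content of the corollary is the elementary observation that if a tensor subcategory is closed under direct summands and contains enough tensor powers of each simple object to "see" the unit, then each object has a dual inside the subcategory.

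First I would set up notation: $\D\subset\C$ is a full tensor subcategory closed under direct summands (in the sense of \S\ref{geneneralities}), hence as an additive category it is generated by a set of simple objects of $\C$, and it is itself semisimple with finitely many simples, finite-dimensional Hom-spaces, and simple unit $\be$. The only thing missing from the definition of "fusion category" is rigidity, i.e.\ the existence of duals. Since $\C$ is rigid, each $X\in\D$ has a left dual $X^*$ and a right dual ${}^*X$ in $\C$; the task is to show $X^*,{}^*X\in\D$. Because $\D$ is closed under taking summands, it suffices to exhibit $X^*$ (and ${}^*X$) as a direct summand of an object of $\D$. For $X$ simple, apply Lemma~\ref{Xpower} to get $N\geq 1$ with $\Hom_\C(\be,X^{\otimes N})\neq 0$. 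Then $\be$ is a direct summand of $X^{\otimes N}=X\otimes X^{\otimes(N-1)}$, so $X^*$ is a direct summand of $X^{\otimes(N-1)}$ (using $\Hom_\C(\be,X\otimes Y)\cong\Hom_\C(X^*,Y)$, valid in any rigid category). Since $X\in\D$ and $\D$ is closed under tensor product, $X^{\otimes(N-1)}\in\D$, and since $\D$ is closed under summands, $X^*\in\D$. The same argument with the identification $\Hom_\C(\be,Y\otimes X)\cong\Hom_\C({}^*X,Y)$ gives ${}^*X\in\D$. For a general (non-simple) object of $\D$, write it as a direct sum of simples and use that duals commute with finite direct sums.

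Having $X^*\in\D$ for every $X\in\D$, I would then check that the evaluation and coevaluation morphisms $\ev_X\colon X^*\otimes X\to\be$ and $\coev_X\colon\be\to X\otimes X^*$ of $\C$ are morphisms of $\D$: this is automatic because $\D$ is a \emph{full} subcategory and all three objects $\be$, $X\otimes X^*$, $X^*\otimes X$ lie in $\D$. The rigidity axioms (the zig-zag identities) then hold in $\D$ because they already hold in $\C$. The same for the right-dual structure maps. Hence $\D$ is rigid, and therefore $\D$ is a fusion category in its own right.

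I do not expect a genuine obstacle here: all the analytic difficulty has been absorbed into Lemma~\ref{nilpota} and Lemma~\ref{Xpower}. The only point requiring a little care is the passage from simple objects to arbitrary objects and the bookkeeping of left versus right duals (one must not conflate $X^*$ with ${}^*X$, since the subcategory must be shown closed under both); this is handled uniformly by the two adjunction isomorphisms above. A secondary point worth stating explicitly is that $\D$ being full is what makes the structure morphisms automatically live in $\D$ — without fullness the argument would need more work.
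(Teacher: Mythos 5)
Your proof is correct and follows essentially the same route as the paper: apply Lemma~\ref{Xpower} to a simple $X\in\D$ and use the adjunction $\Hom(\be,X^{\otimes N})\cong\Hom(X^*,X^{\otimes(N-1)})$ to exhibit $X^*$ as a summand of $X^{\otimes(N-1)}\in\D$. The extra bookkeeping you include (right duals, fullness guaranteeing the (co)evaluation maps lie in $\D$, passage to non-simple objects) is a correct elaboration of what the paper leaves implicit.
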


\begin{proof} 
Let $\D$ be a fusion subcategory of a fusion category $\C$.
We have to show that if $X\in \O(\D)$ then $X^*\in \O(\D)$.
Clearly $X^{\otimes n}\in\D$ for all $n\ge 1$.
For $N$ as in Lem\-ma~\ref{Xpower} we get $\Hom(X^*,X^{\otimes N-1})\ne 0$.
So $X^*\in \O(\D)$.
\end{proof}

\bibliographystyle{ams-alpha}

\end{document}